\documentclass[a4paper,12pt]{article}

\usepackage[notext]{kpfonts}
\usepackage{baskervald}

\usepackage{amsfonts,amssymb,amsmath,amsthm,abstract,color}
\usepackage[mathscr]{euscript}
\usepackage[ps,all,arc,rotate]{xy}
\usepackage[lmargin=1in,rmargin=1in,tmargin=1in,bmargin=1in]{geometry}
\usepackage{fancyhdr}
\usepackage{dsfont}
\usepackage{pb-diagram}
\usepackage{mathtools}
\usepackage{enumitem}
\usepackage[backref=page]{hyperref}
\usepackage[usenames,dvipsnames]{xcolor}
\usepackage{tikz-cd}
\usepackage{extpfeil}
\usepackage{float}
\hypersetup{colorlinks=true,citecolor=NavyBlue,linkcolor=Brown,urlcolor=Orange}

\usepackage{stmaryrd}
\usepackage[capitalise]{cleveref}
\usepackage{float}
\usetikzlibrary{arrows.meta, positioning, shapes.misc}
\usepackage{comment}
\numberwithin{equation}{section}
\usepackage{appendix}


\usepackage{titlesec}

\titlespacing*{\section}
{0pt}{3ex plus 1ex minus .2ex}{2.5ex plus .2ex}
\titlespacing*{\subsection}
{0pt}{2ex plus 1ex minus .2ex}{1.5ex plus .2ex}
\titlespacing*{\subsubsection}
{0pt}{2ex plus 1ex minus .2ex}{1.5ex}

\newcommand{\sectionpoint}{\if \@empty\titlesec \else}              

\newcommand*{\justifyheading}{\raggedright}
\titleformat{\section}{\normalfont \Large \bfseries \justifyheading}{\thesection.}{0.5em}{}
\titleformat{\subsection}{\normalfont \large \bfseries \justifyheading}{\thesubsection.}{0.5em}{}
\titleformat{\subsubsection}[runin]{\normalfont \normalsize \bfseries \justifyheading}{\thesubsubsection.}{0.5em}{}[  \protect{\rule[3pt]{10pt}{0.5pt}}]

\theoremstyle{plain}
\newtheorem*{theorem*}{Theorem}
\newtheorem{theorem}{Theorem}[section]
\newtheorem{maintheorem}{Theorem}

\newtheorem{conjecture}[theorem]{Conjecture}
\newtheorem{lemma}[theorem]{Lemma}

\newtheorem{corollary}[theorem]{Corollary}
\newtheorem{proposition}[theorem]{Proposition}

\theoremstyle{definition}
\newtheorem{definition}[theorem]{Definition}

\theoremstyle{remark} 
 \newtheorem{remark}[theorem]{Remark}
\newtheorem{example}[theorem]{Example}
\newtheorem{notation}[theorem]{Notation}

\renewcommand{\tilde}{\widetilde}

\renewcommand{\geq}{\geqslant}
\renewcommand{\leq}{\leqslant}

\newcommand{\CC}{\mathbb{C}}
\newcommand{\RR}{\mathbb{R}}

\newcommand{\FF}{\mathbb{F}}
\newcommand{\PP}{\mathbb{P}}
\newcommand{\QQ}{\mathbb{Q}}

\newcommand{\ZZ}{\mathbb{Z}}

\newcommand{\h}{\mathfrak{h}}

\renewcommand{\1}{\mathds{1}}

\DeclareMathOperator{\CH}{CH}

\DeclareMathOperator{\Db}{\mathrm{D}^\mathrm{b}}

\DeclareMathOperator{\End}{End}
\DeclareMathOperator{\Gal}{Gal}
\DeclareMathOperator{\GL}{GL}

\DeclareMathOperator{\Hom}{Hom}
\DeclareMathOperator{\Hilb}{Hilb}
\DeclareMathOperator{\id}{id}

\DeclareMathOperator{\pt}{pt}
\DeclareMathOperator{\rk}{rk}
\DeclareMathOperator{\SO}{SO}

\DeclareMathOperator{\Sym}{Sym}

\DeclareMathOperator{\tr}{tr}

\DeclareMathOperator{\Bl}{Bl}
\DeclareMathOperator{\Ima}{Im}

\DeclareMathOperator{\MT}{MT}

\DeclareMathOperator{\Aut}{Aut}

\DeclareMathOperator{\Oo}{O}

\DeclareMathOperator{\et}{\mathrm{\acute{e}t}}

\DeclareMathOperator{\NS}{\mathrm{NS}} 
\DeclareMathOperator{\Km}{\mathrm{Km}} 
\DeclareMathOperator{\Pic}{\mathrm{Pic}}

\newcommand{\Sing}{\mathsf{Sing}}

\newcommand{\orb}{\mathrm{orb}}

\newcommand{\CHM}{\mathsf{CHM}}

\newcommand{\Mot}{\mathsf{Mot}}

\begin{document}

\title{\textbf{The hyper-Kummer construction}}

\author{Salvatore Floccari, Lie Fu}
\date{ }
\maketitle

\begin{abstract}
We investigate a construction which associates a hyper-K\"ahler manifold of $\mathrm{K}3^{[3]}$-type with any hyper-K\"ahler sixfold of generalized Kummer type, thus relating the two most studied deformation types of hyper-K\"ahler manifolds of dimension~$6$. This construction, discovered by the first-named author in a previous work, is named \textit{hyper-Kummer} in the present paper, as it is one of our main points that it is a higher-dimensional analog of the classical Kummer construction of K3 surfaces from abelian surfaces. In this spirit, we prove several results which parallel those known for the classical Kummer construction: we characterize the hyper-Kummer manifolds of $\mathrm{K}3^{[3]}$-type up to birational equivalence in terms of their Hodge lattices, and establish a McKay correspondence for their derived categories and Chow motives. Reversing the hyper-Kummer construction, we propose a recipe to construct locally complete families of projective varieties of $\mathrm{Kum}^3$-type starting from a family of varieties of $\mathrm{K}3^{[3]}$-type equipped with 16 prime divisors in a certain Kummer lattice configuration.
We further compare the hyper-Kummer $\mathrm{K}3^{[3]}$-type manifolds with the Mongardi--Rapagnetta--Sacc\`a double covers of O'Grady's six-dimensional hyper-K\"ahler manifolds, thus relating all three known deformation types of hyper-K\"ahler manifolds of dimension $6$. The hyper-Kummer construction produces a rich configuration of hyper-K\"ahler manifolds of $\mathrm{K}3^{[2]}$-type and K3 surfaces canonically associated with a manifold of $\mathrm{Kum}^3$-type, in particular the \textit{hyper-Kummer K3 surfaces}. In the projective case, such K3 surfaces form countably many $4$-dimensional families of generic Picard rank 16, and are generalizations of the classical Kummer K3 surfaces. 
We prove abelianity of Chow motives for infinitely many 4-dimensional families of hyper-Kummer K3 surfaces, thereby proving Kimura--O'Sullivan finite-dimensionality conjecture for many new K3 surfaces of Picard rank $16$. As applications, we show how the hyper-Kummer construction can be used to prove Beauville's weak splitting conjecture for all varieties of $\mathrm{Kum}^3$-type. Building on previous results, we moreover establish the Hodge and Tate conjectures for all powers of any of the varieties involved in the hyper-Kummer construction. 
\end{abstract}

\renewcommand{\thefootnote}{}
\null\footnotetext{ S.F.\ is supported by the ERC Advanced Grant SYZYGY and the ERC Advanced Grant TameHodge. L.F.\ is supported by the University of Strasbourg Institute for Advanced Study (USIAS), by the Agence Nationale de la Recherche (ANR) under projects ANR-20-CE40-0023 and ANR-24-CE40-4098, and by the International Emerging Actions (IEA) project of CNRS.

\noindent{\textbf{Keywords:} Hyper-K\"ahler manifolds, K3 surfaces, Kummer surfaces, Hodge theory, moduli spaces, Hilbert schemes, Hodge conjecture, Tate conjecture, motives, derived categories, lattices, McKay correspondence.}\\
\noindent{\textbf{2020 MSC:} 14J42, 14J28, 14C30, 14C15, 14D22, 14F08, 14J50, 14C05.}
}

\renewcommand{\thefootnote}{\arabic{footnote}} 
\newpage
\small{\tableofcontents}
\newpage

\section{Introduction}

In his seminal papers \cite{Kummer-surfaceII-1864, Kummer-surfaceI-1864}\footnote{Both are collected in the more readily available volume \cite{Kummer-CollectedWorksII} edited by Andr\'e Weil.} published in 1864, Ernst Kummer constructed and studied quartic surfaces with exactly 16 nodes in the 3-dimensional projective space $\mathbb{P}^3$, and discovered profound relations with abelian functions in two variables. Kummer's quartic surface plays an important role in the history of mathematics because it reveals, in a single explicit object, the deep interaction between geometry, complex analysis and algebraic equations. In the modern language of algebraic geometry, quartic surfaces with 16 isolated $\mathrm{A}_1$-singularities arise as quotients of principally polarized abelian surfaces $A$ by the involution $-1: x\mapsto -x$. The 16 singular points are the images of the 16 fixed points of $-1$, while the embedding $A/{-1} \hookrightarrow\mathbb{P}^3$ is via the complete linear system of the square of the principal polarization. Ever since its discovery, the rich and beautiful projective geometry of Kummer’s quartic surfaces has been the topic of much research; see for example Hudson's \cite{Hudson}.
\begin{figure}[H]
    \centering
    \includegraphics[width=0.5\linewidth]{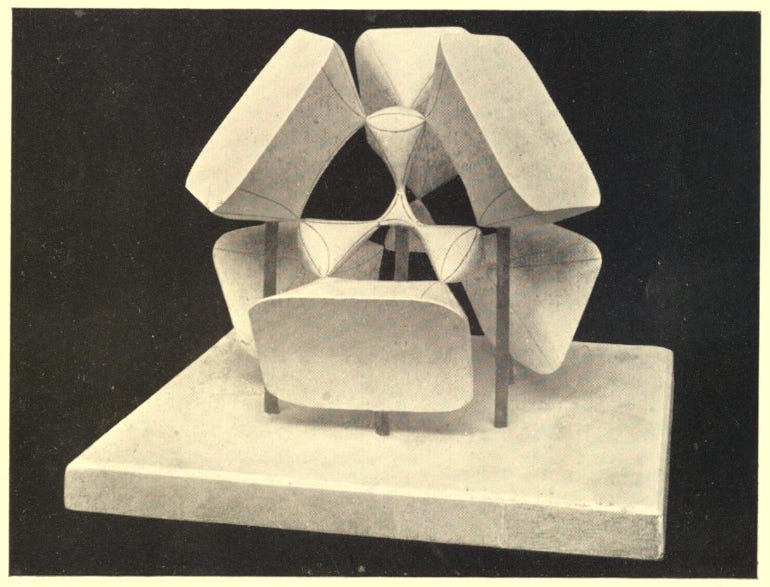}
    \caption{The real locus of Kummer's quartic surface. The picture is taken from \cite{Hudson}.
}
\end{figure}

Forgetting the polarization and the projective embedding, the same construction can be performed more generally for any 2-dimensional complex torus $A$, resulting in a singular complex surface $A/{-1}$, with 16 nodes corresponding to the 2-torsion points for the group structure on $A$.
Moreover, blowing up these singular points gives rise to a crepant resolution of singularities
\begin{equation*}
    \Km(A):=\Bl_{A[2]}(A/{-1}) \longrightarrow A/{-1},
\end{equation*}
producing one of the most important examples of K3 surfaces, namely, \textit{the Kummer surface} associated with $A$, denoted by $\Km(A)$ in this paper. Alternatively, let $p'\colon  \widetilde{A}\to A$ be the blow-up of $A$ along the locus of 2-torsion points $A[2]$; then the involution $-1$ lifts to an involution on $\widetilde{A}$ with fixed locus being the disjoint union of the 16 exceptional curves, and $\Km(A)$ is isomorphic to the quotient of $\widetilde{A}$ by the lifted involution. We summarize the situation in the following commutative diagram:
\begin{equation}
\label{diag:KummerConstruction}
    \begin{tikzcd}
    &\widetilde{A} \arrow[swap]{dl}{p'} \arrow{dr}{q'}&\\
    A  \arrow{dr}[swap]{q} \arrow[dashed]{rr}{r}&& \Km(A) \arrow{dl}{p}\\
    &A/{-1}&
\end{tikzcd}
\end{equation}
where $r$ is the degree-2 rational map, $p, p'$ are the blow-ups described above, and  $q, q'$ are quotient maps by involutions.

The \textit{hyper-Kummer construction}, discovered in \cite{floccariKum3}, is a six-dimensional analog of the Kummer construction in the realm of compact hyper-K\"ahler manifolds (see \cite{beauville1983varietes} for generalities on such manifolds). Like the Kummer construction attaches a K3 surface to any 2-dimensional complex torus, the hyper-Kummer construction attaches a hyper-K\"ahler manifold of $\mathrm{K}3^{[3]}$-type to any hyper-K\"ahler sixfold of generalized Kummer type ($\mathrm{Kum}^3$-type). More precisely, let $K$ be any hyper-K\"ahler manifold of $\mathrm{Kum}^3$-type. There is a natural action of a group $G\cong (\ZZ/2\ZZ)^5$ on $K$, where
\begin{equation}
\label{eqn:DefinitionG}
    G \coloneqq \{g\in \Aut(K)~|~ g|_{H^2(K,\QQ)}=\id~;~g|_{H^4(K,\QQ)}=\id\}.
\end{equation}

As proved in \cite{floccariKum3}, blowing up the quotient $K/G$ along its singular locus gives rise to a crepant resolution
$$p\colon Y_K\longrightarrow K/G,$$ 
producing a hyper-K\"ahler manifold $Y_K$ of $\mathrm{K}3^{[3]}$-type.
Alternatively, let $\widetilde{K}$ be the blow-up of $K$ along the total fixed locus $\bigcup_{1\neq g\in G} K^g$; then the $G$-action lifts to $\widetilde{K}$ and the quotient is isomorphic to $Y_K$. We summarize the situation in the following commutative diagram:
\begin{equation}\label{Diag:hyperKummerConstruction}
\begin{tikzcd}
    & \widetilde{K} \arrow{rd}{q'} \arrow[swap]{ld}{p'} \\
    K \arrow[swap]{rd}{q} \arrow[dashed]{rr}{r} && Y_K \arrow{ld}{p} \\
     & K/G   
\end{tikzcd}
\end{equation}
where $r$ is the degree-32 rational map, $p,p'$ are the blow-ups described above, and $q,q'$ are the quotient maps with respect to the $G$-actions. 
We call the $\mathrm{K}3^{[3]}$-type hyper-K\"ahler manifold $Y_K$ the \textit{hyper-Kummer sixfold} associated with $K$. 

The hyper-Kummer construction is quite explicit for the generalized Kummer 6-folds associated with an abelian surface, at least up to birational transformations. 

\begin{example}[Section \ref{subsec:GeneralizedKummer}]
\label{example:hyperKummerConstruction}
    Let $A$ be an abelian surface and $K=K^3(A)$ the associated 6-dimensional generalized Kummer variety. In this case, $G$ is naturally identified with the group $A[2]\times \{\pm 1\}\simeq (\mathbb{Z}/2\mathbb{Z})^{5}$ generated by $-1_A$ and the translations by the 2-torsion points of $A$. The hyper-Kummer manifold $Y_K$ associated with $K$ is birational to  $\Km(A)^{[3]}$, the Hilbert cube of the Kummer surface associated with $A$, and the hyper-Kummer resolution is identified with the inverse of the birational map
    \begin{align*}
       K^3(A)/G &\dashrightarrow \Km(A)^{[3]}\\
        \{a_1, a_2, a_3, a_4\} &\mapsto \{\overline{a_1+a_2}, \overline{a_1+a_3}, \overline{a_1+a_4}\},
    \end{align*}
    where $\{a_1, a_2, a_3, a_4\}$ denotes the $G$-orbit of a generic point of $K^3(A)$ given by an unordered set $\{a_1, a_2, a_3, a_4\}$ of four points in $A$ such that $a_1+a_2+a_3+a_4=0$, and $\overline{a}$ denotes the class in $A/{-1}$ of a given point $a\in A$. The key point is that the above formula yields an isomorphism of orbifolds $A^{(4)}_0/G\simeq (A/\pm1)^{(3)}$; see Lemma \ref{lem:isomorphismOrbifolds}.
\end{example}

\subsection{Facts on the Kummer construction}

To motivate our results and fix some notation, we present here a collection of known results about the classical Kummer construction. They will serve as the paradigm for our investigation of the hyper-Kummer construction. Consider the classical Kummer construction as in Diagram \eqref{diag:KummerConstruction}.

\subsubsection*{Hodge lattices} 
The second cohomology of a smooth compact complex surface has a natural structure of lattice given by the intersection pairing. 
The natural morphism 
\begin{equation*}
    r_*:=q'_*p'^*\colon H^2(A, \ZZ)(2) \hookrightarrow H^2(\Km(A), \ZZ)
\end{equation*}
is a primitive embedding of Hodge lattices, where $(2)$ in the source means that the quadratic form is multiplied by 2. As lattices, $\Ima(r_*)\simeq \mathrm{U}(2)^{\oplus 3}$ and its orthogonal complement $\Ima(r_*)^{\perp}$ is a negative definite lattice of rank 16 with discriminant group $(\ZZ/2\ZZ)^6$, known as the \textit{Kummer lattice}, denoted by $L_{\Km}$ in this paper. Geometrically, $L_{\Km}$ is the saturation of the sublattice of $H^2(\Km(A), \ZZ)$ generated by the classes of the 16 rational curves introduced by the resolution $\Km(A)\to A/\pm 1$.

\subsubsection*{Characterization (Nikulin \cite{Nikulin})} Let $S$ be a K3 surface. Let $\NS(S)$ be its N\'eron--Severi lattice and let $H^2_{\mathrm{tr}}(S, \ZZ)\coloneqq \NS(S)^{\perp}\subset H^2(S,\ZZ)$ be its transcendental Hodge lattice.  The following conditions are equivalent:
\begin{enumerate}[label=(\roman*)]
    \item $S$ is a Kummer surface, i.e.~$S\simeq \Km(A)$ for a 2-dimensional complex torus $A$;
    \item there exists a primitive embedding of lattices $L_{\Km}\hookrightarrow \NS(S)$;
    \item $S$ contains 16 disjoint smooth rational curves.
\end{enumerate}
If moreover $S$ is projective, the above conditions are also equivalent to:
\begin{enumerate}[label=(\roman*)]
   \item[(iv)] there exists a primitive embedding of lattices $H^2_{\mathrm{tr}}(S, \ZZ)\hookrightarrow \mathrm{U}(2)^{\oplus 3}$.
\end{enumerate}

\subsubsection*{Torelli-type Theorems} 
Let $A$ and $A'$ be two abelian surfaces. 
\begin{itemize}
    \item (Shioda \cite{Shioda-PeriodAbelianSurfaces}). If $H^2(A,\ZZ)$ and $H^2(A',\ZZ)$ are Hodge isometric, then $A'\simeq A$ or $A'\simeq \widehat{A}$, hence $\Km(A)\simeq \Km(A')$.
    \item (Huybrechts \cite{huybrechtsMotives}, Fu--Vial \cite{FuVial}). The following conditions are equivalent:
        \begin{enumerate}[label=(\roman*)]
            \item $H^2(A,\QQ)$ and $H^2(A',\QQ)$ are Hodge isometric; 
            \item $\Km(A)$ and $\Km(A')$ are connected by a chain of twisted derived equivalences;
            \item there is an isomorphism of rational motives $\h(\Km(A))\simeq \h(\Km(A'))$ as Frobenius algebra objects.
        \end{enumerate}
\end{itemize}
More recently, Li--Zou \cite[Corollary 1.2.2]{LiZou-DerivedIsogeniesAbSurfaces} showed that the above three conditions are also equivalent to the condition that $A$ and $A'$ are connected by a chain of twisted derived equivalences and to the condition that there is an isogeny $A\to A'$ of square degree.

\vspace{0.5cm}

The next two facts are instances of the McKay correspondence. 
Let $A$ be an abelian surface. Let $G$ be the cyclic group of order 2 acting on $A$ by the involution $-1_A$. Let $[A/G]$ be the quotient Deligne--Mumford stack. 

\subsubsection*{Derived categories}
Let $\Db(-)$ be the derived category of perfect complexes. Then $\Db([A/G])$ is canonically equivalent to the equivariant category $\mathrm{D}^{\mathrm{b}}_G(A)$.
The Fourier--Mukai functor 
\begin{equation}\label{eqn:DerivedEqKummer}
Rp'_* \circ {q'}^* \colon  \Db(\Km(A))\xrightarrow{\simeq} \Db([A/G])
\end{equation}
induces an equivalence of triangulated categories, with inverse equivalent to the functor $(Rq'_*\circ L{p'}^*)^{G}$.  To see \eqref{eqn:DerivedEqKummer}, we identify $\Km(A)$ with the $G$-equivariant Hilbert scheme of $A$ and then it is an easy example of \cite[Theorem 1.1]{BridgelandKingReid}.
The equivalence \eqref{eqn:DerivedEqKummer} implies relations between derived invariants of $A$ and $\Km(A)$, like K-theory, Hochschild (co)homology, etc.

\subsubsection*{Motives}
In the category of rational Chow motives $\CHM_{\QQ}$, by the blow-up formula, we have a canonical isomorphism 
\begin{equation*}
        \h(\Km(A)) \simeq \h(A)^G \oplus \1(-1)^{\oplus 16}.
\end{equation*}
This isomorphism can be reformulated as an isomorphism between the motive of the Kummer surface $\Km(A)$ and the orbifold motive (see \cite{FuVialTian-MHRC}) of the quotient stack $[A/G]$:
\begin{equation}
\label{eqn:IsomMotKummer}
    \h(\Km(A)) \simeq \h_{\orb}([A/G]).
\end{equation}
Moreover, \eqref{eqn:IsomMotKummer} can be promoted to an isomorphism of algebra objects in $\CHM_{\QQ}$ by \cite[Proposition 3.9]{FuVialTian-MHRC} or \cite[Theorem 1.2]{Fu-Tian-MultiplicativeMcKaySurface}. Applying various realization functors, the isomorphism \eqref{eqn:IsomMotKummer} encompasses most topological and cohomological relations between $A$ and $\Km(A)$, e.g. for Hodge structures, Chow groups, motivic cohomology and so on, with rational coefficients. 

\subsection{Results on the hyper-Kummer construction}
\label{subsec:Intro-ResultsHK}

A main goal of the present work is to establish properties of the hyper-Kummer construction that parallel those of the classical Kummer construction recalled above. Let $K$ be a compact hyper-K\"ahler manifold of $\mathrm{Kum}^3$-type, let $G\simeq (\ZZ/2\ZZ)^5$ be the group defined in \eqref{eqn:DefinitionG}, and let $Y_K$ be the associated hyper-Kummer $\mathrm{K}3^{[3]}$-type manifold. Consider the hyper-Kummer construction as in Diagram \eqref{Diag:hyperKummerConstruction}:
\begin{equation*}
\begin{tikzcd}
    & \widetilde{K} \arrow{rd}{q'} \arrow[swap]{ld}{p'} \\
    K \arrow[swap]{rd}{q} \arrow[dashed]{rr}{r} && Y_K \arrow{ld}{p} \\
     & K/G   
\end{tikzcd}
\end{equation*}
\subsubsection*{Hodge lattices}
The second cohomology of a compact hyper-K\"ahler manifold has a natural structure of Hodge lattice given by the Beauville--Bogomolov quadratic form \cite{beauville1983varietes}. For a $\mathrm{Kum}^3$-type manifold $K$, the lattice $H^2(K, \ZZ)$ is isometric to  $\mathrm{U}^{\oplus 3}\oplus \langle -8 \rangle$, which admits a unique overlattice of index $2$, isometric to $\mathrm{U}^{\oplus 3}\oplus \langle -2 \rangle$. We define the Hodge lattice $\widehat{H}^2(K,\ZZ)$ to be this index-2 overlattice of $H^2(K, \ZZ)$, equipped with the Hodge structure extended from $H^2(K, \ZZ)$. 

\begin{maintheorem}[Hodge lattice, Proposition \ref{prop:cohomologyY_K}]
\label{thm-intro:HodgeLattices}
    With the above notation, we have: 
    \begin{enumerate}[label=(\roman*)]
        \item The classes of the $16$ exceptional divisors of the blow-up $p$ are pairwise orthogonal $(-2)$-classes in $H^2(Y_K,\ZZ)$, and the saturation of the sublattice generated by them is isometric to the Kummer lattice. 
        \item The morphism of Hodge structures $r_*\coloneqq q'_*p'^* \colon H^2(K,\ZZ)\to H^2(Y_K,\ZZ)$ is injective, divisible by $2^4$, and multiplies the quadratic form by $2^9$.
        \item The map $\tfrac{r_*}{2^4}\colon H^2(K,\ZZ)(2)\to H^2(Y_K,\ZZ)$ is an embedding of Hodge lattices which extends to a primitive embedding of Hodge lattices:
        \begin{equation*}
            {\frac{\hat{r}_*}{2^4}}\colon \widehat{H}^2(K,\ZZ)(2)\hookrightarrow  H^2(Y_K,\ZZ),
        \end{equation*} 
        where $(2)$ means that the quadratic form is multiplied by a factor $2$. Hence, the image of $\frac{\hat{r}}{2^4}$ is isometric to $\mathrm{U}(2)^{\oplus 3}\oplus \langle -4\rangle$, while $\Ima\left({\frac{\hat{r}_*}{2^4}}\right)^{\perp}\simeq L_{\Km}$ is the Kummer lattice generated up to finite index by the $16$ exceptional divisors of $p$ as in (i).
    \end{enumerate}
\end{maintheorem}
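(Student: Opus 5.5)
The plan is to reduce everything to the explicit example $K=K^3(A)$ of Example~\ref{example:hyperKummerConstruction}, where $Y_K$ is birational to $\Km(A)^{[3]}$, and then propagate to all $K$ by deformation. The map $r_*=q'_*p'^*$, the $G$-action and the exceptional divisors of $p$ all move in families of $\mathrm{Kum}^3$-type manifolds: $G$ acts trivially on $H^2$ and $H^4$, so it deforms with $K$, and $Y_K$ is built functorially from $(K,G)$. Every numerical statement (injectivity, divisibility by $2^4$, the scaling factor $2^9$, the lattice spanned by the exceptional classes, primitivity, the orthogonal complement) is therefore locally constant in the family. Since the family is connected, it suffices to verify these statements for one generalized Kummer $K^3(A)$.

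\textbf{Step 1: the scaling factor.} For general $x,y\in H^2(K)$ I use $q'^*q'_*=\sum_{g\in G}g^*=32$ on $H^2$, since $G$ acts trivially there. This gives $\int_{Y_K}(r_*x)^6=\tfrac{1}{32}\int_{\widetilde K}(q'^*r_*x)^6=32^5\int_K x^6$. I then compare the Fujiki relations: $\int_K x^6=c_K\,q_K(x)^3$ with $c_K=60\cdot 4=\tfrac{6!}{3!\,2^3}\cdot 4$, and $\int_Y y^6=15\,q_Y(y)^3$. This yields $q_Y(r_*x)^3=\lambda^3 q_K(x)^3$. Computing $\lambda$ gives $\lambda=2^9$, and the sign is fixed by positivity on Kähler classes. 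I will double-check the constants, but the proposition asserts $2^9$, so this is a consistency check. It also gives injectivity immediately.

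\textbf{Step 2: exceptional divisors.} From \cite{floccariKum3} the singular locus of $K/G$ has 16 components, each generically a transversal $A_1$-singularity: a surface of fixed points of an involution, with stabilizer $\ZZ/2$ modulo the rest. So each exceptional divisor $E_i$ is a $\PP^1$-bundle over a surface, and $q_Y(E_i)=-2$ by the standard computation for such divisors on $\mathrm{K}3^{[n]}$-type, i.e.\ $E_i$ is a prime exceptional divisor of square $-2$ and divisibility $1$. Pairwise orthogonality follows because the components are disjoint, or from $E_i\cdot E_j\cdot h^4=0$ via the Fujiki polarized form. The classes $E_i$ are orthogonal to $r_*H^2(K)$ because $p^*$ of classes from $K/G$ restricts trivially along the fibres.

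\textbf{Step 3: lattice-theoretic identification (the main obstacle).} Ranks add up: $7+16=23=b_2(Y_K)$, so $r_*H^2(K,\QQ)\oplus\langle E_i\rangle_\QQ=H^2(Y_K,\QQ)$. It remains to show the following.
\begin{enumerate}[label=(\alph*)]
\item $r_*$ is divisible by exactly $2^4$, and $\tfrac{r_*}{2^4}$ extends to the index-2 overlattice $\widehat H^2(K,\ZZ)$ with primitive image.
\item The saturation of $\langle E_i\rangle$ is $L_{\Km}$.
\end{enumerate}
I would do both on $K^3(A)$, using the birational model $\Km(A)^{[3]}$. Birational maps of hyper-Kähler manifolds induce Hodge isometries on $H^2$, and under this identification $H^2(\Km(A)^{[3]},\ZZ)=H^2(\Km(A),\ZZ)\oplus\ZZ\delta$. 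The explicit formula $\{a_i\}\mapsto\{\overline{a_1+a_j}\}$ lets me compute $r_*$ on the summand $H^2(A)\subset H^2(K^3(A))$: it should be $2^4$ times the classical Kummer map $\pi_*$ (Kummer structure). On the class $e$ of square $-8$, it should go to a combination of $\delta$ and the Kummer-lattice classes of square $-4\cdot 2^8$ after rescaling, giving $\langle-4\rangle$ after division. Then $\tfrac{r_*}{2^4}(H^2(A))=\mathrm{U}(2)^3$ as in the classical case. The 16 exceptional divisors of $p$ correspond to the loci where one of the points $\overline{a_1+a_j}$ is a node of $A/\pm1$, i.e.\ the divisors $C_i^{[3]}$-type classes $[C_i]$ in $H^2(\Km(A))$, up to the birational identification. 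Their saturation is therefore the classical Kummer lattice, by Nikulin.

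Finally, $(\mathrm{U}(2)^3\oplus\langle-4\rangle)\oplus L_{\Km}$ must sit in $H^2(Y_K)\simeq\mathrm{U}^3\oplus E_8(-1)^2\oplus\langle-4\rangle$. Discriminant-group counting shows that the image of $\widehat H^2(K)(2)$ is exactly the orthogonal complement of $L_{\Km}$, hence primitive. The delicate point is identifying where $\tfrac12 e$ goes, i.e.\ that the overlattice extension is realized integrally. I expect to handle this by exhibiting $\tfrac{r_*(e)}{2^5}$ as an integral combination of $\delta$ and half-sums of the $[C_i]$ in the explicit model.
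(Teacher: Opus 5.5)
Your strategy is the paper's: reduce to $K^3(A)$, compare with $\Km(A)^{[3]}$ through the degree-$2^5$ map $q$ with $r=\psi\circ q$, and get the factor $2^9$ from Fujiki constants. One slip in Step 1: $c_K=60$, not $60\cdot 4$; with $240$ the ratio $\frac{c_K}{15}\cdot 32^5$ is not a cube. The real problem is a genuine gap in Step 3, which is where all the integral content lives and which you assert rather than prove. ``It should be $2^4$ times $\pi_*$'' has no mechanism behind it, and two ideas are missing.

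\emph{(1) Degree computations on the exceptional loci.} A local analysis over the codimension-$2$ strata shows that the lift $\widetilde q\colon\widetilde K^3(A)\dashrightarrow\Km(A)^{[3]}$ is a morphism off codimension $2$. It restricts to generically finite maps $\widetilde E\to D$ of degree $2^5$ and $F_{\sigma_\tau}\to R_\tau$ of degree $2^4$, read off from $\overline q$ on $\overline E\to\overline D$ and $\overline W_{\sigma_\tau}\to\overline R_\tau$. This gives $\widetilde q_*\widetilde\xi=2^5\delta$ and $\widetilde q_*[F_{\sigma_\tau}]=2^4[R_\tau]$.

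\emph{(2) Schur's lemma.} Once $q_*H^2(A,\ZZ)$ is known to land in $H^2(A,\ZZ)(2)$, the induced endomorphism of $H^2(A,\ZZ)$ is equivariant for the monodromy group $\mathrm{SL}_4$ of complex tori, because the formula for $q$ works in families. By Schur's lemma on the irreducible $\bigwedge^2 V$ it is multiplication by an integer $k$, and $2k^2=2^9$ forces $k=2^4$. This is what gives divisibility by $2^4$ and $\frac{1}{2^4}r_*H^2(A,\ZZ)=\mathrm{U}(2)^{\oplus 3}$.

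Your expectation that $r_*(\xi)/2^5$ is a combination of $\delta$ and half-sums of the $[C_i]$ is wrong, and it contradicts your own orthogonality claim. In fact $r_*(\xi)=2^5\delta$ exactly. That is why $\frac{r_*}{2^4}$ sends $\xi/2\mapsto\delta$, and why its image $H^2(A,\ZZ)(2)\oplus\ZZ\delta=\langle[R_\tau]\rangle^{\perp}$ is saturated.

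Step 2 contains geometric errors. The singular locus of $K/G$ is the union of the sixteen \emph{fourfolds} $W_\sigma/G$, so $E_\sigma$ is generically a $\PP^1$-bundle over a fourfold, not a surface. The $E_\sigma$ are also not disjoint: $E_\sigma\cap E_{\sigma'}$ lies over $V_{\sigma,\sigma'}/G$, where the fibres are $\PP^1\times\PP^1$. Orthogonality must therefore come from your projection-formula argument against pull-backs from $K/G$, not from disjointness.

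Moreover, $q(E_\sigma)=-2$ is not a standard consequence of being $\PP^1$-ruled: on $\mathrm{K}3^{[3]}$-type manifolds, prime exceptional divisors can have square $-2$ or $-4$. The paper obtains (i) by comparing $\widetilde r_*[F_{\sigma_\tau}]=2^4[E_{\sigma_\tau}]$ with $\widetilde q_*[F_{\sigma_\tau}]=2^4[R_\tau]$. This gives $\psi_*[R_\tau]=[E_{\sigma_\tau}]$, and the known configuration of the $[R_\tau]$ finishes the argument. Your matching of divisors over the same stratum could replace this step, but only after checking two things via Lemma~\ref{lem:isomorphismOrbifolds}: that $\overline W_{\sigma_\tau}/G$ corresponds to $\overline R_\tau$, and that each resolution has a unique exceptional divisor over its generic point.
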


\subsubsection*{Characterization of hyper-Kummer manifolds}
We establish the following birational characterization of hyper-Kummer sixfolds in terms of the Hodge lattices on the second cohomology, in analogy with the aforementioned Nikulin's characterization of Kummer surfaces.

\begin{maintheorem}[Birational characterization, Theorem \ref{thm:birationalCharacterization}]
\label{thm-intro:BirationalCharacterization}
Let $Y$ be a hyper-K\"ahler manifold of $\mathrm{K}3^{[3]}$-type. 
The following conditions are equivalent:
\begin{enumerate}[label=(\roman*)]
\item $Y$ is bimeromorphic to the hyper-Kummer manifold $Y_K$ associated with some hyper-K\"ahler manifold $K$ of $\mathrm{Kum}^3$-type.
\item There exists a primitive embedding of lattices $L_{\Km}\hookrightarrow \NS(Y)$ such that the orthogonal complement of the induced embedding $L_{\Km}\hookrightarrow H^2(Y,\ZZ)$ is isometric to $\mathrm{U}(2)^{\oplus 3}\oplus \langle -4\rangle$.
\end{enumerate}
\end{maintheorem}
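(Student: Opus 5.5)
The direction (i)$\Rightarrow$(ii) follows from Theorem \ref{thm-intro:HodgeLattices} together with the fact that a bimeromorphic map between hyper-K\"ahler manifolds induces a Hodge isometry on $H^2(-,\ZZ)$. For $Y_K$, part (iii) of that theorem gives the decomposition: $\Ima(\hat r_*/2^4)\simeq \mathrm{U}(2)^{\oplus 3}\oplus\langle -4\rangle$, with orthogonal complement $L_{\Km}$. The lattice $L_{\Km}$ is the saturation of the span of the $16$ exceptional divisors of $p$, so it is algebraic and lies in $\NS(Y_K)$. Since $L_{\Km}$ is the orthogonal complement of a sublattice, it is primitive. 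Transporting these statements along the Hodge isometry $H^2(Y_K,\ZZ)\simeq H^2(Y,\ZZ)$ gives (ii).

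For (ii)$\Rightarrow$(i) I would use the global Torelli theorem for $\mathrm{K}3^{[3]}$-type manifolds (Verbitsky, Markman). Two manifolds of this type are bimeromorphic if their $H^2$ are Hodge isometric via a parallel-transport operator. For $\mathrm{K}3^{[n]}$-type with $n-1=2$ a prime power, the monodromy group is the full group $\widetilde{\Oo}^+(H^2)$ of orientation-preserving isometries acting by $\pm 1$ on the discriminant group; since that discriminant group is $\ZZ/4$, this means every orientation-preserving isometry lies in it. Moreover, composing with $-\id$ fixes orientation issues, because $-\id$ is a Hodge isometry of $H^2$.

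The first step is to fix a reference $K_0$ of $\mathrm{Kum}^3$-type with its sixfold $Y_{K_0}$, and the decomposition $H^2(Y_{K_0},\ZZ)\supset T_0\oplus L_0$ from Theorem \ref{thm-intro:HodgeLattices}. Given $Y$ as in (ii), write $H^2(Y,\ZZ)\supset T\oplus L$ with $L\simeq L_{\Km}$ algebraic and $T\simeq \mathrm{U}(2)^{\oplus3}\oplus\langle-4\rangle$. I would then show that there is an isometry $\phi\colon H^2(Y,\ZZ)\to H^2(Y_{K_0},\ZZ)$ sending $L$ to $L_0$. This is a gluing problem: the overlattice $T\oplus L\subset \Lambda_{\mathrm{K3}^{[3]}}$ is determined by a glue subgroup, an isotropic anti-isometric graph between subgroups of the discriminant groups $A_T$ and $A_L\simeq (\ZZ/2)^6$. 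I would check that the relevant gluings are all equivalent under $\Oo(T)\times\Oo(L)$. Presumably $\Oo(L_{\Km})\to \Oo(A_{L_{\Km}})$ is surjective, as it is in the K3 case, which should make this work; alternatively, one can invoke Nikulin's uniqueness criterion for primitive embeddings of $L_{\Km}$ into $\Lambda_{\mathrm{K3}^{[3]}}=\mathrm{U}^{\oplus3}\oplus E_8(-1)^{\oplus2}\oplus\langle-4\rangle$ with prescribed complement. I expect this lattice-theoretic uniqueness to be the main obstacle.

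With $\phi$ in hand, the Hodge structure on $H^2(Y)$ is determined by its period line in $T_\CC$, since $L$ is algebraic. Transfer this period via $\phi$ to $T_0\cong \widehat{H}^2(K_0,\ZZ)(2)$; it is a valid period for $\mathrm{Kum}^3$-type (positivity of $\sigma\bar\sigma$ is preserved because the scaling by $2$ keeps signs), and it lies in $H^2(K_0,\ZZ)_\CC=\widehat H^2_\CC$. By surjectivity of the period map for $\mathrm{Kum}^3$-type, there is a $K$ deformation-equivalent to $K_0$ whose period is this one, obtained via a path of marked deformations. The hyper-Kummer construction works in families, since $G$ deforms with $K$ because it is defined cohomologically. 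So $Y_K$ is a deformation of $Y_{K_0}$, and parallel transport sends $T_0$ and $L_0$ to the corresponding sublattices of $Y_K$, with the period of $Y_K$ equal to the $\hat r_*$-image of the period of $K$. Hence $\phi$ composed with parallel transport is a Hodge isometry $H^2(Y,\ZZ)\simeq H^2(Y_K,\ZZ)$. Adjusting by $\pm\id$ or by an element of $\Oo(L)$ if needed to make it orientation-preserving, it is a parallel-transport operator, and Torelli gives that $Y$ and $Y_K$ are bimeromorphic.
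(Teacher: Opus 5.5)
Your argument is correct and is essentially the paper's proof. The paper phrases it with markings and the sub-period domain $\Omega_{\mathrm{K}3^{[3]}}^{L_{\Km}^{\bot}}$, but it uses the same ingredients:
\begin{itemize}
\item uniqueness of the primitive embedding $L_{\Km}\hookrightarrow\Lambda_{\mathrm{K}3^{[3]}}$ with complement $\mathrm{U}(2)^{\oplus 3}\oplus\langle -4\rangle$ (Lemma \ref{lem:embeddingL_Km});
\item surjectivity of the $\mathrm{Kum}^3$ period map;
\item the hyper-Kummer construction in families;
\item birational Torelli for $\mathrm{K}3^{[3]}$-type.
\end{itemize}
Your explicit $\pm\id$ adjustment makes precise the orientation/connected-component point that the paper leaves implicit.

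One correction to your hedge on the lattice step: $\Oo(L_{\Km})\to\Oo(A_{L_{\Km}})$ is not surjective. Every isometry of $L_{\Km}$ permutes the $32$ roots $\pm r_i$, acting through $\mathrm{AGL}_4(\FF_2)$ on the indices. Hence the image is $\mathrm{GL}_4(\FF_2)\cong A_8$, of index $2$ in $\Oo(A_{L_{\Km}})\cong S_8$. This does not hurt the proof. Nikulin's criterion only needs surjectivity of $\Oo(T)\to\Oo(A_T)$ for the complement $T\simeq\mathrm{U}(2)^{\oplus 3}\oplus\langle -4\rangle$. That surjectivity holds by Theorem \ref{thm:orthogonalGroups} together with Remark \ref{rmk:relaxing}, and it is the route behind Lemma \ref{lem:embeddingL_Km}.
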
 
\begin{remark}
    Up to isometry, there are two primitive embeddings of the Kummer lattice $L_{\Km}$ into the $\mathrm{K}3^{[3]}$-lattice $\Lambda_{\mathrm{K}3^{[3]}}=\mathrm{U}^{\oplus 3}\oplus \mathrm{E}_8(-1)^{\oplus 2}\oplus \langle -4\rangle$, one with orthogonal complement $\mathrm{U}(2)^{\oplus 3}\oplus \langle -4\rangle$ and the other with orthogonal complement $\mathrm{U}(2)^{\oplus 2}\oplus \mathrm{U} \oplus \langle -4\rangle$.
\end{remark}

\subsubsection*{Torelli-type theorems}
As was discovered by Namikawa \cite{Namikawa02}, the naive generalization of the global Torelli theorem for K3 surfaces does not hold for hyper-K\"ahler manifolds of generalized Kummer type: two hyper-K\"ahler manifolds of $\mathrm{Kum}^n$-type with Hodge isometric second cohomology are not necessarily bimeromorphic. We establish the following Torelli-type results, to be compared to the aforementioned results of Shioda \cite{Shioda-PeriodAbelianSurfaces} and of Huybrechts \cite{huybrechtsMotives} and Fu--Vial \cite{FuVial}.

\begin{maintheorem}[Proposition \ref{prop:HodgeIsometricKummer} and Corollary \ref{cor:HomologicalMotiveIso}]
\label{thm-intro:Torelli-type}
    Let $K$ and $K'$ be two projective hyper-K\"ahler manifolds of $\mathrm{Kum}^3$-type. 
    \begin{enumerate}[label=(\roman*)]
        \item If $H^2(K, \ZZ)$ and $H^2(K', \ZZ)$ are Hodge isometric, then the associated hyper-Kummer sixfolds $Y_K$ and $Y_{K'}$ are birational.
        \item If  $H^2(K, \QQ)$ and $H^2(K', \QQ)$ are Hodge isometric, we have isomorphisms of homological motives as Frobenius algebra objects $\mathsf h(K)\simeq \mathsf h(K')$  and $\mathsf h(Y_K)\simeq \mathsf h(Y_{K'})$.
    \end{enumerate}
\end{maintheorem}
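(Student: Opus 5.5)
The plan is to prove the two parts separately. Part (i) will come from the birational characterization (Theorem \ref{thm-intro:BirationalCharacterization}), Theorem \ref{thm-intro:HodgeLattices}, and Markman's Torelli theorem for $\mathrm{K}3^{[n]}$-type. Part (ii) will come from motivic results on generalized Kummers and on the hyper-Kummer quotient.

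\textbf{Part (i).} Fix a Hodge isometry $\phi\colon H^2(K,\ZZ)\to H^2(K',\ZZ)$.

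First, $\phi$ extends uniquely to a Hodge isometry $\hat\phi\colon \widehat H^2(K,\ZZ)\to \widehat H^2(K',\ZZ)$. This holds because the index-$2$ overlattice is unique, hence canonical.

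By Theorem \ref{thm-intro:HodgeLattices}(iii), $H^2(Y_K,\ZZ)$ is an overlattice of $\widehat H^2(K,\ZZ)(2)\oplus L_{\Km}$. The $L_{\Km}$ factor is spanned up to finite index by the $16$ exceptional divisors, and it is algebraic. The same description holds for $Y_{K'}$.

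Next I will glue $\hat\phi(2)$ with a suitable isometry $\psi$ of $L_{\Km}$ into a Hodge isometry $\Phi\colon H^2(Y_K,\ZZ)\to H^2(Y_{K'},\ZZ)$. Both overlattices are determined by gluing isomorphisms between the discriminant groups of the two summands. To extend $\hat\phi(2)\oplus\psi$, I need $\psi$ to match the discriminant action of $\hat\phi(2)$. The natural candidate for $\psi$ is the identification of the $16$ exceptional divisors induced by $\phi$ on the fixed loci, which are indexed by data attached to $K[3]$-type torsion. If that does not work, I will use surjectivity of $\Oo(L_{\Km})\to \Oo(A_{L_{\Km}})$ onto the relevant subgroup. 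That surjectivity is a classical fact for the Kummer lattice, with the relevant discriminant group being $(\ZZ/2)^6$.

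With $\Phi$ constructed, I will use Markman's Torelli theorem. It says $Y_K$ and $Y_{K'}$ are bimeromorphic if $\Phi$ or $-\Phi$ is a parallel transport operator. For $\mathrm{K}3^{[3]}$-type the monodromy group is $\widetilde{\Oo}^+$, because $3-1=2$ is a prime power. So the remaining tasks are:
\begin{itemize}
\item check that $\Phi$ acts as $\pm1$ on the discriminant $\ZZ/4$, possibly after composing with $-\id$ or adjusting $\psi$;
\item check that $\Phi$ preserves orientation.
\end{itemize}
The only genuine risk is the sign constraint on the discriminant. I will handle it by tracking the $\langle-4\rangle$ generator inside $\mathrm{U}(2)^{\oplus3}\oplus\langle-4\rangle$, which comes from the $\langle-2\rangle$ of $\widehat H^2(K)$ scaled by $2$.

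Since the characterization theorem is only up to bimeromorphism, this gives birationality in the projective case.

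\textbf{Part (ii).} For generalized Kummers, $\mathsf h(K)$ is known to lie in the Tannakian subcategory generated by $\mathsf h^1(A)$-type motives. The cohomology of $K$ is generated as an algebra, via the Looijenga--Lunts / Kuga--Satake structure, by $H^2$ together with an $H^3$ piece. That $H^3$ piece is governed by the Kuga--Satake abelian variety of $H^2(K,\QQ)$.

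The plan is:
\begin{itemize}
\item A rational Hodge isometry $H^2(K,\QQ)\simeq H^2(K',\QQ)$ induces an isogeny of Kuga--Satake varieties, and an isomorphism of Mumford--Tate-equivariant $\mathfrak{so}$-representations on the full cohomology, compatible with the cup product.
\item Homological motives are governed by Hodge classes. This uses the known Hodge conjecture for powers of these varieties, or André motivated cycles together with abelianity; homological motives of abelian type are detected by Hodge realizations. So the algebra isomorphism on cohomology lifts to $\mathsf h(K)\simeq\mathsf h(K')$ as Frobenius algebra objects.
\item For $Y_K$, the McKay/orbifold decomposition writes $\mathsf h(Y_K)$ as the orbifold motive $\mathsf h_{\orb}([K/G])$. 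Its twisted sectors are motives of the fixed loci, namely abelian or K3-type pieces determined by $H^2(K)$. The algebra structure is built from the orbifold product. The isomorphism for $K$ should be $G$-equivariant, since $G$ acts trivially on $H^2$ and $H^4$, and it should extend to the fixed loci. Combining these gives the isomorphism for $Y_K$.
\end{itemize}

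The main obstacle is ensuring this compatibility with the $G$-action on $H^3$, and with the sectors of the orbifold product.
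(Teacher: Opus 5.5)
\textbf{There is a genuine gap in part (i), at the gluing step.}

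To extend $\hat\phi(2)\oplus\psi$ over $H^2(Y_K,\ZZ)\supset\widehat H^2(K,\ZZ)(2)\oplus L_{\Km}$, you need some $\psi\in\Oo(L_{\Km})$ whose action on $A_{L_{\Km}}\cong(\ZZ/2\ZZ)^6$ matches, under the glue anti-isometry, the action of $\hat\phi$ on the glue subgroup $H\cong(\ZZ/2\ZZ)^6$ of $A_{\widehat H^2(K,\ZZ)(2)}$.

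\emph{The first candidate does not exist.} $\phi$ is an abstract Hodge isometry, not induced by any map $K\to K'$, so it does not act on fixed loci or exceptional divisors.

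\emph{The fallback is false.} The map $\Oo(L_{\Km})\to\Oo(A_{L_{\Km}})$ is not surjective. The only $(-2)$-vectors of $L_{\Km}$ are $\pm r_i$. Hence $\Oo(L_{\Km})$ consists of signed permutations of the $r_i$ preserving the code of affine hyperplanes in $\FF_2^4$, i.e.\ $(\ZZ/2\ZZ)^{16}\rtimes\mathrm{AGL}_4(\FF_2)$. Signs and translations act trivially on the discriminant. So the image is $\mathrm{GL}_4(\FF_2)\cong\Omega^+_6(2)$, which has index $2$ in $\Oo(A_{L_{\Km}})\cong\Oo^+_6(2)$.

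\emph{The obstruction is realised.} Isometries of $H^2(K,\ZZ)$ do induce glue automorphisms outside this subgroup: swapping $e\leftrightarrow f$ in one hyperbolic plane of $\xi^\perp$ induces a reflection. By surjectivity of the period map, such isometries occur as Hodge isometries between projective $K,K'$, and for them no $\psi$ exists.

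\emph{The construction had to fail somewhere.} It never uses projectivity. Yet the Remark after Proposition~\ref{prop:HodgeIsometricKummer} shows that for Hodge isometric, non-bimeromorphic $K,K'$ of Picard rank $0$, the sixfolds $Y_K,Y_{K'}$ are not bimeromorphic. There $\NS=L_{\Km}$, so every Hodge isometry $H^2(Y_K,\ZZ)\to H^2(Y_{K'},\ZZ)$ would be exactly of your glued form.

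\emph{A minor correction.} $\mathrm{Mon}^2(\mathrm{K}3^{[3]})=\Oo^+$, not $\widetilde{\Oo}^+$, and $\Oo(\ZZ/4\ZZ)=\{\pm1\}$. So the discriminant-sign check you single out as the main risk is vacuous; the real obstruction is on the $L_{\Km}$ side.

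\textbf{How the paper gets around this.} It does not try to preserve $L_{\Km}$. It extends only the transcendental part, and that is where projectivity enters.
\begin{enumerate}
\item With markings compatible with $r_*$, $\hat\phi\in\Oo(\hat T)$ stabilizes the index-$2$ sublattice $T$ coming from $H^2(K,\ZZ)(2)$.
\item Embed $\Lambda_{\mathrm{K}3^{[3]}}\subset\widetilde\Lambda_{\mathrm{K}3}$ with complement $\langle v\rangle$, $v^2=4$. Lemma~\ref{lem:extendIsometryToM} extends $\hat\phi$ to $(\hat T\oplus\langle v\rangle)^{\mathrm{sat}}$ with $v\mapsto\pm v$.
\item This restricts to a Hodge isometry $(\hat H_1\oplus\langle v\rangle)^{\mathrm{sat}}\to(\hat H_2\oplus\langle v\rangle)^{\mathrm{sat}}$ of the saturated transcendental parts.
\item Since the $K_i$ are projective, the orthogonal complements of these in $\widetilde\Lambda_{\mathrm{K}3}$ are indefinite of rank $\geq 17$. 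Theorem~\ref{thm:orthogonalGroups} therefore lets the isometry extend to $\widetilde\Lambda_{\mathrm{K}3}$.
\item Restricting to $v^\perp$ gives a Hodge isometry $H^2(Y_{K_1},\ZZ)\to H^2(Y_{K_2},\ZZ)$, which in general moves the exceptional classes. Markman's theorem then concludes.
\end{enumerate}

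\textbf{Part (ii).} Your treatment of $K$ is essentially the paper's: a Hodge- and point-class-compatible algebra isomorphism $H^*(K,\QQ)\cong H^*(K',\QQ)$, made algebraic by the Hodge conjecture for $K\times K'$ (Corollary~\ref{cor:HCPower}).
\begin{itemize}
\item That algebra isomorphism must be quoted from \cite{solda19} or \cite[Theorem~4.7]{floccari2020}. It cannot be derived from generation by $H^2$ and $H^3$, which is false: that subalgebra is fixed by $G_1$, while $G_1$ acts non-trivially on $H^6(K,\QQ)$.
\item For $Y_K$, the orbifold-product route needs a \emph{multiplicative} isomorphism $\mathsf h(Y_K)\cong\mathsf h_{\orb}([K/G])$. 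The paper only establishes this additively.
\item That route is also unnecessary. Rationally $H^2(Y_K,\QQ)=\widehat H^2(K,\QQ)(2)\oplus(L_{\Km}\otimes\QQ)$ orthogonally, so $\phi$ extends to a rational Hodge isometry $H^2(Y_K,\QQ)\cong H^2(Y_{K'},\QQ)$. The same argument then applies verbatim to the $\mathrm{K}3^{[3]}$-type varieties $Y_K,Y_{K'}$, which fall under case~(i) of Theorem~\ref{thm:motivationbyKSknownCases}; this is Corollary~\ref{cor:HomologicalMotiveIso}.
\end{itemize}
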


Statement (ii) confirms for the $\mathrm{Kum}^3$-deformation type the general expectation that deformation equivalent hyper-K\"ahler varieties with Hodge isometric rational second cohomology should have isomorphic motives, as Frobenius algebra objects. This statement was recently proven for the homological motives of varieties of $\mathrm{K}3^{[n]}$-type in \cite{MaulikShenYin-OrlovConjecture}.

\subsubsection*{Derived categories and motives}
The crepant resolution $Y_K\to K/G$ and the coarse moduli space map from the quotient stack $[K/G]\to K/G$ are both minimal smooth models of the singular variety $K/G$.  
The following result fits in the framework of McKay correspondence; it should be compared to \eqref{eqn:DerivedEqKummer} and \eqref{eqn:IsomMotKummer} for the classical Kummer construction.

\begin{maintheorem}[Derived categories and motives, Theorem \ref{thm:HyperKummerIsBKR}, Corollary \ref{cor:DerivedEquivalenceHyperKummer}, Corollary \ref{cor:MHRC-additive}]
\label{thm-intro:DerCat&Motives}
 Let $K$ be a projective hyper-K\"ahler variety of $\mathrm{Kum}^3$-type. Let $Y_K$ be the associated hyper-Kummer $\mathrm{K}3^{[3]}$-variety. Let the notation be as in the hyper-Kummer construction in Diagram \eqref{Diag:hyperKummerConstruction}.
 \begin{enumerate}[label=(\roman*)]
     \item We have an isomorphism $Y_K\cong \Hilb^G(K)$, where $\Hilb^G(K)$ is the equivariant Hilbert scheme. 
     \item The Fourier--Mukai functor 
     \begin{equation}
Rp'_* \circ L{q'}^* \colon  \Db(Y_K)\xrightarrow{\simeq} \Db([K/G])
\end{equation}
induces an equivalence of triangulated categories, with inverse functor equivalent to $(Rq'_*\circ L{p'}^*)^G$.
\item There is an isomorphism in the category of rational Chow motives $\CHM_\QQ$:
\begin{equation}
\label{eqn:MHRCY_K}
                    \h(Y_K)\simeq \h_{\orb}([K/G]),
\end{equation}
    where the $\h_{\orb}$ denotes the orbifold Chow motive of the Deligne--Mumford stack $[K/G]$.
 \end{enumerate}
As a consequence, we have isomorphisms for K-theory, Hochschild (co)homology and rational Chow groups:
\begin{align*}
    \mathrm K^*(Y_K)&\simeq \mathrm K^*([K/G]);\\
    \mathrm{HH}^*(Y_K) &\simeq \mathrm{HH}^*([K/G]);\\
    \CH^*(Y_K)_{\QQ}&\simeq \CH^*_{\orb}([K/G])_{\QQ}.
\end{align*}
\end{maintheorem}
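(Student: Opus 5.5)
The argument follows the surface case: first identify $Y_K$ with the $G$-Hilbert scheme, then apply the Bridgeland--King--Reid theorem \cite{BridgelandKingReid}, and finally obtain the motivic statement from the structure of the fixed loci together with the blow-up formula.

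\emph{Step 1: $Y_K\cong\Hilb^G(K)$.} Consider the irreducible component $\Hilb^G(K)$ of the $G$-Hilbert scheme containing the free $G$-orbits. It comes with the Hilbert--Chow morphism $\tau\colon \Hilb^G(K)\to K/G$, which is an isomorphism over the free locus. From \cite{floccariKum3} I would use the following description of the fixed loci. The stabilizers that occur are $\{1\}$, $\ZZ/2$ and a few larger groups. Along the codimension-$2$ components of $\bigcup_{g\neq1}K^g$, the slice representation is locally of the form $\CC^2/\pm1\times\CC^4$. The crepant resolution $p$ is the blow-up of the singular locus, and $\widetilde K\to Y_K$ is a flat finite morphism of degree $32$, since $\widetilde K$ and $Y_K$ are smooth and $q'$ is finite. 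Flatness gives a family of $G$-clusters over $Y_K$, namely the fibers of $q'$. This produces a morphism $Y_K\to\Hilb^G(K)$ compatible with the maps to $K/G$. It is birational and injective on points, because distinct points of $Y_K$ give distinct clusters: over the $\ZZ/2$-locus this is the classical $\Hilb^{\ZZ/2}(\CC^2)=T^*\PP^1$ picture. The morphism is proper because $Y_K$ is compact, so its image is closed and equal to the whole main component. Normality of $\Hilb^G(K)$ is not known in advance, so I would avoid needing it: I would show that $\dim\tau^{-1}(x)\le$ the corresponding fiber dimension of $p$ and check smoothness of $\Hilb^G(K)$ by a local computation. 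The local computation uses the étale-local models of $K$ near points with larger stabilizer; from the explicit Example~\ref{example:hyperKummerConstruction} and the isomorphism of orbifolds $A^{(4)}_0/G\simeq(A/\pm1)^{(3)}$, these models look like $(\CC^2)^{3}/(\text{products of }\pm1)$ or symmetric-product-type quotients.

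\emph{Step 2: the derived equivalence.} BKR applies to $\Hilb^G(K)$ once we know that $\dim \Hilb^G(K)\times_{K/G}\Hilb^G(K)\le \dim K+1$. Since $G$ acts symplectically, $\omega_K$ is $G$-locally trivial. By the semismallness of symplectic resolutions (Kaledin), and since $p$ is a crepant, hence symplectic, resolution, the fiber product has dimension at most $6$. Hence BKR gives both smoothness of $\Hilb^G(K)$ and the equivalence $\Phi\colon\Db(\Hilb^G(K))\to\Db_G(K)=\Db([K/G])$. Its kernel is the universal cluster, that is $\mathcal O_{\widetilde K}$ via $(q',p')$. This means $\Phi = Rp'_*\circ L{q'}^*$, and its inverse is $(Rq'_*L{p'}^*)^G$, up to the standard twist, which vanishes because $\omega$ is trivial. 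If Step 1 is difficult, smoothness from BKR can also be fed back to conclude $Y_K\cong\Hilb^G(K)$ by uniqueness of the component.

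\emph{Step 3: motives and the consequences.} For (iii), I would compute both sides explicitly. On the resolution side, I apply the blow-up formula to $\widetilde K\to K$ and take $G$-invariants, which gives $\h(Y_K)=\h(\widetilde K)^G$. On the orbifold side, $\h_{\orb}([K/G])=\bigoplus_{[g]}\h(K^g)^{G}(-\operatorname{age}(g))$. The components $K^g$ are known from \cite{floccariKum3}: they are K3 surfaces, points, and codimension-$2$ loci. Matching the summands, including the ages, reduces the statement to isomorphisms between the invariant parts of the motives of the fixed components and of the exceptional pieces. Alternatively, the derived equivalence of Step 2, combined with the motivic hyper-K\"ahler resolution results of \cite{FuVialTian-MHRC}, gives this isomorphism additively. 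The isomorphisms for K-theory and Hochschild (co)homology follow from Step 2 by derived invariance, and the isomorphism for Chow groups follows from (iii) by realization.

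I expect the main obstacle to be Step 1 at the points with non-cyclic stabilizers. There the BKR dimension estimate, and the identification of the blow-up of the singular locus with $\Hilb^G$, both require the explicit local structure of the $G$-action.
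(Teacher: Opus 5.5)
Your architecture (a flat family of clusters from $q'$ giving a map $Y_K\to\Hilb^G(K)$, then BKR, then an explicit computation of $\h(Y_K)$) is sound. Your device of transporting the semismallness bound through the surjection $Y_K\to\Hilb^G(K)$ is also good: BKR itself then supplies smoothness of $\Hilb^G(K)$, and a birational morphism between two crepant resolutions is an isomorphism, so you could even skip the injectivity and smoothness checks. But the step on which all of this rests is not established. The fibres of $q'$ are subschemes of $\widetilde K$, not of $K$. To obtain $G$-clusters \emph{in $K$}, and to know that the BKR kernel is $\mathcal O_{\widetilde K}$ (so that $\Phi=Rp'_*\circ L{q'}^*$), you need $(q',p')\colon\widetilde K\to Y_K\times K$ to be a closed immersion. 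That is, $p'$ must embed every fibre $(q')^{-1}(y)$ into $K$; equivalently, the diagram is cartesian up to nilpotents. This is a local statement at every non-free orbit. You verify it only over the $\ZZ/2\ZZ$-locus and leave the points with larger stabilizers as an acknowledged open obstacle, even allowing for ``symmetric-product-type'' local models there.

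The missing input is the local structure the paper uses, \cite[Proposition~4.2]{floccariKum3}; it can also be read off from Proposition~\ref{prop:FixedPoints} and the transversality of the $W_\sigma$. Every stabilizer is $(\ZZ/2\ZZ)^j$ with $j\le 3$. Over a neighbourhood of the image point, the whole diagram is the germ $(\CC^2)^j\times(\CC^2)^{3-j}$, with $(\ZZ/2\ZZ)^j$ acting by $-1$ separately on the first $j$ factors, and $p,p'$ are the corresponding products of the surface blow-ups. No symmetric-product models occur. Combine this with the product formula $\Hilb^{G_1\times G_2}(X_1\times X_2)\cong\Hilb^{G_1}(X_1)\times\Hilb^{G_2}(X_2)$, whose universal family is $\mathcal U_1\times\mathcal U_2$ (Proposition~\ref{prop:ProductGHilbert}). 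Everything then reduces to Example~\ref{ex:Involution}, which gives at once the closed immersion, the identification of the universal cluster with $\widetilde K$, and the local isomorphism $Y_K\to\Hilb^G(K)$; the paper then glues.

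For (iii), your first route does work and is a more elementary alternative to the paper's de Cataldo--Migliorini decomposition for the semismall map $p$. That route applies the blow-up formula to $\widetilde K\to K$ (iteratively, since the $W_\sigma$ meet), takes $G$-invariants, and matches the result against the fixed loci. You must take ages componentwise, though: the age is $1$ on $W_\sigma$ but $3$ at the $140$ isolated points of $K^\sigma$. Your ``alternative'' via the derived equivalence is not an argument. A derived equivalence is not known to induce an isomorphism of Chow motives, and \cite{FuVialTian-MHRC} does not provide this.
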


\begin{remark}
    See Theorem \ref{thm:MotiveOfYK} for a more concrete version of \eqref{eqn:MHRCY_K} unraveling the definition of the orbifold Chow motive $\h_{\orb}([K/G])$. Ignoring the ring structure, $\h_{\orb}([K/G])$ (resp.~$\mathrm K([K/G])$, $\mathrm{HH}^*([K/G])$, $\CH^*_{\orb}([K/G])_{\QQ}$) can be explicitly expressed in terms of the Chow motives (resp.~$\mathrm{K}$-theory, Hochschild cohomology, Chow rings) of various fixed loci of the $G$-action on $K$ by \cite{Vistoli-EquivariantK} (resp. \cite{Arinkin-CaldararuHablicsek-HKR}, \cite{FuVialTian-MHRC}). Taking into account the multiplicative structure, the motivic hyper-K\"ahler resolution conjecture formulated by Fu--Tian--Vial \cite{FuVialTian-MHRC} predicts that \eqref{eqn:MHRCY_K} can be promoted into an isomorphism of algebra objects in $\CHM_{\QQ}$, up to some sign change; this is proved in an upcoming work of Christopher Nicol in a more general context. 
\end{remark}

\subsection{Companion manifolds and hyper-Kummer K3 surfaces}
It turns out that the richness of the geometry of the hyper-Kummer construction goes beyond the close analogs of the classical Kummer construction presented in Section \ref{subsec:Intro-ResultsHK}: for any $\mathrm{Kum}^3$-type manifold $K$, there is an interesting \textit{configuration} of hyper-K\"ahler manifolds canonically associated with $K$; we call them the \textit{companion hyper-K\"ahler manifolds} of $K$.
Let $G_1\simeq (\ZZ/2\ZZ)^4$ be the subgroup of $G$ consisting of automorphisms acting trivially also on $H^3(K,\ZZ)$.

The companion hyper-K\"ahler manifolds of $K$ are the following (see Section \ref{subsec:Companions}):

\begin{itemize}
    \item The hyper-Kummer sixfold $Y_K$ constructed as crepant resolution of $K/G$, see Diagram \eqref{Diag:hyperKummerConstruction};
    \item For any of the $16$ elements $\sigma\in G\backslash G_1$,
    \begin{itemize}
        \item its fixed locus $K^{\sigma}$ has a unique 4-dimensional component $W_\sigma$, that is a hyper-K\"ahler manifold of $\mathrm{K}3^{[2]}$-type; 
        \item the $G$-action stabilizes $W_\sigma$ and the quotient $W_\sigma/G$ admits a crepant resolution $M_\sigma$, which is a hyper-K\"ahler manifold of $\mathrm{K}3^{[2]}$-type.
    \end{itemize}
    \item For any of the $120$ unordered pairs of distinct elements $\sigma, \sigma'$ of $G\backslash G_1$,
    \begin{itemize}
        \item their common fixed locus $V_{\sigma, \sigma'}=W_\sigma\cap W_{\sigma'}$ is a K3 surface embedded in $K$;
        \item the $G$-action stabilizes $V_{\sigma, \sigma'}$ and the quotient $V_{\sigma, \sigma'}/G$ admits a crepant resolution $S_{\sigma, \sigma'}$, which is a K3 surface.
    \end{itemize}
\end{itemize}
 In total, given a $\mathrm{Kum}^3$-type manifold $K$, there are the following canonically associated hyper-K\"ahler manifolds: the hyper-Kummer sixfold $Y_K$ of $\mathrm{K}3^{[3]}$-type, the $16$ $\mathrm{K}3^{[2]}$-type submanifolds $W_\sigma$ (all isomorphic to each other), the $16$ $\mathrm{K}3^{[2]}$-type crepant resolutions $M_\sigma$ (all isomorphic to each other), the $120$ K3 surfaces $V_{\sigma, \sigma'}$ embedded in $K$ (all isogenous to each other but falling into $15$ isomorphism classes in general), and the $120$ crepant resolution K3 surfaces $S_{\sigma, \sigma'}$ (which are all isomorphic to each other in the projective case, see Corollary \ref{cor:Relation-YKMKSK} below). These hyper-K\"ahler manifolds fit into the following diagram:

\begin{equation}\label{Diag:Constellation}
\begin{tikzcd}
    K \arrow{r}  & K/G & Y_K \arrow{l} \\
    W_\sigma 
    \arrow[hook]{u}\arrow{r} & W_\sigma/G 
    \arrow[hook]{u} & M_\sigma 
    \arrow{l} 
    \\
    V_{\sigma,\sigma'} 
    \arrow[hook]{u} \arrow{r}& V_{\sigma,\sigma'}/G \arrow[hook]{u} & S_{\sigma,\sigma'}
    \arrow{l} 
\end{tikzcd}
\end{equation}
where the maps from left to middle are quotients by the action of $G$ and the maps from right to middle are crepant resolutions. 
The crucial property is that the whole configuration deforms together with the $\mathrm{Kum}^3$-type manifold $K$ (Remark \ref{rmk:deformation}). This follows from the deformation invariance of the group $G$, which goes back to Hassett--Tschinkel \cite{hassettTschinkel}.

We establish results analogous to Theorem \ref{thm-intro:HodgeLattices} (Hodge lattice) and Theorem \ref{thm-intro:DerCat&Motives} (McKay correspondences for derived categories and motives) for each of the companion hyper-K\"ahler manifolds, see Proposition \ref{prop:cohomologyW&V}, Proposition \ref{prop:cohomologyM&S}, and Theorem \ref{thm:HyperKummerIsBKR-ForM}. We fully characterize the K3 surfaces $V_{\sigma, \sigma'}$ and $S_{\sigma,\sigma'}$ in terms of their Hodge lattice,
see Theorem \ref{thm:characterizeV} and Theorem \ref{thm:characterizeS}.

\paragraph*{Hyper-Kummer K3 surfaces.} For this Introduction, let us only highlight our results on the $120$ K3 surfaces $S_{\sigma, \sigma'}$ associated with $K$. In the projective setting, these K3 surfaces are all isomorphic to each other; thus, we obtain a \textit{hyper-Kummer} K3 surface associated with any variety of $\mathrm{Kum}^3$-type. 

\begin{maintheorem}[Hyper-Kummer K3, Theorem \ref{thm:TranscendentalResolutions}, Corollary \ref{cor:Relation-YKMKSK}]
\label{thm-intro:HyperKummerK3Basic}
Let $K$ be a projective manifold of $\mathrm{Kum}^3$-type.
   \begin{enumerate}[label=(\roman*)]
       \item The $\mathrm{K}3$ surfaces $S_{\sigma,\sigma'}$ associated with $K$ are all abstractly isomorphic to each other; we denote by $S_K$ this $\mathrm{K}3$ surface, called the \emph{hyper-Kummer} $\mathrm{K}3$ surface associated with $K$.
       \item The transcendental Hodge lattice of $S_K$ is Hodge isometric to $\widehat{H}_{\mathrm{tr}}^2(K,\ZZ)(2)$ (see Theorem \ref{thm-intro:HodgeLattices} for the notation).
       \item Each of the companion manifolds $Y_K$ and $M_{\sigma}$ associated with $K$ is a smooth and projective moduli space of Bridgeland stable objects in the derived category of the hyper-Kummer $\mathrm{K}3$ surface $S_K$. Their transcendental Hodge lattices are all Hodge isometric to $\widehat{H}^2_{\tr}(K, \ZZ)(2)$.
   \end{enumerate}
\end{maintheorem}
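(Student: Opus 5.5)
The plan is to establish (ii) and the lattice statement in (iii) first, by Hodge-theoretic computations parallel to Theorem \ref{thm-intro:HodgeLattices}. Then (i) follows from the derived Torelli theorem for K3 surfaces. Finally the moduli interpretation in (iii) follows from the Torelli theorem for $\mathrm{K}3^{[n]}$-type manifolds, in the form of Bayer--Macr\`i and Markman.

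\emph{Step 1 (transcendental lattices).} For a pair $\sigma,\sigma'\in G\setminus G_1$, restrict to the K3 surface $V_{\sigma,\sigma'}\subset K$ and form the analogue of the hyper-Kummer diagram for $V_{\sigma,\sigma'}\to V_{\sigma,\sigma'}/G\leftarrow S_{\sigma,\sigma'}$. The induced map $r_*$ on $H^2$ of the K3 surfaces is compatible with the restriction $H^2(K,\ZZ)\to H^2(V_{\sigma,\sigma'},\ZZ)$. On transcendental parts, $H^2_{\mathrm{tr}}(K,\QQ)\to H^2_{\mathrm{tr}}(S_{\sigma,\sigma'},\QQ)$ is a Hodge similitude, injective by irreducibility and nonzero on $H^{2,0}$. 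Dimensions agree, so it is an isomorphism rationally.

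To get the integral statement I would use Proposition \ref{prop:cohomologyM&S}, the analogue of Theorem \ref{thm-intro:HodgeLattices} for $S_{\sigma,\sigma'}$ and $M_\sigma$. It should give a primitive embedding $\widehat H^2(K,\ZZ)(2)\hookrightarrow H^2(S_{\sigma,\sigma'},\ZZ)$, suitably normalized, whose orthogonal complement is spanned up to finite index by exceptional curves and pulled-back algebraic classes. Intersecting with transcendental parts yields $H^2_{\mathrm{tr}}(S_{\sigma,\sigma'},\ZZ)\simeq \widehat H^2_{\mathrm{tr}}(K,\ZZ)(2)$. This requires primitivity of $\widehat H^2_{\mathrm{tr}}(K,\ZZ)(2)$ inside the image, which holds since a saturated sublattice of a primitive sublattice stays primitive. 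The same argument, using Theorem \ref{thm-intro:HodgeLattices}(iii) for $Y_K$ and Proposition \ref{prop:cohomologyM&S} for $M_\sigma$, gives the transcendental lattices claimed in (iii).

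\emph{Step 2 (isomorphism of the $S_{\sigma,\sigma'}$).} All $S_{\sigma,\sigma'}$ now have Hodge isometric transcendental lattices $T\coloneqq\widehat H^2_{\mathrm{tr}}(K,\ZZ)(2)$. By Orlov and Mukai they are Fourier--Mukai partners. To upgrade this to an isomorphism I would count partners via Hosono--Lian--Oguiso--Yau: the number of Fourier--Mukai partners with transcendental lattice $T$ is governed by the genus of the N\'eron--Severi lattice and by $\mathrm O(A_{\NS})$ modulo Hodge isometries of $T$.

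Since $T$ is $2$-divisible in its dual and the Picard rank is at least $16$, I expect the N\'eron--Severi lattice $N$ to be unique in its genus. I also expect $\mathrm O(N)\to \mathrm O(A_N)$ to be surjective. This would follow from Nikulin's criteria, since $N$ contains a copy of $L_{\Km}$ and $\rk N \geq 2+\ell(A_N)$ should be checkable. Together these force a unique partner. I expect this step to be the main obstacle, because the discriminant $A_N\simeq A_T$ is a $2$-group and the uniqueness in the genus needs care when $\ell(A_T)$ is large.

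As a fallback, one can deform. Along the $4$-dimensional family of projective $K$ with fixed polarization type, the surfaces $S_{\sigma,\sigma'}$ for different pairs are related by monodromy of $K$ permuting the pairs. For very general $K$ the isomorphism follows from the Torelli theorem by matching full $H^2$ markings, and it then specializes.

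\emph{Step 3 (moduli description).} The surfaces $Y_K$ and $M_\sigma$ are projective of $\mathrm{K}3^{[n]}$-type ($n=3,2$) with transcendental lattice $T\simeq H^2_{\mathrm{tr}}(S_K,\ZZ)$. By Markman's description, the Mukai lattice $\widetilde H(Y_K)$ with its Hodge structure contains $T$ primitively. Embedding $T$ primitively into $\widetilde H(S_K,\ZZ)$ produces a Hodge isometry $\widetilde H(Y_K)\simeq \widetilde H(S_K,\ZZ)$, provided the orthogonal complements in the unimodular lattice of signature $(4,20)$ are unique up to isometry. This holds because the algebraic part contains a hyperbolic plane, by Nikulin. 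Then $v=\theta(\text{generator})$ is a primitive Mukai vector with $v^2=2n-2$, and by Bayer--Macr\`i and Markman $Y_K$ (resp.\ $M_\sigma$) is birational to, and after choosing the generic stability condition in the right chamber isomorphic to, a moduli space $M_\tau(S_K,v)$.
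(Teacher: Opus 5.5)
Your proposal is correct. Steps 1 and 3 follow the paper; Step 2 takes a different and longer route.

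\textbf{Steps 1 and 3.} You read off the transcendental lattices from the primitive embeddings $\widehat H^2(K,\ZZ)(2)\hookrightarrow H^2(-,\ZZ)$ of Propositions \ref{prop:cohomologyY_K} and \ref{prop:cohomologyM&S}. The orthogonal complements there are automatically algebraic, since the image contains $H^{2,0}$. For the moduli interpretation you essentially re-derive, via Markman's extended Mukai lattice, the criterion the paper cites from Addington together with Bayer--Macr\`i (or Mongardi).

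\textbf{Step 2.} The paper simply invokes the standard fact that projective K3 surfaces of Picard rank $\geq 12$ with Hodge isometric transcendental lattices are isomorphic. Concretely, Theorem \ref{thm:orthogonalGroups}(i) applied to $N=\NS$ gives two things: $N$ is unique in its genus, and $\Oo(N)\to \Oo(A_N)$ is surjective. Hence the Hodge isometry of transcendental lattices extends to a Hodge isometry of $H^2$. Global Torelli, after composing with $\pm 1$ and reflections in $(-2)$-classes, then gives an isomorphism.

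Your detour through Orlov--Mukai and the Hosono--Lian--Oguiso--Yau count needs exactly this same lattice input, so it gains nothing over the direct argument. The step you flag as the main obstacle is also not one. Since $\ell(A_N)=\ell(A_T)\leq \rk T=22-\rho\leq 6\leq \rk N-2$, Nikulin's criterion applies immediately. The deformation fallback is therefore unnecessary, and it would itself need care about specializing isomorphisms.

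\textbf{Uniqueness in Step 3.} The uniqueness of the primitive embedding of $T$ into the Mukai lattice also follows most cleanly from the same rank--length bound. The complement has rank $\geq 18$ and discriminant of length $\leq 6$. This is a better justification than merely noting that the algebraic part contains a hyperbolic plane.
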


Projective hyper-Kummer K3 surfaces form countably many 4-dimensional families, which contain the families of Kummer surfaces as divisors. In fact, not surprisingly, the hyper-Kummer K3 surface $S_{K^3(A)}$ associated with the generalized Kummer variety $K^3(A)$ on an abelian surface is nothing but the Kummer surface $\Km(A)$.

\begin{maintheorem}[Characterization and classification, Theorem \ref{thm:characterizeS} and Corollary \ref{cor:Transcendental-hyper-KummerK3}]
\label{thm-intro:HyperKummerK3Classification}
Let $S$ be a projective $\mathrm{K}3$ surface.
\begin{enumerate}[label=(\roman*)]
    \item $S$ is a hyper-Kummer $\mathrm{K}3$ surface if and only if there exists a primitive embedding of lattices $H^2_{\mathrm{tr}}(S,\ZZ)\hookrightarrow \mathrm{U}(2)^{\oplus 3}\oplus \langle -4\rangle$.
    \item If $S$ is a hyper-Kummer $\mathrm{K}3$ surface of minimal Picard rank, equal to $16$, then its transcendental lattice 
    $H^2_{\tr}(S, \ZZ)$ is isometric to one of the following lattices, for some $d>0$:
    \begin{itemize}
    \item   $\mathrm{U}(2)^{\oplus 2}\oplus \langle -4\rangle \oplus \langle -4d\rangle$~, called split type.
    \item 
    $\mathrm{U}(2)^{\oplus 2} \oplus \begin{psmallmatrix}
        -4 & 2 \\
        2 & -4d
    \end{psmallmatrix}$~, called non-split type\,;
    \end{itemize}
\end{enumerate}
\end{maintheorem}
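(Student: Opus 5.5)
For (i), the plan is to reduce everything to lattice theory via Theorem \ref{thm-intro:HyperKummerK3Basic}(ii), the surjectivity of the period map for $\mathrm{Kum}^3$-type manifolds, and the global Torelli theorem for K3 surfaces.

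\emph{Only if.} If $S\simeq S_K$, then by Theorem \ref{thm-intro:HyperKummerK3Basic}(ii) we have $H^2_{\tr}(S,\ZZ)\simeq \widehat H^2_{\tr}(K,\ZZ)(2)$. This is a primitive sublattice of $\widehat H^2(K,\ZZ)(2)\simeq \mathrm{U}(2)^{\oplus 3}\oplus\langle -4\rangle$, which gives the required embedding.

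\emph{If.} Suppose $T:=H^2_{\tr}(S,\ZZ)$ embeds primitively into $\Lambda(2)$, where $\Lambda:=\mathrm{U}^{\oplus 3}\oplus\langle -2\rangle$.
\begin{enumerate}[label=(\alph*)]
\item Dividing the form by $2$, $T(1/2)$ is an even lattice embedded primitively in $\Lambda$. It carries a Hodge structure of K3 type, transported from $S$, whose period is positive.
\item Inside $\Lambda$, consider the index-$2$ sublattice $\Lambda_0\simeq \mathrm{U}^{\oplus 3}\oplus\langle -8\rangle$. For a suitable choice of this sublattice containing $T(1/2)\cap\Lambda_0$, it is isometric to $H^2(K,\ZZ)$.
\item Extend the Hodge structure to $\Lambda_0$ by declaring $T^\perp$ algebraic.
\item By surjectivity of the period map for $\mathrm{Kum}^3$-type manifolds, there exists $K$ whose period realizes this Hodge structure. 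Since $T^\perp$ contains positive classes, $K$ is projective.
\item Then $\widehat H^2_{\tr}(K,\ZZ)$ is the primitive saturation of $T(1/2)$ in $\Lambda$, which is $T(1/2)$ itself. Hence $H^2_{\tr}(S_K)\simeq T$ as Hodge lattices.
\end{enumerate}
To conclude $S\simeq S_K$, observe that both $S$ and $S_K$ are K3 surfaces of Picard rank at least $16$, so $\rk T\le 6$. By Nikulin, the Néron–Severi lattice is then determined by the transcendental lattice: it has rank $\ge 16$, it contains the hyperbolic plane, and the genus is unique. Any Hodge isometry of transcendental lattices therefore extends to one of $H^2$, and global Torelli gives the isomorphism. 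The main subtlety in this direction is step (b): the overlattice $\widehat H^2$ must be compatible with the Hodge structure. This holds because the overlattice is unique and hence automatically Hodge.

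For (ii), Picard rank $16$ means $\rk T=6$, so $T$ has corank $1$ in $L:=\mathrm{U}(2)^{\oplus 3}\oplus\langle -4\rangle$. Its orthogonal complement is generated by a primitive vector $v\in L$ with $v^2<0$, because $T$ has signature $(2,4)$ and $L$ has signature $(3,4)$, so $v^\perp$ must have signature $(2,4)$ and $v$ is positive. The plan is then:
\begin{enumerate}[label=(\alph*)]
\item Classify primitive vectors $v\in \Lambda=\mathrm{U}^{\oplus3}\oplus\langle -2\rangle$ up to $\Oo(\Lambda)$, using Eichler's criterion, since $\Lambda$ contains two copies of $\mathrm U$. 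The orbit of $v$ is determined by $v^2$ and by the class $v/\mathrm{div}(v)$ in the discriminant group $A_\Lambda\simeq \ZZ/2$. The divisibility is $1$ or $2$, and the class is trivial or not.
\item Normal forms: either $v=e+df$ in one copy of $\mathrm U$, or $v=e+df+\delta$ with $\delta$ the generator of $\langle -2\rangle$ (and similarly in the divisibility-$2$ cases).
\item Compute $v^\perp$ in each case. For $v=e+df$ one gets $\mathrm{U}^{\oplus2}\oplus\langle -2\rangle\oplus\langle -2d\rangle$. For $v=e+df+\delta$ the last block becomes $\begin{psmallmatrix}-2&1\\1&-2d'\end{psmallmatrix}$, the non-split type.
\item Scaling by $2$ yields the two families in the statement. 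The parameter $d$ is reindexed appropriately so that the norm of $v$ is positive.
\end{enumerate}
The step I expect to be the main obstacle is handling the divisibility-$2$ orbits. I must check that they give nothing new, or that they coincide with one of the two listed forms up to isometry, for instance by showing that $v^\perp$ is then again isometric to a split or non-split lattice after a change of $d$.
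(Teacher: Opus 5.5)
\textbf{Part (i).} This is correct, but you argue it differently from the paper.

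The paper passes through the rank-$15$ lattice $N_S$. For projective $S$, the primitive embedding of $H^2_{\mathrm{tr}}(S,\ZZ)$ into $\Lambda_{\mathrm{K}3}$ is unique. So an embedding into $\mathrm{U}(2)^{\oplus 3}\oplus\langle -4\rangle\cong N_S^{\perp}$ yields a primitive copy of $N_S$ in $\NS(S)$. A period argument on the K3 side then finishes: the sub-domain $N_S^{\perp}$ of the K3 period domain is identified with $\Omega_{\mathrm{Kum}^3}$ via the hyper-Kummer construction, and Torelli gives $S\cong S_{\sigma,\sigma'}$.

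You instead build the period of $K$ directly on an index-$2$ sublattice $\Lambda_0\subset\Lambda$. You then invoke Theorem \ref{thm:TranscendentalResolutions} and close with Nikulin's uniqueness of $\NS$ (rank $\geq 16$, discriminant length $\leq 6$) plus Torelli. This is shorter and avoids $N_S$ and Lemma \ref{lem:embeddingN_S}. It only works for projective $S$, whereas the paper's $N_S$-criterion also covers non-projective surfaces and the $(\ZZ/2\ZZ)^4$-quotient description.

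Your stated worry in step (b) is a non-issue, and the wording there is circular. The Hodge structure lives on $\Lambda\otimes\QQ=\Lambda_0\otimes\QQ$, so any index-$2$ sublattice $\Lambda_0\cong \mathrm{U}^{\oplus 3}\oplus\langle -8\rangle$ works.

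\textbf{Part (ii).} The strategy is sound and more direct than the paper's, which reads the answer off the table of Theorem \ref{thm:NeronSeveriHyperkummer} built from $\mathrm{Kum}^3$ polarization types. You take the primitive generator $v$ of $T^{\perp}$, classify $v\in\Lambda$ by Eichler, compute $v^{\perp}$, and rescale. Note that $v^2>0$, not $<0$ as you first write.

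However, your computation in step (c) is wrong, and as a result you have misidentified the obstacle.
\begin{itemize}
\item The vector $v=e+df+\delta$ has $(v,f)=1$, so it has divisibility $1$. By Eichler it lies in the orbit of $e+(d-1)f$.
\item Directly: $v^{\perp}\cap\langle e,f,\delta\rangle$ has basis $e-df$, $2f+\delta$, with Gram matrix $\begin{psmallmatrix}-2d&2\\2&-2\end{psmallmatrix}$.
\item Replacing $e-df$ by $e-df+2f+\delta$ diagonalizes this to $\langle -2(d-1)\rangle\oplus\langle -2\rangle$, which is \emph{split}.
\end{itemize}
So every divisibility-$1$ vector gives the split type. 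The non-split type comes precisely from the divisibility-$2$ orbit that you planned to discard or absorb.
\begin{itemize}
\item A primitive $v$ with $\mathrm{div}(v)=2$ has $v^2=2(4d'-1)$ and is equivalent to $2(e+d'f)-\delta$.
\item Its orthogonal in $\langle e,f,\delta\rangle$ has basis $e-d'f$, $f-\delta$, with Gram matrix $\begin{psmallmatrix}-2d'&1\\1&-2\end{psmallmatrix}$.
\item Rescaling by $2$ gives $\mathrm{U}(2)^{\oplus 2}\oplus\begin{psmallmatrix}-4&2\\2&-4d'\end{psmallmatrix}$.
\end{itemize}
There are no other orbits, since $A_\Lambda\cong\ZZ/2\ZZ$ forces $\mathrm{div}(v)\in\{1,2\}$. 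This is exactly Lemma \ref{lem:orbitsU^3+<-2>}. With this correction (divisibility $1$ gives split, divisibility $2$ gives non-split), your argument for (ii) goes through.
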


In Section \ref{sec:ExamplesHyperKummerK3}, we present some interesting examples of 4-dimensional families of hyper-Kummer K3 surfaces, namely, the Heisenberg-invariant quartic K3 surfaces in $\PP^3$, the degree-8 K3 surfaces which are diagonal (2,2,2)-complete intersections in $\PP^5$, and a family of genus-125 K3 surfaces with $(\ZZ/2\ZZ)^4$ symplectic action.

\subsection{Construction of locally complete families of $\mathrm{Kum}^3$-type manifolds}
\label{subsec-intro:ConstructionKumType}
Several beautiful constructions of locally complete (hence, 20-dimensional) families of projective hyper-K\"ahler manifolds of $\mathrm{K}3^{[n]}$-type have been found: in dimension 4, Beauville--Donagi's Fano varieties of lines on cubic fourfolds \cite{BD85}, O'Grady's double EPW sextics \cite{OGrady-Duke-DoubleEPW}, Iliev--Ranestad's varieties of sum of powers of cubic forms in six variables \cite{Iliev-Ranestad-VSP}, Debarre--Voisin fourfolds via Grassmannian geometry \cite{DV10}; in dimension 6, Iliev--Kapustka--Kapustka--Ranestad's EPW cubes \cite{IKKR-EPWCube}; in dimension 8, Lehn--Lehn--Sorger--van Straten eightfolds via twisted cubics on cubic fourfolds \cite{LLSvS}; let us also mention that in \cite{BLMNPS}, infinitely many such families in arbitrary dimension are constructed as moduli spaces of stable objects in some K3 category. However, so far, there is no available geometric construction of locally complete (hence, $4$-dimensional) families of projective hyper-K\"ahler manifolds of $\mathrm{Kum}^n$-type\footnote{We are informed of a work in progress of F. Giovenzana--Rojas--Song on a geometric construction of a locally complete family of projective $\mathrm{Kum}^2$-type varieties. In the same spirit of \cite{BLMNPS}, a work in progress of Bayer--Perry--Pertusi--Zhao aims at constructing infinitely many such families in arbitrary dimensions from moduli spaces of stable objects in noncommutative abelian surfaces.}. 

We provide a general recipe for such a construction in dimension 6, essentially by reversing the hyper-Kummer construction. The key is the following reconstruction result.

\begin{maintheorem}[Reconstruction, Theorem \ref{thm:biregularCharacterization}]
\label{thm-intro:reconstruction}
Let $Y$ be a hyper-K\"ahler manifold of $\mathrm{K}3^{[3]}$-type. 
Assume that there exist $16$ prime divisors $\{E_i\}_{i=1}^{16}$ on $Y$ satisfying the following conditions: 
\begin{enumerate}[label=(\roman*)]
\item the classes $[E_i]\in H^2(Y,\ZZ)$ are pairwise orthogonal $(-2)$-classes;
\item the saturation of the sublattice $\langle [E_i]\rangle_{i=1,\dots,16}$ of $H^2(Y,\ZZ)$ is the Kummer lattice, and its orthogonal complement is isometric to $\mathrm{U}(2)^{\oplus 3}\oplus \langle -4\rangle$;
\item there exists a birational morphism $Y\to \overline{Y}$ contracting precisely the $16$ divisors $E_i$.  
\end{enumerate}
Then there exists a hyper-K\"ahler manifold $K$ of $\mathrm{Kum}^3$-type such that $\overline{Y}$ is isomorphic to $K/G$, and $Y$ is birational to the associated hyper-Kummer sixfold $Y_K$.
\end{maintheorem}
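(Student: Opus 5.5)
The plan is to reverse the hyper-Kummer construction by first producing $K$ abstractly through deformation theory, and then comparing contractions. By Theorem \ref{thm-intro:BirationalCharacterization}, hypotheses (i)--(ii) already imply that $Y$ is bimeromorphic to some hyper-Kummer sixfold $Y_{K_0}$. The real content is biregular: we must identify $\overline{Y}$ with $K/G$ for a suitable $K$. We choose $K$ so that $Y_K$ has the same period as $Y$ together with the same marked Kummer configuration.

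\textbf{Step 1 (choice of $K$).} The orthogonal complement $T:=\langle [E_i]\rangle^{\perp}\simeq \mathrm{U}(2)^{\oplus 3}\oplus\langle -4\rangle$ carries a weight-two Hodge structure, and $T(1/2)\simeq \mathrm{U}^{\oplus 3}\oplus\langle -2\rangle$. Using Theorem \ref{thm-intro:HodgeLattices}(iii), we identify this with $\widehat{H}^2$ of a $\mathrm{Kum}^3$-type manifold. The sublattice of index $2$ corresponding to $H^2(K,\ZZ)\simeq \mathrm{U}^{\oplus 3}\oplus\langle-8\rangle$ is canonical, being the unique one of its kind. By surjectivity of the period map for $\mathrm{Kum}^3$-type, there is a $K$ whose period is the Hodge structure induced on this sublattice. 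The Hodge embedding $\hat r_*/2^4$ then yields a Hodge isometry $\phi\colon H^2(Y_K,\ZZ)\to H^2(Y,\ZZ)$ sending $\Ima(\hat r_*/2^4)$ to $T$ and $L_{\Km}$ to the saturation of $\langle[E_i]\rangle$. We compose $\phi$ with an automorphism of the Kummer lattice, and if necessary change $K$ by the elements of $\Oo(\widehat H^2)$ realized by monodromy, so that $\phi$ maps the classes of the exceptional divisors of $p\colon Y_K\to K/G$ to the $[E_i]$. This uses that the stabilizer of the $16$ roots acts transitively enough, as in Nikulin's treatment of $L_{\Km}$. We also need $\phi$ to be a parallel transport operator. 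For $\mathrm{K}3^{[3]}$-type, the monodromy group is $\Oo^+$ acting trivially on the discriminant, i.e.\ it is determined by orientation and the discriminant character. We arrange both by possibly composing with $-1$ on $T$ or with the hyper-Kummer-compatible symmetries.

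\textbf{Step 2 (Torelli and birational comparison).} By Markman's Hodge-theoretic Torelli theorem, $\phi$ is induced by a bimeromorphic map $f\colon Y_K\dashrightarrow Y$. Since $\phi$ is a Hodge isometry and $\phi$ maps the $p$-exceptional prime divisors to the $E_i$, the next task is to compare the contractions. The morphism $Y\to\overline{Y}$ contracting exactly the $E_i$ corresponds to a face of the movable/nef cone of $Y$ orthogonal to $T$. Likewise, $p$ corresponds to a face of the nef cone of $Y_K$: the pullback of Kähler classes on $K/G$ spans $\Ima(r_*)\otimes\RR$ intersected with the positive cone. We then modify $f$ by the Weyl group of reflections in the prime exceptional divisors, together with Hodge monodromies, so that $\phi$ sends a Kähler-type class on $K/G$ into the closure of the birational Kähler cone of $Y$. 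The contraction data then match on some birational model $Y'$ of $Y_K$ that admits a morphism to $K/G$.

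\textbf{Step 3 (identifying $\overline{Y}$).} Both $\overline{Y}$ and $K/G$ are normal, and they are obtained as $\mathrm{Proj}$/images of the same linear data. Concretely, they are the contractions associated with the same class $h\in T$, which is big and nef on the relevant models and is positive on all curves except those in the exceptional divisors. A bimeromorphic map between hyper-K\"ahler manifolds is an isomorphism in codimension one. Since the contracted loci are exactly the $16$ divisors on both sides, the map descends to a bimeromorphism $\overline{Y}\dashrightarrow K/G$ that is an isomorphism in codimension $2$ and preserves an ample class. Hence it is an isomorphism.

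\textbf{Main obstacle.} I expect the main obstacle to be Step 2: showing that $\phi$ can be chosen to map the nef class of $K/G$ into the chamber of $Y$ that contracts precisely the $E_i$. This requires controlling the wall structure, i.e.\ the wall divisors of $\mathrm{K}3^{[3]}$-type, inside $T^{\perp\perp}$. I would first try to exploit that $T\otimes\RR$ contains no wall divisors beyond those from $L_{\Km}$-directions. By a discriminant computation with Mongardi's description, a class in $T$ has square divisible by $4$, which is incompatible with being a wall divisor of the relevant type, so the chamber containing $h$ is unique up to Hodge monodromy.
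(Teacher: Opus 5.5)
\textbf{There is a genuine gap in Steps 2--3.} Your Step 1 and the first half of Step 2 only reprove Theorem~\ref{thm:birationalCharacterization}. The whole biregular content sits in what you call the main obstacle, and the argument you give there is wrong.

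You need $\phi$ to carry $p^*\mathcal{K}(K/G)$ onto $\pi_0^*\mathcal{K}(\overline{Y})$ inside the face $T_{\RR}\cap\overline{\mathcal{BK}(Y)}$ orthogonal to the $[E_i]$. Your reason for expecting no walls there fails:
\begin{enumerate}
\item The numerical wall divisors of Definition~\ref{def:numericalWallDivisors} have squares $-4,-12,-36$, all divisible by $4$. So the congruence on $T\simeq\mathrm{U}(2)^{\oplus 3}\oplus\langle-4\rangle$ excludes only the $(-2)$-classes.
\item Concretely, on $\Km(A)^{[3]}$ with the divisors $R_\tau$, the complement of the Kummer lattice is $H^2(A,\ZZ)(2)\oplus\langle\delta\rangle$. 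Here $\delta$, half the Hilbert--Chow divisor, is a prime exceptional class of square $-4$ and divisibility $4$ lying in $T$.
\item The hyperplanes that matter in $T_{\RR}$ are $D^{\perp}\cap T_{\RR}$ for \emph{all} wall divisors $D$ of $Y$. They depend on the projections of $D$ to $T_{\QQ}$, not only on wall divisors lying in $T$. In the very general torus example at the end of Section~\ref{sec:Reconstrution}, $l_\tau=2r_\tau-\delta$ and $p_\tau=4r_\tau-\delta$ lie neither in $T$ nor in $L_{\Km}$.
\end{enumerate}

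So nothing prevents the face from being cut into several chambers. This happens, for example, when $K$ has non-isomorphic birational models $K'$, whose quotients $K'/G$ are pairwise non-isomorphic. What you actually must prove is that the chamber containing $\pi_0^*$ of a K\"ahler class of $\overline{Y}$ is the image of $p^*\mathcal{K}(K'/G)$ for some $\mathrm{Kum}^3$-manifold $K'$. That amounts to matching the $\mathrm{K}3^{[3]}$-walls of $Y$ in $T_{\RR}$ with the $\mathrm{Kum}^3$-walls of $K$, and the proposal does not do this.

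Two related steps are also unjustified:
\begin{enumerate}
\item A parallel-transport Hodge isometry is induced by a bimeromorphic map only after composing with reflections in prime exceptional divisors of $Y$. Reflecting in a prime exceptional $D$ with $q(D,[E_i])>0$ moves $[E_i]$, which can destroy the matching of $\{[E_\sigma]\}$ with $\{[E_i]\}$.
\item Step 3 argues with ample classes and $\mathrm{Proj}$, although $Y$ is only assumed K\"ahler.
\end{enumerate}

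\textbf{How the paper avoids the wall analysis.} It uses a deformation argument instead:
\begin{itemize}
\item By Bakker--Lehn (cf.\ Lehn--Pacienza), the contraction $\pi_0\colon Y\to\overline{Y}$ extends over $\mathrm{Def}(Y,\langle[E_i]\rangle)\cong\mathrm{Def}^{\mathrm{lt}}(\overline{Y})$. Each fiber contracts exactly the transported divisors.
\item At a very general point $t$, $\NS(\mathcal{Y}_t)=L_{\Km}$ and its classes have divisibility $1$, so the only wall divisors are $\pm[E_i]$.
\item Hence $\mathcal{K}(\mathcal{Y}_t)=\mathcal{BK}(\mathcal{Y}_t)$ by Proposition~\ref{prop:veryGeneralCase}. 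The bimeromorphism from Theorem~\ref{thm:birationalCharacterization} is then an isomorphism $\mathcal{Y}_t\cong Y_{K_t}$, so $\overline{\mathcal{Y}}_t\cong K_t/G$.
\item Finally, $\mathrm{Def}^{\mathrm{lt}}(K/G)\cong\mathrm{Def}(K)$ carries the conclusion back to $\overline{Y}$.
\end{itemize}
This is the missing idea: deform to a very general point where the chamber structure is trivial, rather than computing it.

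Two smaller errors in Step 1 are fixable. The index-$2$ sublattice $\mathrm{U}^{\oplus3}\oplus\langle-8\rangle\subset\mathrm{U}^{\oplus3}\oplus\langle-2\rangle$ is not unique (Lemma~\ref{lem:O(L)}); it is fixed by the gluing with $L_{\Km}$, as in Lemma~\ref{lem:extendIsometryToM}. Also, $\mathrm{Mon}^2(\mathrm{K}3^{[3]})$ is all of $\Oo^+$, since $\ZZ/4\ZZ$ with its discriminant form has isometry group $\{\pm1\}$.
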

If $Y$ is projective, we show in Corollary \ref{cor:reconstructionProcedure} that there exists a $(\mathbb{Z}/2\mathbb{Z})^5$-cover $f\colon \widetilde{Y}\to Y$ ramified over the divisors $E_i$ such that the divisors $f^{-1}(E_i)$ can be contracted via a birational morphism $\widetilde{Y}\to Z$, resulting in a variety $Z$ with a crepant resolution of $\mathrm{Kum}^3$-type.
Moreover, in the projective setting condition (iii) can always be achieved after passing to a different smooth birational model of $Y$ (see Remark \ref{rmk:contractionAfterFlops}).

Now we explain our construction of locally complete families of  $\mathrm{Kum}^3$-type varieties. We proceed in three steps:
\begin{enumerate}[label=(Step \arabic*)]
    \item Construct a 4-dimensional locally complete family of projective hyper-Kummer $\mathrm{K}3^{[3]}$-type manifolds $\mathscr{Y}\to B$, together with 16 prime divisors $\{\mathscr{E}_i\to B\}$ with the (fiberwise) lattice-theoretic properties $(i), (ii)$ in Theorem \ref{thm-intro:reconstruction}.
    \item For any $b\in B$, up to replacing $Y_b$ by a smooth birational hyper-K\"ahler model, one has a birational morphism $Y_b\to \overline{Y_b}$ contracting precisely the 16 divisors $E_{i, b}$, where $Y_b$ and $E_{i,b}$ are fibers of $\mathscr{Y}$ and $\mathscr{E}_i$ respectively. Up to base change, this yields a relative birational morphism $\mathscr{Y}\to \overline{\mathscr{Y}}$ contracting precisely the strict transforms of the 16 divisors $\mathscr{E}_i$.
    \item For any $b\in B$ there is a $(\mathbb{Z}/2\mathbb{Z})^5$-Galois cover $\widetilde{Y}_b\to Y_b$ branched along the 16 divisors $E_{i,b}$ with ramification index $2$.
    We obtain a relative finite ramified cover $\widetilde{\mathscr{Y}}\to \mathscr{Y}$, and the Stein factorization of the composition $\widetilde{\mathscr{Y}}\to \mathscr{Y} \to \overline{\mathscr{Y}}$ gives a birational morphism $\widetilde{\mathscr{Y}} \to \mathscr{K}$ over $B$, which contracts the inverse images of the divisors $\mathscr{E}_i$. After taking a simultaneous $\QQ$-factorial terminalization $\widetilde{\mathscr{K}}\to \mathscr{K}$ over $B$, the resulting family is a locally complete family of $\mathrm{Kum}^3$-type varieties. 
\end{enumerate}

For Step 1, one valid source is to first construct a 4-dimensional family of projective hyper-Kummer K3 surfaces with generic Picard rank 16, and then take the relative moduli space of stable sheaves on the fibers with some suitable primitive Mukai vector of square 4. Finding a Mukai vector so that the moduli spaces are birational to hyper-Kummer varieties of $\mathrm{K}3^{[3]}$-type 
is essentially a lattice theoretic problem; on the other hand, the identification of 16 prime divisors  on these moduli spaces with the desired properties requires a more delicate study of their geometry.

In Section \ref{sec:ExamplesHyperKummerK3}, we present two families as in Step 1, namely, the family of Beauville--Mukai systems of torsion sheaves on  Heisenberg-invariant quartic surfaces and that of Hilbert cubes of a family of $(\mathbb{Z}/2\mathbb{Z})^4$-invariant K3 surfaces in $\mathbb{P}^{125}$.
For both families, we describe the $16$ divisors in the moduli space of sheaves with the required properties.

\subsection{Interactions with hyper-K\"ahler manifolds of O'Grady-6 type}
\label{subsec:Intro-OG6}
At present, three deformation types of 6-dimensional compact hyper-K\"ahler manifolds have been discovered: the $\mathrm{Kum}^3$-type, the $\mathrm{K}3^{[3]}$-type \cite{beauville1983varietes} and the OG6-type \cite{O'G03}.
Mongardi--Rapagnetta--Sacc\`a \cite{MRS18} discovered a geometric relation between the last two types: for special hyper-K\"ahler manifolds of OG6-type called \textit{OG6-resolutions} (Definition \ref{def:OG6-resolution}) there is a rational double cover given by a $\mathrm{K}3^{[3]}$-type manifold, which we call the \textit{MRS double cover} of the OG6-resolution.

\begin{center}
    \begin{tikzpicture}[
    box/.style={
        draw,
        rounded corners,
        minimum width=2cm,
        minimum height=1.5cm,
        align=center,
        fill=blue!10
    },
    arrow/.style={
        -{Stealth[scale=0.8]},
        thick
    }
]

\node[box] (left) {$\mathrm{Kum}^3$-type};
\node[box, right=4cm of left] (middle) {$\mathrm{K}3^{[3]}$-type};
\node[box, right=4cm of middle] (right) {OG6-resolutions};

\draw[arrow] (left) -- node[above, midway] {hyper-Kummer } (middle);
\draw[arrow] (left) -- node[below, midway, sloped] {construction} (middle);
\draw[arrow] (right) -- node[above, midway] {MRS double cover} (middle);
\draw[arrow] (right) -- node[below, midway, sloped] {construction} (middle);

\end{tikzpicture}
\end{center}

This \textit{synergy} between the known deformation types of hyper-K\"ahler sixfolds, pictured above, is particularly rich and powerful, especially for the study of algebraic cycles. 

The construction of Mongardi--Rapagnetta--Sacc\`a yields a $5$-dimensional family of manifolds of $\mathrm{K}3^{[3]}$-type, which we compare with the $5$-dimensional family of hyper-Kummer manifolds of $\mathrm{K}3^{[3]}$-type. Firstly, we give a lattice-theoretic characterization of the MRS double covers among manifolds of $\mathrm{K}3^{[3]}$-type, in the same spirit of the characterization of hyper-Kummer sixfolds in Theorem \ref{thm-intro:BirationalCharacterization}. The role of the Kummer lattice is replaced by another rank-16 negative definite lattice, called the Barnes--Wall lattice; see Section \ref{subsec:Barnes--Wall-Lattice} for generalities on this lattice.
\begin{maintheorem}[Characterization, Theorem \ref{thm:characterizationMRS}]
\label{thm-intro:characterization-MRS-Double}
	Let $Z$ be a hyper-K\"ahler manifold of $\mathrm{K}3^{[3]}$-type. Then $Z$ is birational to the MRS double cover of some $\mathrm{OG}6$-resolution $X$ if and only if there exists a primitive embedding of the Barnes--Wall lattice $\mathrm{BW}_{16}$ into the  $\NS(Z)$.
\end{maintheorem}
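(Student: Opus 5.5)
The plan is to imitate the proof of Theorem \ref{thm-intro:BirationalCharacterization}, with the Kummer lattice replaced by $\mathrm{BW}_{16}$. We rely on what we know about MRS double covers, on lattice theory, and on the global Torelli theorem for $\mathrm{K}3^{[3]}$-type manifolds (Markman's monodromy description together with Verbitsky's theorem).

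\textbf{Forward direction.} Let $X$ be an $\mathrm{OG}6$-resolution with MRS double cover $Z\dashrightarrow X$. By Mongardi--Rapagnetta--Sacc\`a the rational double cover is given by a birational involution $\iota$ of $Z$, and $\iota$ acts on $H^2(Z,\ZZ)$ as a non-symplectic-free (in fact symplectic) involution.
\begin{enumerate}
\item Compute the anti-invariant lattice $H^2(Z,\ZZ)^{\iota=-1}$, or equivalently the coinvariant one. It should be negative definite of rank $16$, contained in $\NS(Z)$ because $\iota$ is symplectic, and isometric to $\mathrm{BW}_{16}$. This isometry is presumably established in Section \ref{subsec:Barnes--Wall-Lattice} or in the MRS analysis.
\item Primitivity is then automatic, since a coinvariant lattice is always primitive.
\item Being birational to $Z$ preserves $H^2$ as a Hodge lattice, so any $\mathrm{K}3^{[3]}$-type manifold birational to $Z$ also admits a primitive embedding $\mathrm{BW}_{16}\hookrightarrow\NS$.
\end{enumerate}

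\textbf{Converse direction.} Suppose $\mathrm{BW}_{16}\hookrightarrow \NS(Z)$ primitively.
\begin{enumerate}
\item Show by lattice theory (Nikulin's results on discriminant forms, using that $\mathrm{BW}_{16}$ has discriminant group $(\ZZ/2)^8$ and contains no $(-2)$-vectors) that the primitive embedding into $\Lambda_{\mathrm{K}3^{[3]}}$ is unique up to isometry. Show also that it is unique up to the monodromy group $\widehat{\Oo}^+$, i.e.\ isometries acting by $\pm1$ on the discriminant $\ZZ/4$, and compute the orthogonal complement $T$. I expect $T\simeq \mathrm{U}(2)^{\oplus 3}\oplus\langle -4\rangle$ or a similar rank-$7$ lattice. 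In particular the embedding coming from MRS double covers is the only one.
\item The MRS double covers form a $5$-dimensional family, namely those of the Kummer-type varieties on abelian surfaces deformed with $X$. Their periods fill an open dense subset of the period domain of $T$, matching $\dim = \rk T - 2 = 5$.
\item Given $Z$, choose a parallel transport operator $H^2(Z,\ZZ)\to H^2(Z_0,\ZZ)$ to a reference MRS double cover $Z_0$, sending the $\mathrm{BW}_{16}$ sublattices to each other. This is possible by the uniqueness in step 1, adjusted by monodromy.
\item Using surjectivity of the period map for $T$-polarized manifolds, find an MRS double cover $Z'$ with the same period as $Z$ in the connected component of the moduli space. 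Verbitsky's Torelli theorem then shows that $Z$ and $Z'$ are bimeromorphic.
\item One must still check that every period in the $T$-period domain, not just a generic one, is realized by an MRS double cover. For this, show that the family of $\mathrm{OG}6$-resolutions with its MRS covers is complete over the whole period domain of $X$: via the Torelli theorem for $\mathrm{OG}6$-type, every $X$ of $\mathrm{OG}6$-type admitting the appropriate divisor configuration (the contraction to the singular quotient) is an $\mathrm{OG}6$-resolution.
\end{enumerate}

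\textbf{Main obstacle.} I expect the hardest parts to be step 1 of the converse, i.e.\ uniqueness of the embedding modulo the monodromy group rather than merely modulo $\Oo(\Lambda)$, and step 5, extending from generic to all periods. For step 5 I would first try a closedness argument: MRS double covers can be defined in families by taking the fixed-locus structure of the birational involution $\iota$, which deforms as long as the $\mathrm{BW}_{16}$ classes stay algebraic, following Hassett--Tschinkel-type deformation of the group action.
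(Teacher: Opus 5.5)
Your architecture for the converse is the same as the paper's: the period subdomain orthogonal to a fixed copy of $\mathrm{BW}_{16}$, uniqueness of that embedding, surjectivity of periods, and birational Torelli. The gap is at the one step that carries the content. You never show that the anti-invariant lattice $S\subset H^2(Z,\ZZ)$ of the covering involution $\phi$ is isometric to $\mathrm{BW}_{16}$; you only say this is ``presumably established'' in the appendix or by Mongardi--Rapagnetta--Sacc\`a. It is not. The appendix only defines $\mathrm{BW}_{16}$ (the unique negative definite rank-$16$ lattice with discriminant form $A_{\mathrm{U}(2)^{\oplus 4}}$ and no $(-2)$-classes) and studies its embeddings; the identification is the substance of Proposition \ref{prop:cohomologyMRS}. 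Without it you have neither the forward implication nor, in the converse, the fact that MRS double covers have periods in the $\mathrm{BW}_{16}^{\perp}$-subdomain at all.

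The missing inputs are the following.
\begin{enumerate}
\item[(a)] $S$ is negative definite of rank $16$, because the invariant lattice $T$ contains $s^*H^2(X,\ZZ)(2)\cong\mathrm{U}(2)^{\oplus 3}\oplus\langle -4\rangle$ with finite index.
\item[(b)] $S$ contains no $(-2)$-class. By Markman, for such a class $e$ on a $\mathrm{K}3^{[3]}$-type manifold, $e$ or $-e$ is $\QQ$-effective. The birational map $\phi$ preserves $\QQ$-effectivity, yet would send $e$ to $-e$.
\item[(c)] $\phi$ acts trivially on $A_{H^2(Z,\ZZ)}\cong\ZZ/4\ZZ$. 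Otherwise its extension $\tilde\phi$ to $\widetilde{\Lambda}_{\mathrm{K}3}$ sends $v\mapsto -v$. Then $T$ is the invariant lattice of an involution of a unimodular lattice, hence $2$-elementary of odd rank $7$, so $A_T\cong(\ZZ/2\ZZ)^k$ with $k$ odd. This contradicts $[T:\mathrm{U}(2)^{\oplus 3}\oplus\langle -4\rangle]^2=2^{8-k}$.
\end{enumerate}
Hence $\tilde\phi(v)=v$, and $S$ is the anti-invariant lattice of an involution of the Mukai lattice, so it is $2$-elementary of length at most $8$. The paper then embeds the root-free lattice $S$ primitively into the Leech lattice (Huybrechts' argument, using that $\tilde\phi$ is symplectic). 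It extends $-\mathrm{id}_S$ to an involution of $\Lambda_{24}$ and invokes the Harada--Lang classification, in which $\mathrm{BW}_{16}$ is the only anti-invariant lattice of rank $16$.

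Two smaller remarks. First, your step 5 and the detour through Torelli for $\mathrm{OG}6$-type resolutions are unnecessary. The MRS construction applies to every locally trivial deformation of a singular $\mathrm{OG}6$-variety (Theorem \ref{thm:MRSconstruction}). The map $s^*$ is canonical in families and identifies $\Omega_{\mathrm{OG}6^{\mathrm{sing}}}$ with the whole subdomain $\Omega_{\mathrm{K}3^{[3]}}^{\mathrm{BW}_{16}^{\perp}}$. Surjectivity of the period map for locally trivial deformations then realizes every period, not just generic ones. Second, the monodromy-versus-$\Oo(\Lambda)$ issue you flag in step 1 does not arise for $\mathrm{K}3^{[3]}$-type. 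Since $\Oo(A_{\Lambda_{\mathrm{K}3^{[3]}}})=\{\pm 1\}$, we have $\mathrm{Mon}^2=\Oo^+(\Lambda_{\mathrm{K}3^{[3]}})$, and after composing with $-\mathrm{id}$ any Hodge isometry is a parallel transport. So uniqueness of the embedding up to $\Oo(\Lambda_{\mathrm{K}3^{[3]}})$ (Lemma \ref{lem:embeddingBWintoK33}) suffices.
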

Comparing Theorem \ref{thm-intro:characterization-MRS-Double} with Theorem \ref{thm-intro:BirationalCharacterization}, one observes that MRS double covers and hyper-Kummer manifolds form \textit{different} $5$-dimensional families of $\mathrm{K}3^{[3]}$-type manifolds.
Nevertheless, in the projective case the two families are closely related.

\begin{maintheorem}[Comparison hyper-Kummer and MRS, Theorem \ref{thm:comparisonHyperkummerMRS}]
\label{thm-intro:Comparison-HyperKummer-MRS}
    Let $K$ be a variety of $\mathrm{Kum}^3$-type, and let $Y_K$ be the associated hyper-Kummer sixfold of $\mathrm{K}3^{[3]}$-type. Assume that $K$ admits a primitive polarization $h$ with divisibility $\mathrm{div}(h)> 1$. Then, $Y_K$ is birational to an MRS double cover.
    
    Conversely, let $X$ be any projective $\mathrm{OG}6$-resolution, and let $Z_X$ be its MRS double cover. Then $Z_X$ is birational to a hyper-Kummer $\mathrm{K}3^{[3]}$-type variety.
\end{maintheorem}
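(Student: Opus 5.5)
The plan is to reduce both directions to lattice theory, using the two characterizations already at hand: the birational characterization of hyper-Kummer sixfolds (Theorem \ref{thm-intro:BirationalCharacterization}) and that of MRS double covers (Theorem \ref{thm-intro:characterization-MRS-Double}). For $\mathrm{K}3^{[3]}$-type manifolds, being birational to an MRS double cover amounts to a primitive embedding $\mathrm{BW}_{16}\hookrightarrow \NS(Z)$. Being hyper-Kummer up to birational equivalence amounts to a primitive embedding $L_{\Km}\hookrightarrow\NS(Y)$ whose orthogonal complement in $H^2$ is $\mathrm{U}(2)^{\oplus 3}\oplus\langle -4\rangle$. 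So in each direction I only need to exhibit the other rank-16 lattice inside the Néron--Severi group, primitively and with the right orthogonal complement.

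\textbf{First direction.} Let $h$ be a primitive polarization on $K$ with $\mathrm{div}(h)>1$. Since $H^2(K,\ZZ)\simeq \mathrm{U}^{\oplus 3}\oplus\langle -8\rangle$, one has $\mathrm{div}(h)\in\{2,4,8\}$.
\begin{enumerate}[label=(\arabic*)]
\item By Theorem \ref{thm-intro:HodgeLattices}, $\NS(Y_K)$ contains the saturation $N$ of $L_{\Km}\oplus \ZZ\cdot\tfrac{\hat r_*}{2^4}(h)$.
\item The divisibility condition on $h$ means that $\tfrac{\hat r_*}{2^4}(h)$, viewed in $\mathrm{U}(2)^{\oplus3}\oplus\langle-4\rangle$, has a nontrivial image in the discriminant group. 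That discriminant group is glued to the discriminant group $(\ZZ/2)^6$ of $L_{\Km}$. Hence $N$ is a nontrivial rank-$17$ overlattice of $L_{\Km}\oplus\langle 2h^2\rangle$, obtained by gluing a half-class of the image of $h$ to a glue vector of $L_{\Km}$.
\item I then want to show that this overlattice contains a primitive copy of $\mathrm{BW}_{16}$. The natural candidate is the orthogonal complement in $N$ of a suitable vector of the form $a\,\hat r_*(h)/2^4+\ell$ with $\ell\in L_{\Km}^\vee$. The model is the classical fact that $\mathrm{BW}_{16}$ and $L_{\Km}$ become related after adjoining one vector, since both are built from the first-order Reed--Muller code.
\item Concretely, I would reduce to the case $\mathrm{div}(h)=2$ after replacing $h$ if necessary. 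I would compute discriminant forms, and apply Nikulin's uniqueness and existence of primitive embeddings to check that $\mathrm{BW}_{16}$ embeds primitively into $N$, hence into $\NS(Y_K)$.
\item Theorem \ref{thm-intro:characterization-MRS-Double} then gives the conclusion.
\end{enumerate}

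\textbf{Converse.} Let $X$ be a projective $\mathrm{OG}6$-resolution with MRS double cover $Z_X$.
\begin{enumerate}[label=(\arabic*)]
\item Theorem \ref{thm-intro:characterization-MRS-Double} gives $\mathrm{BW}_{16}\subset \NS(Z_X)$ primitively. Projectivity supplies an ample class orthogonal to it, so $\NS(Z_X)$ contains a rank-$17$ lattice $\mathrm{BW}_{16}\oplus\langle 2t\rangle$ up to saturation.
\item I would identify the orthogonal complement $T=\mathrm{BW}_{16}^\perp\subset\Lambda_{\mathrm{K}3^{[3]}}$, which I expect to be $\mathrm{U}(2)^{\oplus3}\oplus\langle-4\rangle$ or a close relative.
\item Running the argument of the first direction backwards, I would find inside the saturation of $\mathrm{BW}_{16}\oplus\ZZ t$ a primitive copy of $L_{\Km}$ whose orthogonal complement in $H^2(Z_X,\ZZ)$ is $\mathrm{U}(2)^{\oplus3}\oplus\langle-4\rangle$.
\item To control the orthogonal complement, I would use the lattice-theoretic dichotomy noted after Theorem \ref{thm-intro:BirationalCharacterization}: there are exactly two embeddings, and they are distinguished by whether the complement is $2$-elementary of the right parity. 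I would then rule out the $\mathrm{U}(2)^{\oplus2}\oplus\mathrm{U}\oplus\langle-4\rangle$ option via the discriminant group.
\item Theorem \ref{thm-intro:BirationalCharacterization} then concludes.
\end{enumerate}

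\textbf{Main obstacle.} The hard step is the explicit lattice computation relating $L_{\Km}$ and $\mathrm{BW}_{16}$ across one extra vector. This means finding the right vector, and checking both primitivity and the orthogonal complement. I expect it to be cleanest to work with explicit models: $L_{\Km}$ via the affine space $\FF_2^4$ and the code of affine hyperplanes, and $\mathrm{BW}_{16}$ via the Reed--Muller construction. With those models in hand, the needed vector should be visible directly. I would first try the vector $\tfrac12(\hat r_*(h)/2^4+\sum_{i}E_i)$-type glue and compute its orthogonal complement.
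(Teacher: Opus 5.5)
There is a genuine gap in the first direction: you have the effect of the hypothesis $\mathrm{div}(h)>1$ backwards.

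Write $h=\alpha w+\beta\xi$ with $w\in\xi^{\perp}$ primitive and $\gcd(\alpha,\beta)=1$, so that $\mathrm{div}(h)=\gcd(\alpha,8)$. Work in $\hat T:=\mathrm{U}(2)^{\oplus 3}\oplus\ZZ\delta$, where $\delta^2=-4$ and $\delta$ is orthogonal to the $\mathrm{K}3$ lattice. There $\bar h:=\tfrac{1}{2^4}r_*(h)=\alpha\bar w+2\beta\delta$ with $\bar w\in\mathrm{U}(2)^{\oplus 3}$ primitive.

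If $\mathrm{div}(h)>1$, then $\alpha=2\alpha'$ and $\beta$ is odd. So $\bar h=2\bar h_0$ with $\bar h_0=\alpha'\bar w+\beta\delta$ primitive of square $d=q_K(h)/2$, and $4\mid d$. Because $\beta$ is odd, no multiple $t\bar h_0\in\hat T^{\vee}$ with $t\notin\ZZ$ has its class in the summand $A_{\mathrm{U}(2)^{\oplus 3}}\subset A_{\hat T}$, and this summand is the only part glued to $A_{L_{\Km}}$. Hence $L_{\Km}\oplus\ZZ\bar h_0$ is saturated, and $\NS(Y_K)$ contains the \emph{orthogonal direct sum} $L_{\Km}\oplus\langle d\rangle$ (Proposition~\ref{prop:invariants-pushforward-polarization}, Lemma~\ref{lem:NS-hyperKummer}). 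Its index over $L_{\Km}\oplus\ZZ\bar h$ comes only from $\bar h$ itself being divisible by $2$; the gluing with $L_{\Km}$ is trivial.

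The picture you describe, with half of the image of $h$ glued to a nonzero glue vector of $L_{\Km}$, is the case $\mathrm{div}(h)=1$: then $\alpha$ is odd and the class of $\bar h/2$ is $\tfrac{\alpha}{2}\bar w$. In that case the saturation is $L_{4d}$. This lattice embeds primitively in $\Lambda_{\mathrm{K}3}$, so it contains no primitive $\mathrm{BW}_{16}$ (Remark~\ref{rmk:notMRS}). Your steps (3)--(4) therefore aim at an impossible target. The step ``reduce to $\mathrm{div}(h)=2$ after replacing $h$'' is also neither justified nor needed.

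Once the lattice is identified correctly, no Reed--Muller model is required. In the Mukai lattice $\mathrm{BW}_{16}^{\perp}\cong\mathrm{U}(2)^{\oplus 4}$, which contains a primitive $v$ of square $d$ because $4\mid d$. The saturation of $\mathrm{BW}_{16}\oplus\ZZ v$ has signature $(1,16)$ and the discriminant form of $L_{\Km}\oplus\langle d\rangle$, so it is isometric to that lattice by Nikulin's uniqueness theorem (Remark~\ref{rmk:BW+h}).

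In the converse your outline is right, but step (4) does not work as stated. Let $h_X$ be a polarization of $X$ of square $2m$. The saturation $N$ of $\mathrm{BW}_{16}\oplus\ZZ s^*h_X$ is again isometric to $L_{\Km}\oplus\langle 4m\rangle$, so primitive copies of $L_{\Km}$ in $N$ exist. However, the complement of such a copy in $H^2(Z_X,\ZZ)$ is the saturation of its complement in $N$ together with $N^{\perp}$. Its type depends on how the copy sits relative to the fixed gluing of $N$ with $N^{\perp}$, not on the isometry class of $N$. The paper explicitly notes that an arbitrary copy may give complement $\mathrm{U}(2)^{\oplus 2}\oplus\mathrm{U}\oplus\langle -4\rangle$. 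No discriminant computation excludes this; you have to \emph{choose} the copy. (Also, neither candidate complement is $2$-elementary; they differ by $|A|=2^8$ versus $2^6$.)

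The paper makes the choice by transport of structure in Lemma~\ref{lem:MRSisHyperKummer}:
\begin{enumerate}
\item Take a model $L_{\Km}\oplus\ZZ h'$, saturated with complement $\hat T$, where $h'\in\hat T$ has the same square and divisibility as $s^*h_X$ (Remark~\ref{rmk:possible-invariants}), and find $\mathrm{BW}_{16}$ inside it.
\item Move this $\mathrm{BW}_{16}$ onto the given one, using uniqueness of the primitive embedding $\mathrm{BW}_{16}\hookrightarrow\Lambda_{\mathrm{K}3^{[3]}}$ (Lemma~\ref{lem:embeddingBWintoK33}).
\item Move the resulting class onto $s^*h_X$, using transitivity of $\Oo(\hat T)$ on primitive vectors of fixed square and divisibility, together with the fact that every isometry of $\hat T=\mathrm{BW}_{16}^{\perp}$ extends to $\Lambda_{\mathrm{K}3^{[3]}}$ (Lemma~\ref{lem:extendIsometryComplementBW}).
\end{enumerate}
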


Notice that the second part of the Theorem implies that \textit{any} projective $\mathrm{OG}6$-resolution admits a rational map of degree $2^6$ from a variety of $\mathrm{Kum}^3$-type.

\subsection{Applications to algebraic cycles}\label{subsec-intro:applicationToCycles}
The hyper-Kummer construction gives a new and effective way to tackle deep conjectures on algebraic cycles for hyper-K\"ahler manifolds of $\mathrm{Kum}^3$-type. It constitutes a crucial ingredient in the proof of the Hodge conjecture for such varieties, obtained in \cite{floccariHCKum3}. In our work \cite{FloccariFu-HodgeConjectureWeilFourfolds}, we instead employed the construction of Mongardi--Rapagnetta--Sacc\`a to obtain a new short proof of the Hodge conjecture for abelian fourfolds of Weil-type with discriminant 1, which was proven by Markman \cite{markman2019monodromy, Markman2025Cycles}. In Sections 
\ref{sec:SuperKummer},
\ref{sec:BeauvilleConjecture}, \ref{sec:HodgeTateConjecture}, we obtain some further applications of these constructions to the study of algebraic cycles.

\paragraph{Beauville's weak splitting  conjecture}
Mumford's argument in \cite{Mumford-RationalEquivalence} shows that the Chow group $\CH^i$ of a hyper-K\"ahler variety is infinite dimensional when $i>1$. On the other hand, based on the results established by Beauville and Voisin \cite{Beauville-Voisin} on cycles on K3 surfaces, Beauville conjectured in \cite{Beauville-SplittingBBFiltration} that  for any hyper-K\"ahler variety, the cycle class map should be injective on the subalgebra of the Chow ring generated by the divisor classes.

\begin{maintheorem}[Theorem \ref{thm:BeauvilleConjecture}]
\label{thm-intro:BeauvilleConj}
Let $K$ be a projective hyper-K\"ahler manifold of $\mathrm{Kum}^3$-type. The restriction of the cycle class map $\mathrm{cl}\colon \mathrm{CH}^{*}(K)_{\QQ}\to H^{2*}(K,\QQ)$ to the subalgebra of $\CH^{*}(K)_{\QQ}$ generated by divisor classes is injective.
\end{maintheorem}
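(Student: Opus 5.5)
We reduce the statement for $K$ to the known case of $\mathrm{K}3^{[3]}$-type varieties via the hyper-Kummer construction. By Theorem \ref{thm-intro:DerCat&Motives}(iii) we have $\h(Y_K)\simeq \h_{\orb}([K/G])$. The direct summand of $\h_{\orb}([K/G])$ indexed by the identity element of $G$ (untwisted sector) is $\h(K)^G$, and it carries the ordinary intersection product. Since $G$ acts trivially on $H^2(K,\QQ)$, it also acts trivially on $\NS(K)_\QQ$. I would then argue that $G$ acts trivially on the image of $\CH^1(K)_\QQ\to \CH^1(K)_\QQ$ modulo homological equivalence. Since $\CH^1=\NS$ for hyper-K\"ahler varieties, every divisor class is $G$-invariant, and the divisor subalgebra $\langle \CH^1(K)\rangle$ lies in $\CH^*(K)^G_\QQ=\CH^*(K/G)_\QQ$.

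Concretely, consider $r=q'_*p'^*$, or more precisely the correspondence $\Gamma=\overline{\mathrm{graph}}(r)\subset K\times Y_K$. The map $q^*$ identifies $\CH^*(K/G)_\QQ$ with $\CH^*(K)^G_\QQ$ as rings. Pulling back along the crepant resolution, $p^*\colon\CH^*(K/G)_\QQ\to\CH^*(Y_K)_\QQ$ is a ring homomorphism; this holds because $K/G$ has quotient singularities and $\QQ$-coefficient intersection theory works on it. Moreover $p_*p^*=\id$, so $p^*$ is injective, and the same identities hold in cohomology. The key diagram is therefore
\[
\langle \CH^1(K)\rangle \xrightarrow{\;p^*\circ (q^*)^{-1}\;} \CH^*(Y_K)_\QQ,
\]
whose image lies in the subalgebra generated by the divisors $p^*D$, which are divisor classes on $Y_K$. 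The diagram is compatible with cycle class maps, and the corresponding map in cohomology is injective.

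Next I invoke Beauville's weak splitting conjecture for $Y_K$, a variety of $\mathrm{K}3^{[3]}$-type. This is known for $\mathrm{K}3^{[n]}$-type varieties that are moduli spaces of stable objects on K3 surfaces; it follows, for instance, from the multiplicative Chow–K\"unneth decomposition and Voisin's/Riess' results for moduli of sheaves. By Theorem \ref{thm-intro:HyperKummerK3Basic}(iii), $Y_K$ is such a moduli space on $S_K$. Now suppose $z\in\langle\CH^1(K)\rangle$ with $\mathrm{cl}(z)=0$. Then $p^*(q^*)^{-1}z$ is a polynomial in divisors of $Y_K$ with vanishing cohomology class, hence zero. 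Since $p^*$ is injective, we get $z=0$.

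The main obstacle is rigor in the step that $p^*\colon \CH^*(K/G)_\QQ\to\CH^*(Y_K)_\QQ$ is a ring map. If we only use operational Chow groups of the $\QQ$-variety $K/G$, it is cleaner to argue via $\widetilde K$. Here $p'^*$ is an injective ring map on $\CH^*(K)_\QQ$ (blow-up), and $q'^*\colon \CH^*(Y_K)_\QQ\to\CH^*(\widetilde K)^G_\QQ$ is an isomorphism of rings. A divisor $D$ on $K$ then gives $p'^*D=q'^*D'$ for the divisor $D'=\tfrac1{32}q'_*p'^*D$ on $Y_K$, using $G$-invariance of $p'^*D$. Products of $D_i$ thus map under $p'^*$ to $q'^*$ of the corresponding product of the $D_i'$. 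Injectivity of $p'^*$ in both Chow and cohomology, combined with the conjecture on $Y_K$, gives the result. I would also verify $G$-invariance of $\CH^1(K)_\QQ$ itself, which holds since $\CH^1=\NS$ for hyper-K\"ahler varieties.
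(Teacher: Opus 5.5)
Your reduction is correct and is exactly the paper's argument. Divisor classes on $K$ are $G$-invariant because $G$ acts trivially on $H^2$ and $\Pic^0(K)=0$. The pullback $p'^*$ is an injective ring map, and $q'^*$ identifies $\CH^*(Y_K)_\QQ$ with $\CH^*(\widetilde{K})^G_\QQ$ compatibly with cohomology. Since $p'^*D=q'^*(\tfrac{1}{32}q'_*p'^*D)$, the divisor subalgebra of $K$ embeds into that of $Y_K$. The detour through $\h_{\orb}([K/G])$ and $p^*$ on $K/G$ is not needed.

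The gap is the input you need on $Y_K$. Weak splitting is not an available result for arbitrary $\mathrm{K}3^{[n]}$-type moduli spaces of stable objects on K3 surfaces. It also does not follow from a multiplicative Chow--K\"unneth decomposition. Such a decomposition places the divisor subalgebra inside $\CH^*_{(0)}$, but injectivity of the cycle class map on $\CH^*_{(0)}$ is a separate, open conjecture. The unconditional cases are Hilbert schemes $S^{[n]}$ (Voisin, Maulik--Negut) and varieties admitting a rational Lagrangian fibration (Riess). Since $Y_K$ is in general not birational to a Hilbert scheme of points (only in the super-Kummer case), you need the Lagrangian fibration.

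The missing observation is that $Y_K$ always has one after a birational modification. $\NS(Y_K)$ contains $L_{\Km}$ together with the image of an ample class of $K$, so $\rho(Y_K)\geq 17\geq 5$. Meyer's theorem then gives an isotropic class in $\NS(Y_K)$. By Bayer--Macr\`i, some birational model $Y'$ carries a Lagrangian fibration $Y'\to\PP^3$, and Riess's theorem gives weak splitting for $Y'$. Riess's isomorphism of Chow rings of birational hyper-K\"ahler varieties preserves the divisor subalgebras and is compatible with cycle class maps, so it transfers weak splitting to $Y_K$.

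This is precisely where the hyper-Kummer construction is useful: $K$ itself may have Picard rank one and no Lagrangian fibration, whereas $Y_K$ always has large Picard rank.
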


Theorem \ref{thm-intro:BeauvilleConj} is the first time that Beauville's  weak splitting conjecture is proved for an entire deformation type of hyper-K\"ahler manifolds of dimension $>2$; see \cite{Voisin-ChowRingHK}, \cite{Fu-BeauvilleVoisinKummer}, \cite{Riess2016WeakSplittingProperty}, \cite{NegutMaulik} for previous evidence.

\paragraph{Homological motives and Hodge conjecture}

The Kuga--Satake construction associates an abelian variety $\mathrm{KS}(X)$ with any hyper-K\"ahler variety $X$. The Kuga--Satake abelian variety only depends on the Hodge lattice $H^2(X,\ZZ)$. 
A general expectation for the motives of hyper-K\"ahler varieties is the following conjecture. 

\begin{conjecture}
\label{conj-intro:MotivationByKS}
    Let $X$ be a projective hyper-K\"ahler variety, and let $\mathrm{KS}(X)$ be the associated Kuga--Satake abelian variety. Then $X$ is motivated by $\mathrm{KS}(X)$, i.e., the motive of $X$ belongs to the thick tensor subcategory of motives generated by the motive of $\mathrm{KS}(X)$.  
\end{conjecture}

The conjecture depends on the choice of a suitable category of motives. In the category of Andr\'e motives \cite{andre1996Motives}, the conjecture holds for any K3 surface by \cite{Andre1996}, and it has been verified for any hyper-K\"ahler variety of known deformation type in \cite{soldatenkov19, FFZ}. In the setting of homological motives, the conjecture is wide open already for K3 surfaces.

We can verify Conjecture \ref{conj-intro:MotivationByKS} for the homological motive of any of the hyper-K\"ahler varieties involved in the hyper-Kummer and MRS constructions (Corollary \ref{cor:HodgeConjecture}). This follows from the following more general result, which is deduced from \cite{floccariKum3, floccariVaresco, floccariHCKum3, FloccariFu-HodgeConjectureWeilFourfolds}.

\begin{maintheorem}[Homological motives, Theorem \ref{thm:motivationbyKSknownCases}, Corollaries \ref{cor:HCPower} and \ref{cor:TateConjecture}]\label{intro-thm:motivationbyKSknownCases}
    Let $X$ be one of the following hyper-K\"ahler varieties:
    \begin{enumerate}[label=(\roman*)]
    \item any $\mathrm{K}3$ surface or $\mathrm{K}3^{[n]}$-type variety 
    satisfying the following condition: 
    there exists an isometric embedding of rational quadratic spaces $H^2_{\mathrm{tr}}(X,\QQ)\hookrightarrow \mathrm{U}_{\QQ}^{\oplus 3}\oplus \langle -a\rangle_{\QQ}$,
    for some positive integer $a$.
    \item any variety of $\mathrm{Kum}^2$ or $\mathrm{Kum}^3$-type;
    \item any $\mathrm{OG}6$-resolution.
    \end{enumerate}
    Then the Kuga--Satake variety $\mathrm{KS}(X)$ of $X$ is isogenous to a power of an abelian fourfold of Weil type with discriminant $1$, and Conjecture \ref{conj-intro:MotivationByKS} holds for the homological motive of $X$. Moreover, the Hodge conjecture and the Tate conjecture hold for $X$ and any of its powers. 
\end{maintheorem}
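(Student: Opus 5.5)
The plan is to reduce everything to a single Hodge-theoretic input, namely that the relevant Kuga--Satake varieties are powers of a Weil-type abelian fourfold of discriminant $1$. Once that is in place, the motivic statements follow from the known results on such fourfolds: the Hodge conjecture for them and their powers, by Markman and by \cite{FloccariFu-HodgeConjectureWeilFourfolds}. Concretely, I would first treat case (i). For a K3 surface or $\mathrm{K}3^{[n]}$-type variety $X$ with $H^2_{\mathrm{tr}}(X,\QQ)\hookrightarrow \mathrm{U}_\QQ^{\oplus 3}\oplus\langle -a\rangle_\QQ$, the Kuga--Satake construction applied to the transcendental part factors through this rank-$7$ quadratic space. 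For a generic Hodge structure on a quadratic space of signature $(2,5)$ and discriminant $a$ (up to squares), the Kuga--Satake variety is isogenous to a power of a Weil-type abelian fourfold. The field of definition is $\QQ(\sqrt{-a})$ or the appropriate discriminant field, and the discriminant is $1$ because $\mathrm{U}^{\oplus 3}$ is split. Since the Kuga--Satake variety of a sub-Hodge structure is a factor of a power of that of the ambient one, the claim descends to any $X$ in (i). Cases (ii) and (iii) reduce to (i) on the Hodge side. For $\mathrm{Kum}^3$ (resp.\ $\mathrm{Kum}^2$), $H^2$ is $\mathrm{U}^{\oplus3}\oplus\langle -8\rangle$ (resp.\ $\langle -6\rangle$). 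For OG6, $H^2$ is $\mathrm{U}^{\oplus 3}\oplus\langle -2\rangle^{\oplus 2}$, but Theorem \ref{thm-intro:Comparison-HyperKummer-MRS} and \cite{MRS18} relate its transcendental part to that of a $\mathrm{K}3^{[3]}$-type variety, which sits in a lattice of the form in (i).

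Next I would prove motivation by $\mathrm{KS}(X)$ for the homological motive, case by case. For (i), I would follow \cite{floccariVaresco}. Using the embedding of $H^2_{\tr}$ into $\mathrm{U}^{\oplus3}\oplus\langle -a\rangle$, I would realize $X$, or a K3 with Hodge isometric rational transcendental lattice, as related by algebraic correspondences to a hyper-K\"ahler variety of $\mathrm{Kum}$-type. For K3 surfaces, Theorem \ref{thm-intro:HyperKummerK3Basic} supplies this when $a=4$ via hyper-Kummer K3 surfaces; general $a$ goes through moduli spaces of sheaves and the $\mathrm{Kum}^n$ results of \cite{floccariKum3}. One then needs that rational Hodge isometries of $H^2$ for $\mathrm{K}3^{[n]}$-type are induced by algebraic cycles, which holds by Markman's and Buskin's results. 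For (ii), \cite{floccariKum3, floccariHCKum3} show that the homological motive of a $\mathrm{Kum}^3$ variety lies in the category generated by $\mathrm{KS}(K)$. Here the hyper-Kummer construction, with $\h(Y_K)\simeq \h_{\orb}([K/G])$ from Theorem \ref{thm-intro:DerCat&Motives}, together with the companion K3 surfaces, transfers the statement between $K$, $Y_K$ and $S_K$. For (iii), I would use the MRS degree-$2$ map and the degree-$2^6$ map from a $\mathrm{Kum}^3$ variety noted after Theorem \ref{thm-intro:Comparison-HyperKummer-MRS}. This makes $\h(X)$ a direct summand of $\h$ of a smooth model over $\mathrm{Kum}^3$ plus lower-dimensional pieces, which are handled by induction on the fixed loci.

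Finally, the Hodge and Tate conjectures for all powers $X^m$ come out formally. The motive of $X^m$ lies in the thick tensor category generated by $\h(B)$ for $B$ a Weil-type fourfold of discriminant $1$. Hodge classes on $X^m$ therefore come from Hodge classes on powers of $B$ via algebraic correspondences. The Hodge conjecture for all powers of $B$ then gives the Hodge conjecture for $X^m$; this uses that Hodge classes on powers of $B$ are generated by Weil classes and divisors, so the known case suffices. For the Tate conjecture, I would spread out over a finitely generated field and use that Mumford--Tate equals the motivic Galois group for $B$. Combined with Faltings' theorem for abelian varieties, this reduces Tate classes to Hodge classes.

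I expect the main obstacle to be case (i) for arbitrary $a$. Producing algebraic correspondences, not merely Hodge isometries, between an arbitrary such $X$ and a $\mathrm{Kum}$-type variety requires a careful chain of isogenies of K3-type Hodge structures realized by cycles. My first attempt would be to pass through moduli of twisted sheaves and use Buskin's theorem for rational Hodge isometries between K3 surfaces.
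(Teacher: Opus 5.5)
Your overall skeleton matches the paper: the Kuga--Satake variety is a power of a discriminant-$1$ Weil fourfold, homological motivation is proved case by case, and the Hodge and Tate conjectures then follow. There is, however, a genuine gap in case (i) exactly where you expect it, and your proposed fix cannot close it.

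Moduli spaces of (twisted) sheaves and Buskin's theorem only relate varieties whose rational transcendental Hodge structures are Hodge isometric. So they preserve the quadratic space $H^2_{\mathrm{tr}}(X,\QQ)$, and with it the set of admissible $a$. The only varieties your sketch offers as already motivated are companions of $\mathrm{Kum}^3$-manifolds. Their rational transcendental spaces embed in $\mathrm{U}_{\QQ}^{\oplus 3}\oplus\langle -1\rangle_{\QQ}$ (for $Y_K$, $M_\sigma$, $S_K$) or in $\mathrm{U}_{\QQ}^{\oplus 3}\oplus\langle -2\rangle_{\QQ}$ (for $V_{\sigma,\sigma'}$). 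This does not cover (i). For example, $\mathrm{U}_{\QQ}^{\oplus 2}\oplus\langle -7\rangle_{\QQ}^{\oplus 2}$ embeds in $\mathrm{U}_{\QQ}^{\oplus 3}\oplus\langle -7\rangle_{\QQ}$ but in neither of those two spaces, since neither $7$ nor $14$ is a sum of two rational squares.

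The paper does not derive case (i) from the hyper-Kummer construction at all. K3 surfaces are taken from \cite[Theorem 5.11]{floccari25}, and $\mathrm{K}3^{[n]}$-type varieties are reduced to them via \cite{Addington2016} and \cite{Rie14, Bue18}; your moduli-space reduction for $\mathrm{K}3^{[n]}$ is the same.

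Three further pieces are missing from the motivation step.
\begin{itemize}
\item You never treat $\mathrm{Kum}^2$. The paper uses \cite{floccariVaresco}, the standard conjectures from \cite{foster}, and the algebraic cycle inducing $H^1(A,\QQ)\cong H^3(K,\QQ)$ for the intermediate Jacobian $A$.
\item For $\mathrm{Kum}^3$, transferring through $\h(Y_K)\simeq\h_{\orb}([K/G])$ only reaches the summand $\mathsf h(K)^G$. The odd part $\mathsf h^-(K)$ must be obtained separately, as the image of $\mathsf a_2(K)\otimes\mathsf h^3(K)$. This takes place in a semisimple category guaranteed by the standard conjectures (\cite{CM13} for $Y_K$, \cite{foster} for $\mathsf h^-(K)$).
\item For OG6-resolutions, comparing $Z_X$ (or $Y_K$) with $\widetilde X$ produces pieces lying over the singular locus $B/\pm 1$ of $X$, not fixed loci of $G$. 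So your ``induction on fixed loci'' is unsupported; the paper instead cites \cite[Theorem 5.7]{FloccariFu-HodgeConjectureWeilFourfolds}.
\end{itemize}

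The final step is not formal as you state it either. The claim that Hodge classes on powers of $B$ are generated by divisors and Weil classes is only justified when $B$ has generic Mumford--Tate group. Special members (extra endomorphisms, CM) can carry further Hodge classes on their powers. What you need is the Hodge conjecture for all powers of every Weil-type fourfold of discriminant $1$, which the paper takes from \cite{floccari25}; granted this, your fullness argument is the paper's.

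For the Tate conjecture, Faltings' theorem only controls endomorphisms. The Mumford--Tate group of a Weil-type fourfold is not cut out by endomorphisms, because the Weil classes are exceptional. You need the Mumford--Tate conjecture. The paper gets it from \cite{Andre1996} and \cite{FFZ} for each variety and from \cite{commelin2019} for products, and then applies Moonen's reduction \cite{moonen17} of Tate to Hodge.

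Minor: $\mathrm{U}^{\oplus 3}\oplus\langle -a\rangle$ has signature $(3,4)$, not $(2,5)$.
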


The second statement follows from the first one, since the Hodge and Tate conjectures hold for powers of any abelian fourfold of Weil type with discriminant $1$ as is proven in \cite{floccari25}, which relies on the works of O'Grady \cite{O'G21}, Markman \cite{markman2019monodromy}, Voisin \cite{voisinfootnotes} and Varesco \cite{varesco}.

\paragraph{K3 surfaces with abelian Chow motive}
Much less is known about Conjecture \ref{conj-intro:MotivationByKS} in the realm of Chow motives, which is its strongest version, even for K3 surfaces. We are able to verify it for the Chow motive of ``half'' of the countably many $4$-dimensional families of projective hyper-Kummer K3 surfaces, which enjoy the special property that their Hilbert cube $S^{[3]}$ is birational to a hyper-Kummer sixfold, as well as to a MRS double cover. This is in fact the case for all hyper-Kummer K3 surfaces of non-split type in the sense of Theorem \ref{thm-intro:HyperKummerK3Classification}, as we show in Theorem \ref{thm:SuperKummer}.

\begin{maintheorem}[K3 surfaces with abelian Chow motives, Theorem \ref{thm:SuperKummerAbelianMotive} and Corollary \ref{cor:K3withAbelianMotive}]
\label{thm-intro:SuperKummerAbelianMotive}
Let $S$ be a projective $\mathrm{K}3$ surface such that there exists an embedding of quadratic spaces
\[H^2_{\mathrm{tr}}(S,\mathbb{Q})\hookrightarrow\mathrm{U}^{\oplus 2}_{\QQ} \oplus \langle -1\rangle_{\QQ}\oplus \langle -b \rangle_{\QQ}\]
for some positive integer $b \equiv 3 \pmod{4}$.
Then the Chow motive of $S$ lies in the thick tensor subcategory of rational Chow motives generated by the Chow motive of its Kuga--Satake variety, which is a power of an abelian fourfold of Weil type with discriminant~$1$. 
In particular, the rational Chow motive of $S$ is abelian, hence finite dimensional in the sense of Kimura--O'Sullivan.
\end{maintheorem}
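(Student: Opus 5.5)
Starting from the lattice hypothesis, I will reduce to a hyper-Kummer K3 surface of non-split type. The first step is to show that $S$ is hyper-Kummer in the sense of Theorem \ref{thm-intro:HyperKummerK3Classification}, with an integral primitive embedding of $H^2_{\tr}(S,\ZZ)$ into $\mathrm{U}(2)^{\oplus 3}\oplus\langle -4\rangle$. Since the statement only gives a rational embedding, I would instead pass to an isogenous K3 surface. Concretely, I want a K3 surface $S'$ with a Hodge isometry $H^2_{\tr}(S,\QQ)\simeq H^2_{\tr}(S',\QQ)$ such that $S'$ is a hyper-Kummer K3 surface of non-split type. The integral lattice $\mathrm{U}(2)^{\oplus 2}\oplus\begin{psmallmatrix}-4&2\\2&-4d\end{psmallmatrix}$ is rationally $\mathrm{U}_\QQ^{\oplus 2}\oplus\langle -1\rangle_\QQ\oplus\langle -(4d-1)\rangle_\QQ$, up to squares: the $2\times 2$ block scaled by $1/2$ has discriminant $4d-1\equiv 3 \pmod 4$. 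This is where the congruence $b\equiv 3\pmod 4$ enters, with $b=4d-1$.

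By Huybrechts' theorem on isogenous K3 surfaces (every rational Hodge isometry is induced by an algebraic correspondence, and isogenous K3 surfaces have isomorphic Chow motives), the statement reduces to the case of $S'$. Abelianity of the Chow motive and membership in the tensor category generated by $\mathrm{KS}$ are both invariant under isomorphism of Chow motives. To construct $S'$, I would realize the rational transcendental Hodge structure inside the integral lattice $\mathrm{U}(2)^{\oplus2}\oplus\begin{psmallmatrix}-4&2\\2&-4d\end{psmallmatrix}$ and then use surjectivity of the period map. The Picard rank of $S$ must be at least $16$ by the rank bound on the target space; if it is larger, I would take a primitive sublattice.

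The second step applies Theorem \ref{thm:SuperKummer}: for non-split $S'$, the Hilbert cube $S'^{[3]}$ is birational both to a hyper-Kummer sixfold $Y_K$ and to an MRS double cover $Z_X$ of an OG6-resolution $X$. Birational hyper-Kähler varieties have isomorphic Chow motives (Rieß), so $\h(S'^{[3]})\simeq\h(Y_K)$. By Theorem \ref{thm-intro:DerCat&Motives}(iii), $\h(Y_K)$ is a summand of sums of Tate twists of motives of fixed loci of $G$ on $K$. The motive of $S'$ is a direct summand of $\h(S'^{[3]})$ by de Cataldo–Migliorini. It therefore suffices to show that the Chow motive of $K$, and of the relevant fixed loci, is abelian and generated by $\mathrm{KS}$.

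The third step, which I expect to be the main obstacle, is showing that $\h(K)$ is abelian in $\CHM_\QQ$. A $\mathrm{Kum}^3$-type $K$ is not obviously a generalized Kummer of an abelian surface. Here I would use the second model $Z_X$. The OG6-resolution $X$ from the MRS construction is built from an abelian surface (Lagrangian fibrations and moduli of sheaves on an abelian surface), and its Chow motive is known to be abelian through its structure as a resolution of a moduli space on an abelian surface. Its double cover $Z_X$ then has a motive expressible via the orbifold/McKay description. The proposed route is therefore
\[
\h(S')\subset \h(S'^{[3]})\simeq\h(Z_X)\subset\langle \h(A)\rangle .
\]
Finally, I must identify $A$, or the abelian varieties appearing, with powers of the Weil-type fourfold $\mathrm{KS}(S')$, up to isogeny, using the Hodge-theoretic statement of Theorem \ref{intro-thm:motivationbyKSknownCases}. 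The delicate points are as follows:
\begin{enumerate}[label=(\alph*)]
\item proving that the relevant abelian varieties are motivated by $\mathrm{KS}$ at the level of Chow motives, where Kimura finiteness lets homological statements be lifted;
\item checking that the needed birational and McKay isomorphisms hold for Chow motives and not only for homological ones.
\end{enumerate}
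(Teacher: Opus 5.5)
Your first step matches the paper's Corollary~\ref{cor:K3withAbelianMotive}: you write $b=4m-1$, pass to a non-split hyper-Kummer surface $S'$ with rationally Hodge isometric transcendental lattice, and invoke Huybrechts' theorem. Your use of Theorem~\ref{thm:SuperKummer} to make $S'^{[3]}$ birational to an MRS double cover $Z_X$ is also the right move. The gap is in producing an abelian variety that Chow-motivates $S'$.

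Your Step~2 is valid but only relocates the problem. It reduces everything to abelianity of the Chow motives of $K$, $W_\sigma$ and $V_{\sigma,\sigma'}$. But $K$ is a $\mathrm{Kum}^3$-type variety of Picard rank~$1$, whose Chow motive is not known. The $V_{\sigma,\sigma'}$ are K3 surfaces whose transcendental Hodge structure is, up to rescaling, that of $S'$, so they are exactly as hard.

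Your Step~3 claims that $X$ is ``built from an abelian surface'' with known abelian Chow motive. This fails in the case that matters. For $S'$ very general of Picard rank $16$, $X$ has Picard rank $1$ (Remark~\ref{rmk:transcendental_hyperKummerK3}). So $X$ is only a locally trivial deformation of some $K_{A,H}(v)$, not itself a moduli space on an abelian surface; such moduli spaces have Picard rank at least $2$.

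Even granting abelianity of $\h(\widetilde X)$, it would not pass to $Z_X$. Indeed, $\widetilde X$ is birational to the quotient of $Z_X$ by the covering involution, so it only controls an invariant part. There is no McKay-type description of $\h(Z_X)$ in terms of $X$. Your fallback of lifting homological statements via Kimura nilpotence is circular, since it presupposes the finite-dimensionality you are trying to prove.

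The missing idea is that $Z_X$ itself contains an abelian fourfold of the right kind. By \cite[Theorem 3.4]{FloccariFu-HodgeConjectureWeilFourfolds}, $\mathrm{Sing}(X)\simeq B/\pm1$ with $B$ of Weil type with discriminant~$1$. Its preimage in $Z_X$ is $\widetilde B/\pm1$ with $\widetilde B=\Bl_{B[2]}B$, which gives $f\colon\widetilde B\to Z_X$.

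The paper composes Nakajima-type correspondences $\h^2_{\tr}(S)\to\h(S^{[3]})\simeq\h(Z_X)\xrightarrow{f^*}\h(\widetilde B)\xrightarrow{f_*}\h(Z_X)\simeq\h(S^{[3]})\to\h(S)$, with suitable Tate twists. Riess's isomorphism is multiplicative, so $\phi_*f_*f^*\phi_*^{-1}$ is cup product with $\beta=\phi_*f_*[\widetilde B]$. The projection and excess intersection formulas then show that each composite is an explicit scalar multiple of the identity of $\h^2_{\tr}(S)$, already in $\CHM_\QQ$ (Lemma~\ref{lemma:ScalarCorrespondences}).

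If all these scalars vanished, then $[\beta]=0$, contradicting Lemma~\ref{lemma:betaNonzero}, which is proved via the Fujiki relation on $X$. This step uses the de Cataldo--Migliorini decomposition of $H^4(S^{[3]},\QQ)$ and the fact that a very general $S$ has Mumford--Tate group equal to the full group of orthogonal similitudes. Hence $\h^2_{\tr}(S)$ is a direct summand of a twist of $\h(\widetilde B)\simeq\h(B)\oplus(\text{Tate motives})$, with no finite-dimensionality input. The general case then follows by spreading out the correspondences from the very general one.
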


This theorem proves the Kimura--O'Sullivan finite dimensionality conjecture \cite{Kimura, OSullivan, AndreMotifs} for infinitely many new $4$-dimensional families of K3 surfaces of generic Picard rank $16$. Such a result was previously available only for a few families of K3 surfaces of generic Picard rank $16$, thanks to \cite{paranjape, Laterveer-2016-K3FiniteDimMotive, Ingalls-Logan-Patashnick}.

\subsection*{Acknowledgments} We are pleased to thank Arend Bayer, Francesco Denisi, Bert van Geemen, Annalisa Grossi, Daniel Huybrechts, Radu Laza, Emanuele Macr\`i, Giacomo Mezzedimi, Giovanni Mongardi, Kieran O'Grady, Alessandra Sarti and Claire Voisin for their interest in this work and helpful discussions.

\section{The hyper-Kummer construction}
\label{sec:hyper-KummerConstruction}

In the paper \cite{floccariKum3}, the first-named author introduced a construction which associates a hyper-K\"ahler manifold $Y_K$ of $\mathrm{K}3^{[3]}$-type with any hyper-K\"ahler manifold $K$ of $\mathrm{Kum}^3$-type. We call this procedure the \textit{hyper-Kummer construction} in the present article, as it is one of our main points that this construction parallels in many ways the classical Kummer construction of K3 surfaces from $2$-dimensional complex tori. 

\subsection{The construction} \label{subsec:hyper-KummerConstruction} 
Let $K$ be a hyper-K\"ahler manifold of $\mathrm{Kum}^3$-type. We denote by $\Aut_0(K)$ the group of automorphisms of $K$ whose induced action on $H^2(K,\ZZ)$ is trivial. It is a general fact about compact hyper-K\"ahler manifolds that this group of automorphisms which act trivially on the second cohomology is deformation invariant, as proved in \cite[Theorem 2.1]{hassettTschinkel}. By \cite{boissiere2011higher}, for any $K$ of $\mathrm{Kum}^3$-type we have an isomorphism
\begin{equation} 
\Aut_0(K)\cong (\ZZ/4\ZZ)^4\rtimes \ZZ/2\ZZ, \end{equation}
where the second factor in the semi-direct product acts on the first via the inverse map. 

\begin{definition}\label{def:subgroupsAut0}
We consider the following normal abelian subgroups of $\Aut_0(K)$:
\begin{enumerate}[label=(\roman*)]
    \item the subgroup $\Gamma \cong (\ZZ/4\ZZ)^4 \triangleleft\, \Aut_0(K)$ of automorphisms acting trivially on $H^2(K, \ZZ)$ and $H^3(K,\ZZ)$;
    \item the subgroup $G\cong (\ZZ/2\ZZ)^5 \triangleleft \,\Aut_0(K)$ of automorphisms acting trivially on $H^2(K, \ZZ)$ and $H^4(K,\ZZ)$;
    \item the subgroup $G_1\cong (\ZZ/2\ZZ)^4 \triangleleft\, \Aut_0(K)$ obtained as the intersection $G_1\coloneqq \Gamma\cap G$, which is the group of  automorphisms of $K$ acting trivially on $H^i(K,\ZZ)$ for all $i\neq 6$.
\end{enumerate}
\end{definition}

The fixed locus of an automorphism in $\Aut_0(K)$ is a union of isolated points, K3 surfaces, and manifolds of $\mathrm{K}3^{[2]}$-type, by \cite{oguiso2020no} and \cite{KMO}. 
By \cite[Lemma 2.4]{floccariKum3}, for each $\sigma\in G\setminus G_1$, the fixed locus $K^\sigma$ contains a unique component $W_{\sigma}\subset K$ of dimension $4$; moreover, these are the only automorphisms in $\Aut_0(K)$ with this property. 
These involutions generate the normal subgroup $G\cong (\ZZ/2\ZZ)^5 $ of $\Aut_0(K)$ defined above.

\begin{remark}
    The subgroup $G$ is in fact defined in \cite[Definition 2.1]{floccariKum3} as the subgroup of $\Aut_0(K)$  generated by automorphisms $g\in \Aut_0(K)$ whose fixed locus contains a $4$-dimensional component. For the equivalence with the definition given here, see \cite[Remark 6.2]{floccariHCKum3}.
\end{remark}

The following theorem is the hyper-Kummer construction.
\begin{theorem}[\cite{floccariKum3}] \label{thm:hyperKummerConstruction}
For any manifold $K$ of $\mathrm{Kum}^3$-type, the quotient $K/G$ admits a crepant resolution $Y_K\to K/G$ with $Y_K$ a hyper-K\"ahler manifold of $\mathrm{K}3^{[3]}$-type, obtained by blowing-up the singular locus of $K/G$. 
More precisely, the total fixed locus $\bigcup_{1\neq g\in G} K^g$ is the union of the $16$ submanifolds $W_\sigma$ for $\sigma \in G\setminus G_1$ (equipped with the reduced scheme structure), and, denoting by $\widetilde{K}\coloneqq \mathrm{Bl}_{\bigcup_{\sigma\in G\backslash G_1} W_\sigma} (K)$, 
we have a commutative diagram
\begin{equation}\label{diag:hyperKummerConstruction}
\begin{tikzcd}
    & \tilde{K} \arrow{rd}{q'} \arrow[swap]{ld}{p'} \\
    K \arrow[swap]{rd}{q} \arrow[dashed]{rr}{r} && Y_K \arrow{ld}{p} \\
     & K/G   
\end{tikzcd}
\end{equation}
where $p,p'$ are blow-up maps and $q,q'$ are the quotient maps with respect to the $G$-action.    
\end{theorem}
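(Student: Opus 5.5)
The plan has four steps: describe the $G$-action and fixed loci on a convenient model, globalize by deformation, show the blow-up produces a smooth crepant quotient, and identify its deformation type.

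\textbf{Step 1: the model case.} We first treat the model case $K=K^3(A)$ for an abelian surface $A$, and then propagate to all $K$ of $\mathrm{Kum}^3$-type by deformation.

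For $K^3(A)$, the group $G$ is generated by $-1_A$ and translations by $A[2]$; this identification is the one stated in Example \ref{example:hyperKummerConstruction}, and the cohomological characterization is recalled in Definition \ref{def:subgroupsAut0}. We describe the fixed loci explicitly:
\begin{itemize}
\item a translation $t_\tau$ with $\tau\in A[2]\setminus\{0\}$ has only isolated fixed points, namely subschemes supported on $A[4]$-type orbits;
\item the elements $\sigma=t_\tau\circ(-1)$ of $G\setminus G_1$ have a $4$-dimensional fixed component $W_\sigma$, birational to $\Km(A)^{[2]}$, whose generic point is $\{a,\tau-a,b,\tau-b\}$.
\end{itemize}
We then check that every isolated fixed point of a nontrivial $g\in G$, and every lower-dimensional fixed component, actually lies on some $W_\sigma$. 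This gives the first claim: $\bigcup_{g\neq1}K^g=\bigcup_\sigma W_\sigma$.

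\textbf{Step 2: globalization.} Since $\Aut_0$, and hence $G$, is deformation invariant by \cite{hassettTschinkel}, we work on a family $\mathscr K\to B$ of $\mathrm{Kum}^3$-type manifolds with fiberwise $G$-action. The fixed loci of the elements of $G$ form families of submanifolds. Their local structure is locally constant in the family, because fixed loci of finite groups acting on smooth families are smooth over the base. So the combinatorial description of Step 1 transfers to every $K$.

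\textbf{Step 3: local analysis of the blow-up.} At each point $x$ of the total fixed locus, we determine the stabilizer $G_x$ and its linear representation on $T_xK$. This uses the holomorphic symplectic form: each $\sigma$ acts as a symplectic involution with $4$-dimensional fixed space, so locally $\sigma=\mathrm{diag}(1,1,1,1,-1,-1)$. At intersections of several $W_\sigma$, the stabilizer is $(\ZZ/2)^k$ acting through a symplectic representation.

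We then show the following, first in the model case and then by deformation:
\begin{itemize}
\item the $W_\sigma$ meet in a normal-crossing-like fashion, so that blowing up their union (a reduced scheme, possibly after an iterated blow-up ordered by dimension) yields a smooth $\widetilde K$;
\item on $\widetilde K$, every nontrivial stabilizer is generated by reflections in exceptional divisors;
\item by Chevalley--Shephard--Todd, $\widetilde K/G$ is then smooth.
\end{itemize}
Crepancy follows because the ramification divisor of $q'$ exactly cancels the discrepancy of $p'$: each exceptional divisor over a codimension-$2$ fixed locus has discrepancy $1$ and ramification index $2$. Hence $K_{Y_K}$ is trivial. Finally, we identify $\widetilde K/G$ with the blow-up of $K/G$ along its singular locus, which is the image of $\bigcup W_\sigma$, by the universal property of the blow-up together with flatness of the quotient.

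\textbf{Step 4: deformation type.} $Y_K$ is simply connected, since $\pi_1$ of a smooth crepant resolution of $K/G$ is a quotient of $G$ by the subgroup generated by elements with fixed points, and that subgroup is all of $G$. It carries a symplectic form induced from the $G$-invariant form on $K$, and $h^{2,0}=1$, so $Y_K$ is hyper-K\"ahler. In the model case it is birational to $\Km(A)^{[3]}$ via Lemma \ref{lem:isomorphismOrbifolds}. Huybrechts' theorem says birational hyper-K\"ahler manifolds are deformation equivalent, so $Y_K$ is of $\mathrm{K}3^{[3]}$-type. The family version from Step 2 then gives this for all $K$.

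The main obstacle is Step 3: controlling the local structure at the deepest strata, where several $W_\sigma$ and isolated fixed points of $G_1$ coincide. There we must verify that a single blow-up of the reduced union suffices to make all stabilizers reflection groups. I would do this by explicit local coordinates in the model case $K^3(A)$, at points such as non-reduced subschemes supported at $2$-torsion points.
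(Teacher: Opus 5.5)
Your architecture matches the route the paper follows, after \cite{floccariKum3}. You deform to $K^3(A)$ using the deformation invariance of $G$ and read off fixed loci and stabilizers there. You then show $K/G$ is locally $(\CC^2/\pm1)^j\times(\CC^2)^{3-j}$, so one blow-up gives a smooth crepant $\widetilde K/G$. Finally you get the deformation type from the birational map to $\Km(A)^{[3]}$ of Lemma~\ref{lem:isomorphismOrbifolds} together with Huybrechts.

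\textbf{The gap: the fixed loci of $G_1$.} Your Step~1 description is wrong, and it is exactly the fact the theorem rests on. A translation by $0\neq\tau\in A[2]$ has no isolated fixed points. It fixes the closures of the loci $\{a,a+\tau,-a+\epsilon,-a+\epsilon+\tau\}$, $\epsilon\in A[2]$. These are the $8$ K3 surfaces $V_{\sigma_1,\sigma_2}=W_{\sigma_1}\cap W_{\sigma_2}$ with $\sigma_1+\sigma_2=\tau$ (Proposition~\ref{prop:FixedPoints}(ii)). The isolated fixed points, the ones supported on $A[4]$, belong to the elements of $G\setminus G_1$. Each such element has $140$ of them, and they are the triple intersections $Z_{\sigma_1,\sigma_2,\sigma_3}$.

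This matters. A symplectic involution acts as $-1$ on the tangent space at an isolated fixed point. So if some $t\in G_1$ had one, whether on a $W_\sigma$ or not, $K/G$ would locally contain a factor $\CC^6/\pm1$ or $\CC^4/\pm1$. Such a factor is $\QQ$-factorial, terminal and singular, so no crepant resolution could exist. The ``main obstacle'' you single out therefore does not occur. What you actually need is that for every $x$, the stabilizer $G_x$ is generated by the (at most three, independent) $\sigma\in G\setminus G_1$ with $x\in W_\sigma$.

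\textbf{Local model without coordinates.} With the correct fixed loci you do not need local coordinates at non-reduced subschemes. Since $G_x\cong(\ZZ/2\ZZ)^j$ is abelian and $T_x(K^g)=(T_xK)^g$, the character decomposition of $T_xK$ is determined by the numbers $\dim_x K^g$ for $g\in G_x$. Proposition~\ref{prop:FixedPoints} gives these: $4$ for each $\sigma_i$, $2$ for $\sigma_i+\sigma_k$, and $0$ for $\sigma_1+\sigma_2+\sigma_3$. This forces $T_xK\cong(\CC^2)^{\oplus 3}$, with each generator $\sigma_i$ acting by $-1$ on its own $\CC^2$ summand and trivially on the others. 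That is the product model used in the proof of Theorem~\ref{thm:HyperKummerIsBKR}.

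\textbf{Drop the flatness argument.} The map $q$ is not flat over $\Sing(K/G)$, because a flat surjection from a regular scheme has regular target. So the blow-up does not commute with $q$. The identification $\widetilde K/G\cong\Bl_{\Sing(K/G)}(K/G)$ comes instead from the product model. The reduced ideal of the singular locus of $(\CC^2/\pm1)^j\times(\CC^2)^{3-j}$ is the product $\mathfrak m_1\cdots\mathfrak m_j$ of ideals in independent variables. Its blow-up is $(\Bl_0(\CC^2/\pm1))^j\times(\CC^2)^{3-j}=\bigl((\Bl_0\CC^2)^j\times(\CC^2)^{3-j}\bigr)/G_x$. In particular, a single blow-up of the reduced union $\bigcup W_\sigma$ suffices upstairs, with no iteration.

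Your crepancy computation (discrepancy $1$ against ramification index $2$), simple connectivity, $h^{2,0}=1$, and the appeal to Huybrechts are fine.
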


\subsection{A constellation of companion hyper-K\"ahler manifolds}
\label{subsec:Companions}
Given a hyper-K\"ahler manifold $K$ of $\mathrm{Kum}^3$-type, the hyper-K\"ahler submanifolds $W_\sigma$ and their intersections, quotients and crepant resolutions thereof, give rise to a very interesting configuration of $\mathrm{K}3^{[2]}$-type hyper-K\"ahler manifolds and K3 surfaces canonically contained in or associated with $K$. We call them the \textit{companion hyper-K\"ahler manifolds} of $K$.

Before giving the details, we first summarize the configuration in a diagram. For any $\sigma\neq \sigma'$ in $G\setminus G_1$, we have canonical symplectic varieties fitting in the following diagram:
\begin{equation}\label{diag:configuration}
\begin{tikzcd}
    K \arrow{r} \arrow[dashed, bend right=-20]{rr} & K/G & Y_K \arrow{l} \\
    W_\sigma 
    \arrow[hook]{u} \arrow[dashed, bend right=-20]{rr} \arrow{r} & W_\sigma/G 
    \arrow[hook]{u} & M_\sigma 
    \arrow{l} 
    \\
    V_{\sigma,\sigma'} 
    \arrow[hook]{u} \arrow{r} \arrow[dashed, bend right=-20]{rr} & V_{\sigma,\sigma'}/G \arrow[hook]{u} & S_{\sigma,\sigma'}
    \arrow{l} 
\end{tikzcd}
\end{equation}
where $Y_K$ is the sixfold obtained via the hyper-Kummer construction (Theorem \ref{thm:hyperKummerConstruction}) which is of $\mathrm{K}3^{[3]}$-type,  $W_\sigma$ and $M_\sigma$ are hyper-K\"ahler manifolds of $\mathrm{K}3^{[2]}$-type, and $V_{\sigma,\sigma'}$ and $S_{\sigma,\sigma'}$ are K3 surfaces; the quotient varieties in the middle column are symplectic orbifolds, whose natural symplectic resolutions are in the right column.
Moreover, when the $\mathrm{Kum}^3$-type manifold $K$ deforms, this whole diagram deforms together with $K$ (see Remark \ref{rmk:deformation}).

We give details of \eqref{diag:configuration} in the next few propositions. Fix a $\mathrm{Kum}^3$-type hyper-K\"ahler manifold $K$.

\begin{proposition}[Canonical hyper-K\"ahler submanifolds {\cite[\S2]{floccariKum3}}]
\label{prop:canonicalSubvarietiesI}
\begin{enumerate}[label=(\roman*)]
\item For each $\sigma\in G\setminus G_1$, the submanifold $W_\sigma \subset K$ is hyper-K\"ahler of $\mathrm{K}3^{[2]}$-type. These submanifolds are all mutually isomorphic.
\item For $\sigma\neq \sigma' $ in $G\setminus G_1$, the intersection $V_{\sigma, \sigma'}\coloneqq W_\sigma \cap W_{\sigma'}$ is a $\mathrm{K}3$ surface. 
We obtain ${{16}\choose{2}}=120$ $\mathrm{K}3$ surfaces contained in $K$, which fall into $15$ distinct isomorphism classes, in general.
\item For pairwise distinct $\sigma, \sigma',\sigma''\in G\backslash G_1$, the intersection $Z_{\sigma, \sigma',\sigma''}\coloneqq W_\sigma \cap W_{\sigma'} \cap W_{\sigma''}$ consists of $4$ distinct points, while $4$ or more distinct components $W_\sigma$ do not intersect. 
\end{enumerate} 
\end{proposition}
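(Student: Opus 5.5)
The plan is to use deformation invariance to reduce to the case $K=K^3(A)$, the generalized Kummer sixfold of an abelian surface $A$. There every statement becomes an explicit computation with points of $A$.

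\textbf{Step 1: reduction.} By \cite[Theorem 2.1]{hassettTschinkel}, the group $\Aut_0(K)$, and hence its subgroups $G$ and $G_1$, deform along with $K$ in any smooth family of $\mathrm{Kum}^3$-type manifolds. Fixed loci of a finite group acting on a smooth family form smooth families. So the components $W_\sigma$ deform smoothly, and so do their transverse intersections. The following properties are all invariant under smooth deformation:
\begin{itemize}
\item the deformation type of each component;
\item whether two components are isomorphic, via parallel transport of the $G$-action together with conjugation by $\Aut_0(K)$;
\item the number of points in a $0$-dimensional fixed intersection.
\end{itemize}
The claim that there are $15$ isomorphism classes "in general" only needs to be checked for a very general member of the family.

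\textbf{Step 2: the model case.} Take $K=K^3(A)$ with $G=A[2]\times\{\pm1\}$, as in Example \ref{example:hyperKummerConstruction}. The elements of $G\setminus G_1$ are the $16$ maps $\sigma_\tau\colon x\mapsto \tau-x$ with $\tau\in A[2]$.

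For (i), the generic point of $K^{\sigma_\tau}$ is a cluster $\{a,\tau-a,b,\tau-b\}$. Its sum is $2\tau=0$, so every such cluster lies in $K$. The locus of such clusters is the closure of the set of pairs of points on $A/\sigma_\tau$, which is isomorphic to a copy of $\Km(A)^{[2]}$ birationally. I will identify $W_\sigma$ as the $4$-dimensional component: it is birational to $\Km(A)^{[2]}$, and it is smooth hyper-K\"ahler since it is a component of a fixed locus of a symplectic automorphism. Alternatively I can cite \cite{KMO}, which says such components are of $\mathrm{K}3^{[2]}$-type. Translations by $\tfrac12$-torsion points, which lie in $\Aut_0$, conjugate the $\sigma_\tau$ to one another, so all the $W_\sigma$ are isomorphic.

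For (ii), a generic point fixed by both $\sigma_\tau$ and $\sigma_{\tau'}$ is a cluster stable under the translation $t_{\tau+\tau'}$. Such a cluster has the form $\{a,a+\epsilon,\tau-a,\tau-a-\epsilon\}$ with $\epsilon=\tau+\tau'$. This gives a surface birational to $A/\langle t_\epsilon,\sigma_\tau\rangle$, which is a Kummer surface of the isogenous abelian surface $A/\langle\epsilon\rangle$.
\begin{itemize}
\item It is smooth and symplectic, as an intersection of fixed loci inside a symplectic fixed locus, and so it is a K3 surface.
\item Its isomorphism class depends only on $\epsilon\in A[2]\setminus\{0\}$. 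That gives $15$ classes, and for very general $A$ the surfaces $\Km(A/\langle\epsilon\rangle)$ are pairwise non-isomorphic.
\item Each $\epsilon$ arises from $8$ unordered pairs $\{\tau,\tau'\}$, and $15\cdot 8=120$.
\end{itemize}

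For (iii), three distinct $\tau,\tau',\tau''$ produce two independent translations $\epsilon$ and $\epsilon'$. Stability under both forces the cluster to be a coset $a+\langle\epsilon,\epsilon'\rangle$ satisfying $\tau-a\in a+\langle\epsilon,\epsilon'\rangle$. Then $2a$ ranges over a coset of $\langle\epsilon,\epsilon'\rangle$, with the sum condition automatic. Counting solutions $a$ modulo $\langle\epsilon,\epsilon'\rangle$ gives $4\cdot 16/4\cdot\ldots$ clusters, which I expect to reduce to $4$. I still have to check that the non-reduced clusters supported on $2$-torsion points do not add points. For four or more $\sigma$, the generated translation subgroup either has order $8$, which is impossible for a length-$4$ cluster, or else the fourth $\tau$ lies in the affine span of the first three. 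In the latter case I will check directly that no cluster is fixed.

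\textbf{Main obstacle.} The delicate part is the non-generic strata. Over the diagonals and the $2$-torsion points, the birational descriptions above may fail, and fixed points could hide in the punctual Hilbert schemes. To control this I would use the local description of the fixed loci from \cite[Lemma 2.4]{floccariKum3} and Theorem \ref{thm:hyperKummerConstruction}. That result gives the total fixed locus as exactly the union of the smooth $W_\sigma$, so extra components cannot occur. Transversality of $W_\sigma\cap W_{\sigma'}$ would follow from the tangent space computation: $T W_\sigma$ is the $(+1)$-eigenspace of $\sigma$, and the two involutions commute. For the point count in (iii), I would fall back on the holomorphic Lefschetz formula, or on an Euler characteristic check, if the direct enumeration becomes messy.
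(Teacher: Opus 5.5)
\textbf{Part (iii) has a genuine gap.} Your overall route (deform to $K=K^3(A)$ via \cite[Theorem 2.1]{hassettTschinkel} and compute with clusters) is the one the paper relies on, through \cite[\S2]{floccariKum3} and the explicit descriptions in Proposition~\ref{prop:CanonicalSubvarietiesK^3(A)}. Part (i) and the generic part of (ii) are fine. The problem is that you test membership in $W_{\sigma_\tau}$ by ``being fixed by $\sigma_\tau$''. This fails for $0$-dimensional loci: besides $W_{\sigma_\tau}$, the fixed locus $K^{\sigma_\tau}$ contains the $140$ isolated points $\{\epsilon_1,\dots,\epsilon_4\}$ with all $2\epsilon_i=\tau$ (Proposition~\ref{prop:FixedPoints}(i)).

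Your own count shows this. Put $H=\langle\tau+\tau',\tau+\tau''\rangle$. The clusters fixed by $\sigma_\tau,\sigma_{\tau'},\sigma_{\tau''}$ are the cosets $a+H$ with $2a\in\tau+H=\{\tau,\tau',\tau'',\tau+\tau'+\tau''\}$, and there are $64/4=16$ of them; nothing ``reduces''. If $2a=\tau$, every point of $a+H$ is fixed by $\sigma_\tau$. So this cluster is an isolated fixed point of $\sigma_\tau$ and does not lie on $W_{\sigma_\tau}$; it lies on $W_{\sigma_{\tau'}}\cap W_{\sigma_{\tau''}}\cap W_{\sigma_{\tau+\tau'+\tau''}}$ instead. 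The same happens for $\tau'$ and $\tau''$.

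For the same reason, your planned check for four indices with $\tau_4=\tau_1+\tau_2+\tau_3$ fails. There \emph{are} $16$ clusters fixed by all four involutions, namely the cosets $a+H$ with $2a\in\{\tau_1,\dots,\tau_4\}$. What is true is only that none of them lies on all four $W$'s.

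\textbf{The missing criterion.} You need to decide when a cluster lies on $W_{\sigma_\tau}$, as opposed to merely on $K^{\sigma_\tau}$. Since $W_{\sigma_\tau}$ is the strict transform of $\{(a,b,\tau-a,\tau-b)\}$, a reduced $\sigma_\tau$-invariant cluster lies on it if and only if $\sigma_\tau$ acts freely on its support. The sum condition shows that the only alternative is that all four points satisfy $2x=\tau$.

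With this criterion, $a+H$ lies on all three $W$'s if and only if $2a=\tau+\tau'+\tau''$, which gives $16/4=4$ points. For four indices you would need $2a\in\tau_1+H=\{\tau_1,\dots,\tau_4\}$ while avoiding every $\tau_i$, which is impossible.

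Non-reduced clusters are not the issue here: a length-$4$ subscheme invariant under the free action of $H$ is a reduced orbit. Your proposed fallbacks do not help either. Theorem~\ref{thm:hyperKummerConstruction} only identifies the \emph{union} of all $K^g$ with the union of the $W_\sigma$; the isolated points of $K^{\sigma_\tau}$ are covered by other $W$'s. A Lefschetz or Euler-characteristic count computes $K^g$, not $W_\sigma\cap W_{\sigma'}\cap W_{\sigma''}$.

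The same criterion is what you need in (ii) to show that $W_\sigma\cap W_{\sigma'}$ consists only of the Kummer surface. For example, the clusters $\{x,x+\epsilon,y,y+\epsilon\}$ with $2x=2y=\tau$ are fixed by both involutions but do not lie on $W_{\sigma_\tau}$.
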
 

The $G$-action on these canonical submanifolds is described as follows.

\begin{proposition}[$G$-actions {\cite[\S2]{floccariKum3}}]
\label{prop:canonicalSubvarietiesII}
\begin{enumerate}[label=(\roman*)]
\item The subgroup $G\subset \Aut_0(K)$ stabilizes globally\footnote{That is, in the decomposition group of $W_{\sigma}$.} each $W_\sigma$, and, hence, also each of the submanifolds $V_{\sigma, \sigma'}$ and $Z_{\sigma,\sigma',\sigma''}$. 
\item For $\sigma\in G\setminus G_1$, the pointwise stabilizer of $W_\sigma$ is the subgroup $\langle \sigma\rangle\cong \ZZ/2\ZZ$, so that $G$ induces a faithful action of $G/\langle \sigma\rangle\cong (\ZZ/2\ZZ)^4$ on $W_\sigma$. 
\item For $\sigma\neq \sigma'$ in $G\setminus G_1$, the pointwise stabilizer of $V_{\sigma,\sigma'}$ in $G$ is $\langle \sigma,\sigma'\rangle\cong (\ZZ/2\ZZ)^2$, so that $G$ induces a faithful action of $G/\langle \sigma, \sigma'\rangle\cong (\ZZ/2\ZZ)^3$ on $V_{\sigma,\sigma'}$. 
\item For pairwise distinct $\sigma,\sigma',\sigma''$ in $G\setminus G_1$, the pointwise stabilizer of $Z_{\sigma,\sigma',\sigma''}$ in $G$ is $\langle \sigma,\sigma',\sigma''\rangle\cong (\ZZ/2\ZZ)^3$, so that $G$ induces a faithful and transitive action of $G/\langle \sigma,\sigma',\sigma''\rangle\cong (\ZZ/2\ZZ)^2$ on $Z_{\sigma,\sigma',\sigma''}$.
\end{enumerate} 
\end{proposition}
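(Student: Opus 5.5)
The plan is to deduce everything from two facts: $G$ is abelian, and the description of fixed loci recalled before Theorem \ref{thm:hyperKummerConstruction}. Each $W_\sigma$ is the unique $4$-dimensional component of $K^\sigma$, and the total fixed locus of $G$ is $\bigcup_{\sigma\in G\setminus G_1} W_\sigma$.

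For (i), take $g\in G$. Since $G$ is abelian, $g$ commutes with $\sigma$, so $g(K^\sigma)=K^{g\sigma g^{-1}}=K^\sigma$. Thus $g$ permutes the connected components of $K^\sigma$ and preserves their dimensions. Uniqueness of the $4$-dimensional component then gives $g(W_\sigma)=W_\sigma$. Stability of $V_{\sigma,\sigma'}$ and $Z_{\sigma,\sigma',\sigma''}$ follows at once, since they are intersections of the $W$'s.

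For (ii), let $H\subset G$ be the pointwise stabilizer of $W_\sigma$. Clearly $\sigma\in H$. Suppose $g\in H$ with $g\neq 1,\sigma$.
\begin{itemize}
\item If $g\in G\setminus G_1$, then $W_\sigma\subset K^g$ is a connected $4$-dimensional subset of $K^g$. It must therefore lie in the unique $4$-dimensional component $W_g$. Equality of dimensions and irreducibility give $W_\sigma=W_g$. But $W_\sigma\cap W_g$ is a K3 surface by Proposition \ref{prop:canonicalSubvarietiesI}(ii), a contradiction.
\item If $g\in G_1$, then by \cite[Lemma 2.4]{floccariKum3} the element $g$ has no $4$-dimensional fixed component, which is again a contradiction.
\end{itemize}
Hence $H=\langle\sigma\rangle$, and $G/\langle\sigma\rangle$ acts faithfully on $W_\sigma$.

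For (iii) and (iv) the same argument applies one dimension lower, but it now needs knowledge of which elements of $G$ fix a K3 surface, respectively a given point. For (iii), suppose $g\notin\langle\sigma,\sigma'\rangle$ fixes $V_{\sigma,\sigma'}$ pointwise. Then $g$ fixes $V$ together with the normal directions. In $T_vK=T_vV\oplus N_1\oplus N_2$, the element $\sigma$ acts by $-1$ on the $2$-dimensional complement of $T_vW_\sigma$; symmetrically for $\sigma'$. The three commuting involutions $g,\sigma,\sigma'$ act on $T_vK/T_vV$, which is $4$-dimensional and symplectic. I would use the local structure there to show that $g$ or $g\sigma$ etc.\ is an element of $G\setminus G_1$ other than $\sigma,\sigma'$ fixing a $4$-dimensional locus through $V$. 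Alternatively, $g$ could coincide with $\sigma\sigma'$ modulo a subgroup acting trivially near $v$; this would contradict freeness of the action of a finite group on the tangent space, since an automorphism acting trivially on $T_vK$ and fixing $v$ is trivial. Concretely, the action of $\langle g,\sigma,\sigma'\rangle\cong(\ZZ/2)^3$ on the $4$-dimensional normal space must be faithful and symplectic. A faithful symplectic $(\ZZ/2)^3$ acting diagonally on $\CC^4$ with $\pm1$ eigenvalues of even multiplicity is impossible to realize with $\sigma,\sigma'$ as given: the characters available preserving the symplectic pairing of $N_1$ and $N_2$ force $g\in\langle\sigma,\sigma'\rangle$. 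The main obstacle is this linear-algebra step, together with verifying that $N_1,N_2$ are symplectic for the restricted form. I expect it to require the fact that $V$ is symplectic in $K$.

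For (iv) the same reasoning gives that the stabilizer of a point $z\in Z_{\sigma,\sigma',\sigma''}$ is $\langle\sigma,\sigma',\sigma''\rangle$. Indeed, an element of a stabilizer acting faithfully on $T_zK\cong\CC^6$ diagonally and symplectically is constrained: $T_zK=N_1\oplus N_2\oplus N_3$ with character structure as above, and the stabilizer has order at most $8$. Transitivity then follows by counting: the orbit of $z$ has size $32/8=4=|Z|$, by Proposition \ref{prop:canonicalSubvarietiesI}(iii).
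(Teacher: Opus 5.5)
Your proof is correct, but it takes a different route from the paper. The paper, citing \cite[\S2]{floccariKum3}, proves this the same way as Propositions \ref{prop:canonicalSubvarietiesI} and \ref{prop:FixedPoints}. It deforms $K$ to a generalized Kummer $K^3(A)$ via Remark \ref{rmk:deformation} and checks everything on the explicit model of Proposition \ref{prop:CanonicalSubvarietiesK^3(A)} and Remark \ref{rmk:actionAut_0}. There $G=A[2]\times\langle -1\rangle$, and for instance $G/\langle\sigma_\tau\rangle\cong A[2]$ acts by translations on $W_{\sigma_\tau}\cong(\Km^{\tau}(A))^{[2]}$. You instead argue intrinsically on $K$: (i) follows from commutativity of $G$ and uniqueness of the $4$-dimensional component; (ii) from dimension, Proposition \ref{prop:canonicalSubvarietiesI}(ii) and \cite[Lemma 2.4]{floccariKum3}; and (iii)--(iv) from the isotropy representation.

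The step you call the main obstacle is routine, and you can drop the alternative sketches. Let $P$ be an elementary abelian $2$-group fixing $x$ and preserving $\omega_x$. Its isotypic components in $T_xK$ are pairwise $\omega$-orthogonal, because all characters are real. Hence each component is symplectic, so even-dimensional. Since $P$ acts faithfully on $T_xK$ (by linearization), $P$ embeds in $\{\pm1\}^k$, where $k$ is the number of nontrivial characters occurring.
\begin{itemize}
\item In (iii), take $P$ to be the pointwise stabilizer of $V$. Then $T_vV\subseteq (T_vK)^P\subseteq (T_vK)^{\langle\sigma,\sigma'\rangle}=T_vV$, since near $v$ the fixed locus of $\langle\sigma,\sigma'\rangle$ is $V$. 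The remaining $4$ dimensions give $k\le 2$, so $|P|\le 4$.
\item In (iv), $k\le 3$ gives $|\mathrm{Stab}_G(z)|\le 8$.
\end{itemize}
In (iv) you should also record that $\sigma,\sigma',\sigma''$ are independent in $G\cong\FF_2^5$, because $\sigma+\sigma'\in G_1$. So $\langle\sigma,\sigma',\sigma''\rangle$ really has order $8$. Your orbit count then gives transitivity, and commutativity of $G$ makes the pointwise stabilizer of $Z$ equal to each point stabilizer.

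Your route is more conceptual. It also avoids having to argue that pointwise stabilizers are constant along the deformation, and the same tangent argument would give (ii) without Proposition \ref{prop:canonicalSubvarietiesI}(ii). The paper's route additionally yields the explicit description of the actions on $K^3(A)$, which is used later in Proposition \ref{prop:Fujikiresolution} and Lemma \ref{lem:resolutionK3}. Note, though, that your inputs --- Proposition \ref{prop:canonicalSubvarietiesI}, \cite[Lemma 2.4]{floccariKum3}, and $|Z_{\sigma,\sigma',\sigma''}|=4$ --- are themselves established by that deformation argument.
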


The fixed points are described as follows:

\begin{proposition}[{Fixed loci, see \cite[Proposition 2.12]{floccariKum3}}]
    \label{prop:FixedPoints}
\begin{enumerate}[label=(\roman*)]
	\item For any $\sigma\in G\backslash G_1$, 
	\begin{equation}
		\operatorname{Fix}_K(\sigma)= W_{\sigma} \sqcup \bigsqcup_{\substack{\sigma_1+\sigma_2+\sigma_3=\sigma\\
				\sigma_i \text{ distinct in } G\backslash G_1}}Z_{\sigma_1, \sigma_2, \sigma_3}
	\end{equation}
	consists of a hyper-K\"ahler fourfold of $\mathrm{K}3^{[2]}$-type and $140$ isolated points.
	\item For any $\sigma\in G_1\backslash \{\id\}$, 
	\begin{equation}
		\operatorname{Fix}_K(\sigma)=\bigsqcup_{\substack{\sigma_1+\sigma_2=\sigma \\ \sigma_i \in G\setminus G_1}} V_{\sigma_1, \sigma_2}    
	\end{equation}
	is the disjoint union of $8$ $\mathrm{K}3$ surfaces.
	\item For any $\sigma\in G\backslash G_1$, let $G_\sigma:=G/\langle\sigma\rangle\cong (\ZZ/2\ZZ)^4$ which acts faithfully on $W_{\sigma}$; the projection $G\to G/\langle \sigma\rangle$ restricts to an isomorphism $G_1\xrightarrow{\ \sim\ } G/\langle \sigma\rangle$. Then
	\begin{equation}
		\operatorname{Fix}_{W_{\sigma}}(\sigma')=V_{\sigma,\sigma+\sigma'}\sqcup \bigsqcup_{\substack{\sigma_1+\sigma_2=\sigma'\\
				\sigma_1\neq \sigma\neq \sigma_2 }} Z_{\sigma_1, \sigma_2, \sigma} \quad \forall \sigma'\neq 0\in G_1,
	\end{equation}
	is the disjoint union of a $\mathrm{K}3$ surface and $28$ points.
	\item For any $\sigma\neq \sigma'\in G\backslash G_1$, let $G_{\sigma, \sigma'}:=G/\langle\sigma, \sigma'\rangle\cong (\ZZ/2\ZZ)^3$ which acts faithfully on $V_{\sigma, \sigma'}$; the projection $G\to G_{\sigma, \sigma'}$ identifies $G_{\sigma,\sigma'}$ with $G_1/\langle \sigma+\sigma'\rangle$. Then
	\begin{equation}
		\operatorname{Fix}_{V_{\sigma, \sigma'}}(\sigma'')=Z_{\sigma, \sigma', \sigma''+\sigma}\sqcup Z_{\sigma,\sigma', \sigma''+\sigma'} \quad \forall \ 0\neq \sigma''\neq \sigma+\sigma'\in G_1,
	\end{equation}
	consists of $8$ points. 
\end{enumerate}
\end{proposition}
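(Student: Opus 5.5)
The plan is to reduce to a model case by deformation and then do an explicit computation there. By Hassett--Tschinkel, $\Aut_0(K)$ together with its subgroups $G$, $G_1$ is deformation invariant, so the fixed loci of the elements of $G$ vary in smooth families; by Proposition \ref{prop:canonicalSubvarietiesI} the $W_\sigma$, $V_{\sigma,\sigma'}$ and $Z_{\sigma,\sigma',\sigma''}$ deform with $K$. The number and dimension of the connected components of each $\operatorname{Fix}_K(g)$ are therefore constant. It thus suffices to verify the statement for a generalized Kummer $K=K^3(A)$ with $A$ a general abelian surface. In that case $G=A[2]\times\{\pm1\}$ acts on $\{a_1,\dots,a_4\}$, $\sum a_i=0$, by translation $a_i\mapsto a_i+\tau$ (which preserves the sum since $4\tau=0$) and by $a_i\mapsto -a_i$. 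The subgroup $G_1$ is the translation group $A[2]$, and $G\setminus G_1$ consists of the $16$ elements $x\mapsto -x+\tau$.

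For (ii), take a translation $\tau\neq0$. A subscheme is fixed exactly when its support is $\tau$-stable, so generically it has the form $\{b,b+\tau,c,c+\tau\}$ with $2b+2c+2\tau=0$, i.e.\ $b+c\in\tau'+\tau$ with $\tau'\in A[2]$. This gives $8$ components, one for each coset choice modulo $\langle\tau\rangle$. Each component is a surface, and by Proposition \ref{prop:canonicalSubvarietiesI} it is a K3. I would identify each component with $V_{\sigma_1,\sigma_2}$, where $\sigma_1+\sigma_2=\tau$: there are $16$ ordered, hence $8$ unordered, such pairs. To do so, check that $V_{\sigma_1,\sigma_2}$ is fixed by $\sigma_1\sigma_2=\tau$, which is immediate, and that distinct pairs give distinct components, which follows from Proposition \ref{prop:canonicalSubvarietiesII}(iii) since the pointwise stabilizers differ.

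For (i), take $\sigma\colon x\mapsto -x+\tau$. Fixed subschemes are unions of $\sigma$-orbits. The generic fixed points $\{b,\tau-b,c,\tau-c\}$, subject to $2\tau=0$, form the fourfold $W_\sigma$. The isolated fixed points are supported on fixed points of $x\mapsto \tau-x$, which are the $16$ points $x$ with $2x=\tau$. By Proposition \ref{prop:FixedPoints}, no further components appear. Each $Z_{\sigma_1,\sigma_2,\sigma_3}$ with $\sigma_1+\sigma_2+\sigma_3=\sigma$ lies in $\operatorname{Fix}(\sigma)$, because $\sigma$ lies in its pointwise stabilizer by Proposition \ref{prop:canonicalSubvarietiesII}(iv). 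Disjointness from $W_\sigma$ follows from the fact that four distinct $W$'s never meet, from Proposition \ref{prop:canonicalSubvarietiesI}(iii). I would then count: the number of unordered triples of distinct elements of $G\setminus G_1$ (an affine $\FF_2$-space of dimension $4$) summing to $\sigma$ is $\binom{15}{2}\cdot\frac{1}{3}=35$, since $\sigma_1$ and $\sigma_2\neq\sigma_1$ determine $\sigma_3$, and $\sigma_3$ must be distinct from both and from $\sigma$. That gives $35\cdot4=140$ points. To confirm these exhaust the isolated fixed points, compare Euler characteristics or Lefschetz numbers: with $\chi(W_\sigma)=324$ and the known value of $\chi(\operatorname{Fix})$, the rest must be $140$ isolated points. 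I expect this completeness check to be the main obstacle. I would settle it by a holomorphic Lefschetz or topological Lefschetz computation, using that $\sigma$ acts on $H^*(K)$ trivially in degrees $2$ and $4$ and by a known sign in degrees $3$ and $5$, and that its action on $H^6$ can be computed via the orbifold decomposition from \cite{floccariKum3}.

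Parts (iii) and (iv) then follow by intersecting. First, $\operatorname{Fix}_{W_\sigma}(\sigma')=W_\sigma\cap\operatorname{Fix}_K(\sigma')$. Applying (ii) to $\sigma'\in G_1$, the components are $V_{\sigma_1,\sigma_2}$. Intersecting with $W_\sigma$ gives $V_{\sigma,\sigma+\sigma'}$ when $\sigma\in\{\sigma_1,\sigma_2\}$, and gives $Z_{\sigma_1,\sigma_2,\sigma}$ otherwise. There are $7$ such remaining pairs, so $28$ points. Similarly, $\operatorname{Fix}_{V_{\sigma,\sigma'}}(\sigma'')=V_{\sigma,\sigma'}\cap\operatorname{Fix}_K(\sigma'')$, and this meets only the pairs of (ii) containing $\sigma$ or $\sigma'$, but not both. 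That gives the two stated $Z$'s, $8$ points in total. The identifications $G_1\cong G/\langle\sigma\rangle$ and $G_1/\langle\sigma+\sigma'\rangle\cong G/\langle\sigma,\sigma'\rangle$ are elementary, since $\sigma\notin G_1$ and $\sigma+\sigma'\in G_1$.
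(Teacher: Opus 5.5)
Parts (ii)--(iv) follow the paper's route and are fine: you deform to $K^3(A)$, describe the fixed locus of a translation, and then intersect with $W_\sigma$ and $V_{\sigma,\sigma'}$. Your pointwise-stabilizer argument identifying the eight components in (ii) is more explicit than the paper's.

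The gap is in (i), at the step you flag yourself. The sentence ``By Proposition~\ref{prop:FixedPoints}, no further components appear'' cites the statement you are proving. So nothing you wrote shows that $\operatorname{Fix}_K(\sigma)$ has no components besides $W_\sigma$ and the $Z$'s, or that the points of the $Z$'s are isolated rather than lying on a surface component. Your claim that isolated fixed points are supported on $A_{2,\tau}$ is likewise unproved. This is a real step, because there are non-reduced $\sigma_\tau$-fixed subschemes that must be shown to lie in $W_{\sigma_\tau}$. Examples are $\{b,\tau-b\}$ together with a length-$2$ point at some $\epsilon\in A_{2,\tau}$, length-$2$ points at two elements of $A_{2,\tau}$, or a length-$4$ point at one of them. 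The Lefschetz plan does not settle this as stated. The trace on $H^6$ is the whole difficulty, and the orbifold decomposition you propose to extract it from is itself computed from these fixed loci. Moreover, a value of $\chi(\operatorname{Fix}_K(\sigma))$ alone cannot exclude, say, a $\mathrm{K}3$ component through $24$ of the $Z$-points in place of $24$ isolated points.

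The missing input is the one the paper uses. Proposition~\ref{prop:CanonicalSubvarietiesK^3(A)}(i), from \cite{floccariKum3} resting on \cite{KMO}, gives $\operatorname{Fix}(\sigma_\tau)=W_{\sigma_\tau}\sqcup\{140 \text{ isolated points}\}$. These points are the reduced subschemes $\{\epsilon_1,\dots,\epsilon_4\}$ with $\epsilon_i\in A_{2,\tau}$ distinct and $\sum_i\epsilon_i=0$. If you cite this instead, your count finishes (i):
\begin{itemize}
\item the $Z_{\sigma_1,\sigma_2,\sigma_3}$ with $\sigma_1+\sigma_2+\sigma_3=\sigma$ are fixed by $\sigma$ and miss $W_\sigma$;
\item they are pairwise disjoint, which you should state: two distinct such triples share at most one element, so their union involves at least four distinct $W$'s;
\item hence their $35\cdot 4=140$ points are exactly the isolated ones.
\end{itemize}
This replaces the paper's direct matching via Proposition~\ref{prop:CanonicalSubvarietiesK^3(A)}(iii). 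There, writing $\sigma_i=\sigma_{\tau_i}$, each $\{a,\tau_1-a,\tau_2-a,\tau_3-a\}$ with $2a=\tau_1+\tau_2+\tau_3=\tau$ is visibly one of those reduced subschemes.

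If you want a self-contained argument instead, drop Lefschetz and do two things. First, a local analysis showing that every fixed subscheme other than the reduced ones is a limit of points $\{b,\tau-b,c,\tau-c\}$, hence lies in $W_{\sigma_\tau}$. Second, observe that $\sigma_\tau$ acts by $-1$ on the tangent space $\{(v_i)\in\bigoplus_i T_{\epsilon_i}A : \sum_i v_i=0\}$ at a reduced $\{\epsilon_1,\dots,\epsilon_4\}$, so those points are isolated.
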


In fact, not only the quotient $K/G$ admits a hyper-K\"ahler resolution (Theorem \ref{thm:hyperKummerConstruction}), but also all the quotients of $W_{\sigma}$ and $V_{\sigma,\sigma'}$ by $G$ admit natural hyper-K\"ahler resolutions:
\begin{proposition}[Resolutions]
\label{prop:ResolutionQuotients} 
\phantomsection
\begin{enumerate}[label=(\roman*)]
    \item For any $\sigma\in G\setminus G_1$, the quotient $W_\sigma /G$ admits a crepant resolution $M_{\sigma} \to W_\sigma /G$ with $M_\sigma$ a hyper-K\"ahler manifold of $\mathrm{K}3^{[2]}$-type, obtained by a single blow-up of $W_\sigma/G$ along its singular locus. 
    More precisely, the total fixed locus $\bigcup_{1\neq g\in G/\langle\sigma\rangle} (W_\sigma)^g$ is the union of the $15$ $\mathrm{K}3$ surfaces $V_{\sigma,\sigma'}$ for $\sigma'\neq \sigma $ in $G\setminus G_1$, and, denoting by $\widetilde{W}_\sigma\coloneqq \mathrm{Bl}_{\bigcup_{\sigma\neq \sigma' \in G\setminus G_1} V_{\sigma,\sigma'}} (W_\sigma)$, there is a commutative diagram
    \begin{equation}\label{diag:Fujikiresolution}
    \begin{tikzcd}
    & \widetilde{W}_\sigma \arrow[swap]{ld}{{p}'} \arrow{rd}{{q}'} \\
    W_\sigma \arrow[swap]{rd}{{q}} \arrow[dashed]{rr}{{r}_{\sigma}} && M_\sigma \arrow{ld}{{p}}\\
    & W_\sigma / G 
    \end{tikzcd}
    \end{equation} 
    where ${p}'$, ${p}$ are blow-up maps, and ${q}'$, ${q}$ are quotient maps with respect to the action of $G$, or rather, of $G/\langle\sigma\rangle$. All the $M_{\sigma}$'s are isomorphic to each other.    
    \item For any $\sigma \neq \sigma'$ in $G\setminus G_1$, the quotient $V_{\sigma,\sigma'}/G$ admits a crepant resolution $S_{\sigma,\sigma'}\to V_{\sigma,\sigma'}/G$ with $S_{\sigma,\sigma'}$ a $\mathrm{K}3$ surface obtained by blowing-up the singular locus of $V_{\sigma,\sigma'}/G$, which consists of $14$ nodes. More precisely, the fixed locus $\bigcup_{1\neq g\in G/\langle \sigma, \sigma'\rangle} (V_{\sigma,\sigma'})^g$ consists of the $56$ points in the union of $Z_{\sigma,\sigma',\sigma''}$ for $\sigma''\in G\setminus G_1$ distinct from $\sigma $ and $\sigma'$. Denoting by $\widetilde{V}_{\sigma,\sigma'}\coloneqq \mathrm{Bl}_{\bigcup_{\sigma''\in G\setminus G_1, \ \sigma\neq\sigma''\neq \sigma'} Z_{\sigma,\sigma',\sigma''}} (V_{\sigma,\sigma'})$, there is a commutative diagram 
    \begin{equation}\label{diag:resolutionK3}
    \begin{tikzcd}
    & \widetilde{V}_{\sigma, \sigma'} \arrow[swap]{ld}{{p}'} \arrow{rd}{{q}'} \\
    V_{\sigma,\sigma'} \arrow[swap]{rd}{{q}} \arrow[dashed]{rr}{{r}_{\sigma,\sigma'}} && S_{\sigma,\sigma'} \arrow{ld}{{p}}\\
    & V_{\sigma,\sigma'} / G 
    \end{tikzcd}
    \end{equation} 
    where ${p}'$, ${p}$ are blow-up maps, and ${q}'$, ${q}$ are quotient maps with respect to the action of $G$, or rather, of $G/\langle\sigma, \sigma'\rangle$.
\end{enumerate}
\end{proposition}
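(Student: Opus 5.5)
We argue separately for $W_\sigma$ and $V_{\sigma,\sigma'}$, reading off the local structure of the quotient from Proposition \ref{prop:FixedPoints}.

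\textbf{Part (i): the fixed locus in $W_\sigma$.} The group $G/\langle\sigma\rangle\cong G_1$ acts faithfully and symplectically on $W_\sigma$ (Proposition \ref{prop:canonicalSubvarietiesII}). By Proposition \ref{prop:FixedPoints}(iii), a nontrivial $\sigma'\in G_1$ fixes in $W_\sigma$ the K3 surface $V_{\sigma,\sigma+\sigma'}$ together with $28$ isolated points of the form $Z_{\sigma_1,\sigma_2,\sigma}$. Each such point lies in $V_{\sigma,\sigma_1}$, which is the fixed surface of $\sigma+\sigma_1\in G_1$. So the total fixed locus is the union of the $15$ surfaces $V_{\sigma,\sigma'}$.

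\textbf{Part (i): local structure and the blow-up.} The surfaces $V_{\sigma,\sigma'}$ meet pairwise along the points $Z_{\sigma,\sigma',\sigma''}$, and three of them meet there. At a generic point of $V_{\sigma,\sigma'}$ the stabilizer is $\ZZ/2$, acting as $-1$ on the normal $\CC^2$; there the quotient is locally $\CC^2\times \CC^2/\pm1$, and blowing up the reduced singular locus resolves it crepantly. The special points of $Z_{\sigma,\sigma',\sigma''}$ have stabilizer $\langle\sigma'+\sigma,\sigma''+\sigma\rangle\cong(\ZZ/2)^2$ in $G_1$, acting symplectically on $T W_\sigma=\CC^4$. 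Decomposing $\CC^4$ into characters, the three nontrivial elements each fix a plane. So the representation is $\CC^4=\bigoplus_{\chi\ne 1}$ of character spaces, with multiplicities forcing $\CC^4\cong \CC^2_{\chi_1}\oplus\ldots$. Concretely, I would check that it is $(\CC^2)^{\oplus 2}$ with the group acting as $(\pm1,\pm1)$ on the two factors, so the quotient is locally $(\CC^2/\pm1)^2$. I expect this local model at the triple points to be the \textbf{main obstacle}: one must show that the blow-up of the reduced singular locus (two transverse surfaces) of $(\CC^2/\pm1)^2$, or equivalently of $\CC^4$ along the two planes followed by the quotient, is smooth and crepant. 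I would first try to prove this by directly computing the blow-up of the union of the two planes and observing that it is $T^*\PP^1\times T^*\PP^1$. This matches the treatment in \cite{floccariKum3} for $K/G$.

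\textbf{Part (i): conclusion.} Crepancy gives a symplectic compact Kähler fourfold $M_\sigma$. The commutative diagram follows, as in Theorem \ref{thm:hyperKummerConstruction}, since $\widetilde W_\sigma/G$ is smooth and equals the blow-up of $W_\sigma/G$. To get simple connectivity and the deformation type, I would deform along with $K$ (Hassett--Tschinkel invariance of $G$) to $K=K^3(A)$. There $W_\sigma$ is explicit and the resolution is birational to $\Km(A)^{[2]}$, so $M_\sigma$ is of $\mathrm{K}3^{[2]}$-type by Huybrechts' birational invariance and deformation invariance. The $M_\sigma$ are mutually isomorphic because the $W_\sigma$ are conjugate under $\Aut_0(K)$, which normalizes $G$.

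\textbf{Part (ii).} This follows the same pattern, now on a surface. The group $G/\langle\sigma,\sigma'\rangle$ acts symplectically on $V_{\sigma,\sigma'}$ with only isolated fixed points, by Proposition \ref{prop:FixedPoints}(iv). The union of the $Z_{\sigma,\sigma',\sigma''}$ has $14\times 4=56$ points. Their stabilizers are $\ZZ/2$, since the stabilizer of each point in $G$ is exactly $\langle\sigma,\sigma',\sigma''\rangle$ by Proposition \ref{prop:canonicalSubvarietiesII}(iv). The group has order $8$ and acts transitively on each $4$-point set $Z_{\sigma,\sigma',\sigma''}$ through a quotient of order $4$, so the $56$ points give $14$ nodes $\CC^2/\pm1$. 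Blowing these up gives a crepant resolution, and $S_{\sigma,\sigma'}$ is a K3 surface by the standard Nikulin argument on symplectic quotients of K3 surfaces.
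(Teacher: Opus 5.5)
Your plan works, but two statements in your local analysis at a point $z\in Z_{\sigma,\sigma',\sigma''}$ are false and need repair.

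First, only \emph{two} of the surfaces pass through $z$, namely $V_{\sigma,\sigma'}$ and $V_{\sigma,\sigma''}$. A third surface $V_{\sigma,\sigma'''}$ through $z$ would put $z$ in four distinct $W$'s, which Proposition~\ref{prop:canonicalSubvarietiesI}(iii) forbids.

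Second, it is not true that all three nontrivial elements of the stabilizer $\langle\sigma+\sigma',\sigma+\sigma''\rangle$ fix a plane in $T_zW_\sigma$. The elements $\sigma+\sigma'$ and $\sigma+\sigma''$ fix $T_zV_{\sigma,\sigma'}$ and $T_zV_{\sigma,\sigma''}$. However, $z$ is one of the isolated points $Z_{\sigma_1,\sigma_2,\sigma}$ with $\sigma_1+\sigma_2=\sigma'+\sigma''$ in Proposition~\ref{prop:FixedPoints}(iii). So $\sigma'+\sigma''$ acts as $-1$ on $T_zW_\sigma$. Your version is also numerically impossible: there is no nonzero common fixed vector, so $\dim(T_zW_\sigma)^g$ equals the multiplicity of the unique nontrivial character with kernel $\{1,g\}$. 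These three multiplicities sum to $4$, not $6$.

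With the correct data, $T_zW_\sigma=\CC^2_\chi\oplus\CC^2_{\chi'}$, where $\chi,\chi'$ are the two characters nontrivial on $\sigma'+\sigma''$. Hence the two fixed planes are transverse, and the local model is indeed $(\CC^2/\pm1)^2$, as you end up asserting. Your ``main obstacle'' is then immediate. The reduced ideal of the two transverse surfaces is the product of the pulled-back maximal ideals of the two nodes. Its blow-up is therefore $\Bl_0(\CC^2/\pm1)\times\Bl_0(\CC^2/\pm1)\cong T^*\PP^1\times T^*\PP^1=(\Bl_0\CC^2)^2/(\ZZ/2\ZZ)^2$, and this also gives the diagram.

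With this fixed, your argument is correct but organized differently from the paper's. The paper never analyzes $W_\sigma/G$ for a general $K$. It deforms to $K=K^3(A)$, where $W_\sigma/G\cong\Km(A)^{[2]}/A[2]$. There Fujiki's resolution (Proposition~\ref{prop:Fujikiresolution}, whose proof contains the transversality check) already gives a smooth hyper-K\"ahler fourfold birational to $\Km(A)^{[2]}$. The paper then transports this along the locally trivial family $\mathcal W_\sigma/G\to B$ by blowing up the $15$ subfamilies $\mathcal V_{\sigma,\sigma'}/G$. In (ii) it does the same along the locally trivial family of $14$-nodal surfaces. The product-of-nodes structure is invoked only to produce the diagram.

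You instead obtain smoothness, crepancy and the diagram for every $K$ directly from the fixed-point data of Proposition~\ref{prop:FixedPoints}, and deform only to determine the deformation type. The paper's route is shorter, since all local work happens in one explicit model. Yours is more intrinsic and does not need local triviality of the quotient families.

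One point to make explicit: to apply Huybrechts' theorem at $K^3(A)$ you need $M$ to be hyper-K\"ahler there. This follows either from Fujiki's criterion, as in the paper, or from bimeromorphic invariance of $\pi_1$ and $h^{2,0}$.
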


We shall prove Proposition \ref{prop:FixedPoints} and Proposition \ref{prop:ResolutionQuotients} in Section \ref{subsec:ProofProp}. Statement $(ii)$ is the well-known crepant resolution of the quotient of a K3 surface by a symplectic action of $(\ZZ/2\ZZ)^3$, while case~$(i)$ is a deformation of a construction considered by Fujiki~\cite{fujiki1983}.

Now the diagram \eqref{diag:configuration} being explained, let us point out that a crucial property of this diagram is that it deforms together with the $\mathrm{Kum}^3$-manifold $K$, in the following sense.

\begin{remark}[Deformation of the configuration {\cite[Remark 3.6]{floccariHCKum3}}]\label{rmk:deformation}
    The hyper-Kummer construction works in families. Let $\mathcal{K}\to B$ be a family of $\mathrm{Kum}^3$-manifolds. By \cite[Theorem 2.1]{hassettTschinkel}, the groups $\Aut_0(\mathcal{K}_b)$, when $b$ varies in $B$, form a local system of groups over $B$, and we get sub-local systems of abelian groups $G\supset G_1$ with fibers isomorphic to $(\ZZ/2\ZZ)^5$ and $(\ZZ/2\ZZ)^4$ respectively. We then obtain a family $\mathcal{K}/G\to B$ with fibers $K_b/G$, and the blow-up of the total singular locus yields a family $\mathcal{Y}_{\mathcal{K}}\to B$ of hyper-K\"ahler manifolds of $\mathrm{K}3^{[3]}$-type, whose fiber over $b$ is the hyper-Kummer manifold associated with $\mathcal{K}_b$. 
    Moreover, as the $W_\sigma$'s and $V_{\sigma,\sigma'}$'s are defined in terms of the fixed loci of automorphisms in $G$, they deform with~$K$ (they are trianalytic submanifolds in the terminology of Verbitsky~\cite{VerbitskyTrianalytic}). Indeed, up to a finite \'etale base-change, we obtain $16$ subfamilies $\mathcal{W}_\sigma\to B$ of $\mathcal{K}$ over $B$ of $\mathrm{K}3^{[2]}$-manifolds, as well as $120$ subfamilies $\mathcal{V}_{\sigma,\sigma'}\to B$ of $\mathrm{K}3$ surfaces. Further, we may take fiberwise the quotient by $G$ of these families and simultaneously resolve the singularities, obtaining families $\mathcal{Y}\to B$, $\mathcal{M}_\sigma\to B$, $\mathcal{S}_{\sigma,\sigma'}\to B$ of hyper-K\"ahler manifolds fitting in a diagram as \eqref{diag:configuration} of families over $B$.
\end{remark}

\subsection{The case of a generalized Kummer manifold}
\label{subsec:GeneralizedKummer}
We can use Remark \ref{rmk:deformation} to study the configuration of $\mathrm{K}3^{[n]}$-type manifolds attached to an arbitrary $K$ of $\mathrm{Kum}^3$-type via deformation to the best understood example, namely the generalized Kummer variety~$K^3(A)$ associated with an abelian surface or a $2$-dimensional complex torus $A$. This is in fact how Propositions \ref{prop:canonicalSubvarietiesI} and \ref{prop:canonicalSubvarietiesII} are proved in \cite{floccariKum3}.
We first determine the hyper-K\"ahler varieties associated with $K^3(A)$  up to birational isomorphism.

Recall from \cite{beauville1983varietes} that $K^3(A)$ is defined as the fiber of the composition $A^{[4]}\to A^{(4)}\to A$ over $0\in A$, where the first map is the Hilbert--Chow resolution, and the second one sends~$(a_1,a_2,a_3,a_4)\in A^{(4)} $ to $\sum_i a_i \in A$. We let $A_0^{(4)}\subset A^{(4)}$ be the fiber over $0$ of the latter, so that the restriction of the Hilbert--Chow morphism $\nu\colon K^3(A)\to A_0^{(4)}$ is a crepant resolution. Let us fix some more notation.

\begin{notation}
    Let $A$ be an abelian surface or a $2$-dimensional complex torus. We denote by $A[k]$ the subgroup of points of order $k$ in $A$. Given $\alpha\in A$, we denote by $A_{2,\alpha}$ the subset of those $x\in A$ such that $2x=\alpha$, which is a torsor under $A[2]$. We shall denote by $(\alpha, -1)$ the automorphism of $A$ given by $x\mapsto -x+\alpha$; notice that $A_{2,\alpha}$ is the set of fixed points of~$(\alpha,-1)$.
    Given $\tau\in A[2]$, we let $\Km^{\tau}(A)$ be the minimal resolution of $A/\langle (\tau, -1)\rangle$; then $\Km^{\tau}(A)$ is a K3 surface, isomorphic to the Kummer K3 surface $\Km(A)$ associated with $A$ (which is $\Km^0(A)$ with the above notation). The resolution introduces $16$ $(-2)$-curves $\{C_{\alpha}\}_{\alpha\in A_{2,\tau}}$ on $\Km^{\tau}(A)$.
\end{notation}

By \cite{boissiere2011higher}, we have an identification \begin{equation} 
\Aut_0(K^3(A))= A[4] \rtimes \langle -1\rangle,
\end{equation} 
with the action on $K^3(A)$ given by the restriction of the natural action of $ A[4] \rtimes \langle -1\rangle$ on $A^{[4]}$. Under the above isomorphism, we can explicitly identify the groups introduced in Definition \ref{def:subgroupsAut0}:
\begin{equation}
    \Gamma=A[4], \ \ \ G=A[2]\times \langle -1\rangle, \ \ \ G_1=A[2].
\end{equation}
Notice that any $\sigma\in G\setminus G_1$ can be written uniquely as $(\tau,-1)$ for some $\tau\in A[2]$; we shall write $\sigma_{\tau}$ to denote $(\tau,-1)$, and hence label the elements of $G\setminus G_1$ with those of $A[2]$.

The canonical submanifolds $W_\sigma$ and $V_{\sigma,\sigma'}$ of $K^3(A)$ can be described explicitly as follows (our notation is essentially that of \cite[\S3]{floccariHCKum3}).
\begin{proposition}[{\cite[\S2]{floccariKum3}}]\label{prop:CanonicalSubvarietiesK^3(A)}
    \begin{enumerate}[label=(\roman*)]
    \item For any $\tau\in A[2]$, the fixed locus of the automorphism $\sigma_{\tau} :=(\tau, -1)\in G\setminus G_1$ of $K^3(A)$ is the union of $140$ isolated points and a $4$-dimensional component $W_{\sigma_{\tau}}$ which is the strict transform of 
    \begin{equation}
    \{(a,b,-a+\tau, -b +\tau)\ | \ a,b\in A\}\subset A_0^{(4)}
    \end{equation}
    under the Hilbert--Chow morphism.
    The obvious rational map $A\times A\dashrightarrow W_{\sigma_{\tau}}$ induces an isomorphism 
       $ W_{\sigma_{\tau}}\cong (\Km^{\tau}(A))^{[2]}$.
    \item For $\tau\neq\tau'$ in $A[2]$, the intersection $V_{\sigma_{\tau}, {\sigma_{\tau}}_{'}} = W_{\sigma_{\tau}}\cap W_{{\sigma_{\tau}}_{'}}$ is the strict transform of 
     \begin{equation}
    \{(a,-a+\tau, -a+\tau', a+\tau +\tau')\ | \ a\in A \}\subset A_0^{(4)}
    \end{equation}
    under the Hilbert--Chow morphism. The obvious rational map $A\dashrightarrow V_{\sigma_{\tau}, {\sigma_{\tau}}_{'}}$ induces an isomorphism $ 
    V_{\sigma_{\tau}, {\sigma_{\tau}}_{'}} \cong \Km^{\bar{\tau}}(A/\langle \tau+\tau'\rangle)$,
    where $\bar{\tau}$ denotes the image of $\tau$ in~$(A/\langle \tau+\tau'\rangle)[2]$ (notice that $\tau$ and $\tau'$ have the same image in $A/\langle \tau+\tau'\rangle$).
    \item For pairwise distinct $\tau,\tau',\tau''$ in $A[2]$, the intersection $Z_{\sigma_{\tau}, {\sigma_{\tau}}_{'}, {\sigma_{\tau}}_{''}} = W_{\sigma_{\tau}}\cap W_{{\sigma_{\tau}}_{'}}\cap W_{{\sigma_{\tau}}_{''}}$ consists of the $4$ distinct points of the preimage in $K^3(A)$ of 
    \begin{equation}
        \{(a,-a+\tau, -a+\tau',-a+\tau'') \ | \ a\in A_{2,\tau+\tau'+\tau''}\}\subset A_0^{(4)}.
    \end{equation}
    \end{enumerate}
\end{proposition}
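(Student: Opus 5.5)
The approach is to work first on the symmetric product $A_0^{(4)}$ and then lift to $K^3(A)$ through the Hilbert--Chow morphism $\nu\colon K^3(A)\to A_0^{(4)}$. Since $\nu$ is $G$-equivariant, the fixed locus of $\sigma_\tau=(\tau,-1)$ on $K^3(A)$ maps into the fixed locus of $\sigma_\tau$ on $A_0^{(4)}$. We therefore begin by computing the latter.

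A point $\{a_1,a_2,a_3,a_4\}$ with $\sum a_i=0$ is fixed by $x\mapsto -x+\tau$ exactly when the multiset is preserved by this map. This map is an involution of $A$, so it permutes the four points with cycle type $(2,2)$, type $(2,1,1)$, or type $(1,1,1,1)$; fixed points of the involution lie in $A_{2,\tau}$.
\begin{itemize}
\item Type $(2,2)$ gives the locus $\{(a,-a+\tau,b,-b+\tau)\}$. The sum of these four points is $2\tau=0$, so the whole locus lies in $A_0^{(4)}$. It is $4$-dimensional and birational to $(A/\pm)^{(2)}$, twisted by $\tau$.
\item Types $(2,1,1)$ and $(1,1,1,1)$ contain fixed points. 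They are either specializations of type $(2,2)$, or else, after imposing $\sum a_i=0$, finite sets.
\end{itemize}
Counting the finitely many configurations that do not lie on the closure of the $(2,2)$-locus, together with the contributions from the punctual Hilbert schemes over nonreduced points, should recover the $140$ isolated points. These are the $Z$'s, namely $\binom{15}{2}\cdot 4\cdot\ldots$ arranged as in Proposition~\ref{prop:FixedPoints}. Taking this count from \cite{floccariKum3} is acceptable, since the count is only needed for consistency.

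For the identification $W_{\sigma_\tau}\cong \Km^\tau(A)^{[2]}$, consider the map $A\times A\dashrightarrow W_{\sigma_\tau}$ given by $(a,b)\mapsto\{a,-a+\tau,b,-b+\tau\}$. It factors through $(A/\langle(\tau,-1)\rangle)^{(2)}$, and hence gives a bimeromorphic map $\Km^\tau(A)^{[2]}\dashrightarrow W_{\sigma_\tau}$. Both sides are smooth, and $W_{\sigma_\tau}$ is hyper-Kähler by Proposition~\ref{prop:canonicalSubvarietiesI}. To upgrade this map to an isomorphism, we extend it to a morphism $\Km^\tau(A)^{[2]}\to A^{[4]}$. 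A length-two subscheme of $A/\pm$ determines its pullback to $A$, which is a $\sigma_\tau$-invariant length-four subscheme; this pullback is well defined after resolving, using the $G$-Hilbert description of $\Km^\tau(A)$. The resulting morphism lands in the fixed locus of $\sigma_\tau$, and it is injective because the pullback recovers the subscheme. A bijective morphism onto the smooth fixed component is an isomorphism by Zariski's main theorem. I expect this step to be the main obstacle: making the extension rigorous over the exceptional curves $C_\alpha$ and over nonreduced points requires care with the scheme structure of the pullback.

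Parts (ii) and (iii) then follow by intersecting the $(2,2)$ loci of $\sigma_\tau$ and $\sigma_{\tau'}$. A $4$-point multiset paired both as $\{a,-a+\tau\}$ and as $\{a,-a+\tau'\}$ must have the form $(a,-a+\tau,-a+\tau',a+\tau+\tau')$. This locus is invariant under $a\mapsto a+\tau+\tau'$ and under $a\mapsto -a+\tau$. It is therefore birational to the Kummer surface of $A/\langle\tau+\tau'\rangle$ for the involution $\bar\tau$. Smoothness of $V$ (Proposition~\ref{prop:canonicalSubvarietiesI}) together with the minimality of K3 surfaces then upgrades this birational map to an isomorphism.

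For three indices $\tau,\tau',\tau''$, the multiset must in addition be paired by $\tau''$. This forces the fourth point to satisfy $a+\tau+\tau'=-a+\tau''$, that is, $2a=\tau+\tau'+\tau''$, so $a\in A_{2,\tau+\tau'+\tau''}$. This gives $16$ choices of $a$. The group $\langle\tau+\tau',\tau+\tau''\rangle$ permutes these labels and preserves the multiset, so the choices yield $4$ distinct points of $A_0^{(4)}$. These points have distinct entries, so $\nu$ is an isomorphism near them, and they give exactly $4$ points of $K^3(A)$.
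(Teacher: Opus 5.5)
The paper gives no argument for this statement; it is quoted from \cite{floccariKum3}. So I judge your sketch on its own. The step you single out in (i) is not just delicate, it fails: the obvious map $\Km^{\tau}(A)^{[2]}\dashrightarrow W_{\sigma_\tau}$ does not extend to a morphism. Fix $\alpha\in A_{2,\tau}$ and linear coordinates $x,y$ centred at $\alpha$ in which $\sigma_\tau$ acts as $-1$.
\begin{itemize}
\item For distinct $p,q\in C_\alpha$ the two $\ZZ/2\ZZ$-clusters are $(\ell_p)+\mathfrak m_\alpha^2$ and $(\ell_q)+\mathfrak m_\alpha^2$. Their union is $\operatorname{Spec}(\mathcal O_{A,\alpha}/\mathfrak m_\alpha^2)$, which has length $3$. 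So ``pull back and take the union'' does not produce a point of $A^{[4]}$.
\item No other choice works either. The configurations $\{\alpha\pm(s_1,0),\ \alpha\pm(0,s_2)\}$ lie in $W_{\sigma_\tau}$. They come from points of $\Km^\tau(A)^{[2]}$ that converge to $\{[1:0],[0:1]\}\in\Sym^2C_\alpha$ however $s_1,s_2\to 0$. Their ideal at $\alpha$ is $(xy,\ s_2^2x^2+s_1^2y^2-s_1^2s_2^2)$, whose flat limit $(xy,\ rx^2+y^2)$, with $r=\lim s_2^2/s_1^2$, depends on the path.
\end{itemize}
Hence the map has indeterminacy along each of the $16$ planes $\Sym^2C_\alpha\cong\PP^2$, $\alpha\in A_{2,\tau}$, and is a local isomorphism elsewhere. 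One checks that it is the Mukai flop of these planes. The point $[pq]\in\PP(\Sym^2T_\alpha A)$ is replaced by its polar line with respect to the conic of squares; that line parametrizes the ideals $(V)+\mathfrak m_\alpha^3$ with $V$ a pencil of quadrics.

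Consequently your injectivity and Zariski-main-theorem argument cannot be carried out. For a very general torus no other argument yields an isomorphism either. Let $\delta$ be half the Hilbert--Chow class and let $C_\alpha$ also denote the induced divisor classes. The K\"ahler cone of $\Km^\tau(A)^{[2]}$ is bounded by the $17$ walls $\delta^\perp$ and $(2C_\alpha-\delta)^\perp$. The K\"ahler cone of the flopped manifold is bounded by the $32$ walls $C_\alpha^\perp$ and $(2C_\alpha-\delta)^\perp$. What your construction does give is a bimeromorphic identification $W_{\sigma_\tau}\sim\Km^\tau(A)^{[2]}$. That is what the rest of the paper actually uses: the $\mathrm{K}3^{[2]}$-type via Huybrechts, and the identification of second cohomology.

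Beyond this, several steps are only asserted.
\begin{itemize}
\item The $140$ isolated points all lie over reduced configurations. They are the zero-sum $4$-element subsets of $A_{2,\tau}$, i.e.\ the affine planes of the $\FF_2^4$-torsor $A_{2,\tau}$, giving $35\cdot 4=140$. Your expression $\binom{15}{2}\cdot 4\cdots$ does not give this count.
\item Nothing comes from punctual Hilbert schemes, but this is exactly what must be proved. You need to show that every $\sigma_\tau$-fixed point in a Hilbert--Chow fibre over a non-reduced point of the closed $(2,2)$-locus already lies in $W_{\sigma_\tau}$. This is a local computation with $(-1)$-invariant punctual subschemes of $\CC^2$. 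Deferring it to \cite{floccariKum3} is not a proof.
\item You may not invoke Proposition~\ref{prop:canonicalSubvarietiesI}, because the paper deduces it from the present proposition by deformation. Smoothness of $W_{\sigma_\tau}$ and of $W_{\sigma_\tau}\cap W_{\sigma_{\tau'}}$ instead follows from smoothness of fixed loci of finite groups acting on smooth varieties.
\item In (ii) you only intersect the open reduced loci. You still have to exclude further components of $W_{\sigma_\tau}\cap W_{\sigma_{\tau'}}$ over non-reduced points.
\item Part (iii) is fine once you add one observation. A point of the triple intersection is fixed by the translations by $\tau+\tau'$ and $\tau+\tau''$, which act freely. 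So it lies over a reduced configuration, where your pairing argument applies.
\end{itemize}
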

\begin{remark}\label{rmk:actionAut_0}
    Let $g=(\epsilon,\pm 1)\in A[4]\rtimes \langle -1\rangle=\Aut_0(K^3(A))$. Then the action of $g$ sends $W_{\sigma_{\tau}}$ to $W_{{\sigma_{2\epsilon+\tau}}}$, $V_{\sigma_{\tau},{\sigma_{\tau}}_{'}}$ to $V_{\sigma_{2\epsilon+\tau}, {\sigma_{2\epsilon+\tau}}_{'}}$ and $Z_{\sigma_{\tau},{\sigma_{\tau}}_{'},{\sigma_{\tau}}_{''}} $ to $Z_{{\sigma_{2\epsilon+\tau}},{\sigma_{2\epsilon+\tau}}_{'},{\sigma_{2\epsilon+\tau}}_{''}}$.
\end{remark}

The remainder of this section is devoted to describe the hyper-Kummer manifolds $Y_{K^3(A)}$, $M_{\sigma_{\tau}}$ and $S_{\sigma_{\tau},{\sigma_{\tau}}_{'}}$ associated with $K^3(A)$. The following proposition is proved at the end of this Section after some preparations.
\begin{proposition}\label{prop:Y_K^3(A)} \phantomsection \begin{enumerate}[label=(\roman*)]
\item The hyper-Kummer $\mathrm{K}3^{[3]}$-manifold $Y_{K^3(A)}$ is birational to $\Km(A)^{[3]}$.
\item For any $\tau\in A[2]$, the hyper-Kummer $\mathrm{K}3^{[2]}$-manifold $M_{\sigma_{\tau}}$ obtained as crepant resolution of the quotient of $W_{\sigma_{\tau}}\subset K^3(A)$ by $G$ is birational to $(\Km(A))^{[2]}$.
\item For any $\tau \neq \tau'$ in $A[2]$, the hyper-Kummer $\mathrm{K}3$ surface $S_{\sigma_{\tau}, {\sigma_{\tau}}_{'}}$ obtained as crepant resolution of the quotient of $V_{\sigma_{\tau}, {\sigma_{\tau}}_{'}} \subset K^3(A)$ by $G$ is isomorphic to the Kummer $\mathrm{K}3$ surface $\Km(A)$.
    \end{enumerate}
\end{proposition}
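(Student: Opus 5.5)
The plan is to reduce all three statements to explicit computations on the open loci of $A^{(4)}_0$, $A^{(2)}$ and $A$, using the descriptions in Proposition \ref{prop:CanonicalSubvarietiesK^3(A)}. Since $Y_{K^3(A)}$, $M_{\sigma_\tau}$ and $S_{\sigma_\tau,\sigma_{\tau'}}$ are crepant resolutions of $K^3(A)/G$, $W_{\sigma_\tau}/G$ and $V_{\sigma_\tau,\sigma_{\tau'}}/G$, for (i) and (ii) it suffices to produce birational maps from these quotients to $\Km(A)^{[3]}$ and $\Km(A)^{[2]}$. For (iii) we additionally use that a birational map between K3 surfaces is an isomorphism.

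For (i), the key step is the isomorphism of orbifolds $A^{(4)}_0/G\simeq (A/\pm1)^{(3)}$ announced in Example \ref{example:hyperKummerConstruction} (Lemma \ref{lem:isomorphismOrbifolds}). Consider the map $A^4_0\to A^3$, $(a_1,\dots,a_4)\mapsto (a_1+a_2,a_1+a_3,a_1+a_4)$. It is surjective, since $a_1$ can be recovered from $2a_1=b_2+b_3+b_4$ up to $A[2]$. Its fibers are therefore $A[2]$-torsors, with $\epsilon\in A[2]$ acting by translation on all $a_i$.

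Next we track the symmetric group. Permuting $a_2,a_3,a_4$ permutes the $b_i$. The transposition $a_1\leftrightarrow a_2$ sends $(b_2,b_3,b_4)$ to $(b_2,\,b_2-b_3-b_1',\dots)$. Concretely, $a_2+a_3=-(a_1+a_4)$, so it becomes $(b_2,-b_4,-b_3)$. Modulo signs this is again a permutation. Hence $\mathfrak S_4\times\{\pm1\}\ltimes A[2]$ acting on $A^4_0$ descends to $\mathfrak S_3\ltimes\{\pm1\}^3$ on $A^3$, and one gets $A^{(4)}_0/G\cong A^3/(\{\pm1\}^3\rtimes\mathfrak S_3)=(A/\pm1)^{(3)}$. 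This is a finite bijective morphism between normal varieties, hence an isomorphism; I will check the group-order count $24\cdot 2\cdot16 = 16\cdot 48$ to confirm this.

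Since $K^3(A)\to A^{(4)}_0$ and $\Km(A)^{[3]}\to (A/\pm1)^{(3)}$ are birational, (i) follows.

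For (ii), $W_{\sigma_\tau}$ is birational to $A^2/\langle(\tau,-1)\rangle^{2}\rtimes\mathfrak S_2$ via $(a,b)$. The remaining group $G/\langle\sigma_\tau\rangle\cong A[2]$ acts by translation $(a,b)\mapsto(a+\epsilon,b+\epsilon)$, together with the swap of $a$ with $-a+\tau$, which is already quotiented. The map $(a,b)\mapsto(\overline{a+b},\overline{a-b+\tau})$, i.e.\ $a_1+a_2$ and $a_1+a_4$ in the notation above, identifies $W_{\sigma_\tau}/G$ birationally with $(A/\pm1)^{(2)}$. This is the restriction of the isomorphism in (i), since $a_1+a_3=\tau$ is then a $2$-torsion point.

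For (iii), restrict once more. On $V$ the parametrization $a\mapsto(a,-a+\tau,-a+\tau',a+\tau+\tau')$ gives $b_2=\tau$ and $b_3=\tau'$, both constant, and $b_4=2a+\tau+\tau'$. Thus $V/G\to A/\pm1$, $a\mapsto \overline{2a+\tau+\tau'}$, is birational: the residual group $(\ZZ/2)^3$ together with $\langle\tau+\tau'\rangle$ accounts exactly for the degree $16$ of multiplication by $2$ composed with the quotient identifications. So $S_{\sigma_\tau,\sigma_{\tau'}}$ is birational to $\Km(A)$, hence isomorphic to it.

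The main obstacle is the bookkeeping in (i): verifying that the induced group on $A^3$ is exactly $\{\pm1\}^3\rtimes\mathfrak S_3$, and that the kernel of the action is precisely $G$, which is needed for the orbifold (not merely birational) statement. I would check this by computing generators explicitly and counting orders.
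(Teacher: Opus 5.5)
Your proof is correct and shares the paper's skeleton. The common ingredients are the map $\overline q(a_1,\dots,a_4)=(\overline{a_1+a_2},\overline{a_1+a_3},\overline{a_1+a_4})$, the birationality of the Hilbert--Chow morphisms, and, for (iii), the identification $V_{\sigma_\tau,\sigma_{\tau'}}/G\sim A/(A[2]\times\langle -1\rangle)\cong A/\pm 1$ via multiplication by $2$.

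The differences are in how the orbifold isomorphism is proved and how (ii) is derived.

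For (i), the paper (Lemma \ref{lem:isomorphismOrbifolds}) uses a three-case analysis to show that points with equal image under $\overline q$ differ by an element of $G$. You instead factor the quotient through the isogeny $A^4_0\to A^3$, whose kernel is the diagonal $A[2]$, and identify the image of $\mathfrak S_4\times\{\pm1\}$ with $\{\pm1\}^3\rtimes\mathfrak S_3$. This is cleaner. However, the check you deferred is needed even for birationality, since a proper image subgroup would make the induced map have degree $>1$. It goes as follows:
\begin{itemize}
\item $\mathfrak S_4$ acts on the three pairings $\{12|34\},\{13|24\},\{14|23\}$ through $\mathfrak S_4\to\mathfrak S_3$.
\item The Klein four-group gives the three even sign changes; for example, $(12)(34)$ sends $(b_2,b_3,b_4)$ to $(b_2,-b_3,-b_4)$.
\item Hence $\mathfrak S_4$ maps isomorphically onto the signed permutations with an even number of sign changes, and $-1\mapsto(-1,-1,-1)$ supplies the rest.
\end{itemize}
In particular, the kernel of the whole action on $A^3$ is $A[2]$, not $G$ as you wrote at the end, because $-1\in G$ acts nontrivially.

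For (ii), the paper reduces to $\tau=0$ by $\Aut_0$-equivariance and proves the separate Lemma \ref{lem:isomorphismOrbifolds2} for $\bar s(\bar x,\bar y)=(\overline{x+y},\overline{x-y})$. You simply restrict the bijection of (i) to $\overline W_{\sigma_\tau}/G$. Its image is the locus of cycles containing $\bar\tau$, which is in bijection with $(A/\pm1)^{(2)}$ via $\{x,y\}\mapsto\{\bar\tau,x,y\}$. This avoids a second case analysis and treats all $\tau$ at once. The paper itself uses exactly this restriction later, in the proof of Lemma \ref{lem:geometryMapq}.
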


For any positive integer $n$, there is a birational morphism $m\colon \Km(A)^{[n]}\to (A/\pm 1)^{(n)}$, given as the composition of the Hilbert--Chow morphism $\Km(A)^{[n]}\to \Km(A)^{(n)}$ and the natural morphism $\Km(A)^{(n)}\to (A/\pm 1)^{(n)}$. Now $(A/\pm 1)^{(n)}$ is singular along the union of $17$ components of codimension~$2$: the big diagonal $\overline{D}\subset (A/\pm 1)^{(n)}$ of points $(\overline{a_1},\overline{a_2}, \dots, \overline{a_n})$ supported on at most $n-1$ distinct points, and the $16$ components $\overline{R}_{\tau}$ of points containing in their support the node of $(A/\pm 1)$ corresponding to $\tau\in A[2]$. 
\begin{remark}\label{rmk:Km(A)^[n]}
    Denote by $\overline{U}\subset (A/\pm 1)^{(n)}$ the subset of the points $(\overline{a_1},\dots,\overline{a_n})$ whose support consists of at least $n-1$ distinct points and contains at most $1$ node (with multiplicity 1). 
    Then $\overline{U}\subset (A/\pm 1)^{(n)}$ is open with complement of codimension $>2$. The singular locus of $\overline{U}$ is the disjoint union of the $17$ components $\overline{D}\cap \overline{U}$ and $\overline{R}_{\tau}\cap \overline{U}$, for $\tau\in A[2]$. 
    Denote by $U\subset \Km(A)^{[n]}$ the open subvariety $m^{-1}(\overline{U})$. The restriction $m_{|_U}\colon U\to \overline{U}$ of $m\colon \Km(A)^{[n]}\to (A/\pm 1)^{(n)}$ is identified with the blow-up of $\overline{U} $ along its singular locus. 
    The exceptional divisor has $17$ components, which give in $\Km(A)^{[n]}$ the Hilbert--Chow divisor $D$ and, for $\tau\in A[2]$, the divisors $R_{\tau}\subset \Km(A)^{[n]}$ of subschemes whose support intersects the exceptional curve $C_{\tau}\subset \Km(A)$, lying over $\overline{D}$ and $\overline{R}_{\tau}$ respectively. 
\end{remark}

Consider the Hilbert--Chow morphism $\nu\colon K^3(A)\to A_0^{(4)}$. It is equivariant for the natural action of $G=A[2]\times \langle -1\rangle$. The proof of Proposition \ref{prop:Y_K^3(A)}.$(i)$ is based on the following isomorphism of symplectic orbifolds.
\begin{lemma}\label{lem:isomorphismOrbifolds}
    There is a natural isomorphism of symplectic orbifolds 
    \begin{equation}
        A^{(4)}_0/ G \xrightarrow{\ \sim \ } (A/\pm 1)^{(3)}.
    \end{equation}
\end{lemma}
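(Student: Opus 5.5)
The plan is to write down the map from Example~\ref{example:hyperKummerConstruction} explicitly and check that it is a bijective morphism between normal varieties, hence an isomorphism, and that it respects the symplectic orbifold structure. Define
\[
\Phi\colon A^{(4)}_0\longrightarrow (A/\pm 1)^{(3)},\qquad \{a_1,a_2,a_3,a_4\}\mapsto \{\overline{a_1+a_2},\ \overline{a_1+a_3},\ \overline{a_1+a_4}\}.
\]
The first task is to show $\Phi$ is well defined, i.e.\ independent of which point is labelled $a_1$. Using $a_1+a_2+a_3+a_4=0$, we have $a_i+a_j=-(a_k+a_l)$ for $\{i,j,k,l\}=\{1,2,3,4\}$. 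Hence the three classes $\overline{a_i+a_j}$, indexed by the three partitions of $\{1,2,3,4\}$ into two pairs, do not depend on the choice of $a_1$. So $\Phi$ is $\mathfrak S_4$-invariant. It is therefore a morphism: first lift it to $A^4_0\to (A/\pm1)^3\to (A/\pm1)^{(3)}$, then descend along the quotient by $\mathfrak S_4$. Next I check $G$-invariance. Replacing every $a_i$ by $-a_i$ changes each sum by a sign, which is killed in $A/\pm1$. Translating every $a_i$ by a $2$-torsion point $\tau$ changes each sum by $2\tau=0$, and preserves $\sum a_i=0$ since $4\tau=0$. So $\Phi$ factors through $\bar\Phi\colon A^{(4)}_0/G\to(A/\pm1)^{(3)}$.

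Next I show $\bar\Phi$ is bijective. Given $b_2,b_3,b_4\in A$ with prescribed classes in $A/\pm1$, the equations $a_1+a_j=b_j$ together with $\sum a_i=0$ give $a_j=b_j-a_1$ and
\[
2a_1=b_2+b_3+b_4 .
\]
So $a_1$ is determined up to $A[2]$; this is exactly the translation part of $G$, which moves all $a_i$ simultaneously. The sign ambiguities in the lifts $b_j$ are controlled as follows. Changing all three signs corresponds to $-1\in G$. Changing a single sign, say $b_2\mapsto -b_2$, should be absorbed by relabelling which point plays the role of $a_1$. Precisely, the triple $(a_1+a_2,a_1+a_3,a_1+a_4)$ computed from $a_2$ instead of $a_1$ is $(a_1+a_2,\,-(a_1+a_4),\,-(a_1+a_3))$, so two signs flip. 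Combined with the global sign from $-1$, this lets us realise every sign pattern, and the unordered triple handles the permutations. A cleaner way to organise the count is by degrees: $\deg(A^4_0\to A^{(4)}_0/G)=24\cdot 32$, while $\deg(A^3\to(A/\pm1)^{(3)})=6\cdot 8$. The lift $A^3\to A^4_0$, $(b_2,b_3,b_4)\mapsto(a_i)$, is a $16$-to-$1$-inverse correspondence, and $24\cdot32/(16\cdot 48)=1$, so $\bar\Phi$ has degree $1$. Together with surjectivity (by the solvability above) and finiteness, this gives a finite birational morphism between normal varieties, hence an isomorphism by Zariski's main theorem. Both spaces are normal, being quotients of smooth (resp.\ normal) varieties by finite groups.

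Finally, for the statement as an isomorphism of \emph{symplectic orbifolds}, I would exhibit $\bar\Phi$ globally as induced by a group isomorphism. Consider the linear isomorphism $A^4_0\to A^3$, $(a_i)\mapsto(a_1+a_2,a_1+a_3,a_1+a_4)$ composed with the cover; better, its inverse $\psi\colon A^3\to A^4_0$, which is an isogeny of degree $16$ with kernel identified with $A[2]$. The group $\mathfrak S_4\ltimes$ ($A[2]\times\pm1$) upstairs corresponds under $\psi$ to the group $\mathfrak S_3\ltimes(\pm1)^3$ acting on $A^3$, extended by $\ker\psi$. Then both sides are global quotients of the same torus $A^3$ by the same finite group of symplectic automorphisms, since translations and $\pm1$ preserve the product symplectic form. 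This makes the orbifold (and symplectic) structures match, and the holomorphic symplectic forms agree up to a scalar. The main obstacle is this group-theoretic identification: checking that the preimage under $\psi$ of $\mathfrak S_4\ltimes G$ is exactly $\ker\psi\rtimes(\mathfrak S_3\ltimes\{\pm1\}^3)$. In particular the transpositions of $\mathfrak S_4$ not fixing $a_1$ must correspond to the partial sign changes, as in the relabelling computation above.
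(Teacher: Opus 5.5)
Your proof of the isomorphism of varieties is correct, and its decisive step differs from the paper's. The paper shows that the fibres of $\overline{q}$ are $G$-orbits by an explicit case analysis of sign patterns. After reordering one has $\overline{a_1+a_j}=\overline{a'_1+a'_j}$ for $j=2,3,4$, and the cases are:
\begin{itemize}
\item all signs agree, handled by a translation by $A[2]$;
\item all signs are opposite, handled by $-1$ composed with a translation;
\item up to $-1$, exactly one sign is opposite, handled by $(\theta,-1)$ composed with a relabelling.
\end{itemize}
Your relabelling discussion is the same argument. Your degree count, however, is genuinely different and shorter. The map $\lambda\colon A^4_0\to A^3$, $a\mapsto(a_1+a_2,a_1+a_3,a_1+a_4)$, is an isogeny of degree $16$. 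The group $\mathfrak S_4\times G$ acts faithfully on $A^4_0$, and $\mathfrak S_3\ltimes\{\pm1\}^3$ acts faithfully on $A^3$. Hence $768\cdot\deg\bar\Phi=16\cdot 48$, so $\bar\Phi$ is birational. Finiteness and normality of $(A/\pm1)^{(3)}$ then finish the proof with no case analysis. This also makes explicit the normality input that the paper's final step (``fibres are orbits, hence an isomorphism'') uses tacitly.

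Your last paragraph, however, contains a false claim that contradicts your own degree count. The map $\lambda$ is not an isomorphism, so there is no inverse $\psi$. The natural map back, $b\mapsto(b_2+b_3+b_4,\,b_2-b_3-b_4,\,-b_2+b_3-b_4,\,-b_2-b_3+b_4)$, satisfies $\psi\circ\lambda=2$ and has degree $2^8$. The correct statement is that $\ker\lambda=\{(\tau,\tau,\tau,\tau)\mid \tau\in A[2]\}$, which is exactly the translation part of $G$.

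With this correction, your ``main obstacle'' becomes easy and yields a cleaner third proof. The map $\lambda$ is equivariant for a homomorphism $\mathfrak S_4\times G\to\mathfrak S_3\ltimes\{\pm1\}^3$; the source is a direct product, since $G$ commutes with permutations. This homomorphism:
\begin{itemize}
\item kills translations;
\item sends $-1$ to $-1$;
\item sends the Klein group $V_4\subset\mathfrak S_4$ isomorphically onto the even sign changes (e.g.\ $(12)(34)$ acts by $(b_2,b_3,b_4)\mapsto(b_2,-b_3,-b_4)$);
\item maps $\mathfrak S_4/V_4$ onto the permutations of the three pair-partitions.
\end{itemize}
The image of $\mathfrak S_4$ consists of signed permutations with an even number of sign changes, so it does not contain $-1$. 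Hence the kernel is exactly $\ker\lambda$, and comparing orders ($768/16=48$) shows the homomorphism is onto. Therefore
$A^{(4)}_0/G=A^4_0/(\mathfrak S_4\times G)\cong (A^4_0/\ker\lambda)/(\mathfrak S_3\ltimes\{\pm1\}^3)\cong(A/\pm1)^{(3)}$.

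Since $\ker\lambda$ acts freely, this is even an isomorphism of quotient stacks. Moreover, a one-line computation gives $\lambda^*\bigl(\sum_{j=1}^{3}\mathrm{pr}_j^*\omega\bigr)=\bigl(\sum_{i=1}^{4}\mathrm{pr}_i^*\omega\bigr)|_{A^4_0}$ for a symplectic form $\omega$ on $A$. So the symplectic forms match exactly, not just up to a scalar.
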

\begin{proof}
  Let $a=(a_1,a_2,a_3,a_4)\in A_0^{(4)}$, and consider the set
	\begin{equation} 
 S(a)\coloneqq  \{a_1+a_2, \, a_1+a_3, \, a_1+a_4, \, a_2+a_3, \, a_2+a_4, \, a_3+a_4\} \end{equation}
	of points in $A$. Since $a\in A^{(4)}_0$, we have the equations $a_i+a_j = -a_{i'} -a_{j'}$ whenever $\{i,j,i',j'\} =\{1,2,3,4\}$. Hence, the image of $S(a)$ via the quotient $A\to A/\pm 1$ consists of $3$ points, each repeated twice. We thus obtain a well-defined morphism $\overline{q}\colon A^{(4)}_0\to (A/\pm 1)^{(3)}$ by setting 
 \begin{equation} \label{eq:defBarq}
 \overline{q}(a_1,a_2,a_3,a_4)=(\overline{a_1+a_2}, \overline{a_1+a_3},\overline{a_1+a_4}) \in (A/\pm 1)^{(3)}.
 \end{equation} 

 We now consider the action of the group $G$. It is immediate to check that for any $a\in A_0^{(4)}$ and any $g\in G$, we have $\overline{q}(g(a)) = \overline{q}(a)$. Conversely, let $a,a'\in A_0^{(4)}$ be such that $\overline{q}(a)=\overline{q}(a')$. We claim that then there exists $g\in G$ such that $a=g(a')$. 
 
 Indeed, $\overline{q}(a)=\overline{q}(a')$ if and only if $\pi(S(a))=\pi(S(a'))$. Without loss of generality, we may assume $\overline{a_1+a_j} =\overline{a'_1+a'_j}$, for $j=2,3,4$. 
    \begin{itemize}
        \item[$\bullet$] If we have $a_1+a_j = a_1'+a'_j$ in $A$ for $j=2,3,4,$ we find that $a' = a+\tau$ for some $\tau\in A[2]$. Indeed, we obtain $a_1-a_1' = a'_j-a_j$ for all $j$, and hence $a' = (a_1 - \tau, a_2+\tau, a_3+\tau, a_4+\tau)$ where $\tau\coloneqq a_1-a_1'$. Since $a, a'\in A_0^{(4)}$ by assumption, we must have $2\tau=0$, and hence $a'=a+\tau$ is a translate of $a$ by a point of order $2$.
        \item[$\bullet$] If $a_1+a_j = - a'_1 - a'_j$ in $A$ for $j=2,3,4$, we reduce to the previous case by applying the automorphism $-1$, and find $\tau\in A[2]$ so that $a'=-a+\tau$. 
        \item[$\bullet$] Up to permutations of the $a'_i$ and the automorphism $-1$, the last case to consider is when $a_1+a_j=  a'_1 + a'_j$ for $j=2,3$, but $a_1+a_4= -a_1' - a_4'$. 
    Setting $\tau\coloneqq a_1-a_1'$, we must have $a'=(a_1-\tau, a_2+\tau, a_3+\tau, -a_4 + \tau -2a_1)$. 
    Since $a,a'\in A_0^{(4)}$, we have $0 = \sum_i a'_i = \sum_i a_i - 2a_4 + 2\tau -2a_1$. Therefore, $\theta \coloneqq a_4- \tau +a_1$ is a $2$-torsion point. Substituting for $\tau$, we find $a'=(-a_4+\theta, -a_3+\theta, -a_2+\theta, -a_1+\theta)$, and hence $a'$ is obtained from $a$ applying the automorphism $(\theta, -1)\in G$.
    \end{itemize}
    We have thus shown that the fibers of $\overline{q}\colon A_0^{(4)}\to (A/\pm 1)^{(3)}$ are exactly the orbits under the action of $G$. Therefore, $\overline{q}$ factors as the composition of the quotient morphism $A_0^{(4)}\to A_0^{(4)}/G$ with an isomorphism $A_0^{(4)}/G\xrightarrow{\ \sim \ } (A/\pm 1)^{(3)}$. 
\end{proof}

We now turn our attention to the quotients $W_{\sigma_\tau}/G$ of the $\mathrm{K}3^{[2]}$-type submanifolds $W_{\sigma_{\tau}}$ of $K^3(A)$. We will show that in this case $W_{\sigma_{\tau}}/G$ has a crepant resolution $M_{\sigma_{\tau}}$ birational to $\Km(A)^{[2]}$.

The group $G$ induces a faithful action of $G/\langle \sigma_{\tau} \rangle\cong (\ZZ/2\ZZ)^4$ on $W_{\sigma_{\tau} }$ via symplectic automorphisms. 
If $\epsilon\in A[4]$, conjugation by $\epsilon$ in $A[4]\rtimes \langle -1\rangle$ sends $\sigma_{\tau}$ to $\sigma_{\tau+2\epsilon}$, and the action of $\epsilon $ on $K^3(A)$ induces an isomorphism $W_{\sigma_{\tau} } \xrightarrow{\ \sim \ } W_{\sigma_{\tau+2\epsilon}}$. 
We can therefore find $G$-equivariant isomorphisms $W_{\sigma_{\tau} }\xrightarrow{\ \sim \ } W_{{\sigma_{\tau}}_{'}}$, for any $\tau,\tau'$ in $A[2]$. 
It will thus be sufficient to treat the case of $W_0$ with its $G$-action, in which case we just have $\Km(A)^{[2]}$ with the natural action of $A[2]$ (induced by the action of $A[2]$ on $\Km(A)$).
For any $0\neq \theta \in A[2]$, the graph of $\theta\colon \Km(A)\to \Km(A)$ gives the embedding $V_{\theta} = \Km(A/\langle \theta \rangle) \hookrightarrow \Km(A)^{[2]}$, as component of the fixed locus of $\theta$ acting on $\Km(A)^{[2]}$ (see also \cite{KMO}); in our notation, this is the inclusion $V_{\sigma_0,\sigma_\theta}\hookrightarrow W_0$. 

\begin{proposition} \label{prop:Fujikiresolution}
    We have $\bigcup_{0\neq \theta \in A[2]} (\Km(A)^{[2]})^{\theta} =\bigcup_{0\neq \theta \in A[2]} V_{\theta} $. 
    The blow-up of the singular locus of $\Km(A)^{[2]}/A[2]$ yields a smooth hyper-K\"ahler manifold~$M$ of $\mathrm{K}3^{[2]}$-type, birational to $\Km(A)^{[2]}$. We have a commutative diagram 
    \begin{equation} \begin{tikzcd}
        & \mathrm{Bl}_{\bigcup_{0\neq \theta} V_{\theta} } \Km(A)^{[2]} \arrow{rd}{q'} \arrow[swap]{ld}{p'}  \\
        \Km(A)^{[2]}\arrow[swap]{rd}{q} \arrow[dashed]{rr}{\bar{r}} && M\arrow{ld}{p} \\
        & \Km(A)^{[2]}/A[2]
    \end{tikzcd}
    \end{equation}
    in which $p,p'$ are the blow-up maps and $q,q'$ are the quotient maps for the action of $A[2]$.
\end{proposition}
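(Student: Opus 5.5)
The plan is to work on $S\coloneqq\Km(A)$ with its $A[2]$-action by translations. This action is symplectic. A nonzero $\theta\in A[2]$ acts on $\Km(A)$ freely away from the exceptional curves? No: translation by $\theta$ commutes with $-1$, and on $A/\pm1$ it fixes the images of the points $x$ with $x+\theta=-x$, i.e.\ $2x=\theta$. These are $16$ points of $A$, giving $8$ fixed points on $\Km(A)$, off the exceptional curves (since $\theta\neq0$). So $\theta$ is a symplectic involution on $S$ with $8$ isolated fixed points, and $S/\theta$ resolves to $\Km(A/\langle\theta\rangle)$.

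First we compute the fixed loci on $S^{[2]}$. By \cite{KMO} (or a direct analysis), the fixed locus of a symplectic involution $\theta$ on $S^{[2]}$ has two parts: the surface $V_\theta$ of pairs $\{x,\theta x\}$, with its closure over the $8$ fixed points, and the $\binom{8}{2}=28$ isolated points $\{x_i,x_j\}$ with $x_i,x_j$ fixed. Comparing with Proposition \ref{prop:FixedPoints}(iii) via $W_0\cong S^{[2]}$, these $28$ points are the points $Z_{\sigma_1,\sigma_2,\sigma_0}$. Each of them lies on some $V_{\theta'}$ for another $\theta'$: a pair $\{x_i,x_j\}$ of $\theta$-fixed points with $x_j=\theta' x_i$ lies on $V_{\theta'}$. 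We check that such a $\theta'$ exists. Lifting to $A$, with $2a=\theta$ and $2b=\theta$, we have $b=\pm a+\eta$ for some $\eta\in A[2]$, so $\theta'=\eta\neq0$. Hence $\bigcup_\theta (S^{[2]})^\theta=\bigcup_\theta V_\theta$. This proves the first claim.

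Next we handle the local structure of the quotient. Along a generic point of $V_\theta$, the stabilizer is $\langle\theta\rangle$, acting as $-1$ on the normal bundle (rank $2$, symplectic). So the quotient is locally $\CC^2\times \CC^2/\pm1$, and blowing up the image of $V_\theta$ gives a crepant resolution.

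The main obstacle is the points lying on several $V_\theta$. For $\theta\neq\theta'$, the intersection $V_\theta\cap V_{\theta'}$ is the set $Z$ of points fixed by $\langle\theta,\theta'\rangle$, and at such points the stabilizer is $(\ZZ/2)^2$ or larger. Here I would use Proposition \ref{prop:canonicalSubvarietiesII}(iv): the stabilizer in $A[2]$ of a point of $Z_{\sigma,\sigma',\sigma''}$ is $\langle\sigma',\sigma''\rangle\cong(\ZZ/2)^2$ modulo $\sigma=\sigma_0$. Locally the action on $T=\CC^4$ is by a Klein four group $\{1,\theta,\theta',\theta\theta'\}$ inside $\mathrm{Sp}_4$. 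Each nonidentity element has a $2$-dimensional fixed plane, and the three planes are the tangent spaces of the three surfaces $V$ through the point. The tangent space then splits as $T=P_1\oplus P_2$ with the group acting by $(\pm1,\pm1)$, so the quotient is locally $\CC^2/\pm1\times\CC^2/\pm1$. Only two of the three $V$'s pass through the point, consistent with the isolated points of each involution. Blowing up the reduced singular locus of this product gives $\widetilde{\CC^2/\pm1}\times\widetilde{\CC^2/\pm1}$, which is smooth and crepant. The same computation upstairs shows that the blow-up of $S^{[2]}$ along $\bigcup V_\theta$ (two transversal surfaces meeting in points, in $4$-space) is smooth, after the standard check of the strict-transform order. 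It also shows that the lifted group acts with fixed loci that are divisors only, so that $q'$ is a quotient to a smooth space. Commutativity of the diagram follows from the universal property of blow-ups, as in Theorem \ref{thm:hyperKummerConstruction}.

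Finally, $M$ is smooth, compact, and carries a holomorphic symplectic form extended from the quotient (crepancy). It is simply connected, with $h^{2,0}=1$, since $\pi_1$ of the quotient is generated by stabilizers and $H^{2,0}$ is the invariant part. So $M$ is hyper-K\"ahler. It is birational to $\Km(A)^{[2]}$ through the isomorphism $A_0^{(4)}$-style argument restricted to $W_0$: Lemma \ref{lem:isomorphismOrbifolds} restricted to $\{(a,b,-a,-b)\}$ gives $W_0/G$ birational to $(A/\pm1)^{(2)}$, hence to $\Km(A)^{[2]}$. Since $M$ is birational to a $\mathrm{K}3^{[2]}$-type manifold, it is of $\mathrm{K}3^{[2]}$-type by Huybrechts' theorem.
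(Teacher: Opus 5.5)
Your route is the paper's, which defers to Fujiki and \cite[\S4]{floccariKum3}:
\begin{itemize}
\item identify the total fixed locus with $\bigcup_\theta V_\theta$;
\item show the $V_\theta$ meet transversally, so that $\Km(A)^{[2]}/A[2]$ is locally a product of nodes;
\item blow up and check the hyper-K\"ahler property;
\item get the deformation type from birationality and \cite[Theorem 4.6]{Huy99}.
\end{itemize}
Your fixed-point computation is correct, including that the $28$ isolated points $\{\bar a,\overline{a+\eta}\}$ with $2a=\theta$ lie on $V_\eta$. Restricting Lemma~\ref{lem:isomorphismOrbifolds} to $\{(a,b,-a,-b)\}$ gives exactly the map $(\bar x,\bar y)\mapsto(\overline{x+y},\overline{x-y})$ of Lemma~\ref{lem:isomorphismOrbifolds2}, which is what the paper uses.

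Two steps of your local analysis are stated incorrectly, although the conclusions they support are true.

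\textbf{Fixed planes at $z\in V_\theta\cap V_{\theta'}$.} It is false that every nonidentity element of $\{1,\theta,\theta',\theta\theta'\}$ fixes a $2$-plane in $T_z$. The point $z$ is one of the $28$ isolated fixed points of $\theta\theta'$, so $\theta\theta'$ acts as $-\mathrm{id}$ on $T_z$. This is exactly what proves transversality: $T_zV_\theta\cap T_zV_{\theta'}\subset T_z^{\theta\theta'}=0$. Hence $T_z=T_zV_\theta\oplus T_zV_{\theta'}$, with $\theta$ and $\theta'$ acting by $(1,-1)$ and $(-1,1)$. Your ``three planes'' sentence contradicts your next sentence and your own $(\pm1,\pm1)$ model. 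Relatedly, the set of points fixed by $\langle\theta,\theta'\rangle$ is not just $V_\theta\cap V_{\theta'}$. It has $12$ points: the union of the three pairwise intersections of $V_\theta,V_{\theta'},V_{\theta\theta'}$.

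\textbf{Fixed loci upstairs.} On the local model $\Bl_0\CC^2\times\Bl_0\CC^2$, the lift of $(-1,-1)$ fixes $E_1\times E_2$, which has codimension $2$. So the lifted action does not have purely divisorial fixed loci, and your reason for smoothness of the quotient does not hold. The quotient is nevertheless smooth, because each stabilizer is generated by the lifts of $(-1,1)$ and $(1,-1)$, which are reflections. Concretely, the quotient is $(\Bl_0\CC^2/\pm1)\times(\Bl_0\CC^2/\pm1)\cong\Bl_0(\CC^2/\pm1)\times\Bl_0(\CC^2/\pm1)$, and this is what identifies $q'$ with a quotient map onto $M$.

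Finally, when $A$ is a non-algebraic $2$-dimensional torus you also need $M$ to be K\"ahler, which your sketch omits. The paper obtains the full hyper-K\"ahler property, including this, from \cite[Proposition 2.9]{fujiki1983}.
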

\begin{proof}
    This proposition is due to Fujiki \cite{fujiki1983}. To give a streamlined proof, we may follow \cite[\S4]{floccariKum3}: one checks explicitly that the total fixed locus is the union of the K3 surfaces $V_{\theta}$, which intersect transversally. The singularities of $\Km(A)^{[2]}/A[2]$ are thus locally isomorphic to products of ordinary nodes on a surface; this implies that $M$ is nonsingular and it fits into a diagram as in the statement. 
   We can now apply \cite[Proposition 2.9]{fujiki1983} to show that the resolution $M$ is a hyper-K\"ahler manifold. It follows from Lemma \ref{lem:isomorphismOrbifolds2} below that $M$ is birational to $\Km(A)^{[2]}$; by \cite[Theorem 4.6]{Huy99}, the hyper-K\"ahler manifold $M$ is of $\mathrm{K}3^{[2]}$-type. 
   \end{proof}

   We shall study an explicit rational map of degree $2^4$
    \begin{equation}
        s\colon \Km(A)^{[2]} \dashrightarrow \Km(A)^{[2]}
    \end{equation}
    which coincides with the quotient map for the $A[2]$-action over an open subset of $\Km(A)^{[2]}$, i.e. such that there exists a birational map $\bar{\psi} \colon M \dashrightarrow \Km(A)^{[2]}$ satisfying $\bar{\psi}\circ \bar{r} = s$.
    It comes from the following.
    \begin{lemma}\label{lem:isomorphismOrbifolds2}
        There exists an isomorphism of orbifolds 
        \begin{equation}
            (A/\pm 1)^{(2)}/A[2] \xrightarrow{ \ \sim \ } (A/\pm 1)^{(2)}
        \end{equation}
    \end{lemma}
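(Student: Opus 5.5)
I will mimic the proof of Lemma \ref{lem:isomorphismOrbifolds}, writing down an explicit $A[2]$-invariant morphism and checking that its fibers are exactly the orbits. Here $A[2]$ acts on $A/\pm1$ by translation, which is well defined since $-(x+\theta)=-x+\theta$ for $\theta\in A[2]$, and hence it acts diagonally on $(A/\pm 1)^{(2)}$. Given $(\bar a,\bar b)\in (A/\pm1)^{(2)}$ with lifts $a,b\in A$, the four points $\pm a\pm b$ map to two points of $A/\pm 1$, namely $\overline{a+b}$ and $\overline{a-b}$. I therefore define
\begin{equation*}
\bar s\colon (A/\pm1)^{(2)}\to (A/\pm 1)^{(2)},\qquad (\bar a,\bar b)\mapsto (\overline{a+b},\overline{a-b}).
\end{equation*}
Changing the signs of the lifts or swapping $a$ and $b$ only permutes or negates these two classes, so $\bar s$ is well defined. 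It is a morphism because it is induced by the morphism $A\times A\to A\times A$, $(a,b)\mapsto(a+b,a-b)$, which is equivariant for the group generated by $\pm1$ on each factor and the swap. Translating both points by $\theta\in A[2]$ replaces $a+b$ by $a+b+2\theta=a+b$ and leaves $a-b$ unchanged, so $\bar s$ is $A[2]$-invariant.

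Next I show that each fiber of $\bar s$ is a single $A[2]$-orbit. Suppose $\bar s(\bar a,\bar b)=\bar s(\bar a',\bar b')$. After swapping $a'$ and $b'$ and changing the sign of $b'$ if needed, we may assume $a'+b'=\pm(a+b)$ and $a'-b'=\pm(a-b)$; replacing $a'$ by $-a'$ and $b'$ by $-b'$ changes both signs at once, so there are two cases. If the signs agree, then $2a'=2a$ and $2b'=2b$ with $a'-a=b'-b$, so $a'=a+\theta$ and $b'=b+\theta$ for a single $\theta\in A[2]$. If the signs are opposite, say $a'+b'=a+b$ and $a'-b'=b-a$, then $2a'=2b$ and $a'-b=b'-a$, so $(a',b')=(b+\theta,a+\theta)$, which is the same unordered pair translated by $\theta$. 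The mixed cases reduce to these after a swap. In both cases the two points lie in one $A[2]$-orbit.

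I also need $\bar s$ to be surjective. Given $(\bar c,\bar d)$, I choose $a,b$ with $a+b=c$ and $a-b=d$; this is possible because multiplication by $2$ on $A$ is surjective. By the universal property of the quotient, $\bar s$ then factors through a bijective morphism $\phi\colon(A/\pm1)^{(2)}/A[2]\to(A/\pm1)^{(2)}$. The source is normal, being a quotient of a normal variety, and so is the target. A bijective morphism onto a normal variety is an isomorphism as soon as it is birational, by Zariski's main theorem. So $\phi$ is an isomorphism once I know that the degree of $\bar s$ equals $16=|A[2]|$.

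The degree computation is the main obstacle. I would lift to $A\times A$ and note that $(a,b)\mapsto(a+b,a-b)$ has degree $|\ker|=|\{(a,-a):2a=0\}|=16$. Both sides of the lift are quotients of $A\times A$ by the order-$8$ group $(\pm1)^2\rtimes\mathfrak S_2$, and this lifted map is equivariant for an automorphism of that group, so the degrees match up and $\bar s$ has degree $16$. Since the generic stabilizer of $A[2]$ is trivial, the generic fibers of $\bar s$ are then exactly the orbits, counted without multiplicity, and $\phi$ is birational. In the analytic setting, where $A$ is a complex torus, the same argument works with normal complex spaces.

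Finally, I would check that $\phi$ respects the orbifold structures. Both sides are global quotients of $A\times A$: the source by the group generated by $(\pm1)^2\rtimes\mathfrak S_2$ and the diagonal $A[2]$, and the target by the order-$8$ group. The map $(a,b)\mapsto(a+b,a-b)$ is an isogeny with kernel the antidiagonal copy of $A[2]$, and it intertwines these two group actions. So $\phi$ is induced by an equivariant isogeny of $A\times A$, and it matches the orbifold charts and the symplectic forms up to a scalar.
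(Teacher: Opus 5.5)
Your proposal is correct and follows the paper's proof: you use the same map $\overline{s}(\overline{x},\overline{y})=(\overline{x+y},\overline{x-y})$ and the same sign/swap case analysis to show that its fibers are exactly the $A[2]$-orbits. Your extra steps (surjectivity, normality plus Zariski's main theorem, the degree count) spell out what the paper leaves implicit when it says $\overline{s}$ is identified with the quotient map. Two of them are more than you need: the degree computation is automatic in characteristic $0$, where a bijective morphism is already birational. And your closing observation alone already proves the isomorphism: $(a,b)\mapsto(a+b,a-b)$ is an isogeny whose kernel is the diagonal $A[2]$, this kernel commutes with $(\pm1)^2\rtimes\mathfrak S_2$, and the isogeny intertwines that group with itself.
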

    \begin{proof}
        Consider the morphism $\overline{s}\colon (A/\pm 1)^{(2)}\to (A/\pm 1)^{(2)}$ defined by 
        \begin{equation}\label{eq:defMaps}
            \overline{s}(\overline{x},\overline{y}) \coloneqq (\overline{x+y}, \overline{x-y}).
        \end{equation}
        One checks easily that $\overline{s}$ is well-defined. We consider the action of $A[2]$ on $(A/\pm 1)^{(2)}$ via translations; we claim that the fibers of $\overline{s}$ are precisely the $A[2]$-orbits. 
        
        Indeed, $\overline{s}(\overline{x},\overline{y})= \overline{s}(\overline{x}', \overline{y}')$ if and only if we have an equality 
        \begin{equation}
        \{x+y, x-y,-x-y,-x+y\}=\{x'+y', x'-y',-x'-y',-x'+y'\}
        \end{equation}
        of subsets of $A$. This clearly holds if $x'=x+\tau$ and $y'=y+\tau$, for some $\tau\in A[2]$. Conversely, if the above equality holds, up to swapping $x$ and $y$ and changing their sign, we may assume that $x+y=x'+y'$ and $x-y=x'-y'$; then $x-x'=y-y'=y'-y$ is a $2$-torsion point $\tau$, and $(\overline{x}', \overline{y}')=(\overline{x+\tau}, \overline{y+\tau}) $.
        We conclude that $\overline{s}$ is identified with the quotient map with respect to $A[2]$, and, therefore, it induces an isomorphism $(A/\pm 1)^{(2)} /A[2]\xrightarrow{\ \sim \ } (A/\pm 1)^{(2)}$.
    \end{proof}
Next, we consider the quotient by $G$ of the K3 surfaces $V_{\sigma_{\tau},{\sigma_{\tau}}_{'}}\subset K^3(A)$. Recall from Proposition \ref{prop:CanonicalSubvarietiesK^3(A)} $(ii)$ that $V_{\sigma_{\tau},{\sigma_{\tau}}_{'}}\cong \Km^{\bar{\tau}}(A/\langle \tau+\tau'\rangle)$. Moreover, the action of $G$ induces the natural action of $A[2]/\langle \tau+\tau'\rangle$ on $\Km^{\bar{\tau}}(A/\langle \tau+\tau'\rangle)$.

\begin{lemma}\label{lem:resolutionK3}
For any $\tau\neq \tau'\in A[2]$, the quotient $V_{\sigma_{\tau},{\sigma_{\tau}}_{'}}/G$ has $14$ nodes 
    and the blow-up of these nodes gives a $\mathrm{K}3$ surface $S_{\sigma_{\tau},{\sigma_{\tau}}_{'}}$ isomorphic to $\Km(A)$.
\end{lemma}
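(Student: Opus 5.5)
By Remark~\ref{rmk:actionAut_0}, conjugation by elements of $A[4]$ permutes the $\sigma_\tau$ equivariantly, and the group $A[2]\rtimes\ldots$ acts transitively enough on pairs; in any case we may argue for a general pair $\tau\neq\tau'$ directly. Write $\theta\coloneqq\tau+\tau'\neq 0$, $B\coloneqq A/\langle\theta\rangle$, and $\bar\tau\in B[2]$ the common image of $\tau,\tau'$. By Proposition~\ref{prop:CanonicalSubvarietiesK^3(A)}(ii), $V\coloneqq V_{\sigma_\tau,\sigma_{\tau'}}\cong \Km^{\bar\tau}(B)$, and $G$ acts through $A[2]/\langle\theta\rangle\cong(\ZZ/2\ZZ)^3$ by translations (which commute with $(\bar\tau,-1)$ on $B$). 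The plan is to compute $V/G$ as a quotient of $B$ and then identify it birationally, in fact as a singular surface, with $A/\pm1$.

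First step: describe $V/G$. Since the translation action commutes with $(\bar\tau,-1)$, the rational map $B\dashrightarrow V\to V/G$ factors through $B/\langle A[2]/\langle\theta\rangle, (\bar\tau,-1)\rangle$. Now $B/(A[2]/\langle\theta\rangle) = A/A[2]\cong A$ via multiplication by $2$, and $(\bar\tau,-1)$ descends to $x\mapsto -x+2\tau=-x$. Hence $V/G$ is birational to $A/\pm1$; more precisely there is a natural map $V/G\to A/\pm 1$ contracting the images of the $16$ exceptional curves $C_\alpha$, $\alpha\in B_{2,\bar\tau}$, of $\Km^{\bar\tau}(B)$. The $G$-orbits on these $16$ curves: $A[2]/\langle\theta\rangle$ acts on the torsor $B_{2,\bar\tau}$ (of $B[2]$) with the image of $A[2]$ in $B[2]$ of index $2$, so freely with $2$ orbits of size $8$, giving $2$ curves in $V/G$ (each smooth rational, image of $\PP^1$ with trivial stabilizer after checking that no element fixes a curve pointwise, using Proposition~\ref{prop:canonicalSubvarietiesII}(iii)). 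In $A/\pm1$ these go to two of the $16$ nodes.

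Second step: singularities. By Proposition~\ref{prop:FixedPoints}(iv) the fixed points of nontrivial elements of $G/\langle\sigma_\tau,\sigma_{\tau'}\rangle$ on $V$ are $56$ points, each with stabilizer $\ZZ/2\ZZ$ (seven nontrivial elements, $8$ points each, and the points $Z$ have stabilizer of order $2$ in the quotient by Proposition~\ref{prop:canonicalSubvarietiesII}(iv)), and orbits of size $4$; so $V/G$ has $56/4=14$ nodes, all $A_1$ since the action is symplectic. Then $S\coloneqq\Bl_{14}(V/G)$ is the minimal resolution; it is a K3 surface (Euler characteristic: $(24-56)/8+56/4\cdot$ correction; more cleanly, by the standard formula $e(V/G)=(24-56)/8+14=10$, and $e(S)=10+14=24$, trivial canonical bundle since symplectic quotient with crepant resolution). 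This is Proposition~\ref{prop:ResolutionQuotients}(ii) specialised.

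Third step (main point): identify $S\cong \Km(A)$. The natural map $S\to V/G\to A/\pm1$ is birational, with $S$ and $\Km(A)$ both minimal smooth models (K3 surfaces), so the birational map $S\dashrightarrow\Km(A)$ is an isomorphism, as birational K3 surfaces are isomorphic. To be careful I would check the count: $S$ contains $14+2=16$ disjoint $(-2)$-curves mapping to the $16$ nodes of $A/\pm1$ (the $14$ nodes of $V/G$ mapping to the other $14$ nodes), confirming that $S\to A/\pm1$ is exactly the minimal resolution. The main obstacle is verifying that the descended map $V/G\to A/\pm1$ is a morphism which is an isomorphism away from these curves, i.e.\ tracking the $14$ singular points to the remaining $14$ nodes via the explicit description of Proposition~\ref{prop:CanonicalSubvarietiesK^3(A)}(iii); but for the isomorphism statement itself, uniqueness of minimal models of K3 surfaces suffices.
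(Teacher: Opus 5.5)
Your proposal is correct and follows essentially the same route as the paper: show that $V_{\sigma_\tau,\sigma_{\tau'}}/G$ is birational to $A/(A[2]\times\langle -1\rangle)\simeq A/\pm 1$, then conclude that the minimal resolution is $\Km(A)$ because birational K3 surfaces are isomorphic. The only difference is that the paper cites Nikulin for the facts that the quotient has $14$ nodes and that their resolution is a K3 surface. You instead derive both directly from Proposition~\ref{prop:FixedPoints}(iv) and Proposition~\ref{prop:canonicalSubvarietiesII}(iv), together with the Euler characteristic count $e(S)=(24-56)/8+14+14=24$, which is a valid, self-contained substitute.
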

\begin{proof}
    It is well-known (\cite{NikulinFiniteAutomorphism}) that the quotient of a K3 surface by a symplectic action of $(\ZZ/2\ZZ)^3$ has $14$ nodes, and that blowing-up these nodes yields again a K3 surface. In our specific case, $V_{\sigma_{\tau},{\sigma_{\tau}}_{'}}/G$ is birational to $A/(A[2]\times \langle -1\rangle)\simeq A/\pm 1$ (since $A/A[2]\simeq A$), hence the minimal resolution $S_{\sigma_{\tau},{\sigma_{\tau}}_{'}}$ of $V_{\sigma_{\tau},{\sigma_{\tau}}_{'}}/G$ is isomorphic to the Kummer K3 surface $\Km(A)$.
\end{proof}
\begin{proof}[Proof of Proposition \ref{prop:Y_K^3(A)}]
    Statements $(ii)$ and $(iii)$ follow directly from Proposition~\ref{prop:Fujikiresolution} and Lemma \ref{lem:resolutionK3}, respectively. To prove $(i)$, let $q\colon K^3(A)\dashrightarrow \Km(A)^{[3]}$ be the rational map defined by the commutative diagram 
    \begin{equation}
        \begin{tikzcd}
            K^3(A)\arrow{d}{\nu} \arrow[dashed]{r}{q} & \Km(A)^{[3]} \arrow{d} \\
            A_0^{(4)} \arrow{r}{\overline{q}} & (A/\pm 1)^{(3)}
        \end{tikzcd}
    \end{equation}
    where $\overline{q}\colon A_0^{(4)}\to (A/\pm1 )^{(3)}$ is the map defined in the proof of Lemma \ref{lem:isomorphismOrbifolds}.
    The vertical maps are birational. By Lemma \ref{lem:isomorphismOrbifolds}, we conclude that $\Km(A)^{[3]}$ is birational to $K^3(A)/G$, and, hence, to its resolution $Y_{K^3(A)}$.
\end{proof}

\subsection{Proofs of Proposition \ref{prop:FixedPoints} and Proposition \ref{prop:ResolutionQuotients}}
\label{subsec:ProofProp}
Now we give the promised proofs of Propositions \ref{prop:FixedPoints} and \ref{prop:ResolutionQuotients}.

\begin{proof}[Proof of Proposition \ref{prop:FixedPoints}]
     Let $K$ be a $\mathrm{Kum}^3$-manifold. Choose a deformation $\mathcal{K}\to B$ of $K=\mathcal{K}_{b}$ to a generalized Kummer sixfold $K^3(A)=\mathcal{K}_{b'}$ associated with an abelian surface $A$. By Remark \ref{rmk:deformation}, it suffices to prove the proposition for $K^3(A)$.   
In this case, $G=A[2]\times \{\pm 1\}$ and $G_1=A[2]$. Hence $G\backslash G_1=\{\sigma_\tau ~|~ \tau \in A[2]\}$, where $\sigma_\tau:=(\tau, -1)$.
\begin{itemize}
	\item For any $\tau\in A[2]$, the fixed locus of the involution $\sigma_\tau$ on $K$ is the disjoint union of $W_{\sigma_\tau}$ and
	$140$ points $\{\{\epsilon_1, \epsilon_2, \epsilon_3, \epsilon_4\}~|~ \epsilon_i \text{ distinct},~2\epsilon_i=\tau,~\sum_i\epsilon_i=0\}$. Note that these $140$ points are precisely the disjoint union of all the $Z_{\sigma_{\tau_1}, \sigma_{\tau_2}, \sigma_{\tau_3}}$ with $\tau_1, \tau_2, \tau_3$ distinct and summing to $\tau$:
	\begin{equation}
		\operatorname{Fix}_K(\sigma_\tau)= W_{\sigma_\tau} \sqcup \bigsqcup_{\substack{\tau_1+\tau_2+\tau_3=\tau\\
				\tau_i \text{ distinct}}}Z_{\sigma_{\tau_1}, \sigma_{\tau_2}, \sigma_{\tau_3}}.
	\end{equation}
	
	\item For any $0\neq \tau\in A[2]$, the fixed locus of  the involution $(\tau, 1)$ on $K$ is the disjoint union of eight K3 surfaces: $$\operatorname{Fix}_K(\tau, 1)=\bigsqcup_{\tau_1+\tau_2=\tau} V_{\sigma_{\tau_1}, \sigma_{\tau_2}}.$$
	
	\item For each $\tau\in A[2]$ , the action of $G$ on $W_{\sigma_\tau}$ is not faithful, with kernel generated by the involution $\sigma_\tau=(\tau, -1)$. Set  $G_{\sigma_{\tau}}=G/\langle(\tau, -1)\rangle$, which acts faithfully on $W_{\sigma_{\tau}}$. The fixed loci of elements of $G_{\sigma_{\tau}}$ on $W_{\sigma_{\tau}}$ can be obtained by intersecting the above fixed loci with $W_{\sigma_{\tau}}$. We have that the fixed locus is always the disjoint union of a K3 surface and 28 points:
	
	\begin{equation}
	\operatorname{Fix}_{W_{\sigma_{\tau}}}(\tau', -1)=V_{\sigma_{\tau},\sigma_{\tau'}}\sqcup \bigsqcup_{\substack{\tau_1+\tau_2=\tau+\tau'\\
				\tau_1\neq \tau\neq \tau_2 }}Z_{\sigma_{\tau_1}, \sigma_{\tau_2}, \sigma_{\tau}} \quad \forall \tau'\neq \tau\in A[2];
	\end{equation}
	or equivalently,
	\begin{equation}
		\operatorname{Fix}_{W_{\sigma_{\tau}}}(\tau', 1)=V_{\sigma_{\tau},\sigma_{\tau+\tau'}}\sqcup \bigsqcup_{\substack{\tau_1+\tau_2=\tau'\\
				\tau_1\neq \tau\neq \tau_2 }} Z_{\sigma_{\tau_1}, \sigma_{\tau_2}, \sigma_{\tau}} \quad \forall \tau'\neq 0\in A[2].
	\end{equation}
	
	\item For any $\tau\neq \tau'\in A[2]$, the action of $G$ on $V_{\sigma_{\tau}, \sigma_{\tau'}}$ is not faithful, with kernel generated by the two involutions $(\tau,-1)$ and $(\tau', -1)$. Set $G_{\sigma_{\tau}, \sigma_{\tau'}}=G/\langle(\tau, -1), (\tau', -1)\rangle$, which acts on $V_{\sigma_{\tau}, \sigma_{\tau'}}$ faithfully.  The fixed loci are always 8 points:
	\begin{equation}
		\operatorname{Fix}_{V_{\sigma_{\tau}, \sigma_{\tau'}}}(\tau'', -1)=Z_{\sigma_{\tau}, \sigma_{\tau'}, \sigma_{\tau''}} \sqcup Z_{\sigma_{\tau}, \sigma_{\tau'}, \sigma_{\tau+\tau'+\tau''}} \quad \forall \tau'\neq \tau''\neq \tau\in A[2];
	\end{equation}
	or equivalently,
	\begin{equation}
		\operatorname{Fix}_{V_{\sigma_\tau, \sigma_{\tau'}}}(\tau'', 1)=Z_{\sigma_{\tau}, \sigma_{\tau'}, \sigma_{\tau+\tau''}}\sqcup Z_{\sigma_{\tau}, \sigma_{\tau'}, \sigma_{\tau'+\tau''}}  \quad \forall 0\neq \tau''\neq \tau+\tau'\in A[2].
	\end{equation}
\end{itemize}
\end{proof}

\begin{proof}[Proof of Proposition \ref{prop:ResolutionQuotients}]
 Let $K$ be a $\mathrm{Kum}^3$-manifold. Choose a deformation $\mathcal{K}\to B$ of $K=\mathcal{K}_{b}$ to a generalized Kummer sixfold $K^3(A)=\mathcal{K}_{b'}$.
 By Remark \ref{rmk:deformation}, up to a finite \'etale base-change we obtain $16$ subfamilies $\mathcal{W}_{\sigma}\to B$ of $\mathrm{K}3^{[2]}$-manifolds, and $120$ subfamilies of K3 surfaces $\mathcal{V}_{\sigma,\sigma'}\to B$. 

 To prove $(ii)$, we can then take the quotient $\mathcal{V}_{\sigma,\sigma'}/G \to B$ over $B$. We obtain a locally trivial family of nodal surfaces; possibly up to a further finite \'etale base-change, the nodes are given by $14 $ sections. Blowing-up these sections gives a smooth family~$\mathcal{S}_{\sigma,\sigma'}\to B$ of K3 surfaces. 

 As for $(i)$, similarly, we have a family $\mathcal{W}_\sigma \to B$ and we consider the fiberwise quotient $\mathcal{W}_\sigma /G \to B$, which is a locally trivial family. The singular loci of the fibers yield $15$ locally trivial subfamilies $\mathcal{V}_{\sigma,\sigma'}/G\to B$. The blow-up of $\mathcal{W}_{\sigma}/G$ along the union of the subfamilies $\mathcal{V}_{\sigma,\sigma'}/G$ is again a locally trivial family $\mathcal{M}_{\sigma}\to B$. 
 But, by Proposition \ref{prop:Fujikiresolution}, we know that $\mathcal{M}_{\sigma,b'}$ is a smooth hyper-K\"ahler manifold birational to $\Km(A)^{[2]}$. It follows that $\mathcal{M}_\sigma\to B$ is a smooth family of $\mathrm{K}3^{[2]}$-manifolds. The varieties $M_{\sigma}$'s are pairwise isomorphic because the $W_{\sigma}$'s are $G$-equivariantly isomorphic to each other, by Remark \ref{rmk:actionAut_0}.
    
 The commutative diagram in the statement of Proposition \ref{prop:ResolutionQuotients} can be obtained by noticing that locally analytically the singularities of $W_\sigma /G$ are products of ordinary nodes on surfaces, similarly to the case studied in \cite[\S4]{floccariKum3} (cf. Proof of Proposition \ref{prop:Fujikiresolution}).
\end{proof}

\section{Cohomology lattices of companion manifolds}
\label{sec:CohomologyOfCompanion}

In this section, we analyze the induced maps between the second cohomology groups of the companion hyper-K\"ahler manifolds in Diagram \eqref{diag:configuration}. 
Given a hyper-K\"ahler manifold $K$ of $\mathrm{Kum}^3$-type, let the notation be as in Section \ref{sec:hyper-KummerConstruction}. We denote by $\iota_{\sigma}\colon W_{\sigma}\hookrightarrow K$ and $\iota_{\sigma,\sigma'}\colon V_{\sigma,\sigma'}\hookrightarrow K$ the closed immersions of the associated natural submanifolds of $K$. For any  $\sigma\neq \sigma'$ in $G\setminus G_1$, we consider the diagrams 
\begin{equation}
\begin{tikzcd}
    K \arrow[dashed]{rr}{r} && Y_K &&& K \arrow[dashed]{rr}{r} && Y_K
    \\
    W_{\sigma} \arrow[hook]{u}{\iota_{\sigma}} \arrow[dashed]{rr}{r_{\sigma}} && M_{\sigma} &&& V_{\sigma,\sigma'}  \arrow[hook]{u}{\iota_{\sigma,\sigma'}} \arrow[dashed]{rr}{r_{\sigma,\sigma'}} && S_{\sigma,\sigma'}
\end{tikzcd}\end{equation}
There are restriction maps 
\begin{equation}
    \iota^*_{\sigma}\colon H^2(K,\ZZ)\to H^2(W_\sigma,\ZZ) \ \ \ \text{and} \ \ \ \iota_{\sigma,\sigma'}^*\colon H^2(K,\ZZ)\to H^2(V_{\sigma,\sigma'},\ZZ),
\end{equation}
as well as push-forward maps (once we restrict to the subgroup of $G$-invariants):
\begin{equation}
\begin{split}
r_*&\colon H^2(K,\ZZ)\to H^2(Y_K,\ZZ)\ , \\
(r_{\sigma})_{*}&\colon H^2(W_{\sigma},\ZZ)^G \to H^2(M_{\sigma},\ZZ)\ , \\
(r_{\sigma,\sigma'})_{*} & \colon H^2(V_{\sigma,\sigma'},\ZZ)^G\to H^2(S_{\sigma,\sigma'},\ZZ).
\end{split}
\end{equation}
We define $r_*$ (resp. $(r_{\sigma})_{*}$, $(r_{\sigma,\sigma'})_{*}$) as the composition $(q')_*\circ (p')^*$ in diagram \eqref{diag:hyperKummerConstruction} (resp. \eqref{diag:Fujikiresolution}, \eqref{diag:resolutionK3}). Notice that the domain of $r_*$ is the whole of $H^2(K,\ZZ)$, since $G$ acts trivially on it.

\begin{remark}[Overlattice]
\label{rmk:hatH^2} 
The lattice $H^2(K,\ZZ)\simeq \mathrm{U}^{\oplus 3}\oplus \langle -8\rangle$ admits a unique index-2 overlattice $\widehat{H}^2(K,\ZZ)$, which is isometric to $\mathrm{U}^{\oplus 3}\oplus \langle -2\rangle$; see Remark \ref{rmk:unique-overlattice}. We will regard $\widehat{H}^2(K,\ZZ)$ as a $\ZZ$-Hodge structure with the Hodge structure induced by that on $H^2(K,\ZZ)$. Since $\widehat{H}^2(K,\ZZ)$ is the unique overlattice of $H^2(K,\ZZ)$, any isometry of $H^2(K,\ZZ)$ extends to an isometry of $\widehat{H}^2(K,\ZZ)$.
\end{remark}

\subsection{Statement of results}
Let us state the main results of this section.
Let $K$ be a $\mathrm{Kum}^3$-type hyper-K\"ahler manifold. 
Consider the commutative diagram of the hyper-Kummer construction in Theorem \ref{thm:hyperKummerConstruction}:
\begin{equation}
\begin{tikzcd}
& \widetilde{K} \arrow{ld} \arrow{rd}\\
K \arrow[dashed]{rr}{r} \arrow[swap]{rd} && Y_K \arrow{ld} \\
& K/G
\end{tikzcd}
\end{equation}  
The resolution $Y_K\to K/G$ introduces $16$ exceptional divisors $\{E_\sigma\}_{\sigma\in G\setminus G_1}$. Their cohomology classes $[E_\sigma]\in H^2(Y_K,\ZZ)$ are orthogonal to the image of $r_*\colon H^2(K,\ZZ)\to H^2(Y_K,\ZZ)$, and the second cohomology of $Y_K$ is generated over $\QQ$ by the image of $r_*$ and the 16 classes $\{[E_\sigma]\}_{\sigma\in G\setminus G_1}$. 

\begin{proposition}[Cohomology of $Y_K$]
\label{prop:cohomologyY_K}
    For any hyper-K\"ahler manifold $K$ of $\mathrm{Kum}^3$-type, let $Y_K$ be the associated hyper-Kummer $\mathrm{K}3^{[3]}$-manifold.
    The following statements hold:
    \begin{enumerate}[label=(\roman*)]
        \item The push-forward map $r_*\colon H^2(K,\ZZ)\to H^2(Y_K,\ZZ)$ is injective, divisible by $2^4$, and multiplies the quadratic form by a factor $2^9$;
        \item The map $\tfrac{1}{2^4}r_*\colon H^2(K,\ZZ)(2)\to H^2(Y_K,\ZZ)$ is an embedding of lattices which extends to a primitive embedding of lattices and of Hodge structures
        \begin{equation}
            \frac{1}{2^4}r_*\colon \widehat{H}^2(K,\ZZ)(2)\hookrightarrow  H^2(Y_K,\ZZ),
        \end{equation} 
        where on the left-hand side,  $\widehat{H}^2(K,\ZZ)$ is the unique index-2 overlattice of $H^2(K,\ZZ)$, and $(2)$ means that we multiply the quadratic form by a factor $2$. 
        In particular, $\widehat{H}^2(K,\ZZ)(2)\simeq \mathrm{U}(2)^{\oplus 3}\oplus \langle -4\rangle$.
        \item The classes $[E_\sigma]\in H^2(Y_K,\ZZ)$ are pairwise orthogonal $(-2)$-classes; the saturation of the sublattice generated by the $[E_\sigma]$ is the Kummer lattice, and it is the orthogonal complement of the image of the embedding $\frac{1}{2^4}r_*$.
    \end{enumerate}
\end{proposition}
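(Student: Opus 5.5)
We reduce to the generalized Kummer case $K=K^3(A)$ via Remark \ref{rmk:deformation}. The hyper-Kummer construction works in families, and $r_*$, the divisors $E_\sigma$ and the Beauville--Bogomolov forms are all locally constant over the base. Hence divisibility, the scaling factor, primitivity and the orthogonal complement can be checked on one fibre. Compatibility with Hodge structures holds for any $K$, since $r_*=q'_*p'^*$ is a morphism of Hodge structures. On $K^3(A)$ we use the birational model of Proposition \ref{prop:Y_K^3(A)}: $Y_{K^3(A)}$ is birational to $\Km(A)^{[3]}$ via $\overline q$ of Lemma \ref{lem:isomorphismOrbifolds}. A birational map between hyper-K\"ahler manifolds is an isomorphism in codimension one, so it induces a Hodge isometry on $H^2$. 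We may therefore compute on $\Km(A)^{[3]}$, whose lattice is $H^2(\Km(A),\ZZ)\oplus\ZZ\delta$ with $\delta^2=-4$ (here $2\delta=[D]$).

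\textbf{Step 1: the exceptional divisors.} Under this identification, the exceptional divisors $E_\sigma$ of $p$ correspond to components lying over the singular locus of $(A/\pm1)^{(3)}$. By Remark \ref{rmk:Km(A)^[n]} there are 17 such divisors: $D$ and the sixteen $R_\tau$. Only 16 of them come from $K/G$, namely the images of the $W_{\sigma_\tau}$. We need to decide which. The image of $W_{\sigma_\tau}$ under $\overline q$ consists of triples containing $\overline{a+b}$ with $a+b$ constrained, and we expect it to land in $\overline R_{\tau'}$ for suitable $\tau'$. The diagonal $\overline D$, on the other hand, should be the image of the big diagonal of $A^{(4)}_0$, which is not a fixed locus (it is crepantly resolved by Hilbert--Chow). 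So $E_{\sigma_\tau}\leftrightarrow R_{\tau'}$. Now $[R_\tau]=[C_\tau]$ in $H^2(\Km(A))\subset H^2(\Km(A)^{[3]})$, which is a $(-2)$-class. These classes are pairwise orthogonal, and the saturation of $\langle C_\tau\rangle$ in $H^2(\Km(A),\ZZ)$, hence in $H^2(\Km(A)^{[3]},\ZZ)$, is $L_{\Km}$. This gives (iii), apart from the orthogonal-complement claim.

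\textbf{Step 2: computing $r_*$.} Write $H^2(K^3(A),\ZZ)=H^2(A,\ZZ)\oplus\ZZ e$ with $e^2=-8$, where $2e$ is the Hilbert--Chow class. For $\alpha\in H^2(A,\ZZ)$, $\overline q$ is built from the maps $A^4\to A$, $(a_i)\mapsto a_1+a_j$, followed by $A\to A/\pm1$. So $r_*\alpha$ should be a multiple of $\pi_*\alpha\in H^2(\Km(A))$, which lies in $\mathrm{U}(2)^{3}$. We compute the multiple by degree counting. The degree of $r$ is $2^5$, and Fujiki relations give $\int (r_*x)^6=2^5\int x^6$ for classes pulled back. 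The Fujiki constants are $c_K=4\cdot 15=60$ for $\mathrm{Kum}^3$ and $c_Y=15$ for $\mathrm K3^{[3]}$. This yields $15\,q_Y(r_*x)^3=2^5\cdot 60\,q_K(x)^3$, so $q_Y(r_*x)=2^{9}q_K(x)$. This proves the scaling in (i); injectivity follows from the scaling. Since $q_Y(r_*x/2^4)=2q_K(x)$ and $\pi_*$ scales by $2$ onto a primitive sublattice, $r_*\alpha=2^4\pi_*\alpha$ up to sign on $H^2(A)$. For $e$ we must find $r_*e$ explicitly. The strict transform of the diagonal of $A_0^{(4)}$ maps to the locus where two points of $S(a)$ coincide, which is $\overline D$. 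Tracking multiplicities should give $r_*e=2^4\delta'$ with $\delta'^2=-16$. A natural guess is $\delta'=2\delta+\tfrac12\sum_\tau C_\tau$-type corrections, chosen so that $\delta'\perp C_\tau$. We settle this by requiring orthogonality to $E_\sigma$ (which is automatic from $G$-invariance/projection formula) and $q=-16$. Then $\tfrac1{2^4}r_*e$ is the element $2\delta$ (mod $L_{\Km}\oplus\mathrm U(2)^3$ considerations). The class $\delta$ itself, of square $-4$, gives the overlattice generator $\hat e=e/2$. Hence the map extends to $\widehat H^2(K,\ZZ)(2)\simeq \mathrm U(2)^3\oplus\langle-4\rangle$, landing in $\pi_*H^2(A)\oplus\ZZ\delta$. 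This establishes (ii) and the divisibility by $2^4$.

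\textbf{Step 3: primitivity and complement.} The sublattice $\pi_*H^2(A)\oplus\ZZ\delta$ is the orthogonal complement of $L_{\Km}$ inside $H^2(\Km(A))\oplus\ZZ\delta$, since $(\pi_*H^2(A))^\perp=L_{\Km}$ in $H^2(\Km(A))$. It is therefore primitive, and its complement is $L_{\Km}$, as claimed in (iii).

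The main obstacle is Step 2 for the class $e$. One must identify precisely which divisor $\overline q$ sends the Hilbert--Chow divisor to, and with what multiplicity (ramification of $\overline q$ along the diagonal). One must also check that the relevant class is $2\delta$ rather than something involving the $C_\tau$; this also settles the correct matching of $E_\sigma$ versus $D$ in Step 1. I would attack it by a local computation at a generic point of the diagonal $a_1=a_2$ in $A_0^{(4)}$, where $\overline q$ has a $G$-stabilizer $\langle(a_1+a_2\text{-related})\rangle$ trivial generically. Under $\overline q$ the coincidence becomes $\overline{a_1+a_3}=\overline{a_2+a_3}$, which has local degree $2$ onto $\overline D$.
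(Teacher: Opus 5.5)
Your overall route is the paper's. You deform to $K^3(A)$, compare $Y_{K^3(A)}$ with $\Km(A)^{[3]}$ through $\overline q$, match $E_{\sigma_\tau}$ with $R_\tau$, and read off the image $\pi_*H^2(A,\ZZ)\oplus\ZZ\delta$. Steps 1 and 3 are essentially fine.

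The gap is in Step 2, exactly where the content of (i) and (ii) lies. You never prove that $r_*$ restricted to $H^2(A,\ZZ)$ is a scalar multiple of $\pi_*$; ``should be a multiple'' is only a heuristic, and the scaling factor cannot supply it. Even granting $r_*H^2(A,\ZZ)\subset\pi_*H^2(A,\ZZ)$, the map $\pi_*^{-1}\circ r_*$ is only an integral endomorphism of $\mathrm{U}^{\oplus 3}$ multiplying the form by $2^8$. Such maps need not be divisible by $2^4$: on one copy of $\mathrm{U}$ with standard basis $e,f$ take $e\mapsto 2^8e$, $f\mapsto f$, and take $2^4\cdot\mathrm{id}$ on the other two copies. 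So divisibility by $2^4$ and integrality of $\tfrac{1}{2^4}r_*$ remain unproved.

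What is missing is a rigidity argument. In Lemma \ref{lem:cohomologyq_*} the paper first gets $\widetilde q_*(H^2(A,\ZZ))\subset H^2(A,\ZZ)(2)$ by orthogonality, using $\widetilde q_*\widetilde\xi=2^5\delta$ and $\widetilde q_*[F_{\sigma_\tau}]=2^4[R_\tau]$. Since $q$ is defined for $A$ varying in any family of tori, the induced endomorphism of $H^2(A,\ZZ)\cong\bigwedge^2V$ commutes with the $\mathrm{SL}_4$-monodromy. By Schur's lemma it is a scalar $k$, and $2k^2=2^9$ gives $k=2^4$. Alternatively, take $A$ very general, so that $H^2(A,\QQ)$ has no Hodge classes and only scalar Hodge endomorphisms. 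Then $r_*$, a morphism of Hodge structures from $H^2(A,\QQ)\oplus\QQ e$ into $\pi_*H^2(A,\QQ)\oplus\QQ\delta$ (its image is orthogonal to the $C_\tau$), is forced to be block diagonal with a scalar on the first block.

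Two computations in Step 2 are also wrong as written. First, from $r^*r_*x=2^5x$ one gets $\int_Y(r_*x)^6=2^{-5}\int_K(2^5x)^6=2^{25}\int_Kx^6$, not $2^5\int_Kx^6$. Your identity $15\,q_Y(r_*x)^3=2^5\cdot 60\,q_K(x)^3$ gives $q_Y(r_*x)^3=2^7q_K(x)^3$, from which $2^9$ does not follow; it does follow from the correct exponent $2^{25}$, as in Lemma \ref{lem:scalarFactor}.

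Second, your local picture for the class $e$ is incorrect. At a generic point of the diagonal of $A_0^{(4)}$ the relevant coincidence is $\overline{a_1+a_3}=\overline{a_1+a_4}$. There $\overline q$ is a local isomorphism between the two transversal $A_1$-singularities: the preimage of a small neighbourhood is $2^5$ disjoint copies, as in the proof of Lemma \ref{lem:geometryMapq}. Hence $\widetilde E\to D$ has degree $2^5$ and $r_*e=2^5\delta$. Ramification of order $2$ would instead give $2^4\delta$, of square $-2^{10}\neq 2^9\cdot(-8)$. Your candidate $2\delta+\tfrac12\sum_\tau C_\tau$ is not orthogonal to the $C_\tau$ either.

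That said, the orthogonality route you gesture at does work once $r_*H^2(A,\ZZ)$ is known to span $\pi_*H^2(A,\QQ)$, e.g.\ via the Hodge-theoretic argument above. Then $r_*e$ is orthogonal to $\pi_*H^2(A,\ZZ)$ and to all $C_\tau$, so it lies in $\ZZ\delta$. Its square $-2^{12}$ forces $r_*e=\pm2^5\delta$, so $\tfrac{1}{2^4}r_*$ sends $e/2$ to $\pm\delta$. The image of $\widehat H^2(K,\ZZ)(2)$ is then the primitive sublattice $\pi_*H^2(A,\ZZ)\oplus\ZZ\delta$, as in your Step 3.
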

The proof of Proposition \ref{prop:cohomologyY_K} will be given at the end of Section \ref{subsec:CohomologyYK}. For the companion submanifolds $W_{\sigma}$ and $V_{\sigma,\sigma'}$ we have the following result. Its proof will be given in Section \ref{subsec:CohomologyWV}.

\begin{proposition}[Cohomology of $W_\sigma$ and $V_{\sigma,\sigma'}$]
\label{prop:cohomologyW&V}
Let $K$ be a hyper-K\"ahler manifold of $\mathrm{Kum}^3$-type.
\begin{enumerate}[label=(\roman*)]
\item For any $\sigma\in G\setminus G_1$, the pull-back along the embedding $\iota_{\sigma}\colon W_{\sigma}\to K$ induces a primitive embedding of lattices and Hodge structures
    \begin{equation}
        \iota^*_{\sigma}\colon H^2(K,\ZZ)(2)\hookrightarrow H^2(W_{\sigma},\ZZ).
    \end{equation}
    The orthogonal complement to the image of $\iota^*_{\sigma}$ in $H^2(W_{\sigma},\ZZ)$ is isometric to the rank $16$ negative definite lattice $N_W$ of Definition \ref{def:latticeN_W}.
    \item For any $\sigma\neq \sigma'$ in $G\setminus G_1$, the pull-back along the embedding $\iota_{\sigma,\sigma'}\colon V_{\sigma, \sigma'}\to K$ induces an embedding of lattices and Hodge structures
    \begin{equation}
    \iota^*_{\sigma,\sigma'}\colon H^2(K,\ZZ)(4)\hookrightarrow H^2(V_{\sigma,\sigma'},\ZZ).
    \end{equation}
    The image of $\iota^*_{\sigma,\sigma'}$ is not saturated and has index $2^4$ in its saturation, which is isometric to the lattice $\mathrm{U}(2)^{\oplus 3} \oplus \langle -8\rangle$. The orthogonal complement to $\mathrm{im}(\iota^*_{\sigma,\sigma'})$ is isometric to the rank $15$ negative definite lattice $N_V$ of Definition \ref{def:latticeN_V}.
\end{enumerate}      
\end{proposition}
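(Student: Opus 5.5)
The plan is to reduce, via Remark \ref{rmk:deformation}, to the generalized Kummer case $K=K^3(A)$. The restriction maps $\iota_\sigma^*$ and $\iota_{\sigma,\sigma'}^*$ are locally constant in families, and so are the lattices involved. The Hodge-theoretic claims (that the maps are morphisms of Hodge structures) are automatic, since they come from pull-backs. So everything reduces to lattice statements that can be checked at one point, where $W_{\sigma_0}\cong \Km(A)^{[2]}$ and $V_{\sigma_0,\sigma_\theta}\cong \Km(A/\langle\theta\rangle)$ by Proposition \ref{prop:CanonicalSubvarietiesK^3(A)}. By Remark \ref{rmk:actionAut_0} we may also assume $\tau=0$.

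For (i), I use the standard description $H^2(K^3(A),\ZZ)=H^2(A,\ZZ)\oplus\ZZ\delta$ with $\delta^2=-8$ (where $2\delta$ is the class of the Hilbert--Chow exceptional divisor), and $H^2(\Km(A)^{[2]},\ZZ)=H^2(\Km(A),\ZZ)\oplus\ZZ\delta'$ with $\delta'^2=-2$.
\begin{itemize}
\item On $H^2(A,\ZZ)$: a class $\alpha$ pulls back along $W_0\to K^3(A)$, through the rational map $A\times A\dashrightarrow W_0$, $(a,b)\mapsto(a,b,-a,-b)$, to $\alpha_1+\alpha_2+(-1)^*\alpha_1+(-1)^*\alpha_2=2(\alpha_1+\alpha_2)$. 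Hence $\iota_0^*(\alpha)=\pi_*\alpha$, with $\pi_*\colon H^2(A)\to H^2(\Km(A))$ the Kummer map, which multiplies the form by $2$ and has primitive image $\mathrm{U}(2)^{\oplus3}$.
\item On $\delta$: the restriction of the Hilbert--Chow divisor to $W_0$ consists of the Hilbert--Chow divisor of $\Km(A)^{[2]}$, coming from the loci $a=b$ and $a=-b$ (a multiplicity to be checked, probably $2$ in total), together with the locus where $a=-a$, i.e.\ subschemes meeting the $16$ exceptional curves $C_\alpha$. This gives $\iota_0^*\delta=\pm\delta'+\tfrac12\sum_\alpha C_\alpha$ up to sign conventions, whose square is $-2-8+\dots$. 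The normalization must make the square $-16=2\cdot(-8)$, which fixes the coefficients, and I expect $\iota_0^*\delta=2\delta'+\tfrac12\sum C_\alpha$ or a similar expression.
\end{itemize}
Primitivity then follows because $\tfrac12\sum C_\alpha\in L_{\Km}$ is primitive and independent from $\mathrm{U}(2)^{\oplus 3}$ modulo $2$. The orthogonal complement is spanned by $L_{\Km}$ and $\delta'$ subject to one relation, so it has rank $16$. I then compute its Gram matrix and discriminant form and match them with $N_W$ of Definition \ref{def:latticeN_W}. The identification of $\iota_0^*\delta$ is the step I expect to be the main obstacle: it requires a careful local computation of the Hilbert--Chow divisor restricted to $W_0$, including multiplicities at $a=\pm b$ and near the $2$-torsion points. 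Before trusting that computation, I would cross-check it through the norm condition $(\iota^*x)^2=2x^2$. That condition can be derived independently via Fujiki relations, comparing $\int_{W}\iota^*x^4$ with $\int_K x^4\cdot[W]$, where $[W]$ is a $G$-invariant and hence deformation-invariant class.

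For (ii), I restrict further from $W_0$ to $V_{\sigma_0,\sigma_\theta}\cong\Km(B)$ with $B=A/\langle\theta\rangle$, embedded as the graph of translation by $\theta$. Composing the restriction $H^2(\Km(A)^{[2]})\to H^2(\Km(B))$ with (i) gives $\iota^*_{\sigma,\sigma'}$, which multiplies the form by $4$: a factor $2$ from (i) and a factor $2$ from the degree-2 isogeny $A\to B$ together with the graph embedding.
\begin{itemize}
\item On $H^2(A,\ZZ)$, the image is $\pi_{B*}\phi_*H^2(A,\ZZ)$, where $\phi\colon A\to B$. Since $\phi_*H^2(A,\ZZ)$ has index $2^2$ in $H^2(B,\ZZ)$, the saturation should be $\pi_{B*}H^2(B,\ZZ)\cong\mathrm{U}(2)^{\oplus 3}$.
\item The class $\delta$ restricts to a combination of the $C_\beta$ meeting $V$, with square $4\cdot(-8)=-32$. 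I expect this to be $\sum$ of eight $C_\beta$, whose saturation gives the $\langle-8\rangle$ summand.
\end{itemize}
Counting indices should give $2^4$ total, i.e.\ $2^2\cdot 2^2$ or $2^3\cdot 2$ depending on how the Kummer-lattice half-sums interact. The orthogonal complement, of rank $22-7=15$, is generated by the remaining exceptional curves together with the relevant half-sums, and it is compared with $N_V$ of Definition \ref{def:latticeN_V} by computing discriminant groups.
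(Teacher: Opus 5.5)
\textbf{Gap in part (ii).} Part (ii) contains two concrete errors. With your values the index count gives $2^2\cdot 2=2^3$ rather than $2^4$.

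First, $\iota^*_{\sigma,\sigma'}\xi$ is the sum of \emph{all sixteen} exceptional curves of $\Km(B)$, $B=A/\langle\theta\rangle$, not of eight. The point $\{a,-a,-a+\theta,a+\theta\}$ of $V$ is non-reduced exactly when $a\in A[2]$ or $2a=\theta$, i.e.\ over all sixteen nodes of $B/\pm 1$. Eight of these are images of the curves $C_\tau$ and come from $\tfrac12\sum R_\tau$. The other eight come from $A_{2,\theta}$, where the graph of $\theta$ meets the diagonal, and come from $2\delta'$. The elements $\epsilon\in A[4]$ with $2\epsilon\in\{0,\theta\}$ lie in $\Aut_0(K^3(A))$, preserve $V$, and act on it by translations by $B[2]$. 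So all sixteen coefficients are equal, and the norm $-32$ makes them $1$. A sum of eight curves has square $-16$, which contradicts your own normalization, and half of it has square $-4$. The $\langle -8\rangle$ summand is spanned by half the sum of all sixteen.

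Second, $\phi_*H^2(A,\ZZ)\subset H^2(B,\ZZ)$ is isometric to $H^2(A,\ZZ)(2)$, since $\phi^*\phi_*=2$. It therefore has discriminant $2^6$ inside a unimodular lattice, so its index is $2^3$, not $2^2$. The correct count is $2^3\cdot 2=2^4$. Also, your isogeny heuristic for the factor $4$ only covers the $H^2(A,\ZZ)$ part. The restriction $H^2(\Km(A)^{[2]},\ZZ)\to H^2(\Km(B),\ZZ)$ goes from rank $23$ to rank $22$ and is not a similitude. The factor on $\xi$ has to come from the Fujiki relation for $[V]$, as in the paper.

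\textbf{Comparison with the paper.} Otherwise your plan follows the paper: deform to $K^3(A)$, split $H^2(K^3(A),\ZZ)=H^2(A,\ZZ)\oplus\ZZ\xi$, and apply the generalized Fujiki relation to $[W]$ and $[V]$. These classes stay Hodge because $W$ and $V$ deform with $K$; $G$-invariance is irrelevant, since $G$ acts trivially on $H^4$.

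The paper runs Fujiki in the opposite direction. It imports $\iota_\sigma^*\xi=2\delta'+\tfrac12\sum R_\theta$ and $\iota^*_{\sigma,\sigma'}\xi=\sum C_\theta$ from \cite[Proposition 3.11]{floccariHCKum3} and deduces the constants $12$ and $4$. In (i) it then treats $H^2(A,\ZZ)$ by Hodge theory: for very general $A$, the image is a finite-index sublattice of $H^2_{\mathrm{tr}}=H^2(A,\ZZ)(2)$ with the same discriminant, hence all of it.

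Your direction is fine for (i). $G$-invariance gives $\iota^*_\sigma\xi=m_1\delta'+\tfrac{m_2}{2}\sum R_\theta$ with $m_i>0$, and $m_1^2+4m_2^2=8$ forces $(m_1,m_2)=(2,1)$. You still need the Hodge (or monodromy) argument for the $H^2(A)$ part. Pulling back along $A\times A\dashrightarrow \Km(A)^{[2]}$, which is defined off codimension $2$ and misses the exceptional divisors, kills $\delta'$ and the $R_\theta$. So it only gives $\iota^*\alpha=\pi_*\alpha$ modulo those classes.

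Finally, discriminant forms do not determine negative definite lattices, but no matching is needed. The orthogonal of $H^2(A,\ZZ)(2)$ is $L_{\Km}\oplus\ZZ\delta'$. With $\delta'=u-v$, the lattices $N_W$ and $N_V$ are by definition the orthogonals of $2\delta'+\tfrac12\sum r_i$ in $L_{\Km}\oplus\ZZ\delta'$ and of $\sum r_i$ in $L_{\Km}$.
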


Finally, we describe the cohomology of the hyper-Kummer $\mathrm{K}3^{[2]}$-manifolds $M_{\sigma}$ and K3 surfaces $S_{\sigma,\sigma'}$ in the following result, whose part $(i)$ will be proved in Section \ref{subsec:CohomologyM} and part $(ii)$ will be proved in Section \ref{subsec:CohomologyS}.

\begin{proposition}[Cohomology of $M_\sigma$ and $S_{\sigma, \sigma'}$]
\label{prop:cohomologyM&S}
Let $K$ be a hyper-K\"ahler manifold of $\mathrm{Kum}^3$-type.
    \begin{enumerate}[label=(\roman*)]
    \item The composition $(r_\sigma)_*\circ\iota^*_{\sigma}\colon H^2(K,\ZZ) \to H^2(M_{\sigma},\ZZ)$ is injective and multiplies the form by a factor $2^7$. The map $\tfrac{1}{2^3}((r_\sigma)_*\circ\iota^*)$ is integral and extends to a primitive embedding of lattices and Hodge structures 
        \begin{equation}
            \frac{1}{2^3}((r_\sigma)_*\circ\iota_{\sigma}^*)\colon \widehat{H}^2(K,\ZZ)(2)\hookrightarrow H^2(M_\sigma,\ZZ),
        \end{equation}
        with orthogonal complement isometric to the rank $16$ negative definite lattice $N_M$ of Definition \ref{def:latticeN_M}.
        \item The composition $(r_{\sigma,\sigma'})_*\circ\iota^*_{\sigma,\sigma'}\colon H^2(K,\ZZ) \to H^2(S_{\sigma,\sigma'},\ZZ)$ is injective and multiplies the form by a factor $2^5$. The map $\tfrac{1}{2^2}(r_{\sigma,\sigma'})_*\circ\iota_{\sigma,\sigma'}^*$ is integral and extends to a primitive embedding of lattices and Hodge structures 
        \begin{equation}
            \frac{1}{2^2}((r_{\sigma,\sigma'})_*\circ\iota_{\sigma,\sigma'}^*)\colon \widehat{H}^2(K,\ZZ)(2)\hookrightarrow H^2(S_{\sigma,\sigma'},\ZZ),
        \end{equation}
        with orthogonal complement isometric to the rank $15$ negative definite lattice $N_S$ of Definition \ref{def:latticeN_S}.
    \end{enumerate}
\end{proposition}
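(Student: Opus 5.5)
The plan is to reduce both statements to the generalized Kummer case $K=K^3(A)$ by deformation, and there compute through the explicit birational models of Proposition \ref{prop:Y_K^3(A)}. By Remark \ref{rmk:deformation}, the whole diagram \eqref{diag:configuration} deforms with $K$. Hence the maps $\iota_\sigma^*$, $(r_\sigma)_*$, $\iota^*_{\sigma,\sigma'}$, $(r_{\sigma,\sigma'})_*$ form morphisms of local systems over the base. Injectivity, the scaling factor of the form, divisibility, primitivity of the extension to $\widehat H^2$, and the isometry class of the orthogonal complement are all locally constant. They can therefore be checked at one fibre, and the compatibility with Hodge structures is automatic since the maps are induced by algebraic correspondences. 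The common first step is the quadratic form computation. For $x\in H^2(K,\ZZ)$ we have $q'^*(r_*\text{-type image}) = \sum_{g} g^*p'^*x$ away from exceptional loci, so $\int_{M_\sigma}((r_\sigma)_*\iota_\sigma^*x)^4 = |G/\langle\sigma\rangle|^{3}\int_{W_\sigma}(\iota_\sigma^*x)^4$. Then I would use Fujiki relations on $W_\sigma$ and $M_\sigma$, both with constant $3$, together with Proposition \ref{prop:cohomologyW&V}(i), which gives the factor $2$ for $\iota_\sigma^*$. This yields a total factor $2\cdot 2^{4\cdot 3/2}=2^7$. Similarly, for surfaces the factor is $4\cdot |(\ZZ/2)^3| = 2^5$, using Proposition \ref{prop:cohomologyW&V}(ii).

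For integrality and divisibility I would work on $K^3(A)$ explicitly. In (ii), $V_{\sigma,\sigma'}\cong \Km(A/\langle\tau+\tau'\rangle)$, and $S_{\sigma,\sigma'}\cong\Km(A)$ by Lemma \ref{lem:resolutionK3}. The composition then factors as the isogeny-induced map $A\to A/\langle \tau+\tau'\rangle$ followed by the quotient by translations, relating $\Km$ of one torus to $\Km$ of the other. I would identify $H^2(K^3(A),\ZZ)\supset H^2(A,\ZZ)$ (with the extra class $\delta/2$ orthogonal) and track $H^2(A,\ZZ)\to H^2(\Km(A),\ZZ)$. The classical map $\pi_*$ multiplies the form by $2$ with image $\mathrm U(2)^{3}$, and the composition through the $2^4$-isogeny becomes $4\pi_*$ on $H^2(A)$. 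The remaining step is to compute where $\delta$ (the class with $\delta^2=-8$) goes: it should go to $4$ times a class of square $-4\cdot 2$ inside $\mathrm U(2)^3\oplus L_{\Km}$ after the identification. Its half then lands in the span of the exceptional curves as a class of square $-4$, exhibiting the image of $\widehat H^2(K)(2)\cong \mathrm U(2)^3\oplus\langle -4\rangle$. Part (i) is analogous, using $M_\sigma\sim_{bir}\Km(A)^{[2]}$ from Lemma \ref{lem:isomorphismOrbifolds2} via $\bar s(\bar x,\bar y)=(\overline{x+y},\overline{x-y})$, with birational maps between hyper-K\"ahler manifolds preserving $H^2$. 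Here $H^2(A)$ maps via $\bar s$, giving a factor $2^3$ times the natural $\mathrm U(2)^3$, and $\delta$ maps into the span of the Hilbert–Chow class and the $16$ divisors $R_\tau$.

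Primitivity and the orthogonal complements come last. The image is $\mathrm U(2)^3\oplus\langle -4\rangle$, and its complement is the saturation of the classes contracted by the resolution: the $14$ nodal curves plus one extra class for $S$, and the $15$ exceptional divisors plus one class for $M$. I would show primitivity by computing discriminant groups. The ambient lattices $\Lambda_{\mathrm K3}$ and $\Lambda_{\mathrm K3^{[2]}}$ have discriminants $1$ and $2$, so the complement must have discriminant group matching that of $\mathrm U(2)^3\oplus\langle-4\rangle$ up to the gluing. Any nontrivial overlattice would force an isotropic subgroup, which I would rule out by the explicit $\Km(A)$ description, where $\mathrm U(2)^3 = \pi_*H^2(A)$ is primitive. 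The orthogonal complement is then computed directly and matched with $N_S$, $N_M$ of the cited definitions. I expect the main obstacle to be tracking the image of $\delta$ (equivalently of the generator of $\widehat H^2/H^2$) precisely through the Hilbert–Chow/exceptional divisors. This determines both the divisibility $2^2$ or $2^3$ and primitivity; for it I would first compute intersection numbers with test curves on $\Km(A)^{[2]}$.
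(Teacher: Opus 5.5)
Your overall route is the paper's: you deform to $K^3(A)$, use $W_{\sigma_\tau}\cong\Km(A)^{[2]}$ with the map $s$ induced by $\bar s$, and use $V_{\sigma_0,\sigma_\theta}\cong\Km(A/\langle\theta\rangle)\dashrightarrow S_{\sigma_0,\sigma_\theta}\cong\Km(A)$. Your computation of the scaling factors $2\cdot 2^6=2^7$ and $4\cdot 2^3=2^5$, and hence of injectivity, is correct; it is Lemma~\ref{lem:scalarFactor}.

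The gap is that everything else in the statement depends on knowing exactly where the class $\xi$ goes, and you leave this as ``the main obstacle'' without an argument that produces it. Here $\xi$ is half the Hilbert--Chow divisor $E$ of $K^3(A)$, which you call $\delta$; below, $\delta$ is half the Hilbert--Chow class of $\Km(A)^{[2]}$, as in the paper. The parts that depend on the image of $\xi$ are: integrality after dividing by $2^3$ resp.\ $2^2$, the extension to $\widehat{H}^2(K,\ZZ)(2)$, primitivity, and the complements $N_M$, $N_S$. Two inputs are needed.

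First, you need the restrictions $\iota^*_{\sigma_\tau}(\xi)=2\delta+\tfrac12\sum_\theta[R_\theta]$ and $\iota^*_{\sigma_0,\sigma_\theta}(\xi)=\sum_\alpha[N_\alpha]$ (Proposition~\ref{prop:restrictionXi}). Up to sign these are forced by three facts: $G$-invariance, orthogonality to $\iota^*H^2(A,\ZZ)$, and the squares $-16$, $-32$. The signs are then fixed by effectivity of $\iota^*[E]$.

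Second, you need the push-forwards of the boundary divisors, and this input is genuinely geometric: lattice theory only places the image of $\xi$ somewhere in $L_{\Km}\oplus\langle\delta\rangle$, resp.\ $L_{\Km}$. The required formulas are $\widetilde{s}_*\widetilde{\delta}=2^3[R_0]$ and $\widetilde{s}_*[\widetilde{R}_\tau]=2\delta$. These come from the degrees of $\bar s$ on $\overline{D}$, $\overline{R}_\tau$, $\overline{V}_\theta$ (Lemmas~\ref{lem:geometryMaps} and~\ref{lem:cohomologys_*}). You also need $t_*(\tfrac12\sum_{A[2]/\langle\theta\rangle}[N_\alpha])=4[C_0]$ and $t_*(\tfrac12\sum_{A_{2,\theta}/\langle\theta\rangle}[N_\beta])=4[C_\theta]$ (Lemma~\ref{lem:push-forwardt_*}).

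Together these give $\xi\mapsto 2^4(\delta+[R_0])$ and $\xi\mapsto 8([C_0]+[C_\theta])$. This is what produces the divisibility, and the non-primitivity of the image of $\xi$ that allows the extension to $\widehat{H}^2$. It also gives the explicit images $H^2(A,\ZZ)(2)\oplus\langle\delta+[R_0]\rangle$ and $H^2(A,\ZZ)(2)\oplus\langle[C_0]+[C_\theta]\rangle$, whose orthogonal complements are $N_M$ and $N_S$.

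\textbf{Smaller points.}
\begin{itemize}
\item Your one explicit prediction is inconsistent with your own scaling. With factor $2^5$, the class $\xi$ of square $-8$ must map to a class of square $-256$, i.e.\ $8$ times a class of square $-4$, not $4$ times a class of square $-8$.
\item Primitivity of $\pi_*H^2(A,\ZZ)$ does not by itself show that $\pi_*H^2(A,\ZZ)\oplus\langle[C_0]+[C_\theta]\rangle$ is saturated, and the discriminant count you sketch does not decide it. You need a separate check, for example pairing with $\tfrac12\sum_{k\in W}[C_k]$ for an affine hyperplane $W\subset A[2]$ containing $0$ but not $\theta$, as in the proof of Lemma~\ref{lem:embeddingN_S}.
\item Your claim that $\iota^*_{\sigma,\sigma'}$ restricted to $H^2(A,\ZZ)$ is $\pi_*$ for $A\to A/\langle\theta\rangle$ needs proof. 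Use Remark~\ref{rmk:compatibility}, or a monodromy/Schur argument combined with the scaling factor.
\end{itemize}
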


\subsection{
Transcendental lattices and moduli spaces}
The following result follows from the description of the cohomology of hyper-Kummer manifolds.
\begin{theorem}
\label{thm:TranscendentalResolutions}
    Let $K$ be a $\mathrm{Kum}^3$-manifold, and consider the associated hyper-K\"ahler sixfold $Y_K$, fourfolds $M_{\sigma}$ and $\mathrm{K}3$ surfaces $S_{\sigma,\sigma'}$. 
    The transcendental cohomology of any of these hyper-Kummer manifolds is Hodge isometric to $\widehat{H}^2_{\mathrm{tr}}(K,\ZZ)(2)$.
\end{theorem}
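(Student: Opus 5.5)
The plan is to deduce the theorem formally from Propositions \ref{prop:cohomologyY_K} and \ref{prop:cohomologyM&S}. Let $X$ be any of $Y_K$, $M_\sigma$, $S_{\sigma,\sigma'}$, and let
\[
\phi\colon \widehat{H}^2(K,\ZZ)(2)\hookrightarrow H^2(X,\ZZ)
\]
be the primitive embedding of lattices and Hodge structures given there. These are $\tfrac{1}{2^4}r_*$, $\tfrac{1}{2^3}(r_\sigma)_*\iota_\sigma^*$ and $\tfrac{1}{2^2}(r_{\sigma,\sigma'})_*\iota_{\sigma,\sigma'}^*$ respectively.

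In each case the orthogonal complement $N:=\Ima(\phi)^\perp$ is a negative definite lattice: $L_{\Km}$, $N_M$ or $N_S$. For $Y_K$ I will also use that $N$ is spanned over $\QQ$ by the algebraic classes $[E_\sigma]$. For $M_\sigma$ and $S_{\sigma,\sigma'}$, the rank count $23=7+16$ and $22=7+15$ shows that $\Ima(\phi)\oplus N$ has finite index in $H^2(X,\ZZ)$. Moreover $N$ is spanned rationally by classes of exceptional divisors of the resolutions $M_\sigma\to W_\sigma/G$ and $S_{\sigma,\sigma'}\to V_{\sigma,\sigma'}/G$: there are $15$ such divisors in the fourfold case and $14$ nodes in the surface case. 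The remaining rank-one discrepancy is accounted for by $G$-invariant classes on $W_\sigma$ or $V_{\sigma,\sigma'}$ pushed forward, which lie in $N$. In every case $N\otimes\QQ$ is spanned by classes of type $(1,1)$. Indeed, $N$ is a sub-Hodge structure because it is the orthogonal complement of a sub-Hodge structure. Since $N$ is definite and $H^{2,0}(X)$ pairs nontrivially with $H^{0,2}(X)$, the Hodge structure on $N$ must be purely of type $(1,1)$. So $N\subset \NS(X)$, and this requires no extra geometric input.

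Next I identify the transcendental lattices. By definition $H^2_{\mathrm{tr}}(X,\ZZ)=\NS(X)^\perp$, and likewise $\widehat H^2_{\mathrm{tr}}(K,\ZZ)$ is the orthogonal complement of the $(1,1)$-part $\widehat{\NS}(K)$ in $\widehat H^2(K,\ZZ)$. Since $N\subset\NS(X)$, we get $H^2_{\mathrm{tr}}(X,\ZZ)\subset N^\perp$. Because $\phi$ is primitive, $N^\perp=\Ima(\phi)$ exactly. A class $\phi(x)$ is orthogonal to $\NS(X)$ if and only if $x$ is orthogonal to $\phi^{-1}(\NS(X))$. Since $\phi$ is a morphism of Hodge structures with primitive image, $\phi^{-1}(\NS(X))=\widehat{\NS}(K)$. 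Here I use that $\NS(X)\cap\Ima(\phi)=\phi(\widehat{\NS}(K))$, together with the fact that $\NS(X)\otimes\QQ=\phi(\widehat{\NS}(K))_\QQ\oplus N_\QQ$, because $\phi$ and the orthogonal decomposition respect Hodge types. Consequently $\phi$ restricts to a bijection $\widehat{H}^2_{\mathrm{tr}}(K,\ZZ)(2)\to H^2_{\mathrm{tr}}(X,\ZZ)$. It is an isometry for the twisted form and compatible with Hodge structures, which gives the Hodge isometry in the statement.

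The only delicate point is the saturation check: that $H^2_{\mathrm{tr}}(X,\ZZ)$ equals $\phi(\widehat H^2_{\mathrm{tr}}(K,\ZZ))$ on the nose and not merely up to finite index. This rests entirely on the primitivity of $\phi$ asserted in Propositions \ref{prop:cohomologyY_K} and \ref{prop:cohomologyM&S}. Both sides are saturated: transcendental lattices are orthogonal complements, and the image of a saturated sublattice under a primitive embedding is saturated. So I expect no real obstacle beyond invoking those propositions. The genuine work lies upstream, in proving primitivity and computing the orthogonal complements.
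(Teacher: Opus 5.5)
Your proposal is correct and takes the same route as the paper, whose proof is just the one-line deduction from Propositions \ref{prop:cohomologyY_K} and \ref{prop:cohomologyM&S}. You only make explicit the step the paper leaves implicit: the definite complement $N$ is purely of type $(1,1)$, so the primitive Hodge embedding identifies $H^2_{\mathrm{tr}}(X,\ZZ)$ with the image of $\widehat{H}^2_{\mathrm{tr}}(K,\ZZ)(2)$ exactly. Your digression on exceptional divisors spanning $N$ is not needed, as you note yourself.
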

\begin{proof}
    Proposition \ref{prop:cohomologyY_K} implies that $H^2_{\mathrm{tr}}(Y_K,\ZZ) $ is Hodge isometric to $\widehat{H}^2_{\mathrm{tr}}(K,\ZZ)(2)$. We have the same for $M_{\sigma}$ and $S_{\sigma,\sigma'}$ by Proposition \ref{prop:cohomologyM&S}.
\end{proof}

When $K$ is projective, we have the following consequence. 
\begin{corollary}
\label{cor:Relation-YKMKSK}
Let $K$ be a projective manifold of $\mathrm{Kum}^3$-type. Then:
\begin{enumerate}[label=(\roman*)]
\item the $\mathrm{K}3$ surfaces $S_{\sigma,\sigma'}$ associated with $K$ are all isomorphic to each other; we denote by $S_K$ this $\mathrm{K}3$ surface;
\item the hyper-Kummer $\mathrm{K}3^{[3]}$-variety $Y_{K}$ is a smooth and projective moduli space of Bridgeland stable objects in the derived category of $S_K$, for some primitive Mukai vector of square $4$;
\item for any $\sigma$, the hyper-Kummer $\mathrm{K}3^{[2]}$-variety $M_{\sigma}$ is a smooth and projective moduli space of Bridgeland stable objects in the derived category of $S_K$, for some primitive Mukai vector of square $2$.
\end{enumerate} 
\end{corollary}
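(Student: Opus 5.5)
The plan is to deduce all three statements from Theorem \ref{thm:TranscendentalResolutions} together with two standard results: the derived/Torelli description of K3 surfaces with large Picard rank, and the Bayer--Macr\`i / Markman criterion identifying hyper-K\"ahler manifolds of $\mathrm{K}3^{[n]}$-type as moduli spaces on a K3 surface.

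For (i), Theorem \ref{thm:TranscendentalResolutions} shows that all the $S_{\sigma,\sigma'}$ have Hodge isometric transcendental lattices $T\coloneqq \widehat{H}^2_{\mathrm{tr}}(K,\ZZ)(2)$. Since $K$ is projective, these are projective K3 surfaces. Their Picard rank is at least $22-7=15$, and in fact at least $16$, because $N_S$ has rank $15$ and the image of $\widehat H^2(K)(2)$ contributes at least one algebraic class.

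By Nikulin, if the Picard rank $\rho\geq 12$, then the primitive embedding $T\hookrightarrow \Lambda_{\mathrm{K}3}$ is unique up to isometry. Concretely, $\NS$ is indefinite of rank $\geq 12 \geq \ell(A_{\NS})+2$, with $\ell\leq \operatorname{rk} T\leq 6$, so $\NS$ is unique in its genus and $\mathrm{O}(\NS)\to \mathrm{O}(A_{\NS})$ is surjective. Any Hodge isometry of transcendental lattices therefore extends to a Hodge isometry of the full $H^2$. Composing with $\pm$ and reflections in $(-2)$-classes, we may assume it preserves the ample cone, and the global Torelli theorem then gives $S_{\sigma,\sigma'}\simeq S_{\sigma'',\sigma'''}$. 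This is the only real lattice-theoretic input, and I expect it to be routine given $\rho\geq 16$.

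For (ii) and (iii), I would use the following criterion (Addington, Bayer--Macr\`i, Markman): a projective $\mathrm{K}3^{[n]}$-type manifold $X$ is isomorphic to a moduli space $M_\sigma(v)$ of Bridgeland stable objects on a K3 surface $S$ if there is a Hodge isometry $\widetilde H(S,\ZZ)\supset v^\perp\simeq H^2(X,\ZZ)$ compatible with the Markman embedding $H^2(X)\hookrightarrow \widetilde\Lambda$. Equivalently, it suffices that $\widetilde H_{\mathrm{alg}}(S)$ contains a copy of $\mathrm U$ in the orthogonal of the relevant part, which holds when $T(X)\simeq T(S)$ and the Mukai lattice admits such a splitting.

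Concretely, the extended lattice $\widetilde H(Y_K)$ (Markman's Mukai lattice) is a Hodge structure whose transcendental part equals $T$. Thus $\widetilde H(Y_K)$ and $\widetilde H(S_K)=H^*(S_K,\ZZ)$ are both even unimodular lattices of signature $(4,20)$ containing $T$ primitively, with algebraic parts of rank $\geq 18$. The same Nikulin uniqueness argument gives a Hodge isometry $\widetilde H(Y_K)\simeq \widetilde H(S_K)$. It sends Markman's class $v$ (with $v^2=2n-2=4$) to a primitive algebraic Mukai vector, and $v^\perp\simeq H^2(Y_K)$ as Hodge lattices. By Bayer--Macr\`i, there is a generic stability condition $\sigma$ with $M_\sigma(v)$ birational to $Y_K$ via that identification. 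Since birational hyper-K\"ahler manifolds arise as different chambers (every birational model of $M_\sigma(v)$ is $M_{\sigma'}(v)$ for some $\sigma'$), $Y_K$ itself is isomorphic to such a moduli space. Case (iii) is identical with $n=2$ and $v^2=2$, using Proposition \ref{prop:cohomologyM&S}.

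The main obstacle is the bookkeeping showing that the extension of the Hodge isometry on $T$ to the Mukai lattices is compatible with the Markman embedding. This is why the high Picard rank, which makes the embedding of $T$ unique, is essential.
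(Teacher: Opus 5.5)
Your proposal is correct and follows the paper's route. Part (i) is Nikulin's uniqueness of the primitive embedding of the transcendental lattice when $\rho\geq 12$, combined with the global Torelli theorem. Parts (ii)--(iii) come from Theorem~\ref{thm:TranscendentalResolutions} together with Addington's criterion on Markman's extended lattice and Bayer--Macr\`i's result that every smooth birational model of $M_\sigma(v)$ is again such a moduli space.

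The compatibility you flag as the main obstacle is automatic once you define $v$ as the image of Markman's class. The only remaining point, orientation in the birational Torelli theorem, is fixed by a sign, since $\mathrm{Mon}^2=\mathrm{O}^+$ for both $\mathrm{K}3^{[2]}$- and $\mathrm{K}3^{[3]}$-type.
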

\begin{proof}
    The first statement follows from the general fact that projective K3 surfaces of Picard rank at least $12 $ and Hodge isometric transcendental lattices are isomorphic. The  assertions $(ii)$ and $(iii)$ follow from Theorem \ref{thm:TranscendentalResolutions} via \cite[Proposition 1]{Addington2016} and \cite{BM14a}, or \cite[Proposition 3.3]{mongardi2015}. 
\end{proof}

\begin{definition}[Hyper-Kummer K3 surfaces]
\label{def:HyperKummerK3}
Given a projective hyper-K\"ahler manifold $K$ of $\mathrm{Kum}^3$-type, we denote by $S_K$ the isomorphism class of the K3 surfaces $S_{\sigma, \sigma'}$, called the \textit{hyper-Kummer} K3 surface associated with $K$. 
A K3 surface $S$ is called \textit{hyper-Kummer} if it is isomorphic to $S_K$ for some variety $K$ of $\mathrm{Kum}^3$-type.
\end{definition}

Before embarking in the proofs of the above results, we prove a lemma which generalizes \cite[Lemma 4.8]{floccariKum3}. 
\begin{lemma}\label{lem:scalarFactor}
Let $X$ be a hyper-K\"ahler manifold of dimension $2n$ with the action of a finite group $G$ via symplectic automorphisms; let $d$ be the order of $G$. Assume that the quotient of $X$ by $G$ admits a crepant resolution $Y\to X/G$ by a hyper-K\"ahler manifold $Y$. Let $r\colon X\dashrightarrow Y$ be the corresponding rational map of degree $d$. Then the push-forward map 
\begin{equation}
   r_*\colon H^2(X,\ZZ)^G \to H^2(Y,\ZZ)
\end{equation}
multiplies the form by a positive factor equal to
    \begin{equation} 
    d^2 \cdot \sqrt[n]{\frac{c_X}{c_Y\cdot d}}
    \end{equation}
    where $c_X$ and $c_Y$ are the Fujiki constants of $X$ and $Y$ respectively. 
    \end{lemma}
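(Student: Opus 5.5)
The plan is to compare top self-intersections on both sides through the Fujiki relation, using the correspondence $\widetilde{X}\xrightarrow{p'}X$, $\widetilde{X}\xrightarrow{q'}Y$, where $\widetilde{X}$ is a $G$-equivariant resolution of the graph of $r$ (for instance a blow-up as in the diagrams above). Set $r_* = q'_*p'^*$. Fix $\alpha\in H^2(X,\ZZ)^G$ and put $\beta = r_*\alpha\in H^2(Y,\ZZ)$. The goal is to show $q_Y(\beta)=\lambda\, q_X(\alpha)$ with $\lambda$ the stated constant. Since both forms are nondegenerate quadratic forms and $r_*$ is linear, it suffices to check this for a Zariski-dense (or open) set of $\alpha$, and then use polarization.

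\textbf{Step 1: pull-back identity.} Since $G$ acts trivially on $\alpha$ and $q'$ is the quotient by $G$ (generically \'etale of degree $d$), we have $q'^*q'_*(p'^*\alpha)=\sum_{g\in G}g^*p'^*\alpha = d\,p'^*\alpha$ modulo classes supported on the exceptional locus of $p'$ (ramification). It is cleaner to argue over the open set $X^\circ\subset X$ where $G$ acts freely, whose complement has codimension $\geq 2$ because the action is symplectic. Let $Y^\circ$ be its image, whose complement in $Y$ is the exceptional locus plus higher-codimension pieces. On $Y^\circ$ we get $q^{*}(\beta|_{Y^\circ}) = d\,\alpha|_{X^\circ}$ as classes, with $q\colon X^\circ\to Y^\circ$ finite étale of degree $d$.

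\textbf{Step 2: compare top powers.} I would use the Beauville--Bogomolov form in its form-theoretic description rather than top intersections, to avoid codimension-one exceptional divisors. Normalize symplectic forms so that $q^*\sigma_Y=\sigma_X$ on $X^\circ$. This is possible since the crepant resolution's symplectic form descends from the $G$-invariant one, and $H^{2,0}$ is one-dimensional on both sides. Then $\int_Y(\sigma_Y\bar\sigma_Y)^n = \frac1d\int_X(\sigma_X\bar\sigma_X)^n$, because the complements have measure zero. Apply Fujiki's relation $\int \gamma^{2n}=c\,q(\gamma)^n$ to $\gamma=\sigma+\bar\sigma$, whose top power is $\binom{2n}{n}\int(\sigma\bar\sigma)^n$. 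This gives
$$c_Y\,q_Y(\sigma_Y+\bar\sigma_Y)^n=\frac{1}{d}\,c_X\,q_X(\sigma_X+\bar\sigma_X)^n,$$
so that $q_Y(\sigma_Y+\bar\sigma_Y)=\sqrt[n]{c_X/(c_Y d)}\;q_X(\sigma_X+\bar\sigma_X)$, where the root is real positive because both $q$-values are positive.

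\textbf{Step 3: transport and conclude.} Now $r_*\sigma_X$ is the class pushing forward the $d$ sheets, so $r_*\sigma_X = d\,\sigma_Y$. Hence $q_Y(r_*(\sigma_X+\bar\sigma_X)) = d^2\sqrt[n]{c_X/(c_Yd)}\;q_X(\sigma_X+\bar\sigma_X)$. The main obstacle is extending this from the period class to all of $H^2(X)^G$. For that I would use that $r_*$ is a morphism of Hodge structures and is compatible with Beauville--Bogomolov forms up to a scalar on the whole invariant part. Concretely, pick Kähler classes and deform: along the family of hyper-Kähler rotations / $G$-equivariant deformations of $X$ (whose existence follows from the fact that $G$ acts trivially enough, or via Hassett--Tschinkel type invariance), the periods $\sigma_X$ sweep out an open subset of the period domain of $H^2(X,\CC)^G$. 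The classes $\mathrm{Re}\,\sigma_X$ then span an open cone, and the scalar is locally constant. Alternatively, and more simply, I would run Step 2 with $\sigma$ replaced by any $G$-invariant class $\alpha$, computing $\int_Y\beta^{2n}=\frac{1}{d}\int_X(d\alpha)^{2n}$ on $X^\circ$. This is legitimate only if $\beta^{2n}$ sees no contribution from exceptional divisors, which holds when $\beta\cdot E=0$-type orthogonality makes $\beta$ pull back to $d\,p'^*\alpha$ exactly. Granting that, we get $c_Y q_Y(\beta)^n=d^{2n-1}c_X q_X(\alpha)^n$, which gives the factor $d^2\sqrt[n]{c_X/(c_Yd)}$. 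Its sign is fixed as positive by testing on a Kähler class, and it applies to all $\alpha$ by Zariski density.
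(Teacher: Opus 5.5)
There is a genuine gap: the proof stops exactly where the content of the lemma lies, namely the passage from the period plane to all of $H^2(X,\ZZ)^G$. Your Steps 2--3 are correct as far as they go. Working with the honest holomorphic forms on $Y^\circ$ does give $r_*\sigma_X=d\,\sigma_Y$, and it gives $q_Y\circ r_*=\lambda\, q_X$ on the real plane spanned by $\mathrm{Re}\,\sigma_X$ and $\mathrm{Im}\,\sigma_X$, with $\lambda=d^2\sqrt[n]{c_X/(c_Y d)}>0$. But for a fixed $X$ this is a $2$-dimensional subspace, not a Zariski-dense set, and neither of your routes to the rest is carried out. The deformation route needs a family of crepant resolutions $Y_t\to X_t/G$ over the $G$-equivariant deformations of $X$, with $(r_t)_*$ equal to the parallel transport of $r_*$. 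That requires real input (local triviality of the induced deformations of $X/G$ and a lifting result for the resolution), which you only gesture at; Hassett--Tschinkel, which concerns automorphisms acting trivially on $H^2$, does not supply it. The second route is explicitly conditional (``granting that''), and the condition you attach to it, an orthogonality $\beta\cdot E=0$, is not what is needed.

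The missing observation is that nothing needs to be granted: the formula you wrote at the start of Step 1 holds exactly, with no correction supported on the exceptional locus. For the quotient map $q'\colon\widetilde X\to Y=\widetilde X/G$ one has $q'^*q'_*y=\sum_{g\in G}g^*y$ in $H^*(\widetilde X,\QQ)$ for every class $y$. This follows because $q'_*g^*=q'_*$, because $q'^*$ identifies $H^*(Y,\QQ)$ with $H^*(\widetilde X,\QQ)^G$, and because $q'_*q'^*=d$ by the projection formula. As $p'^*\alpha$ is $G$-invariant, $q'^*(r_*\alpha)=d\,p'^*\alpha$, hence
\[
\int_Y (r_*\alpha)^{2n}=\frac1d\int_{\widetilde X}\bigl(d\,p'^*\alpha\bigr)^{2n}=d^{2n-1}\int_X\alpha^{2n}
\]
for every $\alpha\in H^2(X,\ZZ)^G$. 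Fujiki's relation then gives $c_Y\,q_Y(r_*\alpha)^n=d^{2n-1}c_X\,q_X(\alpha)^n$ at once. This is the paper's proof (phrased there as $r^*r_*=d$ on invariant classes), and it makes the restriction to $X^\circ$ and the period computation unnecessary.

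Your Step 2 does retain one use. For $n$ even, the equality of $n$-th powers only shows that one of the quadratic forms $q_Y\circ r_*\mp\lambda q_X$ vanishes identically, since their product vanishes on the irreducible space $H^2(X,\RR)^G$. Your computation $q_Y(r_*(\sigma_X+\bar\sigma_X))>0$ then selects the sign $+$, as an alternative to the paper's test on a K\"ahler class.
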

\begin{proof}
    Denote by $q_X$ and $q_Y$ the Beauville--Bogomolov form of $X$ and $Y$ respectively.
    For any $x\in H^2(X,\ZZ)^G$ we have $r^*r_*(x) = d\cdot x$; therefore, using the projection formula and Fujiki's relation \cite{fujiki1987}, we find 
    \begin{equation}
        \begin{split}
        q_Y(r_*x, r_*x)^n & = \frac{1}{c_Y} \int_Y (r_*(x))^{2n} = \\ & = \frac{1}{c_Y\cdot d} \int_X (r^*r_*(x))^{2n} = \\
       &  =\frac{c_X\cdot d^{2n-1}}{c_Y}\cdot \frac{1}{c_X} \int_X x^{2n} = \\ & = \frac{c_X\cdot d^{2n-1}}{c_Y} q_X(x,x)^n.
        \end{split}
    \end{equation} 
    Taking the $n$-th root we obtain the claimed formula; for $n$ even, the sign is determined since for any K\"ahler class $h$ on $X$ we have $q_X(h,h) > 0$ and $r_*(h)$ belongs to the closure of the K\"ahler cone of $Y$, and so we must have $q_{Y}(r_*(h), r_*(h))\geq 0$.
\end{proof}

\begin{remark}
If $e$ denotes the $\mathrm{g.c.d}$ of $q_X(a,b)$ for all $a,b\in H^2(X,\ZZ)$, then $e\cdot d^2 \sqrt[n]{\tfrac{c_X}{c_Y\cdot d}}$ is an integer.
\end{remark}

\begin{remark}
    Let $f\colon X\dashrightarrow Y$ be a finite rational map of degree $d$ between hyper-K\"ahler manifolds of dimension $2n$. Then there is a well-defined injective push-forward map
    $f_* \colon H^2_{\mathrm{tr}}(X,\ZZ)\to H^2(Y,\ZZ)$
    such that $f^*\circ f_*$ is multiplication by $d$ on $H^2_{\mathrm{tr}}(X,\ZZ)$. It follows with the same proof that $f_*$ multiplies the form by $d^2 \cdot \sqrt[n]{\frac{c_X}{c_Y\cdot d}}$.
\end{remark}

\subsection{The cohomology of $Y_K$} 
\label{subsec:CohomologyYK}
Consider the generalized Kummer variety $K^3(A)$ on an abelian surface $A$.
We have shown in Proposition \ref{prop:Y_K^3(A)} that $Y_{K^3(A)}$ is birational to $\Km(A)^{[3]}$; in fact, to prove this proposition we constructed (Lemma \ref{lem:isomorphismOrbifolds}) an explicit rational map $q\colon K^3(A)\dashrightarrow \Km(A)^{[3]}$ of degree $2^5$ such that $\psi\circ r=q$ for a birational map $\psi\colon Y_{K^3(A)} \dashrightarrow \Km(A)^{[3]}$. We now study the push-forward map $q_*\colon H^2(K^3(A),\ZZ)\to H^2(\Km(A)^{[3]},\ZZ)$. 

Recall from Proposition \ref{prop:CanonicalSubvarietiesK^3(A)} that we have $16$ canonical submanifolds $W_{\sigma_{\tau}}\subset K^3(A)$ of $\mathrm{K}3^{[2]}$-type parametrized by $\tau\in A[2]$, and that
we introduced in Remark~\ref{rmk:Km(A)^[n]} divisors $D$ and $R_{\tau}$, $\tau\in A[2]$, of $\Km(A)^{[3]}$.

\begin{lemma}\label{lem:geometryMapq}
    Let $\widetilde{K}^3(A)\to K^3(A)$ denote the blow-up along $\bigcup_{\tau\in A[2]} W_{\sigma_\tau}$; denote by $F_{\sigma_{\tau}}\subset \widetilde{K}^3(A)$ the component of the exceptional divisor over $W_{\sigma_{\tau}}$. Let $\widetilde{E}$ be the strict transform in $\widetilde{K}^3(A)$ of the Hilbert--Chow divisor $E$ of $K^3(A)$.
    Let \begin{equation}
    \widetilde{q}\colon \widetilde{K}^3(A)\dashrightarrow \Km(A)^{[3]}
    \end{equation} 
    be the rational map induced by $q\colon K^3(A)\dashrightarrow \Km(A)^{[3]}$. Then $\widetilde{q}$ is a morphism outside of a subset of codimension~$\geq 2$ contained in the union of $\widetilde{E}$ and the $16$ divisors $F_{\sigma_{\tau}}$. Moreover:
    \begin{enumerate}[label=(\roman*)]
    \item $\widetilde{q}$ restricts to a generically finite rational map $\widetilde{E}\dashrightarrow D$ of degree $2^5$;
    \item for each $\tau\in A[2]$, the restriction of $\widetilde{q}$ induces a generically finite map $F_{\sigma_{\tau}}\dashrightarrow R_{\tau}$, of degree $2^4$.
    \end{enumerate} 
    \end{lemma}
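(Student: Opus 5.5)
The plan is to work entirely through the Hilbert--Chow morphisms and the explicit map $\overline{q}\colon A_0^{(4)}\to (A/\pm1)^{(3)}$ of Lemma \ref{lem:isomorphismOrbifolds}, reducing everything to generic points of the relevant loci. First I will describe the indeterminacy. Since $\nu\colon K^3(A)\to A_0^{(4)}$ and $m\colon \Km(A)^{[3]}\to (A/\pm1)^{(3)}$ are isomorphisms over the loci of reduced points not hitting the exceptional structure, $q=m^{-1}\circ\overline{q}\circ\nu$ is a morphism wherever $\overline{q}(\nu(x))$ lies in the locus where $m$ is an isomorphism. Using Remark \ref{rmk:Km(A)^[n]}, $m$ is the blow-up of the two codimension-$2$ singular strata $\overline{D}\cap\overline{U}$ and $\overline{R}_\tau\cap\overline{U}$ over the open set $\overline{U}$ whose complement has codimension $>2$. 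I will therefore compute the preimages $\overline{q}^{-1}(\overline{D})$ and $\overline{q}^{-1}(\overline{R}_\tau)$ in $A_0^{(4)}$.

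For $\overline{R}_\tau$, the condition is $\overline{a_1+a_j}=\overline{\tau}$ for some $j$ (up to the $G$-symmetry), i.e.\ $a_1+a_j=\tau$, so $a_k+a_l=\tau$ as well. This is exactly the image of $W_{\sigma_{\tau'}}$ type loci. By Proposition \ref{prop:CanonicalSubvarietiesK^3(A)}(i) together with the $G$-orbit, the preimage is the union of the $\nu$-images of the $W_{\sigma_{\tau'}}$. I will need to track which $\tau'$ map to $R_\tau$, and I will fix the labelling so that $F_{\sigma_\tau}$ maps to $R_\tau$.

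For $\overline{D}$, the condition $\overline{a_1+a_j}=\overline{a_1+a_k}$ means either $a_j=a_k$, which gives the diagonal, i.e.\ the Hilbert--Chow divisor $E$, or $2a_1+a_j+a_k=0$, which is $a_1+\ldots=$ a $W$-type condition again. So away from $E$ and the $W_{\sigma}$'s, $\overline{q}\circ\nu$ lands in the smooth locus of $(A/\pm1)^{(3)}$ and $q$ is defined.

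After blowing up the $W_\sigma$'s, I will use the local picture. Transversally to a generic point of $W_{\sigma_\tau}$, $\sigma_\tau$ acts as $-1$ on a $2$-dimensional normal space, and $\overline{q}$ is, transversally, the quotient by this involution followed by the node of $\overline{R}_\tau$. The universal property of the blow-up $U\to\overline{U}$ then extends $\widetilde{q}$ over the generic point of $F_{\sigma_\tau}$, in the same way $\widetilde{A}\to\Km(A)$ is a morphism. The same argument applies at generic points of $\widetilde{E}$, where $\nu$ is the $A_1$-resolution transversally and the transversal map is an isomorphism onto the $A_1$ singularity of $\overline{D}$. Consequently $\widetilde{q}$ is defined outside a codimension-$\geq 2$ subset of $\widetilde{E}\cup\bigcup F_{\sigma_\tau}$.

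Degrees follow from Lemma \ref{lem:isomorphismOrbifolds}. The map $\widetilde{q}$ is generically the $G$-quotient, and its restriction to an invariant divisor has degree $|G|/|\text{generic stabilizer}|$. The diagonal divisor $E$ has trivial generic pointwise stabilizer in $G$, which gives degree $2^5$ onto $D$. The divisor $F_{\sigma_\tau}$ is $G$-stable with generic stabilizer $\langle\sigma_\tau\rangle$ (Proposition \ref{prop:canonicalSubvarietiesII}(ii)), which gives degree $2^4$ onto $R_\tau$, provided $\sigma_\tau$ acts trivially on $F_{\sigma_\tau}$; it does, since it acts by $-1$ on the normal directions.

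The main obstacle is verifying the generic-point extension claims precisely, in particular that no other component is contracted.
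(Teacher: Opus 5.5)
Your proposal is correct in outline and follows the same framework as the paper: reduce to $\overline q\colon A_0^{(4)}\to (A/\pm1)^{(3)}$ through the Hilbert--Chow morphisms, then check at generic points of $\widetilde E$ and of each $F_{\sigma_\tau}$ that the induced map of blow-ups is locally either an isomorphism or the double cover $\mathrm{Bl}_0\CC^2\to \mathrm{Bl}_0(\CC^2/\pm1)$.

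The real difference is in how you obtain the degrees. The paper computes (a) and (b) from explicit parametrizations: $\overline E$ as a quotient of $A\times A$, and $\overline W_{\sigma_\tau}$ via the map $\overline s$ of Lemma \ref{lem:isomorphismOrbifolds2}. You instead use orbit--stabilizer. This works and is arguably cleaner, but ``$\widetilde q$ is generically the $G$-quotient'' says nothing by itself about restrictions to divisors. You need two further facts:
\begin{enumerate}
\item The fibres of $\overline q$ are exactly the $G$-orbits at every point, not just generically. This is what the proof of Lemma \ref{lem:isomorphismOrbifolds} establishes.
\item The exceptional $\PP^1$ of $\mathrm{Bl}_0\CC^2$ maps isomorphically, via $[x:y]\mapsto[x^2:xy:y^2]$, onto the exceptional conic of $\mathrm{Bl}_0(\CC^2/\pm1)$. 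Near $\widetilde E$ the map is a local isomorphism of blow-ups.
\end{enumerate}
Together these show that $\widetilde E/G\dashrightarrow D$ and $F_{\sigma_\tau}/G\dashrightarrow R_\tau$ are birational. Combined with the generic stabilizers (trivial, respectively $\langle\sigma_\tau\rangle$), this gives the degrees $2^5$ and $2^4$.

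Two computational slips need fixing.
\begin{enumerate}
\item Consider the case $2a_1+a_j+a_k=0$. Combined with $\sum_i a_i=0$, it gives $a_1=a_l$, where $l$ is the remaining index. So this case is again the diagonal, not a $W$-type condition, and $\overline q^{-1}(\overline D)=\overline E$. Your conclusion about where $q$ is defined survives, but the stated reason is wrong.
\item The matching of $F_{\sigma_\tau}$ with $R_\tau$ is not a labelling you get to choose. The condition $a_i+a_j=\tau$ forces the complementary pair to sum to $\tau$ as well, so $\overline q^{-1}(\overline R_\tau)=\overline W_{\sigma_\tau}$ exactly. Indeed $\overline q(a,b,\tau-a,\tau-b)=(\overline{a+b},\overline{\tau},\overline{\tau+a-b})$. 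Moreover, each $W_\sigma$ is $G$-stable (Proposition \ref{prop:canonicalSubvarietiesII}(i)), so there is no ``union over the $G$-orbit'' of several $W_{\sigma_{\tau'}}$. This $G$-stability is exactly what your later orbit--stabilizer count on $F_{\sigma_\tau}$ requires.
\end{enumerate}
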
~The various maps and relevant loci involved in the lemma are summarized in the following diagram:

      \begin{equation}
        \begin{tikzcd}[column sep = large]
            & & F_{\sigma_{\tau}} \arrow[dashed]{rrd}{\widetilde{q}} \arrow{lld} \arrow[hook]{d} \\
            W_{\sigma_{\tau}} \arrow[hook]{d} & & \widetilde{K}^3(A) \arrow[dashed]{rrd}{\widetilde{q}}\arrow{lld} & & R_{\tau} \arrow[hook]{d} \arrow{rddd} \\
             K^3(A) && \widetilde{E} \arrow[dashed]{rrd}{\widetilde{q}} \arrow{lld} \arrow[hook]{u} && \Km(A)^{[3]} \arrow{rddd} \\
            E \arrow{rrdd} \arrow[hook]{u} && \overline{W}_{\sigma_{\tau}} \arrow[<-, crossing over]{lluu}  \arrow[hook]{d} && D \arrow{rddd} \arrow[hook]{u} \\
            && A_0^{(4)} \arrow[<-, crossing over]{lluu} \arrow[swap, crossing over]{rrrd}{\overline{q}} &&& \overline{R}_{\tau} \arrow[<-, crossing over]{lllu}{\overline{q}} \arrow[hook]{d} \\
            && \overline{E} \arrow[swap]{rrrd}{\overline{q}} \arrow[hook]{u} &&&  (A/\pm1)^{(3)} \\
            &&&&& \overline{D} \arrow[hook]{u}
        \end{tikzcd}
    \end{equation}
    %

\begin{proof}
    Consider the morphism $\overline{q}\colon A_0^{(4)} \to (A/\pm 1)^{(3)}$ defined in \eqref{eq:defBarq}. 
    Denote by $\overline{E}\subset A_0^{(4)}$ the diagonal locus, and by $\overline{W}_{\sigma_{\tau}}\subset A_0^{(4)}$ the image of $W_{\sigma_{\tau}}\subset K^3(A)$, for each $\tau\in A[2]$. 
    Then, with notation as in Remark \ref{rmk:Km(A)^[n]} and using the explicit definition of $q$ from \eqref{eq:defBarq}, one checks the following:
    \begin{enumerate}[label=(\alph*)]
        \item $\overline{q}$ restricts to a finite morphism $\overline{q}\colon \overline{E}\to \overline{D}$ of degree $2^5$; 
          \item for each $\tau \in A[2]$, the restriction of $\overline{q}$ induces a finite morphism $\overline{W}_{\sigma_{\tau}}\to \overline{R}_{\tau}$, of degree $2^4$.
    \end{enumerate}
    Indeed, $\overline{E}$ consists of the points $(a,a,b,-2a-b)$, $a,b\in A$. Hence, $\overline{E}$ is the quotient of $A\times A$ by the involution $(a,b)\mapsto (a, -2a-b)$.
    Around a general point of $\overline{E}$, the map $\overline{q}\colon \overline{E}\to \overline{D}$ is given by $\{a, a, b, -2a-b\}\mapsto \{\overline{2a}, \overline{a+b}, \overline{-a-b}\}$. Hence, the degree is half the degree of
    \begin{equation}
    \begin{split}
            A\times A&\to (A/\pm 1)\times (A/\pm 1)\\
            (a, b)&\mapsto (\overline{a+b}, \overline{2a})
    \end{split}
    \end{equation}
    which is $2^6$; this proves (a). Similarly, for (b), around a general point, the map $\overline{W}_{\sigma_{\tau}}\to \overline{R}_{\tau}$ is given by $\{a, \tau-a, b, \tau-b\}\mapsto \{\overline{\tau}, \overline{a+b}, \overline{\tau+a-b}\}$.
    Therefore, the degree of the map $\overline{W}_{\sigma_{\tau}}\to \overline{R}_{\tau}$ is independent of $\tau$ and it equals the degree of
    \begin{equation}
    \begin{split}
        (A/\pm 1)^{(2)}&\to  (A/\pm 1)^{(2)}\\
        (a, b)&\mapsto (\overline{a+b}, \overline{a-b})
        \end{split}
    \end{equation}
    which is $2^4$; this proves (b).
    
    We then consider the commutative diagram \begin{equation}\label{diag:square1}
        \begin{tikzcd}
            \widetilde{K}^3(A) \arrow{d}{\widetilde{\nu}} \arrow[dashed]{r}{\widetilde{q}} & \Km(A)^{[3]} \arrow{d}{m} \\
            A_0^{(4)} \arrow{r}{\overline{q}} & (A/\pm 1)^{(3)} 
        \end{tikzcd}
    \end{equation}
    Let $\overline{U} \subset (A/\pm 1)^{(3)}$ be the open subset of points whose support contains at least $2$ distinct points and at most $1$ node with multiplicity 1, and let $U\subset \Km(A)^{[3]}$ be the preimage $m^{-1}(\overline{U})$. 
    Then, as in Remark \ref{rmk:Km(A)^[n]}, the restriction of $m_{|_{U}}\colon {U}\to \overline{U}$ is the blow-up of the union of $\overline{D}\cap \overline{U}$ and the $\overline{R}_{\tau}\cap \overline{U}$, $\tau\in A[2]$; the components of the exceptional divisor of this map are the restriction to $U$ of the divisors $D \subset \Km(A)^{[3]}$ and the $R_{\tau}\subset \Km(A)^{[3]}$. 

    Consider the open subset $\overline{V}\coloneqq \overline{q}^{-1}(\overline{U})$ of $A_0^{(4)}$; let $V\coloneqq \nu^{-1}(\overline{V}) \subset K^3(A)$ be its preimage under the Hilbert--Chow morphism $\nu\colon {K}^3(A)\to A_0^{(4)}$. We consider also the open subset $\widetilde{V}\coloneqq \widetilde{\nu}^{-1}(\overline{V})$ of $\widetilde{K}^3(A)$.
    Then the restriction $\nu_{|_{V}}\colon V\to \overline{V}$ of the Hilbert--Chow morphism is just the blow-up of the diagonal $\overline{E}\cap \overline{V}$. Therefore, the restriction $\widetilde{\nu}_{|_{\widetilde{V}}}\colon \widetilde{V}\to \overline{V}$ is identified with the blow-up of the union of $\overline{E}\cap \overline{V}$ and the $\overline{W}_{\sigma_{\tau}}\cap\overline{V}$, for $\tau\in A[2]$. The components of the exceptional divisor of $\widetilde{\nu}_{|_{\widetilde{V}}}$ are the restrictions to $\widetilde{V}$ of the divisors $\widetilde{E}$ and $F_{\sigma_{\tau}}$ of $\widetilde{K}^3(A)$.

    Therefore, restricting diagram \eqref{diag:square1}, we get the commutative diagram 
    \begin{equation}
    \begin{tikzcd}
        \widetilde{V} \arrow{d}{\widetilde{\nu}} \arrow[dashed]{r}{\widetilde{q}} & {U} \arrow{d} \\
        \overline{V} \arrow{r}{\overline{q}} & \overline{U}
    \end{tikzcd}
    \end{equation}
    in which the vertical maps are blow-ups. As we are blowing-up components of codimension $2$, and our singularities are no worse than nodes on surfaces, it follows that $\widetilde{q}_{|_{\widetilde{V}}}$ is a morphism, and statements (a), (b) above imply $(i)$ and $(ii)$ in the statement.

    Let us spell this out. Let $x\in U$, and $\overline{x}$ its image in $\overline{U}$. If $\overline{x} \in \overline{D}\cap \overline{U}$, then $\overline{x}$ has a neighborhood $\overline{O}_{\overline{x}}$ in $\overline{U}$ isomorphic to a neighbourhood of the origin in $(\CC^2/\pm 1)\times (\CC^2)^2$, with preimage in $\overline{V}$ equal to $2^5$ disjoint copies of $\overline{O}_{\overline{x}}$; then blowing-up the singular locus of $\overline{V}$ and $\overline{U}$, we see that $\widetilde{q}$ is a morphism over a neighborhood of $x \in U$, and that it restricts to a morphism $\widetilde{E}\cap \widetilde{V}\to D\cap U$ of degree $2^5$.

    If $\overline{x}\in \overline{R}_{\tau}$, then we find a neighborhood of $\overline{O}_{\overline{x}}$ of $\overline{x}$ in $\overline{U}$ isomorphic to a neighbourhood of the origin in $(\CC^2/\pm 1)\times (\CC^2)^2$, with preimage under $\overline{q}$ consisting of $2^4$ copies of $\CC^2 \times (\CC^2)^2$; for each such copy, the map $\overline{q}$ is the quotient $\CC^2\to (\CC^2/\pm 1)$ on the first factor and an isomorphism $(\CC^2)^2\to (\CC^2)^2$ on the second one. 
    After blowing-up, we find a neighborhood $O_x\subset U$ isomorphic to $\mathrm{Bl}_0(\CC^2/\pm 1)\times (\CC^2)^2$, with preimage in $\widetilde{V}$ consisting of $2^4$ copies of $\mathrm{Bl}_0(\CC^2)\times (\CC^2)^2$. On each of these copies, $\widetilde{q}$ is the morphism given by the quotient $\mathrm{Bl}_0(\CC^2) \to \mathrm{Bl}_0(\CC^2/\pm 1)$ on the first factor and by an isomorphism on the second one; this morphism is a double cover, ramified over $C\times (\CC^2)^2$, where $C\subset \mathrm{Bl}_0(\CC^2)$ is the exceptional curve. The exceptional curve $C$ is the intersection of $F_{\sigma_\tau}$ with the corresponding component of $\widetilde{q}^{-1}(O_x)$. It follows that $\widetilde{q}$ restricts to a morphism $F_{\sigma_\tau}\cap \widetilde{V}\to R_{\tau}\cap U$ of degree $2^4$.
\end{proof}

\begin{notation} 
 Following \cite{beauville1983varietes}, we have 
    \begin{equation}
        H^2(K^3(A),\ZZ) = H^2(A,\ZZ)\oplus \langle \xi\rangle= \mathrm{U}^{\oplus 3}\oplus \langle -8 \rangle,
    \end{equation}
    where $\xi$ is a primitive class of square $-8$ which equals half the class of the Hilbert--Chow divisor $E\subset K^3(A)$. 
    If $\widetilde{K}^3(A)\to K^3(A)$ denotes the blow-up along $\bigcup_{\tau\in A[2]} W_{\sigma_\tau}$, we have 
    \begin{equation}
H^2(\widetilde{K}^3(A),\ZZ) = H^2(A,\ZZ) \oplus \langle \widetilde{\xi}\rangle \oplus \langle [F_{\sigma_{\tau}}] \rangle_{\tau\in A[2]},
    \end{equation} 
    where the $F_{\sigma_{\tau}}$ are the components of the exceptional divisor of $\widetilde{K}^3(A)\to K^3(A)$, and $\widetilde{\xi}$ is the pull-back of $\xi$. 
    For $\Km(A)^{[3]}$, we have instead
    \begin{equation}
        \begin{split}
            H^2(\Km(A)^{[3]},\ZZ) & = H^2(\Km(A),\ZZ) \oplus \langle \delta\rangle \supset  H^2(A,\ZZ)(2) \oplus \langle [R_{\tau}]\rangle_{\tau\in A[2]} \oplus \langle \delta\rangle, 
        \end{split}
    \end{equation}
    where $\delta$ is half of the class of the Hilbert--Chow divisor $D\subset \Km(A)^{[3]}$, which is a primitive class of square $-4$, and, as in Remark \ref{rmk:Km(A)^[n]}, the divisor $R_{\tau}\subset \Km(A)^{[3]}$ parametrizes subschemes whose support intersects the rational curve $C_{\tau}\subset \Km(A)$. The $[R_{\tau}]$ are pairwise orthogonal $(-2)$-classes, and they generate a sublattice of $H^2(\Km(A)^{[3]},\ZZ)$ with saturation isomorphic to the Kummer lattice.
\end{notation}

With the above notation, we have the following result.
\begin{lemma}\label{lem:cohomologyq_*}
    The map of $\ZZ$-modules
    \begin{equation}
        \begin{tikzcd}
            H^2(\widetilde{K}^3(A),\ZZ) \arrow{rr}{\widetilde{q}_*} && H^2(\Km(A)^{[3]},\ZZ)\\
            H^2(A,\ZZ) \oplus \langle \widetilde{\xi}  \rangle \oplus \langle [F_{\sigma_{\tau}}]\rangle_{\tau\in A[2]} \arrow{rr} \arrow{u}{\simeq } && H^2(A,\ZZ)(2) \oplus \langle [R_{\tau}]\rangle_{\tau\in A[2]}\oplus \langle \delta\rangle \arrow[hook]{u}
        \end{tikzcd}
    \end{equation}
    satisfies:
    \begin{enumerate}[label=(\roman*)]
        \item $\widetilde{q}_*$ restricts to the multiplication by $2^4$ from the summand $H^2(A,\ZZ)$ on the left to the summand $H^2(A,\ZZ)(2)$ on the right (note that they are the same as $\ZZ$-modules);
        \item $\widetilde{q}_*(\widetilde{\xi}) = 2^5 \cdot \delta$;
        \item $\widetilde{q}_*([F_{\sigma_{\tau}}])=2^4\cdot [R_\tau]$, for each $\tau\in A[2]$.
    \end{enumerate}
\end{lemma}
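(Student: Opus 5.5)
We compute $\widetilde{q}_*$ on each summand separately, using that $\widetilde{q}$ is a morphism outside a subset of codimension $\geq 2$ (Lemma \ref{lem:geometryMapq}). Hence $\widetilde{q}_*$ on divisor classes is the pushforward of cycles: the class of a prime divisor $\Delta$ goes to $\deg(\Delta\dashrightarrow \widetilde{q}(\Delta))\cdot[\widetilde{q}(\Delta)]$ if the image is a divisor, and to $0$ otherwise. For classes in $H^2(A,\ZZ)$, which are not all algebraic, we would instead argue topologically on the open subsets $\widetilde{V}\to U$ from the proof of Lemma \ref{lem:geometryMapq}. Their complements have codimension $\geq 2$, so $H^2$ of $\widetilde K^3(A)$ and of $\Km(A)^{[3]}$ inject into $H^2$ of these open sets.

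Statements (ii) and (iii) come directly from Lemma \ref{lem:geometryMapq}. For (iii), $F_{\sigma_\tau}$ maps onto $R_\tau$ with degree $2^4$, so $\widetilde q_*[F_{\sigma_\tau}]=2^4[R_\tau]$. For (ii), $\widetilde{\xi}$ is the pullback of $\xi=\tfrac12[E]$. Since $W_{\sigma_\tau}\not\subset E$, the pullback of $[E]$ equals $[\widetilde E]$ plus possibly multiples of the $[F_{\sigma_\tau}]$. I would check in local coordinates that the Hilbert--Chow divisor is not singular along the general point of $E\cap W_{\sigma_\tau}$, so no $F$-terms appear and $p'^*[E]=[\widetilde E]$. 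Then $\widetilde q_*[\widetilde E]=2^5[D]=2^6\delta$, so $\widetilde q_*\widetilde\xi=2^5\delta$. As a consistency check, Lemma \ref{lem:scalarFactor} predicts that $\widetilde q_*$ multiplies the form by $2^9$: indeed $(2^5)^2\cdot(-4)=2^9\cdot(-8)$ and $(2^4)^2(-2)=2^9\cdot\langle F,F\rangle$ would require $F^2=-1/2$, which is not a BB value on $\widetilde K$. So I use Lemma \ref{lem:scalarFactor} only on the $H^2(K,\ZZ)$-part.

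For (i), take $\alpha\in H^2(A,\ZZ)$. Its image in $H^2(K^3(A))$ comes from $\sum_i \mathrm{pr}_i^*\alpha$ on $A^4$ restricted to $A^{(4)}_0$; on $\Km(A)^{[3]}$ the summand $H^2(A,\ZZ)(2)\subset H^2(\Km(A))$ is realized via $\pi_*$ from $A$ and then symmetrized. I would compute on the level of symmetric products, outside the singular loci. The map $\overline q$ factors through $A^4\supset A^4_0\to A^3$, given by $(a_i)\mapsto(a_1+a_2,a_1+a_3,a_1+a_4)$, followed by $A^3\to (A/\pm1)^{(3)}$. Pushing forward $\sum_i\mathrm{pr}_i^*\alpha$ along the first map (an isogeny-type map of degree $2^4$ onto $A^3$) gives a class whose $j$-th component can be computed using $\mathrm{sum}^*=\mathrm{pr}_1^*+\mathrm{pr}_2^*$ on $H^1$. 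Alternatively, and more cleanly, I would compute $\widetilde q^*$ on $H^2(A)(2)$ and use $\widetilde q_*\widetilde q^*=\deg=2^5$ on these classes. The pullback of $\beta\in H^2(A)(2)$ should be $2\beta$ under the identification, because $a\mapsto a_1+a_j$ summed over pairs gives $(\sum_{j}(a_1+a_j))^*$, and by $\sum a_i=0$ this totals twice the diagonal class. This yields $\widetilde q_*=2^5/2=2^4$ times the identity. This bookkeeping of pullbacks of $H^2(A)$ through addition maps, with the factor from $A\to A/\pm1$ and the symmetrization, is the main obstacle, and is where I would be most careful. As a cross-check, Lemma \ref{lem:scalarFactor} gives a factor of $2^9=(2^4)^2\cdot 2$ on the lattice, matching multiplication by $2^4$ composed with the $(2)$-twist.
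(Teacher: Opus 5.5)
Your treatment of (ii) and (iii) matches the paper's: both are read off from the degrees $2^5$ and $2^4$ in Lemma \ref{lem:geometryMapq}. For (ii) no local analysis of $E$ is needed. The coefficient of $[F_{\sigma_\tau}]$ in $p'^*[E]$ is the multiplicity of $E$ at the generic point of $W_{\sigma_\tau}$. This is $0$ because $W_{\sigma_\tau}\not\subset E$, so $p'^*[E]=[\widetilde E]$ outright.

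For (i) you take a genuinely different route. The paper first argues, by orthogonality, that $\widetilde q_*$ maps the summand $H^2(A,\ZZ)$ into $H^2(A,\ZZ)(2)$. The resulting endomorphism is equivariant for the monodromy group $\mathrm{SL}_4$ of complex tori, which acts irreducibly on $\bigwedge^2 V$. Schur's lemma makes it a scalar $k$, and $k^2\cdot 2=2^9$ (Lemma \ref{lem:scalarFactor}) gives $k=2^4$.

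Your approach computes $\widetilde q^*$ through the square with $\overline q$, then applies $\widetilde q_*\widetilde q^*=2^5$ to the proper, generically finite morphism $\widetilde V\to U$. It is more elementary: it gives the landing in $H^2(A,\ZZ)(2)$ and the scalar, including its sign, at once, with no deformation or representation theory.

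The identity you need, $\widetilde q^*(\pi_*\alpha)=2\,p'^*\alpha$ (identifying $\alpha$ and $\pi_*\alpha$ with their images in $H^2(K^3(A),\ZZ)$ and $H^2(\Km(A)^{[3]},\ZZ)$), is true. The reason you give for it is not. Pullback along a sum of maps is not the sum of the pullbacks in degree $2$, and the addition maps in fact contribute a factor $1$. Write $\alpha=x\wedge y$ with $x,y\in H^1(A,\ZZ)$, and set $x_i=\mathrm{pr}_i^*x$, $\alpha_i=\mathrm{pr}_i^*\alpha$, $s_{1j}(a)=a_1+a_j$. Then $s_{1j}^*\alpha=(x_1+x_j)\wedge(y_1+y_j)$. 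On $A^4_0$ the relations $x_2+x_3+x_4=-x_1$ and $y_2+y_3+y_4=-y_1$ give
\[
\sum_{j=2}^{4}s_{1j}^*\alpha=3\,x_1\wedge y_1-2\,x_1\wedge y_1+\sum_{j=2}^{4}x_j\wedge y_j=\sum_{i=1}^{4}\alpha_i .
\]
The factor $2$ comes entirely from $A\dashrightarrow \Km(A)$, since $\pi_*\alpha$ pulls back to $\alpha+(-1)^*\alpha=2\alpha$ on $A$. So the class on $(A/\pm1)^{(3)}$ inducing $\pi_*\alpha$ pulls back to $2\sum_j\mathrm{pr}_j^*\alpha$ on $A^3$, and then to $2\sum_i\alpha_i$ on $A^4_0$. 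That is twice the class inducing $\alpha$ on $K^3(A)$. Since the rational cohomology of these quotients injects into that of $A^3$ and $A^4_0$, you get $\widetilde q^*(\pi_*\alpha)=2\,p'^*\alpha$, hence $\widetilde q_*(p'^*\alpha)=2^4\,\pi_*\alpha$.

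Your heuristic factor $2$ from the addition maps, combined with the factor from $A\to A/\pm1$ that you say you would also track, would give $2^3$; the $2^9$ check rules this out. With the bookkeeping above, your argument for (i) is complete.
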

\begin{proof}
    Assertions $(ii)$ and $(iii)$ follow immediately from Lemma \ref{lem:geometryMapq}. Taking the orthogonal complement, it follows that $\widetilde{q}_*$ restricts to a map $H^2(A,\ZZ)\to H^2(A,\ZZ)(2)$, i.e., it gives an endomorphism of the $\ZZ$-module $H^2(A,\ZZ)$. 
    As the definition of the map $q\colon K^3(A)\dashrightarrow \Km(A)^{[3]}$ clearly works for $A$ deforming in any family of $2$-dimensional complex tori, this endomorphism is equivariant with respect to the action of the monodromy group $\mathrm{SL}_4$ of $2$-dimensional complex tori. The representation $H^2(A,\ZZ)$ is the irreducible representation $\bigwedge^2 V$, where $V$ is the standard representation of $\mathrm{SL}_4$. By Schur's lemma, $q_*\in \End_{\mathrm{SL}_4}(H^2(A,\ZZ))$ is multiplication by an integer $k$. Since, as a map of lattices, $q_*\colon H^2(A,\ZZ)\to H^2(A,\ZZ)(2)$ multiplies the form by a factor $2^9$ (see \cite[Lemma~4.8]{floccariKum3} or apply Lemma \ref{lem:scalarFactor}), we conclude that $k=2^4$.    
\end{proof}

\begin{proof}[Proof of Proposition \ref{prop:cohomologyY_K}]
As the hyper-Kummer construction works in families (see Remark \ref{rmk:deformation}) and the statement is topological, it suffices to prove the proposition for the generalized Kummer variety $K^3(A)$ associated with an abelian surface $A$. By Proposition \ref{prop:Y_K^3(A)} there exists a birational map $\psi \colon \Km(A)^{[3]} \dashrightarrow Y_{K^3(A)} $ such that $r=\psi\circ q$.

Now Lemma \ref{lem:cohomologyq_*} implies that $q_*\colon H^2(K^3(A),\ZZ)\to H^2(\Km(A)^{[3]})$ is injective and multiplies the form by $2^9$, and moreover $\tfrac{1}{2^4}q_*$ is integral. By Lemma \ref{lem:cohomologyq_*} $(ii)$ we deduce that $\tfrac{1}{2^4}q_*$ extends to a primitive embedding of lattices and Hodge structures
\begin{equation}
    \frac{1}{2^4} q_*\colon \widehat{H}^2(K^3(A),\ZZ)(2)\hookrightarrow H^2(\Km(A)^{[3]},\ZZ),
\end{equation}
mapping isomorphically onto the saturated sublattice $H^2(A,\ZZ)(2) \oplus \langle \delta\rangle$ of $H^2(\Km(A))^{[3]}$. 

By \cite[Lemma 2.6]{Huy99}, birational maps of hyper-K\"ahler manifolds induce Hodge isometries on the second cohomology. Therefore, we have $r_*=\psi_*\circ q_*$, and the above statements about $q_*$ prove $(i)$ and $(ii)$.

Consider now the blow-up $\widetilde{K}^3(A)\to K^3(A)$ of $K^3(A)$ along $\bigcup_{\tau\in A[2]} W_{\sigma_{\tau}}$, and let $F_{\sigma_{\tau}}$ denote the component of the exceptional divisor over $W_{\sigma_{\tau}}$.
By Theorem \ref{thm:hyperKummerConstruction}, the rational map $r\colon K^3(A)\dashrightarrow Y_{K^3(A)}$ extends to a regular morphism $\widetilde{r}\colon \widetilde{K}^3(A)\to Y_{K^3(A)}$, and we have a commutative diagram (the notation is as above) 
\begin{equation}
\begin{tikzcd}
& & E_{\sigma_{\tau}} \arrow[hook]{r} & Y_{K^3(A)}  \\
F_{\sigma_{\tau}} \arrow{rru} \arrow[dashed]{rrd} \arrow[hook]{r} & \widetilde{K}^3(A) \arrow[swap]{rru}{\widetilde{r}} \arrow[dashed]{rrd}{\widetilde{q}} \\
& & R_{\tau} \arrow[hook]{r} & \Km(A)^{[3]} \arrow[dashed, swap]{uu}{\psi}
\end{tikzcd}
\end{equation}

By construction $\widetilde{r}$ restricts to a finite morphism $F_{\sigma_{\tau}}\to E_{\sigma_\tau}$ of degree $2^4$, for each $\tau\in A[2]$. Therefore, $\widetilde{r}_*\colon H^2(\widetilde{K}^3(A),\ZZ)\to H^2(Y_{K^3(A)},\ZZ)$ satisfies $\widetilde{r}_*([F_{\sigma_\tau}])=2^4\cdot [E_{\sigma_{\tau}}]$.
On the other hand, by Lemma \ref{lem:cohomologyq_*}.$(iii)$, we have $\widetilde{q}_*([F_{\sigma_\tau}])=2^4\cdot [R_{\tau}]$. 
Hence, the birational map $\psi$ satisfies $\psi_*([R_{\tau}])=[E_{\sigma_\tau}]$, for any $\tau\in A[2]$. 
As the $[R_{\tau}]\in H^2(\Km(A)^{[3]},\ZZ)$ are pairwise orthogonal $(-2)$-classes and the saturation of the sublattice they generate is isometric to the Kummer lattice, the same holds for the classes $[E_{\sigma_{\tau}}]\in H^2(Y_{K^3(A)},\ZZ)$, which proves $(iii)$.
\end{proof}

\subsection{The cohomology of the companion hyper-K\"ahler submanifolds}
\label{subsec:CohomologyWV}
We study in this subsection the restriction maps in cohomology induced by the embeddings of $W_{\sigma}$ and $V_{\sigma,\sigma'}$ in~$K$ and prove Proposition \ref{prop:cohomologyW&V}. The key ingredient is already in \cite{floccariHCKum3}, which is partially generalized in \cite[Lemma~4.4]{floccari25}. 
Recall that the submanifolds $W_{\sigma}$ and $V_{\sigma,\sigma'}$ deform together with $K$ in the sense of Remark \ref{rmk:deformation}. Hence, it will be sufficient to study the case of $K=K^3(A)$ for some abelian surface or $2$-dimensional complex torus $A$. In this case, we know from Proposition \ref{prop:CanonicalSubvarietiesK^3(A)} that $W_{\sigma_{\tau}}$ is isomorphic to $\Km(A)^{[2]}$ while $V_{\sigma_{\tau},{\sigma_{\tau}}_{'}}$ is isomorphic to $ \Km(A/\langle \tau+\tau'\rangle)$.
Let us fix some notation. 

\begin{notation}\label{notation:cohomologyLattices}
    Let $\Km(A)$ be the Kummer surface associated with $A$; it contains $16$ disjoint rational curves $C_{\tau}$ parametrized by $\tau\in A[2]$. We have a finite index sublattice 
    \begin{equation}
    H^2(A,\ZZ)(2)\oplus \langle [C_{\tau}]\rangle_{\tau\in A[2]} \subset H^2(\Km(A),\ZZ),
    \end{equation}
    where the $[C_{\tau}]$ are pairwise orthogonal $(-2)$-classes and the saturation of the sublattice they generate is isometric to the Kummer lattice $L_{\Km}$. 

    As for $\Km(A)^{[2]}$, we introduced on it $17$ divisors $D$ and $R_{\tau}$, for $\tau\in A[2]$, in Remark~\ref{rmk:Km(A)^[n]}. The classes $[R_{\tau}]$ in $H^2(\Km(A)^{[2]},\ZZ)$ are pairwise orthogonal $(-2)$-classes, and the saturation of the sublattice generated by these $16$ classes is isometric to the Kummer lattice $L_{\Km}$. The class $[D]$ is divisible by $2$; let $\delta\in H^2(\Km(A)^{[2]},\ZZ)$ be half of the class of $D$. Then $\delta$ is primitive of square $-2$, and there is a finite index sublattice
    \begin{equation}
        H^2(A,\ZZ)(2) \oplus \langle [R_{\tau}]\rangle_{\tau\in A[2]}\oplus \langle \delta\rangle \subset H^2(\Km(A)^{[2]},\ZZ).
    \end{equation}
\end{notation}
We identify $H^2(W_{\sigma_{\tau}},\ZZ)$ and $H^2(V_{\sigma_{\tau},{\sigma_{\tau}}_{'}}\,,\ZZ)$ with $H^2(\Km(A)^{[2]},\ZZ)$ and $H^2(\Km(A/\langle\tau+\tau'\rangle,\ZZ)$, respectively. With the above notation, we have the following result. 
\begin{proposition}[{\cite[Proposition 3.11]{floccariHCKum3}}]\label{prop:restrictionXi}
We have 
\begin{equation}
    \iota^*_{\sigma_{\tau}}(\xi) = 2\delta + \frac{1}{2} \sum_{\theta\in A[2]} [R_{\theta}] \ \in H^2(W_{\sigma_{\tau}},\ZZ);
\end{equation}
\begin{equation}
    \iota^*_{\sigma_{\tau},{\sigma_{\tau}}_{'}}(\xi) = \sum_{\theta \in (A/\langle \tau+\tau'\rangle)[2]} [C_{\theta}] \ \in H^2(V_{\sigma_{\tau},{\sigma_{\tau}}_{'}}\,,\ZZ).
\end{equation}
\end{proposition}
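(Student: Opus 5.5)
The plan is to compute $\iota^*[E]$ directly, where $E\subset K^3(A)$ is the Hilbert--Chow divisor, and then divide by $2$, since $\xi=\tfrac12[E]$. Both $W_{\sigma_\tau}$ and $V_{\sigma_\tau,\sigma_{\tau'}}$ are strict transforms of explicit loci in $A_0^{(4)}$ (Proposition \ref{prop:CanonicalSubvarietiesK^3(A)}). So $\iota^*[E]$ is supported on the preimage of the locus where two of the four points collide. For each such component we must find the multiplicity with which $E$ restricts, and then identify the component with one of the divisors $D$, $R_\theta$ (respectively the curves $C_\theta$).

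\textbf{Case $W=W_{\sigma_\tau}$.} Its points are generically $(a,b,\tau-a,\tau-b)$, and collisions occur in two ways.
\begin{enumerate}[label=(\alph*)]
\item If $a=b$ or $a=\tau-b$, then automatically a second pair collides as well. Under $W\cong \Km^\tau(A)^{[2]}$ this locus is the Hilbert--Chow divisor $D$.
\item If $2a=\tau$ (or $2b=\tau$), then exactly one pair collides. This locus is the divisor $R_\theta$ of subschemes meeting the exceptional curve over $\theta\in A_{2,\tau}$.
\end{enumerate}
Near a generic point of type (a), $A_0^{(4)}$ is locally $\mathrm{Sym}^2(\CC^2)\times\mathrm{Sym}^2(\CC^2)\times(\text{smooth})$. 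Hence $K^3(A)$ is locally $\Hilb^2(\CC^2)^2\times(\text{smooth})$, and $E$ has two smooth branches. The involution $\sigma_\tau$ swaps the two factors (composed with $-1$), so the $4$-dimensional fixed component meets each branch transversally along the same divisor. This gives multiplicity $2$, i.e.\ a contribution $2[D]=4\delta$.

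Near a generic point of type (b), $K^3(A)$ is locally $\Hilb^2(\CC^2)\times(\text{smooth})$, and $\sigma_\tau$ acts on $\Hilb^2(\CC^2)$ through the involution $x\mapsto -x$ on the relative coordinate. Its fixed surface in $\Hilb^2(\CC^2)\cong \CC^2\times \mathrm{Bl}_0(\CC^2/\pm1)$ meets the exceptional divisor transversally in the exceptional curve. This gives multiplicity $1$.

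Altogether $\iota_{\sigma_\tau}^*[E]=4\delta+\sum_\theta [R_\theta]$, and dividing by $2$ yields the first formula. As a sanity check, its square is $-8-8=-16=2\cdot(-8)$, consistent with $\iota^*$ scaling the form by $2$.

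\textbf{Case $V=V_{\sigma_\tau,\sigma_{\tau'}}$.} Its points are $(a,\tau-a,\tau'-a,a+\tau+\tau')$. Listing the six pairwise coincidences, one sees that collisions happen exactly when $2a\in\{\tau,\tau'\}$. In each such case exactly two disjoint pairs collide simultaneously. These $32$ values of $a$ map to the $16$ points of $A/\langle\tau+\tau'\rangle$ fixed by $\bar\tau$, i.e.\ to the curves $C_\theta$, $\theta\in(A/\langle\tau+\tau'\rangle)[2]$ (up to the relabeling by $A_{2,\bar\tau}$). The same two-branch local model as in (a) shows that the surface $V$ meets each branch transversally in $C_\theta$, so $\iota^*[E]=2\sum_\theta[C_\theta]$. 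This gives the second formula, with the check $(\sum C_\theta)^2=-32=4\cdot(-8)$.

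\textbf{Main obstacle.} The hard step is making the local multiplicity computations rigorous, especially the identification of the fixed component and its transversality to each branch of $E$ inside $\Hilb^2(\CC^2)^2$. If this proves delicate, the fallback is to determine the coefficients $(x,y)$ of $\iota^*\xi=x\delta+y\sum[R_\theta]$ (symmetric by the $A[2]$-action) from two pieces of information: the norm constraint $q(\iota^*\xi)=2q(\xi)$, which holds by Lemma \ref{lem:scalarFactor}-type Fujiki-constant arguments, and the known supports. Together these pin down $x=2$, $y=\tfrac12$ among the positive, integral possibilities.
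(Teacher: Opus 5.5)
This paper does not prove the statement itself. It quotes it from \cite[Proposition 3.11]{floccariHCKum3}, so there is no in-text argument to compare against. Your main route nevertheless stands on its own as a correct, self-contained proof.

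Since $W_{\sigma_\tau}\not\subset E$, the class $\iota^*[E]$ is the class of the Cartier divisor $E|_{W_{\sigma_\tau}}$. Every component of this divisor is contracted by $W_{\sigma_\tau}\to\overline{W}_{\sigma_\tau}\cong(A/\sigma_\tau)^{(2)}$ into the diagonal of $A_0^{(4)}$, so it is $D$ or some $R_\theta$. Your two local models then give multiplicities $2$ and $1$, hence $\iota^*[E]=4\delta+\sum_\theta[R_\theta]$.

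Two small corrections to those local pictures:
\begin{itemize}
\item At type (a) points, $A_0^{(4)}$ is locally $(\CC^2/\pm1)^2\times\CC^2$, not $\mathrm{Sym}^2(\CC^2)^2\times(\text{smooth})$.
\item At type (b) points, $\sigma_\tau$ acts by $-1$ on the centre-of-mass coordinate of the cluster at $\theta$ and trivially on $\mathrm{Bl}_0(\CC^2/\pm1)$. That is why the fixed locus is $\{c=0\}$ and meets $E$ transversally.
\end{itemize}

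For $V_{\sigma_\tau,\sigma_{\tau'}}$ you should say why the ``same two-branch model'' applies. Suppose $2a_0=\tau$. The two clusters then sit at $a_0$ and at $\tau'-a_0=a_0+\tau+\tau'$. The involution $\sigma_\tau$ fixes both centres, while $\sigma_{\tau'}$ swaps the two clusters. So locally $V$ is $\{c=0\}$ times the graph of an automorphism of $\mathrm{Bl}_0(\CC^2/\pm1)$ preserving the exceptional curve. This graph meets each of the two branches of $E$ transversally along that curve, which gives $\iota^*[E]=2\sum_\theta[C_\theta]$.

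Your fallback, by contrast, should not be relied on. In this paper, the fact that $\iota^*_\sigma$ multiplies the form by $2$ is deduced in the lemma right after this proposition, from the proposition itself (via $C([W_\sigma])=12$). Lemma~\ref{lem:scalarFactor} does not supply it independently, because its proof uses $r^*r_*=d$ for a quotient map, and there is no counterpart of this for restriction to a submanifold. Your two ``sanity checks'' are likewise consistency checks, not independent confirmations.

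To make the fallback non-circular you would need the scaling factor from elsewhere. One option is to compute it on $H^2(A,\ZZ)$ and use the generalized Fujiki relation to see it is uniform on all of $H^2(K,\ZZ)$. Since the local multiplicity computation already proves the statement, the fallback is simply unnecessary.
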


As the submanifolds $W_{\sigma_{\tau}}$ and $V_{\sigma_{\tau},{\sigma_{\tau}}_{'}}$ deform with $K^3(A)$ to any $\mathrm{Kum}^3$-manifolds, their cohomology classes are canonical Hodge classes, which means that they remain Hodge on any deformation of $K$. 

\begin{remark}
    Let $X$ be a compact hyper-K\"ahler manifold of dimension $2n$ and let $\alpha\in H^{4j}(X,\ZZ)$ be a canonical Hodge class. Then $\alpha$ satisfies the generalized Fujiki relation \cite{fujiki1983}: there exists a constant $C(\alpha)$ such that 
    \begin{equation}\label{eq:FujikiRelation}
        \int_{X} \alpha \cdot \gamma^{2n-2j} = C(\alpha)\cdot q_{X}(\gamma,\gamma)^{n-j}
    \end{equation}
    for any $\gamma\in H^2(X,\ZZ)$, where $q_X$ is the Beauville--Bogomolov form. The constant $C(1)$ is called the Fujiki constant of $X$; it has been computed for all known deformation types of hyper-K\"ahler manifolds, see \cite{rapagnetta2008beauville}.
\end{remark}

\begin{lemma}
    \begin{enumerate}[label=(\roman*)]
    \item The pull-back $\iota_{\sigma}^*\colon H^2(K,\ZZ)\to H^2(W_{\sigma},\ZZ)$ multiplies the form by a factor $2$.
    \item The pull-back $\iota_{\sigma,\sigma'}^*\colon H^2(K,\ZZ)\to H^2(V_{\sigma,\sigma'},\ZZ)$ multiplies the form by a factor $4$.
    \end{enumerate}
\end{lemma}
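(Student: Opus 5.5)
The plan is to reduce to $K=K^3(A)$ and compute the form on a convenient class there, using the generalized Fujiki relation to control the scaling factor. The submanifolds deform with $K$ (Remark \ref{rmk:deformation}), and the statement is topological. Since $W_\sigma$ and $V_{\sigma,\sigma'}$ are canonical, their fundamental classes satisfy \eqref{eq:FujikiRelation}. Hence for any $\gamma\in H^2(K,\ZZ)$ we get
\begin{equation*}
\int_{W_\sigma}(\iota_\sigma^*\gamma)^4=\int_K[W_\sigma]\cdot\gamma^4=C([W_\sigma])\,q_K(\gamma,\gamma)^2 .
\end{equation*}
Fujiki's relation on $W_\sigma$ (of $\mathrm{K}3^{[2]}$-type, Fujiki constant $3$) gives the left-hand side as $3\,q_W(\iota_\sigma^*\gamma,\iota_\sigma^*\gamma)^2$. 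Therefore $q_W(\iota_\sigma^*\gamma)^2=c\cdot q_K(\gamma)^2$ for a constant $c$ independent of $\gamma$.

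A polynomial identity of this shape forces $q_W\circ\iota_\sigma^*=\lambda\, q_K$ for a single scalar $\lambda$. To see this, note that both sides are quadratic forms whose squares are proportional. The quadratic form $q_K$ is irreducible as a polynomial (rank $7\ge 3$), so unique factorization gives $q_W\circ\iota^*_\sigma=\pm\sqrt{c}\,q_K$. The same argument applies to $V_{\sigma,\sigma'}$, now in degree $2$ with $\int_{V}(\iota^*\gamma)^2=C([V])\,q_K(\gamma)^{2}$. Here the relation is already linear in $q_V$, giving the scalar $\lambda=C([V_{\sigma,\sigma'}])$ directly.

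To determine $\lambda$, evaluate on one well-chosen class in the Kummer model. The easiest choice is $\gamma=\xi$, using Proposition \ref{prop:restrictionXi}. For (i), the identity $\iota_{\sigma_\tau}^*\xi=2\delta+\tfrac12\sum_\theta[R_\theta]$ is a sum of mutually orthogonal classes. It has square $4(-2)+\tfrac14\cdot16\cdot(-2)=-16=2\cdot(-8)$, so $\lambda=2$. For (ii), the identity $\iota^*\xi=\sum_\theta[C_\theta]$ has square $16\cdot(-2)=-32=4\cdot(-8)$, so $\lambda=4$.

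Evaluating at $\xi$ fixes the sign as well. Alternatively, a Kähler class restricts to a Kähler class, which has positive square, and this also gives positivity. The orthogonality of $\delta$ and the $[R_\theta]$, and of the $C_\theta$, is taken from Notation \ref{notation:cohomologyLattices}.

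The main obstacle is justifying the passage from $q^2$-proportionality to proportionality of the forms in case (i). This needs the Fujiki relation for $[W_\sigma]$ as a canonical class, together with the irreducibility argument. If one prefers to avoid it, an alternative is an equivariance argument. The restriction is equivariant for the monodromy of $K^3(A)$ over deformations of $A$, which contains $\mathrm{SL}_4$ acting irreducibly on $H^2(A)$. This makes the pulled-back form a multiple of $q$ on $H^2(A,\ZZ)$. One would then compare the $H^2(A)$-part and the $\xi$-part using the fact that the full monodromy acts with dense orbits. I would use the Fujiki argument first.
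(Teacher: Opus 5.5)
Your proposal follows the paper's proof: reduce to $K=K^3(A)$, apply the generalized Fujiki relation to the canonical classes $[W_\sigma]$ and $[V_{\sigma,\sigma'}]$, and fix the constants by evaluating on $\xi$ via Proposition~\ref{prop:restrictionXi}, which gives $C([W_\sigma])=12$ and $C([V_{\sigma,\sigma'}])=4$. Your argument that $P^2=c\,Q^2$ forces $P=\pm\sqrt{c}\,Q$ makes explicit a step the paper leaves implicit; it holds in any polynomial ring, so irreducibility of $q_K$ is not needed.

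One slip to fix: since $[V_{\sigma,\sigma'}]\in H^8(K,\ZZ)$, the relation reads $\int_{V_{\sigma,\sigma'}}(\iota_{\sigma,\sigma'}^*\gamma)^2=C([V_{\sigma,\sigma'}])\,q_K(\gamma,\gamma)$, with exponent $1$ rather than $2$. This exponent-$1$ form is what your conclusion $\lambda=C([V_{\sigma,\sigma'}])$ actually uses.
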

\begin{proof}
    We may assume that $K=K^3(A)$ for some abelian surface $A$. 
    By the generalized Fujiki relation \eqref{eq:FujikiRelation} we have 
    \begin{equation}
        \int_{W_{\sigma_{\tau}}} \iota_{\sigma_{\tau}}^*(\gamma)^4 = C(W_{\sigma_{\tau}})\cdot q_{K}(\gamma,\gamma)^2,
    \end{equation}
    for any $\gamma\in H^2(K,\ZZ)$. 
    On the other hand, as on any manifold of $\mathrm{K}3^{[2]}$-type, we have $\int_{W_{\sigma_{\tau}}}\beta^4=3\cdot q_{W_{\sigma_{\tau}}}(\beta,\beta)^2$. Applying this to the class $\xi\in H^2(K^3(A),\ZZ)$, we use Proposition \ref{prop:restrictionXi} to compute the generalized Fujiki constant $C([W_{\sigma_{\tau}}])=12$, and deduce that $\iota^*_{\sigma_{\tau}}$ multiplies the form by a factor $2$. Similarly, we compute that $C([V_{\sigma_{\tau},{\sigma_{\tau}}_{'}}])=4$, from which we deduce that $\iota^*_{\sigma_{\tau},{\sigma_{\tau}}_{'}}$ multiplies the form by a factor $4$.
\end{proof}

\begin{proof}[Proof of Proposition \ref{prop:cohomologyW&V}]
Taking a deformation of $K$ to a very general manifold $K'$ of $\mathrm{Kum}^3$-type such that $H^2(K',\ZZ)$ is an irreducible Hodge structure, we see that the embeddings $\iota_{\sigma}^*$ and $\iota^*_{\sigma, \sigma'}$ must be either injective or zero. But a symplectic form on $K$ restricts to a symplectic form on $W_{\sigma}$ and $V_{\sigma,\sigma'}$, and therefore $\iota_{\sigma}^*$ and $\iota^*_{\sigma, \sigma'}$ are injective. By the above lemma, they multiply the form by a factor $2$ and $4$, respectively.

Assume now that $K=K^3(A)$ for some very general $2$-dimensional complex torus $A$ such that $H^2(A,\ZZ)$ is an irreducible Hodge structure. Then, as $\iota^*_{\sigma}$ is a morphism of Hodge structures, it must embed $H^2_{\mathrm{tr}}(K^3(A),\ZZ)(2)=H^2(A,\ZZ)(2)$ into $H^2_{\mathrm{tr}}(W_{\sigma_{\tau}},\ZZ) = H^2(A,\ZZ)(2)$ as a sublattice with finite index; but then, it must be surjective. 
As $H^2(K^3(A),\ZZ)=H^2(A,\ZZ)\oplus \langle \xi \rangle$, together with Proposition \ref{prop:restrictionXi} we deduce that the image of $\iota^*_{\sigma_{\tau}}$ is the sublattice $H^2(A,\ZZ)(2)\oplus \ZZ\cdot (2\delta +\frac{1}{2} \sum_{\theta\in A[2]} [R_{\theta}])$ of $H^2(\Km(A)^{[2]},\ZZ)$. This sublattice is saturated, with orthogonal complement the lattice $N_W$ of Definition \ref{def:latticeN_W}. This completes the proof of $(i)$. 

As for $(ii)$, similarly, $\iota^*_{\sigma_{\tau},{\sigma_{\tau}}_{'}}$ must send $H^2(A,\ZZ)\subset H^2(K^3(A),\ZZ)$ into $H^2(A/\langle \tau+\tau'\rangle,\ZZ)(2)\subset H^2(V_{\sigma_{\tau},{\sigma_{\tau}}_{'}}\,,\ZZ)$.
Together with Proposition \ref{prop:restrictionXi}, this implies that $\iota^*_{\sigma_{\tau}, {\sigma_{\tau}}_{'}}$ defines an isometric embedding of $H^2(K^3(A),\ZZ)(4)$ into $H^2(V_{\sigma,{\sigma_{\tau}}_{'}},\ZZ)$, whose image is contained with finite index in the sublattice $H^2(A/\langle \tau+\tau'\rangle,\ZZ)(2)\oplus \langle \tfrac{1}{2}\sum_{\theta\in A/\langle\tau+\tau'\rangle }[C_{\theta]} \rangle$. 
The latter is a primitive sublattice of $H^2(V_{\sigma_{\tau} ,{\sigma_{\tau}}_{'}}\,,\ZZ)$, isometric to $\mathrm{U}(2)^{\oplus 3}\oplus \langle -8\rangle$, in which $\mathrm{im}_{\sigma_{\tau},\sigma_{\tau'}}$ must thus have index $2^4$, with orthogonal complement the lattice $N_V$ from Definition \ref{def:latticeN_V}.
\end{proof}

\begin{remark}\label{rmk:compatibility}
    For $\iota_{\sigma_{\tau}}\colon W_{\sigma_{\tau}}\hookrightarrow K^3(A)$, the restriction map 
    \begin{equation}
    \begin{tikzcd}
        H^2(K^3(A),\ZZ) \arrow{rrr}{\iota^*_{\sigma_{\tau}}} &&& H^2(W_{\sigma_{\tau}}, \ZZ)=H^2 (\Km(A)^{[2]},\ZZ)\\
        H^2(A,\ZZ) \oplus \langle \xi \rangle \arrow{u}{\simeq } \arrow{rrr} &&& H^2(A,\ZZ)(2) \oplus \langle \delta\rangle \oplus \langle [R_{\tau}]\rangle_{\tau\in A[2]} \arrow[hook]{u}
    \end{tikzcd}    
    \end{equation}
    sends $\xi$ to $2\delta + \tfrac{1}{2} \sum_{\tau} [R_{\tau}]$ and on the first summand it restricts to the map $\pi_*\colon H^2(A,\ZZ) \to H^2(A,\ZZ)(2) $ induced by $\pi\colon A\dashrightarrow \Km(A)$ (which is just the identity as map of $\ZZ$-modules). 
    
    It is also possible to prove that for $\iota_{\sigma_{\tau}, {\sigma_\tau}_{'}}\colon V_{\sigma_{\tau}, {\sigma_\tau}_{'} }\hookrightarrow K^3(A)$, the restriction map 
    \begin{equation}
    \begin{tikzcd}
        H^2(K^3(A),\ZZ) \arrow{rrr}{\iota^*_{\sigma_{\tau}, {\sigma_\tau}_{'}}} &&& H^2(V_{\sigma_{\tau}, {\sigma_\tau}_{'}}\,, \ZZ)=H^2 (\Km(A/\langle \tau+\tau'\rangle),\ZZ)\\
        H^2(A,\ZZ) \oplus \langle \xi \rangle \arrow{u}{\simeq } \arrow{rrr} &&& H^2(A/\langle \tau+\tau'\rangle ,\ZZ)(2) \oplus \langle [C_{\theta}]\rangle_{\theta \in (A/\langle \tau+\tau'\rangle)[2]} \arrow[hook]{u}
    \end{tikzcd}    
    \end{equation}
    sends $\xi$ to $\sum_{\theta} [C_{\theta}]$ and it restricts to the map $\pi_*\colon H^2(A,\ZZ) \to H^2(\Km(A/\langle \tau+\tau'\rangle),\ZZ)$ induced by the composition $\pi\colon A\to A/\langle \tau+\tau'\rangle \dashrightarrow \Km(A/\langle \tau+\tau'\rangle)$ on the first summand. 

    The last assertion can be seen as follows. Considering the chain of inclusions $V_{\sigma_{\tau}, {\sigma_\tau}_{'}}\subset W_{\sigma_{\tau}}\subset K^3(A)$ it is sufficient to study the restriction along the embedding $\iota_{\tau+\tau'}\colon \Km(A/\langle \tau+\tau' \rangle) \hookrightarrow \Km(A)^{[2]}$ induced by the graph map $\hat{\iota}_{\tau+\tau'}\colon \Km(A)\hookrightarrow \Km(A)^2$ sending $s$ to $(\tau+\tau')(s)$. Setting $\theta\coloneqq \tau+\tau'$, we are reduced to prove the following claim. 
    
    \textbf{Claim:} \textit{the restriction of the pull-back map $\iota_{\theta}^*\colon H^2(\Km(A)^{[2]},\ZZ)\to H^2(\Km(A/\langle \theta\rangle,\ZZ)$ to the summand $H^2(A,\ZZ)(2)\subset H^2(\Km(A)^{[2]},\ZZ)$ is identified with the restriction to the summand $H^2(A,\ZZ)(2)\subset H^2(\Km(A),\ZZ)$ of $(\bar{h}_{\theta})_*\colon H^2(\Km(A),\ZZ)\to H^2(\Km(A/\langle \theta\rangle),\ZZ)$, where $\bar{h}_{\theta}\colon \Km(A)\dashrightarrow \Km(A/\langle \theta\rangle)$ is the rational map induced by the quotient map $h_{\theta} \colon A\to A/\langle \theta\rangle$}.
    \begin{proof}[Proof of the claim]
    For any smooth and projective surface, there is an embedding $\eta_{X}\colon H^2(X,\ZZ)\hookrightarrow H^2(X^{[2]},\ZZ)$, defined as follows. Consider the cartesian diagram 
    \begin{equation}
        \begin{tikzcd}
            \mathrm{Bl}_{\Delta} (X^2) \arrow{d}{\rho} \arrow{r}{\tilde{\nu}} & X^2\arrow{d} \\
             X^{[2]} \arrow{r}{\nu} & X^{(2)};     
        \end{tikzcd}
    \end{equation}
    then $\eta_X(\alpha)\in H^2(X^{[2]},\ZZ)$ is the unique class such that 
    \begin{equation}\rho^* (\eta_X(\alpha)) = \tilde{\nu}^*(\mathrm{pr}_1^*\alpha+\mathrm{pr}_2^*\alpha).\end{equation}
    
    For the abelian surface $A$, we let $\pi_A\colon A\dashrightarrow \Km(A)$ be the natural map; then $(\pi_A)_*$ induces the embedding of $H^2(A,\ZZ)(2)$ into $H^2(\Km(A),\ZZ)$.
    With the above notation, to prove our claim we need to show that
    \begin{equation}\label{eq:CLAIM}
    \iota^*_{\theta} \circ \eta_{\Km(A)}\circ(\pi_A)_* = (\bar{h}_{\theta})_* \circ (\pi_{A})_* \colon H^2(A,\ZZ)\to H^2(\Km(A/\langle \theta\rangle ),\ZZ).
    \end{equation} 
    We consider the commutative diagram 
    \begin{equation}
        \begin{tikzcd}
            &\mathrm{Bl}_{\Delta}(\Km(A)^2) \arrow[near start]{dd}{\rho} \arrow{rr}{\tilde{\nu}} && \Km(A)^2 \arrow{dd} \\
            \mathrm{Bl}_{\mathrm{Fix}(\theta)}(\Km(A)) \arrow{dd}{\bar{\rho}} \arrow[hook]{ur}{\tilde{\iota}_{\theta}} \arrow[crossing over, near start]{rr}{\tilde{\mu}} && \Km(A)  \arrow[hook]{ur}{\hat{\iota}_{\theta}}\\
            &\Km(A)^{[2]}\arrow{rr}&& \Km(A)^{(2)} \\
            \Km(A/\langle \theta\rangle)\arrow[hook]{ur}{\iota_{\theta}} \arrow{rr} && \Km(A)/\langle \theta \rangle\arrow[hook]{ur}{\bar{\iota}_{\theta}} \arrow[<-, crossing over, near end,swap]{uu}
        \end{tikzcd}
    \end{equation}
    The horizontal maps are birational, while the vertical ones are double covers. We notice that $\rho_*\rho^*$ is multiplication by $\deg \rho =2 $ on the second cohomology of $\Km(A)^{[2]}$. So, for any $\alpha \in H^2(A,\ZZ)$, we have: 
    \begin{equation}\label{eq:computation}
    \begin{split}
    2\cdot ( (\iota_{\theta}^* \circ \eta_{\Km(A)} \circ (\pi_A)_*)(\alpha)) & = ((\iota_{\theta}^* \circ \rho_* ) \circ ( \rho^*\circ \eta_{\Km(A)}\circ (\pi_A)_*))(\alpha) \\
    & = (\bar{\rho}_* \circ \tilde{\iota}_{\theta}^* \circ \tilde{\nu}^*)((\mathrm{pr}_1^*\circ (\pi_A)_*)(\alpha) + (\mathrm{pr}_2^*\circ (\pi_A)_*)(\alpha)))\\
    & = (\bar{\rho}_* \circ \tilde{\mu}^* \circ \hat{\iota}_{\theta}^*) ((\mathrm{pr}_1^*\circ (\pi_A)_*)(\alpha) + (\mathrm{pr}_2^*\circ (\pi_A)_*)(\alpha))\\
    & = (\bar{\rho}_* \circ \tilde{\mu}^* )((\hat{\iota}_{\theta}^* \circ \mathrm{pr}_1^*\circ (\pi_A)_*)(\alpha) + (\hat{\iota}_{\theta}^*\circ \mathrm{pr}_2^*\circ (\pi_A)_*)(\alpha)) \\
    & = (\bar{\rho}_*\circ \bar{\mu}^*)(2(\pi_A)_*(\alpha)),
    \end{split}
    \end{equation}
    where we used, in order: that $\iota^*_{\theta}\circ \rho_*=\bar{\rho}_*\circ \tilde{\iota}_{\theta}^*$ by the projection formula; that by definition $(\rho^*\circ \eta_{\Km(A)})(\beta)= \tilde{\nu}^*(\mathrm{pr}_1^*\beta+\mathrm{pr}_2^*\beta)$ for any $\beta\in H^2(\Km(A),\ZZ)$; that $(\tilde{\iota}_{\theta} \circ \tilde{\nu})^* = (\tilde{\mu}^*\circ \hat{\iota}^*_{\theta})$ by the commutativity of the above diagram; finally, for the last equality, that $\hat{\iota}_{\theta}\colon \Km(A) \hookrightarrow \Km(A)^2$ is the inclusion of the graph of $\theta\colon \Km(A)\to \Km(A)$ and therefore $\mathrm{pr}_1\circ \hat{\iota}_{\theta}=\mathrm{id}_{\Km(A)}$ and $\mathrm{pr}_2\circ \hat{\iota}_{\theta}=\theta$, and that $\theta_*\colon H^2(\Km(A),\ZZ)\to H^2(\Km(A),\ZZ)$ induces the identity on the image of $(\pi_A)_*$. 
    By definition, the map $(\bar{h}_{\theta})_*\colon H^2(\Km(A),\ZZ)\to H^2(\Km(A/\langle \theta\rangle),\ZZ)$ coincides with $\bar{\rho}_*\circ \bar{\mu}^*$. Therefore, as $H^2(A,\ZZ)$ is torsion-free, our claim \eqref{eq:CLAIM} follows from \eqref{eq:computation}. 
    \end{proof}
\end{remark}

\subsection{The cohomology of $M_{\sigma}$}
\label{subsec:CohomologyM}
We now analyze the hyper-Kummer $\mathrm{K}3^{[2]}$-manifolds $M_{\sigma}$ associated with a $\mathrm{Kum}^3$-manifold $K$.
Recall from Proposition \ref{prop:ResolutionQuotients} that $M_{\sigma}$ is the crepant resolution of $W_{\sigma}/G$ by blowing up along its singular locus.

Via a suitable deformation, it suffices to analyze the case of the generalized Kummer variety $K^3(A)$ on a $2$-dimensional complex torus $A$. In this case we can identify any $W_{\sigma_{\tau}}$ with the $G$-action with $\Km(A)^{[2]}$ equipped with the natural action of $A[2]$. Let $r_{W}\colon \Km(A)^{[2]}\dashrightarrow M$ be the natural rational map of degree $2^4$. By Proposition \ref{prop:Fujikiresolution}, the total fixed locus for the action of $A[2]$ is the union of $15$ K3 surfaces $V_{\theta}\subset \Km(A)^{[2]}$. The map $r_{W}$ is resolved by taking the blow-up $\widetilde{\Km}(A)^{[2]}$ of $\Km(A)^{[2]}$ along these K3 surfaces; we denote by $P_{\theta}\subset \widetilde{\Km}(A)^{[2]}$ the component of the exceptional divisor of the blow-up lying over $V_{\theta}$. Then we obtain a morphism $\widetilde{r}_{W}\colon \widetilde{\Km}(A)^{[2]}\to M$, which is the quotient by the action of $A[2]$.
Also recall that in Remark \ref{rmk:Km(A)^[n]} we defined $17$ divisors $D$ and $R_{\theta}$, $\theta\in A[2]$, on $\Km(A)^{[2]}$. 

We introduced in Lemma \ref{lem:isomorphismOrbifolds2} a rational map 
$s\colon \Km(A)^{[2]}\dashrightarrow \Km(A)^{[2]}$, such that there exists a birational map $\psi\colon \Km(A)^{[2]}\dashrightarrow M$ satisfying $r_W=\psi\circ s$. 

\begin{lemma}\label{lem:geometryMaps}
    The rational map $s\colon \Km(A)^{[2]}\dashrightarrow \Km(A)^{[2]}$ is well-defined over the complement of the $15$ $\mathrm{K}3$ surfaces $V_{\theta}\subset \Km(A)^{[2]}$, for $0\neq \theta\in A[2]$. Denote by $\widetilde{D}$, $ \widetilde{R}_{\theta}$ the divisors in $\widetilde{\Km}(A)^{[2]}$ obtained as strict transforms of the divisors $D, R_{\theta}$ of $\Km(A)^{[2]}$, and let 
    \begin{equation}
        \widetilde{s}\colon \widetilde{\Km}(A)^{[2]}\dashrightarrow \Km(A)^{[2]}
    \end{equation}
    be the rational map induced by $s$. Then $\widetilde{s}$ extends to a regular morphism of degree $2^4$ outside some subset of codimension at least $2$ contained in the union of the $P_{\theta}$. Moreover:
    \begin{enumerate}[label=(\roman*)]
        \item $\widetilde{s}$ restricts to a generically finite map $\widetilde{D}\dashrightarrow R_0$ of degree $2^4$;
        \item for each $\tau\in A[2]$, $\widetilde{s}$ restricts to a birational map $\widetilde{R}_{\tau}\dashrightarrow D$;
        \item for any $0\neq \theta\in A[2]$, the map $\widetilde{s}$ restricts to a finite map $P_{\theta}\dashrightarrow R_{\theta}$, of degree $2^3$.
    \end{enumerate}
    \begin{equation}
        \begin{tikzcd}[column sep = large]
            \widetilde{D}  \arrow[dashed, bend right=-20]{rr}  && R_0   \arrow[hook]{rd} \\
             \widetilde{R}_{\tau} \arrow[dashed, bend right=-25]{rr} \arrow[hook]{r}& \widetilde{\Km}(A)^{[2]} \arrow[<-, crossing over]{ul} \arrow[crossing over, dashed, bend right=-20]{rr}{\widetilde{s}} & D \arrow[ hook]{r} & \Km(A)^{[2]} \arrow{dddr}\\
             {P}_{\theta}\arrow{dddr} \arrow[hook]{ur} \arrow[dashed, bend right=-20]{rr} && R_{\theta} \arrow[hook]{ur} \\
            &\overline{D} \arrow[<-, crossing over]{uuul} \arrow[hook]{dr} \arrow[bend right=-20]{rr}  && \overline{R}_0 \arrow[<-, crossing over]{uuul}  \arrow[hook]{rd} \\
             & \overline{R}_{\tau} \arrow[<-, crossing over]{uuul} \arrow[crossing over, bend right=-25]{rr} \arrow[hook]{r} & (A/\pm 1)^{(2)}  \arrow[<-, crossing over]{uuul} \arrow[bend right=-20]{rr}{\overline{s}} & \overline{D} \arrow[<-, crossing over]{uuul} \arrow[hook]{r} & (A/\pm 1)^{(2)}  \\
             & \overline{V}_{\theta}\arrow[hook]{ur} \arrow[bend right=-20]{rr} && \overline{R}_{\theta} \arrow[<-, crossing over]{uuul} \arrow[hook]{ur}
        \end{tikzcd}
    \end{equation}
    \end{lemma}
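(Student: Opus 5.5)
I will mirror the proof of Lemma \ref{lem:geometryMapq}, working first at the level of the singular orbifolds and then passing to the blow-ups. Concretely, consider the commutative square
\begin{equation*}
\begin{tikzcd}
\widetilde{\Km}(A)^{[2]} \arrow{d}{\widetilde{\nu}} \arrow[dashed]{r}{\widetilde{s}} & \Km(A)^{[2]} \arrow{d}{m} \\
(A/\pm 1)^{(2)} \arrow{r}{\overline{s}} & (A/\pm 1)^{(2)}
\end{tikzcd}
\end{equation*}
where $\overline{s}(\overline{x},\overline{y})=(\overline{x+y},\overline{x-y})$ is the map of Lemma \ref{lem:isomorphismOrbifolds2}. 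By that lemma, $\overline{s}$ is the quotient by $A[2]$, hence finite of degree $2^4$. The first task is to identify, by direct computation, the preimages under $\overline{s}$ of the singular strata of the target: the image $\overline{R}_0$ of $R_0$, the image $\overline{D}$ of $D$, and the images $\overline{R}_\theta$ of $R_\theta$.

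\emph{Step 1: strata computations.} I will check the following identities.
\begin{itemize}
\item $\overline{s}(\overline{x},\overline{x})=(\overline{2x},0)$. So the diagonal $\overline{D}$ maps onto $\overline{R}_0$. The map $A\to A/\pm1$, $x\mapsto\overline{2x}$, has degree $16$, which after accounting for $\pm$ gives degree $2^4$ on the relevant components. This yields (i).
\item If $x=\tau$, then $\overline{s}(\overline{\tau},\overline{y})=(\overline{y+\tau},\overline{y+\tau})$ lies on the diagonal. So $\overline{R}_\tau\to\overline{D}$ is $\overline{y}\mapsto\overline{y+\tau}$, which is birational. This yields (ii).
\item The surfaces $\overline{V}_\theta=\{(\overline{x},\overline{x+\theta})\}$ for $\theta\neq0$ map via $(\overline{2x+\theta},\overline{\theta})$ into $\overline{R}_\theta$. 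The map $\overline{x}\mapsto\overline{2x+\theta}$ on $A/\pm1$, with $\overline{V}_\theta\cong (A/\langle\theta\rangle)/\pm1$, has degree $16/2=2^3$. This yields (iii).
\end{itemize}
I also need the converse: the preimage under $\overline{s}$ of the singular locus of the target is exactly the union of $\overline{D}$, the $\overline{R}_\tau$ and the $\overline{V}_\theta$. This follows from the same equations, since $x+y\in A[2]$ or $x-y\in A[2]$ forces $x=\pm y+\theta$.

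\emph{Step 2: local lifting.} I will restrict to open sets $\overline{U}\subset(A/\pm1)^{(2)}$, where points have at most one singular feature, and to $\overline{V}=\overline{s}^{-1}(\overline{U})$, whose complements have codimension $\geq 2$. Over $\overline{U}$, the map $m$ is the blow-up of the singular locus, as in Remark \ref{rmk:Km(A)^[n]}. On the source side I use $\widetilde{\Km}(A)^{[2]}$, whose map to $(A/\pm1)^{(2)}$ over $\overline{V}$ is the blow-up of $\overline{D}$, of the $\overline{R}_\tau$, and of the $\overline{V}_\theta$. Note that $\overline{V}_\theta$ is smooth in $(A/\pm1)^{(2)}$ at general points, since $\overline{x}\neq\overline{x+\theta}$; it lies in the fixed locus of $\theta$.

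Locally, there are two cases. At a general point of $\overline{D}$ or of $\overline{R}_\tau$, the map $\overline{s}$ is étale onto a nodal-times-smooth model, so it lifts to an isomorphism of blow-ups; for $\overline{R}_\tau$ the local model is $(\CC^2/\pm1)\times\CC^2$ with preimage a single copy. At a general point of $\overline{V}_\theta$, the map $\overline{s}$ is locally $\CC^2\times\CC^2\to(\CC^2/\pm1)\times\CC^2$. Blowing up the origin upstairs and the node downstairs makes it the double cover $\mathrm{Bl}_0\CC^2\to\mathrm{Bl}_0(\CC^2/\pm1)$, exactly as at the end of the proof of Lemma \ref{lem:geometryMapq}. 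So $\widetilde{s}$ is a morphism over $\overline{V}$, of degree $2^4$, and it maps $P_\theta$ to $R_\theta$, $\widetilde{D}$ to $R_0$, and $\widetilde{R}_\tau$ to $D$ with the degrees computed in Step 1. The indeterminacy is therefore contained in the complement of $\widetilde{\nu}^{-1}(\overline{V})$, and I will argue it lies in the $P_\theta$.

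\emph{Main obstacle.} The delicate part is the local analysis near $\overline{D}$ and the $\overline{R}_\tau$. I must verify that the symmetric-square singularity (the diagonal in the source) goes to a node $\overline{R}_0$ in the target, and conversely, with matching local models. I will check this via the explicit description $\overline{s}$ in local coordinates $(u,v)\mapsto(u+v,u-v)$, where the swap $u\leftrightarrow v$ corresponds to the sign change $v\mapsto -v$ on the second coordinate. This is also where I must confirm that the ramification degree is absorbed correctly, so that the degree in (i) is $2^4$ and not $2^5$.
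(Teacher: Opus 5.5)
For the substantive content, namely (i)--(iii), the degree $2^4$, and regularity of $\widetilde{s}$ in codimension one, your plan is the paper's proof. Your Step 1 is the paper's (a)--(c). Your Step 2 is the local blow-up argument from the end of the proof of Lemma~\ref{lem:geometryMapq}, run over an open set whose complement has codimension $\geq 2$. Two small points: the étaleness at general points of $\overline{D}$ and $\overline{R}_\tau$ holds because their stabilizers in $A[2]$ are trivial. Also, $x\mapsto\overline{2x}$ has degree $32$ as a map $A\to A/\pm1$; it is the induced self-map of $A/\pm1$ that has degree $2^4$.

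The step you defer cannot be carried out, because as stated it is false. That step is showing the indeterminacy lies in $\bigcup_\theta P_\theta$; the same applies to the lemma's first sentence, which your plan never treats.

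Take a point $(\overline\tau,\overline\tau)$ with $\tau\in A[2]$. Its stabilizer in $A[2]$ is trivial, so $\overline{s}$ is a local isomorphism from the germ of $\mathrm{Sym}^2(\CC^2/\pm1)$ at $(\overline\tau,\overline\tau)$ to the same germ at $(\overline0,\overline0)$. It is given by exactly your coordinate change $(u,v)\mapsto(u+v,u-v)$. Here the local group contains both the swap and the sign changes, and this coordinate change exchanges the two conjugacy classes of symplectic reflections. Accordingly it sends $\overline{D}$ to $\overline{R}_0$ and $\overline{R}_\tau$ to $\overline{D}$.

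Now let $\ell$ be a line in the Lagrangian plane $C_\tau^{(2)}\cong\PP^2$ of length-$2$ subschemes of $C_\tau$. It satisfies $D\cdot\ell=2$ and $R_\tau\cdot\ell=-2$. Together with the fibre of $D$, this shows the relative nef cone of $\Km(A)^{[2]}$ over this germ is $\{aD+bR_\tau : b\le a\le 0\}$. Exchanging the roles of the two exceptional divisors carries this cone to the opposite chamber $\{a\le b\le 0\}$.

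Hence the lift of $\overline{s}$ near $C_\tau^{(2)}$ is the Mukai flop of this plane, not a morphism. So $s$ and $\widetilde{s}$ are undefined along the $16$ planes $C_\tau^{(2)}\subset D\cap R_\tau$, which are disjoint from every $V_\theta$ and every $P_\theta$. The same happens along the strict transforms of $C_\tau\times C_{\tau'}$ for $\tau\neq\tau'$, after quotienting by the stabilizer $\langle\tau+\tau'\rangle$.

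What survives is that $\widetilde{s}$ is regular outside a subset of codimension $2$. Your Step 2 proves this, and it is all that Lemma~\ref{lem:cohomologys_*} uses. The paper's proof, which only invokes the argument of Lemma~\ref{lem:geometryMapq}, passes over this point as well.
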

    \begin{proof}
        The rational map $s$ is induced by the morphism $\overline{s}\colon (A/\pm 1)^{(2)}\rightarrow (A/\pm 1)^{(2)}$ of \eqref{eq:defMaps}, which sends $(\overline{x},\overline{y})$ to $(\overline{x+y},\overline{x-y})$. 
With notation as above, the morphism $\overline{s}\colon (A/\pm1)^{(2)}\to (A/\pm 1)^{(2)}$ satisfies the following:
\begin{enumerate}[label=(\alph*)]
    \item for any $\tau\in A[2]$, it restricts to an isomorphism $\overline{s}\colon \overline{R}_{\tau}\to \overline{D}$;
    \item $\overline{s}$ restricts to a finite morphism $\overline{D}\to \overline{R}_0$ of degree $2^4$; 
    \item for any $0\neq \theta\in A[2]$, the morphism $\overline{s}$ restricts to a finite morphism $\overline{s}\colon \overline{V}_{\theta}\to \overline{R}_{\theta}$ of degree $2^3$.
\end{enumerate}
To see this, notice that $\overline{s}(\overline{\tau}, \overline{x}) = (\overline{x+\tau}, \overline{x-{\tau}})$ belongs to the diagonal $\overline{D}$, for any $\tau\in A[2]$. Clearly, the induced map $\overline{R}_{\tau} \to \overline{D}$ is an isomorphism (notice that they are both isomorphic to $A/\pm 1$). 
For a point $(\overline{x}, \overline{x})\in \overline{D}$, we have $\overline{s}(\overline{x},\overline{x})=(\overline{2x},\overline{0})$, which belongs to $\overline{R}_0$. Thus, $\overline{s}\colon \overline{D}\to \overline{R}_0$ is identified with the quotient $(A/\pm 1) \to (A/\langle A[2], \pm 1\rangle)$, and, hence, it is a finite morphism of degree $2^4$. 
Finally, for each $0\neq \theta\in A[2]$, we have $\overline{s}(\overline{x},\overline{x+\theta})= (\overline{\theta}, \overline{2x+\theta})$, which belongs to $\overline{R}_{\theta}$. Recalling that $\overline{V}_{\theta} \subset (A/\pm 1)^{(2)}$ is isomorphic to $A/\langle \theta, \pm 1\rangle$, the induced morphism $\overline{s}\colon \overline{V}_{\theta}\to \overline{R}_{\theta}$ is identified with the degree $2^3$ quotient map $A/\langle \theta, \pm1\rangle \to A/\langle A[2],\pm1\rangle\cong A/\pm 1$.

Now the resolution $m\colon \Km(A)^{[2]}\to (A/\pm 1)^{(2)}$ is obtained by blowing-up the union of the $17$ surfaces $\overline{D}$ and $\overline{R}_{\tau}$ for $\tau \in A[2]$, while $\widetilde{m}\colon \widetilde{\Km}(A)^{[2]}\to (A/\pm 1)^{(2)}$ is the blow-up of the $32$ surfaces $\overline{D}$, the $\overline{R}_{\tau}$ for $\tau\in A[2]$ and the $\overline{V}_{\theta}$ for $0\neq \theta\in A[2]$. Now assertions $(i)$, $(ii)$ and $(iii)$ follow from (a), (b) and (c) as above, with the same argument used in the conclusion of the proof of Lemma \ref{lem:geometryMapq}.
\end{proof}


\begin{notation}\label{notation:cohomologyGroups}
    As in Notation \ref{notation:cohomologyLattices}, we have the finite index sublattice
    \begin{equation}
       H^2(A,\ZZ)(2) \oplus \langle [R_{\tau}]\rangle_{\tau\in A[2]} \oplus \langle \delta\rangle \subset H^2(\Km(A)^{[2]},\ZZ) ,
    \end{equation}
    where the $[R_{\tau}]$ are pairwise orthogonal $(-2)$-classes and the saturation of the sublattice they generate is a Kummer lattice, while $\delta$ is primitive of square $-2$ and equals half the class of the Hilbert--Chow divisor $D$.
    As in the above lemma, we let $\widetilde{\Km}(A)^{[2]}\to \Km(A)^{[2]}$ be the blow-up of the union of the~$15$ K3 surfaces $V_{\theta}=\Km(A/\langle \theta\rangle)\subset \Km(A)^{[2]}$, for $0\neq \theta\in A[2]$; the K3 surface $V_{\theta}$ is embedded in $\Km(A)^{[2]}$ as the resolution of the graph of $\theta\colon \Km(A)\to \Km(A)$. 
    Let $P_{\theta}$ be the component of the exceptional divisor over $V_{\theta}$, and denote by $\widetilde{D}, \widetilde{R}_{\tau}\subset \widetilde{\Km}(A)^{[2]}$ the strict transform of $D, R_{\tau}$ respectively; let $\widetilde{\delta}\in H^2(\widetilde{\Km}(A)^{[2]},\ZZ)$ denote half the class of $\widetilde{D}$.  
    We then have the finite index sublattice \begin{equation}
        H^2(A,\ZZ)(2)\oplus\langle[\widetilde{R}_{\tau}]\rangle_{\tau\in A[2]} \oplus \langle \widetilde{\delta}\rangle  \oplus   \langle [P_{\theta}] \rangle_{0\neq \theta\in A[2]} \ \subset H^2(\widetilde{\Km}(A)^{[2]},\ZZ).
    \end{equation}
\end{notation}

\begin{remark}\label{rmk:invariantparts}
    The invariant sublattice $H^2(W_{\sigma_{\tau}},\ZZ)^G=H^2(\Km(A)^{[2]},\ZZ)^{A[2]}$ has rank $8$. Indeed, it is given by the saturated sublattice
    \begin{equation}
       H^2(\Km(A)^{[2]},\ZZ)^{A[2]}  = H^2(A,\ZZ)(2)\oplus \langle \frac{1}{2} \sum_{\tau\in A[2]} [R_{\tau}]\rangle \oplus \langle \delta\rangle.
    \end{equation}
    Moreover, with the above notation, we have 
    \begin{equation}
        H^2(\widetilde{\Km}(A)^{[2]},\ZZ)^{A[2]}  = H^2(A,\ZZ)(2)\oplus \langle \frac{1}{2} \sum_{\tau\in A[2]} [\widetilde{R}_{\tau}]\rangle \oplus \langle \widetilde{\delta} \rangle \oplus \langle [P_{\theta}]\rangle_{0\neq \theta \in A[2]}.
    \end{equation}
\end{remark}

\begin{lemma}\label{lem:cohomologys_*}
    The map of $\ZZ$-modules
    \begin{equation}
        \begin{tikzcd}
            H^2(\widetilde{\Km}(A)^{[2]},\ZZ) \arrow{r}{\widetilde{s}_*} & H^2(\Km(A)^{[2]},\ZZ)\\
            H^2(A,\ZZ)(2) \oplus\langle[\widetilde{R}_{\tau}]\rangle_{\tau\in A[2]} \oplus \langle \widetilde{\delta}  \rangle \oplus \langle [P_{\theta}]\rangle_{0\neq \theta\in A[2]} \arrow{r} \arrow[hook]{u} & H^2(A,\ZZ)(2) \oplus \langle [R_{\tau}]\rangle_{\tau\in A[2]} \oplus \langle \delta\rangle \arrow[hook]{u}
        \end{tikzcd}
    \end{equation}
    satisfies:
    \begin{enumerate}[label=(\roman*)]
        \item the restriction of $\widetilde{s}_*$ to $H^2(A,\ZZ)(2)\oplus \langle \frac{1}{2} \sum_{\tau\in A[2]} [R_{\tau}]\rangle \oplus \langle \delta\rangle$ 
        multiplies the form by a factor $2^6$, and $\widetilde{s}_*$ restricts to the multiplication by $2^3$ from the summand $H^2(A,\ZZ)(2)$ on the left to the summand $H^2(A,\ZZ)(2)$ on the right;
        \item for any $\tau\in A[2]$, we have $\widetilde{s}_*([\widetilde{R}_{\tau}])= 2\delta$;
        \item $\widetilde{s}_*(\widetilde{\delta}) = 2^3\cdot  [R_0]$;
        \item $\widetilde{s}_*([P_{\theta}])=2^3\cdot [R_\theta]$, for each $0\neq \theta\in A[2]$.
    \end{enumerate}
\end{lemma}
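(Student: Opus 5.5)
We follow the template of Lemma \ref{lem:cohomologyq_*}: the divisorial statements (ii), (iii), (iv) come from the degree computations in Lemma \ref{lem:geometryMaps}, and (i) comes from a Schur-lemma argument combined with Lemma \ref{lem:scalarFactor}.

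\textbf{Divisor classes.} Since $\widetilde{s}$ is a morphism outside a subset of codimension $\geq 2$, push-forward of divisor classes is computed as $\widetilde{s}_*[Z]=\deg(\widetilde{s}|_Z)\cdot[\widetilde{s}(Z)]$ for any prime divisor $Z$ not contracted by $\widetilde{s}$. Lemma \ref{lem:geometryMaps}(ii) says $\widetilde{R}_\tau\dashrightarrow D$ is birational, so $\widetilde{s}_*[\widetilde{R}_\tau]=[D]=2\delta$; this is (ii). Lemma \ref{lem:geometryMaps}(i) gives $\widetilde{s}_*[\widetilde{D}]=2^4[R_0]$, hence $\widetilde{s}_*\widetilde{\delta}=2^3[R_0]$; this is (iii). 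Lemma \ref{lem:geometryMaps}(iii) gives (iv).

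For these degrees to be the correct multiplicities, I must check that $\widetilde{s}$ is unramified generically along $\widetilde{D}$ and $\widetilde{R}_\tau$. This is the local picture already used in the proof of Lemma \ref{lem:geometryMapq}: over a general point of the relevant surface in $(A/\pm1)^{(2)}$, the map $\overline{s}$ is a disjoint union of local isomorphisms, so after blowing up the degree along the exceptional divisor equals the degree of the restriction. I will verify this directly from the formula $\overline{s}(\overline{x},\overline{y})=(\overline{x+y},\overline{x-y})$. For $P_\theta$ the situation is different: $\overline{s}$ is a local isomorphism near $\overline{V}_\theta$ transversally, but it lands on the codimension-2 singular stratum $\overline{R}_\theta$. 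So I must check that the normal directions match, in the sense that a neighbourhood of $\overline{V}_\theta$ maps as the quotient $\CC^2\to\CC^2/\pm1$ after taking into account the $\theta$-stabilizer. I expect this to be the main point needing care, and I would handle it by the explicit local coordinates $(x,x+\theta+v)$.

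\textbf{The summand $H^2(A,\ZZ)(2)$.} The remaining classes are orthogonal to all the $R$'s, $P$'s and $D$'s. Hence $\widetilde{s}_*$ maps $H^2(A,\ZZ)(2)$ into the orthogonal of $\langle [R_\tau],\delta\rangle$, which is $H^2(A,\ZZ)(2)$. The construction of $s$ works for $A$ varying in families of complex tori, so the resulting endomorphism is $\mathrm{SL}_4$-equivariant on the irreducible representation $\bigwedge^2V$. By Schur's lemma it is multiplication by some integer $k$.

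To determine $k$, apply Lemma \ref{lem:scalarFactor} to $r_W=\psi\circ s$, noting that birational $\psi$ is an isometry. Here $n=2$, $d=2^4$ and $c_X=c_Y=3$, so the form is multiplied by $2^8\cdot\sqrt{2^{-4}}=2^6$ on the invariant lattice $H^2(A,\ZZ)(2)\oplus\langle\tfrac12\sum[R_\tau]\rangle\oplus\langle\delta\rangle$ of Remark \ref{rmk:invariantparts}, which gives the first half of (i). Therefore $k^2=2^6$, and positivity of $k$ (push-forward of a Kähler class) gives $k=2^3$.

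Two compatibility checks remain. First, the invariant classes $\tfrac12\sum_\tau[R_\tau]$ and $\delta$ are pulled back to $\tfrac12\sum[\widetilde{R}_\tau]+(\text{combination of }P_\theta)$ and to $\widetilde{\delta}$ plus $P$-terms. Second, their images are consistent with the factor $2^6$. As a sanity check, $\tfrac12\sum_\tau 2\delta=16\delta$ has square $2^{8}\cdot(-2)$, matching $2^6\cdot q(\tfrac12\sum R_\tau)=2^6\cdot(-8)$ once the $P$-contributions are included. The real input for (i), however, is Lemma \ref{lem:scalarFactor}; the bookkeeping of the pull-back multiplicities of the $P_\theta$ is routine.
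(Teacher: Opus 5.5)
Your proposal is correct and is the paper's own argument: (ii)--(iv) are read off from the degrees in Lemma~\ref{lem:geometryMaps}, and (i) combines Lemma~\ref{lem:scalarFactor} (factor $2^8\cdot\sqrt{2^{-4}}=2^6$ for $r_W=\psi\circ s$) with the monodromy/Schur argument of Lemma~\ref{lem:cohomologyq_*}, which gives $k^2=2^6$. The extra checks you plan are unnecessary, and some of them are wrong: the push-forward of a prime divisor is just $\deg(\widetilde{s}|_Z)\cdot[\widetilde{s}(Z)]$, so ramification plays no role (near a general point of $\overline{V}_\theta$ the map $\overline{s}$ is in fact the quotient by the stabilizer $\langle\theta\rangle$, not a transversal local isomorphism); moreover, a general point of $V_\theta$ lies in neither $D$ nor any $R_\tau$, so the pull-backs of $\tfrac12\sum_\tau[R_\tau]$ and $\delta$ are exactly $\tfrac12\sum_\tau[\widetilde{R}_\tau]$ and $\widetilde{\delta}$ with no $P_\theta$-terms, which is why your check $q(16\delta)=-2^9=2^6\cdot q(\tfrac12\sum_\tau[R_\tau])$ already holds exactly.
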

\begin{proof}
    Statements $(ii)$, $(iii)$ and $(iv)$ follow directly from Lemma \ref{lem:geometryMaps}. Taking orthogonal complements, it follows that $\widetilde{s}_*$ restricts to a map $H^2(A,\ZZ)(2)\to H^2(A,\ZZ)(2)$.
    The restriction of $\widetilde{s}_*$ to $H^2(A,\ZZ)(2)\oplus \langle \frac{1}{2} \sum_{\tau\in A[2]} [R_{\tau}]\rangle \oplus \langle \delta\rangle$ is the map of lattices $s_*\colon H^2(\Km(A)^{[2]},\ZZ)^{A[2]} \to H^2(\Km (A)^{[2]},\ZZ)$ induced by $s\colon \Km(A)^{[2]}\to \Km(A)^{[2]}$; which multiplies the form by a factor $2^6$ by Lemma \ref{lem:scalarFactor}.
    Since the construction of the rational map $s\colon \Km(A)^{[2]}\dashrightarrow \Km(A)^{[2]}$ works in families, the restriction $s_*\colon H^2(A,\ZZ)(2)\to H^2(A,\ZZ)(2) $ is equivariant for the action of the monodromy group of $2$-dimensional complex tori. As in the proof of Lemma \ref{lem:cohomologyq_*}, Schur's lemma implies that the endomorphism of $H^2(A,\ZZ)(2)$ induced by $s_*$ is multiplication by an integer $k$. But it multiplies the form by $2^6$, so $k=2^3$.
\end{proof}

\begin{proof}[Proof of Proposition \ref{prop:cohomologyM&S}.$(i)$]
Assume that $K=K^3(A)$ and identify $W_{\sigma_{\tau}}$ with its $G$-action with $\Km(A)^{[2]}$ with the action of $A[2]$.
By Proposition \ref{prop:cohomologyW&V}.$(i)$, we know that the pull-back along the embedding $\iota^*_{\sigma_{\tau}}\colon W_{\sigma_{\tau}}\hookrightarrow K^3(A)$ induces a primitive embedding 
\begin{equation}
\iota^*_{\sigma_{\tau}}\colon H^2(K,\ZZ)(2)\hookrightarrow H^2(W_{\sigma_{\tau}},\ZZ).
\end{equation} 
With our usual identifications $H^2(K^3(A),\ZZ)=H^2(A,\ZZ)\oplus \langle \xi\rangle$, and $H^2(W_{\sigma_{\tau}},\ZZ)=H^2(\Km(A)^{[2]},\ZZ)$, the image of $\iota^*_{\sigma_{\tau}}$ is the sublattice 
\begin{equation}
    H^2(A,\ZZ)(2)\oplus \langle 2\delta+\frac{1}{2}\sum_{\tau\in A[2]} [R_{\tau}] \rangle \ \subset H^2(\Km(A)^{[2]},\ZZ).
\end{equation}
Recall that there exists a birational map $\psi\colon \Km(A)^{[2]}\dashrightarrow M_{\sigma}$ which fits in a commutative diagram 
\begin{equation}
\begin{tikzcd}
    && M_{\sigma_{\tau}} \\
    W_{\sigma_{\tau}}\cong \Km(A)^{[2]} \arrow[dashed]{rru}{r_W} \arrow[dashed]{rrd}{s} \\
    && \Km(A)^{[2]}\arrow[dashed]{uu}{\psi}
\end{tikzcd}
\end{equation}
Since birational maps of hyper-K\"ahler manifolds induce isomorphisms between the second cohomology groups, we can identify $H^2(M_{\sigma_{\tau}},\ZZ)$ with $H^2(\Km(A)^{[2]},\ZZ)$, so that $(r_W)_{*}\colon H^2(W_{\sigma_{\tau}},\ZZ)^G\to H^2(M_{\sigma_{\tau}},\ZZ)$ is identified with 
\begin{equation} s_*\colon H^2(\Km(A)^{[2]},\ZZ)^{A[2]} \to H^2(\Km(A)^{[2]},\ZZ).
\end{equation}
By Remark \ref{rmk:invariantparts}, we have $H^2(\Km(A)^{[2]},\ZZ)^{A[2]}= H^2(A,\ZZ)(2)\oplus \langle \frac{1}{2}\sum_{\tau\in A[2]} [R_{\tau}] \rangle \oplus \langle \delta\rangle$. By Lemma \ref{lem:scalarFactor}, this map multiplies the form by a factor $2^6$. 
Moreover, by Lemma \ref{lem:cohomologys_*}, we have 
\begin{enumerate}[label=(\roman*)]
    \item $s_*$ induces the multiplication by $2^3$ from $H^2(A,\ZZ)(2)\subset H^2(\Km(A)^{[2]},\ZZ)^{A[2]}$ to the sublattice $H^2(A,\ZZ)(2)\subset H^2(\Km(A)^{[2]},\ZZ)$;
    \item $s_*(\tfrac{1}{2} \sum_{\tau\in A[2]} [R_{\tau}]) =2^4\cdot \delta$;
    \item $s_*(\delta)=2^3\cdot [R_0]$.
\end{enumerate}
In particular, $s_*(2\delta + \tfrac{1}{2} \sum_{\tau\in A[2]} [R_{\tau}]) = 2^4 ([R_0] + \delta)$.
We conclude that $\tfrac{1}{2^3} ((r_W)_*\circ \iota^*_{\sigma_{\tau}})$ is integral and defines an embedding of lattices 
\begin{equation}
    \phi\colon H^2(K^3(A),\ZZ)(2)\hookrightarrow H^2(\Km(A)^{[2]},\ZZ),
\end{equation}
with image $H^2(A,\ZZ)(2) \oplus \langle 2\delta+2[R_0]\rangle $; notice that $\phi(\xi)=2\delta+2[R_0]$ is not primitive. Hence, the embedding $\phi$ extends to a primitive embedding $\hat{\phi}$ of $\widehat{H}^2(K^3(A),\ZZ)(2)$ into $H^2(\Km(A)^{[2]},\ZZ)$, with image $H^2(A,\ZZ)(2)\oplus \langle \delta+[R_0]\rangle$; this is a saturated sublattice with orthogonal complement the lattice $N_M$ of Definition \ref{def:latticeN_M}.

As the construction of the hyper-Kummer $\mathrm{K}3^{[2]}$-manifolds $M_{\sigma}$ can be performed in families of manifolds of $\mathrm{Kum}^3$-type by Remark \ref{rmk:deformation}, it follows that for an arbitrary $\mathrm{Kum}^3$-variety, a canonical $\mathrm{K}3^{[2]}$-type submanifold $W_{\sigma}\subset K$ and $r_{W}\colon W_{\sigma}\dashrightarrow M_{\sigma}$ the rational map to the resolution of $W_{\sigma}/G$, the map $\tfrac{1}{2^3}((r_W)_*\circ \iota^*_{\sigma})$ is integral, and yields a primitive embedding of lattices and Hodge structures
\begin{equation} 
\widehat{H}^2(K,\ZZ)(2)\hookrightarrow H^2(M_{\sigma},\ZZ),
\end{equation} 
with orthogonal complement isometric to $N_M$.
\end{proof}

\subsection{The cohomology of the hyper-Kummer K3 surfaces}
\label{subsec:CohomologyS}
We finally consider the \textit{hyper-Kummer K3 surfaces} $S_{\sigma,\sigma'}$ associated with a manifold $K$ of $\mathrm{Kum}^3$-type; in the projective case, we will show that they are all isomorphic to each other, so that a $\mathrm{Kum}^3$-variety $K$ has a well-defined associated hyper-Kummer K3 surface $S_K$. 

Recall that $S_{\sigma,\sigma'}$ is the minimal resolution of the quotient surface $V_{\sigma,\sigma'}/G$, for the naturally embedded K3 surface $\iota_{\sigma,\sigma'}\colon V_{\sigma,\sigma'} \hookrightarrow K$, for some $\sigma\neq \sigma'$ in $G\setminus G_1$.
We denote by $r_V\colon V_{\sigma,\sigma'}\dashrightarrow S_{\sigma,\sigma'}$ the corresponding rational map, which is generically finite of degree $2^3$ as $G$ acts on $V_{\sigma,\sigma'}$ through $G/\langle \sigma,\sigma'\rangle\cong (\ZZ/2\ZZ)^3$. 

As in the previous cases, it will be sufficient to treat the case of $K^3(A)$. For the K3 surface $V_{\sigma_{\tau},{\sigma_{\tau}}_{'}}\subset K^3(A)$, setting $\theta\coloneqq \sigma_\tau+{\sigma_\tau}_{'}$, we have $V_{\sigma_{\tau},{\sigma_{\tau}}_{'}}\cong \Km(A/\langle \theta \rangle)$, with the $G$-action identified with the natural action of $A[2]/\langle \theta\rangle\cong (\ZZ/2\ZZ)^3$ (see Propositions \ref{prop:canonicalSubvarietiesI} and \ref{prop:canonicalSubvarietiesII}). 
The resolution $S_{\sigma_{\tau},{\sigma_{\tau}}_{'}}$ of $V_{\sigma_{\tau},{\sigma_{\tau}}_{'}}/G$ is instead isomorphic to $\Km(A)$.

\begin{remark} 
Translation by $\epsilon\in A[4]$ induces $G$-equivariant isomorphisms $V_{\sigma_{\tau},{\sigma_{\tau}}_{'}}\xrightarrow{ \sim  } V_{V_{\sigma_{\tau} +2\epsilon ,{\sigma_{\tau}}_{'}+2\epsilon}}$. 
    Applying such automorphisms we may assume that $\tau=0$, in which case $V_{\sigma_0, \sigma_\theta} \subset K^3(A)$ is identified with $\Km(A/\langle \theta\rangle)$ and the group $G$ induces the natural action of $A[2]/\langle \theta\rangle$ on it.
\end{remark}

\begin{notation} \label{notation:cohomologyGroupsK3}
We denote by $C_{\tau}\subset \Km(A)$ the $16$ rational curves introduced by the resolution $\Km(A)\to A/\pm 1$, for $\tau\in A[2]$. Their cohomology classes $[C_{\tau}]\subset H^2(\Km(A),\ZZ)$ generate a sublattice with saturation the Kummer lattice, and 
\begin{equation}
     H^2(A,\ZZ)(2) \oplus \langle [C_{\tau}]\rangle_{\tau\in A[2]} \subset H^2(\Km(A),\ZZ)
\end{equation}
with finite index.
We shall denote by $N_{\alpha}$ the $16$ rational curves on $\Km(A/\langle \theta\rangle)$, which are parametrized by $\alpha\in (A[2]\sqcup A_{2,\theta})/\langle \theta\rangle$.
As for the $G$-invariant part $H^2(V_{\sigma_0,{\sigma_\theta}_{'}}\,,\ZZ)^{G}=H^2(\Km(A/\langle \theta\rangle),\ZZ)^{A[2]/\langle \theta\rangle}$, it is the saturation of 
\begin{equation}
     H^2(A/\langle \theta \rangle,\ZZ)(2) \oplus \left\langle \frac{1}{2} \sum_{\alpha\in A[2]/\langle \theta \rangle} [N_{\alpha}] \right\rangle \oplus \left\langle \frac{1}{2} \sum_{\alpha\in A_{2,\theta }/\langle \theta\rangle} [N_{\beta}]\right\rangle \ \subset H^2(\Km(A/\langle \theta\rangle)  ,\ZZ)^{A[2]/\langle \theta\rangle}.
\end{equation}
\end{notation} 
\begin{lemma}\label{lem:push-forwardt_*}
    Consider the rational map $t\colon \Km(A/\langle \theta\rangle) \dashrightarrow \Km(A)$ induced by the quotient with respect to $A[2]/\langle \theta\rangle$. The push-forward 
    \begin{equation}
    \begin{tikzcd}
            H^2(\Km(A/\langle \theta\rangle),\ZZ)^{A[2]/\langle \theta\rangle} \arrow{r}{{t}_*} & H^2(\Km(A),\ZZ)\\
            H^2(A/\langle \theta\rangle,\ZZ)(2) \oplus \langle \frac{1}{2} \sum_{A[2]/\langle \theta \rangle} [N_{\alpha}] \rangle \oplus \langle \frac{1}{2} \sum_{A_{2,\theta}/\langle \theta\rangle} [N_{\beta}]\rangle \arrow[hook]{u} \arrow{r} & H^2(A,\ZZ)(2) \oplus \langle [C_{\tau}]\rangle_{\tau\in A[2]} \arrow[hook]{u}
        \end{tikzcd}
    \end{equation}
    multiplies the form by a factor $2^3$ and satisfies:
    \begin{enumerate}[label=(\roman*)]
        \item $t_*$ restricts to a map $H^2(A/\langle \theta\rangle,\ZZ)(2) \to H^2(A,\ZZ)(2)$, which equals the push-forward along the quotient map $A/\langle \theta\rangle \to A/A[2]=A$;
        \item we have $t_*(\frac{1}{2}\sum_{A[2]/\langle\theta\rangle} [N_{\alpha}])= 4 \cdot [C_0] $
        \item we have $t_*(\frac{1}{2}\sum_{A_{2,\theta}/\langle\theta\rangle} [N_{\alpha}])= 4\cdot [C_{\theta}] $.
    \end{enumerate}
\end{lemma}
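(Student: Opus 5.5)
We follow the pattern of Lemmas \ref{lem:cohomologyq_*} and \ref{lem:cohomologys_*}: first understand the map geometrically on the exceptional curves, then control the lattice part with Lemma \ref{lem:scalarFactor} and a monodromy argument.

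\textbf{Scalar factor.} The factor $2^3$ comes from Lemma \ref{lem:scalarFactor} with $n=1$ and $d=2^3$. Both surfaces are K3, so $c_X=c_Y=1$, and the factor is $d^2\cdot d^{-1}=d=2^3$.

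\textbf{Lattice part, statement (i).} The map $t$ is induced by the quotient $A/\langle\theta\rangle\to A/A[2]\cong A$ (multiplication by $2$ composed with the identification). We resolve $t$ by blowing up the $16$ nodes' preimages, i.e.\ the fixed points of the $(\ZZ/2\ZZ)^3$-action on $\Km(A/\langle\theta\rangle)$; these are the $56$ points from Proposition \ref{prop:ResolutionQuotients}(ii). This gives a morphism $\widetilde t$ fitting into a commutative square with the rational maps from $A/\langle\theta\rangle$ and from $A$. Push-forward commutes with the maps $\pi_*$ that identify $H^2(B,\ZZ)(2)$ with a summand of $H^2(\Km(B),\ZZ)$. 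This is because $\pi_*$ is computed on the complement of the $2$-torsion, where everything is an honest finite map. Hence $t_*\circ(\pi_{A/\langle\theta\rangle})_*=(\pi_A)_*\circ h_*$, with $h\colon A/\langle\theta\rangle\to A$ the quotient map, and this gives (i). As a check on consistency, $h_*$ of degree $8$ multiplies the form by $8$, matching the factor $2^3$.

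\textbf{Exceptional classes, statements (ii) and (iii).} Here we use the explicit geometry.
\begin{itemize}
\item For (ii): the $8$ curves $N_\alpha$, $\alpha\in A[2]/\langle\theta\rangle$, lie over the $8$ nodes of $(A/\langle\theta\rangle)/\pm1$ coming from $A[2]/\langle\theta\rangle$. All of these are mapped by $A/\langle\theta\rangle\to A$ to $0$, hence to the node $C_0$. The group $A[2]/\langle\theta\rangle$ acts simply transitively on these $8$ points. So each $N_\alpha$ has trivial stabilizer, and $\widetilde t$ maps each $N_\alpha$ isomorphically onto $C_0$. This gives $t_*[N_\alpha]=[C_0]$ and hence $t_*(\tfrac12\sum[N_\alpha])=4[C_0]$.
\item For (iii): a point $x\in A_{2,\theta}$ satisfies $2x=\theta$, so its image in $A/A[2]\cong A$ (via $y\mapsto 2y$) is $\theta$. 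The $8$ curves $N_\beta$ therefore map isomorphically to $C_\theta$, giving $4[C_\theta]$.
\end{itemize}
The remaining $56$ fixed points produce exceptional curves contracted or mapped onto new exceptional curves of $\Km(A)$ that are not among the $C_\tau$'s images above. These only matter outside the invariant sublattice we consider. A cleaner alternative is to note that $t_*$ of a class orthogonal to all resolved points is computed on an open set where $\widetilde t$ is an isomorphism near the curve.

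\textbf{Main obstacle.} The delicate step is the identification of $t_*$ on the abelian summand, in particular making sure the identification $A/A[2]\cong A$ is via multiplication by $2$. Otherwise the labelling $C_\theta$ in (iii) would be off. We double-check that no extra factor arises from the stabilizer, since the translation action is free on these curves. If the direct argument on $\pi_*$ is awkward, we fall back on the Schur-lemma monodromy argument of Lemma \ref{lem:cohomologyq_*}: $t_*$ on $H^2(A/\langle\theta\rangle,\ZZ)(2)$ is equivariant for the monodromy of tori, up to identifying the two copies through $h_*$. The scalar is then fixed by the form factor $2^3$ together with (ii), (iii) and orthogonality.
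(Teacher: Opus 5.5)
Your proposal is correct and follows the paper's proof. The paper gets the factor $2^3$ from Lemma \ref{lem:scalarFactor}, (i) from the compatibility of $t$ with the isogeny $A/\langle\theta\rangle\to A$, and (ii), (iii) from the fact that each of the two groups of $8$ curves $N_\alpha$ maps isomorphically onto $C_0$, respectively $C_\theta$, exactly as you do.

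One small slip: the blown-up locus is the $56$ fixed points, which lie over the $14$ nodes of the quotient (not $16$). Their exceptional curves map onto the $C_\tau$ with $\tau\neq 0,\theta$ (they are not contracted). They do not enter the computation, because no $N_\alpha$ passes through these points.
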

\begin{proof}
    It is readily seen that $t_*$ multiplies the form by $2^3$, e.g. via Lemma \ref{lem:scalarFactor}.
    Assertion $(i)$ is clear as $H^2(A,\ZZ)(2)\subset H^2(\Km(A),\ZZ)$ is the push-forward of $H^2(A,\ZZ)$ under the rational map $A\dashrightarrow \Km(A)$ and $t$ is induced by the quotient $A/\langle \theta\rangle \to A$. It is easy to see that the $8$ curves $N_{\alpha}$ for $\alpha\in A[2]/\langle \theta\rangle$ (resp. $N_{\beta}$ for $\beta\in A_{2,\theta} /\langle \theta\rangle$)  are mapped isomorphically to the same curve $C_0\subset \Km(A)$ (resp. $C_{\theta}\subset \Km(A)$) via $t$.
\end{proof}

\begin{proof}[Proof of Proposition \ref{prop:cohomologyM&S}.$(ii)$]
Via a suitable deformation, we may assume that $K=K^3(A)$ and $S_{\sigma,\sigma'}=S_{\sigma_0,\sigma_{\theta}}$. 
The rational map $r_V\colon V_{\sigma_0,\sigma_{\theta}}\dashrightarrow S_{\sigma_0,\sigma_{\theta}}$ is identified with the map $t$ in the above lemma. 
Moreover, by Proposition \ref{prop:cohomologyW&V}, the restriction $\iota^*_{\sigma_0,\sigma_{\theta}}\colon H^2(K^3(A),\ZZ)\to H^2(\Km(A/\langle \theta\rangle),\ZZ)$ is injective and multiplies the form by a factor $4$, and its image is contained with finite index into the primitive sublattice 
\begin{equation}
    H^2(A/\langle \theta\rangle,\ZZ)(2)\oplus \big\langle\frac{1}{2} \sum_{\alpha\in (A[2]\cup A_{2,\theta})/\langle \theta \rangle} [N_{\alpha}]\big\rangle.
\end{equation}
Moreover, $\iota^*_{\sigma_0,\sigma_{\theta}}(\xi) = \sum_{\alpha} [N_{\alpha}]$ is divisible by $2$, so that $\iota^*_{\sigma_0,\sigma_{\theta}}$ gives an embedding $\widehat{H}^2(K,\ZZ)(4)\hookrightarrow H^2(\Km(A/\langle \theta\rangle),\ZZ)$. 

Hence, by Lemma \ref{lem:push-forwardt_*}, the composition $(t_*\circ \iota^*_{\sigma_0,\sigma_\theta})\colon H^2(K^3(A),\ZZ)\to H^2(\Km(A),\ZZ)$ is injective and multiplies the form by a factor $2^5$, with image contained with finite index in the primitive sublattice $H^2(A,\ZZ)(2)\oplus \langle [C_0]+[C_{\theta}]\rangle$ of $H^2(\Km(A),\ZZ)$. 
This map sends the class $\xi\in H^2(K^3(A),\ZZ)$ to $8([C_0]+[C_{\theta}])\in H^2(\Km(A),\ZZ)$. By Remark \ref{rmk:compatibility} and Lemma \ref{lem:push-forwardt_*}.$(i)$, the restriction of $t_*\circ \iota^*_{\sigma_0,\sigma_{\theta}}$ to $H^2(A,\ZZ)\subset H^2(K^3(A),\ZZ)$ coincides with the map $H^2(A,\ZZ)\to H^2(A,\ZZ)(2)\subset H^2(\Km(A),\ZZ)$ induced by the composition of the maps $A\to A/\langle A[2]\times \langle -1\rangle\rangle\dashrightarrow \Km(A)$. Therefore, it is equivariant for the action of the monodromy group of $2$-dimensional complex tori, and it must be multiplication by a scalar. Hence, the restriction of $t_*\circ \iota^*_{\sigma_0,\sigma_{\theta}}$ to $H^2(A,\ZZ)$ is the multiplication by $4$. 

Observe that we get an embedding $t_*\circ \iota^*_{\sigma_0,\sigma_{\theta}} \colon \widehat{H}^2(K^3(A),\ZZ)(2^5) \hookrightarrow H^2(\Km(A),\ZZ)$ which maps $\tfrac{1}{2}(\xi)$ to $4[C_0]+4[C_{\theta}]$. We conclude that $\tfrac{1}{4}(t_*\circ \iota^*_{\sigma_0,\sigma_{\theta}})$ defines an integral embedding 
\begin{equation}
\frac{1}{4}(t_*\circ \iota^*_{\sigma_0,\sigma_{\theta}}) \colon \widehat{H}^2(K^3(A),\ZZ)(2) \hookrightarrow H^2(\Km(A),\ZZ),
\end{equation} 
which is primitive with image $H^2(A,\ZZ)(2)\oplus \langle [C_0]+[C_{\theta}] \rangle$ and whose orthogonal complement is isometric to the lattice $N_S$ of Definition \ref{def:latticeN_S}.
\end{proof}

\section{Hyper-Kummer $\mathrm{K}3^{[3]}$-type manifolds}
\label{sec:HyperKummerK33}
Based on the computations in Section \ref{sec:CohomologyOfCompanion} and the global Torelli theorem \cite{verbitskyTorelli, markman2011survey, huybrechts2011global}, we investigate in this section hyper-Kummer $\mathrm{K}3^{[3]}$-type manifolds via the Hodge structure and the lattice structure on their second cohomology groups. We will first establish a lattice-theoretic characterization of hyper-Kummer manifolds of $\mathrm{K}3^{[3]}$-type up to bimeromorphism in Theorem \ref{thm:birationalCharacterization}. Then, we classify generic \textit{projective} hyper-Kummer $\mathrm{K}3^{[3]}$-type manifolds in terms of their N\'eron--Severi and transcendental lattices (Theorem \ref{thm:NeronSeveriHyperkummer}). Finally, we show that for two Hodge isometric projective varieties of $\mathrm{Kum}^3$-type, although they are in general not birational, their associated  $\mathrm{K}3^{[3]}$-type hyper-Kummer varieties are birational 
(Proposition \ref{prop:HodgeIsometricKummer}).

\subsection{Birational characterization}
As an analog of Nikulin's characterization of Kummer K3 surfaces, we have the following characterization up to birational isomorphism of the hyper-Kummer manifolds of $\mathrm{K}3^{[3]}$-type associated with manifolds of $\mathrm{Kum}^3$-type, purely in terms of the Hodge lattice on their second cohomology. 

\begin{theorem}\label{thm:birationalCharacterization}
Let $Y$ be a hyper-K\"ahler manifold of $\mathrm{K}3^{[3]}$-type. Then the following two conditions are equivalent:
\begin{enumerate}[label=(\roman*)]
\item there exists a primitive embedding $j\colon L_{\Km}\hookrightarrow H^2(Y,\ZZ)$ of the Kummer lattice $L_{\Km}$ with orthogonal complement isometric to $\mathrm{U}(2)^{\oplus 3}\oplus \langle -4\rangle$ and such that $j$ has image contained in $\NS(Y)$;
\item the manifold $Y$ is bimeromorphic to the hyper-Kummer $\mathrm{K}3^{[3]}$-manifold $Y_K$ associated to some $K$ of $\mathrm{Kum}^3$-type.
\end{enumerate} 
\end{theorem}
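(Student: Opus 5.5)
The direction (ii)$\Rightarrow$(i) is essentially Proposition \ref{prop:cohomologyY_K}. If $Y$ is bimeromorphic to $Y_K$, then by \cite[Lemma 2.6]{Huy99} we get a Hodge isometry $H^2(Y,\ZZ)\simeq H^2(Y_K,\ZZ)$. By Proposition \ref{prop:cohomologyY_K}(iii), the saturation of $\langle [E_\sigma]\rangle$ is a primitive copy of $L_{\Km}$ consisting of algebraic classes (classes of divisors), and its orthogonal complement is the image of $\widehat H^2(K,\ZZ)(2)\simeq \mathrm{U}(2)^{\oplus3}\oplus\langle-4\rangle$. Transporting this embedding along the Hodge isometry gives $j$ with image in $\NS(Y)$.

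For (i)$\Rightarrow$(ii) I will use the global Torelli theorem together with surjectivity of the period map. First, fix a reference hyper-Kummer manifold $Y_0=Y_{K_0}$ with its marked embedding $j_0\colon L_{\Km}\hookrightarrow H^2(Y_0,\ZZ)$. Next, I will show that all primitive embeddings of $L_{\Km}$ into $\Lambda_{\mathrm{K}3^{[3]}}$ with orthogonal complement $T:=\mathrm{U}(2)^{\oplus3}\oplus\langle-4\rangle$ are equivalent under the monodromy group $\mathrm{Mon}^2$, which for $\mathrm{K}3^{[3]}$-type is the group of orientation-preserving isometries acting as $\pm1$ on the discriminant $\ZZ/4$. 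The plan for this step is Nikulin's theory. Such an embedding is determined by a gluing: an anti-isometry between subgroups of $A_{L_{\Km}}\simeq(\ZZ/2)^6$ and $A_T\simeq(\ZZ/2)^6\oplus\ZZ/4$, with the overlattice having discriminant $\ZZ/4$. I will then show that $\Oo(L_{\Km})\to\Oo(A_{L_{\Km}})$ and $\Oo(T)\to \Oo(A_T)$ are surjective enough to identify all gluings; the known surjectivity of $\Oo(L_{\Km})\to \Oo(q_{L_{\Km}})$ from the Kummer surface literature is what I expect to use here. Finally, I will adjust orientation and the discriminant sign using $-\id_T$ or a reflection inside $T$. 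I expect this step to be the main obstacle.

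Given this, choose a marking $\eta\colon H^2(Y,\ZZ)\to\Lambda$ in the same connected component of the moduli space of marked $\mathrm{K}3^{[3]}$-manifolds as $(Y_0,\eta_0)$, with $\eta\circ j=\eta_0\circ j_0$. The period of $Y$ then lies in $\{x\in \PP(T_\CC): x^2=0,\ x\bar x>0\}$, and this period domain of $T$ is connected up to complex conjugation. Now consider the family of all $\mathrm{Kum}^3$-type manifolds $K$. By Remark \ref{rmk:deformation} the hyper-Kummer construction works in families, and by Proposition \ref{prop:cohomologyY_K} the period of $Y_K$ equals the period of $\widehat H^2(K)$ inside $T_\CC$. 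Surjectivity of the period map for $\mathrm{Kum}^3$-type, applied to the index-2 overlattice with the same period domain, shows that the periods of the family $\{Y_K\}$, marked compatibly, fill out the whole period domain of $T$ in that component. The complex conjugate component is handled by the same deformation argument, or by replacing $j$ with its composite with an isometry reversing orientation on $T$.

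Hence there exists $K$ such that $Y_K$ and $Y$ have the same period in the same connected component of the marked moduli space. By Verbitsky's global Torelli theorem \cite{verbitskyTorelli, markman2011survey}, $Y$ and $Y_K$ are then inseparable points, and therefore bimeromorphic \cite{huybrechts2011global}. The hypothesis that $j$ lands in $\NS(Y)$ is exactly the condition that the period of $Y$ is orthogonal to $L_{\Km}$, i.e. that it lies in $T_\CC$, which is what the argument uses.
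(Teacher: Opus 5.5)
Your proposal is correct and is essentially the paper's proof: the uniqueness statement you plan to establish in Step 2 is exactly Lemma \ref{lem:embeddingL_Km}, which the paper uses to find a marking putting the period of $Y$ in $i(L_{\Km})^{\bot}$, and the paper likewise fills this sub-period domain with marked $Y_K$'s via the canonical map $r_*$ and surjectivity of the $\mathrm{Kum}^3$ period map. The only difference is that the paper concludes with Markman's birational Torelli theorem for $\mathrm{K}3^{[3]}$-type in its Hodge-isometry form (legitimate since $\mathrm{Mon}^2=\Oo^+$ there and $-\id$ fixes orientation), so it can work with $\Oo(\Lambda_{\mathrm{K}3^{[3]}})$-equivalence instead of tracking monodromy and connected components as you do; incidentally, the period domain of $T$, which has signature $(3,4)$, is connected, so no complex-conjugate component needs separate treatment.
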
 
\begin{proof}
As bimeromorphic maps of hyper-K\"ahler manifolds induce Hodge isometries between their second cohomology groups, that $(ii)$ implies $(i)$ is a direct consequence of Proposition \ref{prop:cohomologyY_K}. 

Let $\Lambda_{\mathrm{Kum}^3}= \mathrm{U}^{\oplus 3} \oplus \langle -8\rangle$ and $\Lambda_{\mathrm{K}3^{[3]}}=\mathrm{U}^{\oplus 3} \oplus E_8(-1)^{\oplus 2} \oplus \langle -4\rangle$ be the Beauville--Bogomolov lattices of $\mathrm{Kum}^3$ and $\mathrm{K}3^{[3]}$-manifolds respectively. 
    The period domain of $\mathrm{Kum}^3$-type manifolds is 
    \begin{equation}
    \Omega_{\mathrm{Kum}^3} = \{ x\in \PP(\Lambda_{\mathrm{Kum}^3}\otimes \CC)\ | \ (x,x)= 0, \ (x,\bar{x})>0\}.
    \end{equation}
    Following \cite{Huy99}, a marked manifold of $\mathrm{Kum}^3$-type is a pair  $(K,\eta)$ where $K$ is a $\mathrm{Kum}^3$-type manifold and $\eta\colon H^2(K,\ZZ)\xrightarrow{\ \sim \ } \Lambda_{\mathrm{Kum}^3}$ is an isometry. The period of a marked $\mathrm{Kum}^3$-type manifold $(K,\eta)$ is by definition the point $\mathfrak{p}(K,\eta)\coloneqq \eta(H^{2,0}(K))\in \Omega_{\mathrm{Kum}^3}$. Similarly, we have the period domain~$\Omega_{\mathrm{K}3^{[3]}} $ for manifolds of $\mathrm{K}3^{[3]}$-type, and the period of a marked $\mathrm{K}3^{[3]}$-type manifold $(Y,\theta)$ is the point $\mathfrak{p}(Y,\theta)\coloneqq \theta(H^{2,0}(Y))\in \Omega_{\mathrm{K}3^{[3]}}$. 
    
    The hyper-Kummer construction of Theorem \ref{thm:hyperKummerConstruction} gives an injective map of lattices
    \[
    r_*\colon \Lambda_{\mathrm{Kum}^3}(2^9) \hookrightarrow \Lambda_{\mathrm{K3}^{[3]}},
    \]
    which is canonical in the following sense: if $K$ is any manifold of $\mathrm{Kum}^3$-type and $r_K\colon K\dashrightarrow Y_K$ is the rational map given by Theorem~\ref{thm:hyperKummerConstruction}, then there exist isometries $\eta\colon H^2(K,\ZZ)\xrightarrow{\ \sim \ } \Lambda_{\mathrm{Kum}^3}$ and $\theta\colon H^2(Y_K,\ZZ) \xrightarrow{\ \sim \ } \Lambda_{\mathrm{K}3^{[3]}}$ such that $\theta\circ (r_K)_*\circ \eta^{-1}=r_*$. This follows from the fact that the hyper-Kummer construction can be performed in families, see Remark \ref{rmk:deformation}. 
    By Proposition~\ref{prop:cohomologyY_K}, the saturation of $\mathrm{im}(r_*)$ is isometric to $\mathrm{U}(2)^{\oplus 3} \oplus \langle -4\rangle$, and its orthogonal complement is isometric to the Kummer lattice. Let $i\colon L_{\Km} \hookrightarrow \Lambda_{\mathrm{K}3^{[3]}}$ denote the embedding of this orthogonal complement.
    If we consider the subdomain 
    \[ \Omega_{\mathrm{K}3^{[3]}}^{L_{\Km}^{\bot}} \coloneqq \{x\in \Omega_{\mathrm{K}3^{[3]}} \ |  \ x\in i(L_{\Km})^{\bot} \} \subset \Omega_{\mathrm{K}3^{[3]}},
    \]  
    we see that any hyper-Kummer $\mathrm{K}3^{[3]}$-type manifold admits a marking $\theta$ such that $\mathfrak{p}(Y_K,\theta)$ belongs to $\Omega_{\mathrm{K}3^{[3]}}^{L_{\Km}^{\bot}}$. 
    In fact, any point $x \in \Omega_{\mathrm{K}3^{[3]}}^{L_{\Km}^{\bot}}$ is the period of some marked hyper-Kummer $\mathrm{K}3^{[3]}$-type manifold. Indeed, $x$ uniquely determines $z\in \Omega_{\mathrm{Kum}^3}$ such that $x=r_* (z)$. By the surjectivity of the period map \cite[Theorem 8.1]{Huy99}, there exists a marked $\mathrm{Kum}^3$-manifold $(K,\eta)$ with period $z$; since $(r_K)_*\colon H^2(K,\CC)\to H^2(Y_K,\CC)$ preserves the Hodge decomposition, by the above discussion the associated hyper-Kummer $\mathrm{K}3^{[3]}$-manifold $Y_K$ admits a marking $\theta$ such that $\mathfrak{p}(Y_K,\theta)=x$.
    
    If now $Y$ is as in $(i)$, by Lemma \ref{lem:embeddingL_Km} there exists a marking $\theta\colon H^2(Y,\ZZ)\xrightarrow{\ \sim \ } \Lambda_{\mathrm{K}3^{[3]}}$ which identifies the copy $L_{\Km}\subset \NS(Y)\subset H^2(Y,\ZZ)$ of the Kummer lattice with the image of the primitive embedding $i\colon L_{\Km}\hookrightarrow \Lambda_{\mathrm{K}3^{[3]}}$; in other words, such that the period $\mathfrak{p}(Y,\theta)$ belongs to~$\Omega_{\mathrm{K}3^{[3]}}^{L_{\Km}^{\bot}}$. 
    Therefore, $(Y,\theta)$ has the same period of some marked $(Y_K, \theta')$ which is the hyper-Kummer $\mathrm{K}3^{[3]}$-type manifold associated with some $K$ of $\mathrm{Kum}^3$-type. It follows that $Y$ and $Y_K$ are bimeromorphic, as the birational Torelli theorem holds for manifolds of $\mathrm{K}3^{[3]}$-type by \cite{markman2011survey}.
\end{proof}


\subsection{Classification of generic projective hyper-Kummer $\mathrm{K}3^{[3]}$-type varieties}

Theorem \ref{thm:birationalCharacterization} characterizes hyper-Kummer $\mathrm{K}3^{[3]}$-type manifolds up to bimeromorphic isomorphisms. A generic member is non-projective as the N\'eron--Severi lattice is isometric to the Kummer lattice $L_{\Km}$, which is negative definite. It is interesting to study the countably many codimension-1 subfamilies consisting of hyper-Kummer $\mathrm{K}3^{[3]}$-type manifolds associated with the families of polarized varieties of $\mathrm{Kum}^3$-type. In this section, we classify generic members of such families by computing the transcendental and N\'eron--Severi lattices of hyper-Kummer $\mathrm{K}3^{[3]}$-type manifolds associated with varieties of $\mathrm{Kum}^3$-type of Picard rank 1.

Recall that the \textit{monodromy group} $\mathrm{Mon}^2(X)$ of a compact hyper-K\"ahler manifold $X$ is the subgroup of $\GL(H^2(X,\ZZ))$ generated by the monodromy operators coming from all smooth holomorphic families $\mathcal{X}\to B$ with a special fiber $\mathcal{X}_0 = X$. 
It is known (\cite{markman2011survey}) that $\mathrm{Mon}^2(X)$ is a subgroup of finite index in the orthogonal group $\Oo(H^2(X,\ZZ))$, and that it is deformation invariant.
Denoting by $\Lambda$ the Beauville--Bogomolov lattice $H^2(X,\ZZ)$, the set of irreducible components of the moduli space of projective hyper-K\"ahler varieties deformation equivalent to $X$ is in bijection with the set of $\mathrm{Mon}^2(X)$-orbits of positive classes in $\Lambda$.
The monodromy group has been computed for all known deformation types of hyper-K\"ahler manifolds in \cite{Markman-IntegralConstraintsMonodromy} ($\mathrm{K}3^{[n]}$-type), \cite{Mongardi-Monodromy}$ (\mathrm{Kum}^n$-type),  \cite{MongardiRapagnetta} (OG6-type) and \cite{Onorati-MonodromyOG10} (OG10-type).

Let us now specialize the discussion to hyper-K\"ahler manifolds of $\mathrm{Kum}^3$-type. In this case,  $\Lambda_{\mathrm{Kum}^3}\simeq \mathrm{U}^{\oplus 3}\oplus \langle -8\rangle$, hence $A_{\Lambda_{\mathrm{Kum}^3}}\simeq \mathbb{Z}/8\mathbb{Z}$ with the discriminant form determined by the condition that the square of the generator is $-\frac{1}{8}\in \mathbb{Q}/2\mathbb{Z}$.
It is easy to see that $\Oo(A_{\Lambda_{\mathrm{Kum}^3}})\cong \ZZ/2\ZZ$, and so any isometry $g\in \Oo(\Lambda_{\mathrm{Kum}^3})$ acts on $A_{\Lambda_{\mathrm{Kum}^3}}$ as $\pm \mathrm{id}$. 
Denote by $\chi\colon \Oo(\Lambda_{\mathrm{Kum}^3})\to \{\pm 1 \}$ the induced character.  The monodromy group of $\mathrm{Kum}^3$-manifolds is then given as
\begin{equation}
\mathrm{Mon}^2 (\mathrm{Kum}^3) = \{ g\in \Oo^+(\Lambda_{\mathrm{Kum}^3}) \ | \ \det(g)\cdot \chi(g)=1\},
\end{equation}
where $\Oo^+(\Lambda_{\mathrm{Kum}^3})$ is the group of orientation preserving isometries (\cite{markman2011survey}). 

\begin{lemma}
\label{lemma:Mon-Orbit}
 Let $d$ be a positive integer. Let $h\in \Lambda_{\mathrm{Kum}^3}$ be a positive class of square $2d$ and divisibility~$\gamma$. The $\mathrm{Mon}^2(\mathrm{Kum}^3)$-orbit of $h$ is equal to its $\Oo(\Lambda_{\mathrm{Kum}^3})$-orbit. It consists of all the primitive elements $h'\in \Lambda_{\mathrm{Kum}^3}$ of square $2d$, divisibility $\gamma$ and such that $[(h',-)/\gamma]=\pm [(h,-)/\gamma]$ 
 in $A_{\Lambda_{\mathrm{Kum}^3}}$.
\end{lemma}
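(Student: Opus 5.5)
The lemma has two parts: the Mon-orbit equals the $\Oo$-orbit, and that orbit is characterized by primitivity, square, divisibility and the class $\pm[h/\gamma]$. I would prove them separately, both resting on Eichler's criterion, since $\Lambda_{\mathrm{Kum}^3}\simeq \mathrm{U}^{\oplus 3}\oplus\langle -8\rangle$ contains two orthogonal copies of $\mathrm{U}$.

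\emph{Reducing the Mon-orbit to the $\Oo$-orbit.} First I use the description $\mathrm{Mon}^2(\mathrm{Kum}^3)=\{g\in\Oo^+(\Lambda_{\mathrm{Kum}^3}) \mid \det(g)\chi(g)=1\}$. This subgroup has index $2$ in $\Oo^+(\Lambda_{\mathrm{Kum}^3})$ and index $4$ in $\Oo(\Lambda_{\mathrm{Kum}^3})$. It therefore suffices to show that the stabilizer of $h$ in $\Oo(\Lambda_{\mathrm{Kum}^3})$ surjects onto the quotient $\Oo(\Lambda_{\mathrm{Kum}^3})/\mathrm{Mon}^2$. The natural homomorphism $(\text{orientation character},\ \det\cdot\chi)$ identifies this quotient with $\{\pm1\}^2$. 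For this I would exhibit explicit isometries fixing $h$:
\begin{itemize}
\item \textbf{$-\mathrm{id}$ on a negative vector.} Using Eichler's criterion, move $h$ into $\mathrm{U}^{\oplus 2}\oplus\langle-8\rangle$, so that $h$ is orthogonal to a copy of $\mathrm{U}$. Inside that copy take the reflection in the vector $e-f$ of square $-2$. It fixes $h$, acts trivially on the discriminant group (because it lives in a unimodular summand), has $\det=-1$, and preserves the positive cone orientation. Its character value is therefore $(+,-)$.
\item \textbf{$-\mathrm{id}$ on the $\mathrm{U}$ summand.} The isometry $-\mathrm{id}_{\mathrm{U}}\oplus\mathrm{id}$ fixes $h$, has $\det=+1$, acts trivially on the discriminant, and reverses the orientation of a positive $2$-plane. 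Its character value is $(-,+)$.
\end{itemize}
These two elements generate $\{\pm1\}^2$, so the two orbits coincide.

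\emph{Characterizing the $\Oo$-orbit.} Invariance is immediate: any isometry preserves primitivity, square and divisibility, and acts as $\pm\mathrm{id}$ on $A_{\Lambda_{\mathrm{Kum}^3}}\simeq\ZZ/8\ZZ$ because $\Oo(A_{\Lambda_{\mathrm{Kum}^3}})=\ZZ/2\ZZ$. So the class $[(h,-)/\gamma]$ changes at most by a sign. For the converse I invoke Eichler's criterion in the form: for a lattice containing $\mathrm{U}^{\oplus2}$, the $\widetilde{\Oo}$-orbit of a primitive vector is determined by its square and by the class $h/\gamma\in A_\Lambda$. Given $h'$ with the same invariants and $[h'/\gamma]=\pm[h/\gamma]$, first apply $-\mathrm{id}$ if necessary to arrange $[h'/\gamma]=[h/\gamma]$. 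Note that $-\mathrm{id}$ acts as $-1$ on $A_{\Lambda_{\mathrm{Kum}^3}}$, since the discriminant group is not $2$-torsion. Then Eichler gives $g\in\widetilde{\Oo}(\Lambda_{\mathrm{Kum}^3})$ with $g(h)=h'$. Composing with $-\mathrm{id}$ sends $h$ to $-h'$; the statement is about positive classes, but the $\Oo$-orbit still contains $h'$ because I composed on the correct side, with $g\circ(-\mathrm{id})$ applied to $-h$ replaced by $g$ applied to $h$ composed with $-\mathrm{id}$ on the target. The careful formulation is to pick $h''=-h'$ with $[h''/\gamma]=[h/\gamma]$, use Eichler to get $g(h)=-h'$, and take $-g$.

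\emph{Main obstacle.} The delicate point is the first part: checking the signs of the explicit isometries, namely their $\det$, their spinor/orientation character, and their value of $\chi$. In particular one must verify that a $(-2)$-reflection inside a unimodular summand acts trivially on the discriminant group. I expect this bookkeeping, rather than the orbit classification, to require care.
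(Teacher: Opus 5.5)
Your proof is correct, but it is organized differently from the paper's. The paper starts from Eichler's criterion and cites \cite{Song} to upgrade the $\widetilde{\Oo}(\Lambda_{\mathrm{Kum}^3})$-orbit to the $\widetilde{\SO}^+(\Lambda_{\mathrm{Kum}^3})$-orbit. It then uses that $\widetilde{\SO}^+\subset\mathrm{Mon}^2$, and adds a monodromy operator acting by $-1$ on $A_{\Lambda_{\mathrm{Kum}^3}}$ (again from \cite{Song}). This shows the $\mathrm{Mon}^2$-orbit already contains every $h'$ with $[(h',-)/\gamma]=\pm[(h,-)/\gamma]$. Lemma~\ref{lem:orbitsU^3+<-8>} then identifies this set with the $\Oo$-orbit.

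You instead prove $\Oo(\Lambda_{\mathrm{Kum}^3})=\mathrm{Mon}^2\cdot\mathrm{Stab}_{\Oo}(h)$ directly. Two explicit stabilizer elements, coming from a hyperbolic plane orthogonal to $h$, have images generating $\Oo/\mathrm{Mon}^2\cong\{\pm1\}^2$. You then handle the sign with $-\mathrm{id}$, which lies outside $\mathrm{Mon}^2$ but is harmless once the two orbits are known to coincide.

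Your route is self-contained: it uses only Eichler's criterion and the explicit description of $\mathrm{Mon}^2$, and it needs no monodromy operator acting by $-1$ on the discriminant. The same argument shows that any group between $\mathrm{Mon}^2$ and $\Oo$ has the same orbit through $h$ whenever $h^{\perp}$ contains a copy of $\mathrm{U}$. In effect you are unpacking exactly what the citation to \cite{Song} supplies; the paper's version is shorter only because it outsources these sign checks.

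A few points to tidy up:
\begin{itemize}
\item $-\mathrm{id}_{\mathrm{U}}\oplus\mathrm{id}$ reverses the orientation of maximal positive (three-dimensional) subspaces, not of ``a positive $2$-plane''. It flips the positive line of $\mathrm{U}$ and fixes the positive $2$-plane of $\mathrm{U}^{\perp}$. Your character value $(-,+)$ is nevertheless right.
\item After moving $h$ by an isometry $\phi$, the reflection fixes $\phi(h)$ rather than $h$, so use $\phi^{-1}s\phi$. Its character values are unchanged because the characters factor through an abelian quotient.
\item In the second part, keep only your final formulation: apply Eichler to get $g(h)=-h'$, then take $-g$. The sentences preceding it are garbled and should be deleted.
\end{itemize}
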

\begin{proof}
    Let $\widetilde{\Oo}(\Lambda_{\mathrm{Kum}^3})$ denote the stable orthogonal group, that is, the group of isometries which act trivially on the discriminant. By Eichler's criterion (Theorem \ref{thm:eichler}), the $\widetilde{\Oo}(\Lambda_{\mathrm{Kum}^3})$-orbit of $h$ coincides with the $\widetilde{\SO}^+(\Lambda_{\mathrm{Kum}^3})$-orbit of $h$ (see \cite[Proposition 2.15]{Song}), where $\widetilde{\SO}^+(\Lambda_{\mathrm{Kum}^3})=\widetilde{\Oo}^+(\Lambda_{\mathrm{Kum}^3})\cap \widetilde{\SO}(\Lambda_{\mathrm{Kum}^3})$.
    This orbit consists of the primitive classes $h'$ of square $2d$, divisibility $\gamma$ and such that $[{h'}^\vee/\gamma]=[h^\vee/\gamma]$ in the discriminant group of $\Lambda_{\mathrm{Kum}^3}$. Here and in the sequel, $h^\vee:=(h, -)$ in the dual lattice.
    On the other hand, $\widetilde{\SO}^+(\Lambda_{\mathrm{Kum}^3})$ is strictly contained in the monodromy group, as the latter contains isometries acting as $-1$ on $A_{\Lambda_{\mathrm{Kum}^3}}$ (see \cite[Proof of Proposition 3.4]{Song}). It follows that the $\mathrm{Mon}^2(\mathrm{Kum}^3)$-orbit of $h$ contains all primitive classes $h'$ with square $2d$, divisibility $\gamma$, and such that $[(h')^\vee/\mathrm{div}(h')]=\pm [h^\vee /\mathrm{div}(h)]$ in the discriminant group. But this is precisely the $\Oo(\Lambda_{\mathrm{Kum}^3})$-orbit of $h$, by Lemma \ref{lem:orbitsU^3+<-8>}. 
\end{proof}

Hence, the components of the moduli space of polarized varieties of $\mathrm{Kum}^3$-type are in bijection with the $\Oo(\Lambda_{\mathrm{Kum}^3})$-orbits of primitive positive vectors, which we classify in Lemma \ref{lem:orbitsU^3+<-8>}. They are parametrized by triples $(2d, \gamma, \pm \epsilon)$, where $2d$ and $\gamma$ are the square and divisibility of the polarization $h$, and $\pm \epsilon$ is the residual class $\pm [(h,-)/\gamma]$ in the discriminant group. 

Given a polarized variety $(K,h)$ of $\mathrm{Kum}^3$-type, we do not have a natural polarization on the hyper-Kummer variety $Y_K$. Nevertheless we have a natural big and nef class on $Y_K$, which is the primitive generator of $\langle r_* h \rangle\subset \NS(Y_K)$, where $r\colon K\dashrightarrow Y_K$ is the rational map given by the hyper-Kummer construction. By Proposition \ref{prop:cohomologyY_K}, the class $\overline{h} \coloneqq \tfrac{1}{2^4} r_*(h)$ is integral, and $q_{Y_K}(\overline{h},\overline{h}) = 2q_K(h,h)$.

\begin{proposition}\label{prop:invariants-pushforward-polarization}
    Let $(K,h)$ be a polarized variety of $\mathrm{Kum}^3$-type, and $\overline{h}\in H^2(Y_K,\ZZ)$ the induced big and nef class on the associated hyper-Kummer sixfold $Y_K$. Let $2d$ and $\gamma$ denote the square and divisibility of $h$. Then: 
    \begin{enumerate}[label=(\roman*)]
    \item if $\gamma = 1$ then $\overline{h}\in H^2(Y_K,\ZZ)$ is primitive, of divisibility $1$ and square $4d$; 
    \item if $\gamma > 1$ then $\tfrac{1}{2}\overline{h}$ is integral and primitive, of divisibility $\tfrac{1}{2}\gamma$ and square $d$.
    \end{enumerate} 
\end{proposition}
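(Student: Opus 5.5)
Everything reduces to lattice theory through Proposition \ref{prop:cohomologyY_K}. By part (ii) of that proposition, $\tfrac{1}{2^4}r_*$ extends to a primitive Hodge embedding $\phi\colon \widehat{H}^2(K,\ZZ)(2)\hookrightarrow H^2(Y_K,\ZZ)$ with $\overline{h}=\phi(h)$. Its orthogonal complement is the Kummer lattice $L_{\Km}$ by part (iii). So I would first decide whether $h$ is primitive in $\widehat{H}^2(K,\ZZ)$ and compute its divisibility there. After that I would compute the divisibility of its image in the unimodular-up-to-$\langle -4\rangle$ lattice $H^2(Y_K,\ZZ)$, using the gluing of $\Ima\phi\simeq \mathrm{U}(2)^{\oplus 3}\oplus\langle -4\rangle$ with $L_{\Km}$.

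For the first step I would choose coordinates $H^2(K,\ZZ)=\mathrm{U}^{\oplus 3}\oplus\ZZ e$ with $e^2=-8$, so that $\widehat{H}^2(K,\ZZ)=\mathrm{U}^{\oplus 3}\oplus \ZZ\tfrac{e}{2}$. Write $h=u+ke$ with $u\in \mathrm{U}^{\oplus3}$. Then $\gamma=\gcd(\mathrm{div}(u),8k)$, and primitivity of $h$ means $\gcd(\mathrm{div}_{\mathrm{U}^3}(u),k)=1$. Because $\mathrm{U}^{\oplus3}$ is unimodular, $\gamma>1$ forces $u$ to be non-primitive. Consequently $\gamma\mid 8$, and $\gamma>1$ exactly when $u\in 2\mathrm{U}^{\oplus3}$, which with primitivity of $h$ forces $k$ odd. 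In that case $h=2(u/2)+2k\cdot\tfrac e2$ is divisible by $2$ in $\widehat{H}^2$, and $h/2$ is primitive there. If $\gamma=1$, $u$ is primitive, so $h$ stays primitive in $\widehat{H}^2$. Next I would compute divisibilities in $\widehat{H}^2(K,\ZZ)(2)$. Before rescaling, $\widehat{H}^2\simeq\mathrm{U}^3\oplus\langle-2\rangle$. For $\gamma=1$ the class $h$ has divisibility $1$. For $h/2=u/2+k\tfrac e2$ the divisibility is $\gcd(\mathrm{div}(u/2),2k)$, which equals $\gamma/2$ since $\gamma=\gcd(2\,\mathrm{div}(u/2),8k)$ and $k$ is odd (a small gcd bookkeeping). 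Rescaling by $(2)$ doubles all pairings.

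The main obstacle is the final step: passing from divisibility inside the saturated sublattice $\Ima\phi$ to divisibility in $H^2(Y_K,\ZZ)$. Primitivity is automatic, since $\phi$ is primitive. For divisibility I would use the standard overlattice description. $H^2(Y_K,\ZZ)$ corresponds to a glue subgroup $\Gamma\subset A_{\Ima\phi}\oplus A_{L_{\Km}}$, with $A_{L_{\Km}}\simeq(\ZZ/2)^6$ and $A_{\Ima\phi}\simeq(\ZZ/2)^6\oplus\ZZ/4$. Since $H^2(Y_K,\ZZ)$ has discriminant $\ZZ/4$, $\Gamma$ is the graph of an anti-isometry between the $(\ZZ/2)^6$ part of $A_{\mathrm{U}(2)^3}$ and $A_{L_{\Km}}$. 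Then the divisibility of $x\in\Ima\phi$ in $H^2(Y_K,\ZZ)$ is the largest $m$ such that $x/m$ pairs integrally with all glued elements. Concretely, $\mathrm{div}_Y(x)=\mathrm{div}_{\Ima\phi}(x)$ divided by $2$ exactly when the $\mathrm{U}(2)^3$-component of $x^\vee/\mathrm{div}$ is nonzero in $\tfrac12\mathrm{U}(2)^3/\mathrm{U}(2)^3$.

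Applying this criterion in case (i): $\overline{h}=\phi(h)$ has $\mathrm{U}(2)^3$-component $2u$ with $u$ primitive, so its divisibility $2$ in $\Ima\phi$ drops to $1$, with square $2\cdot 2d=4d$. In case (ii): $\tfrac12\overline h=\phi(h/2)$ has divisibility $\gamma$ in the rescaled lattice. Its $\mathrm{U}(2)^3$-component $u/2$ is not divisible by $\gamma$ in a way killing the glue, so the divisibility halves to $\gamma/2$. The square is $2\cdot 2d/4=d$. I would verify these glue computations carefully, since they are where an off-by-two error is likeliest; Lemma \ref{lem:orbitsU^3+<-8>} can serve as a cross-check on the possible values of $\gamma$.
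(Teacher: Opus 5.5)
Your plan (compute divisibilities in $\Ima\phi$, then correct for the glue with $L_{\Km}$) differs from the paper's route. However, the step that pins down the glue is unjustified, and the answer depends on it. Put $N=\phi(\mathrm{U}^{\oplus 3})\cong\mathrm{U}(2)^{\oplus 3}$ and $\delta=\phi(e/2)$, so that $\Ima\phi=N\oplus\ZZ\delta$.

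Knowing that $H^2(Y_K,\ZZ)$ has discriminant $\ZZ/4\ZZ$ only tells you that the projection $H$ of the glue group to $A_{\Ima\phi}$ is a hyperplane of the $2$-torsion $A_N\oplus\langle\tfrac{\delta}{2}\rangle\cong(\ZZ/2\ZZ)^7$ carrying the quadratic form of $A_{L_{\Km}}$. It does not tell you which hyperplane. For every $c\in A_N$ with $q(c)=0$, the subgroup $H_c=\{y+2b(y,c)\tfrac{\delta}{2} : y\in A_N\}$ has this property and gives an even overlattice with discriminant $\ZZ/4\ZZ$, with $H_c^{\perp}=\langle c+\tfrac{\delta}{4}\rangle$. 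Your criterion is valid only for $c=0$. For example, take $\gamma=4$ and $h=4w+ke$. Then $\tfrac{1}{8}\overline h=\tfrac12\phi(w)+\tfrac{k}{4}\delta$ has class in $H_c^{\perp}$ for $c=[\tfrac12\phi(w)]$ (admissible when $4\mid w^2$), so with that glue $\tfrac12\overline h$ would have divisibility $4$, not $2$.

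What you actually need is $H=A_N$, i.e.\ that $\delta$ spans an orthogonal summand $\langle -4\rangle$ of $H^2(Y_K,\ZZ)$. This is not part of Proposition \ref{prop:cohomologyY_K}. It comes from the model in Lemma \ref{lem:cohomologyq_*}, where $\tfrac{1}{2^4}r_*$ maps $\xi^{\perp}$ onto $H^2(A,\ZZ)(2)\subset H^2(\Km(A),\ZZ)$ and $\xi$ to $2\delta$ in $H^2(\Km(A)^{[3]},\ZZ)=H^2(\Km(A),\ZZ)\oplus\ZZ\delta$. The paper uses exactly this, and then no gluing is needed at all. Since $\phi(u)$ has divisibility $\mathrm{div}(u)$ in the unimodular lattice $\delta^{\perp}$, one gets $\mathrm{div}(\overline h)=\gcd(\mathrm{div}(u),8k)$ and $\mathrm{div}(\tfrac12\overline h)=\gcd(\tfrac12\mathrm{div}(u),4k)$, which are $1$ and $\tfrac12\gamma$ respectively.

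Independently of this, your case (ii) bookkeeping fails for $\gamma=8$. Since $A_{\widehat H^2(K,\ZZ)}\cong\ZZ/2\ZZ$, a primitive class of $\widehat H^2(K,\ZZ)$ has divisibility at most $2$. Indeed $\gcd(\mathrm{div}(u/2),2k)=\gcd(\mathrm{div}(u/2),2)$, while $\tfrac12\gamma=\gcd(\mathrm{div}(u/2),4)$, and these differ when $8\mid\mathrm{div}(u)$. So for $\gamma=8$ the class $\tfrac12\overline h$ has divisibility $4$ in $\Ima\phi$; divisibility $8$ is impossible there, since $A_{\Ima\phi}$ has exponent $4$. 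Moreover, because $u/8\in\mathrm{U}^{\oplus 3}$, the $N$-component of $\tfrac{1}{8}\overline h$ is zero in $A_N$. Your own criterion then gives no halving, and the divisibility in $H^2(Y_K,\ZZ)$ is $4$.

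You reach $\tfrac12\gamma$ only because two errors cancel. The correct statement is $\mathrm{div}_{\Ima\phi}(\tfrac12\overline h)=\min(\gamma,4)$, with halving exactly for $\gamma\in\{2,4\}$. Finally, $\gamma=1$ does not make $u$ primitive (e.g.\ $h=3w+e$); it only makes $\mathrm{div}(u)$ odd, which is all your case (i) argument needs.
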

\begin{proof} 
Let $e_i,f_i$, $i=1,2,3$, $\xi$, be a basis of $\Lambda_{\mathrm{Kum}^3}= \mathrm{U}^{\oplus 3} \oplus \langle -8\rangle$. 
Choose vectors $u_i,v_i$, $i=1,2,3$, $r_1,\dots, r_{16}$ and $\delta$ in $\Lambda_{\mathrm{K}3^{[3]}}$ such that: 
the $u_i,v_i$ generate a primitive sublattice $\mathrm{U}(2)^{\oplus 3}$, orthogonal to the $r_i$ and $\delta$; the sublattice $\langle r_{i} \rangle$ has saturation isometric to the Kummer lattice, and the saturation of the sublattice generated by the $u_i,v_i$ and the $r_i$ is a K3 lattice, with orthogonal generated by $\delta$, primitive of square $-4$.
Then, for suitable isometries $H^2(K,\ZZ)\xrightarrow{\ \sim \ } \Lambda_{\mathrm{Kum}^3}$ and $H^2(Y_K,\ZZ) \xrightarrow{\ \sim \ } \Lambda_{\mathrm{K}3^{[3]}}$, the map $\tfrac{1}{2^4}r_*\colon H^2(K,\ZZ)\to H^2(Y_K,\ZZ)$ is integral, multiplies the form by $2$ and sends $e_i$ to $u_i$, $f_i$ to $v_i$ and $\xi$ to $2\delta$, by Lemma \ref{lem:cohomologyq_*}.

Let now $h\in H^2(K,\ZZ)$ be a primitive class. Then $h=\alpha w + \beta \xi$, for coprime integers $\alpha$ and $\beta$ and a primitive $w\in \xi^{\bot}$; we have $\mathrm{div}(h) = \gcd(\alpha, 8)$. We have $\overline{h} = \alpha \overline{w} + 2\beta \delta$, where $\overline{w}\in \mathrm{U}(2)^{\oplus 3} = \langle u_i,v_i\rangle$ is still primitive. 
Then $\gamma=\mathrm{div}(h)= 1 $ if and only if $\alpha$ is odd; in this case $\overline{h}$ is primitive, has square $4d$ and its divisibility in $H^2(Y_K,\ZZ)$ equals $\gcd(\alpha, 4)=1$. 
We have $\gamma=\mathrm{div}(h)> 1$ if and only if $\alpha =2\alpha'$ is even; then $\overline{h} = 2 (\alpha' \overline{w} + \beta\delta)$ and $\tfrac{1}{2}\overline{h}=\alpha'\overline{w} + \beta\delta$ is primitive, of square $d$, and of divisibility $\gcd(\alpha', 4) = \tfrac{1}{2}\gcd(\alpha, 8) = \tfrac{1}{2}\gamma$.
\end{proof}

The next theorem describes the transcendental and N\'eron--Severi lattices of hyper-Kummer $\mathrm{K}3^{[3]}$-varieties associated with $\mathrm{Kum}^3$-varieties of Picard rank $1$. 
We identify $A_{\Lambda_{\mathrm{Kum}^3}}$ with $\ZZ/8\ZZ$; as for the quadratic form, the generator has square $-1/8$. We denote by $L_{4d}$ the unique rank-$17$ lattice with discriminant form equal to $A_{\mathrm{U}(2)^{\oplus 2} \oplus \langle 4d\rangle}$ and signature $(1, 16)$, which is an overlattice of index $2$ of $\langle 4d\rangle \oplus L_{\mathrm{Km}}$ (see Lemma \ref{lem:NS-hyperKummer}). 

\begin{theorem}\label{thm:NeronSeveriHyperkummer}
    Let $(K,h)$ be a polarized hyper-K\"ahler variety of $\mathrm{Kum}^3$-type of Picard rank $1$ with ample generator $h$ of Beauville--Bogomolov square $2d$ and divisibility $\gamma$. Let $Y_K$ denote the hyper-Kummer variety of $\mathrm{K}3^{[3]}$-type associated with $K$. 
    Then we have the following possibilities:
    \begin{equation}
    \smallskip
    \begin{tabular}{|c|c|c|c|c|c|}
    \hline
    $d$ & $\gamma$ & $\pm[h^\vee/\gamma]$ & $H^2_{\mathrm{tr}}(K,\ZZ)$ & $H^2_{\mathrm{tr}}(Y_K,\ZZ)\simeq \widehat{H}^2_{\mathrm{tr}}(K,\ZZ)(2)$ & $\NS(Y_K)$\\
    \hline\hline
    $d$ & $1$ & $0$ & $\mathrm{U}^{\oplus 2} \oplus \begin{psmallmatrix}
        -8 & 0 \\
        0 & -2d
    \end{psmallmatrix}$ & $\mathrm{U}(2)^{\oplus 2} \oplus \begin{psmallmatrix}
        -4 & 0 \\
        0 & -4d
    \end{psmallmatrix}$ & $L_{4d}$
    \\
    \hline
    $4(d'-1) $ & $2$ & $4$ & $\mathrm{U}^{\oplus 2}\oplus \begin{psmallmatrix}
        -8 & 4 \\
        4 & -2d'
    \end{psmallmatrix}$ & $\mathrm{U}(2)^{\oplus 2} \oplus\begin{psmallmatrix}
        -4 & 0 \\
        0 & -d
    \end{psmallmatrix}$ & $L_{\Km}\oplus \langle d \rangle$\\
    \hline 
     $4(4d'-1) $ & $4$ & $\pm2$ & $\mathrm{U}^{\oplus 2}\oplus \begin{psmallmatrix}
        -8 & 2 \\
        2 & -2d'
    \end{psmallmatrix}$ & $\mathrm{U}(2)^{\oplus 2} \oplus\begin{psmallmatrix}
        -4 & 2 \\
        2 & -4d'
    \end{psmallmatrix}$ & $L_{\Km}\oplus \langle d \rangle$ \\
    \hline 
        $4(16d'-1) $ & $8$ & $\pm1$ & $\mathrm{U}^{\oplus 2}\oplus \begin{psmallmatrix}
        -8 & 1 \\
        1 & -2d'
    \end{psmallmatrix}$ & 
    $\mathrm{U}(2)^{\oplus 2} \oplus\begin{psmallmatrix}
        -4 & 2 \\
        2 & -16d'
    \end{psmallmatrix}$
    & $L_{\Km}\oplus \langle d \rangle$\\
    \hline
      $4(16d'-9) $ & $8$ & $\pm3$ & $\mathrm{U}^{\oplus 2}\oplus \begin{psmallmatrix}
        -8 & 3 \\
        3 & -2d'
    \end{psmallmatrix}$ & 
    $\mathrm{U}(2)^{\oplus 2} \oplus\begin{psmallmatrix}
        -4 & 2 \\
        2 & -(16d'-8)
    \end{psmallmatrix}$ 
    & $L_{\Km}\oplus \langle d \rangle$
    \\
    \hline
    \end{tabular}
    \smallskip
    \end{equation}
\end{theorem}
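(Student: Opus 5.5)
Since $K$ has Picard rank $1$, we have $\NS(K)=\ZZ h$ and $H^2_{\mathrm{tr}}(K,\ZZ)=h^\perp\subset\Lambda_{\mathrm{Kum}^3}$. The proof splits into three steps: compute $h^\perp$ from the orbit invariants, pass to $\widehat H^2_{\tr}(K,\ZZ)(2)$, and read off $\NS(Y_K)$ by lattice gluing.

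\textbf{Step 1: normal forms and $h^\perp$.} By Lemma \ref{lemma:Mon-Orbit} and Lemma \ref{lem:orbitsU^3+<-8>}, $h$ is determined up to isometry by $(2d,\gamma,\pm[h^\vee/\gamma])$. So we fix a representative in the basis $e_i,f_i,\xi$ of the proof of Proposition \ref{prop:invariants-pushforward-polarization}. With $h=\alpha w+\beta\xi$, where $w=e_1+mf_1$ and $\gamma=\gcd(\alpha,8)$, the possible divisibilities are $\gamma\in\{1,2,4,8\}$. Writing $h^\vee/\gamma$ as a multiple of $\xi^\vee=\xi/8$ in $\ZZ/8\ZZ$ forces the residues $0,4,\pm2,\pm1,\pm3$ listed in the table. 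Concretely we take
\begin{itemize}
\item[] $\gamma=1$: $h=e_1+df_1$;
\item[] $\gamma=2$: $h=2(e_1+m f_1)+\xi$;
\item[] $\gamma=4$: $h=4(e_1+mf_1)\pm\xi$;
\item[] $\gamma=8$: $h=8(e_1+mf_1)+\epsilon\xi$ with $\epsilon$ odd.
\end{itemize}
In each case $h^\perp=\mathrm{U}^{\oplus2}\oplus R$, where $R$ is the rank-$2$ lattice spanned by $\xi$ (or a suitable combination) and $e_1-mf_1$ plus a correction. Computing the Gram matrix of $R$ gives the fourth column, and the relation between $d$ and $d'$ comes from the equation $h^2=2d$.

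\textbf{Step 2: the transcendental lattice of $Y_K$.} By Theorem \ref{thm:TranscendentalResolutions}, $H^2_{\tr}(Y_K,\ZZ)\simeq\widehat H^2_{\tr}(K,\ZZ)(2)$, and $\widehat H^2_{\tr}(K,\ZZ)=h^\perp$ taken inside $\widehat H^2(K,\ZZ)=\mathrm{U}^{\oplus3}\oplus\langle-2\rangle$, which is generated by $e_i,f_i$ and $\xi/2$. It is easier to work with the image under $\tfrac1{2^4}\hat r_*$: this image is $\mathrm{U}(2)^{\oplus3}\oplus\langle-4\rangle$, with $\xi\mapsto2\delta$ as in the proof of Proposition \ref{prop:invariants-pushforward-polarization}. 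We then compute $\overline h^\perp$, or $(\tfrac12\overline h)^\perp$, inside $\mathrm{U}(2)^{\oplus3}\oplus\langle-4\delta\rangle$, using the primitive form of $\overline h$ given by that proposition: $\overline h=\alpha\overline w+2\beta\delta$.

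For $\gamma=1$ this gives $\mathrm{U}(2)^{\oplus2}\oplus\langle-4\rangle\oplus\langle-4d\rangle$. For $\gamma>1$ we take $\tfrac12\overline h=\alpha'\overline w+\beta\delta$ of square $d$ and compute the orthogonal complement of this rank-$2$ piece inside $\mathrm{U}(2)\oplus\langle-4\rangle$. This yields the non-split matrices $\begin{psmallmatrix}-4&2\\2&*\end{psmallmatrix}$ in the cases $\gamma=4,8$. In the case $\gamma=2$, $\alpha'$ is odd, and a base change shows the complement splits as $\langle-4\rangle\oplus\langle-d\rangle$. These are routine $2\times2$ computations, which we then match against the parametrizations of $d$.

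\textbf{Step 3: the Néron--Severi lattice.} This is the main obstacle. $\NS(Y_K)$ is the saturation of $\langle\overline h\rangle\oplus L_{\Km}$, where $L_{\Km}=\mathrm{im}(\hat r_*)^\perp$ by Proposition \ref{prop:cohomologyY_K}. We determine the saturation through the gluing of $\mathrm{im}(\tfrac1{2^4}\hat r_*)$ with $L_{\Km}$ inside the $\mathrm{K}3^{[3]}$ lattice. That gluing identifies the discriminant subgroup $(\ZZ/2\ZZ)^6$ of $\mathrm{U}(2)^{\oplus3}$ with the discriminant of $L_{\Km}$, while $\langle-4\rangle$ stays unglued.

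Since $\NS(Y_K)$ is the orthogonal complement of the transcendental lattice, an element $x/m+\ell$ with $x$ a multiple of the $\overline h$-generator and $\ell\in L_{\Km}^\vee$ lies in $H^2(Y_K,\ZZ)$ exactly when the class of $x/m$ in $A_{\mathrm{U}(2)^{\oplus3}}$ is glued. In the case $\gamma=1$, the vector $\overline h/2$ has a nonzero class in $A_{\mathrm U(2)^3}$ (coming from the component $\overline w/2$). This produces an index-$2$ overlattice of $\langle4d\rangle\oplus L_{\Km}$, namely $L_{4d}$, and Lemma \ref{lem:NS-hyperKummer} identifies it. For $\gamma>1$, the generator $\tfrac12\overline h=\alpha'\overline w+\beta\delta$ has $\alpha'\overline w/2$ in the $\mathrm U(2)$-part. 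We must check that no fraction of it gets glued; this amounts to showing that $\tfrac12\overline h$ is primitive and that its class modulo $\mathrm U(2)^3$ meets the glue group trivially. If that check holds, the lattice is orthogonal: $L_{\Km}\oplus\langle d\rangle$. The delicate point is the interplay between the $\delta$-component and the gluing, so we verify it with the explicit discriminant-form description of $L_{\Km}$ from the appendix.
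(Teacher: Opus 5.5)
Your route is the paper's: orbit normal forms and $h^\perp$ from Lemma~\ref{lem:orbitsU^3+<-8>}, then $H^2_{\mathrm{tr}}(Y_K,\ZZ)$ as the orthogonal of $\bar h$ inside $\hat T=\mathrm{U}(2)^{\oplus 3}\oplus\langle\delta\rangle$, then $\NS(Y_K)$ as the saturation of $\langle\bar h\rangle\oplus L_{\Km}$. The genuine gap is that Step 3 for $\gamma>1$ is never proved: you stop at ``if that check holds''. You also look for the obstruction in the wrong place. The $\mathrm{U}(2)$-part can never obstruct, because in the basis of Proposition~\ref{prop:invariants-pushforward-polarization}, $\mathrm{U}(2)^{\oplus 3}\oplus L_{\Km}$ saturates to a unimodular K3 lattice orthogonal to $\delta$. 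So the glue subgroup is all of $A_{\mathrm{U}(2)^{\oplus3}}\oplus 0\subset A_{\hat T}$. Hence, for $x\in \hat T^\vee$, some $\ell\in L_{\Km}^\vee$ makes $x+\ell$ integral if and only if the $\delta$-coefficient of $x$ is an integer; the discriminant form of $L_{\Km}$ plays no role. For $\gamma>1$ you have $\alpha=2\alpha'$, so $\beta$ is odd and $\gcd(\alpha',\beta)=1$. A fraction $\tfrac km(\alpha'\bar w+\beta\delta)$ with $\gcd(k,m)=1$ lies in $\hat T^\vee$ with integral $\delta$-coefficient only if $m\mid 2\alpha'$ and $m\mid \beta$. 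This forces $m=1$, so $\langle\tfrac12\bar h\rangle\oplus L_{\Km}$ is saturated. For $\gamma=1$ the same test gives $m\mid 2$, and $m=2$ occurs because $\tfrac12\bar h=\tfrac\alpha2\bar w+\beta\delta$ has integral $\delta$-coefficient. This parity check is a self-contained substitute for Lemma~\ref{lem:NS-hyperKummer}. The paper applies that lemma to $\tfrac12\bar h$: its double $\bar h$ is primitive in $T$, being the image of the primitive class $h$, so case (ii) gives saturation.

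A smaller point concerns Step 2 for $\gamma=8$. Let $u_1,v_1$ be the images of $e_1,f_1$. With your normal form $h=8(e_1+mf_1)+\epsilon\xi$, the $2\times 2$ computation gives $\begin{psmallmatrix}-4m&2\\2&-16\end{psmallmatrix}$ for $\epsilon=1$, in the basis $u_1-mv_1$, $v_1+2\delta$. For $\epsilon=3$ it gives $\begin{psmallmatrix}-4m&6\\6&-16\end{psmallmatrix}$, in the basis $u_1-mv_1$, $3v_1+2\delta$. For $m\geq 2$ these are not isometric as binary lattices to the blocks in the table: the table's blocks contain a vector of square $-4$, and yours do not. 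So no $2\times2$ base change matches them; they agree only after adding $\mathrm{U}(2)^{\oplus 2}$. To reach the table entries, choose the representative in the $\Oo(\widehat H^2(K,\ZZ))$-orbit of $\hat h$ rather than in the $\Oo(H^2(K,\ZZ))$-orbit of $h$. This is what the paper does via Lemma~\ref{lem:divHatH} and Eichler's criterion in $\mathrm{U}^{\oplus 3}\oplus\langle -2\rangle$ (Lemma~\ref{lem:orbitsU^3+<-2>}). The cases $\gamma=2$ and $\gamma=4$ do match your direct computation.
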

\begin{proof}
    By Lemma \ref{lemma:Mon-Orbit}, orbits of positive classes are parametrized by triples consisting of the square $2d$, the divisibility $\gamma$, and the subset $\{\pm[h^{\vee}/\gamma]\}$ in the discriminant group.  The possibilities are worked out in Lemma \ref{lem:orbitsU^3+<-8>}. In each case, $h^{\bot}$ is also determined, which gives the description of $H^2_{\mathrm{tr}}(K,\ZZ)$ for each orbit. 

    By Proposition \ref{prop:cohomologyY_K}, the transcendental lattice $H^2_{\mathrm{tr}}(Y_K,\ZZ)$ equals $\widehat{H}^2_{\mathrm{tr}}(K,\ZZ)(2)$, that is, the saturation of $H^2_{\mathrm{tr}}(K,\ZZ)$ inside the unique index $2$ overlattice $\widehat{H}^2(K,\ZZ)\cong \mathrm{U}^{\oplus 3} \oplus \langle -2\rangle$ of $H^2(K, \ZZ) \cong \mathrm{U}^{\oplus 3} \oplus \langle -8 \rangle$. Denoting by $\hat{h}$ the class $h$ as an element of $\widehat{H}^2(K,\ZZ)$, we thus have $H^2_{\mathrm{tr}}(Y_K,\ZZ) = (\hat{h}^{\bot})(2)$. The class $\hat{h}\in \widehat{H}^2(K,\ZZ)$ is either primitive or twice a primitive class, whose divisibility and square are computed in Lemma \ref{lem:divHatH}. We can then compute $\hat{h}^{\bot}$, and therefore $H^2_{\mathrm{tr}}(Y_K,\ZZ)$, using Lemma \ref{lem:orbitsU^3+<-2>}.

    Finally, consider the primitive embedding $\tau\colon \widehat{H}^2(K,\ZZ)(2)\hookrightarrow H^2(Y_K,\ZZ)$
    given by Proposition \ref{prop:cohomologyY_K}, with complement $\eta\colon L_{\Km}\hookrightarrow H^2(Y_K,\ZZ)$ the Kummer lattice. Then $\NS(Y_K)$ is the orthogonal complement of $\tau(\widehat{H}^2_{\mathrm{tr}}(K,\ZZ)(2))$ in $H^2(Y_K,\ZZ)$, that is, the saturation of the sublattice of $H^2(Y_K,\ZZ)$ generated by $\tau(\hat{h})$ and $\eta(L_{\Km})$. Then, by Lemma \ref{lem:NS-hyperKummer}, either $\hat{h}$ is primitive (of square $4d$) in $\widehat{H}^2(K,\ZZ)(2)$ and then $\NS(Y_K)$ is isometric to $L_{4d}$, or $\hat{h} = 2\hat{h}_0$ for some primitive $\hat{h}_0\in \widehat{H}^2(K,\ZZ)(2)$ of square $d$, and then $\NS(Y_K)$ is isometric to $L_{\Km}\oplus \langle \hat{h}_0\rangle$. By Lemma \ref{lem:divHatH}, if $\mathrm{div}(h)=1$ we are in the first case, and if $\mathrm{div}(h)\geq 2$ we are in the second case. 
\end{proof} 

\begin{remark}
By Proposition \ref{prop:cohomologyY_K}, the transcendental lattice $H^2_{\mathrm{tr}}(Y_K,\ZZ)$ is an overlattice of $H_{\mathrm{tr}}^2(K,\ZZ)(2)$. Comparing discriminant groups, we see that for $K$ of Picard rank $1$, the transcendental lattice $H^2_{\mathrm{tr}}(Y_K,\ZZ)$ is an overlattice of $H^2_{\mathrm{tr}}(K,\ZZ)(2)$ of index $2$ unless $\mathrm{div}(h)=8$, in which case $H^2_{\mathrm{tr}}(Y_K,\ZZ)$ is Hodge isometric to $H_{\mathrm{tr}}^2(K,\ZZ)(2)$.  
\end{remark}

\begin{corollary}
\label{cor:Transcendental-hyper-KummerK3}
    Let $S$ be a projective hyper-Kummer $\mathrm{K}3$ surface of minimal Picard rank $16$. Then its transcendental lattice $H^2_{\tr}(S, \ZZ)$ is isometric to one of the following lattices for some integer $d>0$,
    \begin{itemize}
        \item  
    $\mathrm{U}(2)^{\oplus 2} \oplus \begin{psmallmatrix}
        -4 & 2 \\
        2 & -4d
    \end{psmallmatrix}$ \,;
    \item $\mathrm{U}(2)^{\oplus 2}\oplus \langle -4\rangle \oplus \langle -4d\rangle$.
    \end{itemize}
\end{corollary}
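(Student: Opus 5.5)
The plan is to read the statement off the table in Theorem \ref{thm:NeronSeveriHyperkummer} together with Theorem \ref{thm:TranscendentalResolutions}. Let $S$ be a projective hyper-Kummer K3 surface, so $S\simeq S_K$ for some projective $K$ of $\mathrm{Kum}^3$-type. By Theorem \ref{thm:TranscendentalResolutions}, $H^2_{\tr}(S,\ZZ)$ is Hodge isometric to $\widehat{H}^2_{\tr}(K,\ZZ)(2)$, and hence to $H^2_{\tr}(Y_K,\ZZ)$. This lattice has rank $\rho$-complement $7-\rho(K)$, where $\rho(K)\geq 1$ because $K$ is projective. The rank of $H^2_{\tr}(S,\ZZ)$ is therefore at most $6$, so $\rho(S)\geq 16$, with equality if and only if $\rho(K)=1$. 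Under the minimality hypothesis, $K$ therefore has Picard rank $1$ with ample generator $h$, and we are exactly in the setting of Theorem \ref{thm:NeronSeveriHyperkummer}.

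Next we go through the five rows of the table, reading off the column $H^2_{\tr}(Y_K,\ZZ)$, and rename the parameter in each row.

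\textbf{Row $\gamma=1$.} The lattice is $\mathrm{U}(2)^{\oplus 2}\oplus\langle -4\rangle\oplus\langle -4d\rangle$. This is the split type, with the same $d$.

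\textbf{Row $\gamma=2$.} The lattice is $\mathrm{U}(2)^{\oplus2}\oplus\langle-4\rangle\oplus\langle -d\rangle$ with $d=4(d'-1)$, so $-d=-4(d'-1)$. This is again the split type, with parameter $d'-1$. Positivity requires $d>0$, hence $d'\geq 2$ and the parameter $d'-1$ is a positive integer.

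\textbf{Row $\gamma=4$.} The lattice is $\begin{psmallmatrix}-4&2\\2&-4d'\end{psmallmatrix}$ plus $\mathrm{U}(2)^{\oplus 2}$. This is the non-split type, with parameter $d'$.

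\textbf{Rows $\gamma=8$.} The entries are $-16d'$ and $-(16d'-8)$. Both equal $-4$ times the positive integers $4d'$ and $4d'-2$, so both are of the non-split type.

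The only point needing care is that each parameter is a positive integer. This follows from $d>0$ together with negative definiteness of the rank-$2$ block: for instance, $16d'-8>0$ forces $d'\geq1$. No row gives anything outside the two listed families.

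The main obstacle is the Picard rank bookkeeping, namely that minimal Picard rank $16$ forces $\rho(K)=1$. This rests on Theorem \ref{thm:TranscendentalResolutions}, since an isometric embedding of transcendental lattices preserves rank. Everything else is inspection of the table.
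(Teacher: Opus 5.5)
Your proposal is correct and follows the paper's proof, which simply cites Theorem~\ref{thm:TranscendentalResolutions} together with the table of Theorem~\ref{thm:NeronSeveriHyperkummer}. What you add is the explicit bookkeeping: $\rho(S)=15+\rho(K)$, so $\rho(K)=1$; the renaming of parameters row by row (split type for $\gamma\in\{1,2\}$, non-split type for $\gamma\in\{4,8\}$); and positivity of each parameter, which follows from negative definiteness of the rank-$2$ block.
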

\begin{proof}
    This statement is a direct consequence of Theorem \ref{thm:TranscendentalResolutions} and Theorem \ref{thm:NeronSeveriHyperkummer}.
\end{proof}

\subsection{Hodge isometric $\mathrm{Kum}^3$-varieties}

The naive birational Torelli theorem fails for hyper-K\"ahler manifolds of $\mathrm{Kum}^n$-type, as is shown by the following example, due to Namikawa \cite{Namikawa02}. 
\begin{example}
Let $A$ be a complex torus of dimension $2$ such that  $A$ is not isomorphic to $A^{\vee}$. Then by Shioda \cite{Shioda-PeriodAbelianSurfaces}, $A$ and $A^\vee$ have Hodge isometric second cohomology, hence the generalized Kummer varieties $K^n(A)$ and $K^{n}(A^{\vee})$ have Hodge isometric second cohomology. However, $K^n(A)$ and $K^{n}(A^{\vee})$ are in general not bimeromorphic, for example, when $A$ has Picard rank $0$; there are also projective examples, see \cite[Section 5]{Namikawa02} and \cite[Theorem 8.2]{Magni}. The essential reason for this failure is that there does not exist any Hodge isometry $H^2(K^n(A),\ZZ)\xrightarrow{ \ \sim \ } H^2(K^n(A^{\vee}),\ZZ)$ which is induced by a parallel transport operator. In fact the moduli space of marked manifolds of generalized Kummer type has at least $4$ connected components (\cite{MR4453973}). 
\end{example}

Nevertheless, we prove that the hyper-Kummer varieties associated with two projective $\mathrm{Kum}^3$-type varieties with Hodge isometric $H^2(-, \mathbb{Z})$ are birational.

\begin{proposition}
\label{prop:HodgeIsometricKummer}
    Let $K_1,K_2$ be projective hyper-K\"ahler varieties of $\mathrm{Kum}^3$-type. Assume that there exists a Hodge isometry $H^2(K_1,\ZZ)\xrightarrow{ \ \sim \ } H^2(K_2,\ZZ)$. Then the associated hyper-Kummer sixfolds $Y_{K_1}$ and $Y_{K_2}$ are birational.
\end{proposition}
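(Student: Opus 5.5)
A Hodge isometry $\phi\colon H^2(K_1,\ZZ)\to H^2(K_2,\ZZ)$ extends uniquely to a Hodge isometry $\hat\phi\colon \widehat{H}^2(K_1,\ZZ)\to \widehat{H}^2(K_2,\ZZ)$ of the index-2 overlattices (Remark \ref{rmk:hatH^2}). It therefore restricts to a Hodge isometry of transcendental parts $\widehat{H}^2_{\mathrm{tr}}(K_1,\ZZ)\simeq \widehat{H}^2_{\mathrm{tr}}(K_2,\ZZ)$, and after twisting by $(2)$, Proposition \ref{prop:cohomologyY_K} and Theorem \ref{thm:TranscendentalResolutions} give a Hodge isometry
\[
H^2_{\mathrm{tr}}(Y_{K_1},\ZZ)\simeq H^2_{\mathrm{tr}}(Y_{K_2},\ZZ).
\]
I would avoid working with transcendental lattices alone, since for $\mathrm{K}3^{[3]}$-type a transcendental Hodge isometry does not directly give birationality. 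Instead I will construct a Hodge isometry of the full $H^2$ and then check that it is a parallel transport operator. The orthogonal decomposition of Proposition \ref{prop:cohomologyY_K}, namely that $H^2(Y_{K_i},\ZZ)$ is an overlattice of $\tfrac{1}{2^4}\hat r_*(\widehat{H}^2(K_i,\ZZ)(2))\oplus L_{\Km,i}$ with $L_{\Km,i}$ saturating the span of the classes $[E_\sigma]$, lets us try to glue $\hat\phi(2)$ with a suitable isometry $\psi\colon L_{\Km,1}\to L_{\Km,2}$.

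To make the gluing canonical, I would place everything in families, using Remark \ref{rmk:deformation} and the proof of Theorem \ref{thm:birationalCharacterization}. Choose markings $\eta_i$ of $K_i$ and $\theta_i$ of $Y_{K_i}$ such that $\theta_i\circ (r_{K_i})_*\circ\eta_i^{-1}=r_*$ is the fixed canonical embedding $\Lambda_{\mathrm{Kum}^3}(2^9)\hookrightarrow \Lambda_{\mathrm{K}3^{[3]}}$. Then $g\coloneqq \eta_2\phi\eta_1^{-1}\in \Oo(\Lambda_{\mathrm{Kum}^3})$, and the question is whether $g$, extended to $\widehat{\Lambda}$, glues with some isometry of $i(L_{\Km})$ to an element of $\Oo(\Lambda_{\mathrm{K}3^{[3]}})$ that preserves the positive cone. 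The glue group is the discriminant of $\mathrm{U}(2)^{\oplus3}\oplus\langle-4\rangle$. The isometry $\hat g$ acts on this discriminant through the map $\Oo(\mathrm{U}^{\oplus3}\oplus\langle-2\rangle)\to \Oo(A_{\mathrm{U}(2)^{\oplus 3}\oplus\langle -4\rangle})$, and the needed fact is that every such action is realized by an isometry of $L_{\Km}$. I expect this to hold because the Kummer lattice's isometry group surjects onto the relevant orthogonal group of its discriminant $(\ZZ/2)^6\oplus$ part, via the affine $\mathrm{AGL}_4(\FF_2)$-symmetries; I would verify it via Nikulin's results in the appendix lemmas. Composing with $-1$ if necessary, we may also arrange that the result is orientation preserving.

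Monodromy is then automatic: for $\mathrm{K}3^{[3]}$-type we have $\mathrm{Mon}^2=\Oo^+$ modulo the $\pm1$ character on $A_\Lambda\simeq\ZZ/4$ (Markman), so the $(\ZZ/4)$ character can be corrected by $\pm1$ on the $\langle-4\rangle$ summand, adjusting the choice of $\psi$. This yields a Hodge isometry $H^2(Y_{K_1},\ZZ)\to H^2(Y_{K_2},\ZZ)$ that is a parallel transport operator, and Markman's birational Torelli theorem for $\mathrm{K}3^{[n]}$-type then gives that $Y_{K_1}$ and $Y_{K_2}$ are bimeromorphic, hence birational since both are projective.

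The main obstacle is the gluing step: showing that the discriminant action of $\hat g$, which can be $-1$ on $A_{\Lambda_{\mathrm{Kum}^3}}$ exactly as in Namikawa's $A$ versus $A^\vee$ example, can be matched by an isometry of $L_{\Km}$ compatible with orientation. Here projectivity may be needed. If direct gluing fails, the fallback is to use projectivity to replace the full-$H^2$ argument by the transcendental Hodge isometry together with existence of an embedding of $H^2_{\mathrm{tr}}$ into the Mukai lattice of the hyper-Kummer K3 surface $S_{K_i}$. Corollary \ref{cor:Relation-YKMKSK} gives $S_{K_1}\simeq S_{K_2}$, since these are projective K3 surfaces of Picard rank at least $12$ with Hodge isometric transcendental lattices, and realizes $Y_{K_i}$ as Bridgeland moduli spaces of Mukai vectors $v_i$ of square $4$ on the same K3 surface. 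It would then remain to find an (anti)autoequivalence taking $v_1$ to $v_2$ compatibly, and to conclude birationality via Bayer--Macr\`i wall crossing.
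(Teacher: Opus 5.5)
Your main step, gluing $\hat\phi(2)$ on $\hat T=\mathrm{U}(2)^{\oplus 3}\oplus\langle -4\rangle$ with an isometry $\psi$ of $L_{\Km}$, fails, and it fails exactly in the case the proposition is about.

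The map $\Oo(L_{\Km})\to \Oo(A_{L_{\Km}},q_{L_{\Km}})$ is \emph{not} surjective. The only $(-2)$-vectors of $L_{\Km}$ are $\pm r_i$: a vector with half-integral coefficients is supported on a Reed--Muller codeword of weight $\geq 8$, so its square is $\leq -4$. Hence $\Oo(L_{\Km})=\{\pm1\}^{16}\rtimes\mathrm{AGL}_4(\FF_2)$. Sign changes and translations act trivially on $A_{L_{\Km}}\cong \mathrm{RM}(2,4)/\mathrm{RM}(1,4)\cong\Lambda^2\FF_2^4$. So the image is $\mathrm{GL}_4(\FF_2)\cong A_8$, of index $2$ in $\Oo(A_{L_{\Km}},q_{L_{\Km}})\cong\Oo^+_6(2)\cong S_8$.

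Your $\hat g$ does stabilize the glue subgroup $A_{\langle\delta\rangle}^{\perp}$ (cf.\ Lemma \ref{lem:extendIsometry}), but its action there can be odd. Indeed, $\SO(\mathrm{U}^{\oplus3})=\{\pm1\}\cdot\Lambda^2\mathrm{SL}_4(\ZZ)$ reduces into $\Lambda^2\mathrm{SL}_4(\FF_2)\cong A_8$, while root reflections reduce to transvections, i.e.\ transpositions. So the determinant $-1$ isometries relating $A$ and $\widehat A$ in Shioda's theorem act by odd permutations. For Namikawa's pair $K^3(A)$, $K^3(\widehat A)$ no $\psi$ exists. The obstruction is the character $\det(g)\chi(g)$, so gluing works only when $\pm g\in\mathrm{Mon}^2(\mathrm{Kum}^3)$, i.e.\ only in the easy case where $\phi$ is already a parallel transport up to sign. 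Composing with $-1$ does not help, since $-1$ acts trivially on $A_{L_{\Km}}$. This matches the fact that your main argument never uses projectivity, while the remark after the proposition shows the statement fails for $K$ of Picard rank $0$.

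Your fallback discards exactly the information that is needed. A Hodge isometry of transcendental lattices, or even of $\widehat H^2(K_i,\ZZ)$, does not determine $Y_K$ up to birationality. By Lemma \ref{lem:divHatH} and Theorem \ref{thm:NeronSeveriHyperkummer}, compare a polarization of square $2e$ and divisibility $1$ with one of square $8e$ and divisibility $2$. Both give a primitive class of square $2e$ and divisibility $1$ in $\widehat H^2$, and both give $H^2_{\mathrm{tr}}(Y_K,\ZZ)\simeq\mathrm{U}(2)^{\oplus2}\oplus\langle-4\rangle\oplus\langle-4e\rangle$. Yet $\NS(Y_K)$ is $L_{4e}$ in the first case and $L_{\Km}\oplus\langle 4e\rangle$ in the second, and their discriminant groups have different orders. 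Choosing matching periods gives non-birational $Y_{K_1},Y_{K_2}$ with $S_{K_1}\simeq S_{K_2}$ and Hodge isometric transcendental lattices. So the step of finding an (anti)autoequivalence taking $v_1$ to $v_2$ cannot be carried out from the data you retain.

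The missing input is that $\hat\phi$ preserves the index-$2$ sublattice $T\subset\hat T$ coming from the integral $H^2(K,\ZZ)(2)$. The paper uses exactly this:
\begin{itemize}
\item Embed $\Lambda_{\mathrm{K}3^{[3]}}=v^{\perp}\subset\widetilde\Lambda_{\mathrm{K}3}$ with $v^2=4$. Lemma \ref{lem:extendIsometryToM} says $f\in\Oo(\hat T)$ extends to $M=(\hat T\oplus\langle v\rangle)^{\mathrm{sat}}$, with $v\mapsto\pm v$, if and only if $f$ stabilizes $T$.
\item Restrict only to $(\hat H_i\oplus\langle v\rangle)^{\mathrm{sat}}$. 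Projectivity makes its complement in the unimodular Mukai lattice indefinite of rank $\geq 17$, so Theorem \ref{thm:orthogonalGroups} extends the isometry to $\widetilde\Lambda_{\mathrm{K}3}$; then restrict to $v^{\perp}$.
\item The resulting Hodge isometry of $H^2(Y_{K_i},\ZZ)$ need not preserve $L_{\Km}$; the N\'eron--Severi part absorbs the parity obstruction above.
\end{itemize}
Finally, $\Oo(A_{\Lambda_{\mathrm{K}3^{[3]}}})=\{\pm1\}$, so $\mathrm{Mon}^2=\Oo^+$ and any Hodge isometry or its negative is a parallel transport. Your extra character correction is unnecessary.
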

\begin{proof}
 Since the birational Torelli theorem holds for manifolds of $\mathrm{K}3^{[3]}$-type, it will be sufficient to show that there exists a Hodge isometry $H^2(Y_{K_1},\ZZ)\xrightarrow{\ \sim \ } H^2(Y_{K_2},\ZZ)$. We fix a Hodge isometry $f\colon H^2(K_{1},\ZZ)\xrightarrow{\sim} H^2(K_2,\ZZ)$.

Consider the rational maps $r_i\colon K_i\dashrightarrow Y_{K_i}$ coming from the hyper-Kummer construction. As the construction works in families, choosing a deformation of $K_1$ to $K_2$, we can choose markings $\beta_{i}\colon H^2(K_i,\ZZ)\to \Lambda_{\mathrm{Kum}^3}$ and $\alpha_i\colon  H^2(Y_{K_i},\ZZ) \to \Lambda_{\mathrm{K}3^{[3]}}$ such that the maps $\alpha_{i}\circ (r_i)_*\circ (\beta_i)^{-1}$ are identified with the same map $r_*\colon \Lambda_{\mathrm{Kum}^3}\to \Lambda_{\mathrm{K}3^{[3]}}$. 
By Proposition \ref{prop:cohomologyY_K}, the map $r_*$ is injective and multiplies the form by $2^9$; moreover, $\tfrac{1}{2^4} r_*$ is integral and induces a primitive embedding 
\begin{equation} 
j\colon \widehat{\Lambda}_{\mathrm{Kum}^3}(2) \hookrightarrow \Lambda_{\mathrm{K}3^{[3]}},
\end{equation}
with orthogonal complement isometric to the Kummer lattice $L_{\Km}$. Here, $\widehat{\Lambda}_{\mathrm{Kum}^3}$ is the unique overlattice of ${\Lambda}_{\mathrm{Kum}^3}$. 
Let us denote by $\hat{T}\subset \Lambda_{\mathrm{K}3^{[3]}}$ the primitive sublattice which is the image of $j$, which is isometric to $\mathrm{U}(2)^{\oplus 3} \oplus \langle -4\rangle$, and let us denote by $T\subset \hat{T}$ the index $2$ sublattice which is the image of $\Lambda_{\mathrm{Kum}^3}(2)$ via $j$. 
We have $2$ different Hodge structures on $T$ and $\hat{T}$: the one coming from $K_1$ via $\beta_1$ and the second one coming from $K_2$ via $\beta_2$. Let $H_1, H_2$ (resp. $\hat{H}_1$, $\hat{H}_2$) be the transcendental lattices of these $2$ Hodge structures on $T$ (resp. on $\hat{T}$). We can view $f$ as an isometry of $T$ which identifies these $2$ different Hodge structures; moreover, $f$ extends to an isometry of $\hat{T}$. We thus get an isometry $f\in \Oo(\hat{T})$, which restricts to a Hodge isometry $f_{|_{\hat{H}_1}}\colon \hat{H}_1\to \hat{H}_2$. To conclude the proof, we will show that $f_{|_{\hat{H}_1}}$ extends to an isometry $F$ of $\Lambda_{\mathrm{K}3^{[3]}}$; then $(\alpha_2)^{-1}\circ F\circ (\alpha_{1}) \colon H^2(Y_{K_1},\ZZ) \to H^2(Y_{K_2},\ZZ)$ is a Hodge isometry.

    Choose a primitive embedding $i\colon \Lambda_{\mathrm{K}3^{[3]}}\hookrightarrow \widetilde{\Lambda}_{\mathrm{K}3}$, with orthogonal complement spanned by a primitive vector $v$ of square $4$. We are thus in the situation of Lemma \ref{lem:extendIsometryToM}: which implies that, setting $M\coloneqq ( i(\hat{T}) \oplus \langle v\rangle)^{\mathrm{sat}}$, the isometry $f\in \Oo(\hat{T})$ extends to an isometry $\tilde{f}$ of $M$ such that $f(v)=\pm v$. Then consider $\tilde{H}_1\coloneqq (i(\hat{H}_1)\oplus \langle v\rangle)^{\mathrm{sat}}$ and $\tilde{H}_2\coloneqq (i(\hat{H}_2)\oplus \langle v\rangle)^{\mathrm{sat}}$, equipped with the Hodge structure of weight $2$ induced by that on $\hat{H}_i$, for which $v$ is of type $(1,1)$.
    Then $\tilde{f}$ gives a Hodge isometry 
    \begin{equation}
        \tilde{f}\colon \tilde{H}_1\xrightarrow{\ \sim \ } \tilde{H}_2,
    \end{equation}   
    which sends $v$ to $\pm v$. Since $K_1, K_2$ are assumed projective, the orthogonal complements $Z_i$ of $\tilde{H}_i$ in $\widetilde{\Lambda}_{\mathrm{K}3}$ is an indefinite lattice of rank at least $17$ and length of discriminant at most $6$. By Theorem \ref{thm:embeddingUnimodular}, it follows that $Z_i$ is uniquely determined by its signature and discriminant form, and that $\Oo(Z_i)\to \Oo(A_{Z_i})$ is surjective, for $i=1,2$. 
    It follows that $Z_1$ and $Z_2$ are isometric and that any isometry $\tilde{H}_1\xrightarrow{\ \sim \ } \tilde{H}_2$ extends to an isometry of $\widetilde{\Lambda}_{\mathrm{K}3}$. Therefore, $\tilde{f}$ extends to $\tilde{F}\in \Oo(\widetilde{\Lambda}_{\mathrm{K}3})$.
    Since $\tilde{F}(v)=\pm v$, restricting to $v^{\bot}$ we get an isometry $F\in \Oo(\Lambda_{\mathrm{K}3^{[3]}})$ whose restriction to $\hat{H}_1$ equals $f$.
    \end{proof}

\begin{remark}
We cannot drop the assumption that $K$ and $K'$ are projective. 
Indeed, let $K,K'$ be Hodge isometric manifolds of $\mathrm{Kum}^3$-type of Picard rank $0$ which are not birational to each other.
Then $Y_K, Y_K'$ are $\mathrm{K}3^{[3]}$-manifolds of Picard rank $16$, with N\'eron--Severi group generated by the components $E_1,\dots , E_{16}$  (resp., $E'_1,\dots ,E'_{16}$) of the exceptional divisor of $Y_K\to K/G$ (resp., $Y_{K'}\to K'/G$). 

Assume that $Y_K$ and $Y_K'$ are bimeromorphic. Then by Proposition \ref{prop:veryGeneralCase}, they are in fact isomorphic. As the $E_i$ and $E'_i$ are the only effective $-2$-classes in $\NS(Y_K)$ (resp., $\NS(Y_K')$), any isomorphism $\phi\colon Y_K\to Y_{K'}$ would send $E_i$ to $E'_j$, for some $j$. Setting $U\coloneqq Y_K\setminus (\bigcup_{i=1}^{16} E_i )$ and $U'\coloneqq Y_{K'}\setminus (\bigcup_{i=1}^{16} E'_i )$, it follows that $\phi$ restricts to an isomorphism $U\xrightarrow{\ \sim \ } U'$. It thus induces an isomorphism $\tilde{\phi}$ between the universal covers $\tilde{U}$, $\tilde{U}'$ of $U,U'$ respectively. But $\tilde{U}\subset K$ and $\tilde{U'}\subset K'$ are Zariski open, so that $K$ and $K'$ are bimeromorphic, which contradicts the hypothesis.
\end{remark}

\section{Companion K3 surfaces}
\label{sec:CompanionK3}
As is explained in Section \ref{subsec:Companions}, given a hyper-K\"ahler manifold $K$ of $\mathrm{Kum}^3$-type, there are 120 canonically embedded K3 surfaces $V_{\sigma, \sigma'}$ and 120 resolution K3 surfaces $S_{\sigma, \sigma'}$, where the indices run through pairs of distinct elements $\sigma$ and $\sigma'$ in $G\backslash G_1$. We study in this section these companion K3 surfaces.

\subsection{Embedded companion K3 surfaces}
We start with the companion K3 surfaces $V_{\sigma,\sigma'}$ contained in a hyper-K\"ahler manifold of $\mathrm{Kum}^3$-type. It turns out that these K3 surfaces are precisely the K3 surfaces with a symplectic action of $(\ZZ/2\ZZ)^4$, which were studied by Garbagnati and Sarti \cite{GarbagnatiSarti}. 
We consider the negative definite lattice $N_V$ of rank $15$ introduced in Definition \ref{def:latticeN_V}. 

\begin{theorem}
\label{thm:characterizeV}
    Let $V$ be a $\mathrm{K}3$ surface. Then the following are equivalent: 
    \begin{enumerate}[label=(\roman*)]
    \item there exists a primitive embedding $j\colon N_V\hookrightarrow \NS(V)$;
    \item there exists a faithful action of $H\cong (\ZZ/2\ZZ)^4$ on $V$ via symplectic automorphisms;
    \item $V$ is isomorphic to an embedded companion $\mathrm{K}3$ surface $V_{\sigma,\sigma'}\subset K$, for some manifold $K$ of $\mathrm{Kum}^3$-type and some $\sigma\neq \sigma'\in G\backslash G_1$.
    \end{enumerate}
    If $V$ is projective, these conditions are further equivalent to the following condition:
    \begin{enumerate}
        \item[(iv)] there exists a primitive embedding $H^2_{\mathrm{tr}}(V,\ZZ)\hookrightarrow \mathrm{U}(2)^{\oplus 3} \oplus \langle -8\rangle$.
    \end{enumerate}
\end{theorem}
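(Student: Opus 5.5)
The plan is to prove the cycle (iii)$\Rightarrow$(ii)$\Rightarrow$(i)$\Rightarrow$(iii), and then (i)$\Leftrightarrow$(iv) in the projective case.

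For (iii)$\Rightarrow$(ii) we use Proposition~\ref{prop:canonicalSubvarietiesII}(iii): $G$ acts on $V_{\sigma,\sigma'}$ through $G/\langle\sigma,\sigma'\rangle\cong(\ZZ/2\ZZ)^3$, which is too small. So we take the full stabilizer of $V_{\sigma,\sigma'}$ in $\Aut_0(K)$ instead. In the model case $K=K^3(A)$, $V_{\sigma_0,\sigma_\theta}\cong\Km(A/\langle\theta\rangle)$. Let $\epsilon\in A[4]$ with $2\epsilon=\theta$. By Remark~\ref{rmk:actionAut_0}, translation by $\epsilon$ swaps $W_{\sigma_0}$ and $W_{\sigma_\theta}$, hence preserves $V_{\sigma_0,\sigma_\theta}$. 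On $A/\langle\theta\rangle$ it induces translation by a $2$-torsion point that is not in $A[2]/\langle\theta\rangle$. Together with $A[2]/\langle\theta\rangle$, these translations give the full translation group $(A/\langle\theta\rangle)[2]\cong(\ZZ/2\ZZ)^4$ acting symplectically on the Kummer surface. The stabilizer and its action are deformation invariant by Remark~\ref{rmk:deformation} and the Hassett--Tschinkel local system, so the same holds for every $K$. For (ii)$\Rightarrow$(i) we invoke Nikulin and Garbagnati--Sarti: the coinvariant lattice $H^2(V,\ZZ)_H$ of a symplectic $(\ZZ/2\ZZ)^4$-action is a negative definite rank-$15$ lattice, unique up to isometry, embedded primitively in $\NS(V)$. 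We identify it with $N_V$ by checking on $V=\Km(B)$ with translations by $B[2]$. By Proposition~\ref{prop:cohomologyW&V}(ii), $N_V$ is the orthogonal complement of the saturation of $\iota^*H^2(K,\ZZ)$, i.e.\ of $H^2(B,\ZZ)(2)\oplus\langle\frac12\sum[C_\theta]\rangle$, and this complement is exactly the coinvariant lattice for $B[2]$.

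For (i)$\Rightarrow$(iii) we follow the proof of Theorem~\ref{thm:birationalCharacterization}, using periods of K3 surfaces in place of those of $\mathrm{K}3^{[3]}$-manifolds. First we show that the primitive embedding $N_V\hookrightarrow\Lambda_{\mathrm{K}3}$ is unique up to $\Oo(\Lambda_{\mathrm{K}3})$, with orthogonal complement $\mathrm{U}(2)^{\oplus3}\oplus\langle-8\rangle$. This is a Nikulin-type argument via Theorem~\ref{thm:embeddingUnimodular}, and the discriminant-form computation is the main obstacle; alternatively we cite Garbagnati--Sarti. Any $x$ in the period domain orthogonal to $N_V$ then has the form $\iota^*(z)$ for a unique $z\in\Omega_{\mathrm{Kum}^3}$, because the saturated image of $\iota^*$ has finite index over $\iota^*H^2(K,\ZZ)$ and the complexifications agree. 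By surjectivity of the period map there is a marked $(K,\eta)$ with period $z$, and its companion $V_{\sigma,\sigma'}$ gets a marking with period $x$; the canonicity of that marking comes from the family version of the construction. The global Torelli theorem for K3 surfaces then gives $V\cong V_{\sigma,\sigma'}$. The Hodge isometry must be adjusted by $\pm1$ and by reflections in $(-2)$-classes, i.e.\ by elements of the Weyl group, to preserve Kähler cones. These act trivially on $N_V^\perp$ up to sign, so the period stays in the same locus.

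Finally, for projective $V$, (i)$\Leftrightarrow$(iv). If (i) holds, $H^2_{\mathrm{tr}}(V,\ZZ)\subset N_V^\perp\cong\mathrm{U}(2)^{\oplus3}\oplus\langle-8\rangle$ primitively. Conversely, given (iv), $H^2_{\mathrm{tr}}$ has rank at most $6$ and its orthogonal complement in $\mathrm{U}(2)^{\oplus3}\oplus\langle-8\rangle$ is positive-containing. We glue $N_V$ to $\mathrm{U}(2)^{\oplus3}\oplus\langle-8\rangle$ to recover $\Lambda_{\mathrm{K}3}$, which gives an embedding of $H^2_{\mathrm{tr}}$ into $\Lambda_{\mathrm{K}3}$ with complement containing $N_V$. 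Uniqueness of the embedding of $H^2_{\mathrm{tr}}(V,\ZZ)$ into $\Lambda_{\mathrm{K}3}$ holds since $\rk\NS(V)\geq16$ (Theorem~\ref{thm:embeddingUnimodular}). Hence $N_V\hookrightarrow\NS(V)$, primitively because the glueing was primitive.
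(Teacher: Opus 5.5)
Your proposal is correct and is essentially the paper's proof. The ingredients are the same: the element $\epsilon$ with $2\epsilon=\sigma+\sigma'$ for (iii)$\Rightarrow$(ii), Nikulin/Garbagnati--Sarti for the coinvariant lattice, the sub-period-domain argument with uniqueness of $N_V\hookrightarrow\Lambda_{\mathrm{K}3}$ for (i)$\Rightarrow$(iii), and uniqueness of the embedding of $H^2_{\mathrm{tr}}(V,\ZZ)$ (using $\rk\NS(V)\geq 16$) for (iv)$\Rightarrow$(i); the only difference is that you close the cycle (iii)$\Rightarrow$(ii)$\Rightarrow$(i)$\Rightarrow$(iii), whereas the paper cites (i)$\Leftrightarrow$(ii) and deduces (iii)$\Rightarrow$(i) from Proposition~\ref{prop:cohomologyW&V}. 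The Weyl-group adjustment at the end of (i)$\Rightarrow$(iii) is superfluous, since weak Torelli (a Hodge isometry of $H^2(-,\ZZ)$ suffices) already gives $V\cong V_{\sigma,\sigma'}$; also, $(-2)$-reflections act trivially on $H^2_{\mathrm{tr}}$ but not on all of $N_V^\perp$ in general, though this does no harm.
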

\begin{proof}
    The equivalence between $(i)$ and $(ii)$ is due to Nikulin, see \cite{GarbagnatiSarti}. That $(iii)$ implies $(i)$ is a consequence of Proposition \ref{prop:cohomologyW&V} $(ii)$.
    In fact, we can also see that $(iii)$ implies $(ii)$ directly. Consider the action of the group $\Gamma\cong(\ZZ/4\ZZ)^4$ (see Definition \ref{def:subgroupsAut0}) on $K$: if $\epsilon \in \Gamma$ is such that $2\epsilon = \sigma+\sigma'$, then the corresponding automorphism $\epsilon$ of $K$ induces a non-trivial involution of $V_{\sigma,\sigma'}$. Together with the action of $G/\langle \sigma,\sigma'\rangle$ on $V_{\sigma,\sigma'}$, this involution generates a symplectic action of $(\ZZ/2\ZZ)^4$ on $V_{\sigma,\sigma'}$.

    To see that $(i)$ implies $(iii)$ we proceed as in the proof of Theorem \ref{thm:birationalCharacterization}. As the K3 surfaces $V_{\sigma,\sigma'}\subset K$ deform together with $K$, we obtain for each pair $\sigma\neq \sigma'$ a map of lattices 
    \begin{equation}
    \iota_{\sigma,\sigma'}^*\colon \Lambda_{\mathrm{Kum}^3}(4)\hookrightarrow \Lambda_{\mathrm{K}3};
    \end{equation}
by Proposition \ref{prop:cohomologyW&V}, its image is not saturated but has saturation $\mathrm{U}(2)^{\oplus 3}\oplus \langle -8\rangle$, and the orthogonal complement of $\mathrm{im}(\iota^*_{\sigma,\sigma'})$ is the lattice $N_V$. We fix the primitive embedding $i_{\sigma,\sigma'} \colon N_V\hookrightarrow \Lambda_{\mathrm{K}3}$ as the orthogonal to the image of $\iota^*_{\sigma,\sigma'}$.
This map is canonical in the following sense: for any manifold $K$ of $\mathrm{Kum}^3$-type and any canonical K3 surface $\iota_{\gamma,\gamma', K}\colon V_{\gamma,\gamma'}\hookrightarrow K$, there exist isometries $\eta\colon H^2(K,\ZZ)\xrightarrow{\ \sim \ }\Lambda_{\mathrm{Kum}^3}$ and $\theta\colon H^2(V_{\gamma,\gamma'},\ZZ)\xrightarrow{\ \sim \ } \Lambda_{\mathrm{K}3}$ such that 
\begin{equation}
    \theta \circ \iota^*_{\gamma,\gamma',K} \circ \eta^{-1} = \iota^*_{\sigma,\sigma'},
\end{equation}
for some $\sigma,\sigma'$. 
It follows that, for a suitable marking, the K3 surface $V_{\sigma,\sigma'}$ has period in the subperiod domain 
\begin{equation}
    \Omega_{\Lambda_{\mathrm{K}3}}^{i_{\sigma,\sigma'}(N_V)^{\bot}} \coloneqq \{ x\in \Omega_{\Lambda_{\mathrm{K}3}} \ | \ x \in (i_{\sigma,\sigma'}(N_V))^{\bot}\}
\end{equation}
of the period domain $\Omega_{\Lambda_{\mathrm{K}3}}$.
Conversely, any point in $\Omega_{\Lambda_{\mathrm{K}3}}^{i_{\sigma,\sigma'}(N_V)^{\bot}}$ is the period of some $V_{\gamma,\gamma'} \subset K$ for a suitable marking, since $\iota^*_{\sigma,\sigma'}$ induces an isomorphism of period domains $\Omega_{\mathrm{Kum}^3}\cong \Omega_{\Lambda_{\mathrm{K}3}}^{i_{\sigma,\sigma'}(N_V)^{\bot}}$. 

If now $V$ is a K3 surface as in $(i)$, since $N_V$ embeds uniquely into the K3 lattice by Lemma \ref{lem:embeddingN_V}, we find a marking $\theta$ on $V$ such that $(V,\theta)$ has period in $\Omega_{\Lambda_{\mathrm{K}3}}^{i_{\sigma,\sigma'}(N_V)^{\bot}}$. Hence, $V$ is Hodge isometric, and thus isomorphic, to some companion K3 surface $V_{\sigma,\sigma'}\subset K$, for some $K$ of $\mathrm{Kum}^3$-type.

Since the orthogonal to $N_V$ in $\Lambda_{\mathrm{K}3}$ is isometric to $\mathrm{U}(2)^{\oplus 3} \oplus \langle -8 \rangle$, we have that $(i)$ implies $(iv)$. If $V$ is projective, then there exists a unique primitive embedding of $H^2_{\mathrm{tr}}(V,\ZZ)$ in the K3 lattice (by Theorem \ref{thm:embeddingUnimodular}), which necessarily contains a primitive copy of $N_V$ in its orthogonal complement. Thus $(iv)$ is equivalent to $(i)$ for $V$ projective.
\end{proof}

\subsection{Hyper-Kummer K3 surfaces}
We obtain a similar characterization for the hyper-Kummer K3 surfaces $S_{\sigma,\sigma'}$ associated with a $\mathrm{Kum}^3$-manifold $K$, as the minimal resolution of $V_{\sigma,\sigma'}/G$. We introduce in Definition \ref{def:latticeN_S} another negative definite lattice $N_S$ of rank $15$.

\begin{theorem}\label{thm:characterizeS}
    Let $S$ be a $\mathrm{K}3$ surface. The following are equivalent:
    \begin{enumerate}[label=(\roman*)]
    \item there exists a primitive embedding $N_S \hookrightarrow \NS(S)$;
    \item there exists a $\mathrm{K}3$ surface $V$ with a faithful symplectic action of $H\cong (\ZZ/2\ZZ)^4$ and a subgroup $H'\subset H$ of index $2$ such that $S$ is the minimal resolution of $V/H'$;
    \item $S$ is isomorphic to the $\mathrm{K}3$ surface $S_{\sigma,\sigma'}$ associated with some hyper-K\"ahler manifold $K$ of $\mathrm{Kum}^3$-type.
    \end{enumerate} 
    If $S$ is projective, these conditions are furthermore equivalent to the following condition:
    \begin{enumerate}
        \item[$(iv)$] there exists a primitive embedding of lattices $H^2_{\mathrm{tr}}(S,\ZZ)\hookrightarrow \mathrm{U}(2)^{\oplus 3}\oplus \langle -4\rangle$.
    \end{enumerate}
\end{theorem}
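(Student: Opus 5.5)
The plan follows the proof of Theorem \ref{thm:characterizeV}, using Proposition \ref{prop:cohomologyM&S}.(ii) in place of Proposition \ref{prop:cohomologyW&V}.(ii).

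For $(iii)\Rightarrow(i)$: by Proposition \ref{prop:cohomologyM&S}.(ii), the map $\tfrac{1}{4}(r_{\sigma,\sigma'})_*\circ\iota^*_{\sigma,\sigma'}$ is a primitive embedding $\widehat{H}^2(K,\ZZ)(2)\hookrightarrow H^2(S_{\sigma,\sigma'},\ZZ)$ of Hodge structures. Its orthogonal complement is isometric to $N_S$ and consists of classes of type $(1,1)$, so it lies in $\NS(S_{\sigma,\sigma'})$ and is primitive there.

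For $(iii)\Rightarrow(ii)$: take $V=V_{\sigma,\sigma'}$. As in the proof of Theorem \ref{thm:characterizeV}, it carries the symplectic $(\ZZ/2\ZZ)^4$-action generated by $G/\langle\sigma,\sigma'\rangle$ and an element $\epsilon\in\Gamma$ with $2\epsilon=\sigma+\sigma'$. Set $H'=G/\langle\sigma,\sigma'\rangle$, which has index $2$ in $H$. Then $S_{\sigma,\sigma'}$ is the minimal resolution of $V/H'$ by Proposition \ref{prop:ResolutionQuotients}.(ii).

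For $(ii)\Rightarrow(iii)$: by Theorem \ref{thm:characterizeV}, $V\cong V_{\sigma,\sigma'}\subset K$. We must show that $H'$ can be taken to be the image of $G$. One route is deformation and monodromy: the pairs $(V,H)$ form an irreducible family (Garbagnati--Sarti), and the index-$2$ subgroups should be permuted by monodromy and by the normalizer. The less risky route is to pass directly through $(i)$: compute that the resolution of $V/H'$ has $N_S\subset\NS$ for every $H'$, using the Garbagnati--Sarti description of $H^2(V,\ZZ)^{H'}$ and the pushforward computation of Lemma \ref{lem:push-forwardt_*}. I expect this to be the main obstacle, and I will fall back on proving $(ii)\Rightarrow(i)$ via this lattice computation.

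For $(i)\Rightarrow(iii)$: the composite $\tfrac14(r_{\sigma,\sigma'})_*\circ\iota^*_{\sigma,\sigma'}$ deforms with $K$ (Remark \ref{rmk:deformation}). It therefore gives a canonical embedding $\widehat{\Lambda}_{\mathrm{Kum}^3}(2)\hookrightarrow\Lambda_{\mathrm{K}3}$ whose orthogonal complement is a fixed copy $i(N_S)$. Because periods of $\mathrm{Kum}^3$-manifolds are determined by their image in $\widehat{\Lambda}_{\mathrm{Kum}^3}$, this embedding induces an isomorphism $\Omega_{\mathrm{Kum}^3}\cong\Omega_{\Lambda_{\mathrm{K}3}}^{i(N_S)^\perp}$. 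By surjectivity of the period map \cite{Huy99}, every point of this subdomain is the period of some marked $S_{\sigma,\sigma'}$. Next, I need a lemma analogous to Lemma \ref{lem:embeddingN_V}: a primitive embedding $N_S\hookrightarrow\Lambda_{\mathrm{K}3}$ is unique up to isometry. This follows from Nikulin's criterion, since the complement $\mathrm{U}(2)^{\oplus3}\oplus\langle-4\rangle$ is indefinite with controlled discriminant. Given $S$ as in $(i)$, this lemma provides a marking putting its period in the subdomain. The global Torelli theorem for K3 surfaces then gives $S\cong S_{\sigma,\sigma'}$.

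For the projective case, $(i)\Rightarrow(iv)$ holds because $H^2_{\tr}(S,\ZZ)\subset N_S^\perp\cong\mathrm{U}(2)^{\oplus3}\oplus\langle-4\rangle$, and the embedding is primitive since $N_S^\perp$ is primitive in $\Lambda_{\mathrm{K}3}$. For $(iv)\Rightarrow(i)$: composing with the fixed embedding $\mathrm{U}(2)^{\oplus3}\oplus\langle-4\rangle\hookrightarrow\Lambda_{\mathrm{K}3}$ whose complement is $N_S$ gives a primitive embedding of $H^2_{\tr}(S,\ZZ)$ into $\Lambda_{\mathrm{K}3}$. By Theorem \ref{thm:embeddingUnimodular}, $H^2_{\tr}(S,\ZZ)$ has rank at most $7$ because $\rho(S)\ge 15$, so this embedding is unique up to isometry. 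Hence $\NS(S)$ contains a primitive copy of $N_S$.
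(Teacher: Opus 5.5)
Your proposal follows the paper's proof essentially step by step. It uses the same cycle of implications, the cohomology computation for $S_{\sigma,\sigma'}$ for $(iii)\Rightarrow(i)$, the sub-period-domain argument with uniqueness of $N_S\hookrightarrow\Lambda_{\mathrm{K}3}$ (Lemma~\ref{lem:embeddingN_S}) for $(i)\Rightarrow(iii)$, and Theorem~\ref{thm:embeddingUnimodular} in the projective case.

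The step you flag as the main obstacle is handled in the paper along the lines of your fallback, but by deformation rather than a separate lattice computation. Since $\mathrm{K}3$ surfaces with a faithful symplectic $(\ZZ/2\ZZ)^4$-action form one deformation family, one deforms $(V,H)$ to $(\Km(B),B[2])$. Then $H'$ is an index-$2$ subgroup of $B[2]$, hence of the form $A[2]/\langle\theta\rangle$ for a degree-$2$ isogeny $A\to B=A/\langle\theta\rangle$. Lemma~\ref{lem:push-forwardt_*} then gives the primitive copy of $N_S$ in $\NS(S)$. So proving $(ii)\Rightarrow(i)$ suffices, and you never need to identify $H'$ with the image of $G$.
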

\begin{proof}
    That $(iii)$ implies $(i)$ is a consequence of Proposition~\ref{prop:cohomologyM&S}~$(ii)$.
    Moreover, by Theorem \ref{thm:characterizeV}, condition $(iii)$ implies $(ii)$, since, by construction, the hyper-Kummer K3 surface $S_{\sigma,\sigma'}$ is the minimal resolution of $V_{\sigma,\sigma'}/(\ZZ/2\ZZ)^3$. 

    If $S$ and $V$ are as in $(ii)$, since $\mathrm{K}3$ surfaces with $(\ZZ/2\ZZ)^4$-action form a single deformation family, we may deform $V$ to a Kummer surface $\Km(A)$ so that $H$ is identified with the group $A[2]$ of points of order $2$, and $H'$ with a subgroup of $A[2]$ of index $2$. We are then in the situation of Lemma \ref{lem:push-forwardt_*}, which implies that $N_S$ embeds primitively in the N\'eron--Severi group of $S$. Hence, $(ii)$ implies $(i)$.

    To show that $(i)$ implies $(iii)$ we proceed exactly as in the proof of Theorem \ref{thm:characterizeV}. By Proposition~\ref{prop:cohomologyM&S}.$(ii)$, the hyper-Kummer construction gives $120$ embeddings 
    \begin{equation} j_{\sigma,\sigma'}\colon \Lambda_{\mathrm{Kum}^3} (2) \hookrightarrow \Lambda_{\mathrm{K}3}\end{equation}
    of lattices, whose image is contained in its saturation with index $2$ and has complement the lattice $N_S$; we denote by $i_{\sigma,\sigma'}$ the corresponding primitive embedding of $N_S$ in $\Lambda_{\mathrm{K}3}$. As the construction of the hyper-Kummer K3 surfaces works in families, for any $S_{\gamma,\gamma'}$ associated to a $\mathrm{Kum}^3$-manifold $K$, there exist markings $\eta$ on $K$ and $\theta$ on $S_{\gamma,\gamma'}$ such that 
    \begin{equation}
        \theta\circ (\tfrac{1}{4}(r_{\gamma,\gamma'})_*\circ \iota_{\gamma,\gamma'}^*)\circ \eta^{-1} = j_{\sigma,\sigma'},
    \end{equation}
    for some pair $\sigma,\sigma'$; here, $r_{\gamma,\gamma'}\colon V_{\gamma,\gamma'}\dashrightarrow S_{\gamma,\gamma'}$ is the quotient rational map, while $\iota_{\gamma,\gamma'}\colon V_{\gamma,\gamma'}\hookrightarrow K$ is the inclusion.
    Defining the sub-period domains
    \begin{equation}
        \Omega_{\Lambda_{\mathrm{K}3}}^{i_{\sigma,\sigma'}(N_S)^{\bot}}\coloneqq \{ x \in \Omega_{\Lambda_{\mathrm{K}3}} \ | \ x\in (i_{\sigma,\sigma'}(N_S))^{\bot}\},
    \end{equation}
    it follows that any hyper-Kummer K3 surface has period in some $\Omega_{\Lambda_{\mathrm{K}3}}^{i_{\sigma,\sigma'}(N_S)^{\bot}}$ for a suitable marking. Moreover, $j_{\sigma,\sigma'}$ induces an isomorphism between $\Omega_{\mathrm{Kum}^3}$ and $\Omega_{\Lambda_{\mathrm{K}3}}^{i_{\sigma,\sigma'}(N_S)^{\bot}}$, so that any point of this sub-period domain is the period of a hyper-Kummer K3 surface.

    If $S$ is as in $(i)$, since $N_S$ admits a unique primitive embedding into $\Lambda_{\mathrm{K}3}$ up to isometry (Lemma \ref{lem:embeddingN_S}), there exists a marking on $S$ such that its period lies in $\Omega_{\Lambda_{\mathrm{K}3}}^{i_{\sigma,\sigma'}(N_S)^{\bot}}$. By the above and the Torelli theorem, $S$ is isomorphic to a hyper-Kummer K3 surface $S_{\sigma,\sigma'}$ associated with some $K$ of $\mathrm{Kum}^3$-type.

    The orthogonal of any primitive sublattice $N_S\subset \Lambda_{\mathrm{K}3}$ is isometric to $\mathrm{U}(2)^{\oplus 3} \oplus \langle -4\rangle$, thus $(i)$ implies $(iv)$. If $S$ is projective then there is a unique embedding of $H^2_{\mathrm{tr}}(S,\ZZ)$ in the K3 lattice up to isometry (Theorem \ref{thm:embeddingUnimodular}), and it follows that $(iv)$ is equivalent to $(i)$ in this case. 
\end{proof}

\section{Relation with O'Grady-6 type manifolds}
\label{sec:RelationMRSDouble}
A construction of Mongardi--Rapagnetta--Sacc\`a \cite{MRS18} gives a rational double cover of certain OG6-manifolds by manifolds of $\mathrm{K}3^{[3]}$-type.
In this section, we study the relationship between these rational double covers and the hyper-Kummer $\mathrm{K}3^{[3]}$-type manifolds.

\subsection{Mongardi--Rapagnetta--Sacc\`a double covers}
Let $A$ be an abelian surface, let $v_0$ be a primitive and effective Mukai vector of square $2$, and let $v=2v_0$. Fix a $v$-generic polarization $H$ and denote by $K_{A,H}(v)$ the Albanese fiber of the moduli space $M_{A,H}(v)$ of semistable sheaves on $A$ with Mukai vector $v$ (see \cite{huybrechts2010geometry}). Then $K_{A,H}(v)$ is projective but singular, and it admits a crepant resolution $\widetilde{K}_{A,H}(v)\to K_{A,H}(v)$ which is a hyper-K\"ahler manifold of $\mathrm{OG}6$-type. This was first shown by O'Grady in \cite{O'G03} for $v=(2,0,-2)$; the construction was generalized to more general Mukai vectors in \cite{LS06, PR13}.

The moduli space $K_{A,H}(v)$ is a (singular) irreducible symplectic variety with second Betti number $7$ (\cite{PeregoRapagnetta-2023}). The deformation theory of these irreducible symplectic varieties is the same as if they were smooth, as long as we restrict to locally trivial deformations \cite{bakkerLehn}. 

\begin{definition}
\label{def:OG6-resolution}
    A complex analytic variety $X$ is called a \emph{singular $\mathrm{OG}6$-variety} if it is a locally trivial deformation (in the sense of Flenner--Kosarew \cite{Flenner-Kosarew}) of the singular symplectic variety $K_{A,H}(v)$ with $A, v, H$ as above.
    Any singular $\mathrm{OG}6$-variety $X$ admits a crepant resolution $\widetilde{X}\to X$ by blowing up along the singular locus, where  $\widetilde{X}$ is a hyper-K\"ahler manifold of $\mathrm{OG}6$-type. Any crepant resolution of a singular OG6-variety will be called a \emph{$\mathrm{OG}6$-resolution}.
\end{definition}

The resolution $\widetilde{X}\to X$ can be described as follows. The singular locus in the fibers of a locally trivial family fits into a locally trivial family as well. Choosing a locally trivial deformation of $X$ to O'Grady's example \cite{O'G03}, we have closed subsets $$\Omega\subset \Sigma \subset X$$
such that:
\begin{itemize}
    \item $\Sigma$ is the singular locus of $X$, along which $X$ has generically a transversal $A_1$-singularity; 
    \item $\Omega$ is the singular locus of $\Sigma$, and consists of $256$ points.
\end{itemize}
Following Lehn--Sorger \cite{LS06}, the resolution $\widetilde{X}\to X$ is obtained via the blow-up of the reduced singular locus $\Sigma$. O'Grady's original procedure, which gives the same resolution, consists instead of first blowing up $\Omega$, then blowing up the strict transform of $\Sigma$, and then contracting the total transform of $\Omega$ onto $256$ smooth $3$-dimensional quadrics.

\begin{remark}
    The singular locus $\Sigma$ of a moduli space $K_{A,H}(v)$ as above is isomorphic to $(A\times A^{\vee})/\langle -1\rangle$. It follows that, for an arbitrary singular OG6-variety $X$, the singular locus $\Sigma$ is isomorphic to $B/\langle-1\rangle$ for a $4$-dimensional complex torus $B$ which is not necessarily a product of $2$-dimensional tori (\cite{FloccariFu-HodgeConjectureWeilFourfolds}). 
\end{remark}

Let $X=K_{A,H}(v)$ be a singular $\mathrm{OG}6$-variety obtained as a moduli space of sheaves on an abelian surface. In \cite{MRS18}, the authors constructed a rational double cover of $K_{A,H}(v)$ by a variety of $\mathrm{K}3^{[3]}$-type.
They observe that there must exist a double cover $W\to X$ ramified along $\Sigma$, because the class of the exceptional divisor of $\widetilde{X}\to X $ is divisible by~$2$ in $\Pic(\widetilde{X})$. They next prove that there exists a birational model of $W$ which is a variety~$Z_X$ of $\mathrm{K}3^{[3]}$-type. In fact, the arguments of Mongardi--Rapagnetta--Sacc\`a apply to all singular $\mathrm{OG}6$-varieties.

\begin{theorem}[\cite{MRS18}] \label{thm:MRSconstruction}
Let $X$ be a singular $\mathrm{OG}6$-variety, and let $\widetilde{\Sigma}$ denote the exceptional divisor of the associated $\mathrm{OG}6$-resolution $\widetilde{X}\to X$.
There exists a rational double cover 
\begin{equation}s\colon Z_X\dashrightarrow \widetilde{X} \end{equation}
ramified along $\widetilde{\Sigma}$, with $Z_X$ a hyper-K\"ahler manifold of $\mathrm{K}3^{[3]}$-type.
\end{theorem}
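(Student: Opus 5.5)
The natural strategy is to follow Mongardi--Rapagnetta--Saccà: first construct the double cover for the moduli-space examples $X=K_{A,H}(v)$, and then propagate it to every singular $\mathrm{OG}6$-variety by locally trivial deformation. The key steps are as follows.

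First, on the $\mathrm{OG}6$-resolution $\widetilde{X}\to X$, I would show that the class $[\widetilde{\Sigma}]\in\Pic(\widetilde{X})$ is divisible by $2$. For O'Grady's example this is a lattice computation: $[\widetilde{\Sigma}]$ has square $-8$ and divisibility $2$ in $H^2(\widetilde{X},\ZZ)\cong \mathrm{U}^{\oplus 3}\oplus\langle -2\rangle^{\oplus 2}$, by Rapagnetta. Since $H^2$ is torsion-free and $\widetilde{\Sigma}$ deforms along locally trivial deformations of $X$, divisibility by $2$ holds for every $\widetilde{X}$. Choosing $L$ with $L^{\otimes 2}\cong\mathcal{O}(\widetilde{\Sigma})$, the cyclic cover construction gives a double cover $\widetilde{W}\to\widetilde{X}$ branched exactly along the smooth divisor $\widetilde{\Sigma}$. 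Because $\widetilde{X}$ is simply connected, $\widetilde{W}$ is irreducible.

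Next I would identify $\widetilde{W}$ birationally with a hyper-Kähler manifold. The ramification divisor $R\subset\widetilde{W}$ maps isomorphically onto $\widetilde{\Sigma}$, a $\PP^1$-bundle over $\Sigma$ away from $\Omega$. Its normal bundle restricted to each fibre has degree $-1$, since $\widetilde{\Sigma}$ restricts to degree $-2$ on the fibres. So $R$ can be contracted fibrewise (Fujiki--Nakano) away from the preimage of $\Omega$, giving a variety $W$ with a finite double cover $W\to X$ branched along $\Sigma$. The canonical bundle formula $K_{\widetilde{W}}=\pi^*K_{\widetilde{X}}+R=R$ shows that after contracting $R$ the discrepancy vanishes, so $W$ is smooth in codimension two over $\Sigma\setminus\Omega$ with trivial canonical class. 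Pulling back the symplectic form gives a holomorphic $2$-form that is nondegenerate off a codimension $\ge 2$ set. Near the $256$ points $\Omega$ I would use the local model of $X$ at $\Omega$, which Lehn--Sorger and Kaledin--Lehn--Sorger describe via the quadric cone in $\mathfrak{sp}_4$. This should show that $W$ has terminal $\QQ$-factorial singularities there, resolvable symplectically after a flop, or at least admitting a symplectic resolution. The output is a compact symplectic manifold $Z_X$ birational to $W$.

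Finally, I would show that $Z_X$ is irreducible hyper-Kähler of $\mathrm{K}3^{[3]}$-type. Simple connectivity and $h^{2,0}=1$ would follow from comparing with $\widetilde{X}$: the $\iota$-anti-invariant part of $H^0(\Omega^2)$ vanishes, and $\pi_1$ is controlled by the branched cover. Euler characteristic or Betti number computations would then give $b_2(Z_X)=23$, matching $\mathrm{K}3^{[3]}$ rather than $\mathrm{Kum}^3$ or $\mathrm{OG}6$. The cleanest route is again deformation: it suffices to treat one explicit $X$. For O'Grady's $X=K_A(2,0,-2)$ with $A$ principally polarized or a product, MRS identify $Z_X$ with a moduli space of sheaves on a K3 surface (a Kummer-type $S$), hence of $\mathrm{K}3^{[3]}$-type. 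The deformation type is then constant in locally trivial families, because the double cover construction varies smoothly once $L$ is chosen in the family.

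The main obstacle is the local analysis over the $256$ points of $\Omega$, where $\Sigma$ itself is singular and the naive contraction of $R$ may fail to be smooth. I would first try to reproduce the MRS argument there: blow up $\Omega$ as in O'Grady's procedure, take the double cover, and check directly that the resulting exceptional loci over $\Omega$ are $\PP^3$'s with normal bundle $\mathcal{O}(-1)^{\oplus 3}$ twisted appropriately, so that they can be contracted or Mukai-flopped to reach a smooth $Z_X$.
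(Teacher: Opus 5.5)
Your architecture matches the paper's proof: divisibility of $[\widetilde\Sigma]$ by $2$ via Rapagnetta and locally trivial deformation, then the double covers $\widetilde W\to\widetilde X$ and $W\to X$ branched along $\widetilde\Sigma$ and $\Sigma$, then a modification over the $256$ bad points, and finally deformation invariance plus the MRS result in the moduli case $X=K_{A,H}(v)$. The gap is at the step you yourself flag as the main obstacle. It is the only step that actually produces the manifold $Z_X$, and both concrete guesses you make there are wrong.

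$W$ is smooth off $\Gamma=\phi^{-1}(\Omega)$. At each of these $256$ points it has an isolated singularity that is terminal, as you say, but \emph{not} $\QQ$-factorial, because it admits small symplectic resolutions. Locally, the double cover of $\{A\in\mathfrak{sp}_4 : A^2=0\}$ branched along its singular locus is the minimal nilpotent orbit closure in $\mathfrak{sl}_4$, i.e.\ the cone over the incidence variety $I\subset\PP^3\times\PP^{3\vee}$.

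In fact ``terminal $\QQ$-factorial and symplectically resolvable'' is self-contradictory. A crepant resolution of a terminal variety has no exceptional divisors. A proper birational morphism from a normal variety onto a $\QQ$-factorial variety with no exceptional divisors is an isomorphism.

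Likewise, the $\PP^3$'s in $Z_X$ are Lagrangian. Their normal bundle is therefore $\Omega_{\PP^3}$, not a twist of $\mathcal{O}(-1)^{\oplus 3}$. Contracting them does not produce a smooth model; it gives back the singular $W$. This is the factorization $Z_X\to W\to X$ used later in the paper.

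The missing input is taken by the paper from MRS (proof of Prop.~5.3). It transfers to every singular $\mathrm{OG}6$-variety because locally trivial deformations preserve the local analytic type near $\Gamma$. The steps are:
\begin{enumerate}
\item Blow up $W$ at $\Gamma$. The blow-up $\overline W$ is smooth, and its exceptional divisors $I_i$ are copies of $I$.
\item Each $I_i$ is a $\PP^2$-bundle over $\PP^3$ in two ways, with normal bundle restricting to $\mathcal{O}_{\PP^2}(-1)$ on the fibres.
\item Fujiki--Nakano therefore contracts each $I_i$ along a chosen ruling onto a $\PP^3$, giving a smooth $Z_X$. The two choices differ by Mukai flops.
\end{enumerate}
If you blow up $\Omega$ first and then take the double cover branched along the strict transform of $\Sigma$, as you suggest, you land on $\overline W$. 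What you find over each point of $\Omega$ is then the divisor $I$, not a $\PP^3$.

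For the deformation step you must also choose the ruling coherently in a locally trivial family, which the paper does after a finite base change. Choosing $L$ in the family is not enough.
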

\begin{proof}
We follow \cite{MRS18}. Let $X$ be a singular OG6-manifold, with associated OG6-resolution $\widetilde{X}\to X$. Since $X$ may be deformed via a locally trivial deformation to the OG6-moduli space $K_{A,H}(v)$ on an abelian surface $A$, the class of the exceptional divisor $\widetilde{\Sigma}$ of $\widetilde{X}\to X$ is divisible by $2$ in $\Pic(\widetilde{X})$, by \cite[Theorem 3.3.1]{rapagnetta2007topological}. 
Arguing as in \cite[\S4]{MRS18}, there exists a commutative diagram 
\begin{equation}
\begin{tikzcd}
     \widetilde{W} \arrow{r}{\widetilde{\phi}} \arrow{d} & \widetilde{X} \arrow{d} \\
        W \arrow{r}{\phi} & X
\end{tikzcd}
\end{equation}
of complex analytic spaces, in which $\widetilde{\phi}$ and $\phi$ are double covers ramified over $\widetilde{\Sigma}$ and $\Sigma$ respectively. We consider the subvarieties $\Gamma\coloneqq \phi^{-1}(\Omega)$ and $\Delta\coloneqq \phi^{-1}(\Sigma)$ of $W$; the restriction of $\phi$ induces an isomorphism $\Delta\cong \Sigma$. Hence $\Delta$ is isomorphic to $B/\langle -1\rangle$ for a $4$-dimensional complex torus $B$, and $\Gamma$ consists of $256$ points; moreover, $\Gamma$ is the singular locus of $W$.

As in \cite[Proof of Proposition 5.3]{MRS18}, we consider the blow-up~$\overline{W}$ of $W$ along $\Gamma$. Then $\overline{W}$ is non-singular and contains $256$ exceptional divisors $I_i$, isomorphic to the incidence variety $I\subset \mathbb{P}^3\times \mathbb{P}^{3,\vee}$ for all $i$. Each of the two projections gives $I$ the structure of $\mathbb{P}^2$-bundle over $\mathbb{P}^3$. Choose a projection $p_i\colon I_i\to \mathbb{P}^3$ for each $i$. Then we may apply Nakano contraction theorem to obtain a complex K\"ahler manifold $Z_X$ with a morphism $ \pi\colon \overline{W}\to Z_X$ which restricts to the $\mathbb{P}^2$-bundle $p_i$ over each $I_i$. 
Moreover, the construction may be performed in locally trivial families (up to finite base-change), so that the manifolds $Z_{X}$ and $Z_{X'}$ are deformation equivalent, for any singular OG6-varieties $X, X'$. By \cite{MRS18}, if $X=K_{A,H}(v)$ is an OG6-moduli space on an abelian surface~$A$, then $Z_X$ is a hyper-K\"ahler variety of $\mathrm{K}3^{[3]}$-type; therefore, for any singular OG6-variety $X$, the manifold $Z_X$ constructed above is of $\mathrm{K}3^{[3]}$-type. By construction, $\phi$ gives a rational double cover $s\colon Z_X\dashrightarrow \widetilde{X}$.
\end{proof}

\begin{remark}
   The isomorphism class of the $\mathrm{K}3^{[3]}$-variety $Z_X$ is not canonically determined by $X$, but only its birational class is. Indeed, for $i=1,2,\dots, 256$ we need to choose one of the two projections $I_i\to \mathbb{P}^3$, and different choices may lead to birational but not isomorphic manifolds $Z_X$, related by Mukai flops. 
\end{remark}

The main goal of this section is the study of the $\mathrm{K}3^{[3]}$-manifolds arising from the above theorem.
\begin{definition}
    Let $Z$ be a hyper-K\"ahler manifold of $\mathrm{K}3^{[3]}$-type. We say that $Z$ is an \emph{MRS-double cover} if $Z$ is bimeromorphic to the manifold $Z_X$ arising as a rational double cover of a singular $\mathrm{OG}6$-variety $X$ via the construction of Mongardi--Rapagnetta--Sacc\`a. 
\end{definition}

\begin{remark}\label{rmk:explicitExampleMRS}
	The double cover of Theorem \ref{thm:MRSconstruction} was first observed by Rapagnetta \cite{rapagnetta2007topological} in the following special case.
	Let $J$ be the Jacobian of a very general curve of genus $2$, and let $\Theta$ be a symmetric theta divisor. The linear system $|2\Theta|$ induces an embedding of $J/\pm 1$ into $\PP^3$ as a quartic surface with $16$ nodes. Let $\Km(J)$ be the Kummer K3 surface associated with $J$, and denote by $H$ the pull-back to $\Km(J)$ of the hyperplane divisor of $J/\pm 1\subset \PP^3$. 
	Consider the singular OG6-variety $K_{J,\Theta}(0,2\Theta, 2)$; a general point of this moduli space corresponds to a line bundle supported on a curve in the linear system $|2\Theta|$. The MRS-double cover $Z_{K_{J,H}(0,2\Theta,2)}$ is then identified (up to birationality) with the moduli space $M_{\Km(J)}(0, H, 1)$ (which is also birational to $\Km(J)^{[3]}$).
	A general point of this moduli space has the form $i_*(L)$ for a line bundle $L$ supported on a curve $i\colon C\hookrightarrow \Km(J)$ in the linear system $|H|$ and which does not intersect the $16$ exceptional curves $C_i$ for the blow-up $\Km(J)\to J/\pm 1$; the rational map $M_{\Km(J)}(0,H,1)\dashrightarrow K_{J,\Theta}(0, 2\Theta, 2)$ is induced by the pull-back of sheaves along the rational map $J\dashrightarrow \Km(J)$. 
\end{remark}

\subsection{Characterization of MRS-double covers}
The goal of this section is to characterize the $\mathrm{K}3^{[3]}$-manifolds which are birational to MRS-double covers in terms of their second cohomology. See Appendix \ref{subsec:Barnes--Wall-Lattice} for details on the Barnes--Wall lattice.

\begin{theorem}\label{thm:characterizationMRS}
	Let $Z$ be a manifold of $\mathrm{K}3^{[3]}$-type. Then $Z$ is the MRS-double cover of some singular $\mathrm{OG}6$-variety $X$ if and only if there exists a primitive embedding of the Barnes--Wall lattice $\mathrm{BW}_{16}$ into the N\'eron--Severi lattice of $Z$.
\end{theorem}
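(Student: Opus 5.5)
The plan is to copy the strategy of Theorem \ref{thm:birationalCharacterization}: compute the cohomological shape of the MRS construction once, on a model case, and then conclude using surjectivity of the period map and the birational Torelli theorem for $\mathrm{K}3^{[3]}$-type manifolds \cite{markman2011survey}. The statement is read up to bimeromorphism, as in the Definition of MRS-double covers.

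\textbf{Necessity.} The first step is to compute the push-forward $s_*\colon H^2(\widetilde X,\ZZ)\to H^2(Z_X,\ZZ)$, or equivalently the pull-back along the rational double cover. Here $\widetilde X$ is of $\mathrm{OG}6$-type, with $H^2(\widetilde X,\ZZ)\simeq \mathrm{U}^{\oplus 3}\oplus\langle -2\rangle^{\oplus 2}$. By a variant of Lemma \ref{lem:scalarFactor} for a rational map of degree $2$, taking $c_{\mathrm{OG}6}=60$ and $c_{\mathrm{K}3^{[3]}}=15$, the map $s^*$ should multiply the form by $2$ on the transcendental part. The image of $\mathrm{U}^{\oplus 3}\oplus\langle -2\rangle$ is the span of $H^2(A\text{-part})$ together with the class $\tfrac12[\widetilde\Sigma]$.

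I would carry out this computation on Rapagnetta's model in Remark \ref{rmk:explicitExampleMRS}, where $Z\sim M_{\Km(J)}(0,H,1)$. There the saturation of the image should be a rank-$8$ lattice $T\simeq \mathrm{U}(2)^{\oplus 3}\oplus\langle -4\rangle\oplus\langle ?\rangle$ of signature $(3,5)$. Its orthogonal complement in $\Lambda_{\mathrm{K}3^{[3]}}$ is negative definite of rank $15$, but we need rank $16$. Hence the correct sublattice is the saturation of the image of the classes of type $(2,0)$ and their transcendental companions, namely the image of $H^2(\widetilde X)$ minus the $\widetilde\Sigma$ direction, and its orthogonal complement should be identified with $\mathrm{BW}_{16}$.

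Concretely, the steps are as follows. (a) Show that the subgroup $\mathrm{im}(s^*)^{\perp}$ is generated, up to finite index, by the classes of the $256$ divisors coming from the exceptional loci over $\Omega$, or rather their images in $Z_X$. (b) Identify it with $\mathrm{BW}_{16}$ through its discriminant form, $(\ZZ/2)^{8}$ with the appropriate parity, using the uniqueness results from the Appendix. Since the construction works in locally trivial families (Theorem \ref{thm:MRSconstruction}), this embedding $\mathrm{BW}_{16}\hookrightarrow\Lambda_{\mathrm{K}3^{[3]}}$ is canonical up to markings and consists of algebraic classes, so it lies in $\NS(Z_X)$.

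\textbf{Sufficiency.} Let $j\colon\mathrm{BW}_{16}\hookrightarrow \NS(Z)$ be primitive. By a uniqueness lemma for primitive embeddings of $\mathrm{BW}_{16}$ into $\Lambda_{\mathrm{K}3^{[3]}}$ (analogous to Lemma \ref{lem:embeddingL_Km}, proved via Nikulin's criteria on discriminant forms), we choose a marking sending $j$ to the canonical embedding obtained in the necessity part. The period of $Z$ then lies in the subdomain $\Omega^{\mathrm{BW}_{16}^{\perp}}$. Through $s^*$, this subdomain is identified with the period domain of the locally trivial deformations of singular $\mathrm{OG}6$-varieties, i.e. of $\widetilde X$ with $\widetilde\Sigma$ staying algebraic. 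Surjectivity of the period map for locally trivial deformations \cite{bakkerLehn} then yields $X$ with $Z_X$ having the same period as $Z$, and Markman's birational Torelli theorem gives that $Z$ is bimeromorphic to $Z_X$.

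\textbf{Main obstacle.} The main obstacle is step (b) together with the uniqueness of the embedding. One must check that the complement of the saturated image of $H^2(\widetilde X)$ in $H^2(Z_X)$ is exactly $\mathrm{BW}_{16}$ rather than some other rank-$16$ lattice with the same discriminant, and that $\mathrm{BW}_{16}$ has a single $\Oo(\Lambda_{\mathrm{K}3^{[3]}})$-orbit of primitive embeddings. I would attack the first part using that $\mathrm{BW}_{16}$ is the unique such genus member without roots, showing that $\mathrm{im}(s^*)^{\perp}$ contains no $(-2)$-classes; otherwise a wall divisor would contradict the generic ampleness on $Z_X$. For the second part, I would apply Nikulin's theorem to the orthogonal complement, which is indefinite of small rank, just as in the proof of Proposition \ref{prop:HodgeIsometricKummer}.
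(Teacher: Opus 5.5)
Your overall frame (determine the lattice-theoretic shape of the MRS construction once, then conclude with surjectivity of the period map for singular $\mathrm{OG}6$-varieties and Markman's birational Torelli theorem) is the paper's, and your sufficiency direction agrees with it. The gap is in the necessity direction, which carries all the content: nothing in your proposal actually identifies $\mathrm{im}(s^*)^{\perp}$ with $\mathrm{BW}_{16}$.

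Step (a) is false. No divisor of $Z_X$ lies over $\Omega$: the $256$ exceptional divisors $I_i\subset\overline{W}$ are contracted by $\overline{W}\to Z_X$ onto $256$ copies of $\mathbb{P}^3$. In fact, for very general $X$ one has $\NS(Z_X)=\mathrm{im}(s^*)^{\perp}$, and no nonzero class in it is effective (see below). Similarly, $\tfrac12[\widetilde\Sigma]$ is not in the image. The preimage of $\Sigma$ in $Z_X$ has codimension $2$, so $s^*[\widetilde\Sigma]=0$ and $s^*$ factors through $H^2(X,\ZZ)\cong\mathrm{U}^{\oplus 3}\oplus\langle -2\rangle$. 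This, not a rank count, is why the image has rank $7$.

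Step (b) presupposes its conclusion. The image of $s^*$, even its saturation $\mathrm{U}(2)^{\oplus 3}\oplus\langle -4\rangle$, does not determine the genus of its orthogonal complement. By Lemmas \ref{lem:embeddingL_Km} and \ref{lem:embeddingBWintoK33}, this same lattice is the complement in $\Lambda_{\mathrm{K}3^{[3]}}$ both of $L_{\Km}$ (discriminant $(\ZZ/2\ZZ)^6$) and of $\mathrm{BW}_{16}$ (discriminant $(\ZZ/2\ZZ)^8$); only the gluing distinguishes them. An explicit computation on Rapagnetta's model must detect exactly this, and it is delicate. If you model $s^*$ by the naive push-forward of Mukai vectors $(a,b,c)\mapsto(2a,\pi_*b,c)$ along $J\dashrightarrow\Km(J)$, the orthogonal complement comes out as exactly $L_{\Km}$, i.e.\ the hyper-Kummer picture. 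So such a computation is meaningful only if it keeps track of the Grothendieck--Riemann--Roch correction from the branch curves. Your root-freeness argument via ``generic ampleness'' is also only a sketch.

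The missing idea is the covering involution $\phi\in\mathrm{Bir}(Z_X)$ of $s$. Its invariant lattice $T$ contains $\mathrm{im}(s^*)$ with finite index, so $\mathrm{im}(s^*)^{\perp}$ is the anti-invariant lattice $S$, and the paper extracts everything from this.
\begin{enumerate}
\item $S$ has no $(-2)$-classes, by Markman's results: an anti-invariant effective class $D$ would make $\phi_*D=-D$ effective.
\item $\phi$ is a monodromy operator, hence acts by $\pm1$ on $A_{\Lambda_{\mathrm{K}3^{[3]}}}$. The sign $-1$ is excluded: $T$ would then be $2$-elementary of odd length $k$ and an overlattice of $\mathrm{U}(2)^{\oplus3}\oplus\langle-4\rangle$ of index $\sqrt{2^{8-k}}$, which is not an integer.
\item Hence $\phi$ extends to an involution of the Mukai lattice fixing $v$. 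So $S$ is the anti-invariant lattice of an involution of a unimodular lattice, in particular $2$-elementary.
\item $S$ embeds primitively into the Leech lattice, $\phi|_S$ extends to an involution there, and Harada--Lang's classification gives $S\cong\mathrm{BW}_{16}$.
\item Primitivity of $s^*$ then follows from Lemma \ref{lem:embeddingBWintoK33}.
\end{enumerate}
Your alternative, using that $\mathrm{BW}_{16}$ is the only root-free lattice in its genus, could replace the Leech step. It works only once the genus of $S$ has been determined, and that is precisely what your proposal does not do.
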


We will first show the following relation between the transcendental lattices of a singular OG6-variety $X$ and the associated MRS-double cover $Z_X$. 

\begin{proposition}\label{prop:cohomologyMRS}
	Let $s\colon Z_X\dashrightarrow X$ be the MRS-double cover of a singular $\mathrm{OG}6$-variety $X$. Then the pull-back $s^*\colon H^2(X,\ZZ)\to H^2(Z_X,\ZZ)$ multiplies the form by a factor $2$ and defines a primitive embedding of lattices and Hodge structures 
	\begin{equation}
		s^*\colon H^2(X,\ZZ)(2)\hookrightarrow H^2(Z_X,\ZZ),
	\end{equation}
	with orthogonal complement isometric to the Barnes--Wall lattice $\mathrm{BW}_{16}$.
	In particular, we have a Hodge isometry $s^*\colon H^2_{\mathrm{tr}}(X,\ZZ)(2)\xrightarrow{\ \sim \ } H^2_{\mathrm{tr}}(Z_X,\ZZ)$.
\end{proposition}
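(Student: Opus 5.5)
We first reduce to a single convenient example. The MRS construction works in locally trivial families (Theorem \ref{thm:MRSconstruction}), and $H^2$ of a singular OG6-variety is a local system over such families, as is the pull-back $s^*$. The statement is topological: it concerns the lattice map and the orthogonal complement. Compatibility with Hodge structures is automatic, since $s^*$ is induced by a rational map. So it suffices to treat one $X$, e.g.\ $X=K_{A,H}(v)$ on an abelian surface. Conveniently, we may take Rapagnetta's example from Remark \ref{rmk:explicitExampleMRS}, where $Z_X$ is birational to $M_{\Km(J)}(0,H,1)$, itself birational to $\Km(J)^{[3]}$.

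\textbf{Scaling factor.} Here $H^2(X,\ZZ)$ is the lattice $\mathrm{U}^{\oplus 3}\oplus\langle-2\rangle^{\oplus 2}$ of rank $8$, identified with $\widetilde{\Sigma}^\perp\subset H^2(\widetilde X,\ZZ)$ (Perego--Rapagnetta). We compute $s^*=\tilde s^*$ through the resolved double cover $\widetilde W\to\widetilde X$ and a common resolution with $Z_X$. A Fujiki-type computation as in Lemma \ref{lem:scalarFactor} gives the factor: for $x\in H^2(X)$ we have $\int_{Z}(s^*x)^6=2\int_{\widetilde X}x^6$. The Fujiki constants are $c_{\mathrm{OG6}}=60$ and $c_{\mathrm{K3^{[3]}}}=15$, so
\[ q_Z(s^*x)^3=\tfrac{2\cdot 60}{15}\,q_X(x)^3=8\,q_X(x)^3, \]
hence the factor is $2$. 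Injectivity is clear, since $s_*s^*=2$.

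\textbf{Primitivity and complement.} The complement $s^*(H^2(X))^\perp$ has rank $23-8=16$ and is negative definite, since $s^*H^2(X)(2)$ contains the full positive part of signature $(3,\cdot)$. It remains to show that $s^*H^2(X,\ZZ)$ is saturated and that the complement is $\mathrm{BW}_{16}$.

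For this we work in the explicit model and compute $s^*$ on generators. The classes of $H^2(J,\ZZ)\subset H^2(X)$ should pull back to $H^2(J,\ZZ)(2)\subset H^2(\Km(J))\subset H^2(Z)$, up to a scalar fixed by monodromy/Schur as in Lemma \ref{lem:cohomologyq_*}. The remaining two classes of $H^2(X)$ should map to explicit combinations of $\delta$ and the Kummer classes $[C_\tau]$, or rather their analogues on $M_{\Km(J)}(0,H,1)$, via Mukai's map $v^\perp\to H^2$. This identifies the image concretely inside $\mathrm{U}^{\oplus3}\oplus E_8(-1)^{\oplus2}\oplus\langle-4\rangle$.

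Primitivity then follows by checking discriminants. Alternatively, one can argue that the saturation would give an overlattice of $H^2(X)(2)$ with discriminant group $(\ZZ/2)^6\oplus(\ZZ/4)^2$, and then exclude such overlattices using the gluing with the complement.

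\textbf{Identifying $\mathrm{BW}_{16}$.} The complement $N$ is negative definite of rank $16$. Its discriminant form is minus that of $H^2(X)(2)$, namely $A_N\cong(\ZZ/2)^6\oplus(\ZZ/4)^2$ up to sign. I will show $N$ has no vectors of square $-2$: a root in $N$ would be a class orthogonal to $s^*H^2(X)$ and invariant under the covering involution's anti-part, i.e.\ in the $(-1)$-eigenlattice of the covering involution on $H^2(Z)$. Such a root would yield a $(-2)$-divisor on the cover contracted compatibly, contradicting the structure (256 $\mathbb{P}^2$-bundle contractions). Alternatively, compute $N$ directly in the model. Then the classification of genus: the even lattices in this genus of rank $16$ with minimum $4$ (after scaling) are $\mathrm{BW}_{16}(-1)$ only (Appendix \ref{subsec:Barnes--Wall-Lattice}).

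\textbf{Main obstacle.} I expect the main obstacle to be this last step: pinning down $N$ exactly rather than just its genus, which has several classes. The safest route is the explicit computation in Rapagnetta's example. There I would exhibit $16$ generators of $N$, built from the Kummer curve classes and $\delta$ with appropriate halvings, and match their Gram matrix with a known Barnes--Wall basis. The statement on transcendental lattices then follows immediately by taking orthogonal complements of the Néron--Severi groups, since $\mathrm{BW}_{16}$ consists of algebraic classes.
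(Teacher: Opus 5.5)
Your reduction to a single example and your derivation of the factor $2$ from the Fujiki constants ($60$ and $15$, with $\deg s=2$) are fine. The lattice-theoretic core, however, has a genuine gap, and it starts from a misidentification. $H^2(X,\ZZ)\cong\mathrm{U}^{\oplus 3}\oplus\langle -2\rangle$ has rank $7$, since singular $\mathrm{OG}6$-varieties have $b_2=7$. The rank-$8$ lattice $\mathrm{U}^{\oplus 3}\oplus\langle -2\rangle^{\oplus 2}$ is $H^2(\widetilde X,\ZZ)$, and $\widetilde\Sigma^{\perp}$ has rank $7$. Your count $23-8=16$ is an arithmetic slip that hides this, and in fact $A_{H^2(X)(2)}\cong(\ZZ/2\ZZ)^6\oplus\ZZ/4\ZZ$.

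More seriously, $N$ does not have discriminant form ``minus that of the image'', because $\Lambda_{\mathrm{K}3^{[3]}}$ is not unimodular; indeed $A_{\mathrm{BW}_{16}}\cong(\ZZ/2\ZZ)^8$. Knowing the image only up to isometry cannot settle the question either. The same saturated lattice $\mathrm{U}(2)^{\oplus 3}\oplus\langle -4\rangle$ sits in $\Lambda_{\mathrm{K}3^{[3]}}$ with complement the Kummer lattice $L_{\Km}$ in the hyper-Kummer case (Proposition \ref{prop:cohomologyY_K}, Lemma \ref{lem:embeddingL_Km}). What singles out $\mathrm{BW}_{16}$ is that $N$ has no $(-2)$-classes and is $2$-elementary of the right type, and your proposal proves neither. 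The sentence about a root yielding a $(-2)$-divisor ``contracted compatibly'' is not an argument, and the genus you would feed into the uniqueness of $\mathrm{BW}_{16}$ does not contain $\mathrm{BW}_{16}$.

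The missing idea is the covering involution $\phi\in\mathrm{Bir}(Z_X)$ of $s$. It is symplectic, so its anti-invariant lattice $S=\mathrm{im}(s^*)^{\perp}$ lies in $\NS(Z_X)$. By Markman, every $(-2)$-class on a manifold of $\mathrm{K}3^{[n]}$-type is $\QQ$-effective up to sign; since $\phi^*$ preserves effective classes and acts as $-1$ on $S$, $S$ is root-free. As a monodromy operator, $\phi$ acts by $\pm 1$ on $A_{H^2(Z_X)}\cong\ZZ/4\ZZ$. The sign $-1$ is excluded: the invariant lattice would then be $2$-elementary of odd length $k$ and an overlattice of $\mathrm{U}(2)^{\oplus 3}\oplus\langle -4\rangle$ of index $2^{(8-k)/2}$. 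Hence $\phi$ extends to an involution of the unimodular Mukai lattice fixing the generator of $H^2(Z_X,\ZZ)^{\perp}$. So $S$ is $2$-elementary and embeds into the Leech lattice, where Harada--Lang's classification leaves only $\mathrm{BW}_{16}$. Primitivity of $s^*$ then follows from Lemma \ref{lem:embeddingBWintoK33}.

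Your explicit route can also be completed, but only via a real Grothendieck--Riemann--Roch computation for $F\mapsto p_*q^*F$ along $J\xleftarrow{p}\widetilde J\xrightarrow{q}\Km(J)$, with $E_\tau$ the exceptional curves of $p$. Note that in $H^2(X,\ZZ)\cong v_0^{\perp}$, with $v_0=(0,\Theta,1)$, only $\Theta^{\perp}\subset H^2(J,\ZZ)$ occurs, not all of $H^2(J,\ZZ)$. The Todd class $1-\tfrac12\sum_\tau E_\tau$ of $\widetilde J$ contributes $\pm\tfrac12\sum_\tau C_\tau$ to the image of the rank class. The complement then comes out as $\{x\in L_{\Km} : x\cdot\sum_\tau C_\tau\equiv 0 \bmod 4\}$, the root-free index-$2$ sublattice of $L_{\Km}$, which is Construction~B of $\mathrm{BW}_{16}$ from the Reed--Muller code $\mathrm{RM}(1,4)$. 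If you drop that term, the map $(r,c,s)\mapsto(2r,\pi_*c,s)$, with $\pi\colon J\dashrightarrow\Km(J)$, is still a similitude of factor $2$ sending $v_0$ to $(0,H,1)$. Its image, however, has complement $L_{\Km}$, which is exactly the trap your ``should'' statements leave open.
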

\begin{proof}
    The second cohomology of a singular OG6-variety $X$ carries a pure Hodge structure of weight $2$; moreover, as a lattice, we have $H^2(X,\ZZ) = \mathrm{U}^{\oplus 3}\oplus \langle -2\rangle$.
    The pull-back map $s^*$ is well-defined and non-zero, since the pull-back of the symplectic form on the regular locus of $X$ induces a symplectic form of $Z_X$. Moreover, considering a locally trivial deformation of $X$ to a very general singular OG6-manifold for which the Hodge structure $H^2(X,\ZZ)$ is irreducible, it follows that $s^*$ is injective. 
    Applying Lemma \ref{lem:scalarFactor}, we find that $s^*$ multiplies the form by a factor $2$.
    We thus get an embedding $s^*\colon H^2(X,\ZZ)(2)\hookrightarrow H^2(Z_X,\ZZ)$ (the Fujiki constant of a singular OG6-variety is the same as that of its crepant resolution, computed in \cite{rapagnetta2007topological}). 
    
    Let $\phi\in\mathrm{Bir}(Z_X)$ be the birational covering involution associated with $s$. Then $\phi$ induces an isometry of $H^2(Z_X,\ZZ)$, which we still denote by $\phi$. Let $T\subset H^2(Z_X,\ZZ)$ be the invariant sublattice for $\phi$, and let $S$ be the orthogonal complement (on which $\phi$ acts as $-1$). The image of $s^*$ is contained in $T$ with finite index; hence, $T$ is a rank $7$ lattice of signature $(3,4)$, containing with finite-index $s^*(H^2(X,\ZZ)(2))\simeq \mathrm{U}(2)^{\oplus 3}\oplus \langle -4\rangle$. The anti-invariant lattice $S$ is negative definite of rank $16$. As a consequence of Markman's results \cite[Proposition 1.8, Theorem 9.17]{markman2011survey}, the anti-invariant lattice $S$ contains no $(-2)$-class (see \cite[Lemma 2.9]{10.1093/imrn/rnae112}). 
    Moreover, $\phi$ is a monodromy operator and it has to act as $\pm 1$ on the discriminant group, by \cite[Lemma 4.2]{Markman-IntegralConstraintsMonodromy}. 
    Let $i\colon H^2(Z_X,\ZZ)\hookrightarrow \widetilde{\Lambda}_{\mathrm{K}3}$ be a primitive embedding, with orthogonal complement $\langle v\rangle$ for some primitive $v$ of square $4$. Then $\phi$ extends to an isometry $\tilde{\phi}$ of $\widetilde{\Lambda}_{\mathrm{K}3}$, such that $\tilde{\phi}(v)=\pm v$ according to whether the isometry $\phi$ acts on $A_{H^2(Z_X,\ZZ)}$ as $1$ or $-1$.
    
    We claim that $\phi\in \Oo(H^2(Z_X,\ZZ))$ acts trivially on the discriminant group.
    Indeed, assume this is not the case. Let $\widetilde{T}$ and $\widetilde{S}$ be the invariant and anti-invariant lattices for $\tilde{\phi}\in \Oo(\widetilde{\Lambda}_{\mathrm{K}3})$. Then, since $\phi$ acts as $-1$ on the discriminant group, we have $\widetilde{T}=T$, while $\widetilde{S}$ is the saturation of $S\oplus \langle v \rangle$. Now, since $\widetilde{\Lambda}_{\mathrm{K}3}$ is unimodular, the invariant and anti-invariant sublattices for the involution $\tilde{\phi}$ are $2$-elementary (see \cite[Proposition 3.6.4]{Nikulin1980}). Hence, we have $A_T = (\ZZ/2\ZZ)^k$ for some $k$; since $T$ has odd rank, $k$ must be odd (\cite[Theorem 3.6.2]{Nikulin1980}).
    But then $T$ would have to be an overlattice of $\mathrm{U}(2)^{\oplus 3}\oplus \langle -4\rangle$ of index $\sqrt{\frac{2^8}{2^k}}$, which is not an integer as $k$ is odd. 

    Therefore, $\phi$ acts trivially on $A_{H^2(Z_X,\ZZ)}$, and $\phi$ extends to $\tilde{\phi}\in \widetilde{\Lambda}_{\mathrm{K}3}$ such that $\tilde{\phi}(v)=v$. The anti-invariant sublattice for $\tilde{\phi}$ equals the anti-invariant sublattice $S$ of $\phi$, which is negative definite of rank $16$. Hence, $S$ is $2$-elementary, with discriminant of length at most $8$, and $\phi_{|_S}$ acts trivially on $A_S$ (since this action has to match the one on the discriminant of the invariant sublattice). 

    Since $S$ contains no $-2$-classes, it can be primitively embedded in the Leech lattice $\Lambda_{24}$, which is the unique negative definite unimodular lattice of rank $24$ which contains no $-2$-classes. This follows from \cite[Proposition 2.2]{huybrechts2005equivalences} and its proof, which can be applied since the action of $\tilde{\phi}$ is symplectic with respect to the Hodge structure on $\widetilde{\Lambda}_{\mathrm{K}3}$ induced by the inclusion of $H^2(Z_X,\ZZ)$.
    Hence, we have a primitive embedding $S\hookrightarrow \Lambda_{24}$. Since $\phi_{|_S}$ is the identity on the discriminant, it extends to an isometry $\Phi$ of the Leech lattice, which acts as the identity on $S^{\bot}$ (and as $-1$ on $S$). The possible anti-invariant sublattices for an involution of the Leech lattice are classified in \cite{HaradaLang1990}; the only one of rank $16$ is the Barnes--Wall lattice $\mathrm{BW}_{16}$.

    This shows that the orthogonal complement to $\mathrm{im}(s^*)$ is isometric to $\mathrm{BW}_{16}$. By Lemma \ref{lem:embeddingBWintoK33}, there is a unique isometry class of primitive embeddings of $\mathrm{BW}_{16}$ in $\Lambda_{\mathrm{K}3^{[3]}}$, with orthogonal complement isometric to $\mathrm{U}(2)^{\oplus 3}\oplus \langle -4\rangle$. Hence, the saturation of the image of 
    \begin{equation}
        s^*\colon H^2(X,\ZZ)(2)\hookrightarrow H^2(Z_X,\ZZ)
    \end{equation}
    is isometric to $\mathrm{U}(2)^{\oplus 3}\oplus \langle -4\rangle$, and it follows that $s^*$ is a primitive embedding.
\end{proof}

\begin{proof}[{Proof of Theorem \ref{thm:characterizationMRS}}]
Since birational maps of hyper-K\"ahler manifolds induce Hodge isometries between their second cohomology groups, for any MRS-double cover $Z$ there exists a primitive embedding $\mathrm{BW}_{16}\hookrightarrow \NS(Z)$, by Proposition \ref{prop:cohomologyMRS}.

Again by Proposition \ref{prop:cohomologyMRS}, the construction of Mongardi--Rapagnetta--Sacc\`a gives a primitive embedding of lattices
\begin{equation} \label{eq:pull-backMRS}
    s^*\colon \Lambda_{\mathrm{OG}6^{\mathrm{sing}}}(2)\hookrightarrow \Lambda_{\mathrm{K}3^{[3]}},
\end{equation}
where $\Lambda_{\mathrm{OG}6^{\mathrm{sing}}}$ is the lattice underlying the second cohomology of singular OG6-varieties, which is isometric to $\mathrm{U}^{\oplus 3}\oplus \langle -2\rangle$. The map $s^*$ is canonical in the following sense: let $X$ be any singular OG6-variety, and let $s_X\colon Z_X\dashrightarrow X$ be the rational double cover of Theorem \ref{thm:MRSconstruction}; then, there exist markings $\eta\colon H^2(X,\ZZ)\xrightarrow{\ \sim \ } \Lambda_{\mathrm{OG}6^{\mathrm{sing}}}$ and $\theta\colon H^2(Z_X,\ZZ)\xrightarrow{\ \sim \ } \Lambda_{\mathrm{K}3^{[3]}}$ such that $s_X^*\colon H^2(X,\ZZ)\to H^2(Z_X,\ZZ)$ equals the composition $\theta^{-1}\circ s^*\circ \eta$. 
Moreover, the orthogonal complement to $\mathrm{im}(s^*)$ is isometric to the Barnes--Wall lattice; let $i\colon \mathrm{BW}_{16}\hookrightarrow \Lambda_{\mathrm{K}3^{[3]}}$ denote this embedding. 

Consider now the subspace 
\begin{equation}
\Omega_{\mathrm{K}3^{[3]}}^{\mathrm{BW}_{16}^{\bot}} \coloneqq \{ x\in \Omega_{\mathrm{K}3^{[3]}} \ | \ x\in i(\mathrm{BW}_{16})^{\bot} \}
\end{equation}
of the period domain $\Omega_{\mathrm{K}3^{[3]}}$ of manifolds of $\mathrm{K}3^{[3]}$-type. By the above, any MRS-double cover $Z$ admits a marking $\theta$ such that the period $\mathfrak{p}(Z,\theta)$ lies in $\Omega_{\mathrm{K}3^{[3]}}^{\mathrm{BW}_{16}^{\bot}}$.
Conversely, any $x\in \Omega_{\mathrm{K}3^{[3]}}^{\mathrm{BW}_{16}^{\bot}}$ is the period of a suitably marked MRS-double cover. Indeed, $x$ belongs to the orthogonal complement to $i(\mathrm{BW}_{16})$, which is identified with the image of $\Lambda_{\mathrm{OG}6^{\mathrm{sing}}}$ under the map $s^*$ of \eqref{eq:pull-backMRS}. Let $y\in \Lambda_{\mathrm{OG}6^{\mathrm{sing}}}$ be the preimage of $x$ under this map. Then $y$ is a point in the period domain $\Omega_{\mathrm{OG}6^{\mathrm{sing}}}$, and, therefore, there exists a marked singular OG6-variety $(X,\eta)$ whose period is $y$, by the surjectivity of the period map. By construction, the period map will send the associated MRS-double cover $Z_X$ to the point $x$, for a suitable marking on $Z_X$. 

Recall that there exists a unique primitive embedding of $\mathrm{BW}_{16}$ into $\Lambda_{\mathrm{K}3^{[3]}}$ up to isometry (Lemma \ref{lem:embeddingBWintoK33}). 
If now $Z$ is any manifold of $\mathrm{K}3^{[3]}$-type such that there exists an embedding $j\colon \mathrm{BW}_{16}\hookrightarrow \NS(Z)$, we can find a marking $\theta\colon H^2(Z,\ZZ)\xrightarrow{\ \sim \ } \Lambda_{\mathrm{K}3^{[3]}}$ such that $\theta\circ j$ coincides with the primitive embedding $i\colon \mathrm{BW}_{16}\hookrightarrow \Lambda_{\mathrm{K}3^{[3]}}$ that we have fixed. It follows that $\mathfrak{p}(Z,\theta)$ lies in $\Omega_{\mathrm{K}3^{[3]}}^{\mathrm{BW}_{16}^{\bot}}$; hence, $Z$ has the same period as some marked MRS-double cover $Z_X$ of a singular OG6-variety $X$. As the birational Torelli theorem holds for manifolds of $\mathrm{K}3^{[3]}$-type, we conclude that $Z$ and $Z_X$ are bimeromorphic, and, therefore, $Z$ is a MRS-double cover.
\end{proof}

\subsection{Transcendental and N\'eron--Severi lattices}
We shall now compute the N\'eron--Severi and transcendental lattices of a MRS-double cover of a singular OG6-variety of Picard rank $1$. 
The connected components of the moduli space of projective singular OG6-varieties are in bijection with the orbits of primitive positive classes in $\Lambda_{\mathrm{OG}6^{\mathrm{sing}}}\simeq \mathrm{U}^{\oplus 3}\oplus \langle -2\rangle$ under the monodromy group $\mathrm{Mon}^2(\mathrm{OG}6^{\mathrm{sing}})$ of parallel transport operators in all locally trivial families of singular $\mathrm{OG}6$-varieties. 
This monodromy group $\mathrm{Mon}^2(\mathrm{OG}6^{\mathrm{sing}})$ was computed in \cite[Proposition 4.2]{MongardiRapagnetta}; it is equal to the group of orientation-preserving isometries:
\[
\mathrm{Mon}^2(\mathrm{OG}6^{\mathrm{sing}}) = \Oo^+(\Lambda_{\mathrm{OG}6^\mathrm{sing}}).
\]

\begin{lemma}
    Let $h\in \Lambda_{\mathrm{OG}6^{\mathrm{sing}}}$ be a positive class. Then the $\mathrm{Mon}^2(\mathrm{OG}6^{\mathrm{sing}})$-orbit of $h$ is determined by its square and divisibility.
\end{lemma}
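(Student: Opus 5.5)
The plan is to follow the strategy of Lemma \ref{lemma:Mon-Orbit}, reducing everything to Eichler's criterion. Write $\Lambda\coloneqq\Lambda_{\mathrm{OG}6^{\mathrm{sing}}}\simeq \mathrm{U}^{\oplus 3}\oplus\langle -2\rangle$. Its discriminant group is $A_\Lambda\simeq \ZZ/2\ZZ$, generated by $\tfrac12 e$ where $e$ spans the $\langle-2\rangle$ summand. The discriminant form takes the value $-\tfrac12\in \QQ/2\ZZ$ on the generator. Note that $\Oo(A_\Lambda)$ is trivial, so every isometry of $\Lambda$ acts trivially on $A_\Lambda$. In other words, $\widetilde{\Oo}(\Lambda)=\Oo(\Lambda)$ and $\widetilde{\Oo}^+(\Lambda)=\Oo^+(\Lambda)=\mathrm{Mon}^2(\mathrm{OG}6^{\mathrm{sing}})$.

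Let $h$ be a primitive positive class of square $2d$. Its divisibility $\gamma$ divides the exponent of $A_\Lambda$, so $\gamma\in\{1,2\}$.

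\textbf{Case $\gamma=1$.} Here $h^\vee/\gamma=h^\vee\in\Lambda$ has trivial class in $A_\Lambda$.

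\textbf{Case $\gamma=2$.} The class $[h^\vee/2]\in A_\Lambda$ is nonzero, so it is the unique nonzero element. This is forced anyway, because $(h^\vee/2)^2=d/2$ must reduce modulo $2\ZZ$ to the value $-\tfrac12$ of the discriminant form, which imposes $d\equiv 3 \pmod 4$.

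In both cases the residue $[h^\vee/\gamma]$ is determined by $\gamma$ alone.

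Now apply Eichler's criterion (Theorem \ref{thm:eichler}). Since $\Lambda$ contains two copies of $\mathrm{U}$, the $\widetilde{\SO}^+(\Lambda)$-orbit of a primitive vector is determined by its square and by its class $[h^\vee/\gamma]\in A_\Lambda$, as used in the proof of Lemma \ref{lemma:Mon-Orbit} via \cite[Proposition 2.15]{Song}. Combined with the previous paragraph, any two primitive positive classes with the same square and divisibility lie in the same $\widetilde{\SO}^+(\Lambda)$-orbit. Since $\widetilde{\SO}^+(\Lambda)\subset \Oo^+(\Lambda)=\mathrm{Mon}^2(\mathrm{OG}6^{\mathrm{sing}})$, they also lie in the same monodromy orbit. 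Conversely, monodromy operators are isometries, so they preserve square and divisibility. This gives the statement.

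The main point to check is that Eichler's criterion in the form being quoted applies here. We need the lattice to contain $\mathrm{U}^{\oplus 2}$, which is clear. We also need the orbit to be taken under $\widetilde{\SO}^+$, or at least under a subgroup of $\Oo^+$. If the available version only gives $\widetilde{\Oo}(\Lambda)$-orbits, we would instead argue as follows. Any $g\in\Oo(\Lambda)$ either preserves orientation or becomes orientation-preserving after composing with an element of $\Oo(\Lambda)$ that fixes $h$ and reverses the orientation of positive $3$-planes. Such an element exists: take $-1$ on a hyperbolic plane contained in $h^\perp$, which exists because $h^\perp$ still contains a copy of $\mathrm{U}$. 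This reduction mirrors the argument in Lemma \ref{lemma:Mon-Orbit}, and is the only step requiring care.
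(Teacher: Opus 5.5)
Your proof is correct and is essentially the paper's argument: since $A_\Lambda\simeq \ZZ/2\ZZ$, every isometry acts trivially on it. Eichler's criterion then gives a single orbit for fixed square and divisibility, and this orbit is already an $\SO^+(\Lambda)$-orbit, hence lies in one $\mathrm{Mon}^2(\mathrm{OG}6^{\mathrm{sing}})=\Oo^+(\Lambda)$-orbit. Your fallback of composing with $-1$ on a hyperbolic plane inside $h^\perp$ to correct the orientation is a valid self-contained substitute for the citation to Song.
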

\begin{proof}
    Since $\Lambda_{\mathrm{OG}6^{\mathrm{sing}}}\simeq \mathrm{U}^{\oplus 3}\oplus \langle -2\rangle$, any isometry of $\Lambda_{\mathrm{OG}6^{\mathrm{sing}}}$ acts trivially on the discriminant group. By Eichler's criterion Theorem \ref{thm:eichler}, classes of fixed square and divisibility form a single ${\mathrm{O}}(\Lambda_{\mathrm{OG}6^{\mathrm{sing}}})$-orbit, which in fact equals the $\SO^+(\Lambda_{\mathrm{OG6}^{\mathrm{sing}}})$-orbit of $h$. Hence, the $\mathrm{Mon}^2(\mathrm{OG}6^{\mathrm{sing}})$-orbit of $h$ coincides with its orbit under the orthogonal group.
\end{proof}

We can then compute the transcendental and N\'eron--Severi lattices of MRS-double covers of projective singular OG6-varieties of Picard rank $1$.
\begin{theorem}\label{thm:NeronSeveriMRS}
    Let $(X,h)$ be a polarized singular $\mathrm{OG}6$-variety of Picard rank $1$, with $h$ of degree $2m$ and divisibility $\beta$. Let $Z_X$ denote the MRS-double cover of $X$. Then we have the following possibilities:
    \begin{equation}
    \smallskip
    \begin{tabular}{|c|c|c|c|c|}
    \hline
    $m$ & $\beta$ & $H^2_{\mathrm{tr}}(X,\ZZ)$ & $H^2_{\mathrm{tr}}(Z_X,\ZZ)$ & $\NS(Z_X)$\\
    \hline\hline
    $m$ & $1$ & $\mathrm{U}^{\oplus 2} \oplus \begin{psmallmatrix}
        -2 & 0 \\
        0 & -2m
    \end{psmallmatrix}$ & $\mathrm{U}(2)^{\oplus 2} \oplus \begin{psmallmatrix}
        -4 & 0 \\
        0 & -4m
    \end{psmallmatrix}$ & $L_{\Km}\oplus \langle 4m\rangle$
    \\
    \hline
    $4m'-1 $ & $2$ & $\mathrm{U}^{\oplus 2}\oplus \begin{psmallmatrix}
        -2 & 1 \\
        1 & -2m'
    \end{psmallmatrix}$ & $\mathrm{U}(2)^{\oplus 2} \oplus\begin{psmallmatrix}
        -4 & 2 \\
        2 & -4m'
    \end{psmallmatrix}$ & $L_{\Km}\oplus \langle 4m \rangle$\\
    \hline
    \end{tabular}
    \smallskip
    \end{equation}
\end{theorem}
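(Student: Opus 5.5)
The argument follows the proof of Theorem \ref{thm:NeronSeveriHyperkummer} closely, with Proposition \ref{prop:cohomologyMRS} in place of Proposition \ref{prop:cohomologyY_K}. There are three steps: classify the polarization $h$, compute $h^{\perp}$, and then use the MRS pull-back to read off the lattices of $Z_X$.

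\textbf{Step 1: the polarization.} By the preceding lemma, the $\mathrm{Mon}^2$-orbit of $h$ in $\Lambda_{\mathrm{OG}6^{\mathrm{sing}}}\simeq \mathrm{U}^{\oplus 3}\oplus\langle -2\rangle$ is determined by the square $2m$ and the divisibility $\beta\in\{1,2\}$. We may therefore fix normal forms:
\begin{itemize}
\item if $\beta=1$, take $h=e+mf$ in one copy of $\mathrm{U}$;
\item if $\beta=2$, take $h=2(e+m'f)+\xi$ with $\xi^2=-2$, so that $h^2=8m'-2=2m$ and $m=4m'-1$.
\end{itemize}
A direct computation of $h^{\perp}$ then gives $H^2_{\mathrm{tr}}(X,\ZZ)$:
\begin{itemize}
\item for $\beta=1$, $h^{\perp}=\mathrm{U}^{\oplus 2}\oplus\langle -2\rangle\oplus\langle -2m\rangle$;
\item for $\beta=2$, $h^{\perp}$ is spanned by $\mathrm{U}^{\oplus 2}$ together with $\xi+f$ and $e-m'f$, whose Gram matrix is $\begin{psmallmatrix}-2&1\\1&-2m'\end{psmallmatrix}$. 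I will confirm this by checking that the discriminant is $4m'-1$, matching $|\det h^\perp| = 2\cdot 2m/\beta^2$.
\end{itemize}

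\textbf{Step 2: the transcendental lattice of $Z_X$.} Proposition \ref{prop:cohomologyMRS} gives a Hodge isometry $H^2_{\mathrm{tr}}(Z_X,\ZZ)\simeq H^2_{\mathrm{tr}}(X,\ZZ)(2)$. Scaling the two lattices from Step 1 by $2$ yields the fourth column of the table immediately.

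\textbf{Step 3: the N\'eron--Severi lattice of $Z_X$.} The lattice $\NS(Z_X)$ is the orthogonal complement of $H^2_{\mathrm{tr}}(Z_X,\ZZ)$ in $\Lambda_{\mathrm{K}3^{[3]}}$. Since $s^*$ is primitive with complement $\mathrm{BW}_{16}$, $\NS(Z_X)$ is the saturation of $\langle s^*h\rangle\oplus \mathrm{BW}_{16}$, where $s^*h$ is primitive of square $4m$. It has rank $17$ and signature $(1,16)$. Its discriminant group is determined up to sign by that of $H^2_{\mathrm{tr}}(Z_X)$, via the unimodular-gluing relation for the $\mathrm{K}3^{[3]}$ lattice, using the embedding into $\widetilde{\Lambda}_{\mathrm{K}3}$ with the extra vector $v$ of square $4$ as in Proposition \ref{prop:HodgeIsometricKummer}.

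To identify it with $L_{\Km}\oplus\langle 4m\rangle$, I will use that the discriminant form of $\mathrm{BW}_{16}$ is $2$-elementary of length $8$. Independently, $L_{\Km}\oplus\langle 4m\rangle$ is a valid candidate:
\begin{itemize}
\item $\mathrm{BW}_{16}$ and $L_{\Km}$ are both orthogonal complements of $\mathrm{U}(2)^{\oplus 3}\oplus\langle -4\rangle$ in $\Lambda_{\mathrm{K}3^{[3]}}$ (Lemma \ref{lem:embeddingBWintoK33} and Proposition \ref{prop:cohomologyY_K}), so they have isomorphic discriminant forms.
\item $L_{\Km}\oplus\langle 4m\rangle$ embeds primitively with complement $H^2_{\mathrm{tr}}(Z_X)$ for the correct choice of class.
\end{itemize}
To conclude, I will show two things. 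First, no overlattice strictly between $\langle s^*h\rangle\oplus\mathrm{BW}_{16}$ and $\NS(Z_X)$ occurs; this holds because the discriminant orders already match, as $|\det\NS| = 2^6\cdot 4m$ equals $|\det H^2_{\mathrm{tr}}(Z_X)|$ up to the factor $4$ from $v$. Second, the lattice so obtained is unique in its genus, by Nikulin's criterion (Theorem \ref{thm:embeddingUnimodular}): it is indefinite and its rank $17$ is at least $2$ plus the length of its discriminant, which is at most $7$ or $8$.

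\textbf{Main obstacle.} The hard part is this last genus computation, in particular verifying that the gluing is trivial, i.e. that $\NS(Z_X)$ splits as $\langle 4m\rangle\oplus(\text{rank }16)$ with no index-$2$ overlattice, in contrast to the $L_{4d}$ case in Theorem \ref{thm:NeronSeveriHyperkummer}. I will do this by comparing the discriminant quadratic forms of both sides explicitly. If uniqueness in the genus fails, the fallback is to build the embedding by hand from the explicit $L_{\Km}$ and $\mathrm{BW}_{16}$ complements in $\Lambda_{\mathrm{K}3^{[3]}}$.
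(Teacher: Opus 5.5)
Your Steps 1 and 2 are correct and match the paper, which quotes Lemma \ref{lem:orbitsU^3+<-2>} for $h^{\perp}$ and Proposition \ref{prop:cohomologyMRS} for $H^2_{\mathrm{tr}}(Z_X,\ZZ)$. Step 3, however, rests on false statements.

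First, $\mathrm{BW}_{16}$ and $L_{\Km}$ do not have isomorphic discriminant forms. You note yourself that $A_{\mathrm{BW}_{16}}\cong(\ZZ/2\ZZ)^8$, whereas $A_{L_{\Km}}\cong(\ZZ/2\ZZ)^6$. Being orthogonal complements of the same lattice $\mathrm{U}(2)^{\oplus 3}\oplus\langle -4\rangle$ forces nothing, because $\Lambda_{\mathrm{K}3^{[3]}}$ is not unimodular; the two gluing groups have orders $2^7$ and $2^6$. For the same reason, $\NS(Z_X)$ is not determined by $H^2_{\mathrm{tr}}(Z_X,\ZZ)$. The auxiliary vector $v$ only pins down the discriminant form of $(\NS(Z_X)\oplus\langle v\rangle)^{\mathrm{sat}}$. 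For $\beta=1$, the lattice $\mathrm{U}(2)^{\oplus 2}\oplus\langle -4\rangle\oplus\langle -4m\rangle$ is also the transcendental lattice of a hyper-Kummer sixfold whose N\'eron--Severi lattice is $L_{4m}$ (Theorem \ref{thm:NeronSeveriHyperkummer}). Its discriminant group $(\ZZ/2\ZZ)^4\oplus\ZZ/4m\ZZ$ differs from that of $L_{\Km}\oplus\langle 4m\rangle$.

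Second, the gluing between $\mathrm{BW}_{16}$ and $\langle s^*h\rangle$ is \emph{not} trivial. We have $|\det(\mathrm{BW}_{16}\oplus\langle 4m\rangle)|=2^8\cdot 4m$, while your own (correct) value $|\det\NS(Z_X)|=2^6\cdot 4m$ forces $\NS(Z_X)$ to be an index-$2$ overlattice of $\mathrm{BW}_{16}\oplus\langle s^*h\rangle$. So what you single out as the main obstacle is a false statement, and any attempt to verify it would fail. The isometry $\NS(Z_X)\cong L_{\Km}\oplus\langle 4m\rangle$ is only abstract: $s^*h$ does not split off, since its orthogonal complement in $\NS(Z_X)$ is $\mathrm{BW}_{16}$.

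What is missing is a correct computation of the discriminant form of $\NS(Z_X)$; the paper gets it from Remark \ref{rmk:BW+h}.
\begin{enumerate}
\item Since $\Lambda_{\mathrm{K}3^{[3]}}\subset\widetilde{\Lambda}_{\mathrm{K}3}$ is primitive, $\NS(Z_X)$ is also the saturation of $\mathrm{BW}_{16}\oplus\langle s^*h\rangle$ in the unimodular Mukai lattice. There $\mathrm{BW}_{16}^{\perp}\cong\mathrm{U}(2)^{\oplus 4}$ by Lemma \ref{lem:embeddingBW16intoMukai}.
\item The class $s^*h$ is primitive in $\mathrm{U}(2)^{\oplus 4}$, because $s^*$ is a primitive embedding. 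Hence it has divisibility $2$ there, and $(s^*h)^{\perp}\cap\mathrm{U}(2)^{\oplus 4}\cong\mathrm{U}(2)^{\oplus 3}\oplus\langle -4m\rangle$.
\item $\NS(Z_X)$ is the orthogonal complement of this lattice in $\widetilde{\Lambda}_{\mathrm{K}3}$. So its discriminant form is $-q_{\mathrm{U}(2)^{\oplus 3}\oplus\langle -4m\rangle}$, which is exactly the discriminant form of $L_{\Km}\oplus\langle 4m\rangle$.
\item Uniqueness in the genus (Theorem \ref{thm:orthogonalGroups}: indefinite, rank $17$, discriminant of length $7$) then gives the isometry, uniformly in $\beta$.
\end{enumerate}
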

\begin{proof}
    The various possibilities for $m,\beta$ are listed in Lemma \ref{lem:orbitsU^3+<-2>}, which moreover describes $h^{\bot}$ and hence the transcendental lattice $H^2_{\mathrm{tr}}(X,\ZZ)$.
    Let $\iota\colon \mathrm{BW}_{16}\hookrightarrow H^2(Z_X,\ZZ)$ be the primitive embedding of the Barnes--Wall lattice, which is the orthogonal complement to the image of the pull-back map $s^*\colon H^2(X,\ZZ)(2)\hookrightarrow H^2(Z_X,\ZZ)$ along the MRS-double cover $Z_X\dashrightarrow X$. Then, $\NS(Z_X)$ is the saturation in $H^2(Z_X,\ZZ)$ of the sublattice generated by $s^*(h)\oplus \iota(\mathrm{BW}_{16})$; by Remark \ref{rmk:BW+h}, $\NS(Z_X)$ is isometric to $L_{\Km} \oplus \langle 4m\rangle$. 
\end{proof}

Via Hodge theory, we can associate a well-defined K3 surface with any singular OG6-variety which is projective.

\begin{definition}
    Let $X$ be a projective singular $\mathrm{OG}6$-variety. We define the associated $\mathrm{K}3$ surface as the unique $\mathrm{K}3$ surface $S_X$ such that there exists a Hodge isometry $H^2_{\mathrm{tr}}(S_X,\ZZ)\xrightarrow{\ \sim \ } H^2_{\mathrm{tr}}(X,\ZZ)(2)$.
\end{definition}

The K3 surface $S_X$ exists by the Torelli theorem, since the lattice $H^2_{\mathrm{tr}}(X,\ZZ)(2)$ admits a primitive embedding in the K3 lattice. Moreover, as $X$ is projective and $b_2(X)=7$, this embedding is unique up to isometry, and $S_X$ is determined up to isomorphism. 
\begin{remark}\label{rmk:K3-OG6}
    By the surjectivity of the period map for singular $\mathrm{OG}6$-varieties, a projective K3 surface $S$ is isomorphic to the K3 surface $S_X$ associated with some singular $\mathrm{OG}6$-variety $X$ if and only if there exists a primitive embedding of $H^2_{\mathrm{tr}}(S,\ZZ)$ into $\Lambda_{\mathrm{OG}6^{\mathrm{sing}}}(2)=\mathrm{U}(2)^{\oplus 3}\oplus \langle -4\rangle$, which is equivalent to saying that $S$ is a hyper-Kummer K3 surface, by Theorem \ref{thm:characterizeS}.
\end{remark}

\begin{proposition}
    Let $X$ be a projective singular $\mathrm{OG}6$-variety, $S_X$ the associated $\mathrm{K}3$ surface, and $Z_X$ an MRS-double cover of $X$. Then $Z_X$ is birational to a smooth and projective moduli space of stable sheaves on the $\mathrm{K}3$ surface $S_X$.
\end{proposition}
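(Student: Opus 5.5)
The argument parallels Corollary \ref{cor:Relation-YKMKSK}(ii): a $\mathrm{K}3^{[3]}$-type variety whose transcendental lattice is Hodge isometric to that of a K3 surface, with enough algebraic classes, is birational to a moduli space of sheaves on that K3 surface.

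First, Proposition \ref{prop:cohomologyMRS} gives a Hodge isometry $H^2_{\mathrm{tr}}(Z_X,\ZZ)\simeq H^2_{\mathrm{tr}}(X,\ZZ)(2)$. By definition of $S_X$, this yields a Hodge isometry $H^2_{\mathrm{tr}}(Z_X,\ZZ)\simeq H^2_{\mathrm{tr}}(S_X,\ZZ)$. Since $X$ is projective, $Z_X$ is projective: its Néron--Severi group contains $s^*(h)$ of positive square together with $\mathrm{BW}_{16}$, so $Z_X$ is projective by Huybrechts' criterion.

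Second, I would realize $H^2(Z_X,\ZZ)$ inside the Mukai lattice of $S_X$. Fix the Markman embedding $H^2(Z_X,\ZZ)\hookrightarrow \widetilde{\Lambda}_{\mathrm{K}3}$, whose orthogonal complement is $\langle v\rangle$ with $v^2=4$. With the induced weight-2 Hodge structure, $v$ is algebraic and the transcendental part is $H^2_{\mathrm{tr}}(Z_X,\ZZ)$. The goal is a Hodge isometry from this extended lattice to $\widetilde{H}(S_X,\ZZ)$.

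This amounts to extending the Hodge isometry $H^2_{\mathrm{tr}}(Z_X,\ZZ)\simeq H^2_{\mathrm{tr}}(S_X,\ZZ)$ to the unimodular lattice $\widetilde{\Lambda}_{\mathrm{K}3}$. The transcendental lattice has rank at most $6$. Its orthogonal complement in $\widetilde{\Lambda}_{\mathrm{K}3}$ is indefinite of rank at least $18$ and contains a copy of $\mathrm{U}$, so Theorem \ref{thm:embeddingUnimodular} (Nikulin) shows that the primitive embedding of the transcendental lattice into $\widetilde{\Lambda}_{\mathrm{K}3}$ is unique up to isometry. Hence the Hodge isometry extends to an isometry $\widetilde{\Lambda}_{\mathrm{K}3}\simeq \widetilde{H}(S_X,\ZZ)$. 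This is exactly Addington's criterion \cite[Proposition 1]{Addington2016}. Alternatively, since $\NS(Z_X)\supset \mathrm{BW}_{16}$ has rank at least $17$, \cite[Proposition 3.3]{mongardi2015} applies directly.

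Third, by Bayer--Macr\`i \cite{BM14a}, $Z_X$ is then birational to a moduli space $M_\sigma(S_X,v)$ of Bridgeland stable objects on $S_X$, for the image $v$ of the Markman vector, which is primitive algebraic of square $4$. It remains to upgrade this to a moduli space of stable sheaves. Composing the isometry with reflections and with $\pm$ shifts and tensor products by line bundles, I would arrange $v=(r,c,s)$ with $r>0$, or with $r=0$ and $c$ effective. Then a generic polarization gives a Gieseker chamber, and moving the stability condition into the large volume limit changes the moduli space only birationally.

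The main obstacle is this last normalization of the Mukai vector. Positive rank can be reached by applying spherical twists or autoequivalences of $\Db(S_X)$. Since derived equivalences induce Hodge isometries of the Mukai lattice, the argument then reduces to showing that some Hodge isometric image of $v$ has $r\neq 0$ or effective $c$. This holds because $\NS(S_X)$ is large: it has rank at least $16$ and contains $\mathrm{U}$ or $\mathrm{U}(2)$-type summands, which leaves enough room in the algebraic Mukai lattice.
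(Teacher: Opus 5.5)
Your proposal is correct and is essentially the paper's proof. The paper combines the Hodge isometry $H^2_{\mathrm{tr}}(Z_X,\ZZ)\simeq H^2_{\mathrm{tr}}(X,\ZZ)(2)\simeq H^2_{\mathrm{tr}}(S_X,\ZZ)$ from Proposition \ref{prop:cohomologyMRS} with \cite[Proposition 4]{Addington2016}, and your second and third steps (Nikulin extension to the Mukai lattice, then Bayer--Macr\`i) unpack that citation.

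The normalization you flag as the main obstacle is immediate, and it needs neither spherical twists nor the size of $\NS(S_X)$. Write the image of the Markman vector as $(r,c,s)$ with $c^2-2rs=4$. Either $r\neq 0$, or $r=0$ and $c^2=4$, in which case $c$ or $-c$ is effective by Riemann--Roch. So replacing the vector by its negative if necessary (a shift, hence a Hodge isometry) already makes it a positive Mukai vector.
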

\begin{proof}
This follows immediately from \cite[Proposition 4]{Addington2016}, since the transcendental lattice of $Z_X$ is Hodge isometric to that of $S_X$ by Proposition \ref{prop:cohomologyMRS}.
\end{proof}

\subsection{Comparison of hyper-Kummer sixfolds and MRS-double covers}
The hyper-Kummer and the MRS constructions give two $5$-dimensional families of hyper-K\"ahler manifolds of $\mathrm{K}3^{[3]}$-type, of generic Picard rank $16$. 
Remarkably, these hyper-K\"ahler manifolds share the same transcendental lattices. However, the N\'eron--Severi group of a very general hyper-Kummer $\mathrm{K}3^{[3]}$-manifold is isometric to the Kummer lattice $L_{\Km}$, while that of a very general MRS-double cover is isometric to the Barnes--Wall lattice $\mathrm{BW}_{16}$, so the two families are distinct. Nevertheless, the moduli space $M_{\Km(J)}(0,H,1)$ considered by Rapagnetta (Remark \ref{rmk:explicitExampleMRS}) is both an MRS-double cover and a hyper-Kummer $\mathrm{K}3^{[3]}$-variety, by \cite{floccariKum3}. We will now compare the families of projective hyper-Kummer $\mathrm{K}3^{[3]}$-varieties and MRS-double cover of projective singular OG6-varieties.
First, we observe the following. 
\begin{corollary}
    Let $S$ be a projective $\mathrm{K}3$ surface. The following are equivalent:
    \begin{enumerate}[label=(\roman*)]
    \item $S$ is isomorphic to the $\mathrm{K}3$ surface $S_X$ associated with a projective singular OG6-variety $X$;
    \item $S$ is isomorphic to the hyper-Kummer $\mathrm{K}3$ surface $S_K$ associated with a $\mathrm{Kum}^3$-variety $K$.   
    \end{enumerate}
\end{corollary}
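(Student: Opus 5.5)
The plan is to reduce both conditions to one and the same lattice-theoretic criterion on $H^2_{\mathrm{tr}}(S,\ZZ)$: namely, that $H^2_{\mathrm{tr}}(S,\ZZ)$ admits a primitive embedding into $\mathrm{U}(2)^{\oplus 3}\oplus\langle -4\rangle$.

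For (ii), the equivalence with this criterion is Theorem \ref{thm:characterizeS}, using $(iii)\Leftrightarrow(iv)$ for projective $S$. Corollary \ref{cor:Relation-YKMKSK} identifies $S_K$ with any of the $S_{\sigma,\sigma'}$ associated with $K$. Conversely, Theorem \ref{thm:characterizeS} produces a $\mathrm{Kum}^3$-manifold $K$ with $S\simeq S_{\sigma,\sigma'}$, and one must check that $K$ can be taken projective. This holds because Theorem \ref{thm:TranscendentalResolutions} gives $H^2_{\mathrm{tr}}(S,\ZZ)\simeq\widehat{H}^2_{\mathrm{tr}}(K,\ZZ)(2)$. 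Since $S$ is projective, $H^2_{\mathrm{tr}}(S)$ has signature $(2,\ast)$, so $\NS(K)$ contains a positive class. By Huybrechts' projectivity criterion, $K$ is then projective.

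For (i), the equivalence with the same criterion is exactly Remark \ref{rmk:K3-OG6}. That remark rests on three inputs: the definition of $S_X$ through a Hodge isometry $H^2_{\mathrm{tr}}(S_X,\ZZ)\simeq H^2_{\mathrm{tr}}(X,\ZZ)(2)$; the identification $\Lambda_{\mathrm{OG}6^{\mathrm{sing}}}(2)\simeq \mathrm{U}(2)^{\oplus3}\oplus\langle-4\rangle$; and surjectivity of the period map for locally trivial deformations of singular $\mathrm{OG}6$-varieties.

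In more detail, given $S$ with such an embedding, I would take the Hodge structure of $S$ transported to $\Lambda_{\mathrm{OG}6^{\mathrm{sing}}}$. This yields a period in $\Omega_{\mathrm{OG}6^{\mathrm{sing}}}$, and surjectivity gives an $X$ realizing it. As before, $X$ is projective because its Néron--Severi group contains a positive class. Uniqueness of $S_X$ then identifies $S_X$ with $S$, using the Torelli theorem together with uniqueness of the embedding of the transcendental lattice into $\Lambda_{\mathrm{K}3}$ for projective $S$ with $\rho(S)\geq 12$. Combining the two reductions proves the corollary.

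The main obstacle is the projectivity step: it requires knowing that projectivity of singular $\mathrm{OG}6$-varieties (respectively of $\mathrm{Kum}^3$-manifolds) is detected by the existence of a positive $(1,1)$-class. For singular $\mathrm{OG}6$-varieties I would invoke the Bakker--Lehn theory of locally trivial deformations to obtain this.
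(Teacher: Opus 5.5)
Your proposal is correct and follows the paper's proof. The paper also reduces both conditions to the existence of a primitive embedding $H^2_{\mathrm{tr}}(S,\ZZ)\hookrightarrow \mathrm{U}(2)^{\oplus 3}\oplus\langle -4\rangle$, citing Theorem \ref{thm:characterizeS} and Remark \ref{rmk:K3-OG6}; the projectivity checks you spell out for $K$ and $X$ (a positive class in $\NS$, then Huybrechts' criterion and its Bakker--Lehn analogue) are left implicit there and are a worthwhile addition.
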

\begin{proof}
    In fact, both statements are equivalent to the existence of a primitive embedding of $H^2_{\mathrm{tr}}(S,\ZZ)$ into $\mathrm{U}(2)^{\oplus 3}\oplus \langle -4\rangle$, by Theorem \ref{thm:characterizeS} and Remark \ref{rmk:K3-OG6}.
\end{proof}

Therefore, in the projective case, hyper-Kummer $\mathrm{K}3^{[3]}$-type manifolds and MRS-double covers can be realized as moduli spaces on hyper-Kummer K3 surfaces. 

\begin{theorem}\label{thm:comparisonHyperkummerMRS}
    Let $K$ be a variety of $\mathrm{Kum}^3$-type, and let $Y_K$ be the associated hyper-Kummer sixfold of $\mathrm{K}3^{[3]}$-type. Assume that $K$ admits a primitive polarization $h$ with divisibility $\mathrm{div}(h)> 1$. Then, $Y_K$ is birational to an MRS-double cover.
    
    Conversely, let $X$ be a projective singular OG6-variety, and let $Z_X$ be its MRS-double cover. Then $Z_X$ is birational to a hyper-Kummer $\mathrm{K}3^{[3]}$-variety.
\end{theorem}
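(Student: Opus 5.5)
Both directions will go through the birational Torelli theorem for $\mathrm{K}3^{[3]}$-type manifolds, combined with the lattice-theoretic characterizations already proved: Theorem \ref{thm:birationalCharacterization} for hyper-Kummer sixfolds and Theorem \ref{thm:characterizationMRS} for MRS-double covers. Each direction therefore reduces to exhibiting a suitable primitive rank-16 sublattice of the N\'eron--Severi group.

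\textbf{First direction.} Let $h$ be a primitive polarization on $K$ with $\gamma=\mathrm{div}(h)>1$. By Proposition \ref{prop:invariants-pushforward-polarization}(ii), the class $\tfrac12\overline h$ is integral and primitive, of square $d$, and it lies in the image $\hat T\simeq \mathrm U(2)^{\oplus3}\oplus\langle-4\rangle$ of $\tfrac{1}{2^4}\hat r_*$. Consequently $\NS(Y_K)$ contains the orthogonal direct sum $L_{\Km}\oplus\langle d\rangle$, and this sum is saturated, exactly as in the second to fifth rows of Theorem \ref{thm:NeronSeveriHyperkummer}. That computation was done in Picard rank one, but here we only need the sublattice.

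It then suffices to find a primitive embedding $\mathrm{BW}_{16}\hookrightarrow \NS(Y_K)$. The idea is to use the sublattice $L_{\Km}\oplus\langle d\rangle$ and show that it contains a primitive copy of $\mathrm{BW}_{16}$. This should follow from the appendix statement behind Remark \ref{rmk:BW+h}, which shows that $\mathrm{BW}_{16}$ plus a suitable class is isometric to $L_{\Km}\oplus\langle 4m\rangle$. Running this in reverse gives $\mathrm{BW}_{16}\subset L_{\Km}\oplus\langle 4m\rangle$ as the orthogonal complement of a vector.

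The quantity $d$ appearing when $\gamma>1$ always has the form $4m$: from the table in Theorem \ref{thm:NeronSeveriHyperkummer}, $d\in\{4(d'-1),\,4(4d'-1),\,4(16d'-1),\,4(16d'-9)\}$. So the natural choice is $m=d/4$. We then need the primitivity of $\mathrm{BW}_{16}$ in $H^2(Y_K,\ZZ)$, not only in $L_{\Km}\oplus\langle d\rangle$; this holds because the latter is saturated in $\NS(Y_K)$, which is in turn saturated in $H^2$. Theorem \ref{thm:characterizationMRS} then gives that $Y_K$ is birational to an MRS-double cover.

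\textbf{Second direction.} Let $X$ be a projective singular OG6-variety with polarization $h$ of square $2m$. Theorem \ref{thm:NeronSeveriMRS} (again only the part about the sublattice generated by $s^*h$ and $\mathrm{BW}_{16}$, which does not depend on the Picard rank) gives $L_{\Km}\oplus\langle 4m\rangle\subset\NS(Z_X)$ primitively. Now apply Theorem \ref{thm:birationalCharacterization}. This requires showing that the orthogonal complement of this $L_{\Km}$ in $H^2(Z_X,\ZZ)$ is $\mathrm U(2)^{\oplus3}\oplus\langle-4\rangle$ rather than $\mathrm U(2)^{\oplus2}\oplus\mathrm U\oplus\langle-4\rangle$.

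To decide between the two, compare with a known example: Rapagnetta's $M_{\Km(J)}(0,H,1)$ (Remark \ref{rmk:explicitExampleMRS}) is both an MRS-double cover and hyper-Kummer. Deforming within the family of polarized OG6-varieties of given $(m,\beta)$, the embedding of $L_{\Km}\oplus\langle4m\rangle$ into $\Lambda_{\mathrm K3^{[3]}}$ is deformation-invariant. Alternatively, one can compute directly: the complement is $\mathrm{BW}_{16}^\perp\cap (s^*h)^{\perp}$ glued with $\langle s^*h\rangle$, and its discriminant group distinguishes the two cases, the $\mathrm U(2)^{\oplus3}$ case having length $7$ versus $5$.

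\textbf{Main obstacle.} I expect the main obstacle to be this last lattice check in both directions: identifying exactly which primitive embeddings of $L_{\Km}$ or $\mathrm{BW}_{16}$ occur inside $L_{\Km}\oplus\langle 4m\rangle$ and their complements in $\Lambda_{\mathrm K3^{[3]}}$. I would try first to settle it via discriminant forms and the gluing description underlying Remark \ref{rmk:BW+h}.
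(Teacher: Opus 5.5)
Your first direction is correct and is the paper's argument: $L_{\Km}\oplus\langle\tfrac12\overline h\rangle$ is a saturated sublattice of $\NS(Y_K)$ isometric to $L_{\Km}\oplus\langle d\rangle$ with $4\mid d$. Remark~\ref{rmk:BW+h} puts a primitive $\mathrm{BW}_{16}$ inside it, and Theorem~\ref{thm:characterizationMRS} applies. No complement check is needed there, because all primitive embeddings $\mathrm{BW}_{16}\hookrightarrow\Lambda_{\mathrm{K}3^{[3]}}$ are equivalent (Lemma~\ref{lem:embeddingBWintoK33}). The obstacle you anticipate arises only in the converse.

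\textbf{The converse has a genuine gap, exactly at the step you flag.} Set $N:=(\mathrm{BW}_{16}\oplus\langle s^*h\rangle)^{\mathrm{sat}}$. The isometry $N\cong L_{\Km}\oplus\langle 4m\rangle$ is abstract: it comes from uniqueness of the lattice with given signature and discriminant form in Remark~\ref{rmk:BW+h}. So there is no distinguished ``this $L_{\Km}$'' whose complement you could compute. A priori a primitive copy of $L_{\Km}$ in $N$ may have either complement, and what must be proved is that \emph{some} copy has complement $\mathrm U(2)^{\oplus3}\oplus\langle-4\rangle$. Neither of your suggestions does this.
\begin{itemize}
\item Your ``direct computation'' glues $\mathrm{BW}_{16}^\perp\cap(s^*h)^\perp$ with $\langle s^*h\rangle$. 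That just reproduces $\mathrm{BW}_{16}^\perp\cong\hat T$, the complement of $\mathrm{BW}_{16}$, not of $L_{\Km}$. The complement of a copy $L\subset N$ is the saturation of $N^\perp\oplus\langle w\rangle$, where $w$ spans $L^\perp\cap N$. Here $w$ is never proportional to $s^*h$, since otherwise $L$ would equal $\mathrm{BW}_{16}$, which has no $(-2)$-classes.
\item The comparison with Rapagnetta's example is a single variety of Picard rank $>1$. It does not reach all polarization types $(m,\beta)$. Nothing guarantees that its hyper-Kummer $L_{\Km}$ lies in the lattice $N$ attached to a given $h$. Deformation invariance of $N$ therefore says nothing about which $L_{\Km}\subset N$ to choose.
\end{itemize}

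The paper fills this with Lemma~\ref{lem:MRSisHyperKummer}, which builds a model configuration on the hyper-Kummer side and transports it:
\begin{enumerate}
\item Take $\eta\colon L_{\Km}\hookrightarrow\Lambda_{\mathrm{K}3^{[3]}}$ with complement $\hat T$. Choose $h'\in\hat T$ of square $4m$ and the same divisibility as $s^*h$, with $L_{\Km}\oplus\langle h'\rangle$ saturated (Lemma~\ref{lem:NS-hyperKummer}, Remark~\ref{rmk:possible-invariants}).
\item Embed $\mathrm{BW}_{16}$ in this lattice by Remark~\ref{rmk:BW+h}; its complement there is spanned by some $l$.
\item Move this copy of $\mathrm{BW}_{16}$ onto the given one using Lemma~\ref{lem:embeddingBWintoK33}.
\item Observe that $\mathrm{div}(l)=\mathrm{div}(h')$: the orthogonal complement of $l$ in $\mathrm{BW}_{16}^\perp$ and of $h'$ in $L_{\Km}^\perp$ are the same lattice.
\item Send $l$ to $s^*h$ by an isometry of $\hat T$. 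This exists by transitivity on primitive vectors of fixed square and divisibility, and it extends to $\Lambda_{\mathrm{K}3^{[3]}}$ by Lemma~\ref{lem:extendIsometryComplementBW}.
\end{enumerate}
This transport argument, in particular the fact that every isometry of $\mathrm{BW}_{16}^{\perp}$ extends to the whole lattice, is what your plan is missing.
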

\begin{proof}
    If $h$ is a primitive polarization of divisibility $>1$ and square $2d$, then $\NS(Y_K)$ contains a primitive sublattice isometric to $L_{\Km}\oplus \langle d\rangle$, by Theorem \ref{thm:NeronSeveriHyperkummer}. Hence, by Remark \ref{rmk:BW+h}, there exists a primitive embedding $\mathrm{BW}_{16}\hookrightarrow \NS(Y_K)$, so $Y_K$ is birational to a MRS-double cover by Theorem \ref{thm:characterizationMRS}. 

    If instead $Z_X$ is the MRS-double cover of some projective singular OG6-variety $X$, then $\NS(Z_X)$ contains a sublattice $\mathrm{BW}_{16}\oplus \langle 4d\rangle$, with saturation of index $2$ isometric to $L_{\Km}\oplus \langle 4d\rangle$. By Lemma \ref{lem:MRSisHyperKummer}, there is a primitive embedding of $L_{\Km}$ in $(\mathrm{BW}_{16}\oplus \langle 4d\rangle)^{\mathrm{sat}}$, such that the induced embedding of $L_{\Km}$ into $H^2(Z_X,\ZZ)$ has orthogonal complement isometric to $\mathrm{U}(2)^{\oplus 3}\oplus \langle -4\rangle$. Hence, $Z_X$ is birational to some hyper-Kummer $\mathrm{K}3^{[3]}$-variety by Theorem \ref{thm:birationalCharacterization}.
\end{proof}

\begin{remark}\label{rmk:notMRS}
    If $K$ is a variety of $\mathrm{Kum}^3$-type of Picard rank $1$ equipped with a polarization $h$ of divisibility $1$, then the associated hyper-Kummer variety $Y_K$ is not a MRS-double cover. Indeed, by Theorem \ref{thm:NeronSeveriHyperkummer}, in this case $\NS(Y_K)$ is the lattice $L_{4d}$, which embeds primitively in the K3 lattice with orthogonal complement isometric to $\mathrm{U}(2)^{\oplus 2} \oplus \langle -4d\rangle$. If there was a primitive embedding of $\mathrm{BW}_{16}$ in $L_{4d}$, then we would get a primitive embedding $\mathrm{BW}_{16}\hookrightarrow \Lambda_{\mathrm{K}3}$, which is impossible.
\end{remark}

\section{Constructing $\mathrm{Kum}^3$-type varieties from hyper-Kummer sixfolds}
\label{sec:Reconstrution}

For the time being, no geometric description of a very general (i.e.~of Picard rank $1$) projective hyper-K\"ahler variety of generalized Kummer type is known. 
In dimension $6$, we propose a general construction of such examples as well as of locally complete families of $\mathrm{Kum}^3$-type varieties from finite rational covers of hyper-K\"ahler sixfolds of $\mathrm{K}3^{[3]}$-type. We summarize our strategy as follows. 

\textbf{Step 1:} describe locally complete families $\mathcal{S}\to U$ of hyper-Kummer K3 surfaces together with the following additional data: a relative primitive Mukai vector $\mathsf{v}$ of square $4$ which is algebraic on each fiber of the family such that the relative moduli space $\mathcal{M}\to U$ of (complexes of) sheaves with Mukai vector $\mathsf{v}$ stable with respect to some relative generic stability condition is a smooth family of $\mathrm{K}3^{[3]}$-type varieties; up to shrinking the base of the family and taking an \'etale base-change, a collection of $16$ relative divisors $\mathcal{E}_i$ on the relative moduli space $\mathcal{M}$ such that, for each $u\in U$, the divisors $\mathcal{E}_{i, u}$ on the $\mathrm{K}3^{[3]}$-variety $\mathcal{M}_u$ satisfy lattice-theoretic conditions (i) and (ii) of Theorem \ref{thm:biregularCharacterization} below, ensuring that each $\mathcal{M}_u$ is a hyper-Kummer sixfold. 

\textbf{Step 2:} describe, up to shrinking $U$ and a finite base-change, a relative birational transformation $\mathcal{M}\dashrightarrow \mathcal{M}'$ where $\mathcal{M}'$ is a smooth family of $\mathrm{K}3^{[3]}$-varieties which admits a birational morphism $q\colon \mathcal{M}\to \overline{\mathcal{M}}'$ over $U$ contracting the $16$ divisors $\mathcal{E}_i'$ obtained from the $\mathcal{E}_i$ (this is always possible for each fiber by Remark \ref{rmk:contractionAfterFlops}).

\textbf{Step 3:} take a finite cover $f\colon \widetilde{\mathcal{M}}'\to \mathcal{M}'$ relatively over $U$, where for any $u \in U$,  $f_u$ is the $(\ZZ/2\ZZ)^5$-cover of $\mathcal{M}_u'$ branched over each divisor $\mathcal{E}_{i,u}'$ with index $2$, and then take the relative Stein factorization of $q\circ f \colon \widetilde{\mathcal{M}}'\to \overline{\mathcal{M}}'$ to obtain a family $\mathcal{Z}\to U$ of primitive symplectic varieties. By Corollary \ref{cor:reconstructionProcedure}, a simultaneous $\QQ$-factorial terminalization of $\mathcal{Z}\to U$ will give a locally complete family of $\mathrm{Kum}^3$-varieties.

In Section \ref{sec:ExamplesHyperKummerK3}, we will describe  two families as in Step 1. However, the birational transformation needed in Step 2 seems hard to describe, due to the high Picard rank of the hyper-Kummer varieties of $\mathrm{K}3^{[3]}$-type.

\subsection{$\mathrm{Kum}^3$-type varieties as rational covers}
We now explain and prove the results mentioned in the above strategy.


\begin{theorem}\label{thm:biregularCharacterization}
Let $Y$ be a hyper-K\"ahler manifold of $\mathrm{K}3^{[3]}$-type. Assume that there exist $16$ prime divisors $E_i$, $i=1,\dots, 16$, on $Y$ such that: 
\begin{enumerate}[label=(\roman*)]
\item the classes $[E_i]\in H^2(Y,\ZZ)$ are pairwise orthogonal $(-2)$-classes;
\item the saturation of the sublattice $\langle [E_i]\rangle_{i=1,\dots,16}$ of $H^2(Y,\ZZ)$ is the Kummer lattice, and its orthogonal complement is isometric to $\mathrm{U}(2)^{\oplus 3}\oplus \langle -4\rangle$;
\item there exists a birational morphism $q\colon Y\to \overline{Y}$ whose exceptional locus is the union of the $16$ divisors $E_i$. 
\end{enumerate}
Then there exists a manifold $K$ of $\mathrm{Kum}^3$-type such that $\overline{Y}$ is isomorphic to $K/G$. In particular, $Y$ is birational to the hyper-Kummer $\mathrm{K}3^{[3]}$-manifold $Y_K$. 
\end{theorem}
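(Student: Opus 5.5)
We plan to reconstruct $K$ directly from $Y$ by reversing the hyper-Kummer construction. Throughout we rely on Theorem~\ref{thm:birationalCharacterization} and the period-domain description proved there.

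\textbf{Step 1: a $(\ZZ/2\ZZ)^5$-cover of $Y$.} By (ii), the saturation $L_{\Km}$ of $\langle [E_i]\rangle$ has index $2^5$ over $\bigoplus\ZZ[E_i]$. Its elements are classes $\frac12\sum_{i\in I}[E_i]$ with $I$ ranging over the codewords of the first-order Reed--Muller code of length $16$; since $L_{\Km}\subset \NS(Y)$, these classes are integral. Each such class gives a line bundle $\mathcal{L}_I$ with $\mathcal{L}_I^{\otimes 2}\simeq \mathcal{O}(\sum_{i\in I}E_i)$. Note that $H^1(Y,\mathcal O)=0$ and $Y$ is simply connected, so the square roots are unique. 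Taking the fiber product of the corresponding double covers, i.e.\ the normalization of the cyclic covers, we get a finite Galois cover $f\colon \widetilde Y\to Y$ with group the Pontryagin dual $(\ZZ/2\ZZ)^5$ of $L_{\Km}/\bigoplus\ZZ[E_i]$. This cover is branched with index $2$ exactly along the $E_i$; in particular it is étale over $U=Y\setminus\bigcup E_i$, and $\widetilde Y$ is smooth along the preimage of each $E_i$ away from the pairwise intersections, which are empty or of high codimension.

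\textbf{Step 2: a symplectic variety and its terminalization.} Take the Stein factorization $\widetilde Y\to Z\to \overline Y$ of $q\circ f$. By (iii), $q$ contracts exactly the $E_i$, so $Z$ is a normal compact space with a $G=(\ZZ/2\ZZ)^5$-action and $Z/G\simeq\overline Y$. The symplectic form on $U\simeq q(U)$ pulls back to a holomorphic symplectic form on the étale cover $f^{-1}(U)$, whose complement in $Z$ has codimension $\ge 2$, because the divisors $f^{-1}(E_i)$ are contracted. Since $\overline Y$ has symplectic, hence rational, singularities and $f^{-1}(U)\to q(U)$ is étale, $Z$ is a primitive symplectic variety, and $\pi_1$-type arguments give irreducibility. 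We then need $Z$ to be smooth, or at least to have a crepant resolution of $\mathrm{Kum}^3$-type.

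\textbf{Step 3: identification via periods.} This is the heart of the argument. By Theorem~\ref{thm:birationalCharacterization}, $Y$ is bimeromorphic to some $Y_K$ with $L_{\Km}$ matching the classes $[E_\sigma]$. Conditions (i)--(iii) together with the Markman/Amerik--Verbitsky description of exceptional divisors imply that the $E_i$ are the only prime exceptional divisors in those classes. Hence the birational map $Y\dashrightarrow Y_K$ matches $E_i$ with $E_{\sigma}$ and induces a bijection of contractions. Since both $\overline Y$ and $K/G$ are the contractions of the same divisors, they are determined by the corresponding faces of the movable cone, so $\overline Y\simeq K/G$. More concretely, we plan to compare $Y$ and $Y_K$ via a chain of flops that avoids the $E_i$, and to use that $\overline Y=\operatorname{Proj}\bigoplus H^0(Y,nL)$ for $L$ nef with $L^\perp\cap \NS(Y)\supset L_{\Km}$. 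When the Picard rank is larger, we first deform to the very general case, where $\NS(Y)=L_{\Km}$ and $Y\simeq Y_K$ (cf.\ Proposition~\ref{prop:veryGeneralCase}), and then argue via locally trivial deformations of the contraction pairs. Once $\overline Y\simeq K/G$ holds, the cover of Step~1 coincides with the one induced by $K\to K/G$, since the latter is the unique $G$-cover étale over $q(U)$ and branched along the $E_i$. Therefore $Z\simeq K$, and $Y$ is birational to $Y_K$.

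The main obstacle is Step 3. Birational Torelli only provides $Y\dashrightarrow Y_K$, and upgrading this to an isomorphism $\overline Y\simeq K/G$ of the contracted models requires controlling the wall-and-chamber structure and the deformations of the contraction. If the direct approach fails, we would instead prove that $Z$ has the right invariants: it is a $\QQ$-factorial terminal symplectic variety whose locally trivial deformations specialize to $K^3(A)$. We would then conclude via the locally trivial deformation theory of Bakker--Lehn.
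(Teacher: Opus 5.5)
Your Steps 1--2 are not where the difficulty lies: they are essentially the paper's Corollary~\ref{cor:reconstructionProcedure}, which is deduced \emph{from} this theorem. Step~3, which carries all the content, has a genuine gap.

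First, Theorem~\ref{thm:birationalCharacterization} only produces some bimeromorphic map $\psi\colon Y\dashrightarrow Y_K$ via birational Torelli. Its action on $H^2$ need not be the isometry you used to match the Kummer lattices, since birational Torelli realizes that isometry only up to reflections in prime exceptional classes. So when $\NS(Y)\supsetneq L_{\Km}$, nothing you wrote forces $\psi_*[E_i]=[E_\sigma]$. Uniqueness of the prime divisor in a negative class helps only once the classes are known to match.

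Second, and more seriously, even with matching divisors the target of the contraction is not determined by the divisors. The $G$-action transfers to any birational model $K'$ of $K$, because $G$ acts trivially on $H^2$. For a flop $K\dashrightarrow K'$ with $K\not\simeq K'$, the manifolds $Y_K$ and $Y_{K'}$ are birational with corresponding $E_\sigma$'s. Yet $K/G\simeq K'/G$ would force $K\simeq K'$: lift the isomorphism to the universal covers of the regular loci and normalize. Hence $\overline Y\simeq K/G$ can only hold for a suitably chosen model of $K$. Showing that every way of contracting the $E_i$ arises like this is exactly the wall-and-chamber analysis you flag but do not carry out. The $\operatorname{Proj}$ description is also unavailable in general, since $Y$ need not be projective; for very general $Y$ as in the statement, $\NS(Y)=L_{\Km}$ is negative definite.

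The missing ingredient is to deform the \emph{contraction} itself, which you mention only in passing. By \cite[Theorem 1.1]{bakkerLehn} (cf.\ \cite[Proposition 2.3]{LehnPacienza}), the subspace $\mathrm{Def}(Y,\langle[E_i]\rangle)$, where the $[E_i]$ stay of Hodge type, maps isomorphically onto $\mathrm{Def}^{\mathrm{lt}}(\overline Y)$. Moreover $q$ extends to a family of contractions $\mathcal Y_t\to\overline{\mathcal Y}_t$ of exactly the $16$ transported divisors.

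At a very general $t$ one has $\NS(\mathcal Y_t)=L_{\Km}$. Then Theorem~\ref{thm:birationalCharacterization} and Proposition~\ref{prop:veryGeneralCase} (Kähler cone equals birational Kähler cone) give $\mathcal Y_t\simeq Y_{K_t}$ on the nose. Since the $\pm[E_\sigma]$ are the only $(-2)$-classes there, $\overline{\mathcal Y}_t\simeq K_t/G$ with no chamber ambiguity.

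Finally, $\mathrm{Def}^{\mathrm{lt}}(K_t/G)\simeq\mathrm{Def}(K_t)$, because every deformation of $K_t$ carries the $G$-action. So every fibre of the locally trivial family $\overline{\mathcal Y}$, in particular $\overline Y$, is of the form $K'/G$ for a $\mathrm{Kum}^3$-type $K'$.

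Your fallback through $Z$ would need this same locally trivial deformation input to reach $K^3(A)$. Also, $\QQ$-factoriality and terminality of $Z$ is not established even in the paper.
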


Notice that any hyper-Kummer $\mathrm{K}3^{[3]}$-manifold contains $16$ divisors $E_i$ with the above properties, by Proposition \ref{prop:cohomologyY_K}.

\begin{corollary}\label{cor:reconstructionProcedure}
    Let $Y$ be as in Theorem \ref{thm:biregularCharacterization}; assume further that $Y$ is projective. Then there exists a finite morphism $f\colon \widetilde{Y}\to Y$ which is Galois with Galois group $(\ZZ/2\ZZ)^5$, ramified with index $2$ along each divisor  $E_i$. Moreover, taking the Stein factorization of the composition of $f$ with the birational contraction $q\colon Y\to \overline{Y}$ yields a commutative diagram 
    \begin{equation}
    \begin{tikzcd}
        \tilde{Y} \arrow{dr}{\phi} \arrow{r}{f} \arrow[swap]{d}{q'} & Y \arrow{d}{q} \\
        Z \arrow[swap]{r}{f'} & \overline{Y}
    \end{tikzcd}
    \end{equation}
    in which $q'$ is the birational contraction of the $16$ divisors $f^{-1}(E_i)$, $f'$ is a finite morphism which is \'etale over the smooth locus of $\overline{Y}$, and $Z$ is a primitive symplectic variety whose $\QQ$-factorial terminalization is a smooth hyper-K\"ahler variety of $\mathrm{Kum}^3$-type.    
\end{corollary}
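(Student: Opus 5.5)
We will deduce the corollary from Theorem \ref{thm:biregularCharacterization}, which gives a manifold $K$ of $\mathrm{Kum}^3$-type with $\overline{Y}\cong K/G$, combined with the explicit geometry of the hyper-Kummer construction (Theorem \ref{thm:hyperKummerConstruction}). The first step is to construct the cover $f$. By Proposition \ref{prop:cohomologyY_K}(iii) and hypothesis (ii), the saturation $L_{\Km}$ of $\langle [E_i]\rangle$ satisfies $L_{\Km}/\langle [E_i]\rangle\cong(\ZZ/2\ZZ)^5$. Concretely, the classes $\tfrac12\sum_{i\in I}[E_i]$ that are integral form a $5$-dimensional $\FF_2$-code $\mathcal{C}$ inside $\FF_2^{16}$ (the classical Kummer code). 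Since $Y$ is simply connected and projective, $\Pic(Y)=\NS(Y)$ is torsion-free. Hence for each $I\in\mathcal{C}$ there is a unique line bundle $L_I$ with $L_I^{\otimes 2}\cong\mathcal{O}_Y(\sum_{i\in I}E_i)$. These line bundles are compatible with the group law up to canonical isomorphisms, which we can normalize using the uniqueness of square roots. The standard construction of abelian covers (Pardini's building data for $(\ZZ/2\ZZ)^5$) then produces a normal Galois cover $f\colon\widetilde Y\to Y$ with group $\mathcal{C}^\vee\cong(\ZZ/2\ZZ)^5$. Its branch divisor is $\sum_i E_i$, and each $E_i$ has inertia $\ZZ/2\ZZ$. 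We still need $\widetilde Y$ to be irreducible and smooth. Irreducibility holds because the code $\mathcal{C}$ has no nonzero word killed by all characters, so every intermediate double cover is non-split. For smoothness, the $E_i$ are pairwise disjoint: they are contracted to distinct components of the singular locus, and $[E_i]\cdot[E_j]=0$. Smoothness then follows from the local form of bidouble covers branched along disjoint smooth divisors. Here we use that $E_i$ is smooth, since it is a $\PP^1$-bundle over the codimension-$2$ singular stratum when $\overline{Y}\cong K/G$.

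Next we take the Stein factorization $\widetilde Y\xrightarrow{q'}Z\xrightarrow{f'}\overline Y$ of $q\circ f$. Then $f'$ is finite, $Z$ is normal, and $q'$ has connected fibres. Over the smooth locus $\overline Y^{\circ}=\overline Y\setminus q(\bigcup E_i)$ the map $q$ is an isomorphism and $f$ is unramified. So $f'$ is étale of degree $32$ over $\overline Y^{\circ}$, and $q'$ contracts exactly the preimages $f^{-1}(E_i)$.

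To identify $Z$, we compare with $K\to K/G=\overline Y$. Let $U\subset K$ be the complement of the fixed loci. Then $U\to\overline Y^\circ$ is a connected étale $(\ZZ/2\ZZ)^5$-cover, and its restriction over a small loop around each $E_i$ has monodromy $\sigma_i\in G\setminus G_1$. Since $\pi_1(\overline Y^\circ)^{\mathrm{ab}}\otimes\FF_2$ is generated by these meridians with relations given exactly by the code (dual to the computation of $H^2$), the étale cover $f'^{-1}(\overline Y^\circ)\to\overline Y^\circ$ coincides with $U\to\overline Y^\circ$. Normalization over $\overline Y$ then gives $Z\cong K$. This identification is the main obstacle, because we must match the two $(\ZZ/2\ZZ)^5$-covers. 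We plan to use the uniqueness of the cover associated with the torsion part $\mathrm{Tors}\,H_1(\overline Y^\circ)$. This torsion is computed from $H^2(Y)\supset L_{\Km}\supset\langle E_i\rangle$: the index-$32$ discrepancy shows that $H_1(\overline Y^\circ,\ZZ)\cong(\ZZ/2\ZZ)^5$, and hence there is a unique maximal abelian $2$-cover.

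Finally, $Z\cong K$ is already smooth hyper-Kähler of $\mathrm{Kum}^3$-type. In particular $Z$ is primitive symplectic, and it is its own $\QQ$-factorial terminalization. In families, where only properties (i)–(iii) are available, we would instead invoke the general statement that a $\QQ$-factorial terminalization of a locally trivial deformation of $K$ is of $\mathrm{Kum}^3$-type.
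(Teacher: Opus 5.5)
One step in your construction of $f$ is false, although the statement does not need it. The divisors $E_i$ are \emph{not} pairwise disjoint. Via $\overline Y\cong K/G$, the images $q(E_i)$ and $q(E_j)$ are $W_\sigma/G$ and $W_{\sigma'}/G$, which meet along $V_{\sigma,\sigma'}/G\neq\emptyset$. Over a general point of this surface, the fibre of $q$ is connected (Zariski), lies in $E_i\cup E_j$, and meets both, so $E_i\cap E_j\neq\emptyset$. Beauville--Bogomolov orthogonality $q_Y([E_i],[E_j])=0$ says nothing about disjointness in dimension $6$: in $Y_K$ one has locally $E_\sigma\cap E_{\sigma'}\cong C\times C\times\CC^2$. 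Likewise, $E_i\to q(E_i)$ is not a $\PP^1$-bundle, since its fibres over $V_{\sigma,\sigma'}/G$ are $\PP^1\times\PP^1$. So your smoothness argument for $\widetilde Y$ fails as written. However, the statement does not assert that $\widetilde Y$ is smooth, and you never use it. Normality of $\widetilde Y$, which the Pardini/Alexeev--Pardini construction provides, is all you need for $Z$ to be normal, so simply drop that claim.

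Apart from this, your route is correct and differs from the paper's in two respects. First, to identify the cover over $\overline Y^{\mathrm{reg}}$, the paper argues that $U\subset K$ is simply connected, being the complement of a codimension-$2$ subset. Hence $U$ is the universal cover, $\pi_1(\overline Y^{\mathrm{reg}})\cong(\ZZ/2\ZZ)^5$, and the connected degree-$32$ cover $(f')^{-1}(\overline Y^{\mathrm{reg}})$ is universal as well. You instead compute $H_1(\overline Y^{\circ},\ZZ)\cong(\ZZ/2\ZZ)^5$ from $\pi_1(Y)=1$ and $[L_{\Km}:\langle[E_i]\rangle]=2^5$, then use uniqueness of the maximal abelian cover. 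Both work, and your remarks about meridians and monodromy $\sigma_i$ are not needed.

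Second, and more substantially, the paper only concludes that $Z$ and $K$ are birational. It then reaches the $\mathrm{Kum}^3$ conclusion through a $\QQ$-factorial terminalization (BCHM) and Bakker--Lehn's Theorem~6.16, and leaves $Z\cong K$ to a subsequent remark (``a local analysis should show''). Your normalization step settles this directly. Both $Z$ and $K$ are normal and finite over $\overline Y$ and isomorphic over a dense open, so both are the normalization of $\overline Y$ in the same function field, giving $Z\cong K$. That is stronger than the statement, avoids the MMP input, and applies equally to the paper's universal-cover argument.

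A small slip: Proposition~\ref{prop:cohomologyY_K}(iii) concerns $Y_K$. For your $Y$, the index $2^5$ comes directly from hypothesis (ii).
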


\begin{proof}
    By assumption, the classes of the divisors $[E_i]$ generate a sublattice of $H^2(Y,\ZZ)$ with saturation the Kummer lattice, and so $\langle [E_i]\rangle^{\mathrm{sat}}/\langle [E_i]\rangle \cong (\ZZ/2\ZZ)^5$. By the theory of abelian covers developed in \cite{Pardini-AbelianCover} and in particular \cite[Theorem 2.1]{Alexeev-Pardini-AbelianCovers}, it follows that there exists a finite morphism $f\colon \tilde{Y}\to Y$ which is an abelian cover with Galois group $(\ZZ/2\ZZ)^5$, ramified with index $2$ over each $E_i$. By construction, $f'$ is finite and $q'$ has connected fibers.
    Since $f$ is \'etale over $Y\setminus \bigcup_i E_i$, it follows that $q'$ is injective over $\widetilde{Y}\setminus \bigcup_i f^{-1}(E_i)$. On the other hand, $q'(f^{-1}(E_i))$ must be a fourfold in $Z$. Therefore $q'$ is birational and contracts exactly the $16$ divisors $f^{-1}(E_i)$. Moreover, the image of $Y\setminus \bigcup_i E_i$ in $\overline{Y}$ has codimension $2$ and equals the regular locus of $\overline{Y}$. Therefore $f'$ is a quasi-\'etale morphism, which is \'etale over the regular locus $\overline{Y}^{\mathrm{reg}}$ of $\overline{Y}$. 

    Applying Theorem \ref{thm:biregularCharacterization}, we deduce that there exists a finite quasi-\'etale morphism $K\to \overline{Y}$, \'etale over the regular locus of $\overline{Y}$, for some $\mathrm{Kum}^3$-manifold $K$. If $U\subset K$ is the preimage of the regular locus of $\overline{Y}$, then $U$ is simply connected, since it is the complement of a codimension $2$ closed subset in the smooth simply connected variety $K$. Hence, $U$ is the universal cover of $\overline{Y}^{\mathrm{reg}}$, which thus has fundamental group $(\ZZ/2\ZZ)^5$. 
    But then $(f')^{-1}(\overline{Y}^{\mathrm{reg}})\subset Z$ must also be the universal cover of $\overline{Y}^{\mathrm{reg}}$, so $Z$ and $K$ are birational.
    We next observe that $\overline{Y}$ is a symplectic variety in the sense of \cite{beauvillesymplectic}, and in fact a primitive symplectic variety (\cite[Definition 3.1]{bakkerLehn}) with $\QQ$-factorial singularities (\cite[Example 3.4]{BakkerLehn2021GlobalTorelliSingularSymplectic}).
    Since $Z$ is a quasi-\'etale cover of $\overline{Y}$, it is also a symplectic variety, and in fact a primitive symplectic variety since $Z$ is birational to the smooth hyper-K\"ahler variety $K$. 
    By \cite{BCHM} (see also \cite{bakkerLehn}), there exists a $\QQ$-factorial terminalization $\widetilde{Z}\to Z$. Since $K$ and $\widetilde{Z}$ are birational $\QQ$-factorial and terminal primitive symplectic varieties, they must be isomorphic in codimension $1$. Applying \cite[Theorem 6.16]{bakkerLehn}, $\widetilde{Z}$ and $K$ are deformation equivalent, i.e., $\widetilde{Z}$ is a smooth manifold of $\mathrm{Kum}^3$-type, and $\widetilde{Z}\to Z$ is a symplectic resolution. 
    %
\end{proof}
\begin{remark}
    A local analysis of $Z\to \overline{Y}$ should show that $Z$ is already $\QQ$-factorial and terminal, and hence $Z$ is a smooth hyper-K\"ahler manifold of $\mathrm{Kum}^3$-type by Corollary \ref{cor:reconstructionProcedure}.
\end{remark}

\begin{remark}\label{rmk:contractionAfterFlops}
    Let $Y$ be a projective variety of $\mathrm{K}3^{[3]}$-type satisfying assumptions (i) and (ii) in Theorem \ref{thm:biregularCharacterization}. We can then conclude that there exists a finite cover $\widetilde{Y}\to Y$ with Galois group $(\ZZ/2\ZZ)^5$ ramified with index $2$ over each $E_i$, such that $\widetilde{Y}$ is birational to a $\mathrm{Kum}^3$-variety $K$. Moreover, $Y$ is birational to the hyper-Kummer sixfold $Y_K$ associated with $K$.
    
    This follows from Corollary \ref{cor:reconstructionProcedure}, since there exists a $\mathrm{K}3^{[3]}$-variety $Y'$ birational to $Y$ and a birational morphism $Y'\to \overline{Y}'$ which contracts the strict transforms of the divisors $E_i$. 
    In other words, we can achieve (iii) after replacing $Y$ with another smooth birational model.
    Indeed, each $E_i$ is a prime exceptional divisor (\cite{markmanPrime}) and can be contracted after some flops by a result of Druel \cite[Proposition 1.4]{druel}. We can moreover find a smooth birational model of $Y$ on which the strict transforms of the $E_i$ can be simultaneously contracted. This follows from the fact that the intersection matrix of the sublattice of $H^2(Y,\ZZ)$ spanned by the classes $[E_i]$ is negative definite. By \cite[Lemma 4.15]{denisi} and its proof, there exists a big divisor $L$ in the birational K\"ahler cone of $Y$, such that the set of  prime divisors $E$ such that $[E]\in [L]^{\bot}\subset H^2(Y,\ZZ)$ is the set $\{E_1,\dots, E_{16}\}$. Moreover (by \cite[Theorem 1.2]{matsushita2014}) $L$ becomes a big and nef divisor $L'$ on some smooth birational model $Y'$ of $Y$; for $m$ large enough, the linear system $|mL'|$ induces a birational morphism $Y'\to \overline{Y}'$ which contracts the strict transforms of the divisors $E_i$.
\end{remark}



To prove Theorem \ref{thm:biregularCharacterization}, we will need to recall some results on the birational geometry of hyper-K\"ahler manifolds. 
Let $X$ be a hyper-K\"ahler manifold. We denote by $\mathcal{K}(X)$ the K\"ahler cone of $X$, which is open in $H^{1,1}(X)\cap H^2(X,\RR)$. 
The positive cone $\mathrm{Pos}(X)$ is the connected component of 
\begin{equation}
\{\alpha\in H^{1,1}(X)\cap H^2(X,\RR), \ q_X(\alpha,\alpha)>0 \}
\end{equation}
containing the K\"ahler cone. 
To study the birational geometry of hyper-K\"ahler manifolds, one introduces the birational K\"ahler cone
\begin{equation} 
\mathcal{BK}(X)\coloneqq \bigcup_{f\colon X\dashrightarrow X' } f^*(\mathcal{K}(X')), 
\end{equation}
where the union runs over all bimeromorphic maps to a hyper-K\"ahler manifold $X'$. The closure of $\mathcal{BK}(X)$ intersected with $\NS(X)\otimes \RR$ coincides with the closure of the cone of \textit{movable} divisors, that is, the cone generated by divisors whose stable base locus has no fixed part (\cite[Theorem 7]{hassett-Tschinkel2009}).
A prime exceptional divisor on a hyper-K\"ahler manifold $X$ is a prime divisor with negative Beauville--Bogomolov form. By \cite[Theorem 1.5]{markman2011survey}, the closure of the birational K\"ahler cone also coincides with the closure of the fundamental exceptional chamber 
\begin{equation} 
\{\alpha\in \mathrm{Pos}(X) \ | \ q_X(\alpha, [E])> 0 \text{ for any prime exceptional divisor } E\subset X\}.
\end{equation} 
The fundamental exceptional chamber is equivalently characterized in terms of stably prime exceptional divisors, which, if $X$ is of $\mathrm{K}3^{[n]}$-type, admit a numerical characterization, see \cite[Proposition 1.8]{markman2011survey} and \cite[Theorem 1.11]{markmanPrime}.


For a hyper-K\"ahler manifold $X$ of $\mathrm{K}3^{[3]}$-type, the K\"ahler cone can be described explicitly in terms of the second cohomology and the Beauville--Bogomolov pairing $q_X$, by \cite{BM14a,BHT, mongardino}. 
For a class $D\in H^2(X,\ZZ)$, we let $\mathrm{div}(D)$ be its divisibility, i.e., the largest integer $d$ such that $q(D, \cdot)/d$ belongs to $H^2(X,\ZZ)^{\vee}$.

\begin{definition}[Classification of wall divisors for $\mathrm{K}3^{[3]}$-type manifolds]
\label{def:numericalWallDivisors}
    Let $X$ be a manifold of $\mathrm{K}3^{[3]}$-type.
	The set $\mathsf{NW}_X$ of numerical wall divisor classes is the set of classes $D\in \NS(X)$ with the following numerical invariants:
	\begin{equation}\label{eq:tableNumericalWalls}
	\begin{tabular}{|c|c|}
		\hline
		$ q_X(D,D)$ & $\mathrm{div}(D)$\\
		\hline
		 $-12$ & $2$ \\
		 \hline
		 $-36$ & $4$ \\
		 \hline
		 $-2$ & $1$ \\
		 \hline
		 $-4$ & $4$ \\ 
		 \hline
		 $-4$ & $2$\\
		 \hline
	\end{tabular}
	\end{equation}
	The set $W_X$ of wall divisors consists of elements $D$ in $\mathsf{NW}_X$ such that there exists a K\"ahler class $\omega$ on $X$ with $q_X(D, \omega)>0$.
\end{definition}
The notion of wall divisors is equivalent to that of MBM classes \cite{AmerikVerbitsky2015RationalCurves}. We have the following result.
\begin{theorem}[{\cite[Theorem 2.14]{mongardino}}]\label{thm:wall-divisors}
	Let $X$ be a manifold of $\mathrm{K}3^{[3]}$-type. Then 
	\begin{equation}
	\mathcal{K}(X) = \{ \omega\in \mathrm{Pos}(X) \ | \ q_X(\omega, D)>0 \text{ for every } D\in W_X \}.
	\end{equation}
\end{theorem}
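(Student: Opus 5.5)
The plan is to derive Theorem~\ref{thm:wall-divisors} from two general inputs: the description of the K\"ahler cone of any hyper-K\"ahler manifold as a chamber cut out by wall divisors (MBM classes), and the Bayer--Macr\`i description of the walls for moduli spaces of stable objects on K3 surfaces. Deformation invariance then transports the second to all manifolds of $\mathrm{K}3^{[3]}$-type.

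\emph{Step 1 (general chamber structure).} For any hyper-K\"ahler $X$, Amerik--Verbitsky \cite{AmerikVerbitsky2015RationalCurves} and Mongardi show that $\mathcal{K}(X)$ is a connected component of
\[
\mathrm{Pos}(X)\setminus \bigcup_{D} D^{\bot},
\]
where $D$ runs over the wall divisors (MBM classes) in $\NS(X)$. They also show that being a wall divisor is invariant under parallel transport, as long as the class stays of type $(1,1)$. Since $\mathcal{K}(X)$ is one chamber, it is cut out by the inequalities $q_X(\omega,D)>0$ for the wall divisors $D$ positive on it. Replacing $D$ by $-D$ where needed, these are exactly the classes in the set $W_X$ of Definition~\ref{def:numericalWallDivisors}. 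The whole theorem therefore reduces to one claim: a class $D\in\NS(X)$ is a wall divisor if and only if it lies in $\mathsf{NW}_X$.

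\emph{Step 2 (reduction to moduli spaces).} Embed $H^2(X,\ZZ)\hookrightarrow\widetilde{\Lambda}_{\mathrm{K}3}$ as $v^{\bot}$ with $v^2=4$, using Markman's canonical embedding, which is monodromy-equivariant. By the deformation invariance of Step 1, and since the monodromy orbits of classes are determined by $q(D,D)$, $\mathrm{div}(D)$ and the residue in the discriminant (as in Lemma~\ref{lemma:Mon-Orbit} and \cite{Markman-IntegralConstraintsMonodromy}), it suffices to test the claim on one $X$ per orbit. I would take $X$ to be a moduli space $M_\sigma(v)$ of Bridgeland-stable objects on a projective K3 surface whose N\'eron--Severi lattice contains the relevant class. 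There Bayer--Macr\`i \cite{BM14a} (see also \cite{BHT}) identify the walls of the nef cone with the hyperbolic rank-two lattices $\mathcal{H}\ni v$ that contain a class $s$ with $s^2\ge -2$ and $0\le (s,v)\le v^2/2=2$, excluding the totally semistable and fake walls they classify. The wall divisor attached to $\mathcal{H}$ is the primitive generator $D$ of $\mathcal{H}\cap v^{\bot}$.

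\emph{Step 3 (lattice computation, the main obstacle).} Enumerate all pairs $(s^2,(s,v))$ with $s^2\in\{-2,0,2,\dots\}$ and $(s,v)\in\{0,1,2\}$ such that $\mathcal{H}=\langle v,s\rangle$ is hyperbolic and gives a genuine wall. For each pair, compute $D$ as the primitive vector proportional to $4s-(s,v)v$, together with $q(D,D)$ and $\mathrm{div}(D)$ inside $v^{\bot}$. For example:
\begin{itemize}
\item $s^2=-2,\ (s,v)=0$ gives $D=s$, with $q=-2$ and $\mathrm{div}=1$;
\item $s^2=-2,\ (s,v)=1$ gives $4s-v$ of square $-36$ and divisibility $4$;
\item $s^2=-2,\ (s,v)=2$ gives $2s-v$ of square $-12$ and divisibility $2$;
\item $s^2=0,\ (s,v)=1$ gives square $-16$, hence primitive part $-4$ with divisibility $4$;
\item $s^2=0,\ (s,v)=2$ gives $2s-v$ of square $-4$ and divisibility $2$.
\end{itemize}
These reproduce the table in Definition~\ref{def:numericalWallDivisors}. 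The delicate part is the converse direction. I must show that larger $s^2$ or other configurations produce no new primitive invariants: either they give the same $D$, or they fail the hyperbolicity or distance bounds. I must also show that every class of the listed type actually arises from some admissible $\mathcal{H}$. The divisibility computation needs care, since divisibility in $v^{\bot}$ must be computed through the discriminant group of $\langle v\rangle$. Once these checks are done, Step 1 turns the classification into the claimed description of $\mathcal{K}(X)$.
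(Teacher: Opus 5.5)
Your outline follows the standard proof of this result. The paper gives no argument of its own; it quotes Mongardi, whose proof runs exactly as yours: the K\"ahler cone is a chamber cut out by wall divisors, wall divisors are invariant under parallel transport keeping them of type $(1,1)$, on moduli spaces they are described by Bayer--Macr\`i, and one finishes with the lattice computation for $v^2=4$. The argument goes through, but there are a few slips to fix.

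First, for $s^2=0$, $(s,v)=1$, the class $4s-v$ has square $0-8+4=-4$, not $-16$. It is already primitive, since $(4s-v,s)=-1$, and its divisibility in $v^{\bot}$ is $4$.

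Second, Bayer--Macr\`i's nef-cone theorem needs no exclusion of totally semistable or fake walls. Fake walls contain no admissible $s$ anyway, and totally semistable walls can be genuine: the Hilbert--Chow wall, coming from an isotropic $s$ with $(s,v)=1$, is totally semistable and gives the $(-4,4)$ entry you rightly keep.

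Third, the step you call delicate is immediate. Hyperbolicity of $\langle v,s\rangle$ means $(s,v)^2>4s^2$, which together with $0\le(s,v)\le 2$ forces $s^2\in\{-2,0\}$, so only your five pairs occur.

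What does need care is that $\langle v,s\rangle$ may fail to be saturated, so your example list holds only in the saturated case. For instance, $v=(1,0,-2)$ and $s=v(\mathcal{O}_S)=(1,0,1)$ satisfy $s^2=-2$ and $(s,v)=1$. Yet $4s-v=3\,(1,0,2)$, whose primitive part is the Hilbert--Chow class of type $(-4,4)$, not $(-36,4)$.

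Working instead with $\mathcal{H}=(\langle v\rangle\oplus\langle D\rangle)^{\mathrm{sat}}$, via the gluing of $A_{\langle v\rangle}$ with $A_{v^{\bot}}\cong\mathbb{Z}/4\mathbb{Z}$, settles both directions at once. Write $s=\tfrac{(s,v)}{4}v+\beta D$ and impose integrality. Then $\mathcal{H}$ contains an admissible $s$ exactly when $(q(D),\mathrm{div}(D))$ is one of the five entries of the table. Note that divisibility-$4$ classes have square $\equiv -4 \pmod{32}$, which rules out the spurious type $(-12,4)$.
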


The stably prime exceptional divisors are the numerical divisors with square and divisibility as in the last three entries of table \eqref{eq:tableNumericalWalls}, by \cite[Theorem 1.11]{markmanPrime}. 
Thus, the K\"ahler cone is an open chamber in the complement of the wall divisors in the closure of the birational K\"ahler cone. 

We apply this result to the very general hyper-Kummer $\mathrm{K}3^{[3]}$-manifold. 
\begin{proposition}\label{prop:veryGeneralCase}
    Let $K$ be a manifold of $\mathrm{Kum}^3$-type of Picard rank $0$. If $Y'$ is a hyper-K\"ahler manifold bimeromorphic to the hyper-Kummer manifold $Y_K$, then $Y'$ and $Y_K$ are isomorphic.
\end{proposition}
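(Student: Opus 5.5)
The plan is to show that the birational Kähler cone of $Y_K$ consists of a single chamber, namely the Kähler cone itself; then any bimeromorphic $f\colon Y_K\dashrightarrow Y'$ pulls back a Kähler class of $Y'$ to a Kähler class of $Y_K$. By Huybrechts' criterion (a bimeromorphic map between hyper-K\"ahler manifolds that sends some K\"ahler class to a K\"ahler class extends to an isomorphism), this gives $Y'\cong Y_K$.

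Since $K$ has Picard rank $0$, Proposition \ref{prop:cohomologyY_K} gives $H^{1,1}(Y_K)\cap H^2(Y_K,\QQ)=\NS(Y_K)_\QQ$. Indeed $H^2_{\mathrm{tr}}(Y_K,\ZZ)=\tfrac{1}{2^4}\hat r_*(\widehat H^2(K,\ZZ)(2))$, so $\NS(Y_K)=L_{\Km}$, the saturation of $\langle [E_\sigma]\rangle$. This lattice is negative definite, so every wall divisor and every prime exceptional divisor has its class in $L_{\Km}$.

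The key lattice computation is to list all classes $D\in L_{\Km}$ with the invariants in table \eqref{eq:tableNumericalWalls}. I would use the explicit description of $L_{\Km}$: elements are $\sum a_\tau e_\tau$ with $a_\tau\in\tfrac12\ZZ$, where half-integral coefficients occur exactly along affine hyperplanes (codewords of weight $8$ or $16$ of the Reed--Muller code). Such an element with $8$ half-integral coefficients already has square at most $-4$, and then one must check divisibility in the whole $H^2(Y_K,\ZZ)$. Because $L_{\Km}^\perp=\mathrm{U}(2)^{\oplus3}\oplus\langle-4\rangle$ with discriminant group $(\ZZ/2)^6\oplus \ZZ/4$, the gluing determines which classes have divisibility $2$ or $4$.

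The expected outcome is as follows. The classes of square $-2$ and divisibility $1$ are only $\pm e_\sigma$. Classes of square $-4$, such as $e_\sigma\pm e_{\sigma'}$ or half-sums over $8$ elements, would need divisibility $2$ or $4$ in $H^2(Y_K,\ZZ)$, and the plan is to rule this out using the gluing. I expect this to fail: a class $x\in L_{\Km}$ has $\mathrm{div}(x)\geq 2$ in $H^2(Y_K,\ZZ)$ only if $x/2$ pairs integrally, which would force $x\in 2L_{\Km}^\vee$ glued compatibly. Similarly, squares $-12$ and $-36$ with divisibility $2$ and $4$ should be excluded, or shown to be effective only as combinations. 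This case analysis is the main obstacle, and I would first try to reduce it to the statement that the discriminant group of $L_{\Km}$ is $2$-elementary, which bounds divisibility by $2$, and then handle the divisibility-$2$ cases by parity.

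Granting the computation, $W_X$ and the prime exceptional divisors are both exactly the $\pm e_\sigma$ (the $E_\sigma$ are effective, so the signs are fixed). By Theorem \ref{thm:wall-divisors}, $\mathcal K(Y_K)=\{\omega\in\mathrm{Pos}\mid q(\omega,E_\sigma)>0\ \forall\sigma\}$, which is the same set as the fundamental exceptional chamber. Therefore $\mathcal{BK}(Y_K)=\mathcal K(Y_K)$. Finally, $f^*\mathcal K(Y')$ is open in $\mathcal{BK}(Y_K)$ and disjoint from $f'^*\mathcal K(Y'')$ unless the map is an isomorphism, so $f^*\mathcal K(Y')$ meets $\mathcal K(Y_K)$ and $f$ is an isomorphism.
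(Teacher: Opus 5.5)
Your overall architecture is the paper's: use $\NS(Y_K)=L_{\Km}$, show that the only numerical wall classes are $\pm[E_\sigma]$, invoke Theorem~\ref{thm:wall-divisors}, and use that the $E_\sigma$ are prime exceptional to get $\mathcal{BK}(Y_K)=\mathcal{K}(Y_K)$. However, the one non-formal step is that no class of $L_{\Km}$ other than $\pm[E_\sigma]$ has the invariants of table~\eqref{eq:tableNumericalWalls}. That is exactly the step you leave ``granted'', and the route you sketch cannot close it. Bounding divisibility by $2$ via the $2$-elementarity of $A_{L_{\Km}}$ is fine. But no parity argument carried out inside $L_{\Km}$ can kill the divisibility-$2$ cases. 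To see this, identify the $e_\tau$ with the points of $\FF_2^4$, and let $C$ be a weight-$6$ word of $\mathrm{RM}(2,4)=\mathrm{RM}(1,4)^{\perp}$, for instance the symmetric difference of two complementary $2$-planes. Then $x=\sum_{\tau\in C}e_\tau$ is primitive in $L_{\Km}$, since $C\notin\mathrm{RM}(1,4)$, and $x^2=-12$. Moreover, $x/2$ pairs integrally with every $e_\tau$ and with every $\tfrac12\sum_{\tau\in W}e_\tau$, so $\mathrm{div}_{L_{\Km}}(x)=2$. What must be controlled is therefore the divisibility in $H^2(Y_K,\ZZ)$. Your fallback ``or shown to be effective only as combinations'' does not help, because wall divisors are defined purely numerically.

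The missing idea is that $L_{\Km}$ is fully glued to its complement $T\cong\mathrm{U}(2)^{\oplus 3}\oplus\langle -4\rangle$. Indeed, $[H^2(Y_K,\ZZ):L_{\Km}\oplus T]^2=|A_{L_{\Km}}|\cdot|A_T|/|A_{H^2(Y_K,\ZZ)}|=2^6\cdot 2^8/2^2$. So the gluing subgroup has order $2^6=|A_{L_{\Km}}|$, and since it injects into $A_{L_{\Km}}$, it is all of it. Hence the restriction map $H^2(Y_K,\ZZ)\to L_{\Km}^\vee$ is surjective. Every primitive $x\in L_{\Km}$ therefore pairs to $1$ with some class of $H^2(Y_K,\ZZ)$, i.e.\ has divisibility $1$. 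The paper phrases this as the embedding $L_{\Km}\hookrightarrow\Lambda_{\mathrm{K}3^{[3]}}$ factoring through a unimodular copy of $\Lambda_{\mathrm{K}3}$. A non-primitive $D=mD_0$ with $m\geq 2$ then has divisibility $m$ and square in $2m^2\ZZ$, which matches no row of the table. So the numerical wall classes are exactly the $32$ classes of square $-2$ in $L_{\Km}$, which are $\pm[E_\sigma]$ by your half-integrality remark.

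With this in place the rest of your argument works, and two simplifications are available. You only need each $E_\sigma$ to be prime exceptional, which gives $\overline{\mathcal{BK}(Y_K)}\subset\overline{\mathcal{K}(Y_K)}$; you do not need them to be the only ones. And the last step is simply $f^*\mathcal{K}(Y')\subset\mathcal{BK}(Y_K)=\mathcal{K}(Y_K)$ followed by Huybrechts' criterion.
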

\begin{proof}
    The proposition is equivalent to the equality $\mathcal{K}(Y_K)=\mathcal{BK}(Y_K)$ between the K\"ahler cone and the birational K\"ahler cone. 
    For $K$ as in the statement, the N\'eron--Severi group $\NS(Y_K)$ is isometric to the Kummer lattice $L_{\Km}$, by Proposition \ref{prop:cohomologyY_K}, and the transcendental lattice of $Y_K$ is isometric to $\mathrm{U}(2)^{\oplus 3}\oplus \langle -4\rangle$.  
    Moreover, any class in $H^2(Y_K,\ZZ)$ contained in the sublattice $L_{\Km}$ has divisibility $1$ (indeed, this embedding of $L_{\Km}$ into $\Lambda_{\mathrm{K}3^{[3]}}$ factors through a copy of the K3 lattice).
    
    Therefore, the set of numerical wall divisors consists of the $32$ classes with square~$-2$ in $L_{\Km}$, which are given by $\pm[E_i]$, $i=1,\dots,16$, where the $E_i$ are the components of the exceptional divisor of $Y_K\to K/G$.
    By Theorem \ref{thm:wall-divisors}, the K\"ahler cone of $Y_K$ is thus given by 
    \begin{equation}
        \mathcal{K}(Y_K) =\{\omega\in \mathrm{Pos}(Y_K)\ | \ q_{Y_K}(\omega, [E_i])>0, \ \text{for} \ i=1,\dots,16\}.
    \end{equation}
    On the other hand, each $E_i$ is a prime exceptional divisor. Hence, for any $\omega\in \overline{\mathcal{BK}(Y_K)}$ we have $q_{Y_K}(\omega, [E_i])\geq 0$, so that $\overline{\mathcal{BK}(Y_K)}\subset \overline{\mathcal{K}({Y_K})}$ and therefore ${\mathcal{BK}(Y_K)}=\mathcal{K}(Y_K)$.
\end{proof}

\begin{proof}[{Proof of Theorem \ref{thm:biregularCharacterization}}]
Let $Y$ be as in the statement of the theorem. By assumption, there exists a morphism $\pi_0\colon Y\to \overline{Y}$ contracting the $16$ divisors $E_i$.
Then $\overline{Y}$ is a singular symplectic variety (\cite{beauvillesymplectic}). Let $\mathrm{Def}(Y)$ be the local deformation space of $Y$, and let $\mathrm{Def}^{\mathrm{lt}}(\overline{Y})$ denote the local deformation space parametrizing locally trivial deformations of $\overline{Y}$. We consider the closed subspace $\mathrm{Def}(Y, \langle [E_i] \rangle_{i=1}^{16})$ of deformations along which the cohomology classes of all the divisors $E_i$ remain Hodge. 
We denote by $\mathcal{Y}$ and $\overline{\mathcal{Y}}$ the universal families over $\mathrm{Def}(Y)$ and $\mathrm{Def}^{\mathrm{lt}}(\overline{Y})$, respectively. 
By \cite[Theorem 1.1]{bakkerLehn} (see also \cite[Proposition 2.3]{LehnPacienza}), there exists a commutative diagram
\begin{equation}
\begin{tikzcd}
    \mathcal{Y}_{|_{\mathrm{Def}(Y, \langle[E_i]\rangle)}} \arrow{d} \arrow{r}{\pi} & \overline{\mathcal{Y}} \arrow{d} 
    \\
    \mathrm{Def}(Y, \langle[E_i]\rangle) \arrow{r}{\pi_*} & \mathrm{Def}^{\mathrm{lt}}(\overline{Y})
\end{tikzcd}
\end{equation}
in which $\pi_*$ is an isomorphism and $\pi$ is a morphism extending $\pi_0\colon Y\to \overline{Y}$, such that $\pi_t\colon \mathcal{Y}_t \to \overline{\mathcal{Y}}_t$ is the birational contraction of $16$ prime divisors with cohomology classes the parallel transport of the classes $[E_i]$, for any $t\in \mathrm{Def}(Y,\langle [E_i]\rangle)$. 

Consider now a very general point $t\in \mathrm{Def}(Y,\langle [E_i]\rangle)$. Then $\mathcal{Y}_t$ is a manifold of $\mathrm{K}3^{[3]}$-type whose N\'eron--Severi group is generated by the parallel transport of the classes $[E_i]$, and, hence, $\NS(\mathcal{Y}_t)$ is isometric to the Kummer lattice $L_{\Km}$, while $H^2_{\mathrm{tr}}(\mathcal{Y}_t,\ZZ)$ is isometric to $\mathrm{U}(2)^{\oplus 3} \oplus \langle -4\rangle$. 
By Theorem \ref{thm:birationalCharacterization} and Proposition \ref{prop:veryGeneralCase}, it follows that $\mathcal{Y}_t$ is isomorphic to the hyper-Kummer $\mathrm{K}3^{[3]}$-manifold associated with some manifold $K_t$ of $\mathrm{Kum}^3$-type of Picard rank $0$. It follows that the contraction $\overline{\mathcal{Y}_t}$ must be isomorphic to $K_t/G$. 
But $\mathrm{Def}^{\mathrm{lt}}(K/G)$ is naturally isomorphic to $\mathrm{Def}(K)$. Therefore, $\overline{\mathcal{Y}}_{t'}$ is isomorphic to $K_{t'}/G$ for some manifold $K_{t'}$ of $\mathrm{Kum}^3$-type, for any $t'\in \mathrm{Def}^{\mathrm{lt}}(\overline{Y})$. 
\end{proof}

\subsection{The hyper-Kummer manifold of $K^3(T)$}
As an example, we describe the hyper-Kummer manifold associated with the generalized Kummer variety of a general complex torus. 
Let $T$ be a very general $2$-dimensional complex torus, of Picard rank $0$, and consider the (non-projective) Kummer surface $\Km(T)$. 
Then $\NS(\Km(T))$ is a Kummer lattice, which is the saturation of the sublattice spanned by the $16$ exceptional curves $R_{\tau}$, with $\tau\in T[2]$. 
Consider the associated generalized Kummer sixfold $K^3(T)$. The associated hyper-Kummer sixfold $Y_{K^3(T)}$ is birational to $\Km(T)^{[3]}$, by Proposition \ref{prop:Y_K^3(A)}; we will now describe the birational transformation $\Km(T)^{[3]}\dashrightarrow Y_{K^3(T)}$, as a composition of explicit flops.

We have $\NS(\Km(T)^{[3]})\cong \NS(\Km(T))\oplus \ZZ\cdot \delta$, where $\delta$ is half the class of the Hilbert--Chow divisor, of square $-4$. As this lattice is negative definite, it is fairly easy to understand its K\"ahler cone via Theorem \ref{thm:wall-divisors}. By \cite[Theorem 9.17]{markman2011survey}, a stably prime exceptional divisor $E$ on a manifold of $\mathrm{K}3^{[3]}$-type satisfies one of the following: 
\begin{itemize}
    \item[$\bullet$] $(E,E)=-2$ and $\mathrm{div}(E) = 1$;
    \item[$\bullet$] $(E,E)=-4$ and $\mathrm{div}(E) = 2$;
    \item[$\bullet$] $(E,E) = -4$ and $\mathrm{div}(E) = 4$.
\end{itemize}
The prime exceptional divisors are exactly $\delta$ and the $16$ divisors $r_i$ parametrizing length-3 subschemes on $\Km(T)$ with support intersecting the exceptional curve $R_i\subset \Km(T)$.
This determines the closure of the birational K\"ahler cone of $\Km(T)$:
\begin{equation}
\overline{\mathcal{BK}(\Km(T)^{[3]})} = \{ h\in \mathrm{Pos}(\Km(T)^{[3]})\ | \ (h, \delta) \geq 0, \ (h,r_{\tau})\geq 0 \ \text{for any}\ \tau\in T[2]\}.
\end{equation}

The further subdivision into chambers is given by the wall divisors, which are determined numerically as in Definition \ref{def:numericalWallDivisors}. The only wall divisors are:
\begin{itemize}
    \item[$\bullet$] for each $\tau\in T[2]$, a divisor $l_\tau \coloneqq 2r_\tau - \delta$, of square $-12$ and divisibility $2$;
    \item[$\bullet$] for each $\tau\in T[2]$, a divisor $p_{\tau}\coloneqq 4r_\tau - \delta$, of square $-36$ and divisibility $4$. 
\end{itemize}
According to \cite[Corollary 4.7]{amerik2022MBM}, each $l_{\tau}$ is dual to an extremal curve in a Lagrangian $\PP^3$, denoted $L_{\tau}$; in our case $L_{\tau}$ is the $\PP^3$ parametrizing length-3 subschemes of $\Km(T)$ fully supported on the rational curve $R_{\tau}$. Each $p_{\tau}$ is instead dual to an extremal curve which moves to cover a $4$-fold $P_{\tau}$ birational to a $\PP^2$-bundle over a K3 surface; in our case, $P_{\tau}$ is the subvariety parametrizing subschemes with at least $2$ points in their support lying on $R_{\tau}$. 

This determines a wall and chamber decomposition of the birational K\"ahler cone of $\Km(T)^{[3]}$. The hyper-Kummer sixfold $Y_{K^3(T)}$ is the birational model which is ``farthest'' from $\Km(T)^{[3]}$: to reach it, we need to cross all the walls. 
\begin{proposition}
    The hyper-Kummer sixfold $Y_{K^3(T)}$ is obtained from $\Km(T)^{[3]}$ via the flops along the $16$ Lagrangian $L_{\tau}\cong \PP^3$ and the $16$ fourfolds $P_{\tau}$ (which are birational to $\PP^2$-bundles over $\Km(T)$).
\end{proposition}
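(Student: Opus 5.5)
The plan is to identify the Kähler cone of $Y_{K^3(T)}$ inside $H^2(\Km(T)^{[3]},\RR)$ via the birational map $\psi\colon \Km(T)^{[3]}\dashrightarrow Y_{K^3(T)}$ of Proposition \ref{prop:Y_K^3(A)}. Then we show that it is the chamber of $\overline{\mathcal{BK}(\Km(T)^{[3]})}$ separated from $\mathcal{K}(\Km(T)^{[3]})$ by all $32$ walls $l_\tau^\perp$ and $p_\tau^\perp$. Finally, the flops are read off from crossing these walls.

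\textbf{Step 1: locate the Kähler cone of $Y_{K^3(T)}$.} By the proof of Proposition \ref{prop:cohomologyY_K}, $\psi_*$ sends $[R_\tau]=r_\tau$ to $[E_{\sigma_\tau}]$ and sends $\delta$ to $\tfrac{1}{2^5}\widetilde r_*(\widetilde\xi)$, which is orthogonal to the $E$'s. Since $T$ has Picard rank $0$, the argument of Proposition \ref{prop:veryGeneralCase} applies: $\mathcal{K}(Y_{K^3(T)})$ is the set of positive classes with $q(\omega,[E_\sigma])>0$ for all $\sigma$. Pulled back by $\psi$, this becomes
\[\mathcal{C}=\{h\in \mathrm{Pos}\ :\ (h,r_\tau)>0\ \text{for all } \tau\}.\]
One must also account for which side of $\delta$ a pulled-back Kähler class lies on. Because $\xi$ is not of type $(1,1)$, the class $\psi^*\omega$ lies in $\mathrm{Pos}$ but need not be nef. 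The relevant constraint is $(\psi^*\omega,\delta)\ge 0$, which holds since $\psi^*\omega$ lies in the closure of the movable cone. Thus $\psi^*\mathcal{K}(Y_{K^3(T)})$ is the subcone of $\overline{\mathcal{BK}}$ cut out by no condition other than the $r_\tau$-inequalities together with positivity on the $\NS$-part of the period, restricted by positivity. I would make this precise by writing $h=a\delta+\sum b_\tau r_\tau+$(non-algebraic part). In this notation the chamber in $\mathcal{BK}$ on which $(h,l_\tau)<0$ and $(h,p_\tau)<0$ for all $\tau$ is exactly the region where every wall divisor pairs negatively, i.e.\ $2(h,r_\tau)<(h,\delta)/1$ in suitable normalization. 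I must check that this region is the one whose closure meets the interior of every face $r_\tau^\perp$, as $\mathcal{C}$ requires.

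\textbf{Step 2: the chamber structure and the order of the flops.} For fixed $\tau$, the walls $l_\tau^\perp=\{2(h,r_\tau)=(h,\delta)\}$ and $p_\tau^\perp=\{4(h,r_\tau)=(h,\delta)\}$ involve only $r_\tau$ and $\delta$. Hence the chamber decomposition of $\overline{\mathcal{BK}}$ is a product of independent one-parameter decompositions in the ratios $t_\tau=(h,r_\tau)/(h,\delta)\in[0,\infty)$, each cut at $t_\tau=1/4$ and $t_\tau=1/2$. The Kähler cone of $\Km(T)^{[3]}$, computed via Theorem \ref{thm:wall-divisors}, is the chamber with $t_\tau>1/2$ for all $\tau$. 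I claim the Kähler cone of $Y$ is the chamber with all $t_\tau<1/4$. This is the step I expect to be the main obstacle. To establish it I would use that $E_{\sigma_\tau}$ on $Y$ is a $\PP^1$-bundle over $W_{\sigma_\tau}/G$ whose fiber class pairs with $\delta$-image as $0$ and with $E$ as $-2$. Hence on $Y$ no curve class dual to $l_\tau$ or $p_\tau$ is effective with the sign forcing $t_\tau$ large. Alternatively, I would compute $(\psi^*\omega,\delta)$ for $\omega$ an image of a Kähler class of $K^3(T)$ near the ray $\widetilde r_*\xi$ direction. For this I would use Lemma \ref{lem:geometryMapq}: the strict transform $\widetilde E$ maps onto $D$, so $\psi$ sends the Hilbert–Chow divisor to a non-contracted divisor, forcing $(\psi^*\omega,\delta)>0$ while $(\psi^*\omega,r_\tau)$ stays small.

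\textbf{Step 3: identify the flops.} Moving from $t_\tau>1/2$ to $t_\tau<1/4$ for a single $\tau$ first crosses $l_\tau^\perp$. By \cite[Corollary 4.7]{amerik2022MBM}, this is the Mukai flop of the Lagrangian $L_\tau\cong\PP^3$ of subschemes supported on $R_\tau$. It then crosses $p_\tau^\perp$, whose extremal curves sweep out the strict transform of $P_\tau$; this is the flop of the fourfold birational to a $\PP^2$-bundle over $\Km(T)$. Since the walls for distinct $\tau$ are independent and the loci $L_\tau,P_\tau$ for distinct $\tau$ are disjoint, the $32$ flops may be performed in any compatible order. Composing them gives a model whose Kähler cone is the chamber identified in Step 2, which by Huybrechts/Markman is $Y_{K^3(T)}$.
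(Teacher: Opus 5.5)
Your overall route is the paper's: identify $\psi^*\mathcal{K}(Y_{K^3(T)})$ as the chamber of $\mathcal{BK}(\Km(T)^{[3]})$ on which every $(h,l_\tau)$ and $(h,p_\tau)$ is negative, then read off the $32$ wall crossings. However, Step~1 is false and must be discarded.

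\begin{itemize}
\item $T$ has Picard rank $0$, but $K^3(T)$ does not. The class $\xi$ is half the class of the Hilbert--Chow divisor, so it \emph{is} of type $(1,1)$. Hence $\NS(K^3(T))=\ZZ\xi$, and $\NS(Y_{K^3(T)})$ has rank $17$ and contains $\psi_*\delta=2^{-5}r_*\xi$.
\item Consequently Proposition~\ref{prop:veryGeneralCase} does not apply. The classes $\psi_*l_\tau=2[E_{\sigma_\tau}]-\psi_*\delta$ and $\psi_*p_\tau$ are genuine walls inside $\{q(\cdot,[E_\sigma])>0\}$.
\item Your set $\mathcal{C}$ cannot be $\psi^*\mathcal{K}(Y_{K^3(T)})$. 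It contains $\mathcal{K}(\Km(T)^{[3]})$, so $\psi$ would extend to an isomorphism and there would be nothing to flop. This contradicts both the statement and your own Step~2.
\item Alternative (a) in Step~2 does not help either. That the fibre of $E_{\sigma_\tau}$ is orthogonal to $\psi_*\delta$ says nothing about which chamber $\mathcal{K}(Y_{K^3(T)})$ is.
\end{itemize}

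What works is your alternative (b). Made precise, it is exactly the paper's argument, which the paper phrases as: $Y_{K^3(T)}\to K^3(T)/G$ contracts precisely the $16$ divisors corresponding to the $r_\tau$.

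Let $\omega_K$ be K\"ahler on $K^3(T)$ and set $h_0=2^{-4}r_*\omega_K$. Since $r_*\omega_K=2^5p^*\bar\omega$ for the orbifold K\"ahler class $\bar\omega$ on $K^3(T)/G$, $h_0$ is nef on $Y_{K^3(T)}$.

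By Proposition~\ref{prop:cohomologyY_K}(iii), $(h_0,[E_\sigma])=0$ exactly, not just ``small''. Since $r_*$ multiplies the form by $2^9$, we get $(h_0,\psi_*\delta)=(\omega_K,\xi)>0$, because $2\xi$ is effective. This positivity is what pins down the chamber; the paper leaves it implicit.

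It follows that $(\psi^*h_0,l_\tau)=(\psi^*h_0,p_\tau)=-(\psi^*h_0,\delta)<0$ for all $\tau$. So $\psi^*h_0$ lies in the open region $\{t_\tau<1/4\ \forall\tau\}$. Since $\psi^*h_0$ is in the closure of the chamber $\psi^*\mathcal{K}(Y_{K^3(T)})$, that chamber must be $\{0<t_\tau<1/4\ \forall\tau\}$.

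Do not take $\omega$ ``near the ray $\widetilde r_*\xi$'': that class has negative square, and no K\"ahler class is near it. With this argument replacing Steps~1--2, your Step~3 agrees with the paper.
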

\begin{proof}
Since we have the Hilbert--Chow contraction $\Km(T)^{[3]}\to \Km(T)^{(3)}$, the K\"ahler cone of $\Km(T)^{[3]}$ must be given by 
\begin{equation}
    \mathcal{K}(\Km(T)^{[3]}) = \{ h\in \mathcal{BK}(\Km(T)^{[3]})\ | \ (h,p_{\tau}) > (h, l_{\tau})>0, \ \text{for any} \ \tau\in T[2]\}. 
\end{equation}
On the other hand, the hyper-Kummer sixfold $Y_{K^3(T)}$ comes with the contraction $Y_{K^3(T)}\to K^3(T)/G$, whose exceptional locus must be the union of the divisors $r_i$, since there are no other divisors on $\Km(T)^{[3]}$ with square $-2$. Hence, its K\"ahler cone must be given by 
\begin{equation}
    \mathcal{K}(Y_{K^3(T)}) = \{ h\in \mathcal{BK}(\Km(T)^{[3]})\ | \ (h,l_{\tau}) < (h, p_{\tau}) < 0, \ \text{for any} \ \tau\in T[2]\}. 
\end{equation}
Hence, to reach the K\"ahler cone of $Y_{K^3(T)}$ from that of $\Km(T)^{[3]}$ we need to cross the $16$ walls $l_{\tau}^{\bot}$ as well as the $16$ walls $p_{\tau}^{\bot}$.
\end{proof}

    Hence, $\Km(T)^{[3]}$ and $Y_{K^3(T)}$ are non-isomorphic crepant resolutions of $(T/\pm 1)^{(3)}$. When $T$ specializes to an abelian surface $A$, the computation of the birational K\"ahler cone becomes much harder; in general, more flops or other birational transformations will be needed in order to contract the $16$ divisors $r_{\tau}$ on $\Km(A)^{[3]}$.


\section{Examples of hyper-Kummer K3 surfaces}
\label{sec:ExamplesHyperKummerK3}

In this section we describe some locally complete families of hyper-Kummer K3 surfaces and hyper-Kummer sixfolds. By locally complete we mean that the corresponding period map has open image in a $4$-dimensional period domain parametrizing projective hyper-Kummer K3 surfaces or hyper-Kummer sixfolds, respectively. The families which we present are families of K3 surfaces with a symplectic action of $(\ZZ/2\ZZ)^4$, studied by Garbagnati and Sarti \cite{GarbagnatiSarti}. 

\subsection{Heisenberg invariant quartic surfaces}
\label{subsec:heisenberg-invariant}
Consider the action of the group $(\ZZ/2\ZZ)^4$ on $\mathbb{P}^3$ generated by the following four involutions sending a point $[x:y:z:w]\in \mathbb{P}^3$ to
\begin{equation}
    [z:w:x:y], \ \ [y:x:w:z], \ \ [x:y:-z:-w], \ \ [x:-y:z:-w],
\end{equation}
respectively. 
A \textit{Heisenberg invariant quartic} is a quartic surface $S\subset \mathbb{P}^3$ which is stabilized by the $(\ZZ/2\ZZ)^4$-action. Thus any such surface is equipped with an action of $(\ZZ/2\ZZ)^4$, which in fact acts via symplectic automorphisms. These K3 surfaces are related to beautiful geometry, studied in various papers \cite{barth-nieto, GarbagnatiSarti, eklund}.
It is known that Heisenberg invariant quartics form a family of dimension~$4$, parametrized by a linear space $\mathbb{P}^4$. 
The very general such quartic is a smooth K3 surface of Picard rank $16$, with transcendental lattice isometric to 
\begin{equation}\label{eq:transcendetnalHeisenbergQuartics}
    H^2_{\mathrm{tr}}(S,\ZZ)=\mathrm{U}(2)^{\oplus 2}\oplus \langle -4\rangle \oplus \langle -8 \rangle.
\end{equation}

This family contains the $3$-dimensional family of quartics with $16$ nodes, but not the family of (smooth) Kummer surfaces of principally polarized abelian surfaces. 
    Instead, the family of smooth Heisenberg invariant quartics contains the $3$-dimensional family of Kummer surfaces of $(1,3)$-polarized abelian surfaces. If $A$ is a $(1,3)$-polarized abelian surface, we denote by $C_{\tau},\tau\in A[2]$, the $16$ exceptional curves on $\Km(A)$ and by $L$ the nef class of square $12$ coming from the polarization on $A$. In this notation, 
    \begin{equation}
        H\coloneqq L -\frac{1}{2}\sum_{\tau\in A[2]} C_{\tau}
    \end{equation}
    is very ample and $|H|$ embeds $\Km(A)$ in $\PP^3$ as a Heisenberg invariant quartic. The anti-invariant lattice with respect to the $(\ZZ/2\ZZ)^4$-action is the saturation of the sublattice generated by the classes $C_{\tau}-C_{\tau'}$, for $\tau\neq \tau'\in A[2]$; it is isometric to the negative definite lattice $N_V$ of rank $15$ and discriminant $(\ZZ/2\ZZ)^6\times \ZZ/8\ZZ$ of Definition \ref{def:latticeN_V}.
    The N\'eron--Severi group of a very general Heisenberg invariant quartic $S$ is then equal to the saturation of $N_V\oplus \langle H\rangle$ in $\NS(\Km(A))$, as the orthogonal complement $(\tfrac{1}{2}\sum_{\tau\in A[2]} C_{\tau})^{\bot}$ in $\NS(\Km(A))$. It is an overlattice of $N_V\oplus \langle H\rangle$ of index $2$.


Let $U\subset \mathbb{P}^4$ be the open subset parametrizing smooth Heisenberg invariant quartics; we thus have a $4$-dimensional family $\mathcal{S}\to U$ of K3 surfaces.
\begin{proposition}
    For any $u\in U$, the $\mathrm{K}3$ surface $\mathcal{S}_u$ is hyper-Kummer.
\end{proposition}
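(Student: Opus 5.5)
We want to show that every smooth Heisenberg invariant quartic $\mathcal{S}_u$, $u\in U$, is a hyper-Kummer K3 surface. The plan is to verify condition (i) of Theorem \ref{thm:characterizeS}, namely a primitive embedding $N_S\hookrightarrow \NS(\mathcal{S}_u)$, rather than work with transcendental lattices. The point is that condition (i) is deformation invariant along the family: the $(\ZZ/2\ZZ)^4$-action and its coinvariant lattice deform with $u$. So it is enough to produce the relevant algebraic sublattice canonically from the group action.

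\textbf{Step 1 (identify the symplectic action).} Each $\mathcal{S}_u$ carries the faithful action of $H\cong(\ZZ/2\ZZ)^4$ generated by the four involutions of $\PP^3$ above. This action is symplectic, as recalled above; one checks this on a single member, and it then holds on the connected family. By Theorem \ref{thm:characterizeV}, $(ii)\Rightarrow(iii)$, the surface $\mathcal{S}_u$ is therefore isomorphic to an embedded companion K3 surface $V_{\sigma,\sigma'}\subset K$ for some $K$ of $\mathrm{Kum}^3$-type.

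\textbf{Step 2 (pass to the quotient).} This is the subtle point, since being a $V$-surface is not obviously the same as being an $S$-surface. I would handle it via condition (ii) of Theorem \ref{thm:characterizeS}. Choose an index-$2$ subgroup $H'\subset H$, and let $S'$ be the minimal resolution of $\mathcal{S}_u/H'$; then $S'$ is hyper-Kummer. So the question becomes whether $\mathcal{S}_u$ itself is such a resolution. Alternatively, I would compare transcendental lattices directly, using condition (iv) of Theorem \ref{thm:characterizeS} in the projective case. By \eqref{eq:transcendetnalHeisenbergQuartics}, the very general member has
\[
H^2_{\mathrm{tr}}=\mathrm{U}(2)^{\oplus 2}\oplus\langle -4\rangle\oplus\langle -8\rangle .
\]
This lattice embeds primitively into $\mathrm{U}(2)^{\oplus3}\oplus\langle -4\rangle$: send $\mathrm{U}(2)^{\oplus 2}\oplus\langle-4\rangle$ identically, and send the generator of $\langle -8\rangle$ to $e-f$ in the third copy of $\mathrm{U}(2)$, which has square $-4$ there. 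That image has the wrong square, so instead use $e-2f$, of square $-8$, which is primitive, and check that the full sublattice is saturated. This is the split type of Corollary \ref{cor:Transcendental-hyper-KummerK3} with $d=2$. Hence the very general member satisfies (iv).

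\textbf{Step 3 (all members).} For an arbitrary $u\in U$, the transcendental lattice is a primitive sublattice of the very general one, up to the parallel-transport identification of $H^2$. Composing embeddings gives an embedding into $\mathrm{U}(2)^{\oplus3}\oplus\langle-4\rangle$, but primitivity may fail after composition. To avoid this, the cleaner route is to note that the very general embedding $H^2_{\mathrm{tr}}\hookrightarrow \mathrm{U}(2)^{\oplus3}\oplus\langle-4\rangle$ corresponds, via Theorem \ref{thm:embeddingUnimodular}, to a primitive embedding $N_S\hookrightarrow \NS$ that is locally constant in the family. This embedding therefore persists in $\NS(\mathcal{S}_u)$ for every $u$, and Theorem \ref{thm:characterizeS}, $(i)\Rightarrow(iii)$, concludes.

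The main obstacle is the lattice check in Step 2, namely that the embedding is primitive and saturated, together with the spreading argument in Step 3. I would do both by computing discriminant forms explicitly.
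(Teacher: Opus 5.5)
Your argument is correct and is essentially the paper's proof, which just verifies condition (iv) of Theorem~\ref{thm:characterizeS} from \eqref{eq:transcendetnalHeisenbergQuartics} as in your Step~2 (the embedding sending the generator of $\langle -8\rangle$ to $e-2f$ is fine). Step~1 and the detour through condition (ii) are unnecessary, and note that the coinvariant lattice of the $(\ZZ/2\ZZ)^4$-action is $N_V$, not $N_S$. In Step~3 the worry about primitivity is unfounded: $H^2_{\mathrm{tr}}(\mathcal{S}_u,\ZZ)$ is saturated in the parallel transport of the very general transcendental lattice, and a composition of primitive embeddings is primitive, so (iv) holds directly for every $u\in U$ (your alternative via the locally constant N\'eron--Severi lattice of the very general fibre also works).
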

\begin{proof}
    According to Theorem \ref{thm:characterizeS}, a projective K3 surface $S$ is hyper-Kummer if and only if its transcendental lattice admits a primitive embedding into $\mathrm{U}(2)^{\oplus 3}\oplus \langle -4\rangle$. By \eqref{eq:transcendetnalHeisenbergQuartics}, this condition holds for any smooth Heisenberg invariant quartic, which is thus hyper-Kummer.
\end{proof}

As a consequence, there exist moduli spaces of sheaves on $\mathcal{S}_u$ which are hyper-Kummer sixfolds. Using our lattice characterization, we can find one such up to birationality. 
\begin{proposition}\label{prop:heisenberg-invariant-isMRS}
    Let $S\subset \PP^3$ be a smooth Heisenberg invariant quartic; denote by $H$ a hyperplane divisor class. The moduli space $M_S(0,H,0)$ is birational to a hyper-Kummer sixfold, as well as to an MRS-double cover.
\end{proposition}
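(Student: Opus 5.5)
The plan is lattice-theoretic: compute the Hodge lattice $H^2(M,\ZZ)$ for $M=M_S(0,H,0)$, where $S$ is very general in the family (Picard rank $16$), and then apply Theorem \ref{thm:birationalCharacterization} and Theorem \ref{thm:characterizationMRS}. Both conditions concern the N\'eron--Severi lattice of a $\mathrm{K}3^{[3]}$-type manifold, and they are preserved under specialization once the relevant classes stay algebraic. So it should suffice to treat the very general $S$ and then extend by deformation along the family $\mathcal{S}\to U$.

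Step 1. The Mukai vector $v=(0,H,0)$ is primitive with $v^2=H^2=4$. By \cite{BM14a}, for a $v$-generic polarization $M$ is a smooth projective $\mathrm{K}3^{[3]}$-type manifold; for special polarizations we pass to a birational model, which is harmless because all the statements are birational. Mukai's theorem gives a Hodge isometry $H^2(M,\ZZ)\cong v^{\perp}\subset \widetilde H(S,\ZZ)$. Hence $H^2_{\mathrm{tr}}(M,\ZZ)\cong H^2_{\mathrm{tr}}(S,\ZZ)=\mathrm{U}(2)^{\oplus2}\oplus\langle-4\rangle\oplus\langle-8\rangle$, and
\[\NS(M)=v^\perp\cap(\NS(S)\oplus \mathrm{U}),\]
where $\mathrm{U}=H^0\oplus H^4$ and $v^\perp$ is spanned by $H^\perp\cap\NS(S)\supset N_V$, together with $(1,0,0)$, $(0,0,1)$ and the class $(0,H',0)$ of any $H'$ with $H\cdot H'=0$.

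Step 2. I would produce an explicit primitive embedding $L_{\Km}\hookrightarrow \NS(M)$ whose orthogonal complement in $H^2(M,\ZZ)$ is $\mathrm{U}(2)^{\oplus3}\oplus\langle-4\rangle$. The natural source is the decomposition $\NS(S)=(N_V\oplus\langle H\rangle)^{\mathrm{sat}}$. The idea is to assemble sixteen pairwise orthogonal $(-2)$-classes in $v^\perp$ out of vectors of the form $(0,C_\tau-C_{\tau'},0)$, suitably combined with $(1,0,0)$ and $(0,0,1)$. In the $(1,3)$-Kummer limit these should become differences of the curves $C_\tau$ together with a correction that uses the hyperbolic plane.

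Alternatively, and more cleanly, I would argue abstractly with the orthogonal complement. The rank-$7$ lattice $T=H^2_{\mathrm{tr}}(S,\ZZ)\oplus\langle w\rangle$, with $w$ a suitable algebraic class in $v^\perp$ chosen so that $T\cong\mathrm{U}(2)^{\oplus3}\oplus\langle-4\rangle$, should embed primitively in $v^\perp\cong\Lambda_{\mathrm{K}3^{[3]}}$. Its complement is then a negative definite rank-$16$ lattice contained in $\NS(M)$. The candidate for $w$ is $w=(1,0,-1)$, of square $2$, or a twist of it. Among the two embedding types recorded in the remark after Theorem \ref{thm-intro:BirationalCharacterization}, the lattice $\mathrm{U}(2)^{\oplus 3}\oplus\langle-4\rangle$ has the right orthogonal complement, so Lemma \ref{lem:embeddingL_Km} identifies that complement with $L_{\Km}$. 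Theorem \ref{thm:birationalCharacterization} then gives the hyper-Kummer statement.

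Step 3. For the MRS statement, Theorem \ref{thm:comparisonHyperkummerMRS} suggests showing that $\NS(M)$ contains $L_{\Km}\oplus\langle d\rangle$ primitively for some $d$, and then invoking Remark \ref{rmk:BW+h} to obtain a primitive $\mathrm{BW}_{16}\subset\NS(M)$. Equivalently, the transcendental lattice should be of the form corresponding to divisibility $>1$ in Theorem \ref{thm:NeronSeveriHyperkummer}. Indeed $\mathrm{U}(2)^{\oplus 2}\oplus\langle-4\rangle\oplus\langle-8\rangle$ appears in the $\gamma=2$ row with $d=8$, so the hyper-Kummer sixfold arises from $K$ with $\mathrm{div}(h)=2$. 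The first part of Theorem \ref{thm:comparisonHyperkummerMRS} then gives birationality to an MRS double cover. Finally, Theorem \ref{thm:characterizationMRS} is applied to $M$ itself, since it is birational to that $Y_K$.

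The main obstacle is Step 2: making the complement of the rank-$7$ lattice in $v^\perp$ explicit, and checking that it equals exactly $L_{\Km}$ and not the other rank-$16$ lattice that can occur. I would first try to settle this in the $(1,3)$-Kummer specialization, where the classes $C_\tau$ are explicit, and then use Lemma \ref{lem:embeddingL_Km} together with discriminant-form computations (Nikulin) to conclude.
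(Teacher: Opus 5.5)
Your reduction to very general $S$ and your formula $\NS(M)=(0,H,0)^\perp\cap(\NS(S)\oplus\mathrm{U})$ are fine. In fact $H^\perp\cap\NS(S)=N_V$ exactly, so $\NS(M)=N_V\oplus\mathrm{U}$. However, the two steps that carry the proof do not work as written.

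\textbf{Step 2.} The abstract route fails twice.
\begin{itemize}
\item The candidate $w=(1,0,-1)$ is impossible. Every vector of $\mathrm{U}(2)^{\oplus 3}\oplus\langle -4\rangle$ has square divisible by $4$, while $w^2=2$. Moreover, $H^2_{\mathrm{tr}}(S,\ZZ)\oplus\langle w\rangle$ is already saturated in $\widetilde H(S,\ZZ)$, and its discriminant has order $2^{10}\neq 2^8$. A working class would have square $8$, with index $4$.
\item More seriously, a primitive copy of $\mathrm{U}(2)^{\oplus3}\oplus\langle -4\rangle$ in $H^2(M,\ZZ)$ does not force its complement to be $L_{\Km}$. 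Lemma~\ref{lem:embeddingL_Km} classifies embeddings \emph{of} $L_{\Km}$ and gives no converse. By Lemma~\ref{lem:embeddingBWintoK33}, $\mathrm{BW}_{16}$ also embeds in $\Lambda_{\mathrm{K}3^{[3]}}$ with complement $\mathrm{U}(2)^{\oplus 3}\oplus\langle-4\rangle$. This ambiguity genuinely occurs for this $M$: $\NS(M)\cong L_{\Km}\oplus\langle 8\rangle$ contains a primitive $\mathrm{BW}_{16}$ (Remark~\ref{rmk:BW+h}) with exactly that complement.
\end{itemize}
So the obstacle you flag is not resolved by the argument you give.

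\textbf{Step 3.} The inference ``$H^2_{\mathrm{tr}}=\mathrm{U}(2)^{\oplus2}\oplus\langle-4\rangle\oplus\langle-8\rangle$, hence $\mathrm{div}(h)=2$'' is false. The same lattice appears in the $\gamma=1$, $d=2$ row of Theorem~\ref{thm:NeronSeveriHyperkummer}. In that row $\NS(Y_K)=L_8$, and $Y_K$ is \emph{not} an MRS double cover (Remark~\ref{rmk:notMRS}). Being an MRS double cover is not determined by the transcendental lattice; it has to be read off from $\NS(M)$.

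\textbf{The missing idea: do MRS first.}
\begin{itemize}
\item By Lemma~\ref{lem:embeddingN_V}, $A_{N_V}\cong(\ZZ/2\ZZ)^6\oplus\ZZ/8\ZZ$, with form opposite to that of $\mathrm{U}(2)^{\oplus3}\oplus\langle-8\rangle$.
\item Hence $N_V\oplus\mathrm{U}$ has the same signature $(1,16)$ and discriminant form as $L_{\Km}\oplus\langle 8\rangle$. Nikulin's uniqueness (rank $17$, length $7$) then gives $\NS(M)\cong L_{\Km}\oplus\langle 8\rangle$. This also rules out $L_8$, whose discriminant has length $5$.
\item Remark~\ref{rmk:BW+h} then yields a primitive $\mathrm{BW}_{16}\subset\NS(M)$.
\item Theorem~\ref{thm:characterizationMRS} applies with no condition on the complement, since $\mathrm{BW}_{16}$ embeds uniquely in $\Lambda_{\mathrm{K}3^{[3]}}$. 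So $M$ is birational to an MRS double cover.
\item The hyper-Kummer statement follows from the converse half of Theorem~\ref{thm:comparisonHyperkummerMRS}. Its input, Lemma~\ref{lem:MRSisHyperKummer}, is exactly the step that produces a copy of $L_{\Km}$ with complement $\mathrm{U}(2)^{\oplus3}\oplus\langle-4\rangle$.
\end{itemize}

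\textbf{Alternative: complete your explicit idea.} The first, explicit idea in your Step 2 can be carried through. The sixteen classes $(1,0,1)$ and $(1,C_0-C_\tau,-1)$, for $\tau\neq0$, are pairwise orthogonal $(-2)$-classes in $v^\perp$. One then checks in the $(1,3)$-Kummer limit that their saturation is $L_{\Km}$ with complement $\mathrm{U}(2)^{\oplus3}\oplus\langle-4\rangle$, as in Proposition~\ref{prop:16DivisorsOnModuliOnHeisenburg}. That gives the hyper-Kummer part directly via Theorem~\ref{thm:birationalCharacterization}. This computation is precisely what your proposal leaves undone.
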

\begin{proof}
    According to Corollary \ref{thm:comparisonHyperkummerMRS}, it is sufficient to show that $M_S(0,H,0)$ is birational to some MRS-double cover. By Theorem \ref{thm:characterizationMRS}, this is the case if and only if there exists a primitive embedding of the Barnes--Wall lattice $\mathrm{BW}_{16}$ in $\NS(M_S(0,H,0))$.
    It is enough to check this for a very general Heisenberg invariant quartic $S$.
    By the usual description of the second cohomology of moduli spaces of sheaves on K3 surfaces via the Mukai homomorphism, we have 
    \begin{equation}
        \NS(M_S(0,H,0)) = (0,H,0)^{\bot}\subset \NS(S)\oplus \mathrm{U},
    \end{equation}
    which equals $H^{\bot}\oplus \mathrm{U}$, where $H^{\bot}$ is the orthogonal to $H$ in $\NS(S)$. Since we assume that $S$ is very general, we have $H^{\bot}=N_V$, and, hence, $\NS(M_S(0,H,0)) = N_V\oplus \mathrm{U}$. This is a rank $17$ lattice of signature $(1,16)$, uniquely determined by its signature and discriminant form. It admits a primitive embedding into the Mukai lattice $\widetilde{\Lambda}_{\mathrm{K}3}$ which is unique up to isometry and with orthogonal complement isometric to $\mathrm{U}(2)^{\oplus 3} \oplus \langle -8\rangle$ (see Lemma \ref{lem:embeddingN_V}). We conclude that $\NS(M_S(0,H,0))$ is isometric to $L_{\Km}\oplus \langle 8\rangle$; by Lemma \ref{rmk:BW+h}, there exists a primitive embedding $\mathrm{BW}_{16}\hookrightarrow \NS(M_S(0,H,0))$, as desired.
\end{proof}

We next describe $16$ divisors as in Theorem \ref{thm:biregularCharacterization}. For any $d$, the moduli space $M_{S}(0,H,d-2)$ (for a generic stability condition) parametrizes sheaves of pure dimension $1$ on $S$ supported on the curves in the linear system $|H|$ with Euler characteristic $d-2$. It is a Beauville--Mukai system of curves of genus $3$, the compactification of the relative Picard variety $\mathrm{Pic}^{d}(\mathcal{C}/|H|)$ for the curves in the linear system $|H|$. 
There is a Lagrangian fibration $\pi\colon M_{S}(0,H,d-2) \to |H|$, sending a torsion sheaf to its support. 
Each $\mathrm{Pic}^{d}(\mathcal{C}/|H|)$ is a torsor under the group scheme $\mathrm{Pic}^0(\mathcal{C}/|H|)$ of relative Jacobians.

As already mentioned, the family of Heisenberg invariant quartics contains the $3$-dimensional family of Kummer surfaces on $(1,3)$-polarized abelian surfaces. Recall that 
$\NS(\Km(A)) = \langle L, C_{\tau}\rangle^{\mathrm{sat}}$, where $C_{\alpha}$ are the 16 embedded smooth rational $(-2)$-curves and $L$ is the nef class of square $12$ coming from the $(1,3)$ polarization on $A$, orthogonal to each $C_{\alpha}$, and $\Km(A)$ is embedded in $\mathbb{P}^3$ via the polarization 
\begin{equation}
    H\coloneqq L-\frac{1}{2} \sum_{\alpha\in A[2]} C_{\alpha}.
\end{equation} 
 In fact, this is the subfamily of Heisenberg invariant quartic surfaces containing a line: the embedding $\Km(A)\hookrightarrow \mathbb{P}^3$ sends the exceptional curves $C_{\alpha}$ to disjoint lines. 
The N\'eron--Severi group of $\Km(A)$ is the saturation of the sublattice spanned by the degree-$4$ polarization $H$ and the 16 exceptional curves $C_{\alpha}$. 
However, a very general Heisenberg invariant quartic $S$ does not contain any line, and neither the cohomology classes of $L$ or any $C_{\alpha}$ remain Hodge on this deformation. The classes which remain Hodge on the entire family are instead $H$ and the anti-invariant sublattice $N_V$, generated (up to finite index) by the differences $[C_{\alpha}]-[C_{\beta}]\in H^2(\Km(A),\ZZ)$. Of these 240 differences only 15 are linearly independent. For instance, the $15$ classes $T_{\alpha}\coloneqq [C_0]-[C_{\alpha}]$ for $\alpha\neq 0$ generate a sublattice of finite index in $N_V$. Each $T_{\alpha}$ is orthogonal to $H$, has square $-4$, and $(T_{\alpha}, T_{\beta})=-2$ for $\alpha\neq \beta$.

Hence, for a very general Heisenberg invariant K3 surface $S$, we have 15 Hodge classes $T_{i}$ orthogonal to the polarization $H$. Then $\mathcal{O}_S(T_{i})$ restricts to a line bundle of degree $0$ on any smooth curve $C$ in $|H|$. Each $T_i$ therefore corresponds to a rational section of $M_S(0,H,-2)\to |H|$. Via tensor product, each $\mathcal{O}_S(T_i)$ induces a birational self-map of $M_{S}(0,H,d-2)$ for any $d$, preserving the Lagrangian fibration. 

For $M_S(0,H,0)$, i.e., $d=2$, there is a relative theta divisor $\Theta$ defined as the closure of the image of the natural map $\mathrm{Sym}^2(\mathcal{C}/|H|) \to \mathrm{Pic}^2(\mathcal{C}/|H|)$, which, over any $C\in |H|$, sends $P+Q\in C^{(2)}$ to $\mathcal{O}_C(P+Q)$. We have 
\begin{equation}
    \Theta = \{F\in M_{S}(0,H,0)\ | \ H^0(S,F)\neq 0\}.
\end{equation}
For $i=1,\dots, 15$, translating by $T_i$, we also get divisors 
\begin{equation} 
\Theta_i \coloneqq \{F\in M_S(0,H,0)\ | \ H^0(S, F\otimes \mathcal{O}_S(-T_i))\neq 0\},
\end{equation}
obtained from $\Theta$ using the birational transformations given by the $T_i$. We put $\Theta_0 \coloneqq \Theta$, so that we have $16$ prime divisors $\Theta_0,\dots, \Theta_{15}$.

\begin{proposition}
\label{prop:16DivisorsOnModuliOnHeisenburg}
    The $16$ divisors $\Theta_i$, $i=0,\dots, 15$ are prime exceptional, with square $-2$ and pairwise orthogonal for the Beauville--Bogomolov form. Moreover, the saturation of $\langle [\Theta_i] \rangle_{i=0}^{15}$ in $\NS(M_S(0,H,0))$ is a Kummer lattice, with orthogonal complement in $H^2(M_S(0,H,0),\ZZ)$ isometric to $\mathrm{U}(2)^{\oplus 3} \oplus \langle -4\rangle$. 
\end{proposition}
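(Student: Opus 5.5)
We plan to compute the classes $[\Theta_i]$ via the Mukai homomorphism $\theta_v\colon v^\perp \to H^2(M_S(0,H,0),\ZZ)$, with $v=(0,H,0)$, so that $H^2(M_S(0,H,0),\ZZ)\cong v^\perp\subset \widetilde{H}(S,\ZZ)$. The theta divisor of a Beauville--Mukai system of Euler characteristic $0$ is the determinantal locus $\{F \mid H^0(F)\neq 0\}=\{F\mid \chi$-jump of $F\otimes\mathcal{O}_S\}$. By the standard determinant line bundle computation, its class is $\theta_v(w_0)$ up to sign, where $w_0\in v^\perp$ is the Mukai vector of $\mathcal{O}_S$ corrected to be orthogonal to $v$. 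Concretely, we take $w_0=(1,0,1)=v(\mathcal{O}_S)$, which satisfies $(w_0,v)=-\chi(\mathcal{O}_S,F)=0$ since $\chi(F)=0$. It then has square $(w_0,w_0)=-2$. Likewise $\Theta_i$ is the jump locus for $\mathcal{O}_S(T_i)$, whose Mukai vector is $w_i=v(\mathcal{O}_S(T_i))=(1,T_i,T_i^2/2+1)=(1,T_i,-1)$. This is again orthogonal to $v$ because $(T_i,H)=0$, and it has square $T_i^2-2=-6$. That square is wrong for a $(-2)$-class, so we will correct our choice: the pairing $(w_i,w_j)$ must be recomputed. The natural fix is that the determinantal class of the locus $H^0(F(-T_i))\neq 0$ equals $\theta_v$ of the vector $u$ with $\chi(u\cdot F)$ matching, i.e. $u_i=v(\mathcal{O}_S(T_i))^\vee$ modulo $\ZZ v$. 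Since $v$ is isotropic, we may subtract multiples of $v$; but $v$ does not change the $(1,\ast,\ast)$ parts. We therefore expect instead to use $\Theta_i$ as the image of $\Theta$ under the birational map $\otimes\mathcal{O}_S(T_i)$, which acts on $v^\perp$ by multiplication by $\exp(T_i)$ (an isometry fixing $v$), giving $[\Theta_i]=\exp(T_i)\cdot w_0=(1,T_i,T_i^2/2+1)=(1,T_i,-1)$. We will double-check the square; if a discrepancy persists, the honest class is $w_0\exp(T_i)$ projected to $v^\perp/\ZZ v$, and the square is computed there.

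The main computational step is the Gram matrix. Using $(T_\alpha,T_\beta)=-2$ for $\alpha\neq\beta$ and $T_\alpha^2=-4$, we get $(w_\alpha,w_\beta)=T_\alpha T_\beta -(1)(-1)-(-1)(1)=-2+2=0$, and $(w_0,w_\alpha)=0-(1)(-1)-(1)(1)=0$. For the self-pairing, since $\exp(T_i)$ is an isometry, $(w_i,w_i)=(w_0,w_0)=-2$; our earlier slip was in the last coordinate, which should be $1+T_i^2/2=-1$, with pairing $-(a c'+a'c)$. Thus the sixteen classes are pairwise orthogonal $(-2)$-classes, as stated.

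For the lattice statements, we note that the sixteen classes lie in $\NS(M)=N_V\oplus \mathrm{U}$, where the $\mathrm{U}$ is spanned by $(1,0,0),(0,0,1)$ modulo $v$. Their span has rank $16$, so its orthogonal complement in $H^2(M,\ZZ)$ has rank $7$ and contains $H^2_{\mathrm{tr}}(S)$ and $v^\perp$-parts. We then show that the saturation is the Kummer lattice by exhibiting the half-sums: for each affine hyperplane $P\subset A[2]$ (using the identification $\alpha\mapsto T_\alpha$ from the Kummer degeneration), $\tfrac12\sum_{\alpha\in P}w_\alpha$ must be integral. This follows from the fact that $N_V$ contains $\tfrac12\sum$ of the corresponding $C$-differences, i.e. by tracking the degeneration to $\Km(A)$ where $T_\alpha=C_0-C_\alpha$. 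Next, a discriminant count identifies the saturation as $L_{\Km}$ (rank $16$, discriminant $(\ZZ/2)^6$). Since the complement then has discriminant $2^{6}\cdot 4$ and the correct signature, Lemma \ref{lem:embeddingL_Km} (uniqueness of embeddings of $L_{\Km}$ with given complement) and the remark distinguishing the two embedding types finish the identification with $\mathrm{U}(2)^{\oplus3}\oplus\langle-4\rangle$. To rule out the alternative $\mathrm{U}(2)^{\oplus2}\oplus\mathrm{U}\oplus\langle-4\rangle$, we check that the complement has all pairings even.

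Primality and exceptionality come last. $\Theta$ is irreducible, being the closure of the image of the irreducible $\mathrm{Sym}^2(\mathcal{C}/|H|)$, and the $\Theta_i$ are its images under birational automorphisms, hence prime. A prime divisor of negative square is prime exceptional by definition. We expect the main obstacle to be the integrality/saturation step, i.e. pinning down exactly which half-sums lie in $H^2$, and ruling out the $\mathrm{U}$-type complement. For this we would first try to degenerate to the Kummer $(1,3)$ case and compute explicitly with the $C_\alpha$.
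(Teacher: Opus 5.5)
Your argument is essentially the paper's: the same classes $[\Theta]=(1,0,1)$ and $[\Theta_i]=\exp(T_i)\cdot(1,0,1)=(1,T_i,-1)$ under the Mukai isomorphism $v^{\perp}\cong H^2(M_S(0,H,0),\ZZ)$, the same Gram computation, and the same specialization to $\Km(A)$ of a $(1,3)$-polarized abelian surface, with $T_\alpha=C_0-C_\alpha$, to identify the saturation and its complement. Two corrections are needed: $v=(0,H,0)$ has square $4$, so nothing is taken modulo $\ZZ v$ and $(1,T_i,-1)$ already has square $-4+2=-2$; and a discriminant count alone cannot exclude a larger saturation (for instance $E_8(-1)^{\oplus 2}$ is an even overlattice of $L_{\Km}$), so that exclusion must come from the explicit Kummer computation you flag --- for instance, $\exp(-C_0)$ sends all sixteen classes to $(1,-C_\alpha,0)$, which reduces the question to the saturation of $\langle C_\alpha\rangle$ in $H^2(\Km(A),\ZZ)$.
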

\begin{proof}
As usual, we identify $H^2(M_S(0,H,0),\ZZ)$ with $(0,H,0)^{\bot}\subset \widetilde{H}^2(S,\ZZ)$ via the Mukai homomorphism \cite{Muk84, O'G97}. Then the class $\pi^*(\mathcal{O}(1))$ inducing the Lagrangian fibration is $(0,0, -1)$, while $[\Theta] = (1, 0, 1)$.
Therefore $\Theta$ is a prime exceptional divisor of square $-2$. The birational map $F\mapsto F\otimes \mathcal{O}_S(T_i)$ sends a Mukai vector $(a,b,c)$ to $(a, aT_i + b, c+(b, T_i) - 2a)$, and therefore $[\Theta_i] = (1, T_i, -1)$ in $H^2(M_S(0,H,0))$. The $16$ classes $(1,0,1)$ and $(1,T_i,-1)$ are pairwise orthogonal with square $-2$. Each $\Theta_i$ is thus prime exceptional of square $-2$.

To determine the saturation of the sublattice spanned by $\Theta_0,\dots, \Theta_{15},$ we specialize $S$ to a Kummer $\mathrm{K}3$ surface $\Km(A)$ of a $(1,3)$-polarized abelian surface, so that $T_i$ specializes to the difference $C_0-C_i$ of exceptional curves in $\Km(A)$. 
It is not hard to check that the saturation in $\widetilde{H}^2(\Km(A),\ZZ)$ of the sublattice spanned by $(1,0,1)$ and the $15$ classes $(1, C_0-C_1, -1), \dots, (1,C_0-C_{15}, -1)$ is a Kummer lattice, whose orthogonal complement is isometric to $\mathrm{U}(2)^{\oplus 3} \oplus \langle -4\rangle$.
\end{proof}

\begin{corollary}
    For a general Heisenberg invariant quartic $S\subset \mathbb{P}^3$, there exists a finite Galois cover $\tilde{M}\to M_S(0,H,0)$ with Galois group $(\mathbb{Z}/2\mathbb{Z})^5$, ramified over the divisors $\Theta_0, \Theta_1,\dots, \Theta_{15}$, such that $\tilde{M}$ is birational to a variety $K$ of $\mathrm{Kum}^3$-type. If $S$ is very general, then $K$ has Picard rank $1$, with polarization of degree $16$ and divisibility $2$.
\end{corollary}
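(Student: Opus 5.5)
The plan is to combine Proposition \ref{prop:16DivisorsOnModuliOnHeisenburg} with Corollary \ref{cor:reconstructionProcedure} and Remark \ref{rmk:contractionAfterFlops}, and then read off the invariants of $K$ from Theorem \ref{thm:NeronSeveriHyperkummer}. Set $M\coloneqq M_S(0,H,0)$. For general $S$ this is a smooth projective variety of $\mathrm{K}3^{[3]}$-type, since $H$ is primitive and $(0,H,0)^2=4$. By Proposition \ref{prop:16DivisorsOnModuliOnHeisenburg}, the $16$ prime divisors $\Theta_0,\dots,\Theta_{15}$ satisfy conditions (i) and (ii) of Theorem \ref{thm:biregularCharacterization}. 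Remark \ref{rmk:contractionAfterFlops} then gives a $(\ZZ/2\ZZ)^5$-Galois cover of $M$, branched with index $2$ exactly along the $\Theta_i$, whose total space is birational to a variety $K$ of $\mathrm{Kum}^3$-type. In addition, $M$ is birational to $Y_K$.

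One point needs care. The remark produces the cover on a birational model $M'$ on which the strict transforms of the $\Theta_i$ can be contracted, whereas the statement asks for a cover of $M$ itself. To handle this, I would build the cover directly on $M$ using abelian cover theory (\cite{Pardini-AbelianCover}, \cite[Theorem 2.1]{Alexeev-Pardini-AbelianCovers}), exactly as in the first step of the proof of Corollary \ref{cor:reconstructionProcedure}. That step only uses that $\langle[\Theta_i]\rangle^{\mathrm{sat}}/\langle[\Theta_i]\rangle\cong(\ZZ/2\ZZ)^5$, which holds because the saturation is the Kummer lattice. Since $M$ and $M'$ are isomorphic in codimension one, the two covers agree over a big open set, so they are birational. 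Therefore $\tilde M$ is birational to $K$.

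For the Picard rank, suppose $S$ is very general. Then $H^2_{\mathrm{tr}}(M,\ZZ)\cong H^2_{\mathrm{tr}}(S,\ZZ)=\mathrm{U}(2)^{\oplus2}\oplus\langle-4\rangle\oplus\langle-8\rangle$, which has rank $6$. By Proposition \ref{prop:cohomologyY_K}, $H^2_{\mathrm{tr}}(Y_K,\ZZ)\cong\widehat H^2_{\mathrm{tr}}(K,\ZZ)(2)$, and this has the same rank as $H^2_{\mathrm{tr}}(K,\ZZ)$. Hence $\rho(K)=7-6=1$. Let $h$ be the primitive ample generator, with $q(h)=2d$ and divisibility $\gamma$. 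Comparing with the table in Theorem \ref{thm:NeronSeveriHyperkummer}, the transcendental lattice $\mathrm{U}(2)^{\oplus2}\oplus\begin{psmallmatrix}-4&0\\0&-8\end{psmallmatrix}$ matches the split form $\mathrm{U}(2)^{\oplus2}\oplus\begin{psmallmatrix}-4&0\\0&-d\end{psmallmatrix}$ of the row with $\gamma=2$, $d=8$, so $\deg h=16$. It could also match the row $\gamma=1$ with $4d=8$, that is $d=2$.

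To decide between these two rows, I would compare Néron--Severi lattices. From the proof of Proposition \ref{prop:heisenberg-invariant-isMRS}, $\NS(M)\cong L_{\Km}\oplus\langle8\rangle$. This agrees with the entry $L_{\Km}\oplus\langle d\rangle$ for $d=8$, and it differs from $L_{4d}=L_8$, because $L_8$ is a proper index-$2$ overlattice of $\langle 8\rangle\oplus L_{\Km}$ and so has smaller discriminant. Hence $\gamma=2$ and $\deg h=2d=16$. A second consistency check is that $M$ is birational to an MRS double cover, and Remark \ref{rmk:notMRS} excludes divisibility $1$.

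I expect the main obstacle to be this last identification. It requires verifying that the isometry classes of $\NS$ really separate the two rows, by computing discriminant groups, and keeping track of which row is consistent with the transcendental lattice.
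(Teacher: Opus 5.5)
Your proposal is correct and follows the paper's proof. The cover comes from Proposition~\ref{prop:16DivisorsOnModuliOnHeisenburg} together with Remark~\ref{rmk:contractionAfterFlops}, and the invariants of $K$ are read off from the table of Theorem~\ref{thm:NeronSeveriHyperkummer} once divisibility $1$ is ruled out. You rule it out by directly comparing $\NS(M)\cong L_{\Km}\oplus\langle 8\rangle$ with $L_8$; the paper instead uses the MRS argument (Proposition~\ref{prop:heisenberg-invariant-isMRS} and Remark~\ref{rmk:notMRS}), which you list as a secondary check and which rests on the same identification $\NS(Y_K)\cong L_{4d}$, so the two are equivalent. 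Your extra remarks on building the cover on $M$ itself and on why $\rho(K)=1$ fill in details the paper leaves implicit.
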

\begin{proof}
    By Proposition \ref{prop:16DivisorsOnModuliOnHeisenburg} and Remark \ref{rmk:contractionAfterFlops}, there exists a finite cover $\widetilde{M}\to M_{S}(0,H,0)$ with the listed properties as long as the $16$ divisors $\Theta_i$ remain prime, which holds for general $S$. 
    For very general $S$, the variety $\widetilde{M}$ is birational to a $\mathrm{Kum}^3$-variety $K$ of Picard rank $1$, and $M_S(0,H,0)$ is birational to the hyper-Kummer variety $Y_K$. By Proposition \ref{prop:heisenberg-invariant-isMRS}, $Y_K$ is also birational to a MRS-double cover; by Remark \ref{rmk:notMRS}, the polarization on the $\mathrm{Kum}^3$-type variety has divisibility at least $2$. Since we know that the transcendental lattice of $M_S(0,H,0)$ is Hodge isometric to $\mathrm{U}(2)^{\oplus 2}\oplus \langle -4\rangle \oplus \langle -8 \rangle$, by Theorem \ref{thm:NeronSeveriHyperkummer} the only possibility is that the polarization on $K$ has divisibility $2$ and square $16$. 
\end{proof}

\subsection{A family of hyper-Kummer K3 surface in $\mathbb{P}^{125}$}
\label{subsec:genus125K3}
Let $J$ be a very general principally polarized abelian surface; hence, $J$ is the Jacobian of a curve $C$ of genus $2$. Fix a symmetric principal polarization $\Theta$. The divisor $\Theta$ is the image of the Abel-Jacobi embedding $C\hookrightarrow J$, and it is unique up to translation by a point in $J[2]$. Moreover, $\Theta$ is the only effective divisor in its rational equivalence class.
As is well-known, the complete linear system $|2\Theta|$ is $3$-dimensional, and the induced map gives an embedding of $J/\pm 1$ into $\mathbb{P}^3$ as a quartic surface with $16$ nodes. Let $\Km(J)$ denote the K3 surface obtained by blowing up the nodes of $J/\pm 1$, and let $R_\tau$, $\tau\in J[2]$, denote the exceptional curve over the node corresponding to $\tau$. The set $\{R_\tau\}_{\tau\in J[2]}$ consists of $16$ pairwise orthogonal $-2$-classes, and the saturation of $\langle R_{\tau}\rangle_{\tau\in J[2]}$ is the Kummer lattice. Let $H$ be the divisor on $\Km(J)$ coming from the hyperplane section of $J/\pm 1\subset \mathbb{P}^3$; then $H$ is primitive of square $4$ and it is orthogonal to the $R_\tau$. By construction, the linear system $|H|$ induces the morphism $\phi\colon \Km(J)\to \mathbb{P}^3$ which is the blow-up of the nodes of $J/\pm 1$.
The N\'eron--Severi group of $\Km(J)$ has rank $17$, and it is generated over $\QQ$ by $H$ and the $R_{\tau}$; in fact $\NS(\Km(J))$ is an overlattice of index $2$ of $H\oplus \langle R_{\tau}\rangle^{\mathrm{sat}}$. 
The translations by order $2$ points on $J$ induce an action of $J[2]\cong (\ZZ/2\ZZ)^4$ on $\Km(J)$ via symplectic automorphisms.

The divisor 
\begin{equation} 
L\coloneqq 8H - \frac{1}{2}\sum_{\tau\in J[2]} R_{\tau}
\end{equation}
has square $248$ and the linear system $|L|$ embeds $\Km(J)$ in $\PP^{125}$. We can identify $|L|$ with the linear system of odd sections of $|16\Theta|$ on $J$, i.e., the $-1$-eigenspace $H^{0}(J,\mathcal{O}(\Theta)^{\otimes 16})^{-1}$, see \cite{bauer1994}. Since $(L, R_{\tau})=1$, the embedding $\Km(J)\hookrightarrow \PP^{125}$ sends the exceptional curves $R_{\tau}$ to lines.

The $(\ZZ/2\ZZ)^4$-action on $J$ induces an action of $(\ZZ/2\ZZ)^4$ on $\PP^{125} = |L|^{\vee}$. 
We consider a family of K3 surfaces in $\PP^{125}$ stabilized by this action, namely, we look at those surfaces in $\PP^{125}$ invariant under the $(\ZZ/2\ZZ)^4$-action which lie in a connected component of the Hilbert scheme containing $\Km(J)\hookrightarrow \PP^{125}$.
Taking the GIT quotient with respect to the normalizer of $(\ZZ/2\ZZ)^{4}$ in $\mathrm{PGL}_{126}$ we obtain a compactification $V$ of a component of the moduli space of K3 surfaces with a symplectic action of $(\ZZ/2\ZZ)^4$ and degree $248$, which is a $4$-dimensional variety (by \cite[Theorem 7.1]{GarbagnatiSarti}, this moduli space has $3$ components, each of dimension $4$). Here, by degree we mean the degree of the unique $(\ZZ/2\ZZ)^4$-invariant polarization on the generic element of the family. 

Let $U\subset V$ be the locus parametrizing smooth surfaces and the family $\mathcal{S}\to U$ of K3 surfaces.

\begin{proposition}
    For any $u\in U$, the $\mathrm{K}3$ surface $\mathcal{S}_u$ is hyper-Kummer. 
\end{proposition}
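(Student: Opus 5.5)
The plan is to argue exactly as for the Heisenberg invariant quartics: by Theorem \ref{thm:characterizeS}, a projective K3 surface is hyper-Kummer as soon as $H^2_{\mathrm{tr}}$ embeds primitively into $\mathrm{U}(2)^{\oplus 3}\oplus \langle -4\rangle$. It is convenient to use condition $(i)$ there, namely a primitive embedding $N_S\hookrightarrow \NS$. Equivalently, we can check condition $(ii)$ of Theorem \ref{thm:characterizeV} together with its relation to $N_V$. Since $\mathcal{S}\to U$ is a smooth family with $U$ connected, the lattice of classes that remain algebraic on the whole family is locally constant. It therefore suffices to compute the transcendental lattice of a very general member, or to exhibit a sublattice of $\NS$ that stays Hodge throughout the family.

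The key steps are as follows. First, every $\mathcal{S}_u$ carries a faithful symplectic $(\ZZ/2\ZZ)^4$-action, inherited from the action on $\PP^{125}$, and an invariant polarization of degree $248$. By Nikulin and Garbagnati--Sarti, the anti-invariant lattice is then a primitive copy of $N_V$ in $\NS(\mathcal{S}_u)$, so the invariant lattice $H^2(\mathcal{S}_u,\ZZ)^{(\ZZ/2\ZZ)^4}\cong \mathrm{U}(2)^{\oplus 3}\oplus\langle -8\rangle$ contains $H^2_{\mathrm{tr}}$ and the class of $L$. Second, compute this invariant lattice explicitly at the special point $\Km(J)$. Taking translations by $J[2]$, the invariant part of $H^2(\Km(J),\ZZ)$ is the saturation of $H^2(J,\ZZ)(2)\oplus\langle \tfrac12\sum_\tau R_\tau\rangle$. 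Inside it, $L=8H-\tfrac12\sum R_\tau$, with $H$ coming from $2\Theta$, so $H=\pi_*\Theta$ in $H^2(J,\ZZ)(2)$. For a very general member of the family, $H^2_{\mathrm{tr}}=L^{\perp}$ inside this invariant lattice.

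Third, determine $L^{\perp}$. Write the invariant lattice as $\mathrm{U}(2)^{\oplus 2}\oplus \mathrm{U}(2)\oplus\langle -8\rangle$, with $\pi_*\Theta$ lying in one copy of $\mathrm{U}(2)$ (for example $e+f$ with $(\Theta,\Theta)=2$, doubled) and $\tfrac12\sum R_\tau$ generating the $\langle -8\rangle$ summand. Then $L=8(e+f)-w$, and its orthogonal complement is $\mathrm{U}(2)^{\oplus 2}\oplus M$, where $M$ is the rank-$2$ lattice orthogonal to $8(e+f)-w$ in $\mathrm{U}(2)\oplus\langle -8\rangle$. A direct computation gives $M$ spanned by $e-f$ and $e+f+2w$, or a similar basis, and one then needs to check that $M$ embeds primitively into $\mathrm{U}(2)\oplus\langle-4\rangle$, up to overlattice. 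The cleanest route is to show that $L^{\perp}$ is one of the two forms of Corollary \ref{cor:Transcendental-hyper-KummerK3}, split or non-split, or directly that its discriminant form matches a primitive sublattice of $\mathrm{U}(2)^{\oplus3}\oplus\langle-4\rangle$ via Nikulin's criterion. Finally, this conclusion extends from the very general member to every $u\in U$, since algebraic classes only grow under specialization and the primitive embedding of $N_S$ persists.

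I expect the main obstacle to be the third step, because of the saturation bookkeeping. Since $(L,R_\tau)=1$, the class $L$ is not contained in $H^2(J)(2)\oplus\langle\tfrac12\sum R_\tau\rangle$ naively; it lives in a non-obvious overlattice, so one must identify the invariant lattice carefully and confirm that $L$ lies in it. If the direct computation gets messy, I would instead verify condition $(i)$ of Theorem \ref{thm:characterizeS}. In the specialization $\Km(J)$, I would locate an index-$2$ subgroup $H'\subset J[2]$ and use Lemma \ref{lem:push-forwardt_*}-type reasoning to produce $N_S$ inside $N_V\oplus\langle L\rangle^{\mathrm{sat}}$. I would then check that it remains Hodge along the family because it is built only from $L$ and the $(\ZZ/2\ZZ)^4$-module structure.
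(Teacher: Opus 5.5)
You follow the paper's route: specialize to $\Km(J)$, identify $H^2_{\mathrm{tr}}$ of a very general member with $L^{\perp}$ inside the invariant lattice $N_V^{\perp}=H^2(J,\ZZ)(2)\oplus\langle w\rangle\cong\mathrm{U}(2)^{\oplus 3}\oplus\langle -8\rangle$, where $w=\tfrac12\sum_\tau R_\tau$, and conclude by Theorem~\ref{thm:characterizeS}(iv).

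\textbf{The gap.} The one step with real content is computing $L^\perp$ and checking the embedding. You leave it as ``one needs to check'', and the part you do write is wrong: $e+f+2w$ is not orthogonal to $L=8(e+f)-w$, since $(L,e+f+2w)=32+16=48$. This step cannot be skipped.
\begin{itemize}
\item A symplectic $(\ZZ/2\ZZ)^4$-action only gives $N_V\subset\NS$ (Theorem~\ref{thm:characterizeV}), not $N_S\subset\NS$. So your ``equivalently'' with condition (ii) there is false.
\item The outcome genuinely depends on which invariant class $L$ is. For instance, the positive invariant class $e+2f$ has orthogonal complement $\mathrm{U}(2)^{\oplus 2}\oplus\langle -8\rangle^{\oplus 2}$ in $N_V^\perp$. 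Its discriminant group $(\ZZ/2\ZZ)^4\oplus(\ZZ/8\ZZ)^2$ is not the discriminant of the orthogonal of any positive class in $\mathrm{U}(2)^{\oplus 3}\oplus\langle-4\rangle$ (compare Lemma~\ref{lem:orbitsU^3+<-2>}, scaled by $2$). Hence that lattice has no primitive embedding into $\mathrm{U}(2)^{\oplus 3}\oplus\langle-4\rangle$.
\end{itemize}

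\textbf{The missing computation.} It is short.
\begin{itemize}
\item Since $(L,ae+bf+cw)=16(a+b)+8c$, the orthogonal of $L$ in $\mathrm{U}(2)\oplus\langle w\rangle$ is $\{ae+bf-2(a+b)w\}$. It has basis $e-f$, $f-2w$ and Gram matrix $\begin{psmallmatrix}-4&2\\2&-32\end{psmallmatrix}$.
\item Hence $H^2_{\mathrm{tr}}\cong\mathrm{U}(2)^{\oplus 2}\oplus\begin{psmallmatrix}-4&2\\2&-32\end{psmallmatrix}$, which is \eqref{eq:transcendentalHeisenberg125}. This is the non-split type of Corollary~\ref{cor:Transcendental-hyper-KummerK3}.
\item Write the last summand of $\mathrm{U}(2)^{\oplus 3}\oplus\langle -4\rangle$ as $\langle e',f'\rangle\oplus\langle\delta\rangle$. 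The vectors $f'-\delta$ and $e'-8f'$ form a basis of the orthogonal of the primitive class $2e'+16f'-\delta$, and they have the same Gram matrix. This gives the required primitive embedding, so Theorem~\ref{thm:characterizeS} applies.
\item Your passage to arbitrary $u$ is fine, since $H^2_{\mathrm{tr}}(\mathcal S_u)$ is primitive in this lattice.
\end{itemize}

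\textbf{Other remarks.}
\begin{itemize}
\item The obstacle you anticipate is not real. $L=8\pi_*\Theta-w$ lies in $H^2(J,\ZZ)(2)\oplus\langle w\rangle$ on the nose. The value $(L,R_\tau)=1$ is harmless because $R_\tau$ is not invariant; already $(w,R_\tau)=-1$.
\item The fallback via Lemma~\ref{lem:push-forwardt_*} is off target. That lemma produces $N_S$ on the resolved quotient surface, not on $\mathcal S_u$ itself.
\end{itemize}
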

\begin{proof}
We determine the transcendental lattice of the very general element $S$ of the family. By construction, $S$ admits a polarization of degree $248$, and a deformation to $\Km(J)\hookrightarrow \PP^{125}$ under which this polarization specializes to the class $L$. Moreover, we can choose this deformation to be equipped with a fiberwise symplectic action of $(\ZZ/2\ZZ)^4$. Hence, the anti-invariant part for the action of this group in cohomology remains of Hodge type along the family. It follows that the image of $\NS(S)$ in $\NS(\Km(J))$ under specialization is a rank $16$ lattice which is the saturation of the sublattice spanned by $L$ and the classes $R_i - R_j$, for all $i$ and $j$. The saturation of the sublattice spanned by the $R_i-R_j$ is the anti-invariant sublattice $N_V$; its complement in the K3 lattice is isometric to $\mathrm{U}(2)^{\oplus 3} \oplus \langle -8\rangle$. As a class in this complement, $L$ has square $248$ and divisibility $8$. We can then explicitly compute 
\begin{equation}\label{eq:transcendentalHeisenberg125}
    H^2_{\mathrm{tr}}(S,\ZZ) = \mathrm{U}(2)^{\oplus 2} \oplus \begin{psmallmatrix}
        -8 & 2 \\ 2 & -16
    \end{psmallmatrix} \cong \mathrm{U}(2)^{\oplus 2} \oplus \begin{psmallmatrix}
    -4 & 2 \\ 2 & -32
\end{psmallmatrix}
\end{equation}
The second isometry follows as the discriminant forms of $\begin{psmallmatrix}
    -8 & 2 \\ 2 & -16
\end{psmallmatrix}$ and $\begin{psmallmatrix}
    -4 & 2 \\ 2 & -32
\end{psmallmatrix}$ are isomorphic (as is not too hard to check), and the lattices $\mathrm{U}(2)^{\oplus 2} \oplus \begin{psmallmatrix}
    -8 & 2 \\ 2 & -16
\end{psmallmatrix}$ and $\mathrm{U}(2)^{\oplus 2} \oplus \begin{psmallmatrix}
    -4 & 2 \\ 2 & -32
\end{psmallmatrix}$, are uniquely determined by their signature and discriminant forms. 
Therefore, $H^2_{\mathrm{tr}}(S,\ZZ)$ embeds primitively in $\mathrm{U}(2)^{\oplus 3} \oplus \langle -4\rangle$, and $S$ is a hyper-Kummer K3 surface by Theorem \ref{thm:characterizeS}.
\end{proof}

Let $S$ be a very general K3 surface in the family $\mathcal{S}\to U$ described above. Considering a specialization to $\Km(J)$ in this family, the image of $\NS(S)$ in $\NS(\Km(J))$ is also described as the saturation of the sublattice spanned by the $16$ classes $H-R_\tau$, $\tau\in J[2]$. For each $\tau$, the class $H-R_{\tau}$ is effective of square $2$, and $(H-R_{\tau}, H-R_{\tau'} )=4$. Moreover, the orthogonal $(H-R_{\tau})^{\bot}$ in $\NS(S)$ contains no $-2$-classes. Therefore, the N\'eron--Severi group of the very general K3 surface $S$ in the family is generated (up to saturation) by $16$ ample classes $T_i$, $i=1,\dots, 16$, which have square $2$ and such that $(T_i,T_j)=4$.
The $(\ZZ/2\ZZ)^4$-action is transitive on the $T_i$. Each $T_i$ induces a double cover $\phi_{|T_i|}\colon S\to \mathbb{P}^2$, ramified over a smooth sextic curve whose isomorphism class does not depend on $i$. The degree $248$ polarization $L$ is recovered as $L=\tfrac{1}{2} \sum_{i=1}^{16} T_i$.

Taking Hilbert schemes of points on the K3 surfaces in $\mathcal{S}$ we obtain a smooth family $\mathcal{Y}\to U$ with fibers isomorphic to the sixfolds $(\mathcal{S}_u)^{[3]}$ of $\mathrm{K}3^{[3]}$-type. 
For very general $S$ in our family, $\NS(S^{[3]})$ has rank $17$, generated up to finite index by the classes $t_i$ induced by the $T_i\in \NS(S)$ and the Hilbert--Chow class $\delta$ (which is primitive of square $-4$). 
Under a specialization to $(\Km(J))^{[3]}$ we can identify $\NS(S^{[3]})$ with the saturation of the lattice spanned by the classes $h-r_i$ and $\delta$, where $h,r_i$ denote the classes in $\NS(\Km(J)^{[3]})$ corresponding to $H,R_i$ from $\NS(\Km(J))$ and $\delta$ is the Hilbert--Chow class.

\begin{proposition}
    Let $S$ be a very general $\mathrm{K}3$ in the family $\mathcal{S}\to U$. Then $S^{[3]}$ contains $16$ prime exceptional divisors $E_1,\dots , E_{16}$ with cohomology classes $[E_i]= t_i -\delta$. Moreover, the $[E_i]$ are pairwise orthogonal with Beauville-Bogomolov square $-2$ and the saturation of $\langle [E_i]\rangle$ in $H^2(S^{[3]},\ZZ)$ is a Kummer lattice, with orthogonal complement isometric to $\mathrm{U}(2)^{\oplus 3} \oplus \langle -4\rangle$.
\end{proposition}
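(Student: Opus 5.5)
We separate the lattice-theoretic part from the geometric part, and we construct the divisors $E_i$ explicitly. Each $T_i$ has square $2$, so the linear system $|T_i|$ gives a double cover $S\to\PP^2$. Let $\iota_i$ be the covering involution. We take $E_i\subset S^{[3]}$ to be the closure of the locus of length-$3$ subschemes that contain a pair $\{x,\iota_i(x)\}$. Equivalently, $E_i$ is the closure of the subschemes $\xi$ for which $\xi$ and some point of $S$ span a fibre of $\phi_{|T_i|}$.

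This is a $4$-dimensional family of schemes, so $E_i$ is a divisor. It is irreducible, since it is the image of $\PP^2\times S$ under a generically injective map. It is fibred over $\PP^2\times S$, birationally. This matches the standard description of the prime exceptional divisor of class $h-\delta$ on $\mathrm{K}3^{[n]}$ for a degree-$2$ K3 (cf.~\cite{markmanPrime}, \cite{BM14a}), so $[E_i]=t_i-\delta$. To confirm this class we would either intersect with test curves or rely on the known computation for $n=3$ and $h^2=2$. Markman's classification quoted in Section~\ref{sec:Reconstrution} then applies: the class has square $2-4=-2$ and divisibility $1$. Since it is the class of an irreducible uniruled divisor of negative square, $E_i$ is prime exceptional.

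For the lattice statements we use the orthogonal decomposition $H^2(S^{[3]},\ZZ)=H^2(S,\ZZ)\oplus\ZZ\delta$ with $\delta^2=-4$. This gives
\[
(t_i-\delta,\,t_j-\delta)=(T_i,T_j)+\delta^2=4-4=0 \quad (i\neq j),
\]
and $(t_i-\delta)^2=2-4=-2$. So the sixteen classes are pairwise orthogonal $(-2)$-classes.

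Next we show that the saturation of $\langle[E_i]\rangle$ is the Kummer lattice. We would specialize to $\Km(J)^{[3]}$, where $t_i-\delta$ becomes $h-r_i-\delta$. From there we would either find an explicit set of half-sums $\tfrac12\sum_{i\in I}[E_i]$, indexed by the affine hyperplanes and the whole of $\FF_2^4$ as for the Kummer lattice, that are integral in $H^2$, or compute the discriminant. The orthogonal complement of $\langle[E_i]\rangle$ has rank $7$ and signature $(3,4)$. It contains $H^2_{\mathrm{tr}}(S,\ZZ)=\mathrm{U}(2)^{\oplus2}\oplus\begin{psmallmatrix}-4&2\\2&-32\end{psmallmatrix}$ together with the class $\tfrac12\sum_i t_i+c\,\delta$ orthogonal to all $E_i$. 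Since $\sum_i t_i-16\delta\perp E_j$ for every $j$, this class is $L-8\delta$, with square $248-256=-8$; we then saturate appropriately. The plan is to show that the discriminant group of this complement is $(\ZZ/2\ZZ)^6\oplus\ZZ/4\ZZ$, with the discriminant form of $\mathrm{U}(2)^{\oplus3}\oplus\langle-4\rangle$. Uniqueness of the lattice in its genus (Theorem~\ref{thm:embeddingUnimodular}-type results) then identifies the complement as $\mathrm{U}(2)^{\oplus3}\oplus\langle-4\rangle$. Because $H^2(S^{[3]},\ZZ)$ has discriminant of order $4$, the saturation of $\langle [E_i]\rangle$ then has discriminant of order $2^6$. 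It is negative definite of rank $16$ and spanned up to index by orthogonal $(-2)$-roots, so by Nikulin it is the Kummer lattice.

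We expect the main obstacle to be this saturation and discriminant computation, together with checking that the gluing between $\langle[E_i]\rangle^{\mathrm{sat}}$ and its complement is correct. The cleanest route is probably to carry out the computation explicitly on the specialization $\Km(J)^{[3]}$ in terms of $h$, the $r_\tau$, $\delta$ and the half-sum classes of the Kummer lattice, where everything is written down concretely.
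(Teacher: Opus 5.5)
The proposal has a genuine gap at the first step: your $E_i$ is not a divisor. The locus of $\xi\in S^{[3]}$ containing a fibre $\{x,\iota_i(x)\}$ of $\phi_{|T_i|}$ is, as you say, the image of $\PP^2\times S$, so it is $4$-dimensional. But $S^{[3]}$ is a sixfold, so this locus has codimension $2$. The claims $[E_i]=t_i-\delta$ and ``prime exceptional'' therefore apply to nothing, and the ``standard description'' you invoke does not hold in this form.

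The correct divisor, which the paper uses, is the $5$-dimensional locus of $\xi$ lying on some curve of $|T_i|$. Equivalently, $\phi_i(\xi)$ consists of three collinear points of $\PP^2$. This locus is birationally fibred over $(\PP^2)^\vee$, with fibre $C^{[3]}$ over a line $\ell$, where $C=\phi_i^{-1}(\ell)$ has genus $2$. Your locus is only a divisor inside it.

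You could compute its class directly as the degeneracy locus of the evaluation map $H^0(S,T_i)\otimes\mathcal{O}\to\mathcal{O}_S(T_i)^{[3]}$ between rank-$3$ bundles, which gives $t_i-\delta$. Irreducibility and reducedness would still need an argument. The paper argues instead on the specialization $\Km(J)^{[3]}$. The isotropic class $h-\delta$ gives a rational Lagrangian fibration $f\colon\Km(J)^{[3]}\dashrightarrow|H|$. For the plane $P_i=|H-R_i|+R_i$, the closure of $f^{-1}(P_i)$ splits as $r_i+e_i$, so $[e_i]=h-\delta-r_i$. The divisor $e_i$ is the collinearity locus for the projection from the $i$-th node. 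The collinearity divisors on $S^{[3]}$ specialize to the prime divisors $e_i$, which gives both primality and the class.

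The lattice part also needs repair. Your orthogonality and square computations for the $t_i-\delta$ are correct, but the class you place in the complement is wrong. Since $(\sum_i t_i,t_j)=2+15\cdot 4=62$ and $\delta^2=-4$, one gets $(\sum_i t_i-16\delta,\,t_j-\delta)=-2\neq 0$. The orthogonal complement of the $E_j$ in $\NS(S^{[3]})\otimes\QQ$ is spanned by $4l-31\delta$, with $l=\tfrac12\sum_i t_i$, and this class has square $124>0$. A class of square $-8$, like your $l-8\delta$, would contradict the signature $(3,4)$ you correctly predict for the complement.

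Two further inferences are not automatic. First, identifying the complement with $\mathrm{U}(2)^{\oplus 3}\oplus\langle-4\rangle$ and using $|A_{H^2(S^{[3]},\ZZ)}|=4$ does not force $|A_N|=2^6$ for $N=\langle[E_i]\rangle^{\mathrm{sat}}$: a priori the gluing index could be $2^7$, giving $|A_N|=2^8$. Second, an index-$2^5$ overlattice of $\langle-2\rangle^{\oplus 16}$ is the Kummer lattice only when the associated doubly even binary code is the first-order Reed--Muller code. You must rule out, for example, weight-$4$ codewords, which produce extra roots. The route that actually works is the explicit one you mention last: on $\Km(J)^{[3]}$, with $[e_i]=h-r_i-\delta$, exhibit the integral half-sums over $\FF_2^4$ and its affine hyperplanes, and compute the complement directly.
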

\begin{proof}
On $\Km(J)^{[3]}$ we have $16$ prime divisors with cohomology classes $h-r_i-\delta$, as can be seen as follows. The isotropic class $h-\delta$ induces a rational Lagrangian fibration $f\colon \Km(J)^{[3]}\dashrightarrow |H|$, which sends a general subscheme $\xi$ to the unique curve in $|H|$ which contains it. 
Recall that $|H|$ is the hyperplane linear system for $J/\pm 1\subset \mathbb{P}^3$; hence, it contains $16$ planes $P_i\coloneqq |H-R_i| + R_i$, for $i=1,\dots, 16$, parametrizing hyperplanes through the $i$-th node. Curves in $P_i\subset |H|$ are the union of the exceptional curve $R_i$ with a curve of genus $2$, isomorphic to $\Theta\subset J$.
Now $\overline{f^{-1}(P_i)}$ is the closure of the locus of those $\xi\in \Km(J)^{[3]}$ whose support lies on a unique curve $C\in |H|$ which, moreover, belongs to the plane $P_i$. Certainly $\overline{f^{-1}(P_i)}$ contains the divisor $r_i$, but it contains another irreducible component, whose general points have support that does not intersect $R_i$. Hence, $\overline{f^{-1}(P_i)}= r_i + e_i$ for a prime divisor $e_i$; in fact, for a general curve $C\in P_i$, the fibre $\overline{f^{-1}(C)}$ is the union of $2$ irreducible components, both birational to a $\mathbb{P}^1$-bundle over $J$. 
Since $\overline{f^{-1}(P_i)}$ is linearly equivalent to $h-\delta$, we have $e_i = h-\delta-r_i$ in $H^2(\Km(J)^{[3]},\ZZ)$. We easily compute that the $e_i$ are pairwise orthogonal $(-2)$-classes, that the saturation of $\langle e_i\rangle_{i=1,\dots, 16}$ in $H^2(\Km(J)^{[3]},\ZZ)$ is a Kummer lattice, with orthogonal complement isometric to $\mathrm{U}(2)^{\oplus 3}\oplus \langle -4\rangle$. 
The divisors $e_i$ also admit the following description: the $2$-dimensional linear system $P_i$ induces a rational double cover $\phi_i\colon \Km(J)\dashrightarrow \PP^2$, which is the projection of $J/\pm 1$ from the $i$-th node. Then, by its definition, $e_i\subset \Km(J)^{[3]}$ equals the closure of the locus of subschemes whose support maps to $3$ collinear points in $\PP^2$ via $\phi_i$.

Let now $S$ be very general in the family $\mathcal{S}\to U$. Its Picard group is generated by $16$ ample classes $T_i$ of degree $2$, which induce double covers $\phi_i\colon S\to \PP^2$. We define $16$ divisors $E_i\subset S^{[3]}$ as the closure of the locus of those subschemes whose support maps to $3$ collinear points via $\phi_i$. We specialize $S$ to $\Km(J)$ in such a way that $|T_i|$ specializes to the linear system $P_i$; taking Hilbert cubes, the $E_i$ specialize to the divisors $e_i\subset \Km(J)^{[3]}$ defined above. Hence, the $E_i\subset S^{[3]}$ are prime divisors, and their cohomology class $[E_i] = t_i-\delta \in \NS(S^{[3]})$ have the clalimed intersection properties.
\end{proof}


\begin{corollary}\label{cor:HyperKummerSixfoldsFromGenus125K3}
    For general $S$ in the family $\mathcal{S}\to U$, there exists a finite cover $\tilde{Y}\to S^{[3]}$ of degree $2^5$, ramified over the divisors $E_1, \dots, E_{16}$, with $\tilde{Y}$ birational to a variety $K$ of $\mathrm{Kum}^3$-type. If $S$ is very general, then $K$ has Picard rank $1$, with polarization of degree $248$, divisibility $8$ and residual invariant $\pm 1$.
\end{corollary}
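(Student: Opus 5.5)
The plan follows the pattern of the corollary for Heisenberg invariant quartics. First, the preceding proposition shows that for very general $S$ the sixfold $S^{[3]}$ carries $16$ prime exceptional divisors $E_1,\dots,E_{16}$. These satisfy conditions (i) and (ii) of Theorem \ref{thm:biregularCharacterization}: their classes are pairwise orthogonal $(-2)$-classes, they span a Kummer lattice up to saturation, and the orthogonal complement is $\mathrm{U}(2)^{\oplus 3}\oplus\langle -4\rangle$. Primality is an open condition in families, as the specialization argument to $\Km(J)^{[3]}$ shows. We therefore get (i) and (ii) for general $S$ in $\mathcal{S}\to U$, not only very general $S$, by restricting to the open locus where the $E_i$ stay irreducible.

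Next we apply Remark \ref{rmk:contractionAfterFlops}, since $S^{[3]}$ is projective. It provides a $(\ZZ/2\ZZ)^5$-Galois cover $\widetilde Y\to S^{[3]}$ of degree $2^5$, ramified with index $2$ along each $E_i$. Such a cover exists because $\langle [E_i]\rangle^{\mathrm{sat}}/\langle [E_i]\rangle\cong(\ZZ/2\ZZ)^5$, which feeds the abelian cover construction of Corollary \ref{cor:reconstructionProcedure}. The remark also shows that $\widetilde Y$ is birational to a $\mathrm{Kum}^3$-variety $K$ with $S^{[3]}$ birational to $Y_K$. The one point to check is that the cover is built on $S^{[3]}$ itself, with the flops needed to contract the $E_i$ performed only on a birational model $Y'$. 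Covers of $Y'$ and of $S^{[3]}$ agree away from codimension $2$, so $\widetilde Y$ is birational to the cover of $Y'$, and hence to $K$.

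For the invariants, take $S$ very general. Then $H^2_{\mathrm{tr}}(Y_K,\ZZ)\cong H^2_{\mathrm{tr}}(S^{[3]},\ZZ)\cong H^2_{\mathrm{tr}}(S,\ZZ)$, which by \eqref{eq:transcendentalHeisenberg125} is $\mathrm{U}(2)^{\oplus 2}\oplus\begin{psmallmatrix}-4&2\\2&-32\end{psmallmatrix}$ and has rank $6$. By Proposition \ref{prop:cohomologyY_K} we have $H^2_{\mathrm{tr}}(Y_K,\ZZ)\cong\widehat H^2_{\mathrm{tr}}(K,\ZZ)(2)$, so $H^2_{\mathrm{tr}}(K)$ has rank $6$ and $K$ has Picard rank $1$. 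Projectivity of $K$ follows from $\widetilde Y$ being projective, so the generator $h$ of $\NS(K)$ is a polarization.

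It remains to read off $h$ from the table of Theorem \ref{thm:NeronSeveriHyperkummer}. The matrix $\begin{psmallmatrix}-4&2\\2&-16d'\end{psmallmatrix}$ with $16d'=32$, i.e.\ $d'=2$, appears only in the row $\gamma=8$, $\pm[h^\vee/\gamma]=\pm1$. That row gives $d=4(16\cdot 2-1)=124$, so $h$ has square $2d=248$. We must rule out that another row yields an isometric lattice, for instance the split row with $-4d$, or the $\pm3$ row with $16d'-8=32$, which is impossible since $d'$ is an integer. To do this, compare discriminant groups and forms of the candidate rank-two blocks: the split block $\langle-4\rangle\oplus\langle-4d\rangle$ has discriminant group of length $2$ at $2$ with different determinant, and the $\gamma=2,4$ rows would force other determinants. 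The main obstacle is this bookkeeping, namely verifying that the discriminant form distinguishes the rows. I expect it to work by comparing determinants ($\det=124$ for our block against $16d$ for split and $4d$ for the $\gamma=2$ row), together with a check of the $2$-adic form where the determinants coincide.
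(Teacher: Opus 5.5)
Your first part is essentially the paper's argument. The cover comes from Remark~\ref{rmk:contractionAfterFlops} and Corollary~\ref{cor:reconstructionProcedure}, once the $E_i$ are known to stay prime on the general member. The paper gets this from Markman's result that the $E_i$ are stably prime exceptional. Your openness-of-irreducibility argument is a reasonable substitute.

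The gap is in the last step. You claim that $\mathrm{U}(2)^{\oplus 2}\oplus\begin{psmallmatrix}-4&2\\2&-32\end{psmallmatrix}$ occurs only in the row $\gamma=8$, $\pm1$ of Theorem~\ref{thm:NeronSeveriHyperkummer}. That is false. The row $\gamma=4$ with $d'=8$ gives $d=4(4\cdot 8-1)=124$ and transcendental lattice $\mathrm{U}(2)^{\oplus 2}\oplus\begin{psmallmatrix}-4&2\\2&-32\end{psmallmatrix}$. This is the same lattice as the row $\gamma=8$, $\pm1$ with $d'=2$.

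Conceptually, in $\widehat{H}^2(K,\ZZ)\cong\mathrm{U}^{\oplus 3}\oplus\langle -2\rangle$ both polarizations give $\hat h=2\hat h'$ with $\hat h'$ primitive of square $62$ and divisibility $2$. Such classes form a single orbit, so their orthogonals agree. Your assertion that the $\gamma=4$ row "forces another determinant" is wrong: its block has determinant $16d'-4=d=124$ as well. No discriminant-form check can separate the two rows, since the lattices are isometric. The Néron–Severi column $L_{\Km}\oplus\langle d\rangle$ does not separate them either.

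So $H^2_{\mathrm{tr}}$ alone does give the degree $248$, since both surviving rows have $d=124$. It does not give the divisibility $8$. This is exactly the ambiguity recorded in Remark~\ref{rmk:transcendental_hyperKummerK3}.

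What distinguishes $\gamma=4$ from $\gamma=8$ is the gluing of $\NS$ and $H^2_{\mathrm{tr}}$ inside $H^2(Y_K,\ZZ)$. Concretely, it is the divisibility of the primitive class orthogonal to the Kummer lattice. By Proposition~\ref{prop:invariants-pushforward-polarization}(ii) this class is $\tfrac12\bar h$, of divisibility $\gamma/2$.

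The paper computes this class directly. The orthogonal complement of $\langle[E_i]\rangle$ in $\NS(S^{[3]})$ is spanned by $4l-31\delta$, where $l=\tfrac12\sum_i t_i$ is the degree-$248$ polarization.
\begin{itemize}
\item It is orthogonal to each $E_i$: $(4l,t_i)=2(2+15\cdot 4)=124=(31\delta,\delta)\cdot(-1)$.
\item Its square is $16\cdot 248-4\cdot 31^2=124$.
\item Its divisibility in $H^2(S,\ZZ)\oplus\ZZ\delta$ is $4$, because $l$ is primitive in the unimodular $H^2(S,\ZZ)$.
\end{itemize}
Proposition~\ref{prop:invariants-pushforward-polarization} then gives $\gamma=8$ and $2d=248$. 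The residual invariant $\pm1$ follows because the $\pm3$ row would need $16d'-9=31$. You need to add this divisibility computation; without it, $\mathrm{div}(h)=8$ does not follow.
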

\begin{proof}
    For $S$ very general, we described $16$ prime exceptional divisors $E_i\subset S^{[3]}$ in Proposition \ref{prop:hyper-KummerFamilyII}. By \cite[Theorem 1.11]{markmanPrime}, the $E_i$ are stably prime exceptional, which implies that they specialize to prime exceptional divisors $E'_i\subset (S')^{[3]}$ for a general K3 $S'$ in the family. 
    By Remark \ref{rmk:contractionAfterFlops} and Corollary \ref{cor:reconstructionProcedure}, there exists a cover $\tilde{Y}\to S^{[3]}$ ramified over the $E_i$, with $\tilde{Y}$ birational to a variety $K$ of $\mathrm{Kum}^3$-type. 
    
    If $S$ is very general, then $K$ has Picard rank $1$, $\NS(K)= \ZZ\cdot h$. Moreover, we can compute that $\NS(S^{[3]})$ is the saturation of the orthogonal direct sum of $\langle E_i\rangle_{i=1}^{16}$ and the class $4l - 31 \delta$, where $l=\tfrac{1}{2}\sum_{i=1}^{16} t_i$ is the class coming from the $(\ZZ/2\ZZ)^4$-invariant polarization $L\in \NS(S)$, of degree $248$. The class $4l-31\delta$ must therefore be the primitive generator of the image of $\NS(K)$ under the rational map $K\dashrightarrow \tilde{Y} \rightarrow S^{[3]}$. As a class of $H^2(S^{[3]},\ZZ)$, $4l-31\delta$ has divisibility $4$ and square $124$. By Proposition \ref{prop:invariants-pushforward-polarization}, we conclude that $h\in H^2(K,\ZZ)$ has divisibility $ 8$ and square $248$. Comparing the transcendental lattice of $Y_K$ \eqref{eq:transcendentalHeisenberg125} with the table in Theorem \ref{thm:NeronSeveriHyperkummer}, we further deduce that the residual invariant $\pm [(h,-)/8]$ must be $\pm 1$.
\end{proof}

\subsection{A family of degree $8$ hyper-Kummer K3 surfaces}
\label{subsec:DiagonalOctics}
Consider the family of complete intersections in $\PP^5$ of quadrics of the form 
\begin{equation}\label{eq:3quadrics}
    \begin{cases}
        \sum_{i=0}^5 \alpha_i x_i^2 & =0\\
        \sum_{i=0}^5 \beta_i x_i^2 & =0\\
        \sum_{i=0}^5 \gamma_i x_i^2 & =0
    \end{cases}
\end{equation}
for complex parameters $\alpha_i, \beta_i,\gamma_i$. This family is studied in \cite{GarbagnatiSarti}. They show that, modulo the action of projective automorphisms, one obtains a $4$-dimensional family of K3 surfaces of generic Picard rank $16$, equipped with a symplectic action of $(\ZZ/2\ZZ)^4$ given by changing the signs of an even number of coordinates of $\PP^5$. 
Such K3 surfaces have been further studied by Laterveer \cite{Laterveer-2016-K3FiniteDimMotive}, who shows that their quotients by the $(\ZZ/2\ZZ)^4$-action are the nodal K3 surfaces obtained as double covers of $\PP^2$ branched along $6$ lines studied in \cite{paranjape}.

\begin{proposition}\label{prop:hyper-KummerFamilyII}
    Any $\mathrm{K}3$ surface $S$ which is the intersection of $3$ diagonal quadrics as in \eqref{eq:3quadrics} is a hyper-Kummer $\mathrm{K}3$ surface.
\end{proposition}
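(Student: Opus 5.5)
By Theorem \ref{thm:characterizeS}, a projective K3 surface is hyper-Kummer exactly when its transcendental lattice embeds primitively into $\mathrm{U}(2)^{\oplus 3}\oplus\langle -4\rangle$; equivalently, by condition (ii) there, when it is the minimal resolution of $V/H'$ for some K3 surface $V$ with a faithful symplectic $(\ZZ/2\ZZ)^4$-action $H$ and some index-$2$ subgroup $H'\subset H$. I will use the lattice criterion (iv), with (ii) as a geometric cross-check.

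The plan is to compute $H^2_{\mathrm{tr}}(S,\ZZ)$ for a very general member $S$ of the family \eqref{eq:3quadrics}, and then pass to an arbitrary member by specialization. First, $S$ carries the symplectic action of $H\cong(\ZZ/2\ZZ)^4$ by even sign changes. Its anti-invariant lattice is therefore a copy of $N_V$ inside $\NS(S)$, and $\NS(S)$ is the saturation of $N_V\oplus\langle h\rangle$, where $h$ is the hyperplane class of square $8$. Its orthogonal complement in $N_V^{\perp}\simeq \mathrm{U}(2)^{\oplus 3}\oplus\langle -8\rangle$ is $H^2_{\mathrm{tr}}(S,\ZZ)$. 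So the problem reduces to locating $h$ inside $\mathrm{U}(2)^{\oplus3}\oplus\langle-8\rangle$, that is, determining its divisibility and its class in the discriminant group.

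To pin this down I will degenerate to a convenient member, as was done for the Heisenberg quartics in Section \ref{subsec:heisenberg-invariant} and for the genus-$125$ family in Section \ref{subsec:genus125K3}. Garbagnati--Sarti identify which component of the moduli of K3 surfaces with symplectic $(\ZZ/2\ZZ)^4$-action of degree $8$ this family lies in, including the divisibility of the invariant polarization. Using that, I compute $h^{\perp}$ in $\mathrm{U}(2)^{\oplus3}\oplus\langle-8\rangle$. I expect to obtain something of the form $\mathrm{U}(2)^{\oplus2}\oplus\langle-4\rangle\oplus\langle-4d\rangle$ or $\mathrm{U}(2)^{\oplus2}\oplus\begin{psmallmatrix}-4&2\\2&-4d\end{psmallmatrix}$, which visibly embeds primitively into $\mathrm{U}(2)^{\oplus3}\oplus\langle-4\rangle$. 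This matches Corollary \ref{cor:Transcendental-hyper-KummerK3}. If the direct lattice computation is messy, I will fall back on comparing discriminant forms and invoking uniqueness for indefinite lattices (Theorem \ref{thm:embeddingUnimodular}).

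As a more conceptual alternative, I will use Laterveer's observation that $S/H$ is birational to a double plane branched along six lines. Taking an index-$2$ subgroup $H'\subset H$, one could identify the minimal resolution of $S/H'$ with the resolution of a double cover related to $S/H$. But condition (ii) requires $S$ itself to play the role of the resolution of $V/H'$ for some $V$, not the role of $V$. So instead I would try to exhibit $S$ as the resolution of $V/H'$ with $V$ another complete intersection of diagonal quadrics, for instance via $x_i\mapsto x_i^2$-type covers of $\PP^5$.

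Finally, for an arbitrary smooth member of the family: being hyper-Kummer is equivalent to containing a primitive $N_S$ in $\NS$ (condition (i)), and this condition is closed under specialization in the family, since Hodge classes and primitivity persist by parallel transport. So the very general case suffices. The main obstacle is the explicit determination of the position of $h$ relative to $N_V$ (its divisibility and discriminant class), which decides whether $h^{\perp}$ has the required shape.
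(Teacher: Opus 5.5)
Your route is the paper's: compute $H^2_{\mathrm{tr}}(S,\ZZ)$ for very general $S$ as the orthogonal complement of the degree-$8$ class $h$ in the invariant lattice $N_V^{\perp}\cong\mathrm{U}(2)^{\oplus 3}\oplus\langle -8\rangle$, then apply criterion (iv) of Theorem~\ref{thm:characterizeS}. Your specialization argument for special members is fine. However, the one step that carries the content is left undone. Your expectation that the outcome will be of split or non-split type, and hence embed ``visibly'', is not automatic.

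Write $N_V^{\perp}=\Lambda(2)$ with $\Lambda=\mathrm{U}^{\oplus 3}\oplus\langle -4\rangle$, let $\xi$ generate $\langle -4\rangle$, and let $e_1,f_1$ be a standard basis of a $\mathrm{U}$-summand of $\Lambda$. A primitive $h\in\Lambda(2)$ of square $8$ has divisibility $2$ or $4$; divisibility $8$ would force $\beta^2\equiv -1 \pmod 8$ for an odd integer $\beta$. By Eichler's criterion (Theorem~\ref{thm:eichler}, applied in $\Lambda$, where $A_\Lambda\cong\ZZ/4\ZZ$), square and divisibility determine the orbit. The two cases behave differently:
\begin{itemize}
\item Divisibility $4$, e.g.\ $h=2(e_1+f_1)+\xi$, gives $h^{\perp}\cong\mathrm{U}(2)^{\oplus 2}\oplus\langle -4\rangle^{\oplus 2}$.
\item Divisibility $2$, e.g.\ $h=e_1+2f_1$, gives $h^{\perp}\cong\mathrm{U}(2)^{\oplus 2}\oplus\langle -8\rangle^{\oplus 2}$. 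This does \emph{not} embed primitively into $\mathrm{U}(2)^{\oplus 3}\oplus\langle -4\rangle=(\mathrm{U}^{\oplus 3}\oplus\langle -2\rangle)(2)$. A primitive corank-one sublattice there is the complement of a primitive positive vector, hence isometric to $\mathrm{U}(2)^{\oplus 2}\oplus\langle -4\rangle\oplus\langle -4d\rangle$ or $\mathrm{U}(2)^{\oplus 2}\oplus\begin{psmallmatrix}-4&2\\2&-4d\end{psmallmatrix}$. Neither has a discriminant group containing $(\ZZ/8\ZZ)^{2}$.
\end{itemize}
So the statement is decided exactly by which orbit the hyperplane class lies in. Your proposal only says this will be read off from Garbagnati--Sarti, without saying what it is.

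The paper supplies this input by specializing to Jacobian Kummer surfaces, which belong to the family: $\Km(J)\subset\PP^5$ is cut out by three diagonal quadrics via $F=2H-\tfrac12\sum_{\tau\in J[2]}R_\tau$. Work in $H^2(\Km(J),\ZZ)^{J[2]}=H^2(J,\ZZ)(2)\oplus\langle\tfrac12\sum_\tau R_\tau\rangle\cong\mathrm{U}(2)^{\oplus 3}\oplus\langle -8\rangle$. The class $H$ is the image of the principal polarization, so it has square $4$ and divisibility $2$ in $H^2(J,\ZZ)(2)$, while $\tfrac12\sum_\tau R_\tau$ has square $-8$. Hence $F$ is primitive of square $16-8=8$ and divisibility $\gcd(2\cdot 2,8)=4$. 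This puts $h$ in the good orbit and gives
\[
H^2_{\mathrm{tr}}(S,\ZZ)\cong\mathrm{U}(2)^{\oplus 2}\oplus\begin{psmallmatrix}-8&4\\4&-4\end{psmallmatrix}\cong\mathrm{U}(2)^{\oplus 2}\oplus\langle -4\rangle^{\oplus 2}.
\]
This is the complement of $e+f$ in one $\mathrm{U}(2)$-summand of $\mathrm{U}(2)^{\oplus 3}\oplus\langle -4\rangle$, where $(e,f)=2$, so Theorem~\ref{thm:characterizeS}(iv) applies. With this computation added, your argument is complete. The alternative via Laterveer's double planes is then unnecessary; as you note yourself, it points in the wrong direction for condition (ii).
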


To prove Proposition \ref{prop:hyper-KummerFamilyII}, we will calculate the transcendental lattice of the very general element of the family. This family contains the $3$-dimensional family of Jacobian Kummer surfaces, which are embedded in $\PP^5$ via the polarization $F=2H-\tfrac{1}{2} \sum R_{\tau}$ (with the same notation of \S\ref{subsec:genus125K3}).

\begin{lemma}
\label{lemma:transcedenetalOctic}
    Let $S\subset \PP^5$ be a very general complete intersection as in \eqref{eq:3quadrics}. The transcendental lattice of $S$ is isometric to the lattice $\mathrm{U}(2)^{\oplus 2}\oplus \langle -4\rangle \oplus \langle -4\rangle$.
\end{lemma}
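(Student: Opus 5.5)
We compute $\NS(S)$ for very general $S$ by specializing to a Jacobian Kummer surface, exactly as for the genus-$125$ family in \S\ref{subsec:genus125K3}. Then $H^2_{\mathrm{tr}}(S,\ZZ)$ is obtained as an orthogonal complement inside a known lattice.

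\textbf{Step 1: reduce to a rank-one computation.} The family \eqref{eq:3quadrics} carries a fiberwise symplectic $(\ZZ/2\ZZ)^4$-action and contains the Jacobian Kummer surfaces $\Km(J)\subset\PP^5$, embedded by $F=2H-\tfrac12\sum_\tau R_\tau$, with the action specializing to translation by $J[2]$. Along the family the anti-invariant lattice stays algebraic. In $\Km(J)$ this lattice is the saturation of $\langle R_\tau-R_{\tau'}\rangle$, isometric to $N_V$, and it is the same sublattice of the K3 lattice for all members.

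The polarization $F$ is invariant and stays algebraic as well. Since the generic Picard rank is $16$, for very general $S$ we get
\[
\NS(S)=(N_V\oplus\ZZ F)^{\mathrm{sat}},\qquad
H^2_{\mathrm{tr}}(S,\ZZ)=F^{\perp}\ \text{inside}\ N_V^{\perp}\cong \mathrm{U}(2)^{\oplus 3}\oplus\langle -8\rangle .
\]
The last isometry is Lemma~\ref{lem:embeddingN_V}.

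\textbf{Step 2: locate $F$ in $N_V^\perp$.} Write $F$ as an element of $N_V^{\perp}$ using the specialization. We have $F^2=8$, $(F,R_\tau)=1$ and $F$ is invariant, so $F\in N_V^\perp$. The invariant part of $H^2(\Km(J),\ZZ)$ is the saturation of $H^2(J,\ZZ)(2)\oplus\langle\tfrac12\sum R_\tau\rangle$. In these terms $F=2H-\tfrac12\sum R_\tau$, with $H$ the image of $\Theta$, which has square $4$ in $\mathrm{U}(2)^{\oplus3}$. So $F=2\bar\theta-\kappa$, where $\kappa=\tfrac12\sum R_\tau$ has square $-8$ and spans the $\langle-8\rangle$ factor, and $\bar\theta$ is primitive in $\mathrm{U}(2)^{\oplus 3}$ of square $4$.

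Check: $4\cdot4-8=8$, as required. The divisibility of $F$ in $N_V^\perp$ is then $4$, since $(\bar\theta,\cdot)\in 2\ZZ$ and $(\kappa,\cdot)\in 8\ZZ$.

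\textbf{Step 3: compute the complement.} Choose $\bar\theta=u+v$ inside one copy of $\mathrm{U}(2)$. Then $F^\perp$ equals $\mathrm{U}(2)^{\oplus2}$ plus the orthogonal of $2(u+v)-\kappa$ inside $\mathrm{U}(2)\oplus\langle-8\rangle$. That orthogonal is spanned by $u-v$, of square $-4$, and $u+v-\kappa/?$-type combinations, which are found by a short direct solve. The expected result is the rank-$2$ lattice $\langle-4\rangle\oplus\langle-4\rangle$.

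Primitivity and saturation should be checked by comparing discriminants. We need $|\mathrm{disc}(H^2_{\mathrm{tr}})|=|\mathrm{disc}(\NS)|$, and the two discriminant forms should match. Uniqueness of indefinite $2$-elementary-type lattices (Theorem~\ref{thm:embeddingUnimodular}, or Nikulin's results) then pins down the isometry class.

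\textbf{Main obstacle.} The hard step is Step 2: correctly identifying the class $F$, and the sublattice of the K3 lattice it sits in, including the $\tfrac12$-saturations, so that the divisibility is right. As a consistency check I would use the known case: the genus-$125$ family with $L=8H-\tfrac12\sum R_\tau$ produced $\begin{psmallmatrix}-8&2\\2&-16\end{psmallmatrix}$. Rerunning the same computation with $2$ in place of $8$ should give a discriminant group of order $2^4\cdot 16$, consistent with $\langle-4\rangle^{\oplus2}$.
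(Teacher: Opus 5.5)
This is the paper's argument: you specialize to a Jacobian Kummer surface and identify $H^2_{\mathrm{tr}}(S,\ZZ)$ with $F^{\perp}$ inside $N_V^{\perp}=H^2(\Km(J),\ZZ)^{J[2]}\cong\mathrm{U}(2)^{\oplus 3}\oplus\langle -8\rangle$, where $F$ has square $8$ and divisibility $4$; the paper then quotes Garbagnati--Sarti to get $\mathrm{U}(2)^{\oplus 2}\oplus\begin{psmallmatrix}-8&4\\4&-4\end{psmallmatrix}$ and changes basis. The solve you left open in Step 3 closes at once: for $F=2(u+v)-\kappa$ the complement in $\mathrm{U}(2)\oplus\ZZ\kappa$ is $\{au+bv+c\kappa : a+b+2c=0\}$, which has the orthogonal basis $u-v$, $u+v-\kappa$, both of square $-4$, and no discriminant comparison or uniqueness theorem is needed because an orthogonal complement is automatically primitive and is exactly $\NS(S)^{\perp}$.
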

\begin{proof}
    Since the $(\ZZ/2\ZZ)^4$-action is preserved in the family, the anti-invariant lattice (which is isometric to the lattice $N_V$ of Definition \ref{def:latticeN_V}) for this action must remain contained in the N\'eron--Severi group of all K3 surfaces in the family. In $\NS(\Km(J))$, this anti-invariant part is the orthogonal to $\tfrac{1}{2}\sum_{\tau\in J[2]} R_{\tau}$ inside the Kummer lattice generated (after saturation) by the $R_{\tau}$. It follows that, for very general $S$ as in the statement, $\NS(S)$ is isometric to the saturation inside $\NS(\Km(J))$ of the sublattice spanned by $F$ and the classes $R_{\tau} - R_{\tau'}$ for all $\tau,\tau'$ in $J[2]$. Therefore, $H^2_{\mathrm{tr}}(S,\ZZ)$ is isometric to the orthogonal complement $F^{\bot}$ inside $H^2(\Km(J),\ZZ)^{J[2]}\cong \mathrm{U}(2)^{\oplus 3} \oplus \langle -8\rangle$. 
    As a class of this lattice, $F$ has square $8$ and divisibility $4$. By \cite[Theorem 7.1]{GarbagnatiSarti} (or by an explicit computation similar to those done in the proofs of Lemma \ref{lem:orbitsU^3+<-2>} and Lemma \ref{lem:orbitsU^3+<-8>}), we obtain 
    \begin{equation}
    \label{eqn:TranscendentalOctic}
        H^2_{\mathrm{tr}}(S,\ZZ) = \mathrm{U}(2)^{\oplus 2} \oplus\begin{psmallmatrix}
            -8 & 4 \\ 4 & -4
        \end{psmallmatrix}.
    \end{equation}
    This lattice is in fact isometric to $\mathrm{U}(2)^{\oplus 2}\oplus \langle -4\rangle\oplus \langle -4\rangle$: indeed, if $\{a,b\}$ is a basis of the last summand in \eqref{eqn:TranscendentalOctic}, we just need to replace this basis by the basis $\{a+b, b\}$.
\end{proof}
\begin{proof}[{Proof of Proposition \ref{prop:hyper-KummerFamilyII}}]
By the description of the transcendental lattice of the very general element in the family, the proposition follows immediately via Theorem \ref{thm:characterizeS}.
\end{proof}

In contrast with the previous two families we could not find a natural moduli spaces on these K3 surfaces that is a hyper-Kummer sixfold.


\section{Super-Kummer K3 surfaces and abelian motives}
\label{sec:SuperKummer}

As is shown in Corollary \ref{cor:Relation-YKMKSK}, a projective hyper-Kummer sixfold is birational to a smooth projective moduli space of stable sheaves on a hyper-Kummer K3 surface. As Hilbert schemes of points are the best understood moduli spaces of stable sheaves, we are naturally led to investigate hyper-Kummer sixfolds that are birational to Hilbert cubes of some K3 surface; in other words, we study the K3 surfaces whose Hilbert cube is birational to a hyper-Kummer sixfold, called \textit{super-Kummer} K3 surfaces. To study such K3 surfaces, both the hyper-Kummer construction and the MRS-double construction play essential roles, exemplifying the power of the \textit{synergy} mentioned in the Introduction Section \ref{subsec:Intro-OG6}.

\subsection{Super-Kummer K3 surfaces}
\begin{definition}
\label{def:SuperKummerK3}
A K3 surface $S$ is called \emph{super-Kummer} if its Hilbert cube $S^{[3]}$ is bimeromorphic to the hyper-Kummer sixfold $Y_K$ associated with some hyper-K\"ahler manifold $K$ of $\mathrm{Kum}^3$-type.
\end{definition}

We first clarify the relations between Kummer, super-Kummer and hyper-Kummer K3 surfaces:
\begin{lemma}
\label{lemma:SuperKummerIsHyperKummer}
    A (classical) Kummer $\mathrm{K}3$ surface is super-Kummer. A projective super-Kummer $\mathrm{K}3$ surface is a hyper-Kummer $\mathrm{K}3$ surface.
\end{lemma}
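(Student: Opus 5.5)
The first assertion follows from results already established in Section~\ref{sec:hyper-KummerConstruction}. The second follows from the Hodge-theoretic description of hyper-Kummer sixfolds in Section~\ref{sec:CohomologyOfCompanion} together with the lattice characterization of hyper-Kummer K3 surfaces in Theorem~\ref{thm:characterizeS}.

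For the first claim, let $S\simeq\Km(A)$ for a $2$-dimensional complex torus $A$. Proposition~\ref{prop:Y_K^3(A)}.$(i)$ says that the hyper-Kummer sixfold $Y_{K^3(A)}$ associated with the generalized Kummer sixfold $K^3(A)$ is birational (in the analytic setting, bimeromorphic) to $\Km(A)^{[3]}$. The proof there goes through Lemma~\ref{lem:isomorphismOrbifolds}, which makes no projectivity assumption, so it applies verbatim to complex tori. Hence $S^{[3]}$ is bimeromorphic to $Y_K$ with $K=K^3(A)$, and $S$ is super-Kummer by Definition~\ref{def:SuperKummerK3}.

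For the second claim, let $S$ be projective with $S^{[3]}$ bimeromorphic to some $Y_K$. The plan has four steps.
\begin{enumerate}
\item Since $S$ is projective, so is $S^{[3]}$. A bimeromorphic map of hyper-K\"ahler manifolds induces a Hodge isometry on $H^2$ (\cite[Lemma 2.6]{Huy99}), so $Y_K$ has an algebraic class of positive square. By Huybrechts' projectivity criterion $Y_K$ is projective, and then $K$ is projective as well: $\frac{1}{2^4}r_*$ identifies the transcendental parts by Proposition~\ref{prop:cohomologyY_K}, and positivity passes back through the embedding $\widehat H^2(K,\ZZ)(2)\hookrightarrow H^2(Y_K,\ZZ)$, whose image contains $\NS(Y_K)\cap L_{\Km}^{\perp}$.
\item By Theorem~\ref{thm:TranscendentalResolutions}, $H^2_{\mathrm{tr}}(Y_K,\ZZ)\simeq \widehat H^2_{\mathrm{tr}}(K,\ZZ)(2)$. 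On the other side, $H^2_{\mathrm{tr}}(S^{[3]},\ZZ)\simeq H^2_{\mathrm{tr}}(S,\ZZ)$, because $H^2(S^{[3]},\ZZ)=H^2(S,\ZZ)\oplus\ZZ\delta$ with $\delta$ algebraic.
\item Combining these, $H^2_{\mathrm{tr}}(S,\ZZ)$ is Hodge isometric to $\widehat H^2_{\mathrm{tr}}(K,\ZZ)(2)$, which is a primitive sublattice of $\widehat H^2(K,\ZZ)(2)\simeq \mathrm{U}(2)^{\oplus 3}\oplus\langle -4\rangle$.
\item This gives a primitive embedding $H^2_{\mathrm{tr}}(S,\ZZ)\hookrightarrow \mathrm{U}(2)^{\oplus 3}\oplus\langle-4\rangle$, and condition $(iv)$ of Theorem~\ref{thm:characterizeS} shows that $S$ is hyper-Kummer.
\end{enumerate}

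An alternative route avoids the lattice criterion: by Corollary~\ref{cor:Relation-YKMKSK}.(ii), $S_K$ is a projective K3 surface with transcendental lattice Hodge isometric to that of $S$. Since $\rho(S_K)\geq 16\geq 12$, the transcendental lattice embeds uniquely in the K3 lattice, and the Torelli theorem then gives $S\simeq S_K$ directly.

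The main obstacle is justifying projectivity of $K$ in step~1. It is needed to invoke the projective version of condition $(iv)$ in Theorem~\ref{thm:characterizeS}, or Corollary~\ref{cor:Relation-YKMKSK} in the alternative route. The key point to verify is that an algebraic positive class on $Y_K$ projects to a positive class in $\widehat H^2(K)$: $L_{\Km}\subset\NS(Y_K)$ is negative definite, so the orthogonal projection of a positive class to $L_{\Km}^{\perp}$ has even larger square.
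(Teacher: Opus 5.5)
Your proof is correct. The first assertion is handled exactly as in the paper.

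For the second assertion, the paper's proof is essentially your ``alternative route''. It writes the chain of Hodge isometries $H^2_{\mathrm{tr}}(S,\ZZ)\simeq H^2_{\mathrm{tr}}(S^{[3]},\ZZ)\simeq H^2_{\mathrm{tr}}(Y_K,\ZZ)\simeq H^2_{\mathrm{tr}}(S_K,\ZZ)$, with the last step given directly by Theorem~\ref{thm:TranscendentalResolutions}. It then extends the isometry to all of $H^2$ using $\rho\geq 12$ and concludes with Torelli.

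Your main route differs. You feed the primitive embedding $H^2_{\mathrm{tr}}(S,\ZZ)\hookrightarrow \widehat{H}^2(K,\ZZ)(2)\simeq \mathrm{U}(2)^{\oplus 3}\oplus\langle -4\rangle$ into criterion (iv) of Theorem~\ref{thm:characterizeS}.

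The two routes buy different things:
\begin{itemize}
\item The paper's route is more economical. It also gives the sharper conclusion $S\simeq S_K$ for the very $K$ with $S^{[3]}$ bimeromorphic to $Y_K$. It genuinely needs that $K$ be projective for $S_K$ to be defined; the paper leaves this implicit, and your step~1 supplies it correctly.
\item Your main route uses only the lattice shape of $\widehat{H}^2(K,\ZZ)(2)$. It outsources the geometry to Theorem~\ref{thm:characterizeS}, i.e.\ surjectivity of the period map for $\mathrm{Kum}^3$ and uniqueness of $N_S\hookrightarrow\Lambda_{\mathrm{K}3}$. It therefore produces some possibly different $\mathrm{Kum}^3$-manifold $K'$.
\end{itemize}

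Your diagnosis of the main obstacle is slightly off for that route. Condition (iv) only requires $S$ to be projective, so projectivity of your original $K$ is not what is needed there. What is needed, to match Definition~\ref{def:HyperKummerK3}, is projectivity of the $K'$ produced by (iv)$\Rightarrow$(iii). This follows from your own projection argument, now applied to an ample class on $S$ and the negative definite sublattice $N_S\subset \NS(S)$.
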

\begin{proof}
    The first assertion follows from Proposition \ref{prop:Y_K^3(A)}.
    For the second assertion, given a projective super-Kummer K3 surface $S$, by definition, its Hilbert cube $S^{[3]}$ is bimeromorphic to some hyper-Kummer sixfold $Y_K$ associated with some $\mathrm{Kum}^3$-type manifold $K$. Therefore, we have Hodge isometries \[H^2_{\tr}(S,\ZZ)\simeq H^2_{\tr}(S^{[3]},\ZZ)\simeq  H^2_{\tr}(Y_K, \ZZ)\simeq H^2_{\tr}(S_K, \ZZ),\] where $S_K$ is the hyper-Kummer K3 surface associated with $Y_K$ and the last Hodge isometry follows from Theorem \ref{thm:TranscendentalResolutions}. Since $S$ is projective with Picard number at least $12$, the Hodge isometry $H^2_{\tr}(S, \ZZ)\simeq H^2_{\tr}(S_K, \ZZ)$ extends to a Hodge isometry $H^2(S, \ZZ)\simeq H^2(S_K, \ZZ)$. The global Torelli theorem for K3 surfaces implies that $S\simeq S_K$, hence $S$ is hyper-Kummer.
\end{proof}
In short, we have the following inclusions in the \textit{projective} setting:
\[\{\text{Kummer K3 surfaces} \}\subset \{\text{super-Kummer K3 surfaces} \}\subset \{\text{hyper-Kummer K3 surfaces}\}.\]
We will see in the sequel that both inclusions are strict.

\begin{remark}[Non-projective situation]
    It is easy to see that a hyper-Kummer K3 surface $S_{\sigma,\sigma'}$ associated with a very general non-projective manifold of $\mathrm{Kum}^3$-type of Picard rank $0$ is not super-Kummer. Indeed, in this case $\NS(S_{\sigma,\sigma'}) = N_S$ (see Proposition \ref{prop:cohomologyM&S}), and if $S_{\sigma,\sigma'}$ were a super-Kummer K3 surface then the lattice $\NS(S^{[3]}) \simeq N_S \oplus \langle -4\rangle$ would have to be isometric to the Kummer lattice $L_{\Km}$, which is not the case. It is not clear to us whether a non-projective super-Kummer K3 surface is necessarily isomorphic to a hyper-Kummer K3 surface $S_{\sigma,\sigma'}$ associated with some manifold of $\mathrm{Kum}^3$-type.
    Nevertheless, note that the transcendental lattice of a very general super-Kummer K3 surface is isometric to that of a very general hyper-Kummer K3, namely, to $\mathrm{U}(2)^{\oplus 3}\oplus \langle -4\rangle$.
\end{remark}

In the rest of this section, we focus on projective super-Kummer K3 surfaces. We shall characterize projective super-Kummer K3 surfaces of minimal Picard rank $16$ among hyper-Kummer K3 surfaces. For a projective hyper-Kummer surface $S$ of Picard rank $16$, by Corollary \ref{cor:Transcendental-hyper-KummerK3} we have either
\begin{equation}
    H^2_{\mathrm{tr}}(S,\ZZ)=\mathrm{U}(2)^{\oplus 2}\oplus \langle -4\rangle \oplus \langle -4m\rangle
\end{equation}
for some $m>0$, or
\begin{equation}
H^2_{\mathrm{tr}}(S,\ZZ)=\mathrm{U}(2)^{\oplus 2} \oplus \begin{psmallmatrix}
        -4 & 2 \\
        2 & -4m
    \end{psmallmatrix}
\end{equation}
for some $m>0$. We say that $S$ is of \textit{split type} in the first case, and that $S$ is of \textit{non-split type} in the second case.

\begin{theorem}
\label{thm:SuperKummer}
    Let $S$ be a projective hyper-Kummer $\mathrm{K}3$ surface of minimal Picard rank $16$. Then $S$ is super-Kummer if and only if it is of non-split type.
    In this case, $S^{[3]}$ is also birational to a MRS-double cover.
\end{theorem}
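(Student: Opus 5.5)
Since $S$ is projective of Picard rank $16$, the Hilbert cube $S^{[3]}$ has $\NS(S^{[3]})=\NS(S)\oplus\langle\delta\rangle$ of rank $17$ and $H^2_{\tr}(S^{[3]},\ZZ)=H^2_{\tr}(S,\ZZ)$. The plan is to use Theorem \ref{thm:birationalCharacterization}: $S$ is super-Kummer if and only if there is a primitive embedding $L_{\Km}\hookrightarrow \NS(S^{[3]})$ whose orthogonal complement in $H^2(S^{[3]},\ZZ)\simeq\Lambda_{\mathrm{K}3^{[3]}}$ is $\mathrm{U}(2)^{\oplus 3}\oplus\langle -4\rangle$.

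Such an embedding would force $\NS(S^{[3]})$ to be one of the rank-$17$ Néron--Severi lattices of projective hyper-Kummer sixfolds computed in Theorem \ref{thm:NeronSeveriHyperkummer}. There are two cases.
\begin{enumerate}
\item If the polarization has divisibility $1$, then $\NS(Y_K)\simeq L_{4d}$, and the transcendental lattice is of split type.
\item If the divisibility is greater than $1$, then $\NS(Y_K)\simeq L_{\Km}\oplus\langle d\rangle$, and the transcendental lattice is of non-split type. The three rows with $\gamma=4$ or $8$ are visibly non-split. The row with $\gamma=2$ has transcendental lattice $\mathrm{U}(2)^{\oplus 2}\oplus\langle-4\rangle\oplus\langle -d\rangle$ with $d=4(d'-1)$, which is split; I need to check this row against what follows.
\end{enumerate}

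The key observation is the orthogonal splitting $\NS(S^{[3]})=\NS(S)\oplus\langle\delta\rangle$ with $\delta^2=-4$ and $\mathrm{div}(\delta)=2$. The strategy is to compare discriminant forms.
\begin{enumerate}
\item \textbf{Necessity.} Suppose $S$ is super-Kummer. Then $\NS(S^{[3]})$ is isometric to $L_{4d}$ or to $L_{\Km}\oplus\langle d\rangle$. I will show that $L_{4d}$ contains no primitive vector $v$ of square $-4$ with $v^{\perp}$ of rank $16$ that is unimodular-compatible, i.e.\ no orthogonal summand $\langle -4\rangle$ of the right type. Since $L_{4d}$ is an index-$2$ overlattice of $\langle 4d\rangle\oplus L_{\Km}$ with discriminant group $A_{\mathrm{U}(2)^{\oplus 2}}\oplus\ZZ/4d$, this can be tested on the $2$-parts of the discriminant forms. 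An orthogonal summand $\langle -4\rangle$ would produce a $\ZZ/4$ factor with form value $-1/4$. That is not available in the split case $\mathrm{U}(2)^{\oplus 2}\oplus\langle-4\rangle\oplus\langle-4d\rangle$ when $d$ is arbitrary, but it is available in the non-split case. So $S$ must be of non-split type.
\item \textbf{Sufficiency.} Suppose $H^2_{\tr}(S,\ZZ)=\mathrm{U}(2)^{\oplus 2}\oplus\begin{psmallmatrix}-4&2\\2&-4m\end{psmallmatrix}$. I will identify it with the transcendental lattice of $Y_K$ for a suitable $K$ from Theorem \ref{thm:NeronSeveriHyperkummer}, choosing $\gamma\in\{4,8\}$ by the parity of $m$. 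Then I will show directly that $\NS(S)\oplus\langle -4\rangle\simeq L_{\Km}\oplus\langle d\rangle$ with $d=4(4m-1)$. Both lattices are indefinite of rank $17$ with small discriminant length, so by Theorem \ref{thm:embeddingUnimodular} they are determined by signature and discriminant form; this reduces to checking that the two discriminant forms agree. Once they agree, the isometry maps $L_{\Km}$ primitively into $\NS(S^{[3]})$. Its orthogonal complement in $H^2(S^{[3]},\ZZ)$ is an overlattice of $H^2_{\tr}\oplus\langle d\rangle$, and a uniqueness statement (Lemma \ref{lem:embeddingL_Km}) identifies it as $\mathrm{U}(2)^{\oplus 3}\oplus\langle-4\rangle$. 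Theorem \ref{thm:birationalCharacterization} then gives that $S$ is super-Kummer.
\item \textbf{MRS statement.} Since $\NS(S^{[3]})\simeq L_{\Km}\oplus\langle d\rangle$, Remark \ref{rmk:BW+h} gives a primitive embedding $\mathrm{BW}_{16}\hookrightarrow\NS(S^{[3]})$. Theorem \ref{thm:characterizationMRS} then shows that $S^{[3]}$ is birational to an MRS double cover. Alternatively, this follows from Theorem \ref{thm:comparisonHyperkummerMRS}, since the polarization has divisibility greater than $1$.
\end{enumerate}

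The main obstacle is the necessity direction. I must rule out every primitive embedding of $L_{\Km}$ with the correct complement in the split case, not just those compatible with the direct-sum decomposition. I would handle this by computing the discriminant form of $\NS(S)\oplus\langle-4\rangle$ in the split case and showing it is not isometric to that of any hyper-Kummer Néron--Severi lattice with the same transcendental lattice. This includes the $\gamma=2$ row, where I expect the $\langle-d\rangle$ factor to give a discriminant-form mismatch after accounting for the overlattice of index $2$. The verification is a finite computation on quadratic forms over $\ZZ/2^k$.
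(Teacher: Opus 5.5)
Your skeleton is the paper's: compare $\NS(S^{[3]})=\NS(S)\oplus\langle -4\rangle$ with the lattices of Theorem~\ref{thm:NeronSeveriHyperkummer}, identify $\NS(S^{[3]})\cong L_{\Km}\oplus\langle 4(4m-1)\rangle$ in the non-split case, and get the MRS statement from Remark~\ref{rmk:BW+h} and Theorem~\ref{thm:characterizationMRS}. But the sufficiency step has a genuine gap. Lemma~\ref{lem:embeddingL_Km} does \emph{not} identify the complement of $L_{\Km}$ in $H^2(S^{[3]},\ZZ)$. For $n=3$ it gives two classes of primitive embeddings, the second with complement $C'=\mathrm{U}(2)^{\oplus 2}\oplus\mathrm{U}\oplus\langle -4\rangle$, and Theorem~\ref{thm:birationalCharacterization} requires the first. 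This cannot be read off from the transcendental lattice either. With $e,f$ a standard basis of $\mathrm{U}$ and $t$ a generator of $\langle -4\rangle$, the vector $2e+4mf+t\in C'$ has square $4(4m-1)$ and orthogonal complement in $C'$ equal to $\mathrm{U}(2)^{\oplus 2}\oplus\begin{psmallmatrix}-4&2\\2&-4m\end{psmallmatrix}$. The paper sidesteps this by proving the MRS statement first and then applying Theorem~\ref{thm:comparisonHyperkummerMRS}. Its proof, Lemma~\ref{lem:MRSisHyperKummer}, is exactly where an $L_{\Km}$ with the correct complement is produced.

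Your direct route can be repaired. Write $\NS(S^{[3]})=L\oplus\ZZ h$ with $L\cong L_{\Km}$, and put $C=L^{\perp}$. Suppose $C\cong C'$.
\begin{itemize}
\item We have $[H^2(S^{[3]},\ZZ):L\oplus C]^2=|A_L|\,|A_C|/4=32^2$. The gluing subgroup on the $C$-side is $2$-elementary, being isomorphic to a subgroup of $A_L\cong(\ZZ/2\ZZ)^6$, and has order $32$. Hence it is all of $A_C[2]$.
\item From $|A_{H^2_{\tr}}|\,|A_{\ZZ h}|=|A_C|\,[C:H^2_{\tr}\oplus\ZZ h]^2$ we get $[C:H^2_{\tr}\oplus\ZZ h]=2(4m-1)=h^2/\mathrm{div}_C(h)$, so $\mathrm{div}_C(h)=2$.
\item Then $\overline{h/2}$ is a nonzero element of $A_C[2]$, glued to some $\bar\ell\in A_L$. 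So $\ell+h/2\in H^2(S^{[3]},\ZZ)$, and $L\oplus\ZZ h$ is not saturated, a contradiction.
\end{itemize}
Hence $C\cong\mathrm{U}(2)^{\oplus 3}\oplus\langle -4\rangle$ after all. The discriminant-form check you defer is Lemma~\ref{lem:an_elementary_calculation}. Uniqueness of a lattice from its signature and discriminant form is Theorem~\ref{thm:orthogonalGroups}, not Theorem~\ref{thm:embeddingUnimodular}.

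Your necessity argument does not go through as written either. The $\ZZ/4\ZZ$ heuristic is backwards: the split lattice has $\ZZ/4\ZZ$-summands in its discriminant group, while the non-split one has $2$-elementary $2$-part $(\ZZ/2\ZZ)^6$. Also, whether $L_{4d}$ has some orthogonal summand $\langle -4\rangle$ is not the relevant question. What works, and what the paper does, is a comparison of discriminant \emph{groups}. In the split case $A_{\NS(S^{[3]})}\cong A_{H^2_{\tr}(S)}(-1)\oplus A_{\langle -4\rangle}\cong(\ZZ/2\ZZ)^4\oplus\ZZ/4\ZZ\oplus\ZZ/4m\ZZ\oplus\ZZ/4\ZZ$, so $2A_{\NS(S^{[3]})}$ has $2$-rank $3$. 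By contrast, $A_{L_{4d}}\cong(\ZZ/2\ZZ)^4\oplus\ZZ/4d\ZZ$ and $A_{L_{\Km}\oplus\langle d\rangle}\cong(\ZZ/2\ZZ)^6\oplus\ZZ/d\ZZ$ both have $2A$ cyclic. This rules out every row of Theorem~\ref{thm:NeronSeveriHyperkummer} at once, including the $\gamma=2$ row, with no quadratic-form or overlattice bookkeeping.
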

\begin{proof}
    Recall that $H^2_{\mathrm{tr}}(S^{[3]},\ZZ) $ is isometric to the transcendental lattice of $S$, while $\NS(S^{[3]})=\NS(S)\oplus \langle -4 \rangle$. Denoting by $T\coloneqq H^2_{\mathrm{tr}}(S,\ZZ)\oplus \langle 4\rangle$, the lattice $\NS(S^{[3]})$ can also be characterized as the unique even lattice of signature $(1,16)$ and discriminant form isometric to $A_T(-1)$. 
    
    Assume first that $S$ is of split type, so $H^2_{\mathrm{tr}}(S,\ZZ) = \mathrm{U}(2)^{\oplus 2}\oplus \langle -4\rangle \oplus \langle -4m\rangle$, for some $m>0$. Then $A_T = (\ZZ/2\ZZ)^4 \oplus \ZZ/4\ZZ \oplus \ZZ/4m\ZZ\oplus \ZZ/4\ZZ$.
    According to Theorem \ref{thm:NeronSeveriHyperkummer}, the N\'eron--Severi group of a hyper-Kummer sixfold of Picard rank $17$ is either the lattice $L_{4d}$ or $L_{\Km}\oplus \langle 4d\rangle$, for some integer $d>0$; we have $A_{L_{4d}} = (\ZZ/2\ZZ)^4\oplus \ZZ/4d\ZZ$ and $A_{L_{\Km}}\oplus \langle 4d\rangle = (\ZZ/2\ZZ)^6 \oplus \ZZ/4d\ZZ$. None of these groups could be isomorphic to $A_T$, and it follows that $S^{[3]}$ cannot be birational to a hyper-Kummer sixfold, i.e., $S$ is not super-Kummer.

    Assume now that $S$ is of non-split type, so that $H^2_{\mathrm{tr}}(S,\ZZ)=\mathrm{U}(2)^{\oplus 2}\oplus \begin{psmallmatrix}
        -4 & 2 \\ 2 & -4m
    \end{psmallmatrix}$ for some $m>0$. Then $T=H^2_{\mathrm{tr}}(S,\ZZ)\oplus \langle 4\rangle$ is isometric to the lattice $\mathrm{U}(2)^{\oplus 3} \oplus \langle -4(4m-1)\rangle$, by the elementary Lemma \ref{lem:an_elementary_calculation} below. 
    But then $\NS(S^{[3]})$ is the unique lattice of signature $(1,16)$ and discriminant form $A_T(-1)$, and therefore $\NS(S^{[3]})=L_{\Km}\oplus \langle 4(4m-1)\rangle$. By Remark \ref{rmk:BW+h}, there exists a primitive embedding $\mathrm{BW}_{16}\hookrightarrow \NS(S^{[3]})$. By Theorem \ref{thm:characterizationMRS}, $S^{[3]}$ is birational to some MRS-double cover, and, hence, $S^{[3]}$ is birational to some hyper-Kummer variety of $\mathrm{K}3^{[3]}$-type, by Theorem \ref{thm:comparisonHyperkummerMRS}. 
\end{proof}

\begin{lemma}\label{lem:an_elementary_calculation}
Let $m>0$ be an integer. The lattice $L\coloneqq \begin{psmallmatrix}
    -4 & 2\\ 2 & -4m
\end{psmallmatrix}  \oplus \langle 4\rangle$
is isometric to $\mathrm{U}(2) \oplus \langle -4(4m-1)\rangle$.
\end{lemma}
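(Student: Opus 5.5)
We will exhibit an explicit change of basis. Let $a,b$ be the basis of the rank-$2$ summand, with $a^2=-4$, $b^2=-4m$, $(a,b)=2$, and let $c$ be the generator of $\langle 4\rangle$, orthogonal to $a$ and $b$. Since all the pairings are even, $L=L'(2)$, where $L'=\begin{psmallmatrix}-2&1\\1&-2m\end{psmallmatrix}\oplus\langle 2\rangle$. So it suffices to show that $L'\simeq \mathrm{U}\oplus\langle -2(4m-1)\rangle$ and then scale by $2$.

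First, a discriminant check confirms that the claim is plausible. We have $\det L'=(4m-1)\cdot 2$, and $\det(\mathrm{U}\oplus\langle-2(4m-1)\rangle)=2(4m-1)$, so the determinants agree. The signatures are both $(1,2)$. Both lattices are even and indefinite of rank $3$.

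To find a hyperbolic plane in $L'$, we look for an isotropic vector $e$ together with a vector $f$ satisfying $(e,f)=1$. In $L'$, the vector $e=a+c$ has square $-2+2=0$, and $(e,a)=-2$, $(e,c)=2$, $(e,b)=1$. So $f=b$ gives $(e,f)=1$, and $\langle e,b\rangle$ has Gram matrix $\begin{psmallmatrix}0&1\\1&-2m\end{psmallmatrix}$. This lattice is even and unimodular, hence isometric to $\mathrm{U}$: replace $b$ by $b+me$. Since a unimodular sublattice splits off as an orthogonal summand, $L'=\langle e,b\rangle\oplus\langle e,b\rangle^{\perp}$.

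The complement has rank $1$ and determinant $-2(4m-1)$, so it is $\langle -2(4m-1)\rangle$. One can also write the generator down explicitly as $w=xa+yb+zc$, solving $(w,e)=(w,b)=0$. This gives $w=(2m)a+b+(1-2m)c$-type combinations, up to sign. For instance, $w=2a+b-2c\cdot$ adjusted; its precise form is a routine linear solve, and its square is forced by the determinant anyway. Scaling back by $2$ gives $L\simeq \mathrm{U}(2)\oplus\langle-4(4m-1)\rangle$.

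The only real step is spotting the isotropic vector $a+c$ with $(a+c,b)=1$; everything else is formal. The splitting uses only unimodularity, so no uniqueness theorem for indefinite lattices is needed.
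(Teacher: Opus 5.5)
Your argument is correct and is essentially the paper's: your $e=a+c$ and $b+me$ are exactly the paper's first two basis vectors, and the complement you identify through the unimodular splitting and the determinant count is spanned by $4ma+2b+(4m-1)c$, which is the paper's third basis vector. The explicit candidates for $w$ you wrote down are wrong; for instance, $(2m)a+b+(1-2m)c$ pairs with $e$ to $3-8m\neq 0$ in $L'$. Since your argument only uses the determinant to pin down the complement, this slip does not affect the proof.
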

\begin{proof}
 By assumption, $L$ has a basis $\alpha,\beta, \gamma$ in which the intersection matrix is \begin{equation}\begin{pmatrix}
 -4 & 2 & 0 \\
 2 & -4m & 0\\
 0 & 0 & 4
 \end{pmatrix}.
 \end{equation}
Consider the vectors 
\begin{equation}
    a \coloneqq \alpha + \gamma, \ \ \ b\coloneqq \beta + m (\alpha+ \gamma), \ \ \ c = 4m\alpha + 2 \beta + (4m-1) \gamma.
\end{equation}
It is easy to check that $a,b,c$ gives a basis of $L$, in which the intersection matrix becomes
\begin{equation}\begin{pmatrix}
 0 & 2 & 0 \\
 2 & 0 & 0\\
 0 & 0 & -4(4m-1)
 \end{pmatrix},
 \end{equation}
 which shows that $L$ is isometric to $\mathrm{U}(2)\oplus \langle -4(4m-1)\rangle$.
\end{proof}


\begin{remark}\label{rmk:transcendental_hyperKummerK3}
    Let $S$ be a projective K3 surface of Picard rank $16$ which is super-Kummer. Hence, $S$ is non-split and we have $H^2_{\mathrm{tr}}(S,\ZZ) = \mathrm{U}(2)^{\oplus 2}\oplus \begin{psmallmatrix}
        -4 & 2 \\ 2 & -4m
    \end{psmallmatrix}$
    for some $m>0$. Looking at the table given in Theorem \ref{thm:NeronSeveriMRS}, we deduce that $S^{[3]}$ is birational to the MRS-double cover associated with a projective singular OG6-variety of Picard rank $1$ with polarization of square $8m-2$ and divisibility $2$.
    By Theorem \ref{thm:NeronSeveriHyperkummer}, the Hilbert cube $S^{[3]}$ is birational to the hyper-Kummer sixfold associated with a $\mathrm{Kum}^3$-variety of Picard rank $1$, with polarization of divisibility $\geq 4$, but the square and divisibility are not fully determined (each of the last $3$ cases in the table of Theorem \ref{thm:NeronSeveriHyperkummer} is possible, in principle).
    \end{remark}

\subsection{K3 surfaces with abelian motives}
\label{subsec:AbelianMotives}
In this section, we show that projective super-Kummer K3 surfaces have abelian Chow motives, obtaining countably many 4-dimensional families of K3 surfaces of generic Picard rank $16$ with abelian Chow motives. More precisely, we establish the following theorem:

\begin{theorem} \label{thm:SuperKummerAbelianMotive}
Let $S$ be a projective super-Kummer $\mathrm{K}3$ surface. Then the Chow motive of $S$ lies in the thick tensor subcategory of  the category of rational Chow motives generated by the Chow motive of an abelian fourfold of Weil type with discriminant $1$. 
\end{theorem}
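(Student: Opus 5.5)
The plan is to transport abelianity of motives through the chain of constructions relating $S$ to a $\mathrm{Kum}^3$-variety and to an $\mathrm{OG}6$-resolution, using that all maps involved are generically finite or birational between hyper-K\"ahler varieties, so that Chow motives are controlled.

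\textbf{Step 1 (reduction to $S^{[3]}$).} By de Cataldo--Migliorini, $\h(S^{[3]})$ is a direct sum of Tate twists of tensor products of $\h(S)$ (symmetric powers and products $\h(S)^{\otimes k}$). Since $\h(S)$ occurs as a direct summand of $\h(S^{[3]})$ up to a twist, it is enough to show that $\h(S^{[3]})$ lies in the thick tensor subcategory $\langle \h(B)\rangle$ for an abelian fourfold $B$ of Weil type with discriminant $1$. Birational hyper-K\"ahler varieties have isomorphic Chow motives (Rie\ss), so we may replace $S^{[3]}$ by any birational model. By definition and Theorem \ref{thm:SuperKummer} (in the minimal Picard rank case; in general we first specialize or use the lattice argument there, noting that for higher Picard rank the same criteria apply), $S^{[3]}$ is birational to the MRS double cover $Z_X$ of a projective singular $\mathrm{OG}6$-variety $X$ (Theorem \ref{thm:comparisonHyperkummerMRS}).

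\textbf{Step 2 (motive of $Z_X$ via the OG6-resolution).} The rational double cover $s\colon Z_X\dashrightarrow \widetilde X$ is resolved by the explicit construction in Theorem \ref{thm:MRSconstruction}: $\overline W\to Z_X$ is a contraction of $\PP^2$-bundles over $\PP^3$ and $\overline W\to W$ a blow-up at points, with $W\to X$ a double cover branched along $\Sigma\cong B/\langle -1\rangle$. By blow-up and projective bundle formulas, $\h(Z_X)$ is a summand of $\h(\overline W)$ up to Tate motives, and $\h(\overline W)$ is built from $\h(W)$ and Tate motives. The motive of $W$ decomposes into the $\pm1$-eigenparts for the covering involution; the invariant part is $\h(X)$ and is controlled by the resolution $\widetilde X$, whose motive is known (by \cite{FloccariFu-HodgeConjectureWeilFourfolds} and the $\mathrm{OG}6$ results there) to lie in $\langle\h(B)\rangle$ where $B$ is the Weil-type abelian fourfold with $\Sigma=B/\pm1$; the anti-invariant part should be handled via the hyper-Kummer side: Theorem \ref{thm:comparisonHyperkummerMRS} gives also $Z_X\sim_{\mathrm{bir}} Y_K$, and by Theorem \ref{thm-intro:DerCat&Motives}(iii) $\h(Y_K)\simeq \h_{\orb}([K/G])$, a sum of Tate twists of motives of $G$-fixed loci of $K$, namely $K$, the $W_\sigma$, $V_{\sigma,\sigma'}$ and points.

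\textbf{Step 3 (closing the loop), the main obstacle.} One must show these pieces all lie in one $\langle \h(B)\rangle$. The natural route is: $W_\sigma$, $V_{\sigma,\sigma'}$ are hyper-K\"ahler with transcendental lattice a twist of $\widehat H^2_{\tr}(K)$, and $\h(K)$ itself should be shown abelian using that $\h(Y_K)\simeq\h(Z_X)$ contains $\h(K)^G=\h(K)$ (as $G$ acts trivially on the relevant cohomology, or at least $\h(K)^G$ is a summand), while $\h(Z_X)$ is computed via $\widetilde X$. The difficulty is the circularity: I would first prove $\h(\widetilde X)\in\langle \h(B)\rangle$ from the O'Grady/Lehn--Sorger description of $\widetilde X$ as a resolution of a moduli space on an abelian surface deformed locally trivially, which I expect requires an argument that motives of $\mathrm{OG}6$-resolutions with $\Sigma=B/\pm1$ are generated by $B$ (e.g.\ via the Albanese-fiber structure and Kimura finite dimensionality to lift homological to Chow statements). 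Then the anti-invariant part of $\h(W)$ is bounded using Kimura finite-dimensionality and the fact that its cohomology is the anti-invariant $\mathrm{BW}_{16}$-part, algebraic, hence Tate after using finite-dimensionality; this yields $\h(Z_X)$, thus $\h(S^{[3]})$ and $\h(S)$, in $\langle\h(B)\rangle$.
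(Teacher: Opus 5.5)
Your Step 1 is fine: de Cataldo--Migliorini makes $\h(S)(-2)$ a summand of $\h(S^{[3]})$, and Riess gives invariance under birational maps. The paper likewise reduces to very general $S$ of Picard rank $16$ by spreading out correspondences. Steps 2--3, however, have a genuine gap at exactly the point that makes the theorem nontrivial.

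You need the \emph{Chow} motive of the $\mathrm{OG}6$-resolution $\widetilde X$ (or, on the other side, of $K$) to lie in $\langle \h(B)\rangle$. The paper has this only for \emph{homological} motives (Theorem \ref{thm:motivationbyKSknownCases}(iv), via \cite{FloccariFu-HodgeConjectureWeilFourfolds}). For very general $S$, the variety $X$ has Picard rank $1$, so it is not a moduli space $K_{A,H}(v)$ on an abelian surface, which always has Picard rank at least $2$. Chow motives do not propagate along locally trivial deformations. Kimura finite-dimensionality does not help either: nilpotence lifts a homological splitting only once $\h(\widetilde X)$ is already known to be finite-dimensional, which is the open problem.

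The hyper-Kummer detour is circular, because $\h(K/G)$ is a direct summand of $\h(Y_K)\simeq\h(S^{[3]})$. Also $\h(K)^G\neq\h(K)$, since $G$ acts nontrivially on $H^3$.

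Finally, your claim that the anti-invariant part of $\h(\overline W)$ has algebraic cohomology is false. Besides $\mathrm{BW}_{16}\subset H^2$, it contains for instance the products $\mathrm{BW}_{16}\cdot H^2_{\mathrm{tr}}(Z_X,\QQ)\subset H^4$, which are transcendental, so this part is not Tate.

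The missing idea is that you never need the whole motive of $Z_X$; one subvariety suffices. The paper uses the MRS morphism $\varphi\colon Z\to X$, where $Z$ is birational to $S^{[3]}$ via $\phi\colon Z\dashrightarrow S^{[3]}$. It takes the map $f\colon \widetilde B=\mathrm{Bl}_{B[2]}B\to Z$ onto $\widetilde B/\pm1$, which lies over $\mathrm{Sing}(X)\cong B/\pm1$, and sets $\beta=\phi_*f_*[\widetilde B]\in\CH^2(S^{[3]})$. By the projection formula and multiplicativity of Riess's isomorphism, $\phi_*f_*f^*\phi_*^{-1}$ is multiplication by $\beta$.

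Sandwiching this between Nakajima-type correspondences gives maps $\h^2_{\mathrm{tr}}(S)\to\h(\widetilde B)\to\h(S)$. These use $S^3\to S^{(3)}$, the Hilbert--Chow divisor $E$ and the punctual locus $F$. Already in $\CH$, each composite is multiplication by a class in $\CH^0(S)=\QQ$, that is, an explicit intersection number involving $\beta$.

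Suppose all these numbers vanished. Then every de Cataldo--Migliorini component of $[\beta]\in H^4(S^{[3]},\QQ)$ would vanish, using that the Mumford--Tate group of a very general such $S$ is $\mathrm{GO}(H^2_{\mathrm{tr}}(S,\QQ))$. This contradicts $[\beta]\neq 0$, which follows from a Fujiki relation for $[B/\pm1]$ on $X$. Hence $\h^2_{\mathrm{tr}}(S)$ is a direct summand of $\h(\widetilde B)\simeq\h(B)\oplus(\text{Tate})$.
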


We can characterize super-Kummer K3 surfaces as those projective K3 surfaces $S$ such that there exists a primitive embedding $H^2_{\mathrm{tr}}(S,\ZZ)\hookrightarrow \mathrm{U}(2)^{\oplus 2} \oplus \begin{psmallmatrix}
    -4 & 2 \\ 2 & -4m
\end{psmallmatrix}$,
for some integer $m>0$. Passing to rational coefficients, we obtain the following result from Theorem \ref{thm:SuperKummerAbelianMotive}. 

\begin{corollary}\label{cor:K3withAbelianMotive}
Let $S$ be a projective $\mathrm{K}3$ surface. If there exists an embedding of rational quadratic spaces $H^2_{\mathrm{tr}}(S,\mathbb{Q})\hookrightarrow\mathrm{U}^{\oplus 2}_{\QQ} \oplus \langle -1\rangle_{\QQ}\oplus \langle -b \rangle_{\QQ}$ for some positive integer $b \equiv 3 \pmod{4}$, then the Chow motive of $S$ lies in the thick tensor subcategory of  the category of rational Chow motives generated by the Chow motive of an abelian fourfold of Weil type with discriminant 1.  
\end{corollary}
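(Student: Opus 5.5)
The plan is to deduce Corollary~\ref{cor:K3withAbelianMotive} from Theorem~\ref{thm:SuperKummerAbelianMotive}. The key step is to replace $S$ by a projective super-Kummer K3 surface $S'$ whose motive determines that of $S$. The hypothesis is rational, while the super-Kummer condition is integral: $S$ is super-Kummer when $H^2_{\mathrm{tr}}(S,\ZZ)$ embeds primitively into some $\mathrm{U}(2)^{\oplus 2}\oplus \begin{psmallmatrix} -4 & 2\\ 2 & -4m\end{psmallmatrix}$. Write $b=4m-1$. The lattice $N_m\coloneqq \mathrm{U}(2)^{\oplus 2}\oplus \begin{psmallmatrix} -4 & 2\\ 2 & -4m\end{psmallmatrix}$ is rationally isometric to $\mathrm{U}_\QQ^{\oplus 2}\oplus\langle -1\rangle_\QQ\oplus\langle -b\rangle_\QQ$. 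Indeed, $\mathrm{U}(2)_\QQ\simeq \mathrm{U}_\QQ$, and diagonalizing the $2\times 2$ block gives $\langle -4\rangle\oplus\langle -(4m-1)\rangle\simeq \langle -1\rangle\oplus\langle -b\rangle$ over $\QQ$ (its determinant is $4(4m-1)$).

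Step~1 is to transport the Hodge structure. Composing with this rational isometry, the hypothesis gives an isometric embedding $H^2_{\mathrm{tr}}(S,\QQ)\hookrightarrow N_{m,\QQ}$. I would push the Hodge structure of $H^2_{\mathrm{tr}}(S,\QQ)$ forward and declare its orthogonal complement to be of type $(1,1)$. This defines a Hodge structure of K3 type on $N_m$, with transcendental part $T'\coloneqq N_m\cap H^2_{\mathrm{tr}}(S,\QQ)$ (saturated). Then $T'$ embeds primitively into $N_m$, and $N_m$ embeds primitively into $\mathrm{U}(2)^{\oplus 3}\oplus\langle -4\rangle$ via the complement of one copy of $\mathrm{U}(2)$. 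Moreover $T'$ has signature $(2,\ast)$ with rank at most $6$, so by the surjectivity of the period map and the uniqueness of primitive embeddings into the K3 lattice (Theorem~\ref{thm:embeddingUnimodular}), there is a projective K3 surface $S'$ with $H^2_{\mathrm{tr}}(S',\ZZ)\simeq T'$ Hodge isometrically. By construction $H^2_{\mathrm{tr}}(S',\ZZ)$ embeds primitively into $N_m$, so $S'$ is super-Kummer, and there is a rational Hodge isometry $H^2_{\mathrm{tr}}(S,\QQ)\simeq H^2_{\mathrm{tr}}(S',\QQ)$. Theorem~\ref{thm:SuperKummerAbelianMotive} then shows that $\h(S')$ lies in the thick tensor subcategory generated by the motive of an abelian fourfold $B$ of Weil type with discriminant $1$.

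Step~2 is to pass from $S'$ to $S$, which I expect to be the main obstacle: a rational Hodge isometry of transcendental lattices must induce an isomorphism of Chow motives $\h(S)\simeq\h(S')$. I would invoke Huybrechts' theorem \cite{huybrechtsMotives}, which says that isogenous (rationally Hodge isometric) K3 surfaces have isomorphic rational Chow motives, since they are linked by twisted derived equivalences and these induce isomorphisms of Chow motives. The algebraic parts on both sides are sums of Tate motives, so they are harmless.

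This gives $\h(S)\simeq\h(S')$ in the required subcategory. Finally, I would identify $B$ with the Kuga--Satake variety, which depends only on $H^2_{\mathrm{tr}}\otimes\QQ$ up to isogeny. Abelianity and Kimura finite-dimensionality then follow immediately.
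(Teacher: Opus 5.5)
Your proposal is correct and follows the paper's proof. The paper likewise writes $b=4m-1$, uses the rational isometry between your $N_m\otimes\QQ$ and $\mathrm{U}_\QQ^{\oplus 2}\oplus\langle -1\rangle_\QQ\oplus\langle -b\rangle_\QQ$ to produce a super-Kummer $S'$ with $H^2_{\mathrm{tr}}(S,\QQ)$ Hodge isometric to $H^2_{\mathrm{tr}}(S',\QQ)$, and concludes via Huybrechts (and Fu--Vial) together with Theorem~\ref{thm:SuperKummerAbelianMotive}. The extra embedding into $\mathrm{U}(2)^{\oplus 3}\oplus\langle -4\rangle$ and the Kuga--Satake remark are unnecessary; when $\rho(S)>16$, the super-Kummer property of $S'$ rests on the integral characterization stated just before the corollary rather than directly on Theorem~\ref{thm:SuperKummer}, exactly as in the paper.
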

\begin{proof}
    Write $b=4m-1$ for some integer $m>0$. Observe that we have an isometry of rational quadratic spaces:
    \begin{equation}
    \mathrm{U}^{\oplus 2}_{\QQ} \oplus \langle -1\rangle_{\QQ}\oplus \langle -(4m-1) \rangle_{\QQ} \simeq \left(\mathrm{U}(2)^{\oplus 2} \oplus \begin{psmallmatrix}
        -4 & 2 \\
        2 & -4m
    \end{psmallmatrix}
    \right)\otimes_{\ZZ} \mathbb{Q}.
\end{equation}
Hence, there exist a super-Kummer K3 surface $S'$ and a Hodge isometry $H^2_{\mathrm{tr}}(S,\QQ)\xrightarrow{\ \sim \ } H^2_{\mathrm{tr}}(S',\QQ)$, by Theorem \ref{thm:SuperKummer}. By \cite{huybrechtsMotives, FuVial}, the rational Chow motives of $S$ and $S'$ are isomorphic; the conclusion follows from Theorem \ref{thm:SuperKummerAbelianMotive}.
\end{proof}

\begin{remark}
    Among the three families of examples in Section \ref{sec:ExamplesHyperKummerK3},
    \begin{itemize}
        \item by \eqref{eq:transcendetnalHeisenbergQuartics}, a generic Heisenberg-invariant quartic is not super-Kummer. It would be interesting to show that these K3 surfaces have abelian Chow motives. 
        \item by Corollary \ref{cor:HyperKummerSixfoldsFromGenus125K3}, the genus-125 K3 surfaces  with symplectic action of $(\ZZ/2\ZZ)^4$ considered in \ref{subsec:genus125K3} are super-Kummer, and therefore have abelian Chow motives;
        \item by Lemma \ref{lemma:transcedenetalOctic}, a generic diagonal $(2,2,2)$-complete intersection K3 surface considered in \ref{subsec:DiagonalOctics} is not super-Kummer. Nevertheless, Laterveer showed in \cite{Laterveer-2016-K3FiniteDimMotive} that such a K3 surface has abelian Chow motives. 
    \end{itemize}
\end{remark}

The first step towards the proof of Theorem \ref{thm:SuperKummerAbelianMotive} is to relate geometrically the super-Kummer K3 surface to a Weil-type abelian fourfold (which is canonical up to isogeny) via a singular OG6-variety. 

    More precisely, by Theorem \ref{thm:SuperKummer}, for any projective super-Kummer K3 surface $S$, there exists a birational isomorphism 
    \begin{equation}
        \phi\colon Z\dashrightarrow S^{[3]}
    \end{equation}
    between the Hilbert cube $S^{[3]}$ and a hyper-Kummer $\mathrm{K}3^{[3]}$-type variety $Z$ that is an MRS-double cover of a singular OG6-variety $X$. 
    By \cite[Theorem 3.4]{FloccariFu-HodgeConjectureWeilFourfolds}, the singular locus of $X$ is of the form 
    \begin{equation}
        \Sing(X)\simeq B/{\pm 1},
    \end{equation}
     where $B$ is an abelian fourfold of Weil type with discriminant 1. We have the Mongardi--Rapagnetta--Sacc\`a rational double cover $\varphi\colon Z\dashrightarrow X$. The closure of the preimage of $\Sing(X)$ is naturally identified with $\widetilde{B}/{\pm 1}$, where $\widetilde{B}=\Bl_{B[2]}(B)$ is the blow-up of $B$ along its $256$ points of order $2$. The involution $-1$ on $B$ naturally lifts to $\Bl_{B[2]}(B)$. We have the following cartesian square: 
     \begin{equation}
         \begin{tikzcd}
            \widetilde{B}/{\pm 1} \arrow[r, hookrightarrow] \arrow[d] & Z \arrow[d, "\varphi"]\\
            B/{\pm 1} \arrow[r, hookrightarrow]& X
        \end{tikzcd}
     \end{equation}
     By \cite[Proposition 5.3]{MRS18}, the rational map $\varphi\colon Z\dashrightarrow X$ is in fact a morphism, described as the composition of the contraction $Z\to \overline{Z}$ of the $256$ exceptional $\mathbb{P}^3$ contained in $\widetilde{B}/\pm 1 \subset Z$ and finite morphism $\overline{Z}\to X$ of degree $2$ ramified along $\mathrm{Sing}(X)$. 
     Denote by $f\colon \widetilde{B} \to Z$ the composition $\widetilde{B}\to \widetilde{B}/{\pm 1}\hookrightarrow Z$. 
     
     \begin{remark} Although not used in the proof, the key observation which leads to the idea of our proof is that $f^*\colon H^{2,0}(Z)\to H^{2,0}(\widetilde{B})$ is injective. This simply follows from the fact that $\dim (B) = 4 > \tfrac{1}{2} \dim (Z)$, so that the restriction of the symplectic form to $\widetilde{B}/\pm 1$ is non-zero; as $H^{2,0}(\widetilde{B}/\pm 1)\simeq H^{2,0}(\widetilde{B})$, we see that $f^*$ is injective. Moreover, since $Z$ is a moduli space of stable sheaves on the super-Kummer K3 surface $S$, a correspondence induced by a suitable Chern class of the universal sheaf $\mathcal{E}\in \Db(Z\times S)$ gives an isomorphism $H^{2,0}(Z)\simeq H^{2,0}(S)$. 
     The Bloch--Beilinson conjecture predicts that there is an algebraic correspondence 
     between $\widetilde{B}$ and $S$ inducing a surjective map on 0-cycles $\CH_0(\widetilde{B})\to \CH_0(S)$, which would imply that $S$ is Chow-motivated by $\widetilde{B}$, hence by $B$.
     \end{remark}
     
By Riess \cite{Rie14}, the birational map $\phi\colon Z\dashrightarrow S^{[3]}$ induces an isomorphism $\phi_*$ of Chow rings.
\begin{lemma}\label{lemma:betaNonzero}
    The cohomology class of the algebraic cycle 
    \begin{equation} \beta\coloneqq \phi_*(f_*[\widetilde{B}])\in \CH_4(S^{[3]})
    \end{equation} 
    is non-trivial in $H^4(S^{[3]}, \QQ)$.
\end{lemma}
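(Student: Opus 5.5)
Since $\phi$ is birational between hyper-K\"ahler manifolds, by Riess \cite{Rie14} it is induced by a correspondence, namely the closure of its graph plus cycles supported over the indeterminacy loci, which acts as an isomorphism on rational cohomology compatibly with the isomorphism on Chow rings. It therefore suffices to show that the class $[f_*\widetilde{B}] = f_*[\widetilde{B}] \in H^4(Z,\QQ)$ is nonzero. Note that $f\colon \widetilde{B}\to \widetilde{B}/\pm 1\subset Z$ has degree $2$ onto its image. Hence $f_*[\widetilde{B}] = 2[\widetilde{B}/\pm1]$, and we must show that the fundamental class of the $4$-dimensional subvariety $\Delta' := \widetilde{B}/\pm 1\subset Z$ is nonzero in cohomology.

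The natural test is to pair $[\Delta']$ with a power of a class of type $(1,1)$ or with the symplectic form. First choice: let $\sigma$ be a symplectic form on $Z$. Then
\[
\int_{Z}[\Delta']\cdot (\sigma\bar\sigma)^{2} = \int_{\Delta'} (\sigma|_{\Delta'}\,\overline{\sigma|_{\Delta'}})^2 = \tfrac12\int_{\widetilde{B}} (f^*\sigma\, \overline{f^*\sigma})^2 .
\]
As observed in the remark above, $f^*\sigma$ is a nonzero holomorphic $2$-form on $\widetilde{B}$; it comes from a nonzero element $\omega\in H^{2,0}(B)$. The question is whether $\int_B (\omega\bar\omega)^2\neq 0$. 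This is the main obstacle: if $\omega$ is degenerate, for instance of rank $2$ as a form on the tangent space $\CC^4$, then $\omega^2=0$ and the integral vanishes.

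To handle this, I would use the explicit structure. On an open set, $\varphi$ is the double cover of $X$ ramified along $\Sigma = B/\pm1$, and the symplectic form of $Z$ is the pullback of that of $X$ on the smooth locus. The form $\sigma_X$ restricted to the singular locus $\Sigma$ of a singular OG6-variety is known to be nondegenerate, as for O'Grady's example where $\Sigma = (A\times A^\vee)/\pm1$ and the restriction is the natural symplectic form on $A\times A^\vee$. This holds because the transversal $A_1$ singularity splits $T X|_\Sigma$ symplectically into $T\Sigma$ plus a normal $\CC^2/\pm1$ factor. Nondegeneracy is a closed-open condition preserved in locally trivial deformations, so by deforming to $K_{A,H}(v)$ I get $\omega^2\neq0$ as a $(4,0)$-form. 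Then $\int_B(\omega\bar\omega)^2 = \int_B \omega^2\overline{\omega^2}>0$, up to a positive constant. Hence $[\Delta']\cdot[(\sigma\bar\sigma)^2]\neq 0$, which gives $[\Delta']\neq 0$.

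An alternative, if the form argument is awkward, is to pull back an ample class $\ell$ on $X$. The class $\varphi^*\ell$ restricts to $\widetilde{B}$ as the pullback of an ample class on $B/\pm1$, so $\int_{\widetilde{B}} f^*(\varphi^*\ell)^4>0$. This directly gives $\int_Z f_*[\widetilde B]\cdot(\varphi^*\ell)^4>0$, and the nonvanishing transfers to $S^{[3]}$ via $\phi_*$ together with the compatibility of $\phi_*$ with $\phi^{-1*}$ on $H^2$. This second route is simpler; its only delicate point is checking that $\phi_*$ on $H^4$ is an isomorphism commuting with cup product against transported $H^2$ classes, which Riess's result provides.
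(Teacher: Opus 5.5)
Your proposal is correct, and your second route is essentially the paper's own proof. The paper pairs $f_*[\widetilde{B}]$ with $\varphi^*(\alpha)^4$ for an ample class $\alpha$ on $X$, and uses the cartesian square with the projection formula to get $2\int_X[B/\pm 1]\cdot\alpha^4>0$. It then transports non-vanishing to $S^{[3]}$ because $\phi$ induces an isomorphism of rational cohomology rings. The paper phrases positivity through a Fujiki-type relation $\int_X[B/\pm1]\cdot\alpha^4=c\,q_X(\alpha)^2$ with $c>0$. You instead note directly that an ample class restricted to the fourfold $B/\pm 1$ has positive top self-intersection, which is the same fact stated more directly. This route needs $\varphi\colon Z\to X$ to be a morphism, which the paper takes from Mongardi--Rapagnetta--Sacc\`a.

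Your first route, pairing with $(\sigma\bar\sigma)^2$, is genuinely different. It needs no polarization on $X$. It does, however, require $f^*\sigma$ to be non-degenerate, which is strictly more than the injectivity of $f^*$ on $H^{2,0}$ recorded in the paper's remark. Your justification via the local product structure along $\Sigma$ (Kaledin's symplectic leaves) works. A quicker justification: away from the $256$ singular points, the preimage $\Delta$ of $\Sigma$ in the double cover $W\to X$ is the fixed locus of the covering involution. That involution preserves the pulled-back symplectic form, and fixed loci of symplectic involutions are symplectic submanifolds. Non-degeneracy at one point then holds everywhere on $B$, since $\omega$ is translation invariant. Your remark that non-degeneracy is ``closed-open'' in locally trivial families is unjustified, since closedness is not automatic. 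It is also unneeded, because the local argument applies directly to every singular $\mathrm{OG}6$-variety.
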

\begin{proof}
    Since $B/\pm 1\subset X$ deforms to all locally trivial deformations of $X$, by Fujiki relation there exists a constant $c\in \QQ$ such that for any $\alpha\in H^2(X,\ZZ)$ we have
    \begin{equation}
        \int_X [B/\pm 1]\cdot \alpha^4 =c q_X(\alpha)^2,
    \end{equation}
    where $q_X$ is the Beauville--Bogomolov--Fujiki quadratic form on $H^2(X,\ZZ)$. Taking an ample class $\alpha$, we see that $c>0$. By the projection formula, for any ample class $\alpha$ on $X$ we have 
    \begin{equation}
        \int_Z f_*[\widetilde{B}]\cdot\varphi^*(\alpha)^4= 2 \int_Z [\widetilde{B}/\pm 1] \cdot \varphi^*(\alpha)^4=2 \int_X [B/\pm 1] \cdot\alpha^4= 2 c q_X(\alpha)^2 >0 .
    \end{equation}
    In particular, $f_*[\widetilde{B}]\neq 0$ in $H^4(Z, \QQ)$. 
    Since the birational map $\phi$ induces an isomorphism of rational cohomology algebras, $\phi_*(f_*[\widetilde{B}])\neq 0$ in $H^4(S^{[3]}, \QQ)$.   
\end{proof}

\subsection{Correspondences}
\label{subsec:Correspondences}
Let the notation be as above. Using Nakajima-type operators on Hilbert schemes of $S$, we define several natural correspondences between $S$ and $\widetilde{B}$.

Consider the following commutative diagram:
    \begin{equation}
    \label{diag:CorrespondenceSBS}
        \begin{tikzcd}
            &&F\arrow[d, hookrightarrow, swap, "i_2"] \arrow[r, "h"]& S \arrow[d, hookrightarrow, swap, "j_2"] \arrow[dr, hookrightarrow, "\delta"]&&\\
            \widetilde{B} \arrow[r, "f"] & Z \arrow[r, dashed, "\phi", "\simeq"'] & S^{[3]} \arrow[r, "\tau"]& S^{(3)} & S^3 \arrow[l, swap, "\pi"]\arrow[r, "p_i"]& S\\
            && E \arrow[u, hookrightarrow, "i_1"] \arrow[r, "g"]& S^2 \arrow[u, hookrightarrow, "j_1"] \arrow[ur, hookrightarrow, "\iota"] \arrow[urr, "p_i"']& &
        \end{tikzcd}
    \end{equation}
    in which:
    \begin{itemize} 
    \item $f$ and $\phi$ are as above; 
    \item $\tau$ is the Hilbert--Chow morphism and $\pi$ is the natural quotient map by the symmetric group;
    \item $i_1\colon E\hookrightarrow S^{[3]}$ is the inclusion of the Hilbert--Chow divisor, and $g\colon E \to S^2$ sends a non-reduced length-3 subscheme $\xi$ to $(x,y)$ such that the 0-cycle underlying $\xi$ is $x+2y$;
    \item the morphism $\iota\colon S^2\to S^3$ sends $(x,y)$ to $(x,y,y)$, and $\delta\colon S\to S^3$ is the inclusion of the small diagonal;
    \item $i_2\colon F\hookrightarrow S^{[3]}$ is the inclusion of the locus of length-3 subschemes supported at a single point, and $h$ maps such a subscheme to its support;
    \item $p_i$ stands for the projection onto the  $i$-th factor.
    \end{itemize} Both squares in the commutative diagram are cartesian.

    We define the following six types of self-correspondences from $S$ to itself factorizing through $\widetilde{B}$. We let $\h^2_{\mathrm{tr}}(S)$ be the transcendental part of the motive of $S$. The notation is as in Diagram \eqref{diag:CorrespondenceSBS}.
    \begin{enumerate}[label=(\roman*)]
        \item For any $D, D'\in \CH^1(S)$, define $\Gamma_{D, D'}\colon \h^2_{\tr}(S)\to \h(S)$ as the following composition:
        \begin{equation}
            \begin{tikzcd}[column sep =small]
                & \h^2_{\tr}(S) \arrow[r, "p_1^*"] & \h(S^3) \arrow[r, "\pi_*"] & \h(S^{(3)})  \arrow[r, "\tau^*"] & \h(S^{[3]}) \arrow[r, "\phi_*^{-1}", "\simeq"'] &\h(Z) \arrow[r, "f^*"] &\h(\widetilde{B})\arrow[d, "f_*"] \\
              \h(S) &\h(S^3)(4) \arrow[l, "p_{1,*}"']&& \h(S^3)(2) \arrow[ll, "\cdot 1\times D\times D'"'] & \h(S^{(3)})(2) \arrow[l, "\pi^*"'] & \h(S^{[3]})(2) \arrow[l, "\tau_*"']& \h(Z)(2) \arrow[l, "\simeq", "\phi_*"']
            \end{tikzcd}
        \end{equation}
        Here $1\times D\times D':=p_2^*(D)\cdot p_3^*(D')$ and $\phi_*$ is Riess' isomorphism of ring objects in the category of Chow motives \cite{Rie14}. The composition of the first three maps $\tau^*\circ \pi_*\circ p_1^*$ is a positive multiple of the correspondence induced by the incidence cycle $[S^{[1,3]}]\in \CH_6(S^{[3]}\times S)$, which is the Nakajima's operator $\mathfrak{p}_{-1}(1)\circ\mathfrak{p}_{-1}(1)$ . Similarly, the composition of the last four maps  is a positive multiple of $\mathfrak{p}_{1}(D)\circ \mathfrak{p}_{1}(D')$.
        \item Define $\Gamma_1\colon \h(S) \to \h(S)$ as the following composition:
        \begin{equation}
            \begin{tikzcd}
                \h(S) \arrow[r, "p_1^*"] & \h(S^2)  \arrow[r, "{[E]}_*"]& \h(S^{[3]})(1) \arrow[r, "\phi_*^{-1}", "\simeq"'] & \h(Z)(1) \arrow[dr, "f^*"]& \\
                &&&&\h(\widetilde{B})(1)\arrow[dl, "f_*"] \\
                 \h(S) &\h(S^2)(2)  \arrow[l, "p_{1,*}"']  & \h(S^{[3]})(3) \arrow[l, "{[E]}^*"'] & \h(Z)(3) \arrow[l, "\simeq", "\phi_*"']&
            \end{tikzcd}
        \end{equation}
        \item Define $\Gamma_2\colon \h(S) \to \h(S)$ as in (ii) but using $p_2$ instead of $p_1$, that is, as the following composition:
        \begin{equation}
            \begin{tikzcd}
                \h(S) \arrow[r, "p_2^*"] & \h(S^2)  \arrow[r, "{[E]}_*"]& \h(S^{[3]})(1) \arrow[r, "\phi_*^{-1}", "\simeq"'] & \h(Z)(1) \arrow[dr, "f^*"]& \\
                &&&&\h(\widetilde{B})(1)\arrow[dl, "f_*"] \\
                 \h(S) &\h(S^2)(2)  \arrow[l, "p_{2,*}"']  & \h(S^{[3]})(3) \arrow[l, "{[E]}^*"'] & \h(Z)(3) \arrow[l, "\simeq", "\phi_*"']&
            \end{tikzcd}
        \end{equation}
    \item For any $D\in \CH^1(S)$,  define $\Theta_{1,D} \colon \h^2_{\tr}(S) \to \h(S)$ as the following composition:
        \begin{equation}
            \begin{tikzcd}
                \h^2_{\tr}(S) \arrow[r, "p_1^*"] & \h(S^3) \arrow[r, "\pi_*"] & \h(S^{(3)})  \arrow[r, "\tau^*"] & \h(S^{[3]}) \arrow[r, "\phi_*^{-1}", "\simeq"'] &\h(Z) \arrow[dr, "f^*"] &\\
                &&&&&\h(\widetilde{B})\arrow[dl, "f_*"] \\
              \h(S) & \h(S^2)(2) \arrow[l, "p_{1,*}"'] & \h(S^2)(1) \arrow[l, "\cdot p_2^*(D)"'] & \h(S^{[3]})(2) \arrow[l, "{[E]}^*"']& \h(Z)(2) \arrow[l, "\simeq", "\phi_*"']&
            \end{tikzcd}
        \end{equation}
    \item For any $D\in \CH^1(S)$,  define $\Theta_{2,D} \colon \h^2_{\tr}(S) \to \h(S)$ as the following composition:
        \begin{equation}
            \begin{tikzcd}
                \h^2_{\tr}(S) \arrow[r, "p_1^*"] & \h(S^3) \arrow[r, "\pi_*"] & \h(S^{(3)})  \arrow[r, "\tau^*"] & \h(S^{[3]}) \arrow[r, "\phi_*^{-1}", "\simeq"'] &\h(Z) \arrow[dr, "f^*"] &\\
                &&&&&\h(\widetilde{B})\arrow[dl, "f_*"] \\
              \h(S) & \h(S^2)(2) \arrow[l, "p_{2,*}"'] & \h(S^2)(1) \arrow[l, "\cdot p_1^*(D)"'] & \h(S^{[3]})(2) \arrow[l, "{[E]}^*"']& \h(Z)(2) \arrow[l, "\simeq", "\phi_*"']&
            \end{tikzcd}
        \end{equation}
    \item Define, $\Lambda\colon \h(S)\to \h(S)$ as the following composition:
        \begin{equation}
            \begin{tikzcd}
                \h(S) \arrow[r, "p_1^*"] & \h(S^3) \arrow[r, "\pi_*"] & \h(S^{(3)})  \arrow[r, "\tau^*"] & \h(S^{[3]}) \arrow[r, "\phi_*^{-1}", "\simeq"'] &\h(Z) \arrow[dr, "f^*"] &\\
                &&&&&\h(\widetilde{B})\arrow[dl, "f_*"] \\
               &  & \h(S) & \h(S^{[3]})(2) \arrow[l, "{[F]}^*"']& \h(Z)(2) \arrow[l, "\simeq", "\phi_*"']&
            \end{tikzcd}
        \end{equation}
    \end{enumerate}

\begin{lemma}
\label{lemma:ScalarCorrespondences}
The six types of correspondences defined above are all multiples of the identity map. More precisely,
\begin{align}
      \label{eqn:GammaDD'}  \Gamma_{D, D'}=& \left(2\int_{S^3} (\pt\times D\times D')\cdot \pi^*(\tau_*\beta)\right) \id_{\h^2_{\tr}(S)}.\\
       \label{eqn:Gamma1} \Gamma_1=&-\left(\int_{S^2}(\pt\times 1)\cdot \iota^*(\pi^*(\tau_*\beta))\right) \id_{\h(S)}.\\
       \label{eqn:Gamma2} \Gamma_2=&-\left(\int_{S^2}(1\times \pt)\cdot \iota^*(\pi^*(\tau_*\beta))\right) \id_{\h(S)}.\\
       \label{eqn:Theta1} \Theta_{1, D}=& 2\left(\int_{S^2} (\pt\times D)\cdot ({[E]}^*\beta)\right)\id_{\h^2_{\tr}(S)}.\\
       \label{eqn:Theta2}\Theta_{2, D}=&  4\left(\int_{S^2} (D\times \pt)\cdot ({[E]}^*\beta)\right)\id_{\h^2_{\tr}(S)}.\\
       \label{eqn:Lambda} \Lambda=& 6\left(\int_S\pt\cdot ({[F]}^*\beta)\right) \id_{\h(S)}.
\end{align}
where $\beta=\phi_*(f_*[\widetilde{B}])\in \CH^2(S^{[3]})$.
\end{lemma}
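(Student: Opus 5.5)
Each of the six correspondences factors through the middle step $f_*\circ f^*\colon \h(Z)\to\h(Z)(c)$, which (composed with Riess' isomorphism $\phi_*$) is the correspondence given by the cycle $\Delta_*\beta$ on $S^{[3]}\times S^{[3]}$, where $\Delta$ is the diagonal embedding. Indeed, $f_*f^*(x)=x\cdot f_*[\widetilde B]$ by the projection formula, and $\phi_*$ is multiplicative by \cite{Rie14}. So the whole composition is the operation ``pull back to $S^{[3]}$ via a Nakajima-type correspondence, multiply by $\beta$, push forward via another Nakajima-type correspondence''. This can be rewritten as a single correspondence $S\to S$, namely a pushforward to $S\times S$ of a cycle built from $\beta$ and incidence loci. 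The plan is to compute this cycle explicitly and to see that, on the relevant piece of the motive, it acts as the stated scalar.

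For $\Gamma_{D,D'}$ and $\Theta_{i,D}$, I will first use $\tau^*\pi_*p_1^*$ and its transpose $p_{1,*}\pi^*\tau_*$. By the projection formula along $\tau$, the composite correspondence is the cycle
\[
(p_1\times p_{1})_*\bigl((\pi\times\pi)^*(\text{graph-type cycle of }\tau_*\beta)\cdot(1\times 1\times 1\times 1\times D\times D')\bigr).
\]
Because $\tau_*(x\cdot\tau^*y)=\tau_*x\cdot y$, the middle cycle reduces to $\Delta_{S^{(3)},*}(\tau_*\beta)$. Pulling back to $S^3\times S^3$ gives a sum over $\mathfrak S_3$ of graphs of permutations, each weighted by $\pi^*\tau_*\beta$. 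The resulting cycle on $S\times S$ is then a sum of terms. The first kind is the diagonal times a coefficient, namely the terms where the permutation fixes the first factor. These contribute $2\int(\pt\times D\times D')\cdot\pi^*\tau_*\beta$, since two permutations fix index $1$ and the integral arises by pushing $\pi^*\tau_*\beta\cdot(1\times D\times D')$ forward. The second kind is the remaining terms, where the first and target factors differ. These have the form $\sum\alpha_k\times\alpha'_k$, i.e.\ decomposable cycles, which act by zero on $\h^2_{\tr}(S)$ because they factor through $\CH^0,\CH^2$ or through $\NS$-classes. One point needs care here. A cycle $Z_0\subset S\times S$ acts as a scalar on $\h^2_{\tr}$ only if its nondiagonal part is decomposable in $\CH$, not just in cohomology. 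I will argue this via the structure: after integrating out the other factors against $D,D'$, the relevant classes are restrictions of $\pi^*\tau_*\beta$ to partial products. For this I want the Beauville--Voisin-type decomposition of $\CH^2(S^3)$, namely symmetric cycles modulo products of divisors, points and diagonals, that are known to act trivially on transcendental parts. $\Theta_{i,D}$ is handled identically, with $E\to S^2$ replacing $S^3$. In that case the multiplicities $2$ versus $4$ come from the degree of $g\colon E\to S^2$ and from the asymmetry of $x+2y$.

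For $\Gamma_1,\Gamma_2,\Lambda$, which are claimed to be scalar on all of $\h(S)$, I will proceed differently. Both sides go through the same $S^2$ (resp.\ $S$), via $[E]$ on both ends (resp.\ $[F]$). The composite is then $p_{i,*}\bigl(g_*(i_1^*\beta\cdot i_1^*[E])\bigr)$ paired with $p_i^*$, i.e.\ multiplication by a class on $S^2$ followed by the projection $p_{i,*}p_i^*$. This uses the self-intersection formula $i_1^*i_{1,*}=\cdot c_1(N_E)$, which yields the minus sign. Now $p_{i,*}(\gamma\cdot p_i^*x)$ equals $x$ times the degree of $\gamma$ along the fibre whenever $\gamma\in\CH^2(S^2)$ has the form $\text{const}\cdot(\pt\times1)+\ldots$. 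Hence I must show that the relevant cycle $g_*(\ldots)$, and respectively $h_*(i_2^*\beta\cdots)$ in $\CH_0(S)$, lie in the span of $o_S$-type classes. For $\Lambda$ this is the statement that a $0$-cycle on $S$ obtained from $\beta$ is a multiple of the Beauville--Voisin class $o_S$, combined with $\pt=o_S$ in the formula. For $\Gamma_i$ it is the analogous statement in $\CH^2(S^2)$. For both I would use that $\beta$ lies in the subring generated by ``canonical'' classes in the sense of Voisin's results on $S^{[n]}$, namely that $\tau_*\beta$ restricted to diagonals is a polynomial in diagonals, divisors and $o_S$. I expect this control of the Chow class $\beta$, rather than merely its cohomology class, to be the main obstacle. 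The first thing I will try is to deform within the family, using that $\beta$ comes from the canonical subvariety $\widetilde B/\pm1$, and to deduce via a spreading argument that its restrictions are generically defined. This would force them into the Beauville--Voisin ring.
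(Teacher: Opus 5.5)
Your reduction of the middle step to multiplication by $\beta$ and your identification of the diagonal term $2\int_{S^3}(\pt\times D\times D')\cdot\pi^*(\tau_*\beta)$ agree with the paper. However, the proposal does not close. You assert that two things require Chow-level control of $\beta$: the scalarity of $\Gamma_1,\Gamma_2,\Lambda$, and the vanishing of the off-diagonal terms of $\Gamma_{D,D'},\Theta_{i,D}$. You then leave this control, namely its restrictions lying in a Beauville--Voisin ring via a spreading argument, as the ``main obstacle''. That step is not proved, and it amounts to a Franchetta-type statement about $\beta$ which is not available.

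It is also unnecessary, because the obstacle comes from a dimension miscount. Since $\beta\in\CH^2(S^{[3]})$, the class that is finally pushed forward to the target copy of $S$ always lies in $\CH^0(S)=\QQ\cdot[S]$.
\begin{itemize}
\item For $\Gamma_i$: put $\gamma:=g_*\bigl(i_1^*\beta\cdot c_1(\mathcal{O}_E(E))\bigr)\in\CH^2(S^2)$. The composite correspondence is $(p_i,p_i)_*\gamma=\Delta_{S,*}(p_{i,*}\gamma)$ with $p_{i,*}\gamma\in\CH^0(S)$. This is a multiple of $\Delta_S$ for \emph{every} $\gamma$, so the shape of $\gamma$ in $\CH^2(S^2)$ is irrelevant. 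The excess intersection formula is needed only to rewrite the constant in terms of $\iota^*\pi^*\tau_*\beta$ as in the statement.
\item For $\Lambda$: $F$ is $4$-dimensional, so $h_*(i_2^*\beta)$ lies in $\CH_2(S)=\CH^0(S)$. It is not a $0$-cycle, and $o_S$ plays no role. Moreover $\delta^*\pi^*\pi_*p_1^*=\delta^*\circ 2(p_1^*+p_2^*+p_3^*)=6\,\id$.
\item Likewise, $p_{1,*}\bigl((1\times D\times D')\cdot\pi^*\tau_*\beta\bigr)$ and $p_{i,*}\bigl(p_j^*D\cdot[E]^*\beta\bigr)$ lie in $\CH^0(S)$.
\end{itemize}

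For the off-diagonal terms you do not need any decomposition of $\CH^2(S^3)$. Use $\pi^*\pi_*p_1^*=2(p_1^*+p_2^*+p_3^*)$ on $S^3$. For $\Theta_{i,D}$, use in addition $\iota^*\circ 2(p_1^*+p_2^*+p_3^*)=2(p_1^*+2p_2^*)$ on $S^2$, which holds because $\iota(x,y)=(x,y,y)$. This is where the coefficients $2$ and $4$ come from, not a ``degree of $g$'' ($g$ is generically a $\PP^1$-bundle, so it has no degree).

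The terms to be discarded are exactly those in which the source is pulled back to the factor carrying $D$ or $D'$. Since $(\cdot\, p_k^*D)\circ p_k^*=p_k^*\circ(\cdot D)$, each such term factors through $\cdot D\colon\h(S)\to\h(S)(1)$ restricted to $\h^2_{\tr}(S)$, and that restriction is zero. Beauville--Voisin does enter at this point, but only through $S$ itself. The vanishing of $\cdot D$ on $\h^2_{\tr}(S)$ follows from $D\cdot D'\in\QQ\,o_S$ and $\Delta_*D=D\times o_S+o_S\times D$ in $\CH^*(S\times S)$, and needs no information on the Chow class of $\beta$.

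With these two observations the lemma becomes the paper's purely formal computation. It uses the projection formula, multiplicativity of $\phi_*$, and $\pi^*\pi_*=\sum_{\sigma\in\mathfrak S_3}\sigma$. It also uses the vanishing of $\cdot D$ on $\h^2_{\tr}(S)$, and, for $\Gamma_i$, the self-intersection and excess intersection formulas on $E$.
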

\begin{proof}
    Note first the following computation, which is involved in each of the correspondences in the statement:
    \begin{equation}
    \label{eqn:CommonPart}
        \phi_*\circ f_*\circ f^*\circ \phi^{-1}_*=\phi_*\circ (\cdot f_*[\widetilde{B}])\circ \phi^{-1}_*=\cdot \phi_*(f_*[\widetilde{B}])=\cdot \beta ,
    \end{equation}
     where the first equality uses the projection formula, the second uses the fact that $\phi_*$ is an isomorphism of ring objects.
     
    To prove \eqref{eqn:GammaDD'}, we calculate
    \begin{equation}
    \begin{split}
        \pi^*\circ \tau_*\circ\phi_*\circ f_*\circ f^*\circ \phi^{-1}_*\circ \tau^*\circ\pi_*\circ p_1^* 
        =&\pi^*\circ  \tau_*\circ (\cdot \beta)\circ \tau^*\circ \pi_*\circ p_1^*\\
        =& \pi^*\circ (\cdot \tau_*\beta)\circ \pi_*\circ p_1^*\\
        =& (\cdot \pi^*\tau_*\beta)\circ \pi^*\circ \pi_*\circ p_1^*
        \end{split}
    \end{equation}
    where the first equality follows \eqref{eqn:CommonPart}, the second equality uses the projection formula, and the last uses that $\pi^*$ is a ring homomorphism. Therefore, we get the following equality of morphisms of Chow motives $\h^2_{\tr}(S)\to \h(S)$:
    \begin{equation} 
    \begin{split}
        \Gamma_{D,D'}=&p_{1,*}\circ (\cdot (1\times D\times D')\cdot \pi^*(\tau_*\beta))\circ 2(p_1^*+p_2^*+p_3^*)\\
        =&p_{1,*}\circ (\cdot (1\times D\times D')\cdot \pi^*(\tau_*\beta))\circ 2p_1^*\\
        =&\cdot 2 p_{1,*}((1\times D\times D')\cdot \pi^*(\tau_*\beta))
    \end{split}
    \end{equation}
    where the first equality uses that $\pi^*\circ \pi_*=\sum_{\sigma\in \mathfrak{S}_3}\sigma$, the second equality uses the fact that the map $\cdot D$ (as well as $\cdot D'$) is 0 on $\h^2_{\tr}(S)$, the third equality follows from the projection formula. Finally, since $p_{1,*}((1\times D\times D')\cdot \pi^*(\tau_*\beta))\in \CH^0(S)$ is a multiple of the fundamental class of $S$, and this multiple is nothing but the intersection number $2\int_{S^3}(1\times D\times D')\cdot \pi^*(\tau_*\beta)$. Hence, the correspondence $\Gamma_{D, D'}$ preserves $\h^2_{\tr}(S)$ and formula \eqref{eqn:GammaDD'} is proved.

    To show \eqref{eqn:Gamma1} and \eqref{eqn:Gamma2}, we have for $i=1, 2$,
    \begin{equation}
    \begin{split}
       \Gamma_i =&p_{i,*}\circ [E]^*\circ (\cdot \beta) \circ [E]_*\circ p_i^*\\
        =&p_{i,*}\circ g_*\circ i_1^*\circ (\cdot \beta) \circ i_{1,*}\circ g^*\circ p_i^*\\
        =&p_{i,*}\circ g_*\circ (\cdot i_1^*\beta) \circ i_1^* \circ i_{1,*}\circ g^*\circ p_i^*\\
        =&p_{i,*}\circ g_*\circ (\cdot i_1^*\beta) \circ (\cdot c_1(\mathcal{O}_E(E)))\circ g^*\circ p_i^*\\
        =&\cdot p_{i,*}(g_*(i_1^*\beta \cdot c_1(\mathcal{O}_E(E))))\\
        =&\cdot p_{i,*}(j_1^*(-\tau_*\beta))\\
         =&\cdot p_{i,*}(\iota^*\pi^*(-\tau_*\beta))
    \end{split}
    \end{equation}
    where the first equality is by \eqref{eqn:CommonPart}, the third equality uses that $i_1^*$ is a ring homomorphism, the fifth  uses the projection formula, the sixth uses the excess intersection formula. Finally, $p_{i,*}(\iota^*\pi^*(-\tau_*\beta))$ is a multiple of the fundamental class of $S$, and this multiple is the intersection number given in \eqref{eqn:Gamma1} and \eqref{eqn:Gamma2}.

    To show \eqref{eqn:Theta1}, we have for any $D\in \CH^1(S)$
    \begin{equation} 
    \begin{split}
        \Theta_{1, D}=& p_{1, *}\circ(\cdot p_2^*D)\circ {[E]}^*\circ\phi_*\circ f_*\circ f^*\circ \phi^{-1}_*\circ \tau^*\circ\pi_*\circ p_1^*\\
        =& p_{1, *}\circ (\cdot p_2^*D)\circ {[E]}^*\circ (\cdot \beta)\circ \tau^*\circ\pi_*\circ p_1^*\\
        =& p_{1, *}\circ (\cdot p_2^*D)\circ g_*\circ i_1^*\circ (\cdot \beta)\circ \tau^*\circ\pi_*\circ p_1^*\\
        =& p_{1, *}\circ (\cdot p_2^*D)\circ g_*\circ (\cdot i_1^*\beta)\circ i_1^* \circ \tau^*\circ\pi_*\circ p_1^*\\
        =& p_{1, *}\circ (\cdot p_2^*D)\circ g_*\circ (\cdot i_1^*\beta)\circ g^*\circ \iota^*\circ \pi^*\circ\pi_*\circ p_1^*\\
        =& p_{1, *}\circ (\cdot p_2^*D)\circ  (\cdot g_*(i_1^*\beta))\circ \iota^*\circ \pi^*\circ\pi_*\circ p_1^*\\
        =& p_{1, *}\circ (\cdot p_2^*D\cdot {[E]}^*(\beta))\circ \iota^*\circ 2(p_1^*+p_2^*+p_3^*)\\
        =& 2p_{1, *}\circ (\cdot p_2^*D\cdot {[E]}^*(\beta))\circ (p_1^*+2p_2^*)
    \end{split}
    \end{equation}
    where the first equality is the definition, the second one uses \eqref{eqn:CommonPart}, the fourth uses that $i_1^*$ is a ring homomorphism, the sixth uses the projection formula, the seventh uses $\pi^*\circ \pi_*=\sum_{\sigma\in \mathfrak{S}_3}\sigma$.
    
    Now note that $\cdot D$ is the zero map on $\h^2_{\tr}(S)$, hence $(\cdot p_2^*D)\circ p_2^*=0$ on $\h^2_{\tr}(S)$. Therefore, by the projection formula,
    \begin{equation}
        \Theta_{1, D}= 2p_{1, *}\circ (\cdot p_2^*D\cdot {[E]}^*(\beta))\circ p_1^*=\cdot 2p_{1, *}(p_2^*D\cdot {[E]}^*(\beta)).
    \end{equation}
    As $2p_{1, *}(p_2^*D\cdot {[E]}^*(\beta))$ is a multiple of the fundamental class of $S$, the correspondence $\Theta_{1,D}$ preserves $\h^2_{\tr}(S)$ and the multiple is the intersection number
    $2\int_{S^2} (\pt\times D)\cdot {[E]}^*(\beta)$ This proves \eqref{eqn:Theta1}. The proof for \eqref{eqn:Theta2} is almost the same as for \eqref{eqn:Theta1}, and we omit it here.

    To show \eqref{eqn:Lambda}, 
    \begin{equation}
        \begin{split}
        \Lambda=& [F]^*\circ \phi_*\circ f_*\circ f^*\circ \phi^{-1}_*\circ \tau^*\circ\pi_*\circ p_1^*\\
        =& [F]^*\circ (\cdot \beta)\circ \tau^*\circ\pi_*\circ p_1^*\\
        =& h_*\circ i_2^*\circ (\cdot \beta)\circ \tau^*\circ\pi_*\circ p_1^*\\ 
        =& h_*\circ (\cdot i_2^*\beta)\circ i_2^*\circ \tau^*\circ\pi_*\circ p_1^*\\ 
        =& h_*\circ (\cdot i_2^*\beta)\circ h^*\circ\delta^*\circ\pi^*\circ\pi_*\circ p_1^*\\  
        =& (\cdot h_*(i_2^*\beta))\circ \delta^*\circ 2(p_1^*+p_2^*+p_3^*)\\  
        =& (\cdot h_*(i_2^*\beta))\circ 6 \id\\
        =& \cdot 6 {[F]}^*(\beta),
        \end{split}
    \end{equation}
    where the first equality is the definition, the second is by \eqref{eqn:CommonPart}, the fourth uses that $i_2^*$ is a ring homomorphism, the sixth uses the projection formula.
    Finally, note that $6 {[F]}^*(\beta)$ is a multiple of the fundamental class of $S$, and that this multiple is the intersection number $6\int_S\pt\cdot {[F]}^*(\beta)$. The formula \eqref{eqn:Lambda} is proved.    
\end{proof}

\subsection{Proof of Theorem \ref{thm:SuperKummerAbelianMotive}}

    It suffices to prove the theorem for the very general super-Kummer K3 surfaces. Indeed, if any very general super-Kummer K3 surface $S$ is motivated by its corresponding abelian fourfold $B$, then by the standard Hilbert-scheme argument, the correspondences between $S$ and $B$ can be spread out to relative correspondences between any family of super-Kummer K3 surfaces and the corresponding family of  abelian fourfolds, then by applying the specialization map, we can conclude that any super-Kummer K3 surface is motivated by its corresponding abelian fourfold. Therefore without loss of generality , let $S$ be a very general super-Kummer K3 surface of Picard number 16, let $Z$ be a MRS-double cover birational to $S^{[3]}$ and consider the corresponding diagram \eqref{diag:CorrespondenceSBS}.
    
    By Lemma \ref{lemma:ScalarCorrespondences}, each of the six types of correspondences $\Gamma_{D,D'}, \Gamma_1, \Gamma_2, \Theta_{1,D}, \Theta_{2,D}, \Lambda$ defined in Section \ref{subsec:Correspondences} induces an endomorphism of $\h^2_{\tr}(S)$ which is a multiple of the identity and factors through the motive $\h(\widetilde{B})$. If at least one of these multiples is non-zero, 
    then there exists a split injection of $\h^2_{\tr}(S)$ into $\h(\widetilde{B})$, hence $\h(S)=\h^2_{\tr}(S)\oplus \1\oplus\1(-1)^{\oplus \rho}\oplus \1(-2)$ lies in the thick tensor subcategory of rational Chow motives generated by $\h(\widetilde{B})$. As $\h(\widetilde{B})\simeq \h(B)\oplus \1(-1)^{\oplus 256} \oplus \1(-2)^{\oplus 256}\oplus \1(-3)^{\oplus 256}$, we see that $S$ is Chow motivated by $B$. 

    Assume, for contradiction, that all the multiples appearing in Lemma \ref{lemma:ScalarCorrespondences} were zero. By \cite{deCataldoMigliorini2002}, we have the decomposition of rational Chow motives:
\begin{equation}
   (\tau_*, {[E]}^*, {[F]}^*) \colon  \h(S^{[3]}) \xrightarrow{\simeq } \h(S^{(3)}) \oplus \h(S\times S)(-1) \oplus \h(S)(-2).
\end{equation}
Taking realization, we obtain the following isomorphism of rational Hodge structures:
\begin{equation}
\label{eqn:DCM-Decomposition-Cohomology}
    (\tau_*, {[E]}^*, {[F]}^*) \colon H^4(S^{[3]}, \QQ)\xrightarrow{\simeq } H^4(S^{(3)}, \QQ) \oplus H^2(S\times S, \QQ)(-1) \oplus H^0(S, \QQ)(-2)
\end{equation}
Consider the cohomology class of $\beta\coloneqq \phi_*(f_*[\widetilde{B}])$ in $H^4(S^{[3]},\QQ)$, and denote its three components by $\beta_1\coloneqq \tau_*(\beta)\in  H^4(S^{(3)},\QQ)$, $\beta_2\coloneqq {[E]}^*(\beta)\in H^2(S\times S,\QQ)$ and $\beta_3\coloneqq {[F]}^*(\beta)\in H^0(S,\QQ)$, respectively.  
    
    The assumption by absurd that all the multiples appearing in Lemma \ref{lemma:ScalarCorrespondences} were zero can be formulated as saying that the image of the cohomology class of the algebraic cycle $\beta$ under \eqref{eqn:DCM-Decomposition-Cohomology} satisfies:
    \begin{enumerate}[label=(\roman*)]
        \item $\int_{S^3} (\pt\times D\times D')\cdot \pi^*(\beta_1)=0$ for any $D, D'\in \CH^1(S)$;
        \item $\int_{S^2}(\pt\times 1)\cdot \iota^*(\pi^*(\beta_1))=0$;
        \item $\int_{S^2}(1\times \pt)\cdot \iota^*(\pi^*(\beta_1))=0$;
        \item $\int_{S^2} (\pt\times D)\cdot (\beta_2)=0$ for any $D\in \CH^1(S)$;
        \item $\int_{S^2} (D\times \pt)\cdot (\beta_2)=0$ for any $D\in \CH^1(S)$;
        \item $\int_S\pt\cdot \beta_3=0$.
    \end{enumerate}
    We claim that these equations imply that $\beta_1$, $\beta_2$ and $\beta_3$ are all zero. 
    
    Clearly, (vi) implies $\beta_3=0$.
    Since $\beta$ is a Hodge class, $\beta_2\in H^2(S\times S,\QQ)$ is also a Hodge class, i.e., in the N\'eron--Severi group $\NS(S\times S)\simeq \NS(S)\oplus \NS(S)$. By the nondegeneracy of the intersection pairing on $\NS(S)$, (iv) and (v) together imply $\beta_2=0$.
    The cohomology class $\pi^*(\beta_1)$ is a Hodge class in the Hodge structure $H^4(S^3, \QQ)^{\mathfrak{S}_3}$, which can be decomposed as follows (all spaces are with $\QQ$-coefficients):
    \begin{equation}\label{eq:decompositionH^4} H^4(S^3, \QQ)^{\mathfrak{S}_3} \simeq H^4(S)\oplus \Sym^2H^2(S)\simeq  H^4(S)\oplus \Sym^2\NS(S)\oplus \Sym^2T(S)\oplus \NS(S)\otimes T(S),\end{equation} 
    where $T(S)\coloneqq H^2_{\tr}(S,\QQ)$ is the transcendental cohomology of $S$ and $\NS(S)$ denotes the rational N\'eron--Severi space of $S$. 
    We denote the four components of $\pi^*\beta_1$ for the decomposition \eqref{eq:decompositionH^4} by 
    $\gamma_1\in H^4(S)$, $\gamma_2\in \Sym^2\NS(S)$, $\gamma_3\in \Sym^2T(S)$ and $\gamma_4\in \NS(S)\otimes T(S)$. It remains to show that these components are all zero, for then $\beta_1=\frac{1}{6}\pi_*(\pi^*(\beta_1))=0$.
    
    Since $\NS(S)\otimes T(S)$ does not contain any non-zero Hodge class, $\gamma_4=0$. In the intersection number in (i), only $\gamma_2$ can contribute. By taking an orthogonal basis of $\NS(S)$, a direct computation shows that the vanishing condition (i) implies that $\gamma_2=0$. 
    In the intersection number in (iii), it is straightforward to check that the contribution from $\gamma_3$ is zero, so only $\gamma_1$ can contribute. Since $\gamma_1$ is a multiple of the class $\pt\times 1\times 1+1\times \pt\times 1+1\times 1\times \pt$, we see that $\iota^*(\gamma_1)$ is a multiple of $\pt\times 1+ 2\cdot 1\times \pt$. The vanishing condition in (iii) then implies that this multiple must be zero, and therefore $\gamma_1=0$.
    
    Finally, consider the Hodge class $\gamma_3 \in \Sym^2T(S)$. Since, by assumption, $S$ is a very general super-Kummer K3 surface, its Mumford--Tate group is isomorphic to that of some very general variety of $\mathrm{Kum}^3$-type of Picard rank $1$, and so the Mumford--Tate group of $S$ is the full group $\mathrm{GO}(T(S))$ of orthogonal similitudes of $T(S)$. As a result, the space of Hodge classes in $\Sym^2T(S)$ is $1$-dimensional, generated by the identity (seen as a Hodge endomorphism of $T(S)$). Therefore, $\gamma_3$ is a multiple of the identity of $T(S)$. Then the vanishing condition in (ii) implies that the trace of $\gamma_3$ is zero, so this multiple must be zero and $\gamma_3=0$.
    
    We have therefore proved the vanishing of $\beta_1$ (by the vanishing of $\gamma_i$, $i=1,2,3,4$), $\beta_2$ and $\beta_3$. Hence, $\beta=0$ by the decomposition \eqref{eqn:DCM-Decomposition-Cohomology}. But this contradicts Lemma \ref{lemma:betaNonzero}. The proof of Theorem \ref{thm:SuperKummerAbelianMotive} is complete.

\section{Derived categories}
\label{sec:DerivedCategories}
In this section, we establish relations between a $\mathrm{Kum}^3$-type manifold $K$ and the associated hyper-Kummer sixfold $Y_K$ at the level of derived categories. 

The construction of invariant Hilbert schemes plays a key role in what follows; we recall its definition following \cite{BridgelandKingReid}. For references, see the original work of Ito and Nakamura \cite{ItoNakamura96, ItoNakamura2000, Nakamura2001}, as well as Brion's account \cite{Brion-InvariantHilbert} in a broader context. Let $X$ be a smooth variety equipped with a faithful action of a finite group $G$. A $G$-\textit{cluster} is a $G$-invariant 0-dimensional subscheme $Z\subset X$ such that the global sections $\Gamma(\mathcal{O}_Z)$ with the natural $G$-action is a regular $G$-representation. In particular, a $G$-cluster is of length $|G|$ and its reduced support consists of a single $G$-orbit. A free $G$-orbit gives rise to a $G$-cluster, and $G$-clusters can be viewed as generalizations of such free $G$-orbits. The notion of families of $G$-clusters and isomorphisms between them is the natural one. 

It can be shown (cf.~\cite{Brion-InvariantHilbert}) that there exists a fine moduli space $G\operatorname{-Hilb}(X)$ of $G$-clusters called the $G$-Hilbert scheme of $X$, together with a universal family of $G$-clusters on $G\operatorname{-Hilb}(X)\times X$. In this article, we always take the main irreducible component $\Hilb^G(X)$ of $G\operatorname{-Hilb}(X)$, which is the closure of the locus of points parametrizing the free $G$-orbits, and we denote by $\mathcal{U}\subset \Hilb^G(X)\times X$ the universal family. We have a natural Hilbert--Chow morphism 
\begin{equation} \Hilb^G(X)\to X/G,\end{equation} 
which is sometimes a good candidate for a crepant resolution of singularities; see \cite{ItoNakamura96, ItoNakamura2000, Nakamura2001, BridgelandKingReid}.

The construction of invariant Hilbert schemes works more generally in the complex analytic setting, by using Douady spaces instead of Hilbert schemes, and the Hilbert--Chow morphism is replaced by the Douady--Barlet morphism. However, we will keep using the algebraic terminology even when treating analytic objects in the sequel. 

For later use, we record the simplest example of invariant Hilbert schemes. 

\begin{example}
\label{ex:Involution}
    Let $S$ be a smooth complex surface equipped with a biholomorphic involution $\iota$, inducing a faithful $\ZZ/2\ZZ$-action on $S$, such that there are only isolated fixed points $O\subset S$. Then the quotient surface $S/\iota$ has only isolated singular points, which are $A_1$ singularities. We still denote by $O$ the singular locus. Analytically locally at each point of $O$, the surface is isomorphic to the germ $(\CC^2, 0)$ with the involution $\iota=-1$.

    The singularity of $S/\iota$ is resolved by a single blow-up at the singular points, obtaining a crepant resolution $\Bl_O(S/\iota)$. Alternatively, one can blow up $S$ at $O$, over which we have a lifting $\tilde{\iota}$ of the involution with purely codimension-1 fixed locus, and the smooth quotient recovers the crepant resolution. We have the following commutative diagram
    \begin{equation} 
\begin{tikzcd}
    &\widetilde{S}=\Bl_O(S) \arrow[swap]{dl}{p'} \arrow{dr}{q'}&\\
   S  \arrow{dr}[swap]{q}&& \Bl_{O}(S/\iota)=\widetilde{S}/\tilde{\iota} \arrow{dl}{p}\\
    &S/\iota&
\end{tikzcd}
\end{equation}
which, as can be easily checked, is \textit{cartesian up to nilpotents}, i.e., $\widetilde{S} = (S\times_{S/\iota} \widetilde{S}/\widetilde{\iota})_{\mathrm{red}}$. In this situation, we have 
\begin{equation} 
\Hilb^{\ZZ/2\ZZ}(S)\cong\Bl_{O}(S/\iota),
\end{equation}
and, via this isomorphism, $\widetilde{S}\subset S\times \Bl_{O}(S/\iota)$ is identified with the universal $\ZZ/2\ZZ$-cluster.
\end{example}

This applies in particular to the classical Kummer construction. Let $A$ be an abelian surface and $\Km(A)$ the associated Kummer K3 surface; we then have the following   diagram which is cartesian up to nilpotents:
\begin{equation} 
\begin{tikzcd}
    &\widetilde{A} \arrow[swap]{dl}{p'} \arrow{dr}{q'}&\\
    A  \arrow{dr}[swap]{q}&& \Km(A) \arrow{dl}{p}\\
    &A/{-1}&
\end{tikzcd}
\end{equation}
where $p'\colon \widetilde{A}\to A$ is the blow-up of $A$ along the set $A[2]$ of points of order $2$ and $p\colon \Km(A)\to A/{-1}$ is the blow-up of $A/{-1}$ along its singular locus. Then we are in the situation of Example \ref{ex:Involution}, with $\ZZ/2\ZZ$ acting on $A$ via $\pm 1$. Hence, there is a natural isomorphism
\begin{equation}
    \Km(A)\cong \Hilb^{\ZZ/2\ZZ}(A),
\end{equation}
and, via this identification, the universal cluster $\mathcal{U}\subset  \Hilb^{\ZZ/2\ZZ}(A)\times A$ is identified with $\widetilde{A}\subset \Km(A)\times A$. Moreover, as an easy example of \cite{BridgelandKingReid}, the Fourier--Mukai functor 
\begin{equation}
\Phi=Rp'_* \circ {q'}^* \colon  \Db(\Km(A))\xrightarrow{\simeq} \Db([A/{(\ZZ/2\ZZ)}])
\end{equation}
induces an equivalence of triangulated categories, with quasi-inverse $\Psi=(Rq'_*\circ L{p'}^*)^{\ZZ/2\ZZ}$.

We establish the analogous result for the hyper-Kummer construction.
\begin{theorem}
\label{thm:HyperKummerIsBKR}
 Let $K$ be a projective hyper-K\"ahler variety of $\mathrm{Kum}^3$-type endowed with the canonical action of $G\cong (\ZZ/2\ZZ)^{5}$ (see Definition \ref{def:subgroupsAut0}). Let $Y_K$ be the associated hyper-Kummer $\mathrm{K}3^{[3]}$-type variety. Let the notation be as in Theorem~\ref{thm:hyperKummerConstruction}: 
 \begin{equation}
 \label{diag:HyperKummerConstruction}
     \begin{tikzcd}
		& \tilde{K} \arrow[swap]{dl}{p'} \arrow{dr}{q'} \\
		K \arrow{dr}[swap]{q} \arrow[dashed]{rr}{r} && Y_K \arrow{dl}{p} \\
		& K/G
	\end{tikzcd}
 \end{equation}
 Then:
 \begin{enumerate}
     \item[(i)] Diagram \eqref{diag:HyperKummerConstruction} is cartesian up to nilpotents.
     \item[(ii)] We have an isomorphism $Y_K\cong \Hilb^G(K)$.
     \item[(iii)] If we identify $Y_K$ with $\Hilb^G(K)$ via the previous isomorphism, then the universal $G$-cluster $\mathcal{U}$ is identified with $\widetilde{K}$.
 \end{enumerate}
\end{theorem}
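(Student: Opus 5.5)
The plan is to reduce everything to a local analytic computation around the fixed loci, where the situation looks like a product of copies of Example \ref{ex:Involution}. We already have the local model of the $G$-action from Theorem \ref{thm:hyperKummerConstruction} and Proposition \ref{prop:FixedPoints}. The total fixed locus is the union of the $16$ fourfolds $W_\sigma$, which meet pairwise transversally along the K3 surfaces $V_{\sigma,\sigma'}$ and triply at the points $Z_{\sigma,\sigma',\sigma''}$. Four of them never meet. At a point $x\in K$ the stabilizer $G_x$ is generated by those $\sigma\in G\setminus G_1$ with $x\in W_\sigma$, together with possibly an element fixing $x$ as an isolated point. To pin this down, I would first use Proposition \ref{prop:FixedPoints} to check, stratum by stratum, that $G_x\cong(\ZZ/2\ZZ)^k$ with $k\le 3$. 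The key claim is that the action of $G_x$ on $T_xK\cong\CC^6$ is linearizable as a product: it is $(\CC^2,\pm1)^{k}\times\CC^{6-2k}$, with the $i$-th generator acting by $-1$ on the $i$-th factor. For $k=1,2$ this follows since $W_\sigma$ and $V_{\sigma,\sigma'}$ are symplectic with transversal intersections and each $\sigma$ acts by $-1$ on the normal bundle of $W_\sigma$. For the points of $Z_{\sigma,\sigma',\sigma''}$ I would verify it on $K^3(A)$ via Proposition \ref{prop:CanonicalSubvarietiesK^3(A)}(iii) and then deform using Remark \ref{rmk:deformation}. A separate check is needed that the $140$ isolated fixed points of $\sigma$ lie in such $Z$'s; this is exactly the content of Proposition \ref{prop:FixedPoints}(i).

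With this local model, the neighborhood of the orbit $Gx$ is $G\times_{G_x}U$ with $U\subset\CC^6$ a $G_x$-stable ball. Then $K/G$ is locally $U/G_x=(\CC^2/\pm1)^k\times\CC^{6-2k}$. Both $p$ and $p'$ are blow-ups of reduced loci, which are locally unions of transversal smooth codimension-$2$ subvarieties. Such a blow-up is therefore locally the product of the blow-ups of each factor; blowing up transversal smooth centers commutes up to isomorphism in this product situation. Hence (i) reduces to the fact that a product of diagrams which are cartesian up to nilpotents is again cartesian up to nilpotents, applied to the factors of Example \ref{ex:Involution}. The induction step from $G_x$ to $G$ is harmless, because $G\times_{G_x}(-)$ commutes with fibre products.

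For (ii) and (iii), I would construct the morphism $Y_K\to G\text{-}\mathrm{Hilb}(K)$ from the flat family $\widetilde K\subset Y_K\times K$. By (i), this family is $(Y_K\times_{K/G}K)_{\mathrm{red}}$, and $q'$ is finite of degree $|G|$ with $Y_K$ smooth and $\widetilde K$ smooth, hence Cohen--Macaulay, so $q'$ is flat. The fibres are $G$-clusters: this can be checked locally, where the fibre is $G\times_{G_x}$ of a product of $\ZZ/2$-clusters from Example \ref{ex:Involution}, and the coordinate rings are therefore regular representations. Over the free locus the morphism is the identity onto the free orbits, so it lands in $\Hilb^G(K)$ and is birational and proper. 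To show it is an isomorphism, I would show injectivity on points together with smoothness of $\Hilb^G(K)$. The plan for both is again local: the $G$-clusters supported on $Gx$ correspond to $G_x$-clusters on $U$, and for the product action $G_x$-$\mathrm{Hilb}$ of $(\CC^2,\pm1)^k$ is the product of the $\ZZ/2$-Hilbert schemes, each of which is the blow-up of $\CC^2/\pm 1$. Identifying the universal family with $\widetilde K$ then gives (iii).

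The main obstacle is this last local statement: that $(\ZZ/2)^k$-clusters for a product action are products of $\ZZ/2$-clusters, at least on the main component. The quickest route I would try is to appeal to \cite{BridgelandKingReid}. Since $\dim Y_K\times_{K/G}Y_K\le 7$ (the fibres of $p$ have dimension at most $3$ over a codimension-$6$ stratum), their theorem shows that $\Hilb^G(K)$ is a crepant resolution. A birational morphism between crepant resolutions that is bijective over the open stratum, and is compatible over the strata as computed above, is then finite and birational onto a normal variety, hence an isomorphism by Zariski's main theorem.
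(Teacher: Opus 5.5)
Your treatment of (i) matches the paper's proof: the local model $(\CC^2,\pm 1)^k\times\CC^{6-2k}$ around each orbit, the identification of both blow-ups with products of blow-ups, and the fact that ``cartesian up to nilpotents'' is stable under products. The paper takes the local model directly from \cite[Proposition 4.2]{floccariKum3}. Your construction of the classifying morphism $Y_K\to\Hilb^G(K)$ from the flat family $\widetilde K$ is also the paper's.

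The gap is exactly where you put it: showing that this morphism is an isomorphism. Your fallback through \cite{BridgelandKingReid} does not work as written. Their hypothesis $\dim Y\times_X Y\le n+1$ is about $Y=\Hilb^G(K)$ itself, not $Y_K$, so checking it on $Y_K\times_{K/G}Y_K$ is circular until the two are identified. Also, bijectivity over the open stratum says nothing about fibres over the singular strata, so it does not give finiteness, and ``compatible over the strata'' presupposes the local structure of $\Hilb^G(K)$ you are trying to avoid.

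Both points can be repaired. The classifying morphism is proper, birational onto the irreducible $\Hilb^G(K)$, and commutes with the maps to $K/G$, so it is surjective on each fibre over $K/G$. Hence the fibres of $\Hilb^G(K)\to K/G$ are no larger than those of $p$, and the BKR dimension bound transfers. BKR then makes $\Hilb^G(K)$ a crepant resolution. A birational morphism over $K/G$ between two crepant resolutions pulls the nowhere-vanishing volume form back to the nowhere-vanishing volume form, so it is \'etale, hence an isomorphism. This spends the full BKR theorem on what is really a short lemma.

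That lemma is the idea you are missing; the paper proves it as Proposition \ref{prop:ProductGHilbert}: a $(G_1\times G_2)$-cluster $Z\subset X_1\times X_2$ is canonically a product of clusters. Set $\mathcal{O}_{Z_1}\coloneqq(p_{1,*}\mathcal{O}_Z)^{G_2}$ and $\mathcal{O}_{Z_2}\coloneqq(p_{2,*}\mathcal{O}_Z)^{G_1}$.
\begin{itemize}
\item The natural map $\mathcal{O}_{Z_1}\boxtimes\mathcal{O}_{Z_2}\to\mathcal{O}_Z$ is surjective, because $\mathcal{O}_{X_1\times X_2}\to\mathcal{O}_Z$ factors through it.
\item Both sides have length $|G_1|\cdot|G_2|$, since $\Gamma(\mathcal{O}_Z)^{G_2}$ is the regular representation of $G_1$, and symmetrically for $G_2$.
\item So this map is an isomorphism, each $\mathcal{O}_{X_i}\to\mathcal{O}_{Z_i}$ is surjective, and each $Z_i$ is a $G_i$-cluster.
\end{itemize}
The argument works over any base. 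It therefore gives $\Hilb^{G_1\times G_2}(X_1\times X_2)\cong\Hilb^{G_1}(X_1)\times\Hilb^{G_2}(X_2)$, compatibly with the universal families. Applied to powers of Example \ref{ex:Involution} on each chart $U$, it yields $p^{-1}(U)\cong\Hilb^G(q^{-1}(U))$ with universal cluster $(q\circ p')^{-1}(U)$. So the classifying morphism is a local isomorphism over $K/G$, which proves (ii) and (iii) together.
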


\begin{corollary}
\label{cor:DerivedEquivalenceHyperKummer}
    In the above situation of hyper-Kummer construction, the Fourier--Mukai functor 
     \begin{equation}
\Phi=Rp'_* \circ {q'}^* \colon  \Db(Y_K)\xrightarrow{\simeq} \Db([K/G])
\end{equation}
induces an equivalence of triangulated categories, and the quasi-inverse is given by  $\Psi=(Rq'_*\circ L{p'}^*)^G$.
\end{corollary}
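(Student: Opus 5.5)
The plan is to deduce the corollary from Theorem \ref{thm:HyperKummerIsBKR} together with the Bridgeland--King--Reid theorem \cite[Theorem 1.1]{BridgelandKingReid}. That theorem applies to a quasi-projective smooth variety $X$ with a faithful action of a finite group $G$ such that $\omega_X$ is locally trivial as a $G$-sheaf. It asserts that if the fibre product $\Hilb^G(X)\times_{X/G}\Hilb^G(X)$ has dimension at most $\dim X+1$, then $\Hilb^G(X)$ is a crepant resolution of $X/G$ and the Fourier--Mukai functor with kernel the universal cluster $\mathcal{O}_{\mathcal{U}}$ is an equivalence $\Db(\Hilb^G(X))\simeq \Db_G(X)=\Db([X/G])$.

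We apply this with $X=K$.

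First, $K$ is projective and the action of $G$ is symplectic. Hence $\omega_K\cong\mathcal{O}_K$, and the trivialising section $\sigma^3$ is $G$-invariant, so the canonical sheaf is trivial as a $G$-sheaf.

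Second, we bound the fibre product. By Theorem \ref{thm:HyperKummerIsBKR}(ii), $\Hilb^G(K)\cong Y_K$, and $p\colon Y_K\to K/G$ is the blow-up of the singular locus. By Theorem \ref{thm:hyperKummerConstruction} and Proposition \ref{prop:canonicalSubvarietiesI}, this singular locus is the union of the images $W_\sigma/G$, which have dimension $4$. The fibres of $p$ are described in Proposition \ref{prop:ResolutionQuotients} and the local analysis in \cite{floccariKum3}: locally the singularities are products of $A_1$ surface singularities with smooth factors.
\begin{itemize}
\item Over a general point of $W_\sigma/G$, which has codimension $2$, the fibre of $p$ is $\PP^1$, contributing dimension $4+1+1=6$ to the fibre product.
\item Over $V_{\sigma,\sigma'}/G$, of dimension $2$, the fibres are products of two $\PP^1$'s, contributing $2+2+2=6$.
\item Over the points $Z_{\sigma,\sigma',\sigma''}/G$, the fibres are $(\PP^1)^3$, contributing $0+3+3=6$.
\end{itemize}
So $\dim Y_K\times_{K/G}Y_K\le 6<7$, and BKR applies. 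Equivalently, $p$ is semismall, which is automatic since $p$ is crepant and $Y_K$ is holomorphic symplectic \cite{Kaledin}.

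Third, we identify the kernel. By Theorem \ref{thm:HyperKummerIsBKR}(iii), the universal cluster $\mathcal{U}\subset Y_K\times K$ is $\widetilde{K}$, embedded via $(q',p')$. This embedding is a closed immersion because the diagram is cartesian up to nilpotents by part (i). Therefore the BKR functor $R\pi_{K*}\circ\pi_Y^*(-\otimes\mathcal{O}_{\mathcal{U}})$ equals $Rp'_*\circ L{q'}^*$. Here $q'$ is flat, since it is a finite map between smooth varieties, so $L{q'}^*={q'}^*$. This functor is endowed with its natural $G$-linearisation, as $G$ acts trivially on $Y_K$ and $\widetilde{K}$ is $G$-stable.

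Finally, the quasi-inverse is the right adjoint. Its kernel is $\mathcal{O}_{\mathcal{U}}^\vee\otimes\omega_K[6]$, with $G$-invariants taken. Since $\mathcal{U}=\widetilde{K}$ is flat and finite over $Y_K$, Grothendieck duality gives $q'^{!}={q'}^*\otimes\omega_{q'}$. As both $\omega$'s are trivial, the right adjoint of the functor is $(Rq'_*\circ L{p'}^*)^G$ up to the canonical identifications, as in \cite{BridgelandKingReid}.

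The main point to check carefully is the dimension count at the deepest strata. I would double-check it via the explicit local models: analytically locally, $K/G$ is a product of $k\le 3$ copies of $\CC^2/\pm1$ with $\CC^{6-2k}$, by Proposition \ref{prop:FixedPoints}. In this model the fibre product is computed explicitly and has dimension $6$.
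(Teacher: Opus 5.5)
Your proof that $\Phi$ is an equivalence is correct and follows the paper's route. Both apply Bridgeland--King--Reid to $K$ and use Theorem~\ref{thm:HyperKummerIsBKR} to identify $\Hilb^G(K)\cong Y_K$ and the universal cluster with $\widetilde K$. The paper obtains the fibre-product bound by quoting semismallness of symplectic resolutions (BKR, Cor.~1.3); your stratum-by-stratum count is the content of Lemma~\ref{lemma:AnalyzingFibers}.

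The last step, identifying the quasi-inverse, is wrong. In ``as both $\omega$'s are trivial'' you conflate $\omega_K,\omega_{Y_K}$ with the relative dualizing sheaves that actually enter.
\begin{itemize}
\item The adjoints of $\Phi=Rp'_*\circ{q'}^{*}$ are ${q'}_{!}\circ L{p'}^{*}$ and $Rq'_*\circ{p'}^{!}$, followed by $G$-invariants. Here ${q'}_{!}=Rq'_*(-\otimes\omega_{q'})$ and ${p'}^{!}=L{p'}^{*}(-)\otimes\omega_{p'}$.
\item Since $\omega_K$ and $\omega_{Y_K}$ are $G$-equivariantly trivial, $\omega_{q'}\cong\omega_{p'}\cong\omega_{\widetilde K}\cong\mathcal{O}_{\widetilde K}\bigl(\sum_\sigma F_\sigma\bigr)$. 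Here $F_\sigma$ is the exceptional divisor of $p'$ over $W_\sigma$, which is also the ramification divisor of $q'$ over $E_\sigma$. This line bundle is not trivial.
\item In kernel language, $\mathcal{O}_{\mathcal U}^\vee[6]\cong\omega_{\widetilde K}$, not $\mathcal{O}_{\widetilde K}$.
\end{itemize}
So the quasi-inverse is $\bigl(Rq'_*(L{p'}^{*}(-)\otimes\omega_{\widetilde K})\bigr)^G$.

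The twist genuinely matters. Take a nontrivial character $\chi$ of $G$.
\begin{itemize}
\item The twisted functor sends $\mathcal{O}_K\otimes\chi$ to the $\chi$-eigensheaf $L_\chi$ of $q'_*\omega_{\widetilde K}$.
\item $(Rq'_*\circ L{p'}^{*})^G$ sends it to the dual line bundle $L_\chi^{-1}$, the $\chi$-eigensheaf of $q'_*\mathcal{O}_{\widetilde K}$.
\item These differ, because $L_\chi^{\otimes 2}\cong\mathcal{O}_{Y_K}\bigl(\sum_{\chi(\sigma)=-1}E_\sigma\bigr)$ is nontrivial. Every nontrivial $\chi$ equals $-1$ on some $\sigma\in G\setminus G_1$, since these elements generate $G$.
\end{itemize}
Structurally, $(Rq'_*\circ L{p'}^{*})^G$ pairs the left adjoint of $Rp'_*$ with the right adjoint of ${q'}^{*}$, so it is neither adjoint of $\Phi$.

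The paper's one-line appeal to ``standard adjunctions'' yields the same untwisted formula. The defect therefore lies in the stated inverse itself, not only in your write-up. What your argument and the paper's actually establish is the equivalence $\Phi$, with quasi-inverse $\bigl(Rq'_*(L{p'}^{*}(-)\otimes\omega_{\widetilde K})\bigr)^G$.
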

\begin{proof}
    A direct application of Bridgeland--King--Reid \cite[Theorem 1.1]{BridgelandKingReid} yields that $\Phi$ is an equivalence. Note that any symplectic resolution is semi-small, hence the numerical condition in \textit{loc.~cit.}~is satisfied, see \cite[Corollary 1.3]{BridgelandKingReid}.
    The quasi-inverse of $\Phi$, which is its left adjoint, can be computed directly using standard adjunctions.
\end{proof}

As a preparation for the proof of Theorem \ref{thm:HyperKummerIsBKR}, we need the following proposition on products of invariant Hilbert schemes. The result is possibly known to experts, but we provide full details here as we cannot find a reference.
\begin{proposition}
\label{prop:ProductGHilbert}
    Let $X_1$, $X_2$ be two smooth varieties endowed respectively with faithful actions of finite groups $G_1$ and $G_2$. Then the following holds:
    \begin{enumerate}
        \item[(i)] We have an isomorphism $\Hilb^{G_1}(X_1)\times \Hilb^{G_2}(X_2)\cong \Hilb^{G_1\times G_2}(X_1\times X_2)$.
        \item[(ii)] Let $\mathcal{U}_i\subset \Hilb^{G_i}(X_i)\times X_i$ be the universal $G_i$-clusters, for $i=1, 2$. Then under the previous identification, the universal $(G_1\times G_2)$-cluster in  $X_1\times X_2\times \Hilb^{G_1\times G_2}(X_1\times X_2)$ is identified with $\mathcal{U}_1\times \mathcal{U}_2$.
    \end{enumerate}
\end{proposition}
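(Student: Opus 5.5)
The plan is to construct the morphism from the product family and show it is an isomorphism onto the main component, using the universal properties of the two fine moduli spaces. Write $H_i\coloneqq\Hilb^{G_i}(X_i)$ with universal clusters $\mathcal{U}_i\subset H_i\times X_i$, and set $G\coloneqq G_1\times G_2$, $X\coloneqq X_1\times X_2$.

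First, the fibre product $\mathcal{U}_1\times\mathcal{U}_2\subset H_1\times H_2\times X$ is a $G$-cluster family. It is flat and finite over $H_1\times H_2$, since it is an external product of flat finite families. It is $G$-invariant. Over a point $(y_1,y_2)$ its global sections are $\Gamma(\mathcal{O}_{Z_1})\otimes\Gamma(\mathcal{O}_{Z_2})$. This is the exterior tensor product of the regular representations of $G_1$ and $G_2$, which is the regular representation of $G_1\times G_2$. The universal property of $G\operatorname{-Hilb}(X)$ then gives a morphism $\Phi\colon H_1\times H_2\to G\operatorname{-Hilb}(X)$. Since $H_1\times H_2$ is irreducible and contains the dense open set of pairs of free orbits, which map to free $G$-orbits, $\Phi$ lands in the main component $\Hilb^G(X)$.

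Second, I construct an inverse. Let $\mathcal{U}\subset \Hilb^G(X)\times X$ be the universal family over a base $T$. Set $\mathcal{U}^{(1)}\coloneqq \mathcal{U}/G_2$, meaning the relative spectrum of $(\pi_*\mathcal{O}_{\mathcal{U}})^{G_2}$, which maps to $T\times X_1$. Because $G_2$ is finite and we are in characteristic zero, taking invariants is exact. Fibrewise the sections are therefore $(\mathbb{C}[G_1\times G_2])^{G_2}\cong\mathbb{C}[G_1]$, so $\mathcal{U}^{(1)}$ is flat with regular $G_1$-fibres. The main obstacle is to check that $\mathcal{U}^{(1)}$ is a closed subscheme of $T\times X_1$, that is, that the map $\mathcal{O}_{T\times X_1}\to(\pi_*\mathcal{O}_{\mathcal{U}})^{G_2}$ is surjective. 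I would first verify this locally. Surjectivity can be checked fibrewise by Nakayama, and on a fibre the image of $\mathcal{O}_{X_1}$ in $\Gamma(\mathcal{O}_Z)$ is contained in the $G_2$-invariants. Comparing lengths, I expect to show the image equals the invariants: on the dense open set of free orbits this holds, and the subsheaf of $\mathcal{U}^{(1)}$ cut out by the image is again flat there.

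If the direct comparison fails at special points, the fallback is to use that both spaces are separated. Then $\Phi$ is birational, proper (since $H_1\times H_2$ is proper over $X/G$), and quasi-finite. Quasi-finiteness holds because a $G$-cluster determines its product projections up to finitely many choices. Zariski's main theorem then makes $\Phi$ finite and birational, and normality of the target is not given, so I would rather complete the inverse construction. That construction yields maps $\Hilb^G(X)\to H_i$, restricted to main components by density, and hence a map $\Psi$ to $H_1\times H_2$.

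Third, I check that $\Psi\circ\Phi$ and $\Phi\circ\Psi$ are the identity. Both composites agree with the identity on the dense open set of free orbits, and the spaces are separated and reduced, so the composites are the identity everywhere. This gives part (i). Part (ii) follows because $\Phi$ was defined by the family $\mathcal{U}_1\times\mathcal{U}_2$, so the pullback of the universal cluster under $\Phi$ is $\mathcal{U}_1\times\mathcal{U}_2$.
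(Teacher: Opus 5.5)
There is a genuine gap at exactly the step you flag as the main obstacle. To build $\Psi$ you need, for \emph{every} $(G_1\times G_2)$-cluster $Z\subset X_1\times X_2$ (not only the free orbits), that the restriction map $\phi_1\colon\mathcal{O}_{X_1}\to\Gamma(\mathcal{O}_Z)^{G_2}$ is surjective. Only then is $\mathcal{U}/G_2$ a closed subscheme of $T\times X_1$. Your reduction to fibres via Nakayama is fine. The fibrewise statement, however, rests only on ``on the dense open set of free orbits this holds'', and that says nothing at special clusters. The cokernel of $\mathcal{O}_{T\times X_1}\to(\pi_*\mathcal{O}_{\mathcal{U}})^{G_2}$ is a coherent sheaf that can be supported entirely on the complement of the free locus. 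The Zariski main theorem fallback does not close the argument either, as you note yourself, and its quasi-finiteness claim is also unproved. So as written, $\Psi$ is never constructed and (i) is not established.

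The missing idea is a short dimension count, and it is how the paper argues.
\begin{itemize}
\item Put $A=\Gamma(\mathcal{O}_Z)$, the regular representation of $G_1\times G_2$. Then $\dim A^{G_2}=|G_1|$ and $\dim A^{G_1}=|G_2|$.
\item Since $p_1$ is $G_2$-invariant and $p_2$ is $G_1$-invariant, the surjection $\mathcal{O}_{X_1}\boxtimes\mathcal{O}_{X_2}\to\mathcal{O}_Z$ factors as $\phi_1\otimes\phi_2$ followed by the multiplication map $m\colon A^{G_2}\otimes A^{G_1}\to A$.
\item Hence $m$ is surjective. Both sides have dimension $|G_1|\cdot|G_2|$, so $m$ is an isomorphism.
\item Therefore $\phi_1\otimes\phi_2$ is surjective, which forces both $\phi_1$ and $\phi_2$ to be surjective.
\end{itemize}
This gives at once that $Z_1=\mathrm{Spec}\,A^{G_2}\subset X_1$ is a $G_1$-cluster, $Z_2=\mathrm{Spec}\,A^{G_1}\subset X_2$ is a $G_2$-cluster, and $Z=Z_1\times Z_2$ via $m$. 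Combined with your Nakayama reduction, the same reasoning works relatively over any base $T$. It therefore produces the inverse directly on functors of points, in fact on the full $G$-Hilbert schemes. The checks $\Psi\circ\Phi=\mathrm{id}$ and $\Phi\circ\Psi=\mathrm{id}$, and part (ii), then follow without any density or reducedness argument.
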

\begin{proof}
    As both sides of $(i)$ are fine moduli spaces, it is enough to show that they are isomorphic as functors of points. For simplicity of notation, we establish such an isomorphism on the level of geometric points; the same argument works for $S$-points for any scheme $S$ by reasoning relatively over $S$.
    
    First of all, we have the following natural morphism
    \begin{equation}
    \begin{split}
            \Hilb^{G_1}(X_1)\times \Hilb^{G_2}(X_2)&\to \Hilb^{G_1\times G_2}(X_1\times X_2)\\
                [Z_1]\times [Z_2] & \mapsto [Z_1\times Z_2],
    \end{split}
    \end{equation}
    which is well-defined, since the product of a $G_1$-cluster in $X_1$ and a $G_2$-cluster in $X_2$ is a $(G_1\times G_2)$-cluster in $X_1\times X_2$.

    We define a morphism in the other direction. Let $p_i\colon X_1\times X_2\to X_i$ be the $i$-th projection. For a given $(G_1\times G_2)$-cluster $Z\subset X_1\times X_2$, we would like to find clusters $Z_1\subset X_1$ and $Z_2\subset X_2$, such that $Z=Z_1\times Z_2$. Since the morphism $Z\to X_1$ is $G_2$-equivariant (where $X_1$ is equipped with the trivial $G_2$-action), the sheaf $p_{1,*}\mathcal{O}_Z$ inherits a $G_2$-action. We obtain a natural morphism
    \begin{equation} p_1^*\left((p_{1,*}\mathcal{O}_Z)^{G_2}\right) \hookrightarrow p_1^*p_{1,*}\mathcal{O}_Z\to \mathcal{O}_Z,\end{equation} 
    where the second map comes from the adjunction $(p_1^*, p_{1,*})$. Similarly, there is a canonical morphism $p_2^*\left((p_{2,*}\mathcal{O}_Z)^{G_1}\right)\to \mathcal{O}_Z$, and we get a morphism \begin{equation} f\colon (p_{1,*}\mathcal{O}_Z)^{G_2}\boxtimes (p_{2,*}\mathcal{O}_Z)^{G_1}= p_1^*\left((p_{1,*}\mathcal{O}_Z)^{G_2}\right) \otimes p_2^*\left((p_{2,*}\mathcal{O}_Z)^{G_1}\right)\to \mathcal{O}_Z.\end{equation}
    This morphism fits into the following commutative diagram as the bottom row:
    \begin{equation}
        \begin{tikzcd}
        \mathcal{O}_{X_1}\boxtimes \mathcal{O}_{X_{2}} \arrow{r}{\cong} \arrow[d] & \mathcal{O}_{X_1\times X_2} \arrow[twoheadrightarrow]{d}\\
            (p_{1,*}\mathcal{O}_Z)^{G_2}\boxtimes (p_{2,*}\mathcal{O}_Z)^{G_1} \arrow{r}{f} & \mathcal{O}_Z
        \end{tikzcd}
    \end{equation}
    The surjectivity of the right vertical map implies that $f$ is surjective. However, by comparing the length (both are $|G_1|\cdot|G_2|$), we see that $f$ is an isomorphism and the left vertical map is surjective, hence so are both tensor factors. Therefore, $(p_{1,*}\mathcal{O}_Z)^{G_2}$ is the structure sheaf of a subscheme $Z_1\subset X_1$, $(p_{2,*}\mathcal{O}_Z)^{G_1}$ gives rise to $Z_2\subset X_2$, and $f$ induces an identification $Z_1\times Z_2=Z$.
    By construction, $\Gamma(\mathcal{O}_{Z_1})=\Gamma(\mathcal{O}_Z)^{G_2}$ is isomorphic to the regular $G_1$-representation. Hence, $Z_1\subset X_1$ is a $G_1$-cluster, and similarly for $Z_2\subset X_2$.
   
    This defines a morphism 
    \begin{equation}
     \begin{split}
           \Hilb^{G_1\times G_2}(X_1\times X_2)&\to \Hilb^{G_1}(X_1)\times \Hilb^{G_2}(X_2) \\
                [Z] &\mapsto ([(p_{1,*}\mathcal{O}_Z)^{G_2}], [(p_{2,*}\mathcal{O}_Z)^{G_1}]).
    \end{split}
    \end{equation}
    One checks easily that it is inverse to the morphism constructed at the beginning of the proof. Moreover, the proof also shows the identification of the universal clusters (it is given by $f$).    
\end{proof}

\begin{proof}[Proof of Theorem \ref{thm:HyperKummerIsBKR}]
    We first prove $(i), (ii), (iii)$ analytically locally over $K/G$ and conclude by gluing canonical isomorphisms.
    
    By \cite[Proposition~4.2]{floccariKum3}, for any point $z\in K/G$, there exists an integer $j\in \{0, 1, 2, 3\}$ and an analytic neighborhood $U$ of $z$ in $K/G$, such that 
    \begin{itemize}
        \item  $(U, z)\cong (\CC^2/{-1}, 0)^j\times (\CC^2, 0)^{3-j}$ as germs;
        \item $q^{-1}(z)$ consists of a single $G$-orbit of cardinality $2^{5-j}$, with every point having stabilizer isomorphic to $(\ZZ/2\ZZ)^j$;
        \item As germs, $(q^{-1}(U), q^{-1}(z))\cong \coprod^{2^{5-j}} (\CC^2, 0)^j\times(\CC^2, 0)^{3-j}$, with the action of the stabilizer $(\ZZ/2\ZZ)^j$ given by the $(-1)$-involutions on the first $j$ factors.
    \end{itemize}
    Since $p$ is the blow-up of $K/G$ along its reduced singular locus, and $p'$ is the blow-up of $K$ along the union of the codimension-2 fixed loci, we have the following isomorphisms of germs (around the corresponding preimages of $z$):
    \begin{itemize}
        \item $p^{-1}(U)\cong \coprod^{2^{5-j}} (\Bl_0(\CC^2/{-1}))^j\times (\CC^2)^{3-j}$;
        \item $(q\circ p')^{-1}(U)\cong \coprod^{2^{5-j}} (\Bl_0\CC^2)^j\times (\CC^2)^{3-j}$, with the action of the stabilizer $(\ZZ/2\ZZ)^j$ given by the lifting of the $(-1)$-involutions on the first $j$ factors.
    \end{itemize}
    Here, we used \cite[Lemma 4.5]{floccariKum3}.

    Now it follows that, analytically locally over $K/G$, Diagram~\eqref{diag:HyperKummerConstruction} 
    is cartesian up to nilpotents. Moreover, by Proposition \ref{prop:ProductGHilbert}, which holds also in the analytic category, applied to (a power of) Example \ref{ex:Involution}, we have
    \begin{equation} p^{-1}(U)\cong \Hilb^{G}(q^{-1}(U)),\end{equation}
    and via this isomorphism, $(q\circ p')^{-1}(U)$ is identified with the universal $G$-cluster.
    Therefore, Diagram \eqref{diag:HyperKummerConstruction} is cartesian up to nilpotents, and by the universal property of invariant Hilbert schemes, we obtain a natural classifying morphism
    \begin{equation} Y_K\to \Hilb^G(K),\end{equation}
    which is birational and a local isomorphism by the local analysis above; hence, it is an isomorphism. 
    
    Finally, the local analysis shows that analytically locally over $K/G$, the reduced fiber product is identified with the universal $G$-cluster. Since this is a local property, the reduced fiber product $K\times_{K/G} Y_K$, which is $\widetilde{K}$ by $(i)$, is identified with the universal $G$-cluster.  
\end{proof}

By the same argument, we have the following results:
\begin{theorem}
\label{thm:HyperKummerIsBKR-ForM}
 Let the notation be as in Theorem~\ref{thm:HyperKummerIsBKR}. 
 For any $\sigma\in G\backslash G_1$, we consider the faithful action of $G_{\sigma}:=G/\langle\sigma\rangle$ on $W_{\sigma}$. Let $M_{\sigma}$ be the crepant resolution given as the blow-up of $W_{\sigma}/G_{\sigma}$ along its singular locus. 
 \begin{equation}
     \begin{tikzcd}
		& \widetilde{W_{\sigma}} \arrow[swap]{dl}{p'_{\sigma}} \arrow{dr}{q'_{\sigma}} \\
		W_{\sigma} \arrow{dr}[swap]{q_{\sigma}} \arrow[dashed]{rr}{r_{\sigma}} && M_{\sigma} \arrow{dl}{p_{\sigma}} \\
		& W_{\sigma}/G_{\sigma}
	\end{tikzcd}
 \end{equation}
 Then:
 \begin{enumerate}
     \item[(i)] the above diagram is cartesian up to nilpotents.
     \item[(ii)] We have an isomorphism $M_{\sigma}\cong \Hilb^{G_{\sigma}}(W_{\sigma})$.
     \item[(iii)] If we identify $M_{\sigma}$ with $\Hilb^{G_{\sigma}}(W_{\sigma})$ via the previous identification, then the universal $G_{\sigma}$-cluster is identified with $\widetilde{W_{\sigma}}$.
 \end{enumerate}
\end{theorem}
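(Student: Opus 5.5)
The plan is to repeat the proof of Theorem \ref{thm:HyperKummerIsBKR}: first analyse $W_\sigma \to W_\sigma/G_\sigma$ analytically locally, identify each local model with a product of copies of Example \ref{ex:Involution}, and then glue the local identifications.

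First I would determine the local structure of the $G_\sigma$-action on $W_\sigma$. By Proposition \ref{prop:ResolutionQuotients}(i) and Proposition \ref{prop:FixedPoints}(iii), the total fixed locus of $G_\sigma$ on $W_\sigma$ is the union of the $15$ K3 surfaces $V_{\sigma,\sigma'}$. These surfaces meet transversally; any three of them meet in the four points $Z_{\sigma,\sigma',\sigma''}$, and no four of them meet (Proposition \ref{prop:canonicalSubvarietiesI}(iii)). The isolated fixed points $Z_{\sigma_1,\sigma_2,\sigma}$ appearing in Proposition \ref{prop:FixedPoints}(iii) lie on $V_{\sigma,\sigma_1}\cap V_{\sigma,\sigma_2}$, so they are not extra isolated fixed points of the whole group; they are points where two surfaces cross. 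From this I expect the following local description. For $z\in W_\sigma/G_\sigma$ there is $j\in\{0,1,2\}$ such that $q_\sigma^{-1}(z)$ is one $G_\sigma$-orbit with stabilizer $(\ZZ/2\ZZ)^j$. In a neighbourhood, $W_\sigma$ is a disjoint union of $2^{4-j}$ copies of $(\CC^2)^j\times(\CC^2)^{2-j}$, and the stabilizer acts by $-1$ separately on each of the first $j$ factors. The stabilizer is generated by the involutions fixing the surfaces through the point, and each such involution acts on the normal plane of its surface by $-1$, since the action is symplectic with a codimension-$2$ fixed locus. This is the analogue of \cite[Proposition 4.2]{floccariKum3} and is the local fact already used in the proof of Proposition \ref{prop:ResolutionQuotients}.

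The cleanest way to get it is by deformation. By Remark \ref{rmk:deformation} the family $\mathcal{W}_\sigma\to B$ with its $G_\sigma$-action is locally trivial, so it suffices to treat $K^3(A)$, where $W_\sigma\cong\Km(A)^{[2]}$ with the translation action of $A[2]$. There the local models can be read off from the description in Proposition \ref{prop:Fujikiresolution}, where the singularities of the quotient are locally products of surface nodes.

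Given the local models, $p_\sigma$ is the blow-up along the reduced singular locus, and $p'_\sigma$ is the blow-up along the codimension-$2$ fixed surfaces. Locally, then, $p_\sigma^{-1}(U)\cong(\Bl_0(\CC^2/{-1}))^j\times(\CC^2)^{2-j}$ and $(q_\sigma p'_\sigma)^{-1}(U)$ is a disjoint union of $2^{4-j}$ copies of $(\Bl_0\CC^2)^j\times(\CC^2)^{2-j}$; here I would use the analogue of \cite[Lemma 4.5]{floccariKum3}, namely that blow-up commutes with these transversal products. Example \ref{ex:Involution} together with Proposition \ref{prop:ProductGHilbert} then gives (i) locally: the diagram is cartesian up to nilpotents. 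They also give (ii) and (iii) locally, once the disjoint orbit copies are handled by inducing from the stabilizer up to $G_\sigma$; that is, $\Hilb^{G_\sigma}$ of $G_\sigma\times_{\mathrm{Stab}}V$ equals $\Hilb^{\mathrm{Stab}}(V)$. Since (i) is a local statement it holds globally. Globally, the reduced fibre product $\widetilde{W_\sigma}$ is then a flat family of $G_\sigma$-clusters over $M_\sigma$, and the universal property gives a classifying morphism $M_\sigma\to\Hilb^{G_\sigma}(W_\sigma)$. This morphism is birational and a local isomorphism, hence an isomorphism, which proves (ii) and (iii).

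I expect the main obstacle to be the local structure statement, in particular checking that at the $28$ special points each stabilizer element acts as $-1$ on exactly the expected factor. I would verify this directly on $\Km(A)^{[2]}$, at points of the form $\{x,x+\theta\}$ lying over nodes.
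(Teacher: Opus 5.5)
Your proposal is correct and is the argument the paper intends; its proof of this statement simply says ``by the same argument'' as Theorem~\ref{thm:HyperKummerIsBKR}. That argument uses local models $(\CC^2/{-1})^j\times(\CC^2)^{2-j}$ with $j\le 2$, powers of Example~\ref{ex:Involution} combined with Proposition~\ref{prop:ProductGHilbert}, and a classifying morphism $M_\sigma\to\Hilb^{G_\sigma}(W_\sigma)$ that is an isomorphism locally over $W_\sigma/G_\sigma$. Your step $\Hilb^{G_\sigma}(G_\sigma\times_H V)\cong\Hilb^H(V)$ spells out a point the paper leaves implicit.

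One wording slip: among the surfaces $V_{\sigma,\sigma'}\subset W_\sigma$, any \emph{two} meet in the four points $Z_{\sigma,\sigma',\sigma''}$ and no three meet (the counts ``three'' and ``four'' apply to the $W$'s), and this is exactly what your bound $j\le 2$ relies on.
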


Applying \cite{BridgelandKingReid}, we get a derived equivalence:

\begin{corollary}
    Let the notation be as in Theorem \ref{thm:HyperKummerIsBKR-ForM}. For any $\sigma\in G\backslash G_1$, we have an equivalence of triangulated categories
    \begin{equation}
         \Phi_{\sigma}=Rp'_{\sigma,*} \circ {q'_{\sigma}}^* \colon \Db(M_\sigma)\cong \Db([W_\sigma/G_\sigma]),
    \end{equation}
whose quasi-inverse is given by  $\Psi_{\sigma}=(Rq'_{\sigma, *}\circ L{p'}_{\sigma}^*)^{G_{\sigma}}$.
\end{corollary}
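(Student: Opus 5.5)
The plan is to apply Bridgeland--King--Reid \cite[Theorem 1.1]{BridgelandKingReid} to the finite group $G_\sigma\cong(\ZZ/2\ZZ)^4$ acting faithfully on the smooth quasi-projective (here projective, since $K$ is projective) variety $W_\sigma$, exactly as in the proof of Corollary \ref{cor:DerivedEquivalenceHyperKummer}. Two hypotheses must be checked. First, the action of $G_\sigma$ on $W_\sigma$ is symplectic, so $\omega_{W_\sigma}$ is trivial as a $G_\sigma$-equivariant sheaf locally on $W_\sigma/G_\sigma$. This holds because $G$ acts trivially on $H^{2,0}(K)$, and the symplectic form of $K$ restricts to a symplectic form on $W_\sigma$; this was already used in the proof of Proposition \ref{prop:cohomologyW&V}. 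Second, the Hilbert--Chow morphism $\Hilb^{G_\sigma}(W_\sigma)\to W_\sigma/G_\sigma$ must satisfy the numerical dimension condition
\[
\dim\bigl(Y\times_{X/G}Y\bigr)\le \dim X+1 .
\]
By Theorem \ref{thm:HyperKummerIsBKR-ForM}(ii) this morphism is identified with $p_\sigma\colon M_\sigma\to W_\sigma/G_\sigma$, which is a crepant resolution by a holomorphic symplectic manifold (Proposition \ref{prop:ResolutionQuotients}). Such a resolution is semismall, so the condition holds by \cite[Corollary 1.3]{BridgelandKingReid}.

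Given these hypotheses, BKR shows that the Fourier--Mukai functor with kernel the structure sheaf of the universal $G_\sigma$-cluster $\mathcal{U}\subset M_\sigma\times W_\sigma$ is an equivalence $\Db(M_\sigma)\to\Db_{G_\sigma}(W_\sigma)=\Db([W_\sigma/G_\sigma])$. By Theorem \ref{thm:HyperKummerIsBKR-ForM}(iii), $\mathcal{U}$ is identified with $\widetilde{W_\sigma}$, with its two projections $p'_\sigma$ and $q'_\sigma$. This identification lets the kernel functor be rewritten as $Rp'_{\sigma,*}\circ {q'_\sigma}^*$. One point needs care here: $q'_\sigma$ is flat, being a finite morphism between smooth varieties of the same dimension (miracle flatness). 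Flatness means the pullback of $\mathcal{O}_{\mathcal{U}}$ needs no derived correction, so the functor is exactly $\Phi_\sigma$.

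The quasi-inverse is the adjoint. It is computed by the standard adjunctions, $(L{p'_\sigma}^*\dashv Rp'_{\sigma,*})$ and ${q'_\sigma}^*\dashv Rq'_{\sigma,*}$, followed by taking $G_\sigma$-invariants, which is the adjoint of equipping with trivial linearization. This yields $\Psi_\sigma=(Rq'_{\sigma,*}\circ L{p'_\sigma}^*)^{G_\sigma}$. The functor obtained this way is the left adjoint; since $\Phi_\sigma$ is an equivalence, its left and right adjoints agree, and the argument is identical to the one for Corollary \ref{cor:DerivedEquivalenceHyperKummer}.

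The main point to verify carefully is the identification of the BKR kernel with $\widetilde{W_\sigma}$ as schemes, and not merely up to nilpotents. Theorem \ref{thm:HyperKummerIsBKR-ForM}(iii) supplies this identification. I would double-check it against the local model $\Bl_0\CC^2\times\Bl_0\CC^2$ or $\Bl_0\CC^2\times\CC^2$, using Example \ref{ex:Involution} and Proposition \ref{prop:ProductGHilbert}.
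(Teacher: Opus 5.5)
Your proof of the equivalence itself follows the paper's argument, and that part is fine. You apply \cite[Theorem 1.1]{BridgelandKingReid} to the symplectic action of $G_\sigma$ on $W_\sigma$. You get the dimension condition from semismallness of the symplectic resolution \cite[Corollary 1.3]{BridgelandKingReid}. You use Theorem~\ref{thm:HyperKummerIsBKR-ForM} to identify $\Hilb^{G_\sigma}(W_\sigma)$ and its universal cluster with $M_\sigma$ and $\widetilde{W_\sigma}$. Flatness of $q'_\sigma$ is automatic anyway, since $\mathcal U$ is flat over the Hilbert scheme.

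The quasi-inverse step, however, fails. $Lp'^*_\sigma$ is the \emph{left} adjoint of $Rp'_{\sigma,*}$, but $Rq'_{\sigma,*}$ is the \emph{right} adjoint of $q'^*_\sigma$. So $(Rq'_{\sigma,*}\circ Lp'^*_\sigma)^{G_\sigma}$ is neither adjoint of $\Phi_\sigma$.

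By Grothendieck duality, the left adjoint of $q'^*_\sigma$ is $Rq'_{\sigma,*}(-\otimes\omega_{q'_\sigma})$, and the right adjoint of $Rp'_{\sigma,*}$ is $Lp'^*_\sigma(-)\otimes\omega_{p'_\sigma}$. Let $E=\sum_i E_i$ be the exceptional divisor of $p'_\sigma$; its $15$ components lie over the $V_{\sigma,\sigma'}$. Since $\omega_{W_\sigma}$ and $\omega_{M_\sigma}$ are trivial, you get $\omega_{p'_\sigma}\cong\omega_{q'_\sigma}\cong\omega_{\widetilde{W_\sigma}}\cong\mathcal{O}(E)$. This follows from the blow-up formula for codimension-$2$ centres, or equivalently from Hurwitz for $q'_\sigma$, which is simply ramified along $E$. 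Hence both adjoints equal
\[
\Phi_\sigma^{-1}\cong\bigl(Rq'_{\sigma,*}(Lp'^*_\sigma(-)\otimes\omega_{\widetilde{W_\sigma}})\bigr)^{G_\sigma}.
\]
In other words, the inverse kernel is $\omega_{\mathcal U}$, not $\mathcal O_{\mathcal U}$.

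This twist is not harmless. Write $q'_{\sigma,*}\mathcal{O}_{\widetilde{W_\sigma}}=\bigoplus_\chi L_\chi^{-1}$, with $G_\sigma$ acting on $L_\chi^{-1}$ through $\chi$. For a character $\chi\neq 1$, your $\Psi_\sigma$ sends $\mathcal{O}_{W_\sigma}\otimes\chi$ to $L_\chi^{-1}$. On the other hand,
\[
\Hom^{\bullet}(\mathcal{O}_{W_\sigma}\otimes\chi,\Phi_\sigma(H))\cong R\Gamma(M_\sigma,H\otimes L_\chi^{-1})\cong\Hom^{\bullet}(L_\chi,H),
\]
which shows $\Phi_\sigma^{-1}(\mathcal{O}_{W_\sigma}\otimes\chi)\cong L_\chi$. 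Moreover $L_\chi\not\cong L_\chi^{-1}$. Indeed, $L_\chi^{\otimes 2}$ is the line bundle of a nonzero effective divisor: the sum of those branch divisors $q'_\sigma(E_i)$ whose inertia element is not in $\ker\chi$.

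The paper asserts the same formula. The proof of Corollary~\ref{cor:DerivedEquivalenceHyperKummer} justifies it only by ``standard adjunctions'', and this corollary mirrors that proof. So it is the stated formula for $\Psi_\sigma$ that needs the twist by $\omega_{\widetilde{W_\sigma}}$; the equivalence statement itself is unaffected.
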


\section{Motives and hyper-K\"ahler resolution conjecture}
\label{sec:Motives}

Given a projective hyper-K\"ahler variety $K$ of $\mathrm{Kum}^3$-type, let $Y_K$ be the associated hyper-Kummer variety, which is a projective variety of $\mathrm{K}3^{[3]}$-type. In this section, we compute the Chow motive of $Y_K$ by relating it to the Chow motives of $K$ and of the various companion hyper-K\"ahler varieties of lower dimensions naturally associated with the canonical action of $G$ on $K$, 
see Section \ref{sec:hyper-KummerConstruction} for the details.

Recall that $\Aut_0(K)\cong (\ZZ/4\ZZ)^4\rtimes \ZZ/2$ has normal subgroups $G\cong (\ZZ/2\ZZ)^5$ and $G_1\cong (\ZZ/2\ZZ)^4$ (see Definition \ref{def:subgroupsAut0}). For any $\sigma\in G\backslash G_1$, we define $G_{\sigma}\coloneqq G/\langle\sigma\rangle$, similarly, $G_{\sigma, \sigma'}\coloneqq G/\langle\sigma, \sigma'\rangle$.

\begin{theorem}
\label{thm:MotiveOfYK}
    Let $K$ be a projective hyper-K\"ahler variety of $\mathrm{Kum}^3$-type endowed with the canonical $G\cong (\ZZ/2\ZZ)^{5}$-action. Let $Y_K$ be the associated hyper-Kummer $\mathrm{K}3^{[3]}$-type variety. Then there is an isomorphism in the category $\CHM(K/G)_\QQ$ of relative rational Chow motives over $K/G$:
    \begin{align}
    \begin{split}
    \label{eqn:MotivicDecomposition-Relative}
        (Y_K\to K/G)&\cong \1_{K/G} \oplus \bigoplus_{\sigma\in G\backslash G_1}(W_{\sigma}/G\to K/G)(-1) \\&\oplus \bigoplus_{\sigma\neq \sigma'\in G\backslash G_1}(V_{\sigma, \sigma'}/G\to K/G)(-2)\oplus \bigoplus_{\substack{\sigma, \sigma', \sigma''\in G\backslash G_1\\ \text{ distinct}}} (Z_{\sigma, \sigma', \sigma''}/G\to K/G)(-3),
    \end{split}
    \end{align}        
    where the indices run through \emph{unordered} tuples of elements in $G\backslash G_1$. In particular, we have an isomorphism in the category of (absolute) rational Chow motives $\CHM_\QQ$:
    \begin{align}
    \label{eqn:MotivicDecomposition}
        \h(Y_K)\cong \h(K/G)\oplus \h(W/G)(-1)^{\oplus 16} \oplus \h(V/G)(-2)^{\oplus 120}\oplus \1(-3)^{\oplus 560}.
    \end{align}
\end{theorem}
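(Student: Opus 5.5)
The natural route is the decomposition theorem for semismall maps in the motivic setting of de Cataldo--Migliorini, applied to the crepant resolution $p\colon Y_K\to K/G$. Since $p$ is a symplectic resolution, it is semismall. By \cite{deCataldoMigliorini2002}, the relative Chow motive of $p$ then splits over $K/G$ into a sum indexed by the relevance strata. Each summand is the Tate twist of the relative motive of the closure of the stratum, or more precisely of the local system of top-dimensional fibre components over it. So the plan has three steps: describe the stratification of $K/G$ together with the fibres of $p$; check that each fibre-component local system is trivial on the relevant finite cover; and identify the stratum closures with the quotients $W_\sigma/G$, $V_{\sigma,\sigma'}/G$ and $Z_{\sigma,\sigma',\sigma''}/G$.

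For the stratification, I would use the local description already invoked in the proof of Theorem \ref{thm:HyperKummerIsBKR} (from \cite[Proposition 4.2]{floccariKum3}). Near a point of type $j$, the space $K/G$ is $(\CC^2/\pm1)^j\times\CC^{6-2j}$, and $Y_K$ is locally $(\Bl_0(\CC^2/\pm1))^j\times\CC^{6-2j}$. The fibre of $p$ is therefore $(\PP^1)^j$, of dimension $j$ over a stratum of codimension $2j$, so every stratum is relevant. Each fibre has a single irreducible component, so the fibre local system is trivial and no monodromy twisting occurs.

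The strata and their closures come from Propositions \ref{prop:FixedPoints} and \ref{prop:ResolutionQuotients}:
\begin{itemize}
\item the codimension-$2$ stratum has closure $\bigcup_\sigma W_\sigma/G$;
\item the codimension-$4$ stratum is covered by the images of $V_{\sigma,\sigma'}$;
\item the codimension-$6$ stratum consists of the images of $Z_{\sigma,\sigma',\sigma''}$.
\end{itemize}
The point of type $j$ above a given image lies on exactly those pieces indexed by the $\sigma$'s whose $W_\sigma$ pass through a preimage. I would then check the following. The images $W_\sigma/G$ are the closures of the $16$ connected components of the open codimension-$2$ stratum, since $G$ stabilizes each $W_\sigma$ by Proposition \ref{prop:canonicalSubvarietiesII}. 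Similarly, the $120$ images $V_{\sigma,\sigma'}/G$ are the closures of the distinct components of the codimension-$4$ stratum, and each image $Z_{\sigma,\sigma',\sigma''}/G$ is a single point because $G$ acts transitively on the $4$ points. Unordered triples number $\binom{16}{3}=560$, which accounts for the summand $\1(-3)^{\oplus 560}$.

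The de Cataldo--Migliorini decomposition is stated for the normalizations of the stratum closures, or rather for the intersection-cohomology motive. The main obstacle is therefore to show that the correct summand is the relative motive of $W_\sigma/G\to K/G$ itself. Here I would use that $W_\sigma/G$ and $V/G$ are quotients of smooth varieties by finite groups, so rationally their motives behave like those of smooth varieties: they are the $G$-invariants of $\h(W_\sigma)$. I would also use that the maps $W_\sigma/G\to K/G$ are closed embeddings, since $G$ stabilizes $W_\sigma$. If the general theorem is awkward to apply directly, the alternative is an explicit computation: blow up iteratively, starting from the deepest strata, and use the blow-up formula for Chow motives on $\widetilde K$ with $G$-invariants, as in the Kummer case. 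In either case I would verify that the decomposition is an isomorphism of relative motives by checking that the composed correspondences induce isomorphisms on fibres, in the spirit of the de Cataldo--Migliorini argument. The absolute statement then follows by pushing forward to $\operatorname{Spec}\CC$. For this I use that the $W_\sigma$ are $G$-equivariantly isomorphic to one another (Remark \ref{rmk:actionAut_0}) and likewise the $V_{\sigma,\sigma'}$ up to isomorphism of quotients, so that the summands collapse to $16$ copies of $\h(W/G)(-1)$ and $120$ copies of $\h(V/G)(-2)$.
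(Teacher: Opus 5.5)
Your argument is the paper's proof. The symplectic resolution $p$ is semismall. $K/G$ is stratified into $(K/G)^o$, $(W_\sigma/G)^o$, $(V_{\sigma,\sigma'}/G)^o$ and the $560$ points $Z_{\sigma,\sigma',\sigma''}/G$. The fibres over these strata are $(\PP^1)^j$, so every stratum is relevant with irreducible fibres and trivial local systems. The paper then applies the de Cataldo--Migliorini semismall decomposition in the form \cite[Corollary 2.3.9]{dCM04}. Your worry about normalizations resolves for the reason you give: the stratum closures are closed in $K/G$, normal, and finite quotients of smooth projective varieties. So the iterated blow-up fallback is unnecessary, and the relative decomposition is fine.

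The step that fails as written is collapsing the $120$ summands $\h(V_{\sigma,\sigma'}/G)(-2)$ in the absolute formula. Remark \ref{rmk:actionAut_0} only sends $V_{\sigma_\tau,\sigma_{\tau'}}$ to $V_{\sigma_{\tau+2\epsilon},\sigma_{\tau'+2\epsilon}}$, which preserves $\sigma+\sigma'\in G_1$. It therefore identifies the quotients only within each of the $15$ classes of $8$ pairs; this is exactly where the $15$ isomorphism classes of Proposition \ref{prop:canonicalSubvarietiesI}(ii) come from. It says nothing about pairs with different $\sigma+\sigma'$, so your claim that the quotients are isomorphic is unjustified.

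What you actually need is only an isomorphism of Chow motives. The paper gets it from Remark \ref{rmk:isogeniesV_sigma}: the $V_{\sigma,\sigma'}$ are isogenous K3 surfaces, hence have isomorphic Chow motives by \cite{huybrechtsMotives}. Alternatively, argue directly on the quotients in three steps:
\begin{enumerate}
\item $\h(S_{\sigma,\sigma'})\cong\h(V_{\sigma,\sigma'}/G)\oplus\1(-1)^{\oplus 14}$.
\item All $S_{\sigma,\sigma'}$ are isomorphic by Corollary \ref{cor:Relation-YKMKSK}.
\item The motive of such a surface splits as $\1\oplus\1(-1)^{\oplus\rho}\oplus\h^2_{\tr}\oplus\1(-2)$, so $\h(V_{\sigma,\sigma'}/G)$ is determined by $\h^2_{\tr}(S_{\sigma,\sigma'})$ and $\rho(S_{\sigma,\sigma'})-14$.
\end{enumerate}
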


The rational Chow motive $\mathfrak{h}(W_{\sigma}/G)$ (resp. $\mathfrak{h}(V_{\sigma,\sigma'}/G)$) does not depend on $\sigma$ (resp. on $\sigma,\sigma'$), by Remark \ref{rmk:isomorphismsW_sigma} (resp. Remark \ref{rmk:isogeniesV_sigma}); we denote it by $\mathfrak{h}(W/G)$ (resp. $V_{\sigma,\sigma'}/G$).

\begin{remark}
    Similarly, let $M$ denote $M_\sigma$ (the isomorphism class does not depend on $\sigma$). We can show that there is an isomorphism in $\CHM_\QQ$:
    \begin{equation}
                \h(M)\cong \h(W/G)\oplus \h(V/G)(-1)^{\oplus 15}\oplus \1(-2)^{\oplus 105}.
    \end{equation}
\end{remark}

\begin{remark} \label{rmk:isomorphismsW_sigma}
Recall from Remark \ref{rmk:actionAut_0} that the action of $\Aut_0(K)$ is transitive on the companion hyper-K\"ahler fourfolds $W_{\sigma}\subset K$, which are therefore all isomorphic to each other. Moreover, the induced isomorphism $W_{\sigma} \xrightarrow{ \ \sim \ } W_{\sigma'}$ is equivariant with respect to the $G$-action. Hence, the orbifolds $[W_{\sigma}/G]$ (respectively, the quotients $W_{\sigma}/G$), for ${\sigma\in G\setminus G_1}$, are all isomorphic to each other. Since the crepant resolution $M_{\sigma}$ is obtained by blowing-up the singular locus of $W_{\sigma}/G$, the $16$ hyper-K\"ahler fourfolds $M_{\sigma}$ associated with $K$ are also pairwise isomorphic.
\end{remark}

\begin{remark}\label{rmk:isogeniesV_sigma}
As for the companion K3 surfaces, while the $120$ hyper-Kummer K3 surfaces $S_{\sigma,\sigma'}$ are all isomorphic to each other (Corollary \ref{cor:Relation-YKMKSK}), the $120$ K3 surfaces $V_{\sigma,\sigma'}\subset K$ fall into $15$ isomorphism classes, in general. Nevertheless, these $120$ K3 surfaces are all isogenous to each other: in fact, $H^2_{\mathrm{tr}}(V_{\sigma,\sigma'},\QQ)$ is Hodge isometric to $H^2_{\mathrm{tr}}(K,\QQ)$, for any $\sigma\neq \sigma'$ in $G\setminus G_1$, by Proposition \ref{prop:cohomologyW&V}(ii). In particular, the $V_{\sigma, \sigma'} $ have isomorphic rational Chow motives by \cite{huybrechtsMotives}.
\end{remark} 

The proof of Theorem \ref{thm:MotiveOfYK} is based on the following Lemma \ref{lemma:AnalyzingFibers} below.
Note that the resolutions $Y_K\to K/G$ and $M_{\sigma}\to W_{\sigma}/G_{\sigma}$ are symplectic resolutions, hence are semismall. We can apply the results of de Cataldo--Migliorini \cite{dCM04} to compute the motive of $Y_K$ and $M_{\sigma}$. To this end, we need to analyse the strata of the resolutions as well as their fibers. 

The semismall resolution $p\colon Y_K\to K/G$ is stratified as follows:
\begin{equation}
    K/G= (K/G)^o \sqcup\bigsqcup_{\sigma\in G\backslash G_1}(W_{\sigma}/G)^o\sqcup\bigsqcup_{\sigma\neq \sigma'\in G\backslash G_1} (V_{\sigma, \sigma'}/G)^o\sqcup \bigsqcup_{\substack{\sigma, \sigma', \sigma''\in G\backslash G_1\\ \text{ distinct}}} (Z_{\sigma, \sigma', \sigma''}/G)
\end{equation}
Here $(-)^o$ denotes the smooth locus. Note that each $Z_{\sigma, \sigma', \sigma''}/G$ is a single point (Proposition \ref{prop:canonicalSubvarietiesII}). 

\begin{lemma}
\label{lemma:AnalyzingFibers}
The fibers of $p\colon Y_K\to K/G$ are described as follows:
    \begin{itemize}
    \item $p$ is an isomorphism over $(K/G)^o$;
    \item $p$ is a $\PP^1$-bundle over $(W_{\sigma}/G)^o$, for each $\sigma\in G\backslash G_1$;
    \item $p$ is a $\PP^1\times \PP^1$-bundle over $(V_{\sigma,\sigma'}/G)^o$;
    \item The fiber of $p$ over each point $Z_{\sigma, \sigma', \sigma''}/G$ is $\PP^1\times \PP^1\times \PP^1$.
    \end{itemize}
In particular, all the strata of $p$ are relevant with irreducible fibers, hence with trivial local systems. 
\end{lemma}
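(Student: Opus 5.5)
The plan is to reduce everything to the local normal form of $K/G$ already used in the proof of Theorem \ref{thm:HyperKummerIsBKR}. By \cite[Proposition~4.2]{floccariKum3}, every point $z\in K/G$ has an analytic neighbourhood $U$ with $(U,z)\cong (\CC^2/{-1},0)^j\times(\CC^2,0)^{3-j}$ for some $j\in\{0,1,2,3\}$. Here $j$ is the rank of the stabilizer $(\ZZ/2\ZZ)^j$ of a point of $q^{-1}(z)$. Since $p$ is the blow-up of the reduced singular locus, we have $p^{-1}(U)\cong \Bl_0(\CC^2/{-1})^j\times (\CC^2)^{3-j}$. The first step is to match $j$ with the strata. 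Combining Proposition \ref{prop:FixedPoints} with Proposition \ref{prop:canonicalSubvarietiesII}, the pointwise stabilizer is trivial on the complement of $\bigcup W_\sigma$. It is $\langle\sigma\rangle$ on $W_\sigma$ minus the other $W_{\sigma'}$, and $\langle\sigma,\sigma'\rangle$ on $V_{\sigma,\sigma'}$ minus the $Z$'s. On $Z_{\sigma,\sigma',\sigma''}$ it is $\langle\sigma,\sigma',\sigma''\rangle$. Note that the isolated fixed points of elements of $G$ all lie in the $Z$'s. This gives $j=0,1,2,3$ on $(K/G)^o$, $(W_\sigma/G)^o$, $(V_{\sigma,\sigma'}/G)^o$ and $Z_{\sigma,\sigma',\sigma''}/G$ respectively.

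The second step reads off the fibers. $\Bl_0(\CC^2/{-1})$ is the minimal resolution of an $A_1$ singularity, with exceptional fiber $\PP^1$. So over a point of the $j$-th stratum the fiber of $p$ is $(\PP^1)^j$, which is $\mathrm{pt}$, $\PP^1$, $\PP^1\times\PP^1$ and $(\PP^1)^3$ in the four cases. These fibers are irreducible.

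The third step, which I expect to be the main point requiring care, is global: $p^{-1}$ of each stratum is a genuine (Zariski-locally trivial, or at least étale-locally trivial) $(\PP^1)^j$-bundle whose local system of fiber components is trivial. For $j=1$, $p^{-1}((W_\sigma/G)^o)$ is the open part of the exceptional divisor $E_\sigma$. This is the projectivized normal cone of the reduced smooth stratum in a variety with transversal $A_1$ singularities, hence a conic bundle. The fibers are smooth conics, so it is a $\PP^1$-bundle, étale locally trivial. For $j=2$, $(V_{\sigma,\sigma'}/G)^o$ is the transversal intersection of the images of $W_\sigma$ and $W_{\sigma'}$. Here I would argue that the fiber is $C_\sigma\times C_{\sigma'}$, where $C_\sigma$ is the fiber of $E_\sigma$. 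This holds because the two $\PP^1$ factors are distinguished globally, being the intersections with the distinct components $E_\sigma$ and $E_{\sigma'}$. Hence the monodromy cannot swap them, and the bundle is a fiber product of two conic bundles.

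Finally, the statement on relevance is a dimension count. A stratum of codimension $2j$ has fibers of dimension $j$, so the semismallness equality $2\dim p^{-1}(x)=\operatorname{codim}$ holds and every stratum is relevant. Irreducibility of the fibers makes the local system $R^{2j}p_*\QQ$ on each stratum of rank one. It is trivial because it is generated by the fundamental class of the fiber, which is globally defined.
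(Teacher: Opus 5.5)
Your proposal is correct and follows essentially the paper's route: you use the local normal form $(\CC^2/{-1})^j\times(\CC^2)^{3-j}$ from \cite[Proposition~4.2]{floccariKum3}, together with the product structure of the blow-up of the reduced singular locus, and read off the fibres $(\PP^1)^j$. The stabilizer bookkeeping matching $j$ to the strata, the global bundle discussion, and the relevance/fundamental-class argument for trivial local systems are details the paper leaves implicit; only the last is actually needed for the ``in particular'' clause. One small correction to the global discussion: the whole fibre over $(V_{\sigma,\sigma'}/G)^o$ lies in $E_\sigma\cap E_{\sigma'}$, so the two rulings are distinguished as fibres of the $\PP^1$-fibrations of $E_\sigma$ and $E_{\sigma'}$ (over $M_\sigma$, $M_{\sigma'}$), not as intersections with them.
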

\begin{proof} 
This is essentially \cite[Proposition 4.2]{floccariKum3}.
The first assertion is clear. 
The fibers of $p$ over points $(W_{\sigma}/G)^o$ are $\PP^1$ since $K/G$ has transversal $A_1$-singularity along $(W_{\sigma}/G)^o$.
Around a point $P\in (V_{\sigma, \sigma'}/G)^o$, $K/G$ is isomorphic to the germ of $(\CC^2/{-1})^2\times \CC^2$ at the origin in such a way that the germ at $P$ of $(V_{\sigma, \sigma'}/G)^o$ is identified with the germ $(0,0)\times \CC^2$. Hence the resolution $p$, which is the blow-up along the singular locus, is analytically locally isomorphic to the following germ
\begin{equation}
    (\Bl_0(\CC^2/{-1}))^2 \times \CC^2 \to (\CC^2/{-1})^2\times \CC^2,
\end{equation}
and it is clear that the fiber of $p$ over a point $(0,0)\times \CC^2$ is $\PP^1\times \PP^1$.
The proof for the fiber over $Z_{\sigma, \sigma', \sigma''}/G$ is similar, by identifying the blow-up with $(\Bl_0(\CC^2/{-1}))^3\to (\CC^2/{-1})^3$ as germs.
\end{proof}

\begin{proof}[{Proof of Theorem \ref{thm:MotiveOfYK}}]
By Lemma \ref{lemma:AnalyzingFibers}, we can apply \cite[Corollary 2.3.9]{dCM04} to obtain the formula \eqref{eqn:MotivicDecomposition-Relative} for relative motives. Pushing forward to a point, and noting that, by Remarks \ref{rmk:isomorphismsW_sigma} and \ref{rmk:isogeniesV_sigma}, the isomorphism class of $\h(W_{\sigma}/G)$ (resp. $\h(V_{\sigma, \sigma'}/G)$ ) does not depend on $\sigma$ (resp. $\sigma$ and $\sigma'$), we obtain the isomorphism \eqref{eqn:MotivicDecomposition} of Chow motives.
\end{proof}

\subsection{Relation to orbifold motives}
We can formulate Theorem \ref{thm:MotiveOfYK} in a more concise form by relating it to orbifold motives. For the definition of orbifold Chow motives, we refer to \cite{FuVialTian-MHRC} (based on constructions in \cite{Abramovich-Graber-Vistoli-2002}, \cite{Abramovich-Graber-Vistoli-2008}, \cite{Jarvis-Kaufmann-Kimura}).
\begin{corollary}
\label{cor:MHRC-additive}
    Let the notation be as in Theorem \ref{thm:MotiveOfYK}. We have isomorphisms in the category of rational Chow motives $\CHM_\QQ$:
    \begin{align}
                \h(Y_K)&\cong \h_{\orb}([K/G]);\\
                \h(M_{\sigma})&\cong \h_{\orb}([W_\sigma/G_{\sigma}]), \quad \forall \sigma\in G\backslash G_1,
    \end{align}
    where $\h_{\orb}$ denotes the orbifold Chow motive of a Deligne--Mumford stack.
\end{corollary}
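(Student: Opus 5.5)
The plan is to unwind the definition of the orbifold Chow motive and compare it term by term with the decomposition of Theorem \ref{thm:MotiveOfYK}. Following \cite{FuVialTian-MHRC}, for a finite group $G$ acting on a smooth projective $K$, one has additively
\begin{equation*}
\h_{\orb}([K/G]) = \Big(\bigoplus_{g\in G} \h(K^g)(-\mathrm{age}(g))\Big)^G,
\end{equation*}
and since $G$ is abelian this becomes $\bigoplus_{g\in G}\h(K^g/G)(-\mathrm{age}(g))$. So the first step is to list, for each $g\in G$, the fixed locus $K^g$ together with its age. The list comes from Proposition \ref{prop:FixedPoints}. For a symplectic involution, the age is half the codimension of the fixed component, which gives the following cases.
\begin{itemize}
\item[(a)] For $g=\id$: the contribution is $\h(K/G)$.
\item[(b)] For $\sigma\in G\backslash G_1$: $K^\sigma = W_\sigma\sqcup\bigsqcup Z_{\sigma_1,\sigma_2,\sigma_3}$. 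The age is $1$ on $W_\sigma$ and $3$ on the isolated points.
\item[(c)] For $g\in G_1\backslash\{\id\}$: $K^g=\bigsqcup_{\sigma_1+\sigma_2=g} V_{\sigma_1,\sigma_2}$, with age $2$.
\end{itemize}

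The second step is to take $G$-quotients carefully. By Proposition \ref{prop:canonicalSubvarietiesII}, $G$ stabilizes each $W_\sigma$ and each $V_{\sigma,\sigma'}$. Hence term (b) contributes $\h(W_\sigma/G)(-1)$ for each of the 16 elements $\sigma$, and term (c) contributes $\bigoplus_{\{\sigma_1,\sigma_2\}}\h(V_{\sigma_1,\sigma_2}/G)(-2)$. Each unordered pair $\{\sigma,\sigma'\}$ occurs exactly once, namely for $g=\sigma+\sigma'$, so there are $120$ such terms.

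For the isolated points, fix $\sigma$. The points of $K^\sigma$ outside $W_\sigma$ are the $Z_{\sigma_1,\sigma_2,\sigma_3}$ with $\sigma_1+\sigma_2+\sigma_3=\sigma$, and each is a single $G$-orbit by Proposition \ref{prop:canonicalSubvarietiesII}(iv). So each contributes $\1(-3)$. An unordered triple $\{\sigma_1,\sigma_2,\sigma_3\}$ of distinct elements of $G\backslash G_1$ determines $\sigma=\sigma_1+\sigma_2+\sigma_3\in G\backslash G_1$ uniquely. Therefore the total is indexed by all unordered triples, and there are $\binom{16}{3}=560$ of them.

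This matches \eqref{eqn:MotivicDecomposition} term by term, so $\h(Y_K)\cong\h_{\orb}([K/G])$. The only delicate bookkeeping is this triple count and the age computation. The age computation needs a check that $\sigma$ acts as $-1$ on the normal bundle of $W_\sigma$ with rank $2$, and as $-\mathrm{id}$ on $T_xK$ at the isolated points, giving age $6/2=3$. Both follow from the local models of Lemma \ref{lemma:AnalyzingFibers} and Proposition 4.2 of \cite{floccariKum3}.

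For $M_\sigma$ the argument is identical, applied to the action of $G_\sigma$ on $W_\sigma$. First, the analogous de Cataldo--Migliorini decomposition (the Remark after Theorem \ref{thm:MotiveOfYK}) gives
\begin{equation*}
\h(M)\cong\h(W/G)\oplus\h(V/G)(-1)^{\oplus15}\oplus\1(-2)^{\oplus105}.
\end{equation*}
It follows from Lemma \ref{lemma:AnalyzingFibers}-type fiber analysis of $M_\sigma\to W_\sigma/G$: $\PP^1$-bundles over the $V$-strata and $\PP^1\times\PP^1$ over the points $Z$.

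Second, Proposition \ref{prop:FixedPoints}(iii) lists the twisted sectors:
\begin{itemize}
\item[(a)] For each of the $15$ nontrivial $g\in G_\sigma$: one K3 surface $V_{\sigma,\sigma+g}$ of age $1$.
\item[(b)] For the same $g$: $28$ points forming $7$ orbits $Z$ of age $2$.
\end{itemize}
This yields $15\cdot 7=105=\binom{15}{2}$ terms, since each $Z_{\sigma,\sigma',\sigma''}$ is counted once via $g=\sigma'+\sigma''$.

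The main obstacle is verifying the absence of extra sectors, i.e.\ that the orbits of isolated fixed points are exactly as claimed. I would simply read this off Proposition \ref{prop:FixedPoints} on the model $K^3(A)$.
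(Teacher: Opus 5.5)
Your proposal is correct and follows essentially the same route as the paper. You write the orbifold motive as $\bigoplus_{g}\h(K^g/G)(-\mathrm{age}(g))$, using that $G$ is abelian and stabilizes each $W_\sigma$, $V_{\sigma,\sigma'}$ and $Z_{\sigma,\sigma',\sigma''}$; you read off the twisted sectors from Proposition \ref{prop:FixedPoints}; and you match the counts $16$, $120$, $560$ (resp.\ $15$, $105$) against the de Cataldo--Migliorini decomposition. Your extra checks on ages (automatic here, since for an involution the age is half the codimension of the fixed component) and your fiber analysis of $M_\sigma\to W_\sigma/G$ simply spell out what the paper leaves implicit.
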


\begin{proof}
By definition of orbifold Chow motive, noting that $G$ is an abelian group and $G$ preserves (globally) each $W_{\sigma}$, $V_{\sigma', \sigma''}$, and $Z_{\sigma, \sigma', \sigma''}$, plugging in the information on fixed loci, we have that
\begin{align}
\label{eqn:OrbMotiveOfK/G}
\begin{split}
     \h_{\orb}([K/G])&\cong \h(K/G)\oplus
        \bigoplus_{\sigma\in G_1\backslash\{\id\}}\bigoplus_{\substack{\sigma_1+\sigma_2=\sigma\\ \sigma_i\in G\backslash G_1}} \h(V_{\sigma_1, \sigma_2}/G)(-2)\\
        &\oplus\bigoplus_{\sigma\in G\backslash G_1}\left(\h(W_{\sigma}/G)(-1)\oplus \bigoplus_{\substack{\sigma_1+\sigma_2+\sigma_3=\sigma\\
    \sigma_i \text{ distinct in } G\backslash G_1}}\h(Z_{\sigma_1, \sigma_2, \sigma_3}/G)(-3)\right),\\
    &\cong \h(K/G)\oplus \bigoplus_{\sigma_1\neq  \sigma_2\in G\backslash G_1}\h(V_{\sigma_1, \sigma_2}/G)(-2) \oplus \bigoplus_{\sigma\in G\backslash G_1}\h(W_{\sigma}/G)(-1)\oplus \1(-3)^{560} \\
    &\cong \h(K/G)\oplus  \h(W/G)(-1)^{\oplus 16} \oplus \h(V/G)(-2)^{\oplus  120}\oplus \1(-3)^{\oplus 560}.
\end{split}
\end{align}

Similarly, for any $\sigma\in G\backslash G_1$,
\begin{align}
\begin{split}
      \h_{\orb}([W_{\sigma}/G_{\sigma}])&\cong \h(W_{\sigma}/G_{\sigma})\oplus
        \bigoplus_{\sigma'\in G_1\backslash\{\id\}}\left( \h(V_{\sigma, \sigma+\sigma'}/G)(-1)\oplus \bigoplus_{\substack{\sigma_1+\sigma_2=\sigma'\\\sigma_i\neq \sigma}}\h(Z_{\sigma_1, \sigma_2, \sigma}/G)(-2)\right)\\
        &\cong \h(W/G)\oplus \h(V/G)(-1)^{\oplus 15}\oplus \1(-2)^{\oplus 105}.  
\end{split}
\end{align}
 The result follows from Theorem \ref{thm:MotiveOfYK} by comparing \eqref{eqn:OrbMotiveOfK/G} and \eqref{eqn:MotivicDecomposition}, similarly for $[W_\sigma/G_\sigma]$.
\end{proof}

\section{Beauville's weak splitting conjecture}
\label{sec:BeauvilleConjecture}
As a striking consequence of the hyper-Kummer construction, we can prove Beauville's weak splitting conjecture \cite{Beauville-SplittingBBFiltration} for all $\mathrm{Kum}^3$-type hyper-K\"ahler varieties.

Let $X$ be a projective hyper-K\"ahler variety. Consider the subalgebra $\mathrm{R}^{\bullet}(X) \subset \mathrm{CH}^{\bullet}(X)_{\QQ}$ generated by divisor classes. 
The following weak splitting conjecture was proposed by Beauville in \cite{Beauville-SplittingBBFiltration}:
\begin{conjecture}\label{conj:beauville}
    For any hyper-K\"ahler variety $X$, the cycle class map $\mathrm{cl}\colon \mathrm{CH}^{\bullet}(X)_{\QQ}\to H^{2\bullet}(X,\QQ)$ induces an inclusion $\mathrm{R}^{\bullet}(X) \hookrightarrow H^{2\bullet}(X,\QQ)$.
\end{conjecture}

For K3 surfaces, the conjecture is a reformulation of results established by Beauville and Voisin in their influential paper \cite{Beauville-Voisin}. 
There are now many cases in which the conjecture is verified. For example, it was proven by the second author for the generalized Kummer variety $K^n(A)$ on an abelian surface $A$ (\cite{Fu-BeauvilleVoisinKummer}). 
For any hyper-K\"ahler variety $X$ of one of the known deformation types, the conjecture has been proven by Riess \cite{Riess2016WeakSplittingProperty} under the assumption that $X$ admits a rational Lagrangian fibration. We will use her result together with the hyper-Kummer construction to prove the following result. 
\begin{theorem}
\label{thm:BeauvilleConjecture}
    Let $K$ be a projective hyper-K\"ahler sixfold of $\mathrm{Kum}^3$-type. Then Beauville's weak splitting conjecture holds for $K$.
\end{theorem}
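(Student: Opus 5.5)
We transfer the statement from $K$ to the hyper-Kummer sixfold $Y_K$, where Riess's theorem \cite{Riess2016WeakSplittingProperty} is available. Concretely, we reduce to showing that a polynomial $P(D_1,\dots,D_k)$ in divisor classes on $K$ that is cohomologically trivial is already zero in $\CH^\bullet(K)_\QQ$.

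\textbf{Step 1: pass from $K$ to $Y_K$.} Use the correspondence $\widetilde{K}$ from Diagram \eqref{diag:hyperKummerConstruction}. Every $D\in\CH^1(K)_\QQ$ is $G$-invariant, since $G$ acts trivially on $H^2(K,\QQ)=\NS(K)_\QQ\oplus T$ and $\CH^1=\NS$ for hyper-K\"ahler varieties. Hence $p'^*D$ descends: $p'^*D=q'^*\bar D$ up to the exceptional classes $F_\sigma$, where $\bar D=\tfrac{1}{2^5}q'_*p'^*D\in\CH^1(Y_K)_\QQ$. The maps $q'^*$ and $p'^*$ are ring homomorphisms, and $q'_*q'^*=2^5$.

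Write $r^*=p'_*q'^*\colon\CH^\bullet(Y_K)_\QQ\to\CH^\bullet(K)_\QQ$. Then $p'_*p'^*=\mathrm{id}$ gives
\[
P(D_i)=p'_*\bigl(P(p'^*D_i)\bigr).
\]
The key point is to express $P(p'^*D_i)$ as $q'^*P(\bar D_i)$ plus terms supported on the exceptional divisors $F_\sigma$. These extra terms arise only if $p'^*D_i-q'^*\bar D_i\neq0$. Computing $q'^*\bar D$ with Lemma \ref{lem:cohomologyq_*}, deformed, I expect $q'^*q'_*p'^*D=2^5 p'^*D$ exactly, because $G$ acts trivially on $p'^*D$. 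So $p'^*D=q'^*\bar D$ holds on the nose in $\CH^1(\widetilde K)_\QQ$, using $\CH^1=H^2$ on $\widetilde K$ after restricting to the invariant part. Consequently $P(D_i)=r^*P(\bar D_i)$, and the same identity holds in cohomology.

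\textbf{Step 2: reduce to $\mathrm{R}^\bullet(Y_K)$.} Suppose $\mathrm{cl}(P(D_i))=0$. Apply $r_*$ and use $r_*r^*=2^5$ on the pieces coming from $Y_K$, via the projection formula for $q'$. This gives $\mathrm{cl}(P(\bar D_i))=0$ in $H^\bullet(Y_K,\QQ)$. Once we know the conjecture for $Y_K$, we get $P(\bar D_i)=0$ in $\CH^\bullet(Y_K)_\QQ$, and therefore $P(D_i)=r^*P(\bar D_i)=0$.

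\textbf{Step 3: the conjecture for $Y_K$.} This is the main obstacle, because Riess requires a rational Lagrangian fibration. Two routes are available.

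\begin{itemize}
\item[(a)] By Corollary \ref{cor:Relation-YKMKSK}, $Y_K$ is a moduli space on $S_K$ of Picard rank $\ge16$. The lattice $\NS(Y_K)$ of rank $\ge17$ contains $L_{\Km}$ and a positive class, and we want it to contain an isotropic class. By Meyer's theorem, an indefinite form of rank $\ge5$ is isotropic, so an isotropic class exists. By the known results for $\mathrm{K}3^{[n]}$-type varieties (Bayer--Macr\`i), $Y_K$ then carries a rational Lagrangian fibration, and Riess's theorem applies.
\item[(b)] Alternatively, I would invoke Riess's statement that the conjecture is invariant under birational maps, through Rie{\ss}'s isomorphism of Chow rings \cite{Rie14}.
\end{itemize}

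Route (a) settles Step 3, so the whole argument rests on the isotropic class in $\NS(Y_K)$. The remaining delicate point is Step 1: checking that no correction terms along the $F_\sigma$ appear. If they do appear, I would instead use $\CH^\bullet(K)_\QQ\hookrightarrow\CH^\bullet(\widetilde K)_\QQ$ and argue $G$-invariantly on $\widetilde K$, where $\CH^\bullet(\widetilde K)^G_\QQ\cong\CH^\bullet(Y_K)_\QQ$ via $q'^*$. In that setting the classes $F_\sigma=q'^*E_\sigma/2$ are themselves pulled back from divisors on $Y_K$, so they are again covered by the result for $Y_K$.
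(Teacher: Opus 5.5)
Your proposal is correct and follows the paper's proof. Divisor classes on $K$ are $G$-invariant, so $p'^*$ maps $\mathrm{R}^\bullet(K)$ injectively into $G$-invariant classes on $\widetilde K$, hence via $q'^*$ into $\mathrm{R}^\bullet(Y_K)$, compatibly with cycle class maps. The conjecture for $Y_K$ then follows from Meyer's theorem, Bayer--Macr\`i, Riess's theorem and Riess's birational invariance of Chow rings; the paper uses your routes (a) and (b) together, not as alternatives.

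The correction terms you worry about in Step~1 do not occur, since $q'^*q'_*=\sum_{g\in G}g^*$ on $\CH^\bullet(\widetilde K)_\QQ$. In Step~2 the correct justification is $r_*P(D_i)=q'_*p'^*P(D_i)=q'_*q'^*P(\bar D_i)=2^5P(\bar D_i)$, not a general identity $r_*r^*=2^5$. That identity fails, e.g.\ $r^*[E_\sigma]=p'_*(2[F_\sigma])=0$.
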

\begin{proof}
    By the aforementioned work of Riess \cite{Riess2016WeakSplittingProperty}, Beauville's conjecture holds for the hyper-Kummer $\mathrm{K}3^{[3]}$-variety $Y_K$ associated to $K$. Indeed, $Y_K$ has Picard rank at least $17$, and therefore there exists an isotropic vector in $\NS(Y_K)$ by Meyer's theorem \cite{meyers}. By \cite{BM14a}, this implies the existence of a Lagrangian fibration $Y'\to \PP^3$ for a birational hyper-K\"ahler model $Y'$ of $Y_K$, and so Conjecture \ref{conj:beauville} holds for $Y'$ by \cite{Riess2016WeakSplittingProperty}, hence also holds for $Y_K$ by \cite{Rie14}.

    Recall that $Y_K$ is the resolution of the quotient $K/G$. The rational map $r\colon K\dashrightarrow Y_K$ is resolved by a diagram 
    \begin{equation}
    \begin{tikzcd}
        & \widetilde{K} \arrow{dr}{q} \arrow[swap]{ld}{p} \\
        K \arrow[dashed]{rr}{r} && Y_K
    \end{tikzcd}
    \end{equation}
    where $p$ is a blow-up and $q$ is the quotient by the induced action of $G$ on $\widetilde{K}$. The pull-back map 
    \begin{equation} p^*\colon \mathrm{CH}^{\bullet}(K)_{\QQ}\to \mathrm{CH}^{\bullet}(\widetilde{K})_{\QQ}\end{equation} 
    is an embedding of $\QQ$-algebras. 
    Moreover, the $G$-invariant subalgebra $(\mathrm{CH}^{\bullet}(\widetilde{K})_{\QQ})^G$ is identified with $\mathrm{CH}^{\bullet}(Y_K)_{\QQ}$. These identifications are compatible with the identification of cohomology groups $H^{\bullet}(\widetilde{K},\QQ)^G \cong H^{\bullet}(Y_K,\QQ)$, in the sense that we have a commutative diagram
    \[ 
    \begin{tikzcd}
    (\mathrm{CH}^{\bullet}(K)_{\QQ})^G \arrow[hook]{r}{p^*} \arrow{d}{\mathrm{cl}}& 
    (\mathrm{CH}^{\bullet}(\widetilde{K})_{\QQ})^G \arrow{r}{\sim} \arrow{d}{\mathrm{cl}} & \mathrm{CH}^{\bullet}(Y_K)_{\QQ} \arrow{d}{\mathrm{cl}}
    \\
    H^{2\bullet}(K,\QQ)^G \arrow[hook]{r}{p^*} & H^{2\bullet}(\widetilde{K},\QQ)^G \arrow{r}{\sim } & H^{2\bullet}(Y_K,\QQ).
    \end{tikzcd}
    \]
    Moreover, 
    the top row restricts to \begin{equation}  \begin{tikzcd}
    \mathrm{R}^{\bullet}(K)^G \arrow[hook]{r}{p^*} & \mathrm{R}^{\bullet}(\widetilde{K})^G \arrow{r}{\sim} & \mathrm{R}^{\bullet}(Y_K).
    \end{tikzcd}
    \end{equation}

    The group $G$ acts trivially on the second cohomology of $K$ and so on $\mathrm{CH}^1(K)_{\QQ}$, since $\Pic^0(K)=0$; therefore, $G$ acts trivially on the subalgebra of $\CH^{\bullet}(K)_{\QQ}$ generated by $\CH^1(K)_{\QQ}$, which is precisely $R^{\bullet}(K)$. Therefore, $R^{\bullet}(K) = R^{\bullet}(K)^G$ injects into $R^{\bullet}(Y_K)$. Since Beauville's weak splitting conjecture holds for $Y_K$, we deduce that it holds as well for $K$.    
\end{proof}

\section{Homological motives, Hodge and Tate conjectures}
\label{sec:HodgeTateConjecture}
The Kuga--Satake construction (\cite{KugaSatake, deligne1971conjecture})  produces an abelian variety from the Hodge structure on the second cohomology of any hyper-K\"ahler variety.  
It is generally expected that the motive of any hyper-K\"ahler variety is controlled by the motive of the associated Kuga--Satake abelian variety. We make this precise via the following conjecture.

\begin{conjecture}
\label{conj:MotivationByKS}
    Let $X$ be a projective hyper-K\"ahler variety, and let $\mathrm{KS}(X)$ be the associated Kuga--Satake abelian variety. Then $X$ is motivated by $\mathrm{KS}(X)$, i.e., the motive of $X$ belongs to the thick tensor subcategory of motives generated by the motive of $\mathrm{KS}(X)$.  
\end{conjecture}

The conjecture depends on the choice of a category of motives.  If we use Andr\'e motives \cite{andre1996Motives}, Conjecture \ref{conj:MotivationByKS} is known for K3 surfaces by Andr\'e \cite{Andre1996}, and for all hyper-K\"ahler varieties of known deformation types by \cite{soldatenkov19} and \cite{FFZ}. 
The strongest version of Conjecture \ref{conj:MotivationByKS} is in the category of Chow motives; Theorem \ref{thm:SuperKummerAbelianMotive} confirms the conjecture for the rational Chow motive of super-Kummer K3 surfaces.
In this section, we instead investigate Conjecture \ref{conj:MotivationByKS} in the category of homological motives. Although weaker than its Chow-theoretic version, the conjecture is already wide open in this homological setting. In the sequel, $\Mot$ is the category of rational homological motives with respect to singular cohomology, and $\mathsf h$ is the contravariant functor sending a smooth projective variety to its homological motive. 

We will show that Conjecture \ref{conj:MotivationByKS} holds for the homological motives of all of the varieties involved in the hyper-Kummer construction and the Mongardi--Rapagnetta--Sacc\`a double cover construction. In fact, we have the following result, which is based on several previous works. 

\begin{theorem}[\cite{floccariKum3, floccariVaresco, floccariHCKum3, FloccariFu-HodgeConjectureWeilFourfolds}] \label{thm:motivationbyKSknownCases}
    Consider the following hyper-K\"ahler varieties:
    \begin{enumerate}[label=(\roman*)]
    \item any $\mathrm{K}3$ surface or $\mathrm{K}3^{[n]}$-type variety 
    satisfying the following condition: 
    there exists an isometric embedding of rational quadratic spaces $H^2_{\mathrm{tr}}(X,\QQ)\hookrightarrow \mathrm{U}_{\QQ}^{\oplus 3}\oplus \langle -a\rangle_{\QQ}$,
    for some positive integer $a$.
    \item any variety of $\mathrm{Kum}^2$-type; 
    \item any variety of $\mathrm{Kum}^3$-type;
    \item any $\mathrm{OG}6$-resolution.
    \end{enumerate}
    Then the Kuga--Satake variety $\mathrm{KS}(X)$ of $X$ is isogenous to a power of an abelian fourfold of Weil type with discriminant $1$, and Conjecture \ref{conj:MotivationByKS} holds for the homological motive of $X$, that is, $\mathsf h(X)$ belongs to the thick tensor subcategory of homological motives generated by the motive of $\mathrm{KS}(X)$.  
\end{theorem}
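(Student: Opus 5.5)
The plan is to reduce everything to a single Weil-type abelian fourfold. For each class of varieties, we exhibit geometric correspondences, compatible with homological motives, that relate $X$ to a variety already known to be motivated by its Kuga--Satake variety. We then check that the Kuga--Satake variety is a power of a Weil-type fourfold $B$ of discriminant $1$. Throughout we use two facts. First, the Kuga--Satake variety depends only on the rational quadratic space $H^2_{\mathrm{tr}}(X,\QQ)$ up to isogeny and taking powers. Second, by Andr\'e's work, any Hodge isometry of transcendental lattices that is induced by an algebraic correspondence transports motivation statements. We treat the cases in the order (iv), (iii), (ii), (i).

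For (iv) we start from an $\mathrm{OG}6$-resolution $\widetilde X$. As recalled in Section~\ref{sec:RelationMRSDouble}, the singular locus of the singular $\mathrm{OG}6$-variety is $B/{\pm1}$ with $B$ a Weil-type abelian fourfold of discriminant $1$. Following \cite{FloccariFu-HodgeConjectureWeilFourfolds}, $H^2(\widetilde X,\QQ)$ is recovered from $H^2(B,\QQ)$ by a correspondence (restriction to the exceptional locus), and $\mathrm{KS}(\widetilde X)\sim B^k$. The full cohomology of $\widetilde X$ is generated by $H^2$ and the classes coming from the exceptional strata. So $\mathsf h(\widetilde X)$ lies in $\langle \mathsf h(B)\rangle$, using the known motivic description of $\mathrm{OG}6$-resolutions via the MRS double cover $Z_X$ (Theorem~\ref{thm:MRSconstruction}) together with the $\mathrm{K}3^{[3]}$-type description.

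For (iii), Theorem~\ref{thm:hyperKummerConstruction} and Theorem~\ref{thm:MotiveOfYK} express the homological motive of $K$ through $Y_K$ and the companions $W_\sigma$, $V_{\sigma,\sigma'}$. Following \cite{floccariKum3, floccariHCKum3}, we show $\mathsf h(K)$ is generated by $\mathsf h(Y_K)$ and $\mathsf h(V_{\sigma,\sigma'})$, and then Proposition~\ref{prop:cohomologyY_K} identifies $H^2_{\mathrm{tr}}(Y_K)\cong \widehat H^2_{\mathrm{tr}}(K)(2)$. In the projective case Theorem~\ref{thm:comparisonHyperkummerMRS} makes $Y_K$ (if $\mathrm{div}(h)>1$, and after a rational isometry in general) share its Kuga--Satake variety with an $\mathrm{OG}6$-variety, so we reduce to (iv). For (ii), we use the analogous $\mathrm{Kum}^2$ construction of \cite{floccariVaresco}.

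For (i), the embedding $H^2_{\mathrm{tr}}(X,\QQ)\hookrightarrow \mathrm{U}_\QQ^{\oplus3}\oplus\langle -a\rangle_\QQ$ shows that $H^2_{\mathrm{tr}}(X,\QQ)$ is rationally Hodge isometric to a sub-Hodge structure of $H^2$ of some singular $\mathrm{OG}6$-variety, which has lattice $\mathrm{U}^{\oplus3}\oplus\langle-2\rangle$. Surjectivity of the period map produces this variety, since $\langle -a\rangle_\QQ\simeq\langle-2\rangle_\QQ$ after rescaling the form by suitable factors. Then $\mathrm{KS}(X)$ is a power of the same $B$. For a $\mathrm{K}3^{[n]}$-type variety (or a K3 surface), Markman-type results and Buskin/Huybrechts-type isogeny theorems show that $\mathsf h(X)$ is generated by $\mathsf h^2$ of an isogenous K3 surface. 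That K3 surface is in turn realized via a moduli space birational to $Z_X$, which is motivated by $\widetilde X$.

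The main obstacle is upgrading rational Hodge isometries to algebraic correspondences at the level of homological motives, not merely Andr\'e motives. I would handle this by arranging that every isometry used is induced by an explicit geometric construction: MRS double covers, hyper-Kummer rational maps, and derived equivalences, or the algebraic isogenies of \cite{huybrechtsMotives}. Where only a rational Hodge isometry is available, I would invoke the Hodge conjecture for powers of $B$ (\cite{floccari25}) to make it algebraic.
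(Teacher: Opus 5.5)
There is a genuine gap in (ii) and (iii): you never account for the odd cohomology of a variety of $\mathrm{Kum}^n$-type. In (iii) you assert that $\mathsf h(K)$ is generated by $\mathsf h(Y_K)$ and the $\mathsf h(V_{\sigma,\sigma'})$. This is impossible for parity reasons. These motives have cohomology only in even degrees, and morphisms of homological motives preserve degree. Hence every object of the thick tensor subcategory they generate has realization concentrated in even degrees, whereas $H^3(K,\QQ)\neq 0$ (indeed $b_3(K)=8$).

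What the hyper-Kummer construction and Theorem~\ref{thm:MotiveOfYK} actually control is only the $G$-invariant summand $\mathsf h(K)^G$, which is a summand of $\mathsf h(Y_K)$. Since $G$ has no invariants on $H^{\mathrm{odd}}(K,\QQ)$ (elements of $G\setminus G_1$ act there by $-1$), one has $\mathsf h(K)=\mathsf h(K)^G\oplus\mathsf h^-(K)\oplus\mathsf Q(-3)^{\oplus 240}$, and the odd part $\mathsf h^-(K)$ needs a separate input. The paper supplies it as follows:
\begin{itemize}
\item The intermediate Jacobian $A$ of $K$ is a Weil fourfold of discriminant $1$ with $\mathrm{KS}(K)\sim A^4$, and an algebraic cycle induces $H^1(A,\QQ)\cong H^3(K,\QQ)$ (O'Grady, Markman, Voisin). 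Hence $\mathsf h^3(K)\in\langle\mathsf h(A)\rangle$.
\item The standard conjectures hold for $K$: Charles--Markman for $Y_K$ gives them for $\mathsf h(K)^G$, and Foster gives them for $\mathsf h^-(K)$. By Jannsen and Andr\'e, $\langle\mathsf h(K)\rangle$ is then abelian and semisimple.
\item In that category, $\mathsf h^-(K)$ is the image of the cup product $\mathsf a_2(K)\otimes\mathsf h^3(K)\to\mathsf h(K)$, where $\mathsf a_2(K)\in\langle\mathsf h^2(K)\rangle$ by Floccari--Varesco.
\end{itemize}
Your treatment of (ii) via ``the analogous $\mathrm{Kum}^2$ construction'' is not an argument. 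The paper handles (ii) with the same odd-part mechanism, plus the Kuga--Satake--Hodge conjecture for $\mathsf h^2(K)$ and the Hassett--Tschinkel decomposition $\mathsf h^+(K)=\mathsf a_2(K)\oplus\mathsf Q(-2)^{\oplus 80}$.

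Case (i) also has a gap. Rescaling $\langle -a\rangle_\QQ$ to $\langle -2\rangle_\QQ$ yields only a Hodge \emph{similarity} between $H^2_{\mathrm{tr}}(X,\QQ)$ and $H^2_{\mathrm{tr}}$ of a singular $\mathrm{OG}6$-variety $X'$, not an isometry. The K3 surface attached to $Z_{X'}$ has transcendental part embedded in $\mathrm U_\QQ^{\oplus 3}\oplus\langle -1\rangle_\QQ$, and for non-square $a$ your $S$ need not be rationally Hodge isometric to it. For example, $T=\mathrm U^{\oplus 2}\oplus\langle -2\rangle\oplus\langle -12\rangle$ embeds in $\mathrm U^{\oplus 3}\oplus\langle -12\rangle$. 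But $T_\QQ$ does not embed in $\mathrm U_\QQ^{\oplus 3}\oplus\langle -1\rangle_\QQ$: the complement would have to be $\langle 6\rangle$, and Witt cancellation would then force $\langle -2,-3,6\rangle$ to be isotropic, which fails over $\QQ_3$.

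Huybrechts' and Buskin's results require isometries. Your fallback, algebraizing the correspondence via the Hodge conjecture for powers of $B$, is circular: it applies only to products of varieties already known to lie in $\langle\mathsf h(B)\rangle$, which is exactly what is to be proved for $S$. The paper instead takes the K3 case from \cite[Theorem 5.11]{floccari25} and deduces the $\mathrm{K}3^{[n]}$ case via Addington, Riess and B\"ulles. It also feeds $Y_K$ into (iii) through (i), not through (iv).
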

\begin{proof}
    (i). For a K3 surface $S$ satisfying the condition in (i), the conclusion follows from \cite[Theorem 5.11]{floccari25}. If $X$ is a $\mathrm{K}3^{[n]}$-type variety as in (i), then $X$ is birational to a smooth and projective moduli space on a K3 surface $S$ as above, by \cite{Addington2016}. The Kuga--Satake variety of $X$ is isogenous to that of $S$. 
    By \cite{Rie14, Bue18}, $X$ is motivated by $S$, and therefore Conjecture \ref{conj:MotivationByKS} holds for $X$ as well.

    (ii). This is a consequence of \cite{floccariVaresco}.
    Let $K$ be any variety of $\mathrm{Kum}^n$-type and let $A$ be its intermediate Jacobian. By \cite{O'G21, markman2019monodromy, voisinfootnotes}, $A$ is an abelian fourfold of Weil type with discriminant $1$, the Kuga--Satake variety of $K$ is isogenous to $A^4$, and the Kuga--Satake--Hodge conjecture holds for $K$. Moreover, there exists an algebraic cycle $\gamma$ inducing the canonical isomorphism of Hodge structures between $H^1(A,\QQ)$ and $H^3(K,\QQ)$.

    Assume now that $K$ is of $\mathrm{Kum}^2$-type. 
    The standard conjectures hold for $K$ by \cite{foster}.
    Hence, the homological motive of $K$ decomposes into K\"unneth components $\mathsf{h} (K) = \bigoplus_{i=0}^{8} \mathsf{h}^i(K)$. Further, by work of Jannsen \cite{Jan92} and Andr\'e \cite{andre1996Motives}, the tensor subcategory of homological motives generated by $\mathsf{h}(K)$ is abelian and semisimple. See \cite{Ara06} and \cite[\S2]{floccariVaresco} for more details and consequences of this result. Since the Kuga--Satake--Hodge conjecture holds for $K$, the degree-$2$ component $\mathsf{h}^2(K)$ lies in the tensor subcategory $\langle \mathsf{h}(\mathrm{KS}(K))\rangle_{\mathrm{Mot}}^{\otimes}$ generated by the motive of the Kuga--Satake abelian variety. Moreover, thanks to the existence of the cycle $\gamma$, the K\"unneth component $\mathsf{h}^3(K)$ also belongs to this category.
    We split the motive of $K$ into the sum of the even part $\mathsf{h}^+(K)\coloneqq \bigoplus_i \mathsf{h}^{2i}(K)$ and the odd part $\mathsf{h}^-(K)\coloneqq\bigoplus_{i} \mathsf{h}^{2i+1}(K)$. By \cite[Lemma 4.2]{floccariVaresco}, there exists a submotive $\mathsf{a}_2(K)$ of $\mathsf{h}^+(K)$ with realization the subalgebra generated by $H^2(K,\QQ)$; moreover, $\mathsf{a}_2(K)$ belongs to $\langle \mathsf{h}^2(K)\rangle^{\otimes}_{\mathsf{Mot}}$. By \cite{hassettTschinkel}, we have $\mathsf{h}^+(K) =\mathsf{a}_2(K) \oplus \mathsf{Q}^{\oplus 80}(-2)$ in $\mathsf{Mot}$, and therefore $\mathsf{h}^+(K)$ belongs to $\langle \mathsf{h}(\mathrm{KS}(K))\rangle_{\mathsf{Mot}}^{\otimes}$.
Finally, also $\mathsf{h}^-(K)$ lies in $\langle \mathsf{h}(\mathrm{KS}(K))\rangle_{\mathsf{Mot}}^{\otimes}$, being the image of the cup-product map $\mathsf{a}_2(K) \otimes \mathsf{h}^3(K) \to \mathsf{h}(K)$ which is a morphism of motives in the abelian and semisimple category of motives generated by the motives of $A$ and $K$. 

(iii). For a variety of $\mathrm{Kum}^3$-type, the conclusion is a corollary of the results from \cite{floccariHCKum3}, which crucially relies on the hyper-Kummer construction. 
Recall that there is a natural action of the group $\Aut_0(K) \cong (\ZZ/4\ZZ)^4\ltimes \ZZ/2\ZZ$ of automorphisms acting trivially on the second cohomology, with the subgroups $\Gamma\cong (\ZZ/4\ZZ)^4$ and $G\cong (\ZZ/2\ZZ)^5$ of automorphisms acting trivially also on the third and fourth cohomology, respectively. 
By \cite[Proof of Theorem 1.1]{floccariHCKum3}, there is a decomposition 
\begin{equation} \label{eq:motive_decomposition}
    \mathsf{h}(K) = \mathsf{h}(K)^G \oplus \mathsf{h}^-(K) \oplus \mathsf{Q}^{\oplus 240}(-3)
\end{equation}
of the motive of $K$, where $\mathsf{h}(K)^G$ is the $G$-invariant part of the motive of $K$, the summand $\mathsf{h}^-(K)$ is the odd part of the motive, with realization the odd cohomology, and the remaining summand is of Hodge--Tate type. To define $\mathsf{h}^{-}(K)$ as a motive, we take the complement of $(\mathsf{h}(K)^\Gamma)^G$ in $\mathsf{h}(K)^{\Gamma}$; then $\mathsf{h}^{-}$ is a well-defined direct summand of $\mathsf{h}(K)$, with realization the odd cohomology of $K$.
Now $\mathsf{h}(K)^G$ is a direct summand of the motive $\mathsf{h}(Y_K)$ of the hyper-Kummer variety associated with $K$; since the standard conjectures hold for $Y_K$ (\cite{CM13}), they hold for the motive $\mathsf{h}(K)^G$. The standard conjectures hold for $\mathsf{h}^-(K)$ by \cite{foster}. We conclude that the standard conjectures hold for $K$. 
Hence, the subcategory of homological motives generated by $\mathsf{h}(K)$ is abelian and semisimple. 

The Kuga--Satake variety of $Y_K$ is isogenous to a power of $\mathrm{KS}(K)$. Moreover, $Y_K$ satisfies the condition in (i). Thus, since $\mathsf{h}(K)^G$ is a direct summand of the motive of the hyper-Kummer variety $Y_K$, it lies in $\langle \mathsf{h}(\mathrm{KS}(K))\rangle_{\mathsf{Mot}}^{\otimes}$ by $(i)$. In particular, the K\"unneth component $\mathsf{h}^2(K)$ lies in $\langle \mathsf{h}(\mathrm{KS}(K))\rangle_{\mathsf{Mot}}^{\otimes}$. 
By \cite{floccariVaresco}, there exists a submotive $\mathsf{a}_2(K)$ of $\mathsf{h}(K)$ with realization the subalgebra generated by $H^2(K,\QQ)$ and which lies in $\langle \mathsf{h}^2(K)\rangle_{\mathsf{Mot}}^{\otimes}$. 
Since the odd cohomology of $K$ is generated by $H^3(K,\QQ)$ as a module over the subalgebra generated by $H^2(K,\QQ)$, as before $\mathsf{h}^-(K)$ is the image of the cup-product morphism $\mathsf{a}_2(K)\otimes \mathsf{h}^3(K)\to \mathsf{h}(K)$; hence, $\mathsf{h}^-(K)\in \langle \mathsf{h}(\mathrm{KS}(K))\rangle_{\mathsf{Mot}}^{\otimes}$. Therefore, by the decomposition \eqref{eq:motive_decomposition}, we conclude that $\mathsf{h}(K)$ belongs to the subcategory $\langle \mathsf{h}(\mathrm{KS}(K))\rangle^{\otimes}_{\mathsf{Mot}}$ of homological motives.

(iv). This is proved in our paper \cite[Theorem 5.7]{FloccariFu-HodgeConjectureWeilFourfolds}.
\end{proof}

\begin{corollary}\label{cor:HodgeConjecture}
\begin{enumerate}[label=(\roman*)]
    \item Let $K$ be a hyper-K\"ahler variety of $\mathrm{Kum}^3$-type. Then Conjecture \ref{conj:MotivationByKS} holds for the homological motive of $K$ and any of the companion hyper-K\"ahler manifolds $W_{\sigma}$, $V_{\sigma,\sigma'}$, $Y_K$, $M_{\sigma}$ and $S_K$.
    \item Let $X$ be a singular $\mathrm{OG}6$-variety. Then Conjecture \ref{conj:MotivationByKS} holds for the homological motive of any MRS double cover $Z_X$ of $X$, and any crepant resolution of $X$.
\end{enumerate}
\end{corollary}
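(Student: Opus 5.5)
The plan is to reduce every case to Theorem~\ref{thm:motivationbyKSknownCases}. The key tool is that motivation by the Kuga--Satake variety transfers along explicit correspondences, once we know that the relevant Kuga--Satake varieties are isogenous to powers of a single Weil-type abelian fourfold.

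For (i), $K$ itself is covered by Theorem~\ref{thm:motivationbyKSknownCases}(iii). For $Y_K$, $M_\sigma$ and $S_K$, Theorem~\ref{thm:TranscendentalResolutions} gives Hodge isometries
\begin{equation*}
H^2_{\mathrm{tr}}(Y_K,\ZZ)\simeq H^2_{\mathrm{tr}}(M_\sigma,\ZZ)\simeq H^2_{\mathrm{tr}}(S_K,\ZZ)\simeq \widehat{H}^2_{\mathrm{tr}}(K,\ZZ)(2),
\end{equation*}
and the last lattice embeds primitively into $\mathrm{U}(2)^{\oplus 3}\oplus\langle -4\rangle$. Over $\QQ$ this quadratic space is isometric to $\mathrm{U}_\QQ^{\oplus 3}\oplus\langle -1\rangle_\QQ$. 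Hence these K3 and $\mathrm{K}3^{[n]}$-type varieties satisfy condition (i) of Theorem~\ref{thm:motivationbyKSknownCases} with $a=1$.

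For $V_{\sigma,\sigma'}$ and $W_\sigma$ we argue as follows.
\begin{itemize}
\item By Proposition~\ref{prop:cohomologyW&V}(ii), $H^2_{\mathrm{tr}}(V_{\sigma,\sigma'},\QQ)$ is Hodge isometric up to scaling to $H^2_{\mathrm{tr}}(K,\QQ)$. This space embeds into $\mathrm{U}_\QQ^{\oplus 3}\oplus\langle -8\rangle_\QQ$, so condition (i) holds with $a=8$ (the factor $4$ is a square and can be absorbed).
\item For $W_\sigma$, of $\mathrm{K}3^{[2]}$-type, Proposition~\ref{prop:cohomologyW&V}(i) gives $H^2_{\mathrm{tr}}(W_\sigma,\QQ)\simeq H^2_{\mathrm{tr}}(K,\QQ)(2)$, which embeds into $\mathrm{U}(2)^{\oplus3}_\QQ\oplus\langle-16\rangle_\QQ$. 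Since $\mathrm{U}(2)_\QQ\simeq \mathrm{U}_\QQ$, this is $\mathrm{U}_\QQ^{\oplus 3}\oplus\langle -1\rangle_\QQ$.
\end{itemize}

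Theorem~\ref{thm:motivationbyKSknownCases}(i) then shows that each companion is motivated by its own Kuga--Satake variety. This is what the conjecture asks. No comparison with $\mathrm{KS}(K)$ is required, though one could add it: rationally Hodge-isometric (up to scaling) transcendental lattices yield isogenous Kuga--Satake varieties, up to powers. The one point needing care is that, for a non-projective-looking lattice twist, the rational isometry class of the full $H^2$ differs from that of $H^2_{\mathrm{tr}}$. The hypothesis in (i) only concerns $H^2_{\mathrm{tr}}$, however, so this causes no trouble.

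For (ii), let $X$ be projective. Any crepant resolution of $X$ is an $\mathrm{OG}6$-resolution, so Theorem~\ref{thm:motivationbyKSknownCases}(iv) applies directly; all crepant resolutions are birational and have isomorphic homological motives by standard results for symplectic resolutions. For $Z_X$, Proposition~\ref{prop:cohomologyMRS} gives
\begin{equation*}
H^2_{\mathrm{tr}}(Z_X,\QQ)\simeq H^2_{\mathrm{tr}}(X,\QQ)(2)\hookrightarrow \mathrm{U}(2)^{\oplus3}_\QQ\oplus\langle-4\rangle_\QQ\simeq \mathrm{U}_\QQ^{\oplus3}\oplus\langle -1\rangle_\QQ,
\end{equation*}
so $Z_X$ is covered by case (i) of the theorem.

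The main obstacle is bookkeeping rather than anything conceptual: checking the rational isometry $\langle -4\rangle_\QQ\simeq\langle -1\rangle_\QQ$ and that the scalings by $2$ are harmless. We also need that Theorem~\ref{thm:motivationbyKSknownCases}(i) is stated for arbitrary (projective) $\mathrm{K}3^{[n]}$-type varieties, which covers $W_\sigma$, $M_\sigma$, $Y_K$ and $Z_X$ in the projective case.
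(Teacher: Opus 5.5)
Your proposal is correct and is essentially the paper's proof: $K$ and the $\mathrm{OG}6$-resolutions are handled directly by items (iii) and (iv) of Theorem~\ref{thm:motivationbyKSknownCases}. The companions $Y_K$, $M_\sigma$, $S_K$, $W_\sigma$, $V_{\sigma,\sigma'}$ and $Z_X$ fall under item (i), via the transcendental lattice computations of Propositions~\ref{prop:cohomologyY_K}, \ref{prop:cohomologyW&V}, \ref{prop:cohomologyM&S} and \ref{prop:cohomologyMRS}. The only cosmetic differences are that you take $a=8$ for $V_{\sigma,\sigma'}$ where the paper takes $a=2$ (the same rational quadratic space), and that you spell out the $W_\sigma$ case, which the paper leaves implicit.
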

\begin{proof}
    The conclusion for a variety of $\mathrm{Kum}^3$-type and any OG6-resolution follows directly from Theorem \ref{thm:motivationbyKSknownCases}. By Proposition \ref{prop:cohomologyY_K} and Proposition \ref{prop:cohomologyM&S}, the transcendental lattices of $Y_K$, $M_{\sigma}$ and $S_{K}$ are all Hodge isometric to $\widehat{H}_{\mathrm{tr}}^2(K,\ZZ)(2)$; with $\QQ$-coefficients, they embed in $\mathrm{U}_{\QQ}^{\oplus 3} \oplus \langle -1\rangle_{\QQ}$. Similarly, by Proposition \ref{prop:cohomologyW&V},  $H^2_{\mathrm{tr}}(W_{\sigma},\ZZ)$ is Hodge isometric to $H^2_{\mathrm{tr}}(K,\ZZ)(2)$, while the K3 surfaces $V_{\sigma,\sigma'}$ have $H^2_{\mathrm{tr}}(V_{\sigma,\sigma'},\QQ)$ Hodge isometric to $H^2_{\mathrm{tr}}(K,\QQ)$. Hence, $H^2_{\mathrm{tr}}(V_{\sigma,\sigma'},\QQ)$ admits an isometric embedding in $\mathrm{U}_{\QQ}^{\oplus 3}\oplus \langle -2\rangle_{\QQ}$.
    By Proposition \ref{prop:cohomologyMRS}, an MRS double cover $Z_X$ of a singular OG6-variety $X$ has transcendental lattice $H^2_{\mathrm{tr}}(Z_X,\ZZ)$ Hodge isometric to $H^2_{\mathrm{tr}}(X,\ZZ)(2)$. Thus $H^{2}_{\mathrm{tr}}(X,\QQ)$ admits an isometric embedding in $\Lambda_{\mathrm{OG}6^{\mathrm{sing}}}(2)\otimes \QQ \cong \mathrm{U}_{\QQ}^{\oplus 3} \oplus \langle -1\rangle_{\QQ}$. Hence, the corollary follows from Theorem \ref{thm:motivationbyKSknownCases}.
\end{proof}

\subsection{Hodge conjecture}
Theorem \ref{thm:motivationbyKSknownCases} has very strong consequences for the Hodge conjecture of the varieties considered in this paper. Following Varesco \cite[Definition 1.2]{Varesco-HodgeSimilarities}, we say that two hyper-K\"ahler varieties $X$ and $X'$ are \textit{Hodge similar} if there is a Hodge similarity between $H^2_{\mathrm{tr}}(X,\QQ)$  and $H^2_{\mathrm{tr}}(X',\QQ)$, that is, an isomorphism of Hodge structures which rescales the form by some scalar factor. 
 
\begin{corollary}
\label{cor:HCPower}
Let $X_1,\cdots, X_n$ be any collection of hyper-K\"ahler varieties as in Theorem \ref{thm:motivationbyKSknownCases} such that the $X_i$ are Hodge similar to each other. Then the Hodge conjecture holds for all products of powers of $X_1, \cdots, X_n$ as well as of their Kuga--Satake varieties.
In particular, the Hodge conjecture holds for all powers of any hyper-K\"ahler variety $X$ as in Theorem~\ref{thm:motivationbyKSknownCases}.
\end{corollary}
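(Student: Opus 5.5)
The strategy is to reduce to the Hodge conjecture for powers of a single abelian fourfold of Weil type with discriminant $1$, which is known by \cite{floccari25} (building on \cite{O'G21, markman2019monodromy, voisinfootnotes, varesco}). By Theorem \ref{thm:motivationbyKSknownCases}, each $\mathrm{KS}(X_i)$ is isogenous to a power of such a fourfold $B_i$, and each homological motive $\mathsf h(X_i)$ lies in $\langle \mathsf h(\mathrm{KS}(X_i))\rangle^{\otimes}_{\Mot}$, hence in $\langle \mathsf h(B_i)\rangle^{\otimes}_{\Mot}$.

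The first step is to show that all the $B_i$ may be taken to be isogenous to one fourfold $B$. Hodge similar varieties have Hodge similar rational transcendental lattices. For varieties of Picard rank in the relevant range, the Kuga--Satake construction applied to $H^2_{\mathrm{tr}}$ (or to $H^2$) changes only by isogeny and powers when the quadratic form is rescaled or when algebraic classes are added. Indeed, the even Clifford algebra of a rescaled form is isomorphic to that of the original, and adding orthogonal Hodge--Tate summands only takes powers. Hence $\mathrm{KS}(X_i)$ is isogenous to a power of $\mathrm{KS}(X_1)$ for every $i$, and we may fix $B=B_1$. I expect this to be the delicate point: one must be careful that the Kuga--Satake variety of a Hodge-similar lattice is isogenous to a power of the same Weil fourfold. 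I would handle it through Varesco's results on Hodge similarities \cite{Varesco-HodgeSimilarities}, or directly through the description of the Kuga--Satake variety via $\mathrm{CSpin}$ of the transcendental part.

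With $B$ fixed, any product $X_1^{a_1}\times\cdots\times X_n^{a_n}\times \mathrm{KS}(X_1)^{b_1}\times\cdots$ has homological motive in $\langle \mathsf h(B)\rangle^{\otimes}_{\Mot}$. A Hodge class on such a product is therefore a morphism $\mathbb Q(-p)\to N$ in realization, where $N$ is a direct summand, cut out by algebraic correspondences, of a sum of Tate twists of $\mathsf h(B^m)$. The Hodge class corresponds to a Hodge class on some $B^m$. By the Hodge conjecture for powers of $B$ \cite{floccari25}, that class is algebraic. Transporting it back through the algebraic correspondences realizing the inclusion and projection of $N$ gives algebraicity of the original class.

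This transport uses only that morphisms in $\Mot$ are algebraic cycles and that realization is faithful on homological motives, so I regard this step as routine. The final assertion about all powers of a single $X$ is the special case $n=1$.
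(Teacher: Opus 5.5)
Your proposal is correct and follows the paper's proof. The paper likewise uses \cite[Proposition 0.2]{Varesco-HodgeSimilarities} to see that all $\mathrm{KS}(X_i)$ have the isogeny factors of a single Weil fourfold $A$ of discriminant $1$. It then observes that the Hodge conjecture for all powers of $A$ \cite{floccari25} makes the Hodge realization full on $\langle \mathsf h(A)\rangle^{\otimes}_{\Mot}$, which contains $\mathsf h$ of any such product by Theorem \ref{thm:motivationbyKSknownCases}.

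One small slip: $\mathrm{KS}(X_i)$ need not be isogenous to a power of $\mathrm{KS}(X_1)$. When $b_2(X_i)<b_2(X_1)$ it has smaller dimension, so it is only isogenous to a power of $\mathrm{KS}(H^2_{\mathrm{tr}}(X_1,\QQ))$. Sharing isogeny factors is all your argument actually uses, so this does not affect the proof.
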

\begin{proof}
By \cite[Proposition 0.2]{Varesco-HodgeSimilarities}, all the Kuga--Satake varieties $\mathrm{KS}(X_i)$ have the same isogeny factors. In fact, each of the Kuga--Satake varieties $\mathrm{KS}(X_i)$ is isogenous to some power of the same abelian fourfold $A $, which is of Weil type with discriminant $1$. 
Therefore, the tensor subcategories $\langle \mathsf h(\mathrm{KS}(X_i)) \rangle_{\mathsf{Mot}}^{\otimes}$ of $\Mot$ all coincide with $\langle \mathsf h(A) \rangle_{\mathsf{Mot}}^{\otimes}$.
The Hodge conjecture has been established for any abelian fourfold of Weil type with discriminant $1$ \cite{markman2019monodromy} and all its powers \cite{floccari25} (see also \cite{FloccariFu-HodgeConjectureWeilFourfolds} for an alternative proof); equivalently, the Hodge realization functor $\mathsf R\colon \langle \mathsf h(A) \rangle_{\mathsf{Mot}}^{\otimes} \to \mathsf{HS}_{\QQ}$ is full. Let $Y$ be a product of powers of the $X_i$ and their Kuga--Satake varieties as in the statement. Theorem \ref{thm:motivationbyKSknownCases} implies that $\mathsf h(Y)\in \langle \mathsf h(A) \rangle_{\mathsf{Mot}}^{\otimes}$.
Hence, the restriction of the functor $\mathsf R$ to the subcategory $\langle \mathsf h(Y)\rangle_{\mathsf{Mot}}^{\otimes}$ is full; in other words, the Hodge conjecture holds for (all powers of) $Y$.
\end{proof}

\begin{remark}
Corollary \ref{cor:HCPower} implies the Shafarevich conjecture for varieties in Theorem \ref{thm:motivationbyKSknownCases}:  any Hodge similitude $\phi\colon H_{\mathrm{tr}}^2(X,\QQ)\to H_{\mathrm{tr}}^2(Y,\QQ)$ between hyper-K\"ahler varieties $X,Y$ as in Theorem \ref{thm:motivationbyKSknownCases} is algebraic. 
Some cases of Shafarevich conjecture were proven in \cite{Buskin, huybrechtsMotives, markmanRational, Varesco-HodgeSimilarities, floccari25, OGrady2026HKManifolds}.
\end{remark}


As another interesting consequence of Corollary \ref{cor:HCPower}, we observe the following Torelli-type result:

\begin{corollary}
\label{cor:HomologicalMotiveIso}
Let $X$ and $X'$ be deformation equivalent projective hyper-K\"ahler manifolds such that $H^2(X, \QQ)$ and $H^2(X', \QQ)$  are Hodge isometric. 
Assume that $X$ and $X'$ are both as in Theorem \ref{thm:motivationbyKSknownCases}.
Then their homological motives are isomorphic as Frobenius algebra objects in $\Mot$:
    \begin{equation}
        (\mathsf h (X), \cup)\simeq (\mathsf h (X'), \cup).
    \end{equation}   
\end{corollary}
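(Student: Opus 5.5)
The strategy is to build an isomorphism of homological motives out of algebraic Hodge classes. By Corollary \ref{cor:HCPower}, the Hodge conjecture holds for every power of $X\times X'$. The reason is that $X$ and $X'$ are Hodge similar (they are even Hodge isometric on $H^2$), so both lie in the tensor category $\langle \mathsf h(A)\rangle^{\otimes}_{\mathsf{Mot}}$ for a single Weil-type abelian fourfold $A$ of discriminant $1$. On that category the realization functor $\mathsf R$ is full. Consequently it suffices to find an isomorphism of graded $\QQ$-Hodge structures
\[
\Psi\colon H^*(X,\QQ)\xrightarrow{\ \sim\ } H^*(X',\QQ)
\]
which is a ring isomorphism compatible with the Poincaré pairings, i.e. an isomorphism of Frobenius algebras in $\mathsf{HS}_\QQ$. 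Fullness of $\mathsf R$ then makes $\Psi$ and $\Psi^{-1}$ algebraic, so they are morphisms in $\Mot$. Since the Hodge realization is faithful on homological motives, the identities $\Psi\circ\cup_X=\cup_{X'}\circ(\Psi\otimes\Psi)$ and $\Psi^{-1}\circ\Psi=\id$, together with compatibility with the counits, lift from Hodge structures to motives. This yields $(\mathsf h(X),\cup)\simeq(\mathsf h(X'),\cup)$.

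To construct $\Psi$, start from a Hodge isometry $f\colon H^2(X,\QQ)\to H^2(X',\QQ)$. Choose a parallel transport operator $P\colon H^*(X,\QQ)\to H^*(X',\QQ)$ coming from a family connecting $X$ and $X'$; it is a graded ring isomorphism preserving the pairing, though not Hodge in general. Then $g\coloneqq f\circ P|_{H^2}^{-1}$ is a rational isometry of $H^2(X',\QQ)$. The main step is to extend $g$ to a graded ring automorphism $\tilde g$ of $H^*(X',\QQ)$ preserving the pairing, and then set $\Psi\coloneqq \tilde g\circ P$. For this one uses the Looijenga--Lunts--Verbitsky description: the total cohomology is a representation of the LLV algebra $\mathfrak{so}(H^2\oplus U)$. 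By Markman's and Verbitsky's results, for every known deformation type the monodromy representation on $H^*$ extends to an action of $\mathrm{Spin}$ or $\mathrm{O}(H^2(X',\QQ))$ by ring automorphisms, up to sign and character twists. Replacing $f$ by $-f$ if needed, we may assume $g$ lies in the image of this group action, and $\tilde g$ is then defined by that action. For the $\mathrm{Kum}^n$ cases the extension to odd cohomology should go through the $\mathrm{Spin}(H^2)$-action on $H^3$, via the intermediate Jacobian picture recalled in the proof of Theorem \ref{thm:motivationbyKSknownCases}.

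I expect the main obstacle to be checking that $\Psi$ is a morphism of Hodge structures in every degree, and not only on $H^2$. The approach is to show that the Hodge structure on $H^*(X,\QQ)$ is determined by the one on $H^2$ through the Mumford--Tate/LLV action: the Weil operator is the image of the $\mathfrak{so}(H^2)$-element defined by the period. Since $\tilde g$ intertwines the LLV actions and $\Psi$ carries the period of $X$ to the period of $X'$ on $H^2$, the transported Weil operators agree on all of $H^*$. On odd cohomology (in the $\mathrm{Kum}$ case) the spin lift is only determined up to sign, and one must check that the sign can be chosen consistently; the plan is to resolve this by working with even Clifford group elements, for which $\pm1$ acts compatibly. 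A less essential alternative route uses Theorem \ref{thm-intro:Torelli-type}-style arguments, but the LLV approach is the one I would pursue.
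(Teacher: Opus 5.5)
\textbf{There is a genuine gap in the odd cohomology (the $\mathrm{Kum}^2$ and $\mathrm{Kum}^3$ cases).} The rest of your reduction matches the paper. Corollary \ref{cor:HCPower} gives fullness of the realization on $\langle \mathsf h(A)\rangle^{\otimes}_{\Mot}$, so any Hodge isomorphism of Frobenius algebras $H^*(X,\QQ)\to H^*(X',\QQ)$ upgrades to $(\mathsf h(X),\cup)\simeq(\mathsf h(X'),\cup)$. The difference is that the paper does not construct this Hodge isomorphism; it takes it from \cite{solda19} or \cite[Theorem 4.7]{floccari2020}.

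When all cohomology is even ($\mathrm{K}3$, $\mathrm{K}3^{[n]}$, $\mathrm{OG}6$), your LLV construction is a valid substitute. Even cohomology is a representation of $\SO(H^2)$ acting by ring automorphisms that fix the point class, and Verbitsky's description of the Weil operator gives Hodge compatibility in every degree. So the step you flagged as the main obstacle is actually fine. For $\mathrm{OG}6$, note that $b_2=8$ is even, so replacing $f$ by $-f$ does not change $\det g$. Fix the determinant instead by composing $P$ with a monodromy operator of determinant $-1$.

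In odd degrees the problem is not a sign ambiguity. A rational isometry $g\in\SO(H^2(X',\QQ))$ lifts to $\mathrm{Spin}(\QQ)$ only if its spinor norm $\nu\in\QQ^\times/(\QQ^\times)^2$ is trivial, and the even Clifford group does not get around this. A lift $x\in\mathrm{GSpin}(\QQ)$ has the form $\sqrt{\nu}\,y$ with $y\in\mathrm{Spin}(\overline{\QQ})$. Hence for odd classes $x(a)\cup x(b)=\nu\,\tilde g(a\cup b)$, where $\tilde g$ is the $\SO$-action on even cohomology. The two possible repairs both fail:
\begin{itemize}
\item Dividing by $\sqrt{\nu}$ in odd degrees is irrational.
\item Composing $y$ with the scaling by $\nu^{k/2}$ on $H^k$ is rational and multiplicative. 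However, it restricts to $\nu g$ on $H^2$ and multiplies the point class by $\nu^{\dim_{\CC}X}$, so it is not Frobenius.
\end{itemize}

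The obstruction is intrinsic. Let $\Phi$ be a graded algebra automorphism of $H^*(X',\QQ)$ fixing the point class, with $\Phi|_{H^2}=\gamma\in\SO(H^2)$. Then $\Phi$ conjugates $L_a$ to $L_{\gamma a}$, so it normalizes the LLV algebra. Schur's lemma on the irreducible spin representation $H^3\otimes\overline{\QQ}$ gives $\Phi|_{H^3}=c\,y$ with $y\in\mathrm{Spin}(\overline{\QQ})$ lying over $\gamma$. Evaluating $\Phi(a)\cup\Phi(b)\cup\Phi(\omega)^{\dim_{\CC}X-3}$ in top degree forces $c^2=1$. So $y$ is rational and $\gamma$ has trivial spinor norm.

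Your argument therefore needs an extra arithmetic input: a choice of $f$ and $P$ making the spinor norm of $fP^{-1}$ trivial. Monodromy cannot supply this, since by the same argument it changes the spinor norm at most by the class of $-1$. Hodge isometries of $H^2(X')$ are very few: for $X'$ very general of Picard rank one they are only $\pm1$ on $H^2_{\mathrm{tr}}$ and on $\NS$.

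Your proposal does not provide this input, and it is exactly what the paper delegates to the citations. Given the computation above, check carefully that those results really give compatibility with the point class in the $\mathrm{Kum}^n$ case. An isomorphism of graded algebras restricting on $H^2$ to a Hodge similitude $\lambda f$ always exists by the rescaling trick, but it preserves the point class only when $\lambda=\pm1$.
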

\begin{proof}
We can apply \cite{solda19} or \cite[Theorem 4.7]{floccari2020} to obtain an isomorphism of rational cohomology algebras $f\colon H^*(X, \QQ)\xrightarrow{\simeq} H^*(X', \QQ)$ such that it moreover respects the class of a point, hence is Frobenius. Then the Hodge conjecture for $X\times X'$, established in Corollary \ref{cor:HCPower}, allows us to conclude that $f$ and $f^{-1}$ are both algebraic.
\end{proof}

\begin{remark}
    By Taelman \cite[Theorems 4.10 and 4.11]{Taelman-DerivedEqHK}, if two hyper-K\"ahler varieties $X$, $X'$ of known deformation type are derived equivalent, then $H^2(X,\QQ)$ and $H^2(X', \QQ)$  are Hodge isometric. Therefore, Corollary \ref{cor:HomologicalMotiveIso} applies to $X$ and $X'$ as in Theorem \ref{thm:motivationbyKSknownCases} that are Fourier--Mukai partners, hence provides new instances of the so-called \textit{multiplicative Orlov conjecture} proposed by Fu--Vial \cite[Conjecture 4.7]{FuVial} for homological motives; see \cite{MaulikShenYin-OrlovConjecture} for another recent result.
\end{remark}
\begin{remark}
    In Corollary \ref{cor:HomologicalMotiveIso}, if $X$ and $X'$ are defined over a subfield $F\subset \CC$, then there exists a finite extension $F'/F$ such that the isomorphism of Frobenius algebra objects between $\mathsf h(X)$ and $\mathsf h(X')$ holds in the category of homological motives over $F'$. In particular, there is a $\Gal(\overline{F}/F)$-equivariant isomorphism of graded $\QQ_{\ell}$-Frobenius algebras between $H^{*}_{\et}(X_{\overline{F}}, \QQ_{\ell})$ and $H^{*}_{\et}(X'_{\overline{F}}, \QQ_{\ell})$.
\end{remark}

\subsection{Tate conjecture}
Given a smooth projective variety $X$ defined over a finitely generated subfield $F\subset \CC$, for any prime number $\ell$ and any integer $i$, we have the (continuous) Galois representation on the \'etale cohomology:
\begin{equation}
    \rho_{\ell} \colon \Gal(\overline{F}/F) \longrightarrow \GL\left(H^{2i}_{\et}(X_{\overline{F}}, \QQ_{\ell}(i))\right).
\end{equation}
The Tate conjecture predicts that the representation $\rho_{\ell}$ is semisimple\footnote{We include here the prediction on the semisimplicity, which is sometimes referred to as the Grothendieck--Serre conjecture in the literature; see for example \cite[Conjecture 12.5]{Jannsen-MixedMotivesKTheoryLNM}.} and the cycle class map to the space of Galois-invariant cohomology classes
\begin{equation}
    \mathrm{cl}_{\ell}\colon \CH^i(X)_{\QQ_\ell} \longrightarrow  H^{2i}_{\et}(X_{\overline{F}}, \QQ_{\ell}(i))^{\Gal(\overline{F}/F)}
\end{equation}
is surjective.

A powerful strategy towards the Tate conjecture (in characteristic zero) is to reduce it to the Hodge conjecture via the following Mumford--Tate conjecture (see e.g. \cite{moonen2017}):
\begin{conjecture}[Mumford--Tate conjecture]
  Let $X$ be a smooth projective variety defined over a finitely generated subfield $F\subset \CC$. 
  Then under the identification $\GL(H^*(X^{an}, \QQ)\otimes \QQ_{\ell})\simeq \GL(H^{*}_{\et}(X_{\overline{F}}, \QQ_{\ell}))$ provided by the Artin comparison isomorphism, we have 
  $$\MT(X)\otimes \QQ_{\ell} = G_{\ell}(X)^{\circ},$$
  where $\MT(X)\subset \GL(H^*(X^{an},\QQ))$ is the Mumford--Tate group of $X$ and $G_{\ell}(X)^{\circ}$ is the identity component of the Zariski closure of the $\ell$-adic Galois representation.
\end{conjecture}

\begin{corollary}
\label{cor:TateConjecture}
    Let $F$ be a finitely generated subfield of $\CC$ and let $\ell$ be a prime number.
    Let $X_1, \cdots, X_n$ be smooth projective $F$-varieties such that their associated complex varieties are as in Theorem \ref{thm:motivationbyKSknownCases} and are Hodge similar to each other. Then the Tate conjecture holds for all products of powers of $X_i$'s. In particular, the Tate conjecture holds for all powers of any $F$-variety $X$ with $X_\CC$ as in Theorem \ref{thm:motivationbyKSknownCases}.
\end{corollary}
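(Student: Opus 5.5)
The plan is to deduce the Tate conjecture from the Hodge conjecture for the same products (Corollary \ref{cor:HCPower}) together with the Mumford--Tate conjecture for them, using the fact that all the relevant motives lie in the tensor category generated by a single abelian fourfold $A$ of Weil type with discriminant $1$. Let $Y$ be a product of powers of the $X_i$; it is defined over $F$. As a first step, after replacing $F$ by a finite extension (which changes neither the Tate classes up to Galois averaging nor the semisimplicity of the representation), I would arrange that the abelian fourfold $A$ and the algebraic cycles realizing $\mathsf h(Y)\in\langle \mathsf h(A)\rangle^{\otimes}_{\mathsf{Mot}}$ from Theorem \ref{thm:motivationbyKSknownCases} are all defined over $F$. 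The inputs I need are that $A$ descends to a finitely generated field and that the correspondences spread out; this is the standard argument, since Hodge classes that are algebraic are defined over $\overline{F}$ and hence over a finite extension.

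Next I would establish the Mumford--Tate conjecture for $A$. For abelian fourfolds this is known in the relevant cases: the Weil type ones with discriminant $1$ have Mumford--Tate group of type $\mathrm{SU}(2,2)$ over an imaginary quadratic field, up to center, and the Mumford--Tate conjecture for abelian varieties of dimension $\le 4$ is established by Moonen--Zarhin except in a case that does not occur here. I would cite this. Then, since $\mathsf h(Y)$ is a direct summand of a tensor construction on $\mathsf h(A)$ via cycles defined over $F$, both the Mumford--Tate group of $Y$ and the $\ell$-adic group $G_\ell(Y)^\circ$ are quotients of the corresponding groups for $A$, compatibly under Artin comparison. This gives $\MT(Y)_{\QQ_\ell}=G_\ell(Y)^\circ$.

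Finally I would combine the pieces. The Galois-invariant classes in $H^{2i}_{\et}(Y_{\overline F},\QQ_\ell(i))$ are contained in the $G_\ell(Y)^\circ$-invariants, which equal the Hodge classes tensored with $\QQ_\ell$; these are algebraic by Corollary \ref{cor:HCPower}, which yields surjectivity of $\mathrm{cl}_\ell$. For semisimplicity, Faltings gives semisimplicity of the Galois action on $H^1(A)$, hence on tensor constructions. The image of Galois on $\mathsf h(Y)$ is then the image under a motivic morphism defined over $F$ of a semisimple representation, so it is semisimple. I expect the main obstacle to be the bookkeeping of fields of definition: making sure that $A$ and the motivating cycles, constructed transcendentally via the hyper-Kummer and MRS constructions, descend to a finitely generated extension of $F$, and that passing to the finite extension does not lose the statement over $F$ itself. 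The descent to $F$ should follow by taking $\Gal(F'/F)$-invariants, since we are working with $\QQ_\ell$ coefficients.
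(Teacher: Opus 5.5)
Your argument is correct, but you obtain the Mumford--Tate conjecture for the products by a different route from the paper's.

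\textbf{The paper's route.} The paper proves Mumford--Tate factor by factor: \cite{Andre1996} for K3 surfaces, and \cite[Theorem 1.18]{FFZ} for all hyper-K\"ahler varieties of known deformation type. It then invokes Commelin's theorem \cite[Theorem 10.3]{commelin2019}, which says the Mumford--Tate conjecture is stable under products of varieties motivated by abelian varieties in Andr\'e's sense. It concludes with \cite[Proposition 2.3.2]{moonen17}: under Mumford--Tate, the Tate conjecture for powers is equivalent to the Hodge conjecture for powers, and the latter is Corollary \ref{cor:HCPower}. Semisimplicity is built in there, since $G_\ell^\circ=\MT\otimes\QQ_\ell$ is reductive, so your appeal to Faltings is correct but redundant.

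\textbf{Your route.} You instead transport the conjecture from the single Weil-type fourfold $A$ to $Y$. You do this along the algebraic correspondences realizing $\mathsf h(Y)\in\langle\mathsf h(A)\rangle^{\otimes}_{\mathsf{Mot}}$, which come from Theorem \ref{thm:motivationbyKSknownCases} plus Hodge similarity. Then $\MT(Y)$ and $G_\ell(Y)$ are the images of $\MT(A)$ and $G_\ell(A)$, compatibly with the comparison isomorphism, so $\MT(Y)_{\QQ_\ell}=G_\ell(Y)^\circ$. This avoids \cite{FFZ} and Commelin's theorem for motives of abelian type. You only need Mumford--Tate for abelian fourfolds with imaginary quadratic multiplication: Moonen--Zarhin in the simple case, plus the known product cases for special members where $A$ is not simple.

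\textbf{The cost.} You work with actual cycles, so fields of definition matter; in Andr\'e's category Galois compatibility is automatic. Here your justification is imprecise. $A$ is a priori only a complex abelian variety. The fact that algebraic classes on $Y_\CC$ come from cycles over $\overline F$ says nothing about $A$ being defined over a finite extension of $F$.

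The fix is routine. Spread $A$ and the finitely many correspondences over a finitely generated field $F''\supset F$. The restriction $\Gal(\overline{F''}/F'')\to\Gal(\overline F/F)$ is onto the open subgroup $\Gal(\overline F/F''\cap\overline F)$. Hence $G_\ell(Y)^\circ$ and semisimplicity are unchanged, and the Tate classes change only by passing to a finite extension. Galois averaging then descends the cycles to $F$, as you indicate.
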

\begin{proof}
The proof is similar to \cite[Corollary 5.12]{FloccariFu-HodgeConjectureWeilFourfolds}. We include it for completeness.
For a variety $X$ satisfying the Mumford--Tate conjecture, the Tate conjecture for (powers of) $X$ is equivalent to the Hodge conjecture for (powers of) $X_\CC$; see \cite[Proposition 2.3.2]{moonen17}. By the cases of the Hodge conjecture established in Corollary \ref{cor:HCPower}, it suffices to show the Mumford--Tate conjecture for all products of powers of $X_i$.
As all the varieties in Theorem \ref{thm:motivationbyKSknownCases} are motivated by abelian varieties, we can invoke Commelin's result \cite[Theorem 10.3]{commelin2019} to reduce to showing the Mumford--Tate conjecture for each individual variety in the list of Theorem \ref{thm:motivationbyKSknownCases}. The Mumford--Tate conjecture for K3 surfaces is proved in \cite{Andre1996}, while in \cite[Theorem 1.18]{FFZ}, we proved the Mumford--Tate conjecture for all hyper-K\"ahler varieties of known deformation types. 
\end{proof}

\appendix
\section{Lattices}
\label{sec:Appendic-Lattices}

We collect in this appendix some results about lattices following Nikulin \cite{Nikulin1980}, and carry out some computations which are relevant to the main body of the paper.

A lattice $L$ is a free $\ZZ$-module of finite rank, denoted by $\rk(L)$, equipped with a non-degenerate integer-valued symmetric bilinear form, which we often denote by $(a,b)_L$ or just $(a,b)$.
The associated quadratic form on $L\otimes_{\ZZ} \RR$ can be diagonalized with $t_+$ entries equal to~$1 $ and $t_-$ entries equal to $-1$, and the signature of the lattice $L$ is $(t_+, t_{-})$; we have $\mathrm{rk}(L) = t_++t_-$.
If $L$ is a lattice and $k$ is a non-zero integer, we let $L(k)$ be the lattice obtained from $L$ by multiplying the bilinear form by a factor $k$. A lattice is called \emph{even} if $(v,v)\in 2\ZZ$ for any $v\in L$, and \emph{odd} otherwise; all the lattices considered in this paper are even. We denote by $\Oo(L)$ the group of orthogonal transformations of the lattice $L$; for any $k\in \ZZ$, the orthogonal group $\Oo(L(k))$ is canonically identified with $\Oo(L)$.

The dual lattice of $L$ is $L^{\vee}\coloneqq \Hom_{\ZZ}(L,\ZZ)$. The bilinear form gives an embedding with finite cokernel $L\to L^{\vee}$, sending $x$ to $(x,-)$; moreover, $L^{\vee}$ is naturally equipped with a $\QQ$-valued bilinear form. Thus $A_L\coloneqq L^{\vee}/L$ is a finite abelian group, called the \emph{discriminant group} of $L$; a lattice is called \emph{unimodular} if $A_L$ is trivial.
For any even lattice $L$ there is a non-degenerate quadratic form $q_L$ on~$A_L$ with values in $\QQ/2\ZZ$, i.e. $q_L\colon A_L\to \QQ/2\ZZ$ is such that $q_L(na)=n^2q_L(a)$ for $a\in A_L$, $n\in \ZZ$. Denoting by $\bar{w}\in A_L$ the class of $w\in L^{\vee}$, we have $q_L(\bar{w}) = (w,w)_{L^{\vee}}$ modulo $2\ZZ$.  We have an associated symmetric bilinear form $b_L(a,b)=\frac{1}{2}(q_L(a+b)-q_L(a)-q_L(b))$ on~$A_L$ with values in~$\QQ/\ZZ$. 
We denote by $\mathrm{length}(A_L)$ the minimal number of generators of the abelian group $A_L$, called the \textit{length} of $A_L$. There is a natural action of $\Oo(L)$ on $L^{\vee}$, which induces a homomorphism $\Oo(L)\to \Oo(A_L,q_L)$, where $\Oo(A_L,q_L)$ is the group of isometries of the discriminant group. By definition, the \textit{stable orthogonal group} $\tilde{\Oo}(L) $ is the kernel of this homomorphism.

The signature and the discriminant form are the basic invariants of a lattice. An \emph{overlattice} of an even lattice $L$ is a lattice $M$ with an isometric embedding $L\subset M$ with finite index. 
The following result is proved by Nikulin \cite{Nikulin1980}.
    \begin{theorem}[{\cite[Proposition 1.4.1]{Nikulin1980}}] \label{thm:overlattices}
    There is a bijection between even overlattices of $L$ and isotropic subgroups of the discriminant group $A_L$.
    \end{theorem}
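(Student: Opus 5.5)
The statement is Nikulin's correspondence between even overlattices of an even lattice $L$ and isotropic subgroups of $(A_L,q_L)$. I would prove it by working entirely inside $L\otimes\QQ$. The basic observation is that any overlattice $M$ with $L\subset M$ of finite index sits inside $L\otimes\QQ$ with the extended rational form. Since $M$ is integral, every $m\in M$ pairs integrally with $L\subset M$, so $L\subset M\subset M^\vee\subset L^\vee$. This gives the candidate map
\[
M\longmapsto H_M\coloneqq M/L\subset L^\vee/L=A_L .
\]
In the other direction, for a subgroup $H\subset A_L$ I set $M_H\coloneqq \pi^{-1}(H)$, where $\pi\colon L^\vee\to A_L$ is the projection. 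These two constructions are clearly mutually inverse on the level of subgroups: $M_{H_M}=M$ and $H_{M_H}=H$. So the whole content of the theorem is to check that the evenness and integrality conditions on both sides match.

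First, suppose $M$ is an even overlattice. For $m\in M$ we have $q_L(\bar m)=(m,m)\bmod 2\ZZ=0$, so $H_M$ is isotropic for $q_L$.

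Conversely, suppose $H$ is isotropic for $q_L$. Then for $x\in M_H$ we get $(x,x)\in 2\ZZ$. Polarizing gives
\[
2(x,y)=(x+y,x+y)-(x,x)-(y,y)\in 2\ZZ ,
\]
so $(x,y)\in\ZZ$. Hence $M_H$ is an integral, even lattice containing $L$ with index $|H|<\infty$, and in particular it is non-degenerate.

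The only point needing care is this polarization step. It requires $q_L$ to vanish mod $2\ZZ$ on $H$, not merely that $b_L$ vanishes mod $\ZZ$. This is exactly why the correspondence is phrased with $q_L$ rather than $b_L$. Beyond that, the argument is just a verification, so I expect no real obstacle.
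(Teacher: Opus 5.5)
Your proof is correct and is the standard argument. The paper does not prove this statement itself but cites Nikulin, and the bijection it describes is exactly yours: $M\mapsto M/L\subset A_L$ via the chain $L\subset M\subset M^{\vee}\subset L^{\vee}$, with inverse given by taking preimages in $L^{\vee}$, and your point that $q_L$-isotropy yields evenness and hence integrality by polarization is handled correctly.
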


The bijection sends an even overlattice $L\subset M$ to the isotropic subgroup $H= M/L$ of $A_L$. 
From the chain of embeddings $L\subset M\subset M^{\vee} \subset L^\vee$ one sees that $A_M = (M^{\vee}/L)/H$, and in fact $M^{\vee}/L =H^{\bot}\subset A_L$. It follows that $|A_M|=\tfrac{|A_L|}{[M:L]^2}$, where $[M:L]$ is the index of $L$ in $M$.

A sublattice $L\subset M$ is \textit{saturated} if $M/L$ is torsion-free, i.e. $mw \in L$ implies $w\in L$, for any $w\in M$ and non-zero $m\in\ZZ$. 
An embedding $L\hookrightarrow M$ of lattices is called \textit{primitive} if the image of $L$ is saturated; equivalently, if its cokernel is torsion free. 
Two primitive embeddings $j, j'\colon L\to M$ are said to be equivalent if there exist isometries $f\in \Oo(L)$ and $g\in \Oo(M)$ such that $j'=g\circ j \circ f$.

Let $L,M$ be even lattices and let $j\colon L\hookrightarrow M$ be a primitive embedding with orthogonal complement $K$. The embedding $j$ corresponds to the datum of the gluing subgroup $H:=M/{(L\oplus K)}\subset A_{L\oplus K}$ which determines the overlattice $M$ of $L\oplus K$. Since $L$ and $K$ are primitive in $M$, the projections of $H$ to $A_L$ and $A_K$ are injective, i.e., $H$ is the graph of an anti-isometry $\gamma\colon H_L \to H_K$ for subgroups $H_L$, $H_K$ of $A_L$, $A_K$ respectively.
Nikulin \cite[Proposition 1.15.1]{Nikulin1980} proves the following result, which allows one to classify primitive embeddings of a given lattice $L$ into another lattice $M$.
\begin{theorem}\label{thm:embedding}
    The primitive embeddings of an even lattice $L$ with invariants $(t_+, t_-, q_L)$ into an even lattice $M$ with invariants $(m_+, m_-, q_M)$ are determined by the data $(H_L, H_M, \gamma, K, \gamma_K)$ where: 
    \begin{enumerate}[label=\arabic*)]
    \item $H_L$ is a subgroup of $A_L$, $H_M$ is a subgroup of $A_M$; 
    \item $\gamma\colon H_L\to H_M$ is an isomorphism such that $q_M(\gamma(x))=q_L(x)$ for all $x\in H_L$;
    \item $K$ is an even lattice of signature $(m_+-t_+, m_--t_-)$;
    \item $\gamma_K\colon A_K \to \delta(-1)$ is an isomorphism compatible with the quadratic forms, where $\delta$ is obtained as follows: let $\Gamma\subset A_L\oplus A_M(-1)$ be the graph of $\gamma$; then $\Gamma$ is an isotropic subgroup of $A_L\oplus A_M(-1)$ and $\delta\coloneqq \Gamma^{\bot}/\Gamma$, with the induced quadratic form.
    \end{enumerate}
    Two such sets $(H_L, H_M, \gamma, K, \gamma_K)$ and $(H'_L, H'_M, \gamma', K', \gamma'_{K'})$ determine equivalent primitive embeddings if and only if the following conditions hold:
    \begin{enumerate}
    \item[a)]~there exists $\phi\in \Oo(L)$ such that $\bar{\phi}\in \Oo(A_L,q_L)$ induces an isomorphism $\bar{\phi} \colon H_L\xrightarrow{\ \sim \ } H_L'$;
    \item[b)]~there exist $\xi\in\Oo(A_M,q_M)$ and an isometry $\psi\colon K\to K'$ such that $\gamma'\circ \bar{\phi} = \xi \circ \gamma$ and $(\bar{\phi}^{-1},\xi) \circ \gamma_K =  \gamma_K' \circ \bar{\psi}$, where $(\bar{\phi}^{-1}, {\xi}) \colon \delta\xrightarrow{\ \sim \ } \delta'$ is the isometry induced by $(\bar{\phi}^{-1}, {\xi})\in \Oo(A_L \oplus A_M(-1))$.
    \end{enumerate}
\end{theorem}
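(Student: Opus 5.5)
The strategy is to reduce the classification of primitive embeddings to the classification of orthogonal complements glued along discriminant forms, following Nikulin's original argument. A primitive embedding $j\colon L\hookrightarrow M$ with orthogonal complement $K$ realises $M$ as an overlattice of $L\oplus K$. By Theorem~\ref{thm:overlattices}, this overlattice corresponds to an isotropic subgroup $H=M/(L\oplus K)\subset A_L\oplus A_K$. Primitivity of $L$ and $K$ makes both projections of $H$ injective, so $H$ is the graph of an anti-isometry between subgroups of $A_L$ and $A_K$.

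To bring $M$ itself into the picture, I would pass to the equivalent formulation of embedding $L\oplus M(-1)$-data. Concretely, I would consider the chain $L\oplus K\subset M$ and compute $A_M=H^{\perp}/H$. Writing $H_L$ for the projection of $H$ to $A_L$, one checks that $A_M$ contains the image of $H_L^{\perp}\subset A_L$. The part of $A_L$ that "survives" in $A_M$ then identifies a subgroup $H_M\subset A_M$ together with an isometry $\gamma\colon H_L\to H_M$. Dually, the graph $\Gamma$ of $\gamma$ in $A_L\oplus A_M(-1)$ is isotropic. Forming $\delta=\Gamma^{\perp}/\Gamma$ and comparing orders, $|\delta|=|A_L||A_M|/|H_L|^2$, one gets $|A_K|=|\delta|$; a direct check of quadratic forms gives $q_K\cong -q_\delta$. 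This produces the data $(H_L,H_M,\gamma,K,\gamma_K)$. The signature condition on $K$ is immediate from additivity.

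Conversely, given such data, I would reconstruct $M$ as follows. Take the overlattice of $L\oplus K$ corresponding, via Theorem~\ref{thm:overlattices}, to the graph in $A_L\oplus A_K$ obtained by composing $\gamma_K$ with the natural maps. I would then verify that its discriminant form is $(A_M,q_M)$. The main technical point is that this overlattice is then isometric to $M$ itself, not merely of the same genus. To handle this I would use the fact that $L\oplus M(-1)$ and this construction embed into a common unimodular-type gluing, and work with "embedding into $M$" as the datum directly: Nikulin's trick is to glue $L\oplus K$ to $M(-1)$ through the data, which yields the identification without a uniqueness-in-genus assumption.

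For the equivalence statement, I would trace how isometries $\phi\in\Oo(L)$, $g\in\Oo(M)$ and $\psi\colon K\to K'$ act on the gluing data. Only the images $\bar\phi$ and $\xi=\bar g$ in the discriminant groups matter, which gives conditions a) and b). The converse direction needs lifting: I would assemble the isometry of $M$ from $\phi\oplus\psi$ on $L\oplus K$, extended to the overlattice because it preserves the gluing subgroup. I expect this compatibility-of-gluing verification, especially the bookkeeping identifying $\delta$ with $A_K(-1)$, to be the main obstacle. It is routine but sign-sensitive, so I would proceed by careful diagram chasing in $A_L\oplus A_K\oplus A_M(-1)$.
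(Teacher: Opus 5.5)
The paper gives no proof of this statement; it quotes Nikulin's Proposition~1.15.1. Your forward direction (the gluing subgroup $H\subset A_L\oplus A_K$ and $A_M=H^{\perp}/H$) is the standard argument behind it.

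The genuine gap is in the converse. There you propose to show that the overlattice $M'$ of $L\oplus K$ built from the data is isometric to $M$ itself ``without a uniqueness-in-genus assumption''. This cannot be done: the data involve $M$ only through $(m_+,m_-,q_M)$, so they only see its genus.

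Here is a concrete counterexample. Let $L=\langle 2\rangle$, and let $M$ be $E_8\oplus E_8$ or $D_{16}^+$, two non-isometric even unimodular positive definite lattices of rank $16$. Since $A_M=0$, every datum has $H_L=H_M=0$ and $\delta=A_L$, so a datum amounts to a positive definite even $K$ of rank $15$ with $q_K\cong -q_L$. Both $K_1=E_7\oplus E_8$ (the complement of a root in $E_8\oplus E_8$) and $K_2=r^{\perp}\subset D_{16}^+$ (root system $A_1\oplus D_{14}$, so $K_2\not\cong K_1$) qualify. However, $D_{16}^+$ is the only even unimodular overlattice of $L\oplus K_2$, because $A_{L\oplus K_2}\cong(\ZZ/2\ZZ)^2$ has a single nonzero isotropic element. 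Hence $K_2$ is not the complement of any primitive copy of $L$ in $E_8\oplus E_8$.

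What the gluing argument proves, and what Nikulin asserts, is a classification of primitive embeddings of $L$ into even lattices \emph{with invariants} $(m_+,m_-,q_M)$, i.e.\ into members of the genus of $M$. The converse only produces an embedding into some such $M'$. It becomes a statement about the fixed lattice $M$ when $M$ is alone in its genus, e.g.\ under Theorem~\ref{thm:orthogonalGroups}. That theorem covers $\Lambda_{\mathrm{K}3}$, $\widetilde{\Lambda}_{\mathrm{K}3}$, $\Lambda_{\mathrm{K}3^{[n]}}$ and $\mathrm{U}(2)^{\oplus k}\oplus\mathrm{U}$, which are the lattices to which the paper applies the result. So drop the claimed trick and state the converse at the level of the genus.

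Three smaller points.

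\emph{Notation for $H_L$.} If $H_L$ denotes $p_L(H)$, then it is $H_L^{\perp}$, not $H_L$, that embeds isometrically into $A_M$ via $x\mapsto[(x,0)]$. The $H_L$ of the theorem is $p_L(H)^{\perp}$, and your identity $|\delta|=|A_L||A_M|/|H_L|^2=|A_K|$ holds only with that meaning. For unimodular $M$ the theorem forces $H_L=0$, whereas $p_L(H)=A_L$.

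\emph{The map $\gamma_K$.} The ``direct check'' $q_K\cong -q_\delta$ is the heart of the forward direction and deserves an explicit map. For $y\in A_K$, choose $x\in A_L$ with $(x,y)\in H^{\perp}$; this is possible since $b_L$ is nondegenerate, and $x$ is unique modulo $p_L(H)^{\perp}$. Send $y$ to the class of $(x,[(x,y)])$ in $\Gamma^{\perp}/\Gamma$. Then:
\begin{itemize}
\item changing $x$ by $h\in p_L(H)^{\perp}$ changes this element by $(h,\gamma(h))\in\Gamma$;
\item the element is orthogonal to $\Gamma$;
\item it lies in $\Gamma$ only if $(0,y)\in H$, i.e.\ $y=0$;
\item its value is $q_L(x)-q_M([(x,y)])=-q_K(y)$.
\end{itemize}
Together with the order count, this is the required $\gamma_K\colon A_K\xrightarrow{\sim}\delta(-1)$.

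\emph{Equivalence part.} The relation $\gamma'\circ\bar\phi=\xi\circ\gamma$ means that it is $(\bar\phi,\xi)$ that carries $\Gamma$ onto $\Gamma'$. This is the map to use in your sign bookkeeping.
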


    We now recall some facts about isometries of lattices.
    The following result is again a consequence of the work of Nikulin. 

    \begin{theorem}[{\cite[Thm. 1.14.2 and Rmk. 1.14.5]{Nikulin1980}}, {\cite[Chapter 14, 1.5]{huyK3}}]\label{thm:orthogonalGroups}
    Let $L$ be an even indefinite lattice. Assume that either:
    \begin{enumerate} [label=(\roman*)]
        \item $\rk(L) \geq \mathrm{length}(A_L)+2$, or
        \item $A_L=(\ZZ/2\ZZ)^3 \oplus A'$ for some abelian group $A'$ and, for any odd prime $p$, the length of $A_L \otimes_{\ZZ} \ZZ_p$ is at most $\mathrm{rk}(L)-2$. 
    \end{enumerate}
    Then the lattice $L$ is uniquely determined by its signature and discriminant form, and the natural map $\Oo(L)\to \Oo(A_L,q_L)$ is surjective.
    \end{theorem}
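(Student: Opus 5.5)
The plan is to reduce both assertions to local statements at each prime $p$ and then pass from local to global using strong approximation for the spin group. This is Nikulin's argument, and it is also the route of the cited \cite[Chapter 14, 1.5]{huyK3}.

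\textbf{Step 1: the genus.} First recall that, for even lattices, the pair (signature, discriminant form $q_L$) determines the genus of $L$ (\cite[Corollary 1.9.4]{Nikulin1980}). The reason is that $q_L$ decomposes as an orthogonal sum of $p$-primary parts $q_{L,p}$. Each $q_{L,p}$ determines the $\ZZ_p$-lattice $L\otimes\ZZ_p$ up to a unimodular summand. That summand is then pinned down by the rank, and by the discriminant, which is read off from $|A_L|$ together with the signature. So it suffices to show two things: the genus of $L$ contains a single class, and the map $\Oo(L)\to\Oo(A_L,q_L)$ is onto.

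\textbf{Step 2: local surjectivity.} For each $p$ I would prove that $\Oo(L\otimes\ZZ_p)\to\Oo(q_{L,p})$ is surjective. I would also prove that the spinor norms of $\Oo(L\otimes\ZZ_p)$ cover all of $\ZZ_p^\times\QQ_p^{\times2}$, or at least enough units. Under hypothesis (i), the bound $\rk(L)\geq \mathrm{length}(A_L)+2$ implies that $L\otimes\ZZ_p$ splits off a unimodular summand of rank at least $2$. For $p=2$ this summand can be chosen to contain a hyperbolic plane or $\mathrm{U}$-type piece. Under (ii), the three copies of $\ZZ/2\ZZ$ play the same role at $p=2$, and the length condition handles odd $p$. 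With such a summand available, every isometry of $q_{L,p}$ lifts: lift it first to the non-unimodular Jordan part, which is determined by $q_{L,p}$, and then correct it on the unimodular complement. Reflections in vectors of the unimodular piece supply the required unit spinor norms.

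\textbf{Step 3: local to global.} Because $L$ is indefinite of rank at least $3$, strong approximation holds for $\mathrm{Spin}(L\otimes\QQ)$. Combined with the unit spinor norms from Step 2, this gives that the number of spinor genera in the genus is $1$ and that each spinor genus is a single class (Eichler). This proves uniqueness. For surjectivity, take $\bar g\in\Oo(A_L,q_L)$. By Step 2, lift it to a collection of local isometries $(g_p)_p$, with $g_p=\id$ for $p\nmid 2|A_L|$. Then use strong approximation, adjusting by elements of the adelic spinor kernel and using the unit spinor norms to fix the determinant and spinor norm, to find a global $g\in\Oo(L)$ that approximates each $g_p$ closely enough to induce the same action on $A_L$.

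I expect the main obstacle to be the $2$-adic part of Step 2. Lifting isometries of $q_{L,2}$ and controlling $2$-adic spinor norms requires a case analysis of $2$-adic Jordan decompositions, and this is precisely where hypothesis (ii) has to replace (i). I would handle it by first splitting off $\mathrm{U}\otimes\ZZ_2$ or $\mathrm{V}\otimes\ZZ_2$ summands and citing Nikulin's explicit $2$-adic normal forms.
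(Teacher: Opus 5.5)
The paper gives no proof of this statement; it quotes Nikulin's Theorem~1.14.2, and your architecture is the one behind that theorem: the genus is fixed by signature and $q_L$, followed by a local analysis, then strong approximation plus Eichler. Under hypothesis (i) your $2$-adic step is essentially fine. There $L\otimes\ZZ_2$ has an even unimodular summand of rank $\geq 2$, hence a copy of $\mathrm{U}$ or $\mathrm{V}$ over $\ZZ_2$. Reflections in its vectors act trivially on $A_L$, and, normalizing the spinor norm of $s_v$ as $(v,v)/2$, they realize all of $\{\pm1\}\times\ZZ_2^\times$.

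The gap is case (ii): it is not true that ``the three copies of $\ZZ/2\ZZ$ play the same role at $p=2$.'' The summand $(\ZZ/2\ZZ)^3$ only says that the scale-$2$ Jordan block of $L\otimes\ZZ_2$ has rank $\geq 3$. So it splits off $\mathrm{U}(2)$ or $\mathrm{V}(2)$, not a unimodular plane. Moreover $L\otimes\ZZ_2$ may have no unimodular part at all, e.g.\ $L=\mathrm{U}(2)^{\oplus 3}$, which is exactly the lattice the paper feeds into this theorem in Remark~\ref{rmk:A.10}. Reflections in vectors of $\mathrm{U}(2)$ move $A_L$. Worse, for $L=\mathrm{U}(2)^{\oplus 3}$, view $\ker(\Oo(L\otimes\ZZ_2)\to\Oo(q_{L,2}))$ inside $\Oo(\mathrm{U}^{\oplus 3}\otimes\ZZ_2)$. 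Every element of it is congruent to the identity modulo $2$, hence has trivial Dickson invariant and determinant $+1$. So no local correction at $2$ can fix the determinant. Your planned $2$-adic analysis by splitting off $\mathrm{U}\otimes\ZZ_2$ or $\mathrm{V}\otimes\ZZ_2$ has nothing to split off.

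The repair needs two ingredients missing from your sketch. First, at $p=2$ under (ii), use determinant-one elements acting trivially on $q_{L,2}$. Examples are $e\mapsto te$, $f\mapsto t^{-1}f$ on a $\mathrm{U}(2)$ summand with isotropic basis $e,f$ (this is $s_{e-f}s_{e-tf}$, with spinor norm $t$), or multiplication by norm-one units $\equiv 1 \bmod 2$ on $\mathrm{V}(2)$. These give only $\{1\}\times\ZZ_2^\times$.

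Second, in Step~3, before invoking strong approximation (which applies only to the spinor kernel), choose a \emph{global} $h\in\Oo(L\otimes\QQ)$ with $\det h=\det g_2$ and $\theta(h)=\pm\prod_p p^{v_p(\theta(g_p))}$. Such $h$ exists because $(\det,\theta)$ maps $\Oo(L\otimes\QQ)$ onto $\{\pm1\}\times\QQ^\times/\QQ^{\times 2}$ when $L$ is indefinite of rank $\geq 3$. Then each $h^{-1}g_p$ can be pushed into the spinor kernel by elements of $\ker(\Oo(L\otimes\ZZ_p)\to\Oo(q_{L,p}))$. At odd $p$ the length hypothesis gives a unimodular summand of rank $\geq 2$, whose reflections realize all of $\{\pm1\}\times\ZZ_p^\times$. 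At $p=2$ only $\{1\}\times\ZZ_2^\times$ is needed. This global step is missing even in case (i), since unit spinor norms cannot correct a $\theta(g_p)$ of odd valuation. The same bookkeeping also shows the genus is a single spinor genus.

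Two smaller points. Local surjectivity $\Oo(L\otimes\ZZ_p)\to\Oo(q_{L,p})$ holds with no hypothesis \cite[\S1.9]{Nikulin1980}, but your justification fails at $p=2$: there the non-unimodular Jordan part is not determined by $q_{L,2}$, since $\langle 2\rangle$ and $\langle 10\rangle$ over $\ZZ_2$ have the same discriminant form. Also, $L=\mathrm{U}$ satisfies (i) with rank $2$, so ``rank at least $3$'' requires setting this trivial case aside.
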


    \begin{remark}\label{rmk:relaxing}
        By \cite[Thm. 1.14.2]{Nikulin1980}, the hypothesis in the above theorem may be further relaxed as follows: for any odd prime $p$, the length of $A_L\otimes_{\ZZ} \ZZ_p$ is at most $\mathrm{rk}(L)-2$ and, if the length of $A_L\otimes_{\ZZ} \ZZ_2=\mathrm{rk}(L)$, then $A_L$ contains a copy $A_{\mathrm{U}(2)}$, where $\mathrm{U}(2)$ is a hyperbolic plane with the form multiplied by $2$.    \end{remark}

    Combining this theorem with Theorem \ref{thm:embedding}, we obtain the following criterion for the uniqueness of primitive embeddings of a lattice into a unimodular lattice.

    \begin{theorem}\label{thm:embeddingUnimodular}
        Let $j\colon L\hookrightarrow M$ be a primitive embedding of an even lattice $L$ into an even unimodular lattice $M$, and assume the complement of $j(L)$ satisfies the assumption of Theorem \ref{thm:orthogonalGroups}. Then all primitive embeddings of $L$ in $M$ are equivalent to $j$.
    \end{theorem}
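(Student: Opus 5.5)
The plan is to deduce the statement from Theorem \ref{thm:embedding} together with Theorem \ref{thm:orthogonalGroups}, taking $M$ unimodular. Let $j, j'\colon L\hookrightarrow M$ be two primitive embeddings, with orthogonal complements $K=j(L)^{\perp}$ and $K'=j'(L)^{\perp}$.

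The first step is to describe the Nikulin data. Since $A_M=0$, in the data $(H_L,H_M,\gamma,K,\gamma_K)$ of Theorem \ref{thm:embedding} we are forced to take $H_M=0$, $H_L=0$ and $\gamma=0$. Then $\Gamma=0$ and $\delta=A_L$, so $\gamma_K\colon A_K\xrightarrow{\sim} A_L(-1)$ is an anti-isometry. Since $M$ has signature $(m_+,m_-)$, the lattice $K'$ has the same signature $(m_+-t_+,m_--t_-)$ as $K$. Moreover $q_{K'}\cong -q_L\cong q_K$, so $K$ and $K'$ have the same signature and the same discriminant form.

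The second step applies Theorem \ref{thm:orthogonalGroups} to $K$, which is allowed by assumption. Since $K$ is uniquely determined by its signature and discriminant form, there is an isometry $\psi_0\colon K\xrightarrow{\sim}K'$. The remaining task is to check condition b) of Theorem \ref{thm:embedding} with $\phi=\mathrm{id}_L$ and $\xi=\mathrm{id}$. We need an isometry $\psi\colon K\to K'$ with $\gamma_K=\gamma'_{K'}\circ\bar\psi$, that is, $\bar\psi=\gamma'^{-1}_{K'}\circ\gamma_K$. The map $\gamma'^{-1}_{K'}\circ\gamma_K\circ\bar\psi_0^{-1}$ is an element of $\Oo(A_{K'},q_{K'})$. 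By surjectivity of $\Oo(K)\to\Oo(A_K,q_K)$, transported to $K'$ via $\psi_0$, this element lifts to some $g\in\Oo(K')$. We then set $\psi=g\circ\psi_0$, which has the required image $\bar\psi$. Condition a) holds trivially with $\phi=\mathrm{id}$. Theorem \ref{thm:embedding} then gives that $j'$ is equivalent to $j$.

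The main point needing care is that the hypothesis is stated only for the complement of $j(L)$, while we use that $K'$ also falls under the uniqueness statement. This is fine, because uniqueness is applied to $K$: since $K'$ has the same invariants as $K$, it is isometric to $K$. Surjectivity on $K'$ then follows by conjugating with $\psi_0$. Everything else is bookkeeping of the data in Theorem \ref{thm:embedding}.
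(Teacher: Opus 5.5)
Your proposal is correct and is the argument the paper intends. The paper gives no separate proof; it only says the statement follows by combining Nikulin's classification (Theorem \ref{thm:embedding}) with Theorem \ref{thm:orthogonalGroups}. You have supplied that bookkeeping correctly: the gluing data are trivial because $A_M=0$, so $K'\cong K$ by uniqueness of $K$ given its signature and discriminant form, and correcting $\psi_0$ by a lift of $\gamma'^{-1}_{K'}\circ\gamma_K\circ\bar\psi_0^{-1}\in\Oo(A_{K'},q_{K'})$ gives an isometry $\psi$ satisfying $\gamma_K=\gamma'_{K'}\circ\bar\psi$.
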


   Let $L$ be a lattice and $h\in L$. The \emph{divisibility} $\mathrm{div}(h)$ is the largest integer such that $(h,-)/\mathrm{div}(h)$ belongs to $L^{\vee}$, or, equivalently, the positive generator of the ideal $(h, L)$ of $\ZZ$. The following statement is originally due to Eichler.
   
    \begin{theorem}[{\cite[Lemma 3.5]{gritsenko2010moduli}}]\label{thm:eichler}
        Let $L$ be a lattice containing at least two orthogonal hyperbolic planes. Let $h\in L$ be a primitive class, of square $2d$ and divisibility $\gamma$. The $\tilde{\Oo}(L)$-orbit of $h$ consists precisely of those primitive vectors $h'$ of square $2d$, divisibility $\gamma$ and such that $[(h,-)/\gamma]=[(h', -)/\gamma]$ in $A_L$.
    \end{theorem}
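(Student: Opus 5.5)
This is Eichler's criterion, so the plan is the classical Eichler argument rather than anything specific to the paper. Write $L=\mathrm{U}\oplus\mathrm{U}\oplus L_0$, with hyperbolic bases $e_1,f_1$ and $e_2,f_2$ of the two planes. The tool is the Eichler transvections. For isotropic $e$ and $a\in e^{\bot}$, set
\[
E_{e,a}(x)=x-(a,x)e+(e,x)a-\tfrac12(a,a)(e,x)e .
\]
This is an integral isometry when $e$ is one of $e_1,f_1,e_2,f_2$ and $a$ lies in the orthogonal complement of the corresponding plane. Because $e$ and $a$ lie in $L$, the map $E_{e,a}$ acts trivially on $A_L$. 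So all these transvections lie in $\tilde{\Oo}(L)$.

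Necessity is immediate. An element $g\in\tilde{\Oo}(L)$ preserves the square and the divisibility of $h$. It also satisfies $g(h,-)=(gh,-)$ and fixes $[(h,-)/\gamma]\in A_L$, so the residue class is preserved as well.

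For sufficiency, the aim is to bring every primitive $h$ to a normal form that depends only on $(2d,\gamma,[h^\vee/\gamma])$. Write $h=\alpha e_1+\beta f_1+\alpha' e_2+\beta' f_2+\ell$ with $\ell\in L_0$. First use the transvections $E_{e_i,a}$, $E_{f_i,a}$ with $a$ ranging over the other plane and over $L_0$. These perform a Euclidean algorithm on the coordinates in $\mathrm{U}\oplus\mathrm{U}$ and move multiples of vectors of $L_0$ around. The goal of this stage is to make the $\mathrm{U}\oplus\mathrm{U}$-part of $h$ equal to $\gamma$ times a primitive vector, and then to reduce it to $\gamma e_1+m f_1$.

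At that point $h=\gamma e_1+ m f_1+\ell'$, and the residue $[h^\vee/\gamma]$ is determined by the class of $\ell'/\gamma$ in $L_0^\vee/L_0\cong A_L$. Further transvections $E_{e_1,a}$ with $a\in L_0$ change $\ell'$ by $\gamma a$ and adjust $m$ accordingly. The square condition then fixes $m$. Hence $h$ and $h'$ with the same invariants reach the same vector $\gamma e_1+mf_1+\ell''$. Here $\ell''$ is a fixed representative of the residue class, determined modulo $\gamma L_0$, and that ambiguity is absorbed by the transvections.

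The main obstacle is the first reduction. I need to show that the gcd of the $\mathrm{U}\oplus\mathrm{U}$-coordinates can be made equal to $\gamma$ by mixing in $L_0$. This requires using primitivity of $h$ together with the second hyperbolic plane, so that the pairing of $\ell$ against $L_0$ can be transported into the $\mathrm{U}$-coordinates. I would follow the standard argument of Gritsenko--Hulek--Sankaran \cite{gritsenko2010moduli} at this step.
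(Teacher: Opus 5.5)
Your outline is the standard Eichler-transvection proof, which is the argument behind the cited result; the paper gives no proof of its own and just refers to \cite{gritsenko2010moduli}. Two points need fixing: the step you defer has to be supplied, and there is a slip in the last step.

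\textbf{The deferred reduction.} This step is short, and it does not need primitivity.
\begin{enumerate}
\item The transvections $E_{e_i,y}$ and $E_{f_i,y}$, with $y$ in the other hyperbolic plane, fix $\ell$. On $v=ae_1+bf_1+ce_2+df_2$ they act as elementary row and column operations on the matrix $\bigl(\begin{smallmatrix} a & c\\ -d & b\end{smallmatrix}\bigr)$, whose determinant is $(v,v)/2$.
\item By Smith normal form you may therefore assume $h=ge_1+g'f_1+\ell$, with $g=\gcd(a,b,c,d)$ and $g\mid g'$.
\item Then $(h,L)=g\ZZ+\delta\ZZ$, where $\delta\coloneqq\mathrm{div}_{L_0}(\ell)$. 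So $\gamma=\gcd(g,\delta)$ by the definition of divisibility alone.
\item Now $(e_2,h)=0$, so for $x\in L_0$ the transvection $E_{e_2,x}$ simply sends $h$ to $h-(x,\ell)e_2$. Choose $x$ with $(x,\ell)=\delta$. The $\mathrm{U}\oplus\mathrm{U}$-coordinates become $(g,g',-\delta,0)$, whose gcd is $\gamma$.
\item A second Smith normalization gives $h=\gamma e_1+mf_1+\ell$. This also covers the degenerate cases $g=0$ and $\ell=0$.
\end{enumerate}

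\textbf{The slip in the last step.} The transvection must be $E_{f_1,a}$, not $E_{e_1,a}$. For $h=\gamma e_1+mf_1+\ell'$ one has $(f_1,h)=\gamma$, so
$E_{f_1,a}(h)=h+\gamma a-\bigl((a,\ell')+\tfrac12(a,a)\gamma\bigr)f_1$, which shifts $\ell'$ by $\gamma a$ as you want. By contrast $(e_1,h)=m$, so $E_{e_1,a}$ would shift $\ell'$ by $ma$ and also change the $e_1$-coefficient.

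Integrality of $E_{e,a}$ and its triviality on $A_L$ use that $L$ is even, so that $\tfrac12(a,a)\in\ZZ$. This holds under the paper's standing convention. With these fixes the argument is complete. Since Eichler transvections are unipotent and connected to the identity, it in fact shows transitivity under $\widetilde{\SO}^+(L)$, which is the refinement the paper later invokes via \cite{Song}.
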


    Finally, let $M$ be an overlattice of $L$, corresponding to an isotropic subgroup $H$ of $A_L$. Then an isometry $f$ of $L$ extends to an isometry of $M$ if and only if the action $\bar{f}$ on $A_L$ satisfies $\bar{f}(H)=H$. 
    If $L\hookrightarrow M$ is a primitive embedding with orthogonal complement $K$, let $H\subset A_L \oplus A_K$ be the corresponding gluing subgroup; recall that $H$ is the graph of an anti-isometry $\gamma\colon H_L\to H_K$, for subgroups $H_L, H_K$ of $A_L,A_K$ respectively. Then $f\in \Oo(L)$ can be extended (non-uniquely) to an isometry $\tilde{f}\in \Oo(M)$ if and only if the induced action $\bar{f}$ of $f$ on $A_L$ stabilizes $H_L$ and there exists an isometry $g\in \Oo(K)$ whose action on $A_K$ stabilizes $H_K$ and satisfies $\bar{g}_{|_{H_K}} = \gamma\circ \bar{f}_{|_{H_L}}\circ \gamma^{-1}$.

\subsection{Some examples} We will now discuss some lattices which play a role in the paper. The hyperbolic plane $\mathrm{U}$ is the rank $2$ lattice of signature $(1,1)$ generated by isotropic classes $e,f$ such that $(e,f)=1$. It has the property that any embedding $\mathrm{U}\hookrightarrow L$ into an even lattice $L$ is automatically saturated and $L$ splits as the orthogonal direct sum $L=\mathrm{U}\oplus \mathrm{U}^{\bot}$. 
We shall illustrate the use of Theorem \ref{thm:embedding} to understand the orbits of primitive vectors in lattices of the form $\mathrm{U}(2)^{\oplus k }\oplus \mathrm{U}$. 

\begin{lemma}\label{lem:orbitsPrimitiveVectors}
    Let $M=\mathrm{U}(2)^{\oplus k }\oplus \mathrm{U}$, for some $k\geq 1$. Let $d\neq 0$ be an integer. If $d$ is odd, there is a single $\Oo(M)$-orbit of primitive vectors of square $2d$, and any such necessarily has divisibility $1$. If $d$ is even, there are exactly two $\Oo(M)$-orbits of primitive vectors of square $2d$: the orbit of vectors with divisibility $1$ and that of vectors with divisibility $2$. For a primitive class $h$ of square $2d$, we have $h^{\bot}=\mathrm{U}(2)^{\oplus k} \oplus \langle -2d\rangle$ if $\mathrm{div}(h)=1$ and $h^{\bot}=\mathrm{U}(2)^{\oplus k-1}\oplus \mathrm{U}\oplus \langle -2d\rangle$ if $\mathrm{div}(h)=2$.
\end{lemma}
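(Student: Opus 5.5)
The plan is to reduce everything to the discriminant data of $M$ and then invoke Nikulin's classification of primitive embeddings (Theorem \ref{thm:embedding}) for $L=\langle 2d\rangle$ into $M$. Write $M=\mathrm{U}(2)^{\oplus k}\oplus \mathrm{U}$. Since $\mathrm{U}$ is unimodular and $\mathrm{U}(2)^{\vee}=\tfrac12\mathrm{U}(2)$, we have $A_M\cong (\ZZ/2\ZZ)^{2k}$ with quadratic form $q_M$ taking values in $\ZZ/2\ZZ$; it is the orthogonal sum of $k$ copies of $A_{\mathrm{U}(2)}$, the form $u_+$ of $\mathrm{U}(2)$. For a primitive $h$ of square $2d$, the divisibility $\gamma=\mathrm{div}(h)$ is the order of $\overline{h/\gamma}$ in $A_M$. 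That order divides $2$, so $\gamma\in\{1,2\}$. In Nikulin's language, $\gamma$ is the order of $H_L\subset A_L=\ZZ/2d\ZZ$, and $H_L$ is the subgroup of order $\gamma$.

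Next, I split according to $\gamma$.

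If $\gamma=1$, then $H_L=0$ and $\delta=A_L\oplus A_M(-1)$. The complement $K$ then has rank $2k+1$ and discriminant group $\ZZ/2d\oplus(\ZZ/2)^{2k}$ with form $\langle -1/2d\rangle\oplus u_+^{\oplus k}$. This rules out nothing, and $\mathrm{U}(2)^{\oplus k}\oplus\langle -2d\rangle$ realizes it. It is realized concretely by $h=e+df$ in the $\mathrm{U}$ summand, which exists for every $d$.

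If $\gamma=2$, we need an element $x\in A_M$ of order $2$ with $q_M(x)\equiv 2d/4=d/2 \pmod{2\ZZ}$. Since $q_M$ takes values in $\ZZ/2\ZZ$, this forces $d$ even, which gives the parity claim. When $d$ is even, take $h=2(e_1+\tfrac d4 f_1)$-type vectors, or more simply $h=e_1+\tfrac d2 f_1$ in the first $\mathrm{U}(2)$. This vector has square $2d$, is primitive, and has divisibility $2$. Its complement is computed by hand: inside $\mathrm{U}(2)\oplus\mathrm{U}$ one gets $\langle -2d\rangle\oplus\mathrm{U}$, so $h^\perp=\mathrm{U}(2)^{\oplus k-1}\oplus\mathrm{U}\oplus\langle -2d\rangle$. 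Similarly, $h=e+df\in\mathrm{U}$ gives the complement claimed for $\gamma=1$.

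Uniqueness of orbits comes from conditions a), b) of Theorem \ref{thm:embedding}. Within a fixed $\gamma$, two embeddings are equivalent once we show three things:
(1) the gluing data $(H_L,H_M,\gamma)$ are conjugate under $\Oo(A_M,q_M)$;
(2) the complement $K$ is unique in its genus;
(3) $\Oo(K)\to\Oo(A_K)$ is surjective.

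For (2) and (3) I would apply Theorem \ref{thm:orthogonalGroups}, which needs $K$ indefinite. In case $\gamma=1$, $K$ has rank $2k+1$ and length $2k+1$ at the prime $2$, so condition (i) fails. I therefore use the relaxed criterion of Remark \ref{rmk:relaxing}: $A_K$ contains $A_{\mathrm{U}(2)}$ and the odd-$p$ length is $\le 1\le\mathrm{rk}-2$. In case $\gamma=2$, $K$ contains $\mathrm{U}$, so its length is $2k-1\le \mathrm{rk}(K)-2$.

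For (1), $\Oo(A_M,q_M)$ is the orthogonal group of a nondegenerate quadratic space over $\FF_2$ of plus type. By Witt's theorem it acts transitively on nonzero vectors of a given $q$-value, where nonzero singular vectors and nonsingular vectors form separate orbits. The value is fixed by $d/2 \bmod 2$, so transitivity holds.

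I expect the main obstacle to be this last bookkeeping: checking that Witt's theorem applies to the $\QQ/2\ZZ$-valued form, and verifying the compatibility condition b) on $\delta$. An alternative for that step is to avoid Nikulin entirely. Since $M$ contains $\mathrm{U}$ orthogonally, an Eichler-type argument (Theorem \ref{thm:eichler} needs two copies of $\mathrm{U}$, which is problematic for $k\ge1$ with only one $\mathrm{U}$) could give transitivity up to the class of $h/\gamma$ in $A_M$. One would then combine this with the surjectivity $\Oo(M)\to\Oo(A_M)$ from Theorem \ref{thm:orthogonalGroups}(i), which holds because $\mathrm{rk}\,M=2k+2=\mathrm{length}+2$, and with Witt transitivity on $A_M$.
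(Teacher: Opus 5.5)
Your proposal is correct and follows essentially the paper's proof: Nikulin's classification (Theorem \ref{thm:embedding}) of primitive embeddings $\langle 2d\rangle\hookrightarrow M$, the parity obstruction coming from $q_M$ being $\ZZ/2\ZZ$-valued on $A_M\cong(\ZZ/2\ZZ)^{2k}$, transitivity of $\Oo(A_M,q_M)$ on nonzero vectors with a fixed $q$-value, and uniqueness of the complement $K$ via Theorem \ref{thm:orthogonalGroups}. Your explicit representatives $e+df$ and $e_1+\tfrac d2 f_1$, and your appeal to Remark \ref{rmk:relaxing} in the divisibility-$1$ case (where $\mathrm{rk}(K)$ equals the $2$-length of $A_K$), spell out points the paper leaves implicit; just discard the slip $2(e_1+\tfrac d4 f_1)$, which is not primitive.
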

\begin{proof}
   The discriminant group $A_{\langle 2d\rangle}$ is isomorphic to $\ZZ/2d\ZZ$, with form $k\mapsto k^2/2d \in \QQ/2\ZZ$. 
    On the other hand, $A_M=A_{\mathrm{U}(2)^{\oplus k}}$ is the cokernel of $\mathrm{U}(2)^{\oplus k}\hookrightarrow \tfrac{1}{2}\mathrm{U}(2)^{\oplus k}$; hence, $A_M=(\ZZ/2\ZZ)^{2k}$. Any element in $A_M$ has square $1$ or $0$, and there are $3$ orbits under $\Oo(A_M, q_M)$: the orbit of $0$, that of elements where the quadratic form takes value $1$ and that of non-zero elements where the form takes value $0$. By Theorem \ref{thm:orthogonalGroups}, the homomorphism $\Oo(M)\to \Oo(A_M,q_M)$ is surjective.

    We describe the equivalence classes of primitive embeddings of the lattice $L\coloneqq \langle 2d\rangle$ into $M$ via Theorem \ref{thm:embedding}. 
    They are determined by the sets $(H_L,H_M,\gamma, K,\gamma_K)$. Since $A_{M}\cong (\ZZ/2\ZZ)^{2k}$, we must have $H_M = 0 $ or $H_M\cong \ZZ/2\ZZ$. But $A_L$ contains a unique subgroup isomorphic to $\ZZ/2\ZZ$, and the quadratic form takes value $d/2\in \QQ/2\ZZ$ on its generator. If $d$ is odd, this subgroup is not isometric to any subgroup of $A_M$ and therefore $H_L=H_M=0$. We then get a unique equivalence class of primitive embeddings of $\langle 2d\rangle$ into $M$; the image must be generated by a primitive class $h$ of divisibility $1$, and we have $h^{\bot}=\mathrm{U}(2)^{\oplus k}\oplus \langle -2d\rangle$. 

    If $d$ is even, in the same way we get a single orbit of primitive vectors $h$ of square $2d$ and divisibility $1$, with $h^{\bot}=\mathrm{U}(2)^{\oplus k}\oplus \langle -2d\rangle$. But in this case $d/2 \in \QQ/2\ZZ$ is $0$ or $1$ according to whether $d$ is divisible by $4$ or not. Hence, there exist other primitive embeddings with $H_L\cong \ZZ/2\ZZ$ and choosing a subgroup $H_M$ generated by an element on which the form takes value $0$ or $1$, respectively. As there is a single orbit of such vectors under $\Oo(A_M, q_M)$, the discriminant form of $K$ is uniquely determined and is isomorphic to $A_{\mathrm{U}(2)^{\oplus k-1}\oplus \langle -2d\rangle}$, which has length $2k-1$. Since $\rk(K)=2k+1$, the lattice $K$ is uniquely determined by its signature and discriminant form, by Theorem \ref{thm:orthogonalGroups}. Therefore, $K=\mathrm{U}(2)^{\oplus k-1}\oplus \mathrm{U}\oplus \langle -2d\rangle$. Moreover, $\Oo(K)\to \Oo(A_K)$ is surjective, and therefore all primitive embeddings $L\hookrightarrow M$ with $H_L\cong \ZZ/2\ZZ$ are equivalent to each other. The images of the generator of $L$ via these embeddings give the unique $\Oo(M)$-orbit of primitive vectors of square $2d$ (with $d$ even) and divisibility $2$.
\end{proof}

The K3 lattice is the lattice $\Lambda_{\mathrm{K}3}$ arising as the second cohomology of K3 surfaces. We have 
\begin{equation}
    \Lambda_{\mathrm{K3}}=\mathrm{U}^{\oplus 3} \oplus E_8(-1)^{2},
\end{equation}
where $E_8$ is the unique positive definite even unimodular lattice of rank $8$. The Mukai lattice is 
\begin{equation}
\widetilde{\Lambda}_{\mathrm{K}3}\coloneqq \Lambda_{\mathrm{K}3}\oplus \mathrm{U},
\end{equation}
corresponding to the full cohomology of a K3 surface equipped with the Mukai pairing. 

For an integer $n\geq 2$, we denote by $\Lambda_{\mathrm{K}3^{[n]}}$ the lattice which is $H^2(X,\ZZ)$ for a hyper-K\"ahler manifold $X$ of $\mathrm{K}3^{[n]}$-type, equipped with the Beauville--Bogomolov form. We have 
\begin{equation}
    \Lambda_{\mathrm{K}3^{[n]}} = \Lambda_{\mathrm{K}3}\oplus \langle -2n + 2 \rangle.
\end{equation}
Note that $\Lambda_{\mathrm{K}3^{[n]}}$ has discriminant group $A_{\Lambda_{\mathrm{K}3^{[n]}}}\cong \ZZ/(2n-2)\ZZ$, generated by the class $\delta^{\vee}:=\frac{1}{2n-2}(\delta, -)$ where $\delta$ is a generator of the second summand. The form takes value $-\tfrac{1}{2n-2}\in \QQ/2\ZZ$ on $\delta^{\vee}$.

Let us also recall that for any $n>1$, the lattice $\Lambda_{\mathrm{Kum}^n}$ which is the second cohomology of hyper-K\"ahler manifolds of $\mathrm{Kum}^n$-type is 
\begin{equation}
\Lambda_{\mathrm{Kum}^n}=\mathrm{U}^{\oplus 3}\oplus \langle -2n-2\rangle.
\end{equation}
We have $A_{\Lambda_{\mathrm{Kum}^n}}=\ZZ/(2n+2)\ZZ$, generated by the class $\xi^{\vee}\coloneqq \tfrac{1}{2n+2}(\xi, -) $ where $\xi$ is a generator of the second summand; the form takes value $-\tfrac{1}{2n+2}$ on $\xi^{\vee}$.

\begin{remark}\label{rmk:unique-overlattice}
    Since we are concerned with sixfolds of generalized Kummer type, we record the following fact. By Theorem \ref{thm:overlattices}, the lattice $\Lambda_{\mathrm{Kum}^3}$ has a unique overlattice $\widehat{\Lambda}_{\mathrm{Kum}^3} = \mathrm{U}^{\oplus 3}\oplus \langle -2\rangle$, in which it is contained with index $2$. By the uniqueness, any isometry of $\Lambda_{\mathrm{Kum}^3}$ extends to an isometry of $\widehat{\Lambda}_{\mathrm{Kum}^3}$.
\end{remark}

\subsection{The Kummer lattice}
The Kummer lattice appears in~\cite{Nikulin} and \cite{PSS}. We recall some of its properties following \cite[\S14.3]{huyK3} and \cite[\S2]{GarbagnatiSarti}. Consider the negative definite lattice $R=\bigoplus_{i=1}^{16} \ZZ\cdot r_i$ where each~$r_i$ has square $-2$ and they are orthogonal to each other. We may identify the set of the $r_i$'s with the points of a $4$-dimensional vector space over~$\mathbb{F}_2$ (this essentially amounts to fixing an action of $(\ZZ/2\ZZ)^4$ on $L_{\Km}$ via isometries, transitive on the $r_i$).
The Kummer lattice $L_{\Km}$ is an overlattice of $R$, of index $2^5$. In fact, in $L_{\Km}$ we have the additional classes $\frac{1}{2} \sum_{i=1}^{16} r_i$, and $\frac{1}{2}\sum_{i\in W} r_i$, where $W\subset \FF_2^{\oplus 4}$ is an affine hyperplane (there are $30$ such classes).
Geometrically, this lattice arises as follows. 
\begin{example}
    Let $A$ be a $2$-dimensional complex torus, and let $\Km(A)$ denote the associated Kummer K3 surface, which is the minimal resolution of $A/\pm 1$. Let $R_1,\dots, R_{16}$ be the $16$ disjoint smooth rational curves which are the exceptional divisors of $\Km(A)\to A/\pm 1$. Their cohomology classes $r_i\in H^2(\Km(A),\ZZ)$ are pairwise orthogonal $(-2)$-classes, and the saturation of the sublattice generated by the $r_i$ is the Kummer lattice $L_{\Km}$.
The orthogonal complement of $L_{\Km}$ in $\Lambda_{\mathrm{K}3}$ is isometric to the lattice $\mathrm{U}(2)^{\oplus 3}$, which is the image of the push-forward $\pi_*\colon H^2(A,\ZZ)\to H^2(\Km(A),\ZZ)$ induced by the degree-2 rational map $\pi\colon A\dashrightarrow \Km(A)$. Therefore $A_{L_{\Km}}$ is isometric to $A_{\mathrm{U}(2)^{\oplus 3}}\simeq (\mathbb{Z}/2\mathbb{Z})^{6}$. 
\end{example}

\begin{remark}\label{rmk:A.10}
    Notice that $\mathrm{U}(2)^{\oplus 3}$ satisfies the assumptions of Theorem \ref{thm:orthogonalGroups}. Hence, there exists a unique equivalence class of primitive embeddings $L_{\Km}\hookrightarrow \Lambda_{\mathrm{K}3}$, by Theorem \ref{thm:embeddingUnimodular}. Similarly, there exists a unique equivalence class of primitive embeddings $L_{\Km}\hookrightarrow \widetilde{\Lambda}_{\mathrm{K}3}$; the orthogonal complement is $\mathrm{U}(2)^{\oplus 3}\oplus \mathrm{U}$ in this case.
\end{remark}

This leads to the characterization of Kummer K3 surfaces as those K3 surfaces containing a primitive copy of $L_{\Km}$ in their N\'eron--Severi lattice.
However, there are $2$ equivalence classes of primitive embeddings of $L_{\Km}$ in  $\Lambda_{{\mathrm{K}3}^{[n]}}$ when $n$ is odd.

\begin{lemma} \label{lem:embeddingL_Km}
If $n$ is even, there exists a unique equivalence class of primitive embeddings $j\colon L_{\Km}\hookrightarrow \Lambda_{\mathrm{K}3^{[n]}}$, and we have $\mathrm{im}(j)^{\bot}=\mathrm{U}(2)^{\oplus 3}\oplus \langle -2n+2\rangle$.
If $n$ is odd, there exist $2$ equivalence classes of primitive embeddings $j\colon L_{\Km}\hookrightarrow \Lambda_{\mathrm{K}3^{[n]}}$; for one we have $\mathrm{im}(j)^{\bot}=\mathrm{U}(2)^{\oplus 3}\oplus \langle -2n+2\rangle$ and for the other we have $\mathrm{im}(j)^{\bot} = \mathrm{U}(2)^{\oplus 2}\oplus\mathrm{U} \oplus \langle -2n+2\rangle$.
\end{lemma}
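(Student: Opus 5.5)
We apply Theorem \ref{thm:embedding} with $L=L_{\Km}$ and $M=\Lambda_{\mathrm{K}3^{[n]}}=\Lambda_{\mathrm{K}3}\oplus\langle -2n+2\rangle$. The unimodular summand contributes nothing to the discriminant, so $A_M\cong\ZZ/(2n-2)\ZZ$ is generated by $\delta^\vee$ with $q(\delta^\vee)=-\tfrac{1}{2n-2}$. We also have $A_{L_{\Km}}\cong A_{\mathrm{U}(2)^{\oplus 3}}(-1)\cong(\ZZ/2\ZZ)^6$, and $q$ takes values in $\{0,1\}\subset\QQ/2\ZZ$. Since $A_{L_{\Km}}$ is $2$-elementary, any gluing subgroup $H_M\subset A_M$ must be $2$-torsion. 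So either $H_M=0$, or $H_M=\langle (n-1)\delta^\vee\rangle\cong\ZZ/2\ZZ$. In the second case $q((n-1)\delta^\vee)=-\tfrac{n-1}{2}$.

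\emph{Case $H_M=0$.} Then $K$ has signature $(3,4)$ and discriminant form $A_{L_{\Km}}(-1)\oplus A_M(-1)$, that is, the discriminant form of $\mathrm{U}(2)^{\oplus 3}\oplus\langle -2n+2\rangle$. This lattice realizes the data, and the embedding is realized by $L_{\Km}\subset\Lambda_{\mathrm{K}3}$ as in Remark \ref{rmk:A.10}.

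\emph{Case $H_M\cong\ZZ/2\ZZ$.} This requires the value $-\tfrac{n-1}{2}$ to lie in $\{0,1\}$ mod $2\ZZ$, i.e. $n$ odd. For $n$ even the half-integer value cannot occur, so only the first case remains. For $n$ odd we need a nonzero $x\in A_{L_{\Km}}$ with $q(x)\equiv -\tfrac{n-1}{2}$ mod $2$. By Theorem \ref{thm:orthogonalGroups}, $\Oo(L_{\Km})\to\Oo(A_{L_{\Km}})$ is surjective, and there is a single $\Oo(A_{L_{\Km}})$-orbit of nonzero elements with a given value $0$ or $1$. Hence the choice of $(H_L,H_M,\gamma)$ is unique up to the equivalences a), b) of Theorem \ref{thm:embedding}.

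Next we compute $\delta=\Gamma^\perp/\Gamma$. It has order $2^6(2n-2)/4=2^4(2n-2)$, and we identify it with the discriminant form of $\mathrm{U}(2)^{\oplus 2}\oplus\mathrm{U}\oplus\langle -2n+2\rangle$ (up to sign). The orthogonal complement of $x$ contributes $A_{\mathrm{U}(2)^{\oplus 2}}$, and the glued diagonal part replaces $\ZZ/(2n-2)$ by a cyclic part of the same order. The cleanest way to certify this is to exhibit an explicit embedding: take $L_{\Km}\subset\Lambda_{\mathrm{K}3}$ with complement $\mathrm{U}(2)^{\oplus 3}$, then modify it by gluing with $\delta$, or directly write down $\mathrm{U}(2)^{\oplus 2}\oplus\mathrm{U}\oplus\langle -2n+2\rangle\hookrightarrow\Lambda_{\mathrm{K}3^{[n]}}$. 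The two complements are distinguished by the parity of their discriminant lengths ($7$ versus $5$), so the two embeddings are inequivalent.

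For uniqueness within each case, we apply Theorem \ref{thm:orthogonalGroups} to $K$. When $n$ is even, $2n-2\equiv 2\pmod 4$, so the $2$-part of $A_K$ is $(\ZZ/2\ZZ)^7$ while $\rk K=7$. This violates hypothesis (i), but hypothesis (ii), or Remark \ref{rmk:relaxing} (which applies since $A_K$ contains $A_{\mathrm{U}(2)}$), gives uniqueness of $K$ and surjectivity $\Oo(K)\to\Oo(A_K)$. The odd-prime parts of $A_K$ are cyclic. Surjectivity then lets us match the isomorphisms $\gamma_K$, so condition b) holds. The same argument works for both complements when $n$ is odd.

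The main obstacle is the careful computation of $\delta$ in the glued case together with the verification of Nikulin's hypotheses at the prime $2$, where the length equals the rank. This is where Remark \ref{rmk:relaxing} must be invoked rather than the simpler criterion.
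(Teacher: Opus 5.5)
Your framework matches the paper's. You use Nikulin's Theorem \ref{thm:embedding}. You note that $H_M$ is either $0$ or $\langle (n-1)\delta^\vee\rangle$, on which $q=-\tfrac{n-1}{2}$, so gluing can only occur for $n$ odd. You get uniqueness of the complements from Theorem \ref{thm:orthogonalGroups} and Remark \ref{rmk:relaxing}, and your check at the prime $2$ is correct.

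However, one step is wrong. You deduce uniqueness of $(H_L,H_M,\gamma)$ from ``$\Oo(L_{\Km})\to\Oo(A_{L_{\Km}})$ is surjective by Theorem \ref{thm:orthogonalGroups}''. That theorem applies only to \emph{indefinite} lattices, and $L_{\Km}$ is negative definite. In fact the claim is false:
\begin{itemize}
\item The only $(-2)$-vectors of $L_{\Km}$ are the $\pm r_i$, so $\Oo(L_{\Km})=\{\pm1\}^{16}\rtimes\mathrm{AGL}_4(\FF_2)$.
\item Sign changes and translations act trivially on $A_{L_{\Km}}\cong\wedge^2\FF_2^4$.
\item Hence the image is $\mathrm{GL}_4(\FF_2)\cong A_8$, which has index $2$ in $\Oo(A_{L_{\Km}})\cong\Oo^+_6(2)\cong S_8$.
\end{itemize}
You only need transitivity of $\Oo(L_{\Km})$ on nonzero elements of $A_{L_{\Km}}$ with a fixed value of $q$. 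This holds: under $\mathrm{GL}_4(\FF_2)$ these are the decomposable, respectively nondegenerate, $2$-vectors. It is exactly what the paper quotes from \cite[Remark 2.3(5)]{GarbagnatiSarti}.

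Second, you never actually carry out the glued case. You assert the shape of $\Gamma^\perp/\Gamma$ and defer to an explicit embedding that you do not write down. Also, ``the orthogonal complement of $x$ contributes $A_{\mathrm{U}(2)^{\oplus 2}}$'' is not meaningful when $q(x)=1$: then $\langle x\rangle$ is not isotropic, so $x^\perp/\langle x\rangle$ carries no quadratic form.

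The computation can be completed directly. Pick $a_0\in A_{L_{\Km}}$ with $b(a_0,x)=\tfrac12$ and $q(a_0)=0$. Then $\Gamma^\perp/\Gamma$ is the orthogonal sum of two pieces:
\begin{itemize}
\item $\langle x,a_0\rangle^\perp\subset A_{L_{\Km}}$, embedded as $w\mapsto(w,0)$, which is isometric to $A_{\mathrm{U}(2)^{\oplus 2}}$;
\item the cyclic group of order $2n-2$ generated by the class of $(a_0,\delta^\vee)$, on which the form is $\tfrac{1}{2n-2}$.
\end{itemize}
Hence $A_K\cong(\Gamma^\perp/\Gamma)(-1)\cong A_{\mathrm{U}(2)^{\oplus 2}\oplus\langle -2n+2\rangle}$. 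Uniqueness in the genus then gives $K\cong\mathrm{U}(2)^{\oplus 2}\oplus\mathrm{U}\oplus\langle -2n+2\rangle$.

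The paper sidesteps this computation. It writes $\Lambda_{\mathrm{K}3^{[n]}}=v^\perp\subset\widetilde{\Lambda}_{\mathrm{K}3}$, so $v$ is a primitive vector of square $2n-2$ in $L_{\Km}^\perp\cong\mathrm{U}(2)^{\oplus 3}\oplus\mathrm{U}$ (Remark \ref{rmk:A.10}). It then reads off the complement $v^\perp$ from Lemma \ref{lem:orbitsPrimitiveVectors}.

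Two smaller slips:
\begin{itemize}
\item For $\Gamma=0$ one has $A_K\cong A_{L_{\Km}}(-1)\oplus A_M$, not $A_M(-1)$.
\item $7$ and $5$ have the same parity. It is the lengths themselves, or whether $H_L=0$, that separate the two classes.
\end{itemize}
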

\begin{proof} 
Let $L\coloneqq L_{\Km}$ and $M\coloneqq \Lambda_{\mathrm{K}3^{[n]}}$. We determine the possibilities for the sets $(H_L, H_M, \gamma, K, \gamma_K)$ of  Theorem \ref{thm:embedding}. By the description of $q_L$ and $q_M$, it is clear that either $H_L$ is trivial or $H_L\cong \ZZ/2\ZZ$. Then $H_M$ is either trivial or the unique order $2$ subgroup of $A_M$; the form $q_M$ takes value $-(n-1)/2$ on the generator. If $H_L=H_M=0$, we get an equivalence class of primitive embeddings of $L_{\Km}$ in $\Lambda_{\mathrm{K}3^{[n]}}$; the orthogonal complement is $\mathrm{U}(2)^{\oplus 3}\oplus \langle -2(n-1)\rangle$. If $n-1$ is odd in fact we must have $H_L=H_M=0$, and this is the only possibility. If $n-1$ is even, we may also have that $H_L\cong \ZZ/2\ZZ$ is non-trivial, and $q_L$ takes value $0$ or $1$ on the generator of $H_L$ according to whether $n-1$ is divisible by $4$ or not. All such subgroups are conjugated to each other via isometries of $L$ (\cite[Remark 2.3(5)]{GarbagnatiSarti}). Embedding $\Lambda_{\mathrm{K}3^{[n]}}$ in $\widetilde{\Lambda}_{\mathrm{K}3}$ with complement generated by a primitive vector $v$ of square $2(n-1)$ and applying Lemma \ref{lem:orbitsPrimitiveVectors}, the orthogonal complement of such a primitive embedding $L_{\Km}\hookrightarrow \Lambda_{\mathrm{K}3^{[n]}}$ must be isometric to $K=\mathrm{U}(2)^{\oplus 2}\oplus \mathrm{U}\oplus \langle -2(n-1)\rangle$. Since $\Oo(K)\to \Oo(A_K, q_K)$ is surjective by Theorem~\ref{thm:orthogonalGroups}, these embeddings form a unique equivalence class.
\end{proof}

\subsection{The Barnes--Wall lattice}
\label{subsec:Barnes--Wall-Lattice}
There is another rank $16$ negative definite lattice which plays a role in our paper, the Barnes--Wall lattice $\mathrm{BW}_{16}$. 

\begin{definition}\label{def:BW16}
	The Barnes--Wall lattice $\mathrm{BW}_{16}$ is the unique negative definite lattice of rank $16$ with discriminant form isomorphic to $A_{\mathrm{U}(2)^{\oplus 4}}$ and which contains no $(-2)$-classes.
\end{definition}

The above properties indeed determine uniquely the Barnes--Wall lattice, see \cite{scharlau1994genus}.

\begin{remark}\label{rmk:propertiesBW}
    The discriminant group $A_{\mathrm{BW}_{16}} = A_{\mathrm{U(2)^4}}$ is identified with a quadratic space of dimension $8$ over $\FF_2$ which is the sum of $4$ planes with form $\begin{psmallmatrix}
        0 & 1\\ 1 & 0
    \end{psmallmatrix}$, and $\Oo(A_{\mathrm{BW}_{16}}, q_{\mathrm{BW}_{16}})$ is identified with the orthogonal group $\Oo^+_8(2)$.
    By \cite[Chapter 4, 10]{{ConwaySloane1998}}, the natural homomorphism $\Oo(\mathrm{BW}_{16})\to \Oo(A_{\mathrm{BW}_{16}}, q_{A_{\mathrm{BW}_{16}}})$ is surjective. It follows that there are $3$ orbits for the action of $\Oo(\mathrm{BW}_{16})$ on the discriminant group $A_{\mathrm{BW}_{16}} = A_{\mathrm{U(2)^{\oplus 4}}}$: the orbit of $0$, the orbit of non-trivial elements on which the form takes value $0$ and that of elements on which the form takes value $1$.
\end{remark}

The Barnes--Wall lattice does not embed primitively in the K3 lattice, as the complement would have to be a lattice of rank $6$ and discriminant of length $8$, which is impossible. However, $\mathrm{BW}_{16}$ can be embedded primitively in the Mukai lattice. 

\begin{lemma}\label{lem:embeddingBW16intoMukai}
	There exists a unique equivalence class of primitive embeddings $\mathrm{BW}_{16}\hookrightarrow \widetilde{\Lambda}_{\mathrm{K}3}$. The orthogonal complement is isometric to $\mathrm{U}(2)^{\oplus 4}$.
\end{lemma}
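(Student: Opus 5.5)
I will deduce this from Nikulin's criterion, Theorem \ref{thm:embedding}, applied with $L=\mathrm{BW}_{16}$ and $M=\widetilde{\Lambda}_{\mathrm{K}3}$. Since $M$ is unimodular, $A_M=0$, so $H_M=0$ and $\gamma$ is trivial. A primitive embedding is then determined by an even lattice $K$ of signature $(4,4)$ together with an anti-isometry $\gamma_K\colon A_K\xrightarrow{\sim} A_{\mathrm{BW}_{16}}(-1)$. Because $A_{\mathrm{BW}_{16}}\cong A_{\mathrm{U}(2)^{\oplus 4}}$ by Definition \ref{def:BW16}, and $q_{\mathrm{U}(2)^{\oplus 4}}(-1)\cong q_{\mathrm{U}(2)^{\oplus 4}}$ (the form takes values in $\{0,1\}\subset\QQ/2\ZZ$, which are fixed by sign change), the complement $K$ must be an even lattice of signature $(4,4)$ with discriminant form isomorphic to that of $\mathrm{U}(2)^{\oplus 4}$.

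The first step is to show existence. I would simply check that $\mathrm{U}(2)^{\oplus 4}$ is such a $K$, and that the gluing of $\mathrm{BW}_{16}\oplus \mathrm{U}(2)^{\oplus 4}$ along the graph of an anti-isometry of discriminant groups gives an even unimodular overlattice. That overlattice has signature $(4,20)$, so it is isometric to $\widetilde{\Lambda}_{\mathrm{K}3}$ by uniqueness of even unimodular indefinite lattices.

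The second step is uniqueness of $K$. Here $\mathrm{rk}(K)=8=\mathrm{length}(A_K)$, so condition (i) of Theorem \ref{thm:orthogonalGroups} fails. I would instead use the relaxed criterion of Remark \ref{rmk:relaxing}. There is no odd torsion, and although the $2$-part has length equal to the rank, $A_K$ visibly contains a copy of $A_{\mathrm{U}(2)}$. The remark then gives that $K$ is unique, hence $K\cong \mathrm{U}(2)^{\oplus 4}$, and that $\Oo(K)\to\Oo(A_K,q_K)$ is surjective.

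The third step is to show that all the data are equivalent in the sense of condition b) of Theorem \ref{thm:embedding}. Two anti-isometries $\gamma_K,\gamma'_K$ differ by an element of $\Oo(A_K,q_K)$, and this can be absorbed by $\bar\psi$ for some $\psi\in\Oo(K)$ using the surjectivity just obtained. Alternatively, it can be absorbed on the $\mathrm{BW}_{16}$ side using the surjectivity $\Oo(\mathrm{BW}_{16})\to \Oo(A_{\mathrm{BW}_{16}})$ from Remark \ref{rmk:propertiesBW}. Either route gives a single equivalence class.

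The main obstacle is the second step: it is the only place where the standard rank bound fails. If the relaxed Nikulin criterion were in doubt, I would fall back on the known uniqueness of the genus $\mathrm{II}_{4,4}(2^{+8}_{\mathrm{II}})$, which contains $\mathrm{U}(2)^{\oplus 4}=\mathrm{II}_{4,4}(2)$, that is, the scaling by $2$ of the unique even unimodular lattice of signature $(4,4)$. I would also use Miranda–Morrison-type surjectivity for scaled unimodular lattices, where $\Oo(\mathrm{II}_{4,4})\to \Oo_8^+(2)$ is surjective by reduction mod $2$.
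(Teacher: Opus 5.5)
Your proposal is correct and is essentially the paper's argument written out in full. The paper gets existence from Nikulin's Corollary 1.12.3, whose content is exactly your gluing of $\mathrm{BW}_{16}\oplus\mathrm{U}(2)^{\oplus 4}$ into an even unimodular lattice of signature $(4,20)$. It gets uniqueness from Theorem \ref{thm:embeddingUnimodular}, which is Theorem \ref{thm:embedding} combined with uniqueness in the genus and surjectivity of $\Oo(K)\to\Oo(A_K,q_K)$ for $K=\mathrm{U}(2)^{\oplus 4}$. You also noticed that the plain rank bound fails here (rank $=$ length $=8$), so the relaxed hypothesis of Remark \ref{rmk:relaxing} is needed; the paper leaves this implicit, and you handle it correctly.
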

\begin{proof}
	For the existence of such a primitive embedding we may appeal to \cite[Corollary 1.12.3]{Nikulin1980}.
	The uniqueness follows from Theorem \ref{thm:embeddingUnimodular}.
\end{proof}

\begin{lemma}\label{lem:embeddingBWintoK33}
The Barnes--Wall lattice does not embed in $\Lambda_{\mathrm{K}3^{[n]}}$ if $n$ is even. For any odd $n\geq 3$, there exists a unique equivalence class of primitive embeddings $\mathrm{BW}_{16}\hookrightarrow \Lambda_{\mathrm{K}3^{[n]}}$. The orthogonal complement is isometric to $\mathrm{U}(2)^{\oplus 3}\oplus \langle -2(n-1)\rangle$.
\end{lemma}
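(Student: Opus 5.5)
We apply Nikulin's Theorem \ref{thm:embedding} with $L\coloneqq \mathrm{BW}_{16}$ and $M\coloneqq \Lambda_{\mathrm{K}3^{[n]}}=\Lambda_{\mathrm{K}3}\oplus\langle -2n+2\rangle$, following the pattern of Lemma \ref{lem:embeddingL_Km}. Here $A_L\cong(\ZZ/2\ZZ)^8$ and $A_M\cong\ZZ/(2n-2)\ZZ$. So the gluing subgroup $H_L\cong H_M$ is either trivial or of order $2$. The only order-$2$ subgroup of $A_M$ is generated by $(n-1)\delta^\vee$, on which $q_M$ takes the value $-(n-1)/2\in\QQ/2\ZZ$.

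\textbf{Case $H_L=0$.} The complement $K$ would have rank $7$, signature $(3,4)$ and discriminant group $A_L(-1)\oplus A_M$, whose $2$-part has length at least $8$ (in fact $9$). This exceeds $\rk K=7$, which is impossible. Hence $H_L\neq 0$ is forced.

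\textbf{Case $n$ even.} Then $(n-1)/2\notin\ZZ$, while every element of $A_L=A_{\mathrm{U}(2)^{\oplus 4}}$ has square $0$ or $1$ in $\QQ/2\ZZ$. So no isometric $\ZZ/2\ZZ$-gluing exists, and together with the previous case there is no primitive embedding at all. To exclude non-primitive embeddings as well, note that the saturation of any embedding is an even overlattice of $\mathrm{BW}_{16}$. By Remark \ref{rmk:propertiesBW} such an overlattice corresponds to an isotropic subgroup of $A_{\mathrm{BW}_{16}}$. One then checks that it still has a discriminant of $2$-length $\geq 7$, and the same length count (possibly combined with the unique embedding into the Mukai lattice from Lemma \ref{lem:embeddingBW16intoMukai}) rules it out. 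This last point is where I expect some care to be needed; the primitive case is the essential content.

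\textbf{Case $n$ odd.} Now $q_M$ takes the value $0$ or $1$ on the order-$2$ element, according to whether $4\mid n-1$ or not. We choose $H_L$ generated by a nonzero element of the matching square. By Remark \ref{rmk:propertiesBW}, $\Oo(\mathrm{BW}_{16})$ acts transitively on each such class, so all choices of $(H_L,H_M,\gamma)$ are equivalent. The form $\delta=\Gamma^\perp/\Gamma$ is then computed: it has length $7$ and is isometric to $A_{\mathrm{U}(2)^{\oplus 3}\oplus\langle -2(n-1)\rangle}(-1)$.

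\textbf{Identifying $K$.} Existence and the precise form of $K$ are cleanest via the Mukai lattice. Embed $M$ into $\widetilde\Lambda_{\mathrm{K}3}$ with complement $\langle v\rangle$, $v^2=2n-2$. A primitive embedding of $\mathrm{BW}_{16}$ into $M$ is the same as one into $\widetilde\Lambda_{\mathrm{K}3}$ (unique by Lemma \ref{lem:embeddingBW16intoMukai}, with complement $\mathrm{U}(2)^{\oplus4}$) together with a primitive vector $v\in\mathrm{U}(2)^{\oplus 4}$ of square $2n-2$. Write $2n-2=2\cdot 2d'$ with $d'=(n-1)/2$. Then primitive vectors of this square in $\mathrm{U}(2)^{\oplus4}=\mathrm{U}^{\oplus 4}(2)$ correspond to primitive vectors of square $2d'$ in $\mathrm{U}^{\oplus4}$. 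These exist for every $d'\geq 1$ and form a single orbit, by Eichler's criterion (Theorem \ref{thm:eichler}) for the unimodular lattice $\mathrm{U}^{\oplus 4}$. Their orthogonal complement in $\mathrm{U}^{\oplus4}$ is $\mathrm{U}^{\oplus3}\oplus\langle-2d'\rangle$, so after the twist by $(2)$ we get $K\cong\mathrm{U}(2)^{\oplus3}\oplus\langle -2(n-1)\rangle$.

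This gives existence, the complement, and uniqueness. Uniqueness up to equivalence also follows from the transitivity above combined with Theorem \ref{thm:orthogonalGroups} and Remark \ref{rmk:relaxing} applied to $K$, since $A_K$ contains $A_{\mathrm{U}(2)}$, which guarantees surjectivity of $\Oo(K)\to\Oo(A_K)$.
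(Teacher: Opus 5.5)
Your treatment of \emph{primitive} embeddings is correct, and its decisive step is the paper's proof. The paper composes with $\Lambda_{\mathrm{K}3^{[n]}}=v^{\perp}\subset\widetilde\Lambda_{\mathrm{K}3}$, uses the unique embedding of $\mathrm{BW}_{16}$ into the Mukai lattice (complement $\mathrm{U}(2)^{\oplus 4}$), and applies Eichler's criterion on $\mathrm{U}^{\oplus 4}$, exactly as you do.

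For $n$ even, the paper simply notes that every vector of $\mathrm{U}(2)^{\oplus 4}$ has square divisible by $4$, while $2n-2\equiv 2 \pmod 4$. Your Nikulin gluing argument is an equivalent check: $H_L=0$ forces discriminant $2$-length $9>7$, and $-(n-1)/2\notin\ZZ$ rules out an order-$2$ gluing.

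For uniqueness, the paper extends the Eichler isometry of $\mathrm{U}(2)^{\oplus 4}$ to $\widetilde\Lambda_{\mathrm{K}3}$, preserving $\mathrm{BW}_{16}$. This uses the surjectivity $\Oo(\mathrm{BW}_{16})\to\Oo(A_{\mathrm{BW}_{16}})$ from Remark~\ref{rmk:propertiesBW}. Your sentence ``this gives \dots\ uniqueness'' skips that step. Your second argument is nevertheless a valid substitute: transitivity on the gluing data, plus Remark~\ref{rmk:relaxing} applied to $K$, whose discriminant has $2$-length $7=\rk K$ but contains $A_{\mathrm{U}(2)}$.

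The paragraph on non-primitive embeddings is wrong and should be deleted. An isotropic subgroup $H\subset A_{\mathrm{BW}_{16}}\cong A_{\mathrm{U}(2)^{\oplus 4}}$ of order $2^k$ gives an even overlattice whose discriminant $H^{\perp}/H$ has length $8-2k$, not $\geq 7$. Take $k=4$, for instance the subgroup corresponding to $\langle e_1/2,\dots,e_4/2\rangle$. This gives an even unimodular, negative definite overlattice $N$ of rank $16$. Since $\mathrm{U}^{\oplus 3}\oplus N$ is even unimodular of signature $(3,19)$, it is isometric to $\Lambda_{\mathrm{K}3}$. Hence $\mathrm{BW}_{16}\subset N\subset\Lambda_{\mathrm{K}3}\subset\Lambda_{\mathrm{K}3^{[n]}}$ is a non-primitive embedding of index $2^4$, for every $n$.

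So the first sentence of the lemma holds only for primitive embeddings. That is all the paper proves, and all it uses later (Proposition~\ref{prop:cohomologyMRS}, Theorem~\ref{thm:characterizationMRS}).
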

\begin{proof}
    Let $j\colon \mathrm{BW}_{16}\hookrightarrow \widetilde{\Lambda}_{\mathrm{K}3}$ be a primitive embedding. Primitive embeddings of $\mathrm{BW}_{16}$ in $\Lambda_{\mathrm{K}3^{[n]}}$ correspond to the choice of a primitive vector $v$ in $\mathrm{im}(j)^{\bot}$ of square $2(n-1)$, which gives $j'\colon \mathrm{BW}_{16}\hookrightarrow v^{\bot}=\Lambda_{\mathrm{K}3^{[n]}}$.
    Since $\mathrm{im}(j)^{\bot}=\mathrm{U}(2)^{\oplus 4}$, no such vector exists if $n-1$ is odd; moreover, by Theorem \ref{thm:eichler} applied to $\mathrm{U}^{\oplus 4}$, primitive vectors of fixed square in $\mathrm{im}(j)^{\bot}$ form a single $\Oo(\mathrm{U}(2)^{\oplus 4})$-orbit. Since $\Oo(\mathrm{BW}_{16})\to \Oo(A_{\mathrm{BW}_{16}}, q_{A_{\mathrm{BW}_{16}}})$ is surjective, any isometry of $\mathrm{im}(j)^{\bot}$ extends to an isometry of $\widetilde{\Lambda}_{\mathrm{K}3}$. Hence, for $n$ odd, all primitive embeddings $j'\colon \mathrm{BW}_{16} \hookrightarrow \Lambda_{\mathrm{K}3^{[n]}}$ are equivalent to each other; the orthogonal complement $\mathrm{im}(j')^{\bot} \subset \Lambda_{\mathrm{K}3^{[n]}}$ is necessarily isometric to $\mathrm{U}(2)^{\oplus 3}\oplus \langle -2(n-1)\rangle$. 
    \end{proof}
    

\begin{remark}\label{rmk:BW+h}
    Let $j\colon \mathrm{BW}_{16}\hookrightarrow \widetilde{\Lambda}_{\mathrm{K}3}$ be a primitive embedding and let $v\in \mathrm{im}(j)^{\bot}$ be a primitive class of square $4d$, for some $d>0$. Then the saturation $L$ of the sublattice generated by $\mathrm{im}(j)$ and $v$ is an overlattice of index $2$ of $\mathrm{BW}_{16}\oplus \langle v\rangle$, isometric to $L_{\Km}\oplus \langle 4d\rangle$. In fact, $L$ is the unique lattice of signature $(1,16)$ and discriminant form isomorphic to $(A_{\mathrm{U}(2)^{\oplus 3}\oplus \langle -4d\rangle})(-1)$, but also $L_{\Km}\oplus \langle 4d\rangle$ has these properties. In particular, for any $d>0$, there exists a primitive embedding of $\mathrm{BW}_{16}$ in $L_{\Km}\oplus \langle 4d\rangle$.
\end{remark}

\subsection{Some computations}

We include here some computations which we use in the paper. We start with the following easy observation. 
\begin{remark} \label{rmk:divisibility}
    Let $h\in \mathrm{U}^{\oplus 3}\oplus \langle -2d\rangle$ be a primitive positive class. Let $\xi$ be a generator of the last summand. We can write $h=\alpha h' + \beta \xi$, for a primitive class $h'\in \mathrm{U}^{\oplus 3}$ and coprime integers $\alpha, \beta$. Then    
    \[\mathrm{div}(h) = \mathrm{g.c.d.}(\alpha, 2d).\]
    Indeed, let $t\coloneqq \mathrm{g.c.d}(\alpha, 2d)$, and write $\alpha= t\alpha'$, $2d = t d'$. Then, for a class $l=a  l' + b \xi$ with $l'$ in $\mathrm{U}^{\oplus 3}$, we have $(h,l)= t(a\alpha' (h', l') - d' b \beta)$; hence, $t$ divides $\mathrm{div}_L(h)$. Moreover, $\alpha'$ and $d'\beta$ are coprime, and, therefore, there exist integers $a_0,b_0$ such that $\alpha' \cdot a_0 - d'\beta \cdot b_0=1$. Choosing $g \in \mathrm{U}^{\oplus 3}$ such that $(h', g) =1$ we find $(h, a_0 g + b_0 \xi)= t$.
\end{remark}

\begin{lemma}\label{lem:orbitsU^3+<-2>}
    Let $L\simeq \mathrm{U}^{\oplus 3}\oplus \langle -2\rangle$. The primitive classes of fixed square and divisibility form a single $\widetilde{\Oo}(L)$-orbit. Let $h\in L$ be primitive of square $2d>0$. If $\mathrm{div}(h)=1$, we have $h^{\bot}\simeq \mathrm{U}^{\oplus 2}\oplus \langle -2d\rangle \oplus \langle -2\rangle$. If $\mathrm{div}(h)=2$, then $d=4d'-1$ for some integer $d'$ and $h^{\bot} \simeq \mathrm{U}^{\oplus 2}\oplus \begin{psmallmatrix}
        -2 & 1 \\ 1 & -2d'
    \end{psmallmatrix}$.
\end{lemma}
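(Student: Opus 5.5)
The plan is to treat the orbit statement and the two orthogonal complements separately, both through the decomposition of Remark~\ref{rmk:divisibility}.

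\emph{Orbits.} Since $L=\mathrm{U}^{\oplus 3}\oplus\langle -2\rangle$ contains two orthogonal hyperbolic planes, Theorem~\ref{thm:eichler} applies. A primitive $h$ of given square and divisibility is therefore determined up to $\widetilde{\Oo}(L)$ by its class $[(h,-)/\mathrm{div}(h)]\in A_L\cong\ZZ/2\ZZ$. This class has a simple description:
\begin{itemize}
\item if $\mathrm{div}(h)=1$, the class is $0$;
\item if $\mathrm{div}(h)=2$, the class is an element of order exactly $2$ (otherwise $h$ would be divisible by $2$ in $L$), so it is the unique nonzero element.
\end{itemize}
In both cases the class depends only on the divisibility, which gives a single orbit.

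\emph{Parametrization.} Write $h=\alpha h'+\beta\xi$ with $h'\in\mathrm{U}^{\oplus3}$ primitive, $\gcd(\alpha,\beta)=1$ and $\xi^2=-2$. By Remark~\ref{rmk:divisibility}, $\mathrm{div}(h)=\gcd(\alpha,2)$. Using transitivity on $\mathrm{U}^{\oplus 3}$, we may take $h'=e+kf$ in the first copy of $\mathrm{U}$, so that $h'^2=2k$.

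\emph{Case $\mathrm{div}(h)=1$.} Pick the representative $h=e+d f$. Its orthogonal complement splits off the remaining $\mathrm{U}^{\oplus2}\oplus\langle-2\rangle$, and inside the first $\mathrm{U}$ the complement of $e+df$ is spanned by $e-df$, of square $-2d$. This gives $h^\perp\cong\mathrm{U}^{\oplus2}\oplus\langle-2d\rangle\oplus\langle-2\rangle$. Since all such $h$ form one orbit, this suffices.

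\emph{Case $\mathrm{div}(h)=2$.} Here $\alpha=2\alpha'$ and $\beta$ is odd. Then $2d=4\alpha'^2k-2\beta^2$, so $d=2\alpha'^2k-\beta^2\equiv -1\pmod 4$ after reducing $\alpha'$ parity. This needs checking: if $\alpha'$ is odd and $k$ is odd, then $d\equiv 1\pmod 4$. I would instead rely on Eichler's criterion together with the discriminant form: $q_L$ takes value $-1/2$ on the generator of $A_L$, so $(h/2)^2=d/2\equiv -1/2\pmod{2\ZZ}$, which forces $d\equiv -1\pmod 4$. So $d=4d'-1$.

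Take the representative $h=2(e+d'f)+\xi$. Its square is $8d'-2=2d$, as required. Its complement contains $\mathrm{U}^{\oplus 2}$ together with the rank-2 sublattice of $\langle e,f,\xi\rangle$ orthogonal to $h$. A basis of that sublattice is $a=e-d'f$ and $b=f+\xi$. Computing the Gram matrix gives $a^2=-2d'$, $b^2=-2$ and $(a,b)=1$. That this sublattice is saturated with exactly this Gram matrix can be confirmed by a discriminant check: $|A_{h^\perp}|=|A_L|\cdot 2d/\mathrm{div}(h)^2=d=4d'-1$, which equals $\det\begin{psmallmatrix}-2&1\\1&-2d'\end{psmallmatrix}$.

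The main obstacle is this last saturation and determinant bookkeeping, together with the exact congruence for $d$; both reduce to the small explicit computation just described.
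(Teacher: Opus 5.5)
Your proposal is correct and is essentially the paper's proof: Eichler's criterion (with $A_L\cong\ZZ/2\ZZ$ forcing the residue class to depend only on the divisibility), then the explicit representatives $e+df$ and $2(e+d'f)\pm\xi$, with the same complement bases $e-d'f$ and $f\pm\xi$ (matching sign). One correction concerns the computation you abandoned: $(2\alpha'h')^2=8\alpha'^2k$, not $4\alpha'^2k$, so $d=4\alpha'^2k-\beta^2\equiv-1\pmod 4$ follows directly (this is how the paper argues), and the case $d\equiv 1\pmod 4$ you worried about never occurs; your discriminant-form derivation of the congruence and your determinant check of saturation are nonetheless both valid.
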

\begin{proof}
    Since $A_L\cong \ZZ/2\ZZ$, the divisibility of $h$ is either $1$ or $2$. By Eichler's criterion, vectors of fixed square and divisibility form a single orbit under $\tilde{\Oo}(L)$. 
    Let $e_i,f_i$, $i=1,2,3$, $\xi$ be a basis of $L$, where the $e_i,f_i$ generate the three pairwise orthogonal copies of $\mathrm{U}$ and $\xi$ is a primitive vector of square $-2$ orthogonal to each $e_i$ and $f_i$.\\
    If $\mathrm{div}(h) = 1$, by Eichler's criterion, there exists an isometry $\phi$ of $L$ such that $\phi(h)=e_1+d f_1$, and we immediately calculate that $h^{\bot}$ is isometric to $\mathrm{U}^{\oplus 2}\oplus \langle -2d\rangle \oplus \langle -2\rangle$.\\
    If $\mathrm{div}(h)=2$ then $h = 2u + b \xi$, for some primitive class $u\in \mathrm{U}^{\oplus 3}$ and some odd integer $b$. Writing $2m$ for the square of $u$, we thus have $2d = 8m-2b^2 = 2(4m - b^2)$, and therefore $d$ must be congruent to $3$ modulo $4$.
    Writing $d=4d'-1$, by Eichler's criterion, there exists an isometry $\phi$ of $L$ such that $\phi(h)=2(e_1+d'f_1) - \xi$. Then $\phi(h)^{\bot}$ has a basis given by $e_1-d'f_1, f_1-\xi$ and $e_2, f_2, e_3,f_3$. Hence, $h^{\bot}$ is isometric to $\mathrm{U}^{\oplus 2}\oplus \begin{psmallmatrix}
        -2 & 1 \\ 1 & -2d'
    \end{psmallmatrix}$ in this case.
\end{proof}

\begin{lemma}\label{lem:orbitsU^3+<-8>}
    Let $L=\mathrm{U}^{\oplus 3}\oplus \langle -8\rangle$ be the $\mathrm{Kum}^3$-lattice. The $\Oo(L)$-orbit of primitive positive vectors $h$ in $L$ are determined by the square $2d$, the divisibility $\gamma$ and the subset $\{\pm [(h,-)/\gamma]\}\subset A_L$. We have the following possibilities: 
    \begin{enumerate}[label=(\roman*)]
    \item for any $d>0$, there is a single orbit of primitive vectors $h$ of square $2d$ and divisibility $1$, and $h^{\bot}=\mathrm{U}^{\oplus 2}\oplus \langle -8\rangle \oplus \langle -2d\rangle$;
    \item for any $d'>1$ and $d=4(d'-1)$, there is a single orbit of primitive vectors $h$ of square $2d$ and divisibility $2$. We have $h^{\bot}=\mathrm{U}^{\oplus 2} \oplus \begin{psmallmatrix}
        -8 & 4 \\ 4 & -2d'
    \end{psmallmatrix}$;
    \item for any $d'>0$ and $d=4(4d'-1)$, there is a unique orbit of primitive vectors $h$ of square $2d$ and divisibility $4$. We have $h^{\bot} = \mathrm{U}^{\oplus 2}\oplus \begin{psmallmatrix}
        -8 & 2 \\ 2 & -2d'
    \end{psmallmatrix}$;
    \item for any $d'>0$ and $d=4(16d'-1)$ there is a unique orbit of primitive vectors $h$ of square $2d$, divisibility $8$ and such that $[(h,-)/8]\in A_L$ is $\pm 1$. We have $h^{\bot}=\mathrm{U}^{\oplus 2} \oplus \begin{psmallmatrix}
        -8 & 1 \\ 1 & -2d'
    \end{psmallmatrix}$;
    \item[(iv\,$'$)] for any $d'>0$ and $d=4(16d'-9)$ there is a unique orbit of primitive vectors $h$ of square $2d$, divisibility $8$ and such that $[(h,-)/8]\in A_L$ is $\pm 3$. We have $h^{\bot}=\mathrm{U}^{\oplus 2} \oplus \begin{psmallmatrix}
        -8 & 3 \\ 3 & -2d'
    \end{psmallmatrix}$.   \end{enumerate}   
\end{lemma}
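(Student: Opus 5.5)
The plan is to follow the template of Lemma \ref{lem:orbitsU^3+<-2>}: first reduce orbit classification to lattice invariants via Eichler, then compute $h^{\bot}$ for an explicit normal-form representative in each case.

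\textbf{Step 1 (orbits).} Write $L=\mathrm{U}^{\oplus 3}\oplus\langle -8\rangle$ with basis $e_i,f_i$ ($i=1,2,3$) and $\xi$, where $\xi^2=-8$. Then $A_L\cong\ZZ/8\ZZ$ is generated by $\xi^\vee=(\xi,-)/8$, with $q_L(\xi^\vee)=-1/8$. By Theorem \ref{thm:eichler}, which applies since $L$ contains two copies of $\mathrm{U}$, the $\widetilde{\Oo}(L)$-orbit of a primitive $h$ is determined by $h^2$, $\gamma=\mathrm{div}(h)$ and $[(h,-)/\gamma]\in A_L$. One has $\Oo(A_L,q_L)=\{\pm1\}$, and $-\mathrm{id}$ is realized, for instance by $\xi\mapsto-\xi$ fixing $\mathrm{U}^{\oplus 3}$. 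Hence the $\Oo(L)$-orbits are classified by $(2d,\gamma,\pm[(h,-)/\gamma])$.

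\textbf{Step 2 (possible invariants).} Write $h=\alpha u+\beta\xi$ with $u\in\mathrm{U}^{\oplus3}$ primitive of square $2m$ and $\gcd(\alpha,\beta)=1$. Remark \ref{rmk:divisibility} gives $\gamma=\gcd(\alpha,8)$, and $(h,-)/\gamma$ has class $-(8\beta/\gamma)\xi^\vee$ modulo the $\mathrm{U}$-part. This class is an element of order $\gamma$ in $\ZZ/8\ZZ$ whose coefficient is determined by $\beta$ modulo $\gamma$; $\beta$ is odd whenever $\gamma>1$. The square is $2d=2\alpha^2m-8\beta^2$, and the congruence conditions follow case by case:
\begin{itemize}
\item[] $\gamma=2$ ($\alpha=2\alpha'$, $\alpha'$ odd): $d=4(\alpha'^2m-\beta^2)$, so $d\equiv 0\pmod 4$, which matches $d=4(d'-1)$.
\item[] $\gamma=4$: $d=16m'-4\beta^2\equiv -4\pmod{16}$.
\item[] $\gamma=8$: $d=64m''-4\beta^2$. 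Since $\beta^2\equiv1$ or $9\pmod{16}$ according as $\beta\equiv\pm1$ or $\pm3\pmod 8$, the residual class $\pm1$ versus $\pm3$ is read off from $d$ modulo $64$, giving cases (iv) and (iv$'$).
\end{itemize}
Existence in each case is then obtained by exhibiting the representatives below.

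\textbf{Step 3 (orthogonal complements).} For each case I pick a representative in $\mathrm{U}_1\oplus\langle\xi\rangle$ (with $\mathrm{U}_1=\langle e_1,f_1\rangle$), so that $h^\bot=\mathrm{U}^{\oplus2}\oplus(h^\bot\cap(\mathrm{U}_1\oplus\langle\xi\rangle))$, and compute the rank-2 part.
\begin{itemize}
\item[] $\gamma=1$: take $h=e_1+df_1$. Its complement in $\mathrm{U}_1$ is $\langle e_1-df_1\rangle$, so $h^\bot=\mathrm{U}^{\oplus 2}\oplus\langle-8\rangle\oplus\langle-2d\rangle$.
\item[] $\gamma=2$: take $h=2(e_1+d'f_1)+\xi$, of square $8d'-8$. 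A basis of the rank-2 complement should be $\{\xi+4f_1,\ e_1-d'f_1\}$, with Gram matrix $\begin{psmallmatrix}-8&4\\4&-2d'\end{psmallmatrix}$.
\item[] $\gamma=4$: take $h=4(e_1+d'f_1)+\xi$, of square $32d'-8$. The expected basis is $\{\xi+2f_1,\ e_1-d'f_1\}$, giving $\begin{psmallmatrix}-8&2\\2&-2d'\end{psmallmatrix}$.
\item[] $\gamma=8$, class $\pm1$: take $h=8(e_1+d'f_1)+\xi$, with basis $\{\xi+f_1,\ e_1-d'f_1\}$, giving $\begin{psmallmatrix}-8&1\\1&-2d'\end{psmallmatrix}$.
\item[] $\gamma=8$, class $\pm3$: take $h=8(e_1+d'f_1)+3\xi$, of square $128d'-72=8(16d'-9)$. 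The candidate basis is built from $\xi+3f_1$ and $e_1-d'f_1$, adjusted if needed to obtain the off-diagonal entry $3$.
\end{itemize}
In each case I would verify primitivity by checking that the rank-2 sublattice has discriminant $|A_L|\cdot 2d/\gamma^2$, i.e.\ that it equals the full complement.

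\textbf{Main obstacle.} The delicate step is Step 2: correctly matching the residual class $\pm[(h,-)/\gamma]$ with the congruence of $d$, especially separating (iv) from (iv$'$), and confirming that every admissible $d$ in each case is realized by the chosen normal form. The (iv$'$) basis computation also needs care.
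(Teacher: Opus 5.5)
Your proposal is correct and follows essentially the same route as the paper: Eichler's criterion together with $\Oo(A_L)=\{\pm 1\}$ reduces the classification to the invariants $(2d,\gamma,\pm[(h,-)/\gamma])$, and explicit representatives in $\langle e_1,f_1\rangle\oplus\langle\xi\rangle$ then give the rank-$2$ part of $h^{\bot}$ (the paper's representatives are yours with $e_1$ and $f_1$ swapped). Two cosmetic points. In case (iv$'$) no adjustment is needed, since $\{\xi+3f_1,\ e_1-d'f_1\}$ is already a $\ZZ$-basis of that rank-$2$ part, with Gram matrix $\begin{psmallmatrix}-8&3\\3&-2d'\end{psmallmatrix}$. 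With your convention $\xi^\vee=(\xi,-)/8$, the residual class is $+(8\beta/\gamma)\xi^\vee$ rather than $-(8\beta/\gamma)\xi^\vee$, which does not matter since only the class up to sign enters.
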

\begin{proof}
    By Theorem \ref{thm:eichler}, primitive vectors $h$ of square $2d$, divisibility $\gamma$ and fixed class $[(h,-)/\gamma]\in A_L$ form a single $\widetilde{\Oo}(L)$-orbit. 
    Notice that $\Oo(A_L) = \{ \pm 1 \}$ and that $\Oo(L)\to \Oo(A_L)$ is surjective. Hence, the $\Oo(L)$-orbit of a primitive class $h$ of square $2d$ and divisibility $\gamma$ consists precisely of those primitive $h'$ of square $2d$, divisibility $\gamma$ and such that $[(h',-)/\gamma] = \pm [(h,-)/\gamma]$ in $A_L$. It is a single $\widetilde{\Oo}(L)$-orbit if $\gamma=1$ or $2$ and the union of $2$ distinct $\widetilde{\Oo}(L)$-orbits if $\gamma=4$ or $8$.

    Let $e_i,f_i$, $i=1,2,3$ and $\xi$ be a basis of $\mathrm{U}^{\oplus 3}\oplus \langle -8\rangle$, where each $\langle e_i, f_i\rangle$ is a hyperbolic plane with form $\begin{psmallmatrix}
        0 & 1\\
        1 & 0
    \end{psmallmatrix}$ and $\xi$ is a class of square $-8$ orthogonal to each $e_i,f_i$.

    $(i)$: $\mathrm{div}(h)=1$. Such a class exists for any value of $d$. A representative of this orbit is given by $h=de_1 + f_1$ and $h^{\bot}$ is isometric to $\mathrm{U}^{\oplus 2}\oplus \langle -8 \rangle \oplus \langle -2d\rangle$.

    $(ii)$: $\mathrm{div}(h)=2$. Then, necessarily, $[(h,-)/2]=4$ in $A_{L}$. Any class of divisibility $2$ can be written as $2u+k\xi$, with $u$ in $\mathrm{U}^{\oplus 3}$ and an odd integer $k$. Then $2d=4 (u,u) - 8k^2$, and we can write $d= 4(d'-1)$ for some $d'>1$. For any such $d$,  by Eichler's criterion, up to $\Oo(L)$, we can assume that $h$ is the following class of square $2d$ and divisibility $2$: 
    \begin{equation}
        h= 2( d'e_1 + f_1) + \xi.
    \end{equation}
    A basis for $h^{\bot}\subset L$ is then given by the two vectors $4e_1+\xi, f_1 - d'e_1$ together with $e_2,f_2,e_3,f_3$, and we obtain $h^{\bot}= \mathrm{U}^{\oplus 2} \oplus \begin{psmallmatrix}
        -8 & 4\\ 4 & -2d'
    \end{psmallmatrix}$. 
    
    $(iii)$: $\mathrm{div}(h)=4$. In this case $[(h,-)/ 4] = \pm 2$; hence, classes of divisibility $4$ and fixed square form a single orbit under $\Oo(L)$. Any such class may be written as $4u + k \xi$, for some $u\in \mathrm{U}^{\oplus 3} $ and an odd integer $k$; thus, $k^2$ is congruent to $1$ modulo $4$ and we can write $2d=16 (u,u) - 8k^2=8(4d'-1)$, for some $d'>0$. For any $d=4(4d'-1)$, we consider the class 
    \begin{equation} 
    h = 4(d'e_1+f_1)+\xi
    \end{equation} 
    in $L$, which has square $2d$ and divisibility $4$.
    A basis for $h^{\bot}\subset L$ is given by the vectors $2e_1+\xi, f_1 - d'e_1$ together with $e_2,f_2,e_3,f_3$, and we find $h^{\bot}=\mathrm{U}^{\oplus 2}\oplus \begin{psmallmatrix}
        -8  & 2\\
        2 & -2d'
    \end{psmallmatrix}$.
    
    $(iv)$: $\mathrm{div}(h)=8$ and $[(h,-)/8]= \pm 1$. These classes form a single $\Oo(L)$-orbit, once the square is fixed; such a vector may be written as $8u+k\xi$, for some $u\in \mathrm{U}^{\oplus 3}$ and an integer $k$ congruent to $\pm 1$ modulo $8$, and we therefore have $d = 4(16d'-1)$ for some positive integer $d'$. For any such $d$, we consider the class 
    \begin{equation}
        h= 8(d'e_1+f_1)+\xi
    \end{equation}
    in $L$. A basis for $h^{\bot}$ is then given by $e_1+\xi, f_1 - d'e_1$ together with $e_2,f_2,e_3,f_3$, which gives $h^{\bot} =\mathrm{U}^{\oplus 2} \oplus \begin{psmallmatrix}
        -8 & 1 \\
        1 & -2d'
    \end{psmallmatrix}$. 
    
    $(iv\,')$: $\mathrm{div}(h)=8$ and $[(h,-)/8]=\pm 3$. We have a single $\Oo(L)$-orbit of these classes, once the square is fixed. Any such may be written as $8u + k\xi$ for a class $u\in \mathrm{U}^{\oplus 3}$ and an integer $k$ congruent to $\pm 3$ modulo $8$. We therefore have $d = 4(16d' - 9)$, for some positive integer $d'$. For any such $d$, consider the class 
    \begin{equation}
        h= 8(d'e_1+f_1)+3 \xi.
    \end{equation}
    A basis for $h^{\bot}$ is then given by $3e_1+\xi, f_1-d'e_1$ together with $e_2,f_2,e_3,f_3$, and we obtain $h^{\bot}=\mathrm{U}^{\oplus 2}\oplus \begin{psmallmatrix}
        -8 & 3 \\ 3  & -2d'
    \end{psmallmatrix}$.
\end{proof}

Recall that (Remark \ref{rmk:unique-overlattice}) $L=\mathrm{U}^{\oplus 3}\oplus \langle -8\rangle$ has a unique index-$2$ overlattice $\hat{L}= \mathrm{U}^{\oplus 3}\oplus \langle -2\rangle$. Given $h\in L$, we let $\hat{h}\in \hat{L}$ denote its image. 

\begin{lemma}\label{lem:divHatH}
    Let $h\in L$ be a primitive class of square $2d$ and divisibility $\gamma$. Then:
    \begin{enumerate}[label=(\roman*)]
        \item if $\gamma=1$, then $\hat{h}\in \hat{L}$ is a primitive class of divisibility $1$ and square $2d$;
        \item if $\gamma=2$, then $\hat{h}\in \hat{L}$ equals $2\cdot \hat{h}'$ for a primitive class $\hat{h}'\in \hat{L}$ of divisibility $1$ and square $d/2$;
        \item if $\gamma > 2$, then $\hat{h}\in \hat{L}$ equals $2\cdot \hat{h}'$ for a primitive class $\hat{h}'\in \hat{L}$ of divisibility $2$ and square $d/2$.
    \end{enumerate} 
\end{lemma}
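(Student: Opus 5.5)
We work in coordinates. Let $e_i,f_i$ ($i=1,2,3$) and $\xi$ be the standard basis of $L=\mathrm{U}^{\oplus 3}\oplus\langle -8\rangle$ used in Lemma~\ref{lem:orbitsU^3+<-8>}. The overlattice $\hat L$ is obtained by adjoining $\hat\xi:=\xi/2$, which has square $-2$, so that $\hat L=\mathrm{U}^{\oplus 3}\oplus\langle\hat\xi\rangle$. Following Remark~\ref{rmk:divisibility}, we write a primitive $h\in L$ as $h=\alpha u+\beta\xi$, where $u\in\mathrm{U}^{\oplus 3}$ is primitive and $\gcd(\alpha,\beta)=1$. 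This decomposition has two consequences. First, $\gamma=\mathrm{div}_L(h)=\gcd(\alpha,8)$. Second, in $\hat L$ we have $\hat h=\alpha u+2\beta\hat\xi$. The square of $\hat h$ is the same as that of $h$, namely $2d$.

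\textbf{Case $\gamma=1$.} Here $\alpha$ is odd, so $\gcd(\alpha,2\beta)=1$. Hence $\hat h$ is primitive in $\hat L$. By Remark~\ref{rmk:divisibility} applied to $\hat L$ (with $2d$ replaced by $2$), its divisibility is $\gcd(\alpha,2)=1$.

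\textbf{Case $\gamma\ge 2$.} Here $\alpha=2\alpha'$ is even and $\beta$ is odd. Then $\hat h=2(\alpha' u+\beta\hat\xi)$, and we set $\hat h':=\alpha'u+\beta\hat\xi$. Since $\gcd(\alpha',\beta)=1$ and $u$ is primitive, $\hat h'$ is primitive. Its square is $2d/4=d/2$. Again by Remark~\ref{rmk:divisibility} in $\hat L$, its divisibility is $\gcd(\alpha',2)$. Now $\gamma=\gcd(2\alpha',8)=2\gcd(\alpha',4)$, so:
\begin{itemize}
\item $\gamma=2$ exactly when $\alpha'$ is odd, and then $\mathrm{div}(\hat h')=1$;
\item $\gamma\in\{4,8\}$ exactly when $\alpha'$ is even, and then $\mathrm{div}(\hat h')=2$.
\end{itemize}

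The argument is essentially bookkeeping. The only point requiring care is that Remark~\ref{rmk:divisibility} applies verbatim to $\hat L=\mathrm{U}^{\oplus 3}\oplus\langle -2\rangle$, which holds because its proof works for any $\langle -2d\rangle$ summand. We also need that primitivity of $\alpha u+\beta\xi$ with $u$ primitive is equivalent to $\gcd(\alpha,\beta)=1$. Neither step poses a real obstacle.
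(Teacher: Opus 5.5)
Your proof is correct, but it is organized differently from the paper's. The paper argues by orbit reduction. Since $\hat L$ is the unique overlattice of $L$, every isometry of $L$ extends to $\hat L$, so primitivity, square and divisibility of $\hat h$ depend only on the $\Oo(L)$-orbit of $h$. The paper then checks the explicit representatives $de_1+f_1$, $2(d'e_1+f_1)+\xi$, $4(d'e_1+f_1)+\xi$, $8(d'e_1+f_1)+\xi$ and $8(d'e_1+f_1)+3\xi$ supplied by Lemma~\ref{lem:orbitsU^3+<-8>}.

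You instead compute directly for an arbitrary primitive $h=\alpha u+\beta\xi$, applying Remark~\ref{rmk:divisibility} in both $L$ and $\hat L$. This is the same bookkeeping the paper itself uses in the proof of Proposition~\ref{prop:invariants-pushforward-polarization}.

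Your route is more self-contained and somewhat more general:
\begin{itemize}
\item It needs neither Eichler's criterion nor the orbit classification, so you never distinguish the two $\gamma=8$ orbits by residue class $\pm[(h,-)/8]$, which indeed play no role.
\item It does not use positivity of $h$, whereas the orbit representatives in Lemma~\ref{lem:orbitsU^3+<-8>} only cover positive classes.
\item It produces the explicit relations $\gamma=2\gcd(\alpha',4)$ and $\mathrm{div}_{\hat L}(\hat h')=\gcd(\alpha',2)$, which make the trichotomy transparent.
\end{itemize}
The paper's route is shorter to write once Lemma~\ref{lem:orbitsU^3+<-8>} is available. It also matches how the result is consumed in Theorem~\ref{thm:NeronSeveriHyperkummer}, where the table is organized by those same orbits.
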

\begin{proof}
    Since $\hat{L}$ is the unique overlattice of $L$, any isometry of $L$ extends to an isometry of $\hat{L}$. Thus, it is sufficient to determine the properties of $\hat{h}$ for a single representative in each $\Oo(L)$-orbit. 
    We choose the basis $e_i,f_i$, $i=1,2,3$, $\xi$ of $L$ as in the proof of Lemma \ref{lem:orbitsU^3+<-8>}, so that, moreover, $\hat{e}_i, \hat{f}_i$, $i=1,2,3$, and $\hat{\xi}/2$ form a basis of $\hat{L}$. Choosing an explicit representative $h$ as in the various cases of Lemma \ref{lem:orbitsU^3+<-8>}, it is easy to verify the claimed properties of $\hat{h}$.
\end{proof}

While any isometry of $L$ extends to an isometry of the overlattice $\hat{L}$, there are isometries of $\hat{L}$ which do not stabilize the sublattice $L$.
\begin{lemma}\label{lem:O(L)}
    Let $L=\mathrm{U}^{\oplus 3}\oplus \langle -8\rangle$ and let $\hat{L}=\mathrm{U}^{\oplus 3}\oplus \langle -2\rangle$. Let $\delta \in \hat{L}$ be a primitive class of square $-2$ and divisibility $2$ such that $L=\delta^{\bot}\oplus \langle 2\delta \rangle$. Let $f\in \Oo(\hat{L})$; we have $f(\delta)=2\alpha u +\beta\delta$ for a primitive $u\in \delta^{\bot}$ and coprime integers $\alpha,\beta$ with $\beta$ odd. Then $f$ lies in $\Oo(L)\subset \Oo(\hat{L})$ if and only if $\alpha$ is even.
\end{lemma}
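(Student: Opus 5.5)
The plan is to view $L$ inside $\hat L$ as the preimage of a specific subgroup of the discriminant group $A_L\cong\ZZ/8\ZZ$. Then $f\in\Oo(\hat L)$ stabilizes $L$ exactly when it preserves $L$ under the natural identification $\hat L/\!\!\sim$. More concretely, we work directly with the decomposition $\hat L=\delta^{\bot}\oplus\ZZ\delta$ (a direct sum, since $\delta$ has square $-2$ and divisibility $2$). Here $\delta^\bot\simeq\mathrm{U}^{\oplus 3}$ is unimodular, because $\hat L\simeq \mathrm{U}^{\oplus3}\oplus\langle-2\rangle$ and $\delta$ generates a copy of the $\langle -2\rangle$ summand. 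In this decomposition $L=\delta^\bot\oplus\ZZ\cdot 2\delta$ is the set of vectors $w+c\delta$ with $c$ even, i.e.\ the kernel of the homomorphism $\lambda\colon\hat L\to\ZZ/2\ZZ$, $w+c\delta\mapsto c \bmod 2$.

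First I justify the form of $f(\delta)$. Since $f$ is an isometry, $f(\delta)$ is primitive of square $-2$ and divisibility $2$. Write $f(\delta)=w+\beta\delta$ with $w\in\delta^\bot$. Divisibility $2$ and unimodularity of $\delta^\bot$ force $w$ to be divisible by $2$, so $w=2\alpha u$ with $u$ primitive. Primitivity of $f(\delta)$ gives $\gcd(2\alpha,\beta)=1$, hence $\beta$ is odd and $\gcd(\alpha,\beta)=1$. This explains the hypotheses of the statement.

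Next, $f(L)=L$ iff $\lambda\circ f=\lambda$, since both sides are index-$2$ sublattices and $f(L)=\ker(\lambda\circ f^{-1})$. It suffices to check $\lambda\circ f=\lambda$ on generators. On $\delta$: $\lambda(f(\delta))=\beta\equiv 1=\lambda(\delta)$, automatically. On $v\in\delta^\bot$: write $f(v)=w'+c\,\delta$. Then $c\equiv \lambda(f(v))$, and $(f(v),f(\delta))=(v,\delta)=0$. Computing, $0=(w',2\alpha u)-2c\beta$, so $c\beta=\alpha(w',u)$.

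The key step is showing $c$ is even for all $v$ iff $\alpha$ is even. If $\alpha$ is even, then $c\beta$ is even and $\beta$ is odd, so $c$ is even. Conversely, if $\alpha$ is odd, I choose $v$ so that $c$ is odd. Take $v=f^{-1}(x)$ with $x\in\delta'^{\bot}$, where $\delta'=f(\delta)$; then $f(v)=x$ ranges over $f(\delta)^\bot$. I need some $x\perp f(\delta)$ whose $\delta$-coefficient is odd. Pick $g\in\delta^\bot$ with $(u,g)=1$ (possible by unimodularity and primitivity of $u$), and set $x=\beta g+\alpha\delta\cdot k$. Then $(x,f(\delta))=2\alpha\beta-2\alpha\beta k\cdot$ — more simply, solve for $x=a g+c\delta$ with $2\alpha a=2\beta c$, i.e.\ $a=\beta$, $c=\alpha$, which gives $c$ odd. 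So $\lambda\circ f\neq\lambda$ and $f\notin\Oo(L)$.

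The only delicate point is the bookkeeping in this last construction, making sure $x$ lies in $f(\delta)^\bot$ and has odd $\delta$-coefficient. Everything else is routine.
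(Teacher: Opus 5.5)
Your argument is correct, but it takes a different route from the paper.

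\textbf{The paper's route.} It argues through orbits.
\begin{itemize}
\item For ``only if'', it notes that $f\in\Oo(L)$ sends $\xi=2\delta$ to a primitive vector of square $-8$ and divisibility $8$ in $L$. Such a vector has the form $8\alpha'u'+\beta'\xi$, so $f(\delta)=4\alpha'u'+\beta'\delta$ and $\alpha$ is even.
\item For ``if'', it uses Eichler's criterion to find $g\in\Oo(L)$ with $g(\delta)=f(\delta)$. Then $g^{-1}f$ fixes $\delta$ and therefore preserves $\delta^{\bot}\oplus\langle 2\delta\rangle=L$.
\end{itemize}

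\textbf{Your route.} You use only the splitting $\hat{L}=\delta^{\bot}\oplus\ZZ\delta$ with $\delta^{\bot}$ unimodular. You realize $L$ as the kernel of the parity map $\lambda$ on the $\delta$-coordinate, and read both directions off the single relation $c\beta=\alpha(w',u)$ forced by $(f(v),f(\delta))=0$.

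\textbf{Comparison.} Your argument is more elementary and self-contained.
\begin{itemize}
\item No orbit classification is needed.
\item When $\alpha$ is odd, you get an explicit witness $v=f^{-1}(\beta g+\alpha\delta)\in\delta^{\bot}$ with $f(v)\notin L$.
\item You avoid a check that the paper's transitivity claim tacitly requires: that $2f(\delta)$ has the same class as $\xi$, up to sign, in $A_L\cong\ZZ/8\ZZ$. This holds because, writing $\alpha=2\alpha'$, one has $\beta^2=1+8\alpha'^2(u,u)\equiv 1 \pmod{16}$.
\end{itemize}
The paper's version, in turn, fits the discriminant-group and orbit framework of the appendix. It leads directly to the reformulation in Lemma~\ref{lem:extendIsometry}.

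\textbf{Cosmetic fixes.}
\begin{itemize}
\item The second sentence of your first paragraph, about preserving $L$ ``under the natural identification'', is vacuous and should be dropped.
\item The aborted formula $x=\beta g+\alpha\delta\cdot k$ should be deleted. Keep only $x=\beta g+\alpha\delta$, which is the choice that actually works.
\end{itemize}
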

\begin{proof}
    Primitive vectors of square $-2$ and divisibility $2$ in $\hat{L}$ form a single orbit under the orthogonal group, and once such a vector $\delta$ is fixed, any other is written as $2\alpha u + \beta\delta$, for a primitive $u\in \delta^{\bot}$ of square $2e$ and coprime integers $\alpha,\beta$ with $\beta$ odd and such that moreover $4\alpha^2e - \beta^2= -1$. Hence, any isometry $f\in \Oo(\hat{L})$ satisfies $f(\delta)=2\alpha u + \beta \delta$ for some $u,\alpha,\beta$ as above.

    Assume that $f\in \Oo(L)$. Then $\alpha$ must be even. Then the class $\xi\coloneqq 2\delta$ in $L$ is primitive of square $-8$ and divisibility $8$; such vectors form a single orbit under $\Oo(L)$, and any other such vector is written as $8\alpha' u' + \beta'\xi$, for a primitive $u'\in \xi^{\bot}$ of square $2e$, coprime integers $\alpha', \beta'$ with $\beta'$ odd and such that $16(\alpha')^2e-(\beta')^2=-1$. Recalling that $\xi=2\delta$, we see that if $f\in \Oo(L)$ then $f(\delta)=4\alpha' u' + \beta' \delta$, for some $u',\alpha',\beta'$ as above. Hence, $\alpha=2\alpha'$ is even. 

    Conversely, if $f(\delta)= 2\alpha u+\beta \delta$ with $\alpha$ even, there exists some $g\in \Oo(L)$ such that $g(\delta)=f(\delta)$. Hence, $g^{-1}f(\delta)=\delta$, so clearly $g^{-1}f$, and therefore also $f$, belongs to $ \Oo(L)$.
\end{proof}

We can reformulate this lemma as follows. Consider $T \coloneqq L(2)$, which is isometric to $\mathrm{U}(2)^{\oplus 3}\oplus \langle -16\rangle$, and its overlattice $\hat{T}\coloneqq \hat{L}(2)$, which is isometric to $\mathrm{U}(2)^{\oplus 3}\oplus \langle -4\rangle$. We naturally have identifications
\begin{equation}
\Oo(T) =\Oo(L),  \ \ \  \Oo(\hat{T})=\Oo(\hat{L}).  
\end{equation}
Any $f\in \Oo(T)$ extends to an isometry of $\hat{T}$, so that we have an inclusion $\Oo(T)\hookrightarrow \Oo(\hat{T})$.
Choose a primitive $\delta\in \hat{T}$ of square $-4$ and divisibility $4$ such that $\hat{T}=\mathrm{U}(2)^{\oplus 3} \oplus \langle \delta \rangle$ and $T=\mathrm{U}(2)^{\oplus 3}\oplus \langle 2\delta\rangle $.

\begin{lemma}\label{lem:extendIsometry}
     The image of $\Oo(T)$ in $\Oo(\hat{T})$ consists precisely of the isometries whose action on $A_{\hat{T}}$ stabilizes the subgroup $A_{\langle \delta\rangle}$. 
   \end{lemma}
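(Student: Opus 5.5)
We deduce the statement from Lemma \ref{lem:O(L)}, using the identifications $\Oo(T)=\Oo(L)$ and $\Oo(\hat{T})=\Oo(\hat{L})$. Under the rescaling, the class $\delta\in\hat{T}$ (square $-4$, divisibility $4$) corresponds to the class of square $-2$ and divisibility $2$ in $\hat{L}$ considered there. First we describe $A_{\hat{T}}$. Since $\hat{T}=\mathrm{U}(2)^{\oplus 3}\oplus\langle\delta\rangle$, we have
\[
A_{\hat{T}}=A_{\mathrm{U}(2)^{\oplus 3}}\oplus A_{\langle\delta\rangle}\cong(\ZZ/2\ZZ)^6\oplus\ZZ/4\ZZ,
\]
where $A_{\langle\delta\rangle}$ is generated by $\bar\delta^\vee\coloneqq[\delta/4]$, and the form takes the value $-\tfrac14$ on this generator.

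For $f\in\Oo(\hat{T})$, Lemma \ref{lem:O(L)} lets us write $f(\delta)=2\alpha u+\beta\delta$, with $u\in\delta^{\bot}=\mathrm{U}(2)^{\oplus 3}$ primitive, $\beta$ odd, and $\alpha,\beta$ coprime. The lemma says that $f\in\Oo(T)$ if and only if $\alpha$ is even. So it suffices to show that $\alpha$ is even exactly when $\bar f$ stabilizes $A_{\langle\delta\rangle}$.

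To see this, compute $\bar f(\bar\delta^\vee)=[f(\delta)/4]=[\alpha u/2]+\beta[\delta/4]$ in $A_{\hat{T}}$. Since $u$ is primitive in $\mathrm{U}(2)^{\oplus 3}$, the class $u/2$ is a nonzero element of $A_{\mathrm{U}(2)^{\oplus 3}}=\tfrac12\mathrm{U}(2)^{\oplus3}/\mathrm{U}(2)^{\oplus3}$. Hence $[\alpha u/2]$ vanishes if and only if $\alpha$ is even. Because the decomposition of $A_{\hat{T}}$ is a direct sum, $\bar f(\bar\delta^\vee)$ lies in $A_{\langle\delta\rangle}$ if and only if $\alpha$ is even. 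As $\bar f$ is an automorphism and $A_{\langle\delta\rangle}$ is cyclic, $\bar f$ maps the generator into $A_{\langle\delta\rangle}$ exactly when $\bar f(A_{\langle\delta\rangle})=A_{\langle\delta\rangle}$.

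Combining the two equivalences gives the claim: $f\in\Oo(T)$ if and only if $\alpha$ is even, if and only if $\bar f$ stabilizes $A_{\langle\delta\rangle}$.

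The main point requiring care is checking that the conventions of Lemma \ref{lem:O(L)} transfer correctly through the rescaling. This is immediate, since rescaling changes neither primitivity, nor the decomposition $T=\delta^\bot\oplus\langle2\delta\rangle$, nor the coefficients $\alpha$ and $\beta$.
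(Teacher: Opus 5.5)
Your proof is correct and is essentially the paper's argument. You reduce via Lemma~\ref{lem:O(L)} to the parity of $\alpha$ in $f(\delta)=2\alpha u+\beta\delta$, then detect that parity by whether $\bar f([\delta/4])=\alpha[u/2]+\beta[\delta/4]$ lies in the summand $A_{\langle\delta\rangle}$ of $A_{\hat{T}}=A_{\mathrm{U}(2)^{\oplus 3}}\oplus A_{\langle\delta\rangle}$; this also spells out the converse direction, which the paper compresses into a single ``equivalently''.
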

   \begin{proof}
   By the above discussion, an isometry $f$ of $\Oo(\hat{T})$ belongs to $\Oo(T)$ if and only if $f(\delta) = 4\alpha u+\beta\delta$; equivalently, if and only if $\bar{f}\in \Oo(A_{\hat{T}})$ sends $(\delta, -)/4$ to itself up to sign, i.e., if and only if the action of $f$ on the discriminant stabilizes the subgroup $A_{\delta}$.
\end{proof}

Recall (Lemma \ref{lem:embeddingL_Km}) that there exists a unique equivalence class of primitive embeddings of $L_{\Km}$ in $\Lambda_{\mathrm{K}3^{[3]}}$ with orthogonal complement isometric to $\hat{T}= \mathrm{U}(2)^{\oplus 3}\oplus \langle -4\rangle$. Let us fix one such embedding 
\begin{equation}
    \eta\colon L_{\Km}\hookrightarrow \Lambda_{\mathrm{K}3^{[3]}},
\end{equation}
and let $\tau\colon \hat{T}\hookrightarrow \Lambda_{\mathrm{K}3^{[3]}}$ denote the inclusion of the complement. Then $\Lambda_{\mathrm{K}3^{[3]}}$ is an overlattice of $L_{\Km}\oplus \hat{T}$; as such, it corresponds to an isotropic subgroup of $A_{L_{\Km}}\oplus A_{\hat{T}}$, which must be the graph of an anti-isometry of $A_{L_{\Km}}$ with a subgroup $H_{\eta}$ of $A_{\hat{T}}$. Hence, $\eta$ determines a splitting $A_{\hat{T}}=H_{\eta}\oplus A_{\langle -4\rangle}$, with $H_{\eta}\cong A_{\mathrm{U}(2)^{\oplus 3}}$. Let $\delta\in \hat{T}$ be a primitive class of square $-4$, divisibility $4$, and such that $\delta/4$ generates the orthogonal complement of $H_{\eta}$ in $A_{\hat{T}}$; $\delta$ is unique modulo $4\hat{T}$. Let $T \cong \mathrm{U}(2)^{\oplus 3}\oplus \langle -16\rangle$ be the sublattice of $\hat{T}$ generated by $\delta^{\bot}$ and $2\delta$; obviously, $2\hat{T}\subset T$.

\begin{lemma}\label{lem:extendIsometryToM}
    Let $i\colon \Lambda_{\mathrm{K}3}^{[3]} \hookrightarrow \widetilde{\Lambda}_{\mathrm{K}3}$ be a primitive embedding. Let $v$ be a primitive vector of square $4$ which generates the orthogonal complement. Let $T\subset \hat{T}$ be as above, and consider the sublattice \begin{equation} 
    M\coloneqq \mathrm{im}(i\circ \eta)^{\bot} = (\mathrm{im}(i\circ \tau) \oplus \langle v\rangle)^{\mathrm{sat}}.
    \end{equation}
    Then, $M$ is isometric to $\mathrm{U}(2)^{\oplus 3}\oplus \mathrm{U}$, and an isometry $f\in \Oo(\hat{T})$ extends to an isometry of $M$ if and only if it stabilizes $T\subset \hat{T}$.
\end{lemma}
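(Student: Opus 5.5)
The plan is to compute the discriminant data of $M$ explicitly and then apply the extension criterion for overlattices recalled at the end of the lattice preliminaries. First, identify $M$. It is the orthogonal complement in the unimodular lattice $\widetilde{\Lambda}_{\mathrm{K}3}$ of the primitive sublattice $i\circ\eta(L_{\Km})$. By Remark \ref{rmk:A.10}, there is a unique equivalence class of primitive embeddings of $L_{\Km}$ into $\widetilde{\Lambda}_{\mathrm{K}3}$, with orthogonal complement $\mathrm{U}(2)^{\oplus 3}\oplus\mathrm{U}$. Hence $M\simeq \mathrm{U}(2)^{\oplus 3}\oplus \mathrm{U}$ and $A_M\cong A_{L_{\Km}}(-1)\cong(\ZZ/2\ZZ)^6$. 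In particular, $M$ is an overlattice of $\hat T\oplus\langle v\rangle$, where we write $\hat T$ for $i\tau(\hat T)$. Its index is $\sqrt{|A_{\hat T}|\cdot 4/2^6}=\sqrt{2^8/2^6}=2$. The gluing subgroup is therefore an order-$2$ subgroup of $A_{\hat T}\oplus A_{\langle v\rangle}$, namely the graph of an anti-isometry between order-$2$ subgroups $H_{\hat T}\subset A_{\hat T}$ and $H_v=\langle 2\cdot v/4\rangle\subset A_{\langle v\rangle}\cong\ZZ/4\ZZ$.

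The key step is to identify $H_{\hat T}$ with $2A_{\langle\delta\rangle}$, the subgroup generated by $\delta/2$. The anti-isometry forces the generator to have square $-1\in\QQ/2\ZZ$ (since $q(v/2)=1$), and $\delta/2$ has square $-1$. To see that the glue is exactly $\delta/2$, I would argue as follows. The vector $(\delta+v)/2$ lies in $\widetilde{\Lambda}_{\mathrm{K}3}$, because the gluing of $\Lambda_{\mathrm{K}3^{[3]}}\oplus\langle v\rangle$ into the Mukai lattice is via $\delta_{\Lambda}/4\leftrightarrow v/4$, and $\delta$ is chosen so that $\delta/4$ spans the part of $A_{\hat T}$ that is not glued to $L_{\Km}$, i.e.\ the image of $A_{\Lambda_{\mathrm{K}3^{[3]}}}$. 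Concretely, $\delta\equiv\delta_\Lambda$ modulo the $L_{\Km}$-gluing, so $(\delta+v)/2$ differs from $(\delta_\Lambda+v)/2\in\widetilde\Lambda_{\mathrm{K}3}$ by an element of $\Lambda_{\mathrm{K}3^{[3]}}+L_{\Km}^\vee$-type glue that is orthogonal to $L_{\Km}$ after saturation. This element lies in $\hat T\oplus\langle v\rangle\otimes\QQ$ and is integral, hence in $M$. This bookkeeping is the step I expect to need the most care, so I would check it on the explicit model $\widetilde{\Lambda}_{\mathrm{K}3}\supset H^*(\Km(A))$.

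Given this, apply the extension criterion. An isometry $f\in\Oo(\hat T)$ extends to $M$ if and only if $\bar f$ stabilizes $H_{\hat T}=\langle\delta/2\rangle$ and there is $g\in\Oo(\langle v\rangle)=\{\pm1\}$ matching on $H_v$; the latter is automatic. So the criterion reduces to $\bar f(\delta/2)=\delta/2$ in $A_{\hat T}$. Writing $f(\delta)=4\alpha u+\beta\delta$ is not general here, so instead I would note directly that $f$ stabilizes the class of $\delta/2$ modulo $\hat T$ if and only if $f(\delta)\equiv\delta \bmod 2\hat T$. By Lemma \ref{lem:extendIsometry} and the description of $T$ as generated by $\delta^\perp$ and $2\delta$ (so that $T=\{x:(x,\delta)\in 4\ZZ\}$ up to the analogous condition), this is equivalent to $f(T)=T$. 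One can see this by comparing with Lemma \ref{lem:O(L)}: $f$ preserves $T$ exactly when $f(\delta)=2\alpha u+\beta\delta$ with $\alpha$ even, i.e.\ $f(\delta)\in\delta+2\hat T$ (using that $\beta$ is odd and $\delta\in 2\hat T^\vee$). This completes the plan.
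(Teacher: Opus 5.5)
Your overall strategy is the paper's: identify $M$ via Remark~\ref{rmk:A.10} (this part is correct), then apply the extension criterion to the overlattice $M\supset\hat T\oplus\langle v\rangle$. However, the gluing computation, which is the whole content of the lemma, is wrong.

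Since $|A_{\hat T}|=2^8$ and $|A_{\langle v\rangle}|=4$, one gets $[M:\hat T\oplus\langle v\rangle]^2=2^8\cdot 4/2^6=2^4$. The index is $4$, not $2$; you dropped the factor $4$ under the square root. So the glue has order $4$. It is the graph of an anti-isometry from all of $A_{\langle v\rangle}\cong\ZZ/4\ZZ$ onto $A_{\langle\delta\rangle}=\langle[\delta/4]\rangle\cong\ZZ/4\ZZ$, generated by the class of $(\delta+v)/4$. Equivalently, $\langle\delta,v\rangle^{\mathrm{sat}}$ is a hyperbolic plane with basis $(v+\delta)/4,\ (v-\delta)/2$.

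The $\hat T$-side of this glue is $\langle[\delta/4]\rangle$ for the following reason. The lattice $(i\circ\eta)(L_{\Km})\oplus\langle v\rangle$ is saturated with complement $\hat T$, so $\widetilde\Lambda_{\mathrm{K}3}$ glues $A_{\hat T}$ anti-isometrically onto $A_{L_{\Km}}\oplus A_{\langle v\rangle}$, with $H_\eta$ going to $A_{L_{\Km}}$. The preimage of $A_{\langle v\rangle}$ is then $H_\eta^{\perp}=\langle[\delta/4]\rangle$, by the choice of $\delta$.

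Your subgroup $\langle[\delta/2]\rangle$ is $2A_{\hat T}$, which every isometry fixes pointwise. With your glue the extension criterion is vacuous, and it would show that \emph{every} $f\in\Oo(\hat T)$ extends to $M$. That is the Barnes--Wall situation of Lemma~\ref{lem:extendIsometryComplementBW}, where $\langle\epsilon,v\rangle^{\mathrm{sat}}\cong\mathrm{U}(2)$. Here it is false, since $\Oo(T)\subsetneq\Oo(\hat T)$. For instance, Theorem~\ref{thm:eichler} gives an isometry with $f(\delta)=2u+3\delta$, where $u\in\delta^\perp$ is primitive of square $4$ in $\hat L$. This has $\alpha=1$, so it is excluded from $\Oo(T)$ by Lemma~\ref{lem:O(L)}.

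You reach the stated conclusion only through a second error: the claim that ``$\alpha$ even'' is equivalent to $f(\delta)\in\delta+2\hat T$. In fact $f(\delta)-\delta=2\alpha u+(\beta-1)\delta\in 2\hat T$ for every $f$, since $\beta$ is odd. The two mistakes cancel.

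The correct condition is that $\bar f$ stabilizes $A_{\langle\delta\rangle}$, i.e.\ $f(\delta)\equiv\pm\delta\pmod{4\hat T}$ (note that $\delta/4\in\hat T^\vee$). Since $\hat T=\delta^\perp\oplus\langle\delta\rangle$ and $u$ is primitive, this holds if and only if $\alpha$ is even, i.e.\ if and only if $f\in\Oo(T)$ (Lemma~\ref{lem:extendIsometry}). The compatibility with $\Oo(\langle v\rangle)=\{\pm1\}$ is automatic because $\Oo(A_{\langle\delta\rangle})=\{\pm1\}$. With the order-$4$ glue in place, this is exactly the paper's proof.
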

\begin{proof}
    Since $M$ is the orthogonal complement to a primitive copy of $L_{\Km}$ in $\widetilde{\Lambda}_{\mathrm{K}3}$, by Remark \ref{rmk:A.10}, it is isometric to $\mathrm{U}(2)^{\oplus 3}\oplus \mathrm{U}$. Moreover, $(i\circ \eta) (L_{\Km}) \oplus \langle v\rangle$ is a saturated sublattice of $\widetilde{\Lambda}_{\mathrm{K}3}$, since its complement is isometric to $\hat{T}$. Hence, the overlattice $\widetilde{\Lambda}_{\mathrm{K}3}$ of $(\iota\circ \tau)(\hat{T})\oplus ((\iota\circ \eta)(L_{\Km})\oplus \langle v\rangle)$ corresponds to the graph of an anti-isometry between $A_{\hat{T}}$ and $A_{L_{\Km}}\oplus A_{\langle v\rangle}$. This is nothing but the splitting $H_{\eta}\oplus A_{\delta}$ discussed above. 

    It follows that $\langle (i\circ \tau)(\delta), v\rangle^{\mathrm{sat}}$ is a hyperbolic plane. 
    The overlattice $M$ of $(i\circ \eta)(\hat{T})\oplus \langle v\rangle$ is thus given by an isotropic subgroup which is the graph of an anti-isometry $A_{\langle v\rangle}\to A_{\delta}\subset A_{\hat{T}}$. Hence, an isometry $f\in \Oo(\hat{T})$ extends to an isometry of $M$ if and only if the action $\bar{f}$ on $A_{\hat{T}}$ stabilizes $A_{\delta}$ and acts as $\pm 1$ on it. But $\Oo(A_{\delta})=\{\pm 1\}$, so the last condition is automatically satisfied. By Lemma \ref{lem:extendIsometry}, an isometry $f\in \Oo(\hat{T})$ extends to an isometry of $M$ if and only if $f$ belongs to $\Oo(T)$.    
\end{proof}

\begin{lemma}\label{lem:NS-hyperKummer}
Let $\eta\colon L_{\Km}\hookrightarrow \Lambda_{\mathrm{K}3^{[3]}}$ be a primitive embedding with complement isometric to $\hat{T}$; consider the sublattice $T\subset \hat{T}$ as above. Let $h$ be a primitive class in $\Lambda_{\mathrm{K}3^{[3]}}$ orthogonal to the image of $L_{\mathrm{Km}}$, of square $4d>0$. Then $2h\in T$, and: 
\begin{enumerate}[label=(\roman*)]
\item if $2h$ is not primitive in $T$, then necessarily $\langle h\rangle \oplus \eta(L_{\Km})$ is not saturated in $\Lambda_{\mathrm{K}3^{[3]}}$; its saturation is an overlattice of index $2$, isometric to the unique even lattice $L_{4d}$ of signature $(1,16)$ and discriminant form isometric to $A_{\mathrm{U}(2)^{\oplus 2}\oplus \langle 4d\rangle}$. 
\item if $2h$ is primitive in $T$, then $\langle h\rangle\oplus \eta(L_{\Km})$ is saturated in $\Lambda_{\mathrm{K}3^{[3]}}$. 
\end{enumerate}
\end{lemma}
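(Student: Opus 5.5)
The argument is bookkeeping with the gluing subgroup that defines $\Lambda_{\mathrm{K}3^{[3]}}$ as an overlattice of $\eta(L_{\Km})\oplus\hat T$. As recalled before Lemma \ref{lem:extendIsometryToM}, this overlattice is the graph of an anti-isometry $A_{L_{\Km}}\xrightarrow{\sim} H_\eta$. Here $A_{\hat T}=H_\eta\oplus A_{\langle\delta\rangle}$ and $T=\delta^{\bot}\oplus\langle 2\delta\rangle$.

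\textbf{Setup and the claim $2h\in T$.} Since $h\perp\eta(L_{\Km})$ and $\hat T$ is saturated, we have $h\in\hat T$. Write $h=u+c\delta$ with $u\in\delta^{\bot}_{\hat T}$ (identified with $\mathrm{U}(2)^{\oplus 3}$) and $c\in\ZZ$. Then $2h=2u+2c\delta$ lies in $\delta^{\bot}\oplus\langle 2\delta\rangle=T$; in fact $2\hat T\subset T$. Moreover, $2h$ is non-primitive in $T$ exactly when $2h=2t$ with $t\in T$, i.e.\ when $h\in T$. So the two cases are $c$ even (then $h\in T$) and $c$ odd.

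\textbf{Saturation via the discriminant.} Consider $N:=\langle h\rangle\oplus\eta(L_{\Km})$. It fails to be saturated iff some $x=\tfrac12(ah+\ell)$ with $\ell\in\eta(L_{\Km})$, $a$ odd, lies in $\Lambda_{\mathrm{K}3^{[3]}}$, and the index is at most $2$ because the gluing groups are $2$-elementary. The elements of $\Lambda_{\mathrm{K}3^{[3]}}$ lying in $\tfrac12(\hat T\oplus\eta(L_{\Km}))$ are exactly the glue vectors $y+\ell'$ with $y\in\hat T^\vee$ representing a class in $H_\eta$, and such classes are of the form $\tfrac12 u'$ with $u'\in\mathrm{U}(2)^{\oplus3}$. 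So we need: $\tfrac h2$ represents an element of $H_\eta\subset A_{\hat T}$ that is nonzero and glued to a nonzero class of $A_{L_{\Km}}$.

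Since $\delta$ has divisibility $4$ and $\tfrac{\delta}{2}$ is not in $\hat T^\vee\cap\tfrac12\mathrm{U}(2)^{\oplus 3}$, the class $\tfrac h2$ lies in $\hat T^\vee$ with image in $H_\eta$ iff $c$ is even. Its image is then nonzero because $u$ is primitive in $\mathrm{U}(2)^{\oplus3}$ (as $h$ is primitive). This gives (ii) when $c$ is odd, and shows that when $c$ is even $N$ has a saturation $N'$ of index exactly $2$, proving the first part of (i).

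\textbf{Identifying the overlattice in case (i).} The lattice $N'$ has signature $(1,16)$ and $|A_{N'}|=|A_N|/4=4d\cdot 2^6/4=2^6 d$. The cleanest route to its discriminant form is to compute the orthogonal complement of $h$ in $\hat T$. If $c$ is even, $h^\bot_{\hat T}$ contains $\delta$ and the orthogonal of $u$ in $\mathrm{U}(2)^{\oplus 3}$. When $u$ has divisibility $2$ in $\mathrm{U}(2)^{\oplus3}$, Lemma \ref{lem:orbitsPrimitiveVectors} rescaled shows this is $\mathrm{U}(2)^{\oplus2}\oplus\langle-4d\rangle$. So $N'^\bot=\mathrm{U}(2)^{\oplus2}\oplus\langle-4d\rangle\oplus\langle-4\rangle$ up to adjusting for $c\neq0$, handled by writing $h=u+2c'\delta$ and using an isometry of $\hat T$ preserving $T$.

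Since $\Lambda_{\mathrm{K}3^{[3]}}$ has discriminant $\ZZ/4$ glued, the form $A_{N'}$ is determined by $A_{N'^\bot}$ together with the $\langle-4\rangle$ part. This yields $A_{N'}\cong A_{\mathrm{U}(2)^{\oplus 2}\oplus\langle 4d\rangle}$. Uniqueness of the lattice with this signature and form follows from Theorem \ref{thm:orthogonalGroups}: the rank is $17$ and the length is at most $5$, so condition (i) there holds.

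\textbf{Main obstacle.} The delicate point is making the discriminant-form computation honest for general $c$ and for the divisibility of $u$. The cleanest tool is Lemma \ref{lem:extendIsometry}, which lets us move $h$ by $\Oo(T)$ to a normal form $u+2c'\delta$ with $u$ a standard vector. It is simplest to first embed everything in $\widetilde\Lambda_{\mathrm{K}3}$ via Lemma \ref{lem:extendIsometryToM}, where $M=\mathrm{U}(2)^{\oplus3}\oplus\mathrm{U}$ and Lemma \ref{lem:orbitsPrimitiveVectors} applies directly.
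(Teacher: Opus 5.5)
\textbf{Verdict: the saturation part is correct, but there is a genuine gap in identifying the saturation with $L_{4d}$ in case (i).}

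\textbf{What works.} Writing $h=u+c\delta$, the parity of $c$ decides whether $2h$ is primitive in $T$. Since $H_\eta=A_{\delta^{\perp}}=\tfrac12\delta^{\perp}/\delta^{\perp}$, the class of $h/2$ in $A_{\hat T}$ lies in $H_\eta$ exactly when $c$ is even. This gives (ii) and the index-$2$ assertion of (i) without leaving $\Lambda_{\mathrm{K}3^{[3]}}$, which is a more direct route than the paper's. There is one small slip: for $c$ even, $u$ need not be primitive, only of odd content. But $\overline{h/2}\neq 0$ anyway, because $h/2\notin\hat T$.

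\textbf{The gap.} The real content of (i) is that the saturation $N'$ is isometric to $L_{4d}$, and this is not established.
\begin{enumerate}
\item Your claim that $h^{\perp}_{\hat T}$ contains $\delta$ when $c$ is even is false unless $c=0$, since $(h,\delta)=-4c$.
\item The reduction to $c=0$ by an isometry of $\hat T$ preserving $T$ is not justified. Such an isometry moves $h$, but to transport $N'$ it must be compatible with the gluing of $\hat T$ to $\eta(L_{\Km})$ along $H_\eta\cong A_{L_{\Km}}$. You do not check this.
\item The sentence ``the form $A_{N'}$ is determined by $A_{(N')^{\perp}}$ together with the $\langle -4\rangle$ part'' is precisely what needs proof. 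Since $\Lambda_{\mathrm{K}3^{[3]}}$ is not unimodular, gluing only gives a splitting $A_{(N')^{\perp}}=G\oplus G^{\perp}$ with $G^{\perp}\cong A_{\Lambda_{\mathrm{K}3^{[3]}}}\cong\ZZ/4\ZZ$ and $A_{N'}\cong G(-1)$. For $c=0$ one checks $G^{\perp}=A_{\langle\delta\rangle}$, because $\delta/4\in\Lambda_{\mathrm{K}3^{[3]}}^{\vee}$ and $\delta\perp h$. For $c\neq 0$, however, $\delta\notin (N')^{\perp}$, so you would have to locate $G^{\perp}$ by hand. Cancellation is not available in general for finite quadratic forms on $2$-groups.
\end{enumerate}

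\textbf{How the paper closes it.} The paper uses the route you mention at the end but do not carry out. Embed $\Lambda_{\mathrm{K}3^{[3]}}\subset\widetilde{\Lambda}_{\mathrm{K}3}$ with complement $\langle v\rangle$, $v^2=4$. Then $M=\eta(L_{\Km})^{\perp}\cong\mathrm{U}(2)^{\oplus 3}\oplus\mathrm{U}$, where the $\mathrm{U}$ is $\langle\delta,v\rangle^{\mathrm{sat}}$ with standard basis $u_0=(v+\delta)/4$, $w_0=(v-\delta)/2$. So $\delta=2u_0-w_0$ and $(h,u_0)=-c$. Hence $\mathrm{div}_M(h)=1$ for $c$ odd and $\mathrm{div}_M(h)=2$ for $c$ even. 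Lemma \ref{lem:orbitsPrimitiveVectors} then computes $h^{\perp}_M$ for every primitive $h$, with no normal form needed.

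Since $\widetilde{\Lambda}_{\mathrm{K}3}$ is unimodular and $\Lambda_{\mathrm{K}3^{[3]}}$ is primitive in it, the saturation of $\langle h\rangle\oplus\eta(L_{\Km})$ is the same in both lattices and has discriminant form $A_{h^{\perp}_M}(-1)$. This is $A_{\mathrm{U}(2)^{\oplus 3}\oplus\langle 4d\rangle}$ when $c$ is odd and $A_{\mathrm{U}(2)^{\oplus 2}\oplus\langle 4d\rangle}$ when $c$ is even. Comparing orders gives saturatedness or index $2$, and the isometry class, at once.

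\textbf{Alternative fix inside $\Lambda_{\mathrm{K}3^{[3]}}$.} $N'$ is the overlattice of $\ZZ h\oplus\eta(L_{\Km})$ glued along $(\overline{h/2},\bar\lambda)$, with $0\neq\bar\lambda\in A_{L_{\Km}}$ and $q(\bar\lambda)\equiv -d$. The orthogonal group of the $\FF_2$-quadratic space $A_{L_{\Km}}$ is transitive on nonzero vectors of a given $q$-value. So $A_{N'}$ depends only on $d$, which legitimizes computing with $c=0$.
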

\begin{proof}
    We choose a primitive embedding $i\colon \Lambda_{\mathrm{K}3^{[3]}}\hookrightarrow \widetilde{\Lambda}_{\mathrm{K}3}$, with complement generated by a primitive vector $v$ of square $4$. We let $M$ be the saturation of the sublattice of $\Lambda_{\mathrm{K}3^{[3]}}$ spanned by $(i\circ \tau)(\hat{T})$ and $v$. As in the above proof, we find that $M$ is isometric to $\mathrm{U}(2)^{\oplus 3}\oplus \mathrm{U}$, and that $\langle (i\circ \tau)(\delta), v\rangle^{\mathrm{sat}}\cong \mathrm{U}$ is a hyperbolic plane,
    with a standard basis given by (up to sign choices)
    \begin{equation}  u\coloneqq \frac{v+\delta}{4}, \ \ w\coloneqq \frac{v-\delta}{2}.
    \end{equation} 
    A primitive class $h\in \hat{T} = \mathrm{U}(2)^{\oplus 3}\oplus \langle (2u-w)\rangle$ can be written as $h= \alpha h' + \beta (2u-w)$ for some $h'\in \mathrm{U}(2)^{\oplus 3}$ and coprime integers $\alpha, \beta$.
    Now $2h$ is primitive in $T$ if and only if $ \beta$ is odd, i.e., if and only if $\mathrm{div}_{M}(i(h)) = 1$, while $2h$ is not primitive in $T$ if and only if $\beta$ is even, i.e., if and only if $\mathrm{div}_M(i(h))=2$. By Lemma \ref{lem:orbitsPrimitiveVectors}, $\langle h\rangle \oplus \eta(L_{\Km})$ is saturated in the first case, and $(\langle h\rangle \oplus \eta(L_{\Km}))^{\mathrm{sat}}$ is isometric to $L_{4d}$ in the second case.
\end{proof}

\begin{remark}\label{rmk:possible-invariants}
    For any $d>0$ there exists a primitive class $h \in \eta(L_{\Km})^{\bot}$ of square $4d$ and divisibility $2$ such that $\eta(L_{\Km})\oplus \langle h\rangle$ is saturated, and if $d=4d'-1$, there exists a primitive class $h \in \eta(L_{\Km})^{\bot}$ of square $4d$ and divisibility $4$ such that $\eta(L_{\Km})\oplus \langle h\rangle$ is saturated.
    Indeed, recalling that $T=(\mathrm{U}^{\oplus 3} \oplus \langle - 8 \rangle) (2)$, by Lemma \ref{lem:orbitsU^3+<-8>} we have:
    \begin{itemize}
        \item for any $e'>1$ there exists a primitive class $h'\in T$ with $(h')^2 = 16 (e'-1)$ and $\mathrm{div}_{T}(h')=4$, and then $h=h'/2$ is a primitive class of square $4(e'-1)$ and divisibility $2$ in $\hat{T}$;
        \item for any $e'>0$, there exists a primitive class $h'\in T$ of divisibility $8$ and square $(h')^2 = 16(4e'-1)$, and then $h=h'/2$ is a primitive class in $\hat{T}$ of divisibility $4$ and square $4(4e'-1)$;
    \end{itemize}
\end{remark}

Recall that the lattice $\hat{T}=\mathrm{U}(2)^{\oplus 3}\oplus \langle -4\rangle$ also naturally arises as orthogonal complement of the Barnes--Wall lattice in $\Lambda_{\mathrm{K}3^{[3]}}$, as there exists a primitive embedding 
\begin{equation}
    \iota\colon \mathrm{BW}_{16}\hookrightarrow \Lambda_{\mathrm{K}3^{[3]}},
\end{equation}
unique up to isometry, with complement isometric to $\hat{T}$.

\begin{lemma}\label{lem:extendIsometryComplementBW}
    Let $\iota$ be as above and let $\tau\colon \hat{T}\hookrightarrow \Lambda_{\mathrm{K}3^{[3]}}$. Any isometry of $\hat{T}$ can be extended to an isometry of $\Lambda_{\mathrm{K}3^{[3]}}$.
\end{lemma}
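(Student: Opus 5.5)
The plan is to use the gluing description of $\Lambda_{\mathrm{K}3^{[3]}}$ as an overlattice of $\iota(\mathrm{BW}_{16})\oplus\tau(\hat{T})$. Since both sublattices are primitive and mutually orthogonal, this overlattice corresponds to an isotropic subgroup $H\subset A_{\mathrm{BW}_{16}}\oplus A_{\hat{T}}$. By primitivity, $H$ is the graph of an anti-isometry $\gamma\colon H_{\mathrm{BW}}\to H_{\hat{T}}$ between subgroups of the two discriminant groups.

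The first step is to compute the size of $H$. We have $|A_{\mathrm{BW}_{16}}|=2^8$, $|A_{\hat{T}}|=2^8$ (six factors $\ZZ/2\ZZ$ and one $\ZZ/4\ZZ$), and $|A_{\Lambda_{\mathrm{K}3^{[3]}}}|=4$. Hence $|H|^2=2^{16}/4=2^{14}$, so $|H|=2^7$. Since $A_{\mathrm{BW}_{16}}\cong(\ZZ/2\ZZ)^8$, the subgroup $H_{\hat{T}}$ is a $2$-torsion subgroup of order $2^7$ in $A_{\hat{T}}\cong(\ZZ/2\ZZ)^6\oplus\ZZ/4\ZZ$. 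Therefore it must equal the full $2$-torsion $A_{\hat{T}}[2]=A_{\mathrm{U}(2)^{\oplus 3}}\oplus 2A_{\langle -4\rangle}$. This subgroup is characteristic, so \emph{every} $\bar f\in\Oo(A_{\hat{T}})$ stabilizes $H_{\hat{T}}$.

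Next, let $f\in\Oo(\hat{T})$. Its induced action $\bar f$ restricts to an isometry of $H_{\hat{T}}$, and transporting it by $\gamma$ gives $\gamma^{-1}\circ\bar f|_{H_{\hat{T}}}\circ\gamma\in\Oo(H_{\mathrm{BW}})$. Here $H_{\mathrm{BW}}$ is an index-$2$ subgroup of $A_{\mathrm{BW}_{16}}$ with the induced form. By the extension criterion recalled at the end of the lattice preliminaries, it suffices to find $g\in\Oo(\mathrm{BW}_{16})$ whose action on $A_{\mathrm{BW}_{16}}$ preserves $H_{\mathrm{BW}}$ and restricts to this prescribed isometry. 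Since $\Oo(\mathrm{BW}_{16})\to\Oo(A_{\mathrm{BW}_{16}})\cong\Oo^+_8(2)$ is surjective (Remark \ref{rmk:propertiesBW}), the task reduces to a statement about finite quadratic spaces. We must show that every isometry of $H_{\mathrm{BW}}$, with its induced (degenerate) quadratic form, extends to an isometry of the $8$-dimensional $\FF_2$-quadratic space $A_{\mathrm{BW}_{16}}$. More precisely, the isometries of $H_{\mathrm{BW}}$ that actually occur must extend; these are the ones compatible with $\gamma$ and $\bar f$.

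This extension step is where I expect the main obstacle. $H_{\mathrm{BW}}=w^\perp$ is a hyperplane in a nondegenerate quadratic $\FF_2$-space of dimension $8$, with radical spanned by $w$. Via $\gamma$, this radical corresponds to the class $2\cdot(\delta/4)=\delta/2$ of $A_{\hat{T}}[2]$, on which $q_{\hat{T}}$ takes the value $-1\equiv 1$. So $w$ is nonsingular, $q(w)=1$, and $H_{\mathrm{BW}}=w^{\perp}$ is the orthogonal complement of an anisotropic vector. Moreover, $w^\perp=\langle w\rangle\oplus N$, where $N$ is a nondegenerate $6$-dimensional space; this uses characteristic-$2$ structure, with $w\in w^\perp$ because $b(w,w)=0$. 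Any isometry $\phi$ of $w^\perp$ fixes the radical, so $\phi(w)=w$. Witt's extension theorem for quadratic forms over $\FF_2$ then applies to the injective isometry $\phi\colon w^\perp\to A_{\mathrm{BW}_{16}}$ preserving $q$. Here I need to be careful: Witt's theorem in characteristic $2$ requires the map to preserve $q$, not just $b$, which holds here, and then it yields an extension to all of $A_{\mathrm{BW}_{16}}$.

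Applying Witt gives $\xi\in\Oo(A_{\mathrm{BW}_{16}})$ extending $\phi$. We lift $\xi$ to $g\in\Oo(\mathrm{BW}_{16})$ and glue $g$ with $f$ to obtain the desired isometry $F$ of $\Lambda_{\mathrm{K}3^{[3]}}$. If there is any subtlety with orientation or sign, it would only concern whether $F$ preserves orientation, which the statement does not require.
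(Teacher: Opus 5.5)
Your proof is correct, but it takes a different route from the paper's.

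\textbf{Your route.} You work directly with the non-unimodular gluing of $\mathrm{BW}_{16}$ and $\hat T$ inside $\Lambda_{\mathrm{K}3^{[3]}}$.
\begin{itemize}
\item The count $|H|=2^7$ forces $H_{\hat T}=A_{\hat T}[2]$, a characteristic subgroup, so every $\bar f$ preserves it.
\item You then extend the transported isometry of the hyperplane $H_{\mathrm{BW}}=w^{\perp}\subset A_{\mathrm{BW}_{16}}$ to all of $A_{\mathrm{BW}_{16}}$. For this you use Witt's extension theorem for quadratic forms over $\FF_2$. It applies to arbitrary, possibly degenerate, subspaces as long as the ambient polar form is nondegenerate and $q$ itself is preserved, so this step is valid.
\item Finally you lift the extension through the surjection $\Oo(\mathrm{BW}_{16})\to\Oo(A_{\mathrm{BW}_{16}})$ and glue.
\end{itemize}

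\textbf{The paper's route.} The paper avoids any partial-isometry extension on finite quadratic forms.
\begin{itemize}
\item It realizes $\Lambda_{\mathrm{K}3^{[3]}}$ as $v^{\perp}$ in the unimodular Mukai lattice $\widetilde{\Lambda}_{\mathrm{K}3}$, with $v^2=4$.
\item The complement $K=(\hat T\oplus\langle v\rangle)^{\mathrm{sat}}\cong\mathrm{U}(2)^{\oplus 4}$ of $\mathrm{BW}_{16}$ is glued from $\hat T$ and $\langle v\rangle$ along the order-$2$ subgroup $2A_{\hat T}$. Every $\bar f$ fixes this subgroup pointwise, so $f\oplus\mathrm{id}_v$ extends to $K$.
\item Because $\widetilde{\Lambda}_{\mathrm{K}3}$ is unimodular, extending an isometry of $K$ only requires lifting a \emph{full} element of $\Oo(A_{\mathrm{BW}_{16}})$. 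That is exactly what surjectivity provides. Restricting to $v^{\perp}$ finishes the proof.
\end{itemize}

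\textbf{Comparison.} Adding $v$ converts your characteristic-$2$ Witt step into a unimodular gluing, so the paper stays within the tools of its appendix. Your route dispenses with the auxiliary Mukai lattice and gives slightly more information: the gluing subgroup is all of $A_{\hat T}[2]$, and $H_{\mathrm{BW}}$ is the orthogonal of a distinguished vector $w$ with $q(w)=1$. The price is citing Witt's theorem in characteristic $2$. Your remarks on the splitting $w^{\perp}=\langle w\rangle\oplus N$ and on orientation are harmless but not needed.
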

\begin{proof}
    Let $j\colon \Lambda_{\mathrm{K}3^{[3]}}\hookrightarrow \widetilde{\Lambda}_{\mathrm{K}3}$ be a primitive embedding, with orthogonal complement generated by a primitive vector $v$ of square $4$. We obtain a primitive embedding $j\circ \iota$ of $\mathrm{BW}_{16}$ in the Mukai lattice, with complement $K$ necessarily isometric to $\mathrm{U}(2)^{\oplus 4}$. Notice that $K$ is the saturation of the sublattice generated by $\hat{T}\oplus \langle v\rangle$. We can find an isometry $\hat{T}\cong \mathrm{U}(2)^{\oplus 3}\oplus \langle -4\rangle$ such that, if $\epsilon$ is the primitive generator of the last summand, then $\langle \epsilon, v\rangle^{\mathrm{sat}}$ is a copy of $\mathrm{U}(2)$, generated by 
       \begin{equation} 
    u\coloneqq \frac{v+\epsilon}{2}, \ \ w\coloneqq \frac{v-\epsilon}{2}.
    \end{equation}
   Therefore, the isotropic subgroup $H\subset A_{\hat{T}}\oplus A_{\langle v\rangle}$ corresponding to the overlattice $K$ is the order $2$ subgroup generated by $(2\cdot \tfrac{(\epsilon, -)}{4},2\cdot \tfrac{(v, -)}{4})$, i.e., the graph of an anti-isometry $\gamma\colon H_{\langle v\rangle} \to H_{\hat{T}}$ where $H_{\langle v\rangle}\cong \ZZ/2\ZZ$ is the subgroup of $A_{\langle v\rangle}$ generated by $2\cdot \tfrac{(v,-)}{4}$, and $H_{\hat{T}}\cong \ZZ/2\ZZ$ is the subgroup of $A_{\hat{T}}$ generated by $2\cdot \tfrac{(\epsilon,-)}{4}$. 
    
    Notice that $H_{\hat{T}}\subset A_{\hat{T}}$ is the order $2$ subgroup which is the image of multiplication by $2$ on $A_{\hat{T}}$. It follows that the action $\bar{f}$ of any $f\in \Oo(\hat{T})$ stabilizes $H_{\hat{T}}$, and in fact $\bar{f}$ is the identity on $H_{\hat{T}}$. Hence, $f$ extends to an isometry $f'$ of $K$, which is the identity on $v$. Since $\Oo(\mathrm{BW}_{16})\to \Oo(A_{\mathrm{BW}_{16}}, q_{\mathrm{BW}_{16}})$ is surjective (Remark \ref{rmk:propertiesBW}), any isometry of $K$ extends to an isometry of $\widetilde{\Lambda}_{\mathrm{K}3}$. We thus find $\tilde{f}\in \Oo(\widetilde{\Lambda}_{\mathrm{K}3})$ such that $\tilde{f}$ fixes $v$ and stabilizes $\hat{T}$, and $\tilde{f}_{|_{\hat{T}}} = f$; therefore, $\tilde{f}_{|_{v^{\bot}}}$ is an isometry of $\Lambda_{\mathrm{K}3^{[3]}}$ extending $f$.
\end{proof} 

\begin{lemma}\label{lem:MRSisHyperKummer}
    Let $\iota\colon \mathrm{BW}_{16}\hookrightarrow \Lambda_{\mathrm{K}3^{[3]}}$ be a primitive embedding and let $h\in \Lambda_{\mathrm{K}3^{[3]}}$ be a primitive class of square $4d>0$ in $\mathrm{im}(\iota)^{\bot}$. Then the sublattice $\mathrm{BW}_{16}\oplus \langle h\rangle$ is not saturated in $\Lambda_{\mathrm{K}3^{[3]}}$, and its saturation is isometric to $L_{\Km}\oplus \langle 4d\rangle$. Moreover, there exists a primitive embedding $u\colon L_{\Km}\hookrightarrow (\mathrm{BW}\oplus \langle h\rangle)^{\mathrm{sat}}$ such that the image of $u(L_{\Km})$ in $\Lambda_{\mathrm{K}3^{[3]}}$ has orthogonal complement isometric to $\mathrm{U}(2)^{\oplus 3}\oplus \langle -4\rangle$.
\end{lemma}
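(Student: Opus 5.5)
The plan is to pass to the Mukai lattice, where everything becomes an orthogonal complement computation, as in the proofs of Lemma \ref{lem:embeddingBWintoK33} and Lemma \ref{lem:extendIsometryComplementBW}. First choose a primitive embedding $j\colon \Lambda_{\mathrm{K}3^{[3]}}\hookrightarrow \widetilde{\Lambda}_{\mathrm{K}3}$ with complement spanned by a primitive $v$ of square $4$. By Lemma \ref{lem:embeddingBW16intoMukai}, the complement $K$ of $j\iota(\mathrm{BW}_{16})$ in $\widetilde{\Lambda}_{\mathrm{K}3}$ is isometric to $\mathrm{U}(2)^{\oplus 4}$. Then $\hat T=\mathrm{im}(\iota)^\perp\cong \mathrm{U}(2)^{\oplus3}\oplus\langle-4\rangle$ is $v^\perp$ inside $K$, and $h$ and $v$ lie in $K$.

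\textbf{Step 1: saturation.} The saturation $N$ of $\mathrm{BW}_{16}\oplus\langle h\rangle$ in $\Lambda_{\mathrm{K}3^{[3]}}$ is the orthogonal complement of $h^{\perp_{\hat T}}$, equivalently of $\langle h,v\rangle^{\perp_K}$ in $\widetilde{\Lambda}_{\mathrm{K}3}$. Its discriminant form is therefore minus that of $\langle h,v\rangle^{\perp_K}$.

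Every vector of $K=\mathrm{U}(2)^{\oplus4}$ has even pairing with everything, and $h,v$ are primitive of squares $4d$ and $4$. Since $(h,v)=0$, the rank-2 sublattice $\langle h,v\rangle$ sits in $K\cong \mathrm{U}^{\oplus4}(2)$. Using Eichler's criterion (Theorem \ref{thm:eichler}) on $\mathrm{U}^{\oplus4}$, I would move $v$ to $e_1+f_1$ (rescaled) and write $h$ in the complement. This lets me compute $h^{\perp}$ in $\hat T$ explicitly, essentially as in Lemma \ref{lem:orbitsPrimitiveVectors}: $h$ has divisibility $2$ in $\hat T$ up to the $\langle-4\rangle$ summand. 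The result should be that $h^{\perp_{\hat T}}\cong \mathrm{U}(2)^{\oplus2}\oplus\langle-4\rangle\oplus\langle-4d\rangle$, or the non-split variant.

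Comparing $|A_N|$ with $|A_{\mathrm{BW}_{16}}|\cdot 4d=2^8\cdot4d$ then shows that $N$ has index $2$ over $\mathrm{BW}_{16}\oplus\langle h\rangle$. By Theorem \ref{thm:overlattices} the gluing is a nonzero isotropic element of order $2$. Concretely, $h/2$ pairs integrally modulo $\mathrm{BW}_{16}^\vee$: the class $h^\vee/2$ in $A_{\langle h\rangle}$ glues with an element of $A_{\mathrm{BW}_{16}}$ of square $-d \bmod 2$. Such an element exists in either orbit listed in Remark \ref{rmk:propertiesBW}.

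\textbf{Step 2: identifying $N$.} The lattice $N$ has signature $(1,16)$, rank $17$, and discriminant form isometric to $A_{\mathrm{U}(2)^{\oplus3}}\oplus A_{\langle 4d\rangle}$, read off from the complement. I take the discriminant-length condition of Theorem \ref{thm:orthogonalGroups} (via Remark \ref{rmk:relaxing}) to be satisfied: length $\le 7$, with a copy of $A_{\mathrm{U}(2)}$ present. Under that assumption $N$ is unique in its genus, hence $N\cong L_{\Km}\oplus\langle4d\rangle$. This is exactly the argument of Remark \ref{rmk:BW+h}, run inside $\Lambda_{\mathrm{K}3^{[3]}}$ instead of $\widetilde\Lambda_{\mathrm{K}3}$.

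\textbf{Step 3: the embedding $u$.} The isometry $N\cong L_{\Km}\oplus\langle 4d\rangle$ gives a primitive $u(L_{\Km})\subset N$, and the task is to show its complement in $\Lambda_{\mathrm{K}3^{[3]}}$ is $\mathrm{U}(2)^{\oplus3}\oplus\langle-4\rangle$. By Lemma \ref{lem:embeddingL_Km} the only alternative is $\mathrm{U}(2)^{\oplus2}\oplus\mathrm{U}\oplus\langle-4\rangle$, and the two cases are distinguished by the discriminant group: $(\ZZ/2)^6\oplus\ZZ/4$ versus $(\ZZ/2)^4\oplus\ZZ/4$. So I need the gluing of $u(L_{\Km})$ into $\Lambda_{\mathrm{K}3^{[3]}}$ to be trivial on the $\ZZ/2$ part.

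The complement is $\langle h'\rangle\oplus h^{\perp_{\hat T}}$ saturated, where $h'$ generates $u(L_{\Km})^{\perp}\cap N$ and has square $4d$. I would compute its discriminant size directly. If this comes out wrong, I would compose $u$ with an element of $\Oo(N)$. Since $\Oo(N)\to\Oo(A_N)$ is surjective, I can choose the $L_{\Km}$ summand so that the $\ZZ/2$-gluing class of $\Lambda_{\mathrm{K}3^{[3]}}/(N\oplus N^\perp)$ lands in $A_{\langle 4d\rangle}$ rather than in $A_{L_{\Km}}$; this is the freedom that Lemma \ref{lem:embeddingL_Km} exploits.

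I expect this last choice to be the main obstacle. It requires tracking the gluing group $\Lambda_{\mathrm{K}3^{[3]}}/(N\oplus N^\perp)\cong A_{N^\perp}$ and showing it can be arranged to project trivially onto $A_{L_{\Km}}$.
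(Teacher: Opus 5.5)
\textbf{Gap in Step 3.} Step 3 carries all the content of the last assertion, and you leave it open. Worse, the target you set for it cannot be met.

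Write $\Lambda=\Lambda_{\mathrm{K}3^{[3]}}$ and $N=(\mathrm{BW}_{16}\oplus\langle h\rangle)^{\mathrm{sat}}$. Take a decomposition $N=L'\oplus\langle h'\rangle$ with $L'\cong L_{\Km}$. The complement of $L'$ in $\Lambda$ is $\mathrm{U}(2)^{\oplus 3}\oplus\langle -4\rangle$ exactly when $(\Lambda^\vee\cap L'_{\QQ})/L'=0$. Equivalently, the order-$2$ element $\epsilon$ of $(\Lambda^\vee\cap N_{\QQ})/N\subset A_N$ must not lie in $A_{L'}$. Under the gluing anti-isometry $A_N\cong A_{h^{\perp_K}}$ inside $\widetilde{\Lambda}_{\mathrm{K}3}$, $\epsilon$ corresponds to $[v/2]$.

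When $\mathrm{div}_{\hat T}(h)=2$ you have $(v,h^{\perp_K})=2\ZZ$, so $[v/2]$ is not divisible by $2$ in $A_{h^{\perp_K}}$. Hence $\epsilon$ has a nonzero $A_{L'}$-component for \emph{every} choice of $L'$. Your requirement that the gluing class ``project trivially onto $A_{L_{\Km}}$'' (land in $A_{\langle 4d\rangle}$) is therefore impossible.

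The correct requirement is only that the $A_{\langle 4d\rangle}$-component of $\epsilon$ be nonzero. Showing that some element of $\Oo(A_N)$, lifted via $\Oo(N)\to\Oo(A_N)$, achieves this is an explicit orbit computation in $A_N\cong A_{\mathrm{U}(2)^{\oplus 3}}\oplus\ZZ/4d\ZZ$. For instance, you would need to build by hand an isometry of $A_N$ sending a generator $g$ of the cyclic factor to $g+\alpha$, with $\alpha\in A_{L'}$ isotropic and $b(\alpha,\epsilon)\neq 0$. This can be done, but it is exactly the argument your proposal is missing. When $\mathrm{div}_{\hat T}(h)=4$, one has $\epsilon\in 2A_N$ and every $L'$ works.

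\textbf{Error in Step 1.} $N$ and $h^{\perp_{\hat T}}=\langle h,v\rangle^{\perp_K}$ are complementary in the non-unimodular lattice $\Lambda$, so their discriminant forms are not anti-isometric. Your claim would give index $4$ when $\mathrm{div}_{\hat T}(h)=4$.

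The first two assertions are simply Remark \ref{rmk:BW+h}. Saturation in $\Lambda$ equals saturation in $\widetilde{\Lambda}_{\mathrm{K}3}$, and $h$ is primitive of square $4d$ in $K\cong\mathrm{U}(2)^{\oplus 4}$. The relevant complement is $h^{\perp_K}\cong\mathrm{U}(2)^{\oplus 3}\oplus\langle -4d\rangle$, which is in fact what your Step 2 uses.

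\textbf{How the paper handles the last assertion.} It avoids this discriminant bookkeeping altogether:
\begin{itemize}
\item It fixes $\eta(L_{\Km})\subset\Lambda$ with complement $\hat T$.
\item It chooses $h'\in\hat T$ of square $4d$, with the same divisibility as $h$, such that $\eta(L_{\Km})\oplus\langle h'\rangle$ is saturated (Lemma \ref{lem:NS-hyperKummer}, Remark \ref{rmk:possible-invariants}).
\item It embeds $\mathrm{BW}_{16}$ into this lattice via Remark \ref{rmk:BW+h}, and identifies the result with $\iota$ by Lemma \ref{lem:embeddingBWintoK33}.
\item It transports to $h$ using transitivity of $\Oo(\hat T)$ on primitive classes of given square and divisibility (Lemma \ref{lem:orbitsU^3+<-2>}), together with Lemma \ref{lem:extendIsometryComplementBW}.
\end{itemize}
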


\begin{proof}
    By Remark \ref{rmk:BW+h}, the saturation of $\mathrm{BW}\oplus \langle h\rangle$ is isometric to $L_{\Km}\oplus \langle 4d\rangle$. Thus, there exists a primitive embedding $L_{\Km}\hookrightarrow (\mathrm{BW}\oplus \langle h\rangle)^{\mathrm{sat}}$; however, the induced primitive embedding $L_{\Km}\hookrightarrow \Lambda_{\mathrm{K}3^{[3]}}$ may not have complement isometric to $\hat{T}$. We claim that in any case there exists a primitive embedding $L_{\Km}\hookrightarrow (\mathrm{BW}\oplus \langle h\rangle)^{\mathrm{sat}}$ such that the induced primitive embedding of $L_{\Km}$ in $\Lambda_{\mathrm{K}3^{[3]}}$ has complement isometric to $\hat{T}$.
    Let $\theta\colon \hat{T}\hookrightarrow \Lambda_{\mathrm{K}3^{[3]}}$ be the inclusion of the complement of $\mathrm{im}(\iota)$. Let $4d$ be the square of $h$ and set $\gamma\coloneqq \mathrm{div}_{\hat{T}}(h)$.

   
    Choose a primitive embedding $\eta\colon L_{\Km}\hookrightarrow \Lambda_{\mathrm{K}3^{[3]}}$, with orthogonal complement isometric to $\mathrm{U}(2)^{\oplus 3}\oplus \langle -4\rangle$; let $\tau\colon \hat{T}\hookrightarrow \Lambda_{\mathrm{K}3^{[3]}}$ denote the inclusion of the complement. By Lemma \ref{lem:NS-hyperKummer} and Remark \ref{rmk:possible-invariants}, we can find a primitive class $h'\in \mathrm{im}(\tau)$ of square $4d$ and divisibility $\gamma$ such that $L_{\mathrm{Km}}\oplus \langle h'\rangle$ is saturated.
    By Remark \ref{rmk:BW+h} there exists a primitive embedding of $\mathrm{BW}$ into $L_{\Km}\oplus \langle h'\rangle$, with orthogonal complement generated by a class $l$ of square $4d$ such that $L_{\Km}\oplus \langle h'\rangle$ is the saturation of $\mathrm{BW}\oplus \langle l\rangle$, a sublattice with index $2$. Composing with the embedding of $L_{\Km}\oplus \langle h'\rangle$ in $\Lambda_{\mathrm{K}3^{[3]}}$, we obtain a primitive embedding $\mathrm{BW}\hookrightarrow \Lambda_{\mathrm{K}3^{[3]}}$; by Lemma \ref{lem:embeddingBWintoK33}, up to applying an isometry $f$ of ${\Lambda}_{\mathrm{K}3^{[3]}}$ we may assume that this primitive embedding coincides with the primitive embedding $\iota$ in the statement (replacing of course also $\eta $ with $f\circ \eta$, $l$ with $f(l)$, etc.). 
    We thus have the primitive embeddings:
    \begin{equation}\begin{split}
        \iota & \colon \mathrm{BW}\hookrightarrow \Lambda_{\mathrm{K}3^{[3]}}, \ \text{with complement } \theta\colon \hat{T}\hookrightarrow \Lambda_{\mathrm{K}3^{[3]}};\\
        \eta & \colon L_{\mathrm{Km}}\hookrightarrow \Lambda_{\mathrm{K}3^{[3]}}, \ \text{with complement } \tau\colon \hat{T}\hookrightarrow \Lambda_{\mathrm{K}3^{[3]}},
        \end{split}
    \end{equation}
    and primitive classes $t,t' \in \hat{T}$ of square $4d$, such that $l=\theta(t)$, $h'=\tau(t')$ and such that we have the equality
    \begin{equation}(\iota(\mathrm{BW})\oplus \langle \theta(t) \rangle)^{\mathrm{sat}} = \eta(L_{\Km})\oplus \langle \tau(t')\rangle
    \end{equation}
    of sublattices of $\Lambda_{\mathrm{K}3^{[3]}}$. Hence, $\theta(t^{\bot})$ coincides with $\tau({t'}^{\bot})$ in $\Lambda_{\mathrm{K}3^{[3]}}$; in particular, the sublattices $t^{\bot}$ and ${t'}^{\bot}$ of $\hat{T}$ are isometric, and it follows that $\mathrm{div}_{\hat{T}}(t) = \mathrm{div}_{\hat{T}}(t')$ (cf. Lemma \ref{lem:orbitsU^3+<-2>}). Since, by assumption, $\mathrm{div}_{\mathrm{im}(\tau)}(h') = \mathrm{div}_{\hat{T}}(t') = \gamma$, we have $\mathrm{div}_{\mathrm{im}(\theta)}(l)=\gamma$. By construction, there is a primitive embedding of $L_{\Km}$ in $(\iota(\mathrm{BW}_{16})\oplus \langle l\rangle)^{\mathrm{sat}}$ and the orthogonal complement $L_{\Km}^{\bot}\subset \Lambda_{\mathrm{K}3^{[3]}}$ is isometric to $\hat{T}$.

    We have thus found a class $l$ of square $4d$ and divisibility $\gamma$ satisfying the conclusion of the lemma.
    By Lemma \ref{lem:orbitsU^3+<-2>}, primitive vectors in $\hat{T}$ of fixed square and divisibility form a single $\Oo(\hat{T})$-orbit; in particular, there exists an isometry $g\in \Oo(\theta(\hat{T}))$ such that $h=g(l)$. By Lemma \ref{lem:extendIsometryComplementBW}, $g$ extends to an isometry $\tilde{g}$ of $\Lambda_{\mathrm{K}3^{[3]}}$. 
    Applying $\tilde{g}$ we find the desired primitive embedding of $L_{\Km}$ in $(\iota(\mathrm{BW}_{16})\oplus \langle h\rangle)^{\mathrm{sat}}$ such that $L_{\Km}^{\bot}\subset \Lambda_{\mathrm{K}3^{[3]}}$ is isometric to $\hat{T}$.
\end{proof}

\subsection{A few other lattices} 
We collect here some information about a few more lattices appearing in our paper. Consider again $L_{\Km}$ as an overlattice of $\bigoplus_{i=1}^{16} \ZZ \cdot r_i$ where the $r_i$ are pairwise orthogonal classes of square $-2$.

\begin{definition}\label{def:latticeN_S}
    We denote by $N_S$ the sublattice $\langle r_i+r_j\rangle^{\bot}\subset L_{\Km}$, for some $i\neq j$.
\end{definition}
\begin{lemma}\label{lem:embeddingN_S}
    The lattice $N_S$ does not depend on the choice of $i$ and $j$. This lattice is negative definite of rank $15$, and its discriminant group $A_{N_S}$ is isomorphic to $(\ZZ/2\ZZ)^6\oplus \ZZ/4\ZZ$. Moreover, there exists a unique equivalence class of primitive embeddings of $N_S$ into $\Lambda_{\mathrm{K}3}$, with orthogonal complement $\mathrm{U}(2)^{\oplus 3}\oplus \langle -4\rangle$.
\end{lemma}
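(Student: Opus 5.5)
The plan has four steps: independence of the choice of $(i,j)$, the rank and discriminant of $N_S$, existence of an embedding into $\Lambda_{\mathrm{K}3}$ with the stated complement, and uniqueness via Theorem \ref{thm:embeddingUnimodular}.

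For independence of the choice, we use the group of isometries of $L_{\Km}$ generated by the affine structure on the index set $\FF_2^{\oplus 4}$. Translations and affine linear maps of $\FF_2^4$ act on $\bigoplus \ZZ r_i$ by permutations. They preserve the set of half-sums $\tfrac12\sum_{i\in W} r_i$ over affine hyperplanes $W$, and hence extend to isometries of $L_{\Km}$. The affine group $\mathrm{AGL}_4(\FF_2)$ is $2$-transitive on $\FF_2^4$, so any pair $\{i,j\}$ can be moved to any other. Therefore the vectors $r_i+r_j$, and with them their orthogonal complements, are all equivalent under $\Oo(L_{\Km})$.

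For the rank and discriminant, $L_{\Km}$ is negative definite of rank $16$, so $N_S$ is negative definite of rank $15$. To compute $A_{N_S}$, we realize $L_{\Km}$ as $\mathrm{U}(2)^{\oplus 3\perp}$ inside $\Lambda_{\mathrm{K}3}$, in the geometric situation of $H^2(\Km(A),\ZZ)$. Then $N_S=(r_i+r_j)^\perp\cap L_{\Km}$, and its complement in $\Lambda_{\mathrm{K}3}$ is the saturation of $\mathrm{U}(2)^{\oplus 3}\oplus\langle r_i+r_j\rangle$. We first determine the divisibility of $r_i+r_j$ in $\Lambda_{\mathrm{K}3}$. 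Any half-sum over an affine hyperplane pairs with $r_i+r_j$ to $-1$ or $0$ or $-2$; since some hyperplane contains exactly one of $i,j$, the divisibility is $1$ in $L_{\Km}$. We must also check whether $(r_i+r_j)/2$ glues with a class of $\tfrac12\mathrm{U}(2)^{\oplus 3}$ in the overlattice. This is exactly where the $\langle -4\rangle$ summand should come from. Concretely, I would identify the saturation of $\mathrm{U}(2)^{\oplus 3}\oplus\langle r_i+r_j\rangle$ with the lattice obtained in Lemma \ref{lem:push-forwardt_*} and the proof of Proposition \ref{prop:cohomologyM&S}(ii). There the orthogonal complement of $N_S$ in $H^2(\Km(A),\ZZ)$ is $H^2(A,\ZZ)(2)\oplus\langle [C_0]+[C_\theta]\rangle$, which is visibly $\mathrm{U}(2)^{\oplus 3}\oplus\langle-4\rangle$ and primitive. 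Since $\Lambda_{\mathrm{K}3}$ is unimodular, this gives $A_{N_S}\cong A_{\mathrm{U}(2)^{\oplus3}\oplus\langle-4\rangle}(-1)\cong(\ZZ/2\ZZ)^6\oplus\ZZ/4\ZZ$, and it proves existence of the embedding with the stated complement.

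For uniqueness, we apply Theorem \ref{thm:embeddingUnimodular} to the complement $K=\mathrm{U}(2)^{\oplus 3}\oplus\langle -4\rangle$. Here $\rk K=7$ and $\mathrm{length}(A_K)=7$, so condition (i) of Theorem \ref{thm:orthogonalGroups} fails. We therefore use Remark \ref{rmk:relaxing} instead. $A_K$ has no odd part. The $2$-part has length $7=\rk K$, and $A_K$ contains a copy of $A_{\mathrm{U}(2)}$, so $K$ is unique in its genus and $\Oo(K)\to\Oo(A_K)$ is surjective. It remains to note that every primitive embedding of $N_S$ into $\Lambda_{\mathrm{K}3}$ has complement of signature $(3,4)$ with discriminant form $A_{N_S}(-1)$, and hence is isometric to $K$. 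Theorem \ref{thm:embeddingUnimodular} then gives a unique equivalence class.

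I expect the main obstacle to be the discriminant computation: correctly tracking how $r_i+r_j$ glues across $L_{\Km}\oplus\mathrm{U}(2)^{\oplus3}$. I would handle it through the geometric model of Lemma \ref{lem:push-forwardt_*} rather than by direct gluing computations.
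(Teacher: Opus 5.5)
Your overall route is the paper's: $2$-transitivity of the affine group for independence of $(i,j)$, then the Kummer model with $N_S^{\perp}=(\mathrm{U}(2)^{\oplus 3}\oplus\langle r_i+r_j\rangle)^{\mathrm{sat}}$ in $H^2(\Km(A),\ZZ)$, unimodularity to get $A_{N_S}$, and Theorem \ref{thm:embeddingUnimodular} for uniqueness. Your check of its hypotheses via Remark \ref{rmk:relaxing} is more explicit than the paper's one-line citation.

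The gap is at the one step with real content: showing that $\mathrm{U}(2)^{\oplus 3}\oplus\langle r_i+r_j\rangle$ is already saturated in $\Lambda_{\mathrm{K}3}$. You call $H^2(A,\ZZ)(2)\oplus\langle [C_0]+[C_\theta]\rangle$ ``visibly primitive'' and defer to the proof of Proposition \ref{prop:cohomologyM&S}(ii). Only the isometry type $\mathrm{U}(2)^{\oplus 3}\oplus\langle -4\rangle$ of that sublattice is visible; its primitivity is not. That proof merely asserts primitivity, and its justification is the lemma you are proving, so the citation is circular. A priori $N_S^{\perp}$ could be an index-$2$ overlattice, obtained by gluing $\tfrac12(r_i+r_j)$ to a class of $\tfrac12\mathrm{U}(2)^{\oplus 3}$. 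This is exactly the possibility you flag, and it is allowed by the discriminant forms. In that case neither the complement nor $A_{N_S}$ would be as stated.

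Your own divisibility observation closes this in one line, and that is precisely the paper's argument. Choose an affine hyperplane $W$ containing $i$ but not $j$. Suppose $v=u'+\beta(r_i+r_j)\in\Lambda_{\mathrm{K}3}$ with $u'\in\mathrm{U}(2)^{\oplus 3}\otimes\QQ$ and $\beta\in\QQ$. Pair $v$ with $\tfrac12\sum_{k\in W}r_k\in L_{\Km}$, which is orthogonal to $u'$; this gives $-\beta\in\ZZ$. Hence $u'=v-\beta(r_i+r_j)$ lies in $\Lambda_{\mathrm{K}3}\cap(\mathrm{U}(2)^{\oplus 3}\otimes\QQ)=\mathrm{U}(2)^{\oplus 3}$, the last equality holding because $\mathrm{U}(2)^{\oplus 3}=L_{\Km}^{\perp}$ is primitive. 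In particular, no class $\tfrac12(r_i+r_j)+\tfrac12 w$ with $w\in\mathrm{U}(2)^{\oplus 3}$ lies in $\Lambda_{\mathrm{K}3}$, since it pairs to $-\tfrac12$ with that half-sum. With this inserted in place of the forward reference, the rest of your proposal goes through as written.
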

\begin{proof}
    For any other $l\neq k$, there exists an isometry of $L_{\Km}$ sending $r_i$ to $r_l$ and $r_j$ to $r_k$.
    Indeed, there is a natural $(\ZZ/2\ZZ)^4$-action on $L_{\Km}$ via translations, which is transitive on the $16$ classes $r_{m}$. We may thus assume that $r_i=r_l$. We identify the set of points $r_m$ with a $4$-dimensional vector space $V$ over $\FF_2$, in such a way that $r_i$ corresponds to the origin. 
    Now any (affine) linear transformation of $V$ induces an isometry of $L_{\Km}$, as it sends affine hyperplanes to affine hyperplanes. Notice that this isometry will fix the origin of $V$. But we can surely find such a linear transformation sending $r_j$ to $r_k$, whenever $r_j$ and $r_k$ are not the origin of $V$. We thus have found an isometry of $L_{\Km}$ sending $r_i$ to $r_l$ and $r_j$ to $r_k$. This shows that the lattice $N_S$ does not depend on the choice of $i$ and $j$. It is clear that $N_S$ is a negative definite lattice of rank $15$.
    
    We consider now the Kummer surface $\Km(A)$ on an abelian surface $A$; then $L_{\Km}$ is primitively embedded into $H^2(\Km(A),\ZZ)$ and the composition $N_S\hookrightarrow L_{\Km}\hookrightarrow H^2(\Km(A),\ZZ)$ gives a primitive embedding of $N_S$ into the K3 lattice $\Lambda_{\mathrm{K}3}$. The orthogonal complement is the saturation of $H^2(A,\ZZ)(2)\oplus \langle r_i+r_j\rangle$. But this lattice is already saturated: identifying again the $r_m$ with the points of a $4$-dimensional $\FF_2$-vector space $V$, there exists an affine hyperplane $W$ in $V$ containing $r_i$ but not $r_j$; if $v =\alpha u + \beta (r_i+r_j)$ is a $\QQ$-linear combination with $u\in H^2(A,\ZZ)(2)$ which belongs to $H^2(\Km(A),\ZZ)$, then $(v,\tfrac{1}{2}\sum_{k\in W} r_k ) = -\beta$ must be an integer, and hence $v\in H^2(A,\ZZ)(2)\oplus \langle r_i+r_j\rangle$. 
    This shows that the orthogonal complement of $N_S$ in $\Lambda_{\mathrm{K}3}$ is isometric to $\mathrm{U}(2)^{\oplus 3 } \oplus \langle -4\rangle$. Since $\Lambda_{\mathrm{K}3}$ is unimodular, we conclude that $A_{N_S}$ is the abelian group $(\ZZ/2\ZZ)^6 \oplus \ZZ/4\ZZ$. 
    Finally, the embedding of $N_S$ in $\Lambda_{K3}$ is unique up to isometries of $\Lambda_{\mathrm{K}3}$ by Theorem \ref{thm:embeddingUnimodular}.
\end{proof}

\begin{remark} 
    The lattice $\mathrm{U}\oplus N_S$ is uniquely determined by its signature and discriminant form by Theorem \ref{thm:orthogonalGroups}. It is therefore isometric to $L_{\Km}\oplus \langle 4\rangle$. It follows that there exist primitive vectors $v$ and $v'$ of square $4$ in $N_S\oplus \mathrm{U}$ such that $v^{\bot} = L_{\Km}$ and $(v')^{\bot}=\mathrm{BW}_{16}$.
    \end{remark}
    
The following lattice was studied by Garbagnati and Sarti in \cite{GarbagnatiSarti}. The notation is as above.

\begin{definition}\label{def:latticeN_V}
We denote by $N_V$ the sublattice $\langle \sum_{i=1}^{16} r_i\rangle^{\bot}\subset L_{\Km}$.
\end{definition}

\begin{lemma}\label{lem:embeddingN_V}
The lattice $N_V$ is negative definite of rank $15$, with discriminant group isomorphic to $(\ZZ/2\ZZ)^6\oplus \ZZ/8\ZZ$. There exists a unique class of primitive embeddings of $N_V$ in $\Lambda_{\mathrm{K}3}$, with orthogonal complement $\mathrm{U}(2)^{\oplus 3}\oplus \langle -8\rangle$.  
\end{lemma}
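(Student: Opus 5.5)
The plan is to follow the proof of Lemma~\ref{lem:embeddingN_S} almost verbatim, realizing $N_V$ inside the cohomology of a Kummer surface. Rank and definiteness are immediate: $N_V$ is the orthogonal complement of one nonzero vector in the rank $16$ negative definite lattice $L_{\Km}$.

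For the embedding, consider $\Km(A)$ for an abelian surface $A$, with $L_{\Km}\subset H^2(\Km(A),\ZZ)$ primitive. The composition $N_V\hookrightarrow L_{\Km}\hookrightarrow H^2(\Km(A),\ZZ)\cong\Lambda_{\mathrm{K}3}$ is primitive, because $N_V$ is an orthogonal complement inside the primitive sublattice $L_{\Km}$. Its orthogonal complement is the saturation of $H^2(A,\ZZ)(2)\oplus\langle \sum_i r_i\rangle$. Since $\tfrac12\sum_i r_i\in L_{\Km}$, this saturation contains $H^2(A,\ZZ)(2)\oplus\langle \tfrac12\sum_i r_i\rangle$, and $\tfrac12\sum_i r_i$ has square $-8$.

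The main obstacle is checking that $H^2(A,\ZZ)(2)\oplus\langle\tfrac12\sum r_i\rangle$ is already saturated. I would argue as in Lemma~\ref{lem:embeddingN_S}. Take a rational combination $v=\alpha u+\beta\cdot\tfrac12\sum r_i$ with $u\in H^2(A,\ZZ)(2)$ and $v$ integral. Pair $v$ with $\tfrac12\sum_{k\in W}r_k$ for an affine hyperplane $W$; the hyperplane has $8$ points, so the pairing is $\tfrac{\beta}{4}\cdot(-2)\cdot 8/2\cdot$, i.e.\ it equals $-2\beta$. This only forces $2\beta\in\ZZ$, so I need more. Pairing $v$ with $r_j$ gives $-\beta\in\ZZ$, so $\beta\in\ZZ$. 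Then $\alpha u\in H^2(\Km(A),\ZZ)\cap H^2(A,\ZZ)(2)\otimes\QQ$, which equals $H^2(A,\ZZ)(2)$ because that sublattice is primitive. Hence the complement is $\mathrm{U}(2)^{\oplus3}\oplus\langle-8\rangle$.

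Unimodularity of $\Lambda_{\mathrm{K}3}$ then gives $A_{N_V}\cong A_{\mathrm{U}(2)^{\oplus3}\oplus\langle -8\rangle}\cong(\ZZ/2\ZZ)^6\oplus\ZZ/8\ZZ$.

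For uniqueness, apply Theorem~\ref{thm:embeddingUnimodular}. This requires the complement $K=\mathrm{U}(2)^{\oplus3}\oplus\langle-8\rangle$ to satisfy Theorem~\ref{thm:orthogonalGroups}. Here $\rk K=7$ equals $\mathrm{length}(A_K)$, so condition (i) fails. Condition (ii) holds instead: $A_K\supset(\ZZ/2\ZZ)^3$, and there are no odd primes dividing the discriminant. Alternatively, Remark~\ref{rmk:relaxing} applies, since $A_K$ contains a copy of $A_{\mathrm{U}(2)}$. Either way, $K$ is unique in its genus and $\Oo(K)\to\Oo(A_K)$ is surjective, so all primitive embeddings of $N_V$ into $\Lambda_{\mathrm{K}3}$ are equivalent.
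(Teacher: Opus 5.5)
Your proposal is correct and follows the paper's route: realize $N_V$ inside $H^2(\Km(A),\ZZ)$, identify $N_V^{\bot}=H^2(A,\ZZ)(2)\oplus\langle\tfrac12\sum_i r_i\rangle\cong\mathrm{U}(2)^{\oplus 3}\oplus\langle -8\rangle$, read off $A_{N_V}$ from unimodularity, and conclude uniqueness with Theorem~\ref{thm:embeddingUnimodular}. Two small remarks: your explicit check that the complement satisfies Theorem~\ref{thm:orthogonalGroups}(ii) (or Remark~\ref{rmk:relaxing}), because condition (i) fails, is a detail the paper leaves implicit; and in the side computation you discard, the pairing of $v$ with $\tfrac12\sum_{k\in W}r_k$ is $-4\beta$, not $-2\beta$, which is harmless since pairing with $r_j$ already forces $\beta\in\ZZ$.
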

\begin{proof}
    These results can be found in \cite{GarbagnatiSarti}. They can be obtained as in the proof of Lemma \ref{lem:embeddingN_S}, considering the natural primitive embedding of $N_V$ in $H^2(\Km(A),\ZZ)$ and computing that $N_V^{\bot} = H^2(A,\ZZ)(2)\oplus \langle \tfrac{1}{2}\sum_{i=1}^{16} r_i\rangle$ is isometric to $\mathrm{U}(2)^{\oplus 3}\oplus \langle -8\rangle$.
\end{proof}
 
There are two more negative definite lattices of rank $16$ which arise in our work as primitive sublattices of $\Lambda_{\mathrm{K}3^{[2]}}$, which, however, we have not studied in detail. 
We consider the sum $L_{\Km}\oplus \mathrm{U}$, where $\mathrm{U}$ is a hyperbolic plane with standard basis $u,v$; as above, we regard $L_{\Km}$ as an overlattice of $\bigoplus_{i=1}^{16} \ZZ\cdot r_i$ where the $r_i$ are pairwise orthogonal $(-2)$-classes.

\begin{definition}\label{def:latticeN_W}
    We let $N_{W}$ be the lattice 
    \begin{equation}
    N_W\coloneqq \bigl\langle u+v, 2(u-v)+ \frac{1}{2} \sum_{i=1}^{16} r_i\bigr\rangle^{\bot}\subset L_{\Km}\oplus \mathrm{U}.
    \end{equation}
\end{definition}

The lattice $N_W$ is negative definite of rank $16$. It can be computed that $A_{N_W}\cong (\ZZ/2\ZZ)^7\times \ZZ/16\ZZ$. 
Identifying $\Lambda_{\mathrm{K}3^{[2]}}$ as the orthogonal complement to $u+v$ in ${\Lambda}_{\mathrm{K}3}\oplus \mathrm{U}$ and choosing a primitive embedding of $L_{\Km}$ into $\Lambda_{\mathrm{K}3}$, the lattice $N_W$ is naturally a primitive sublattice of $\Lambda_{\mathrm{K}3^{[2]}}$, with complement $\mathrm{U}(2)^{\oplus 3}\oplus \langle -16\rangle$.
There exist at least two equivalence classes of primitive embeddings $N_W\hookrightarrow \Lambda_{\mathrm{K}3^{[2]}}$, one with orthogonal complement isometric to $\mathrm{U}(2)^{\oplus 3}\oplus \langle -16\rangle$, the other with orthogonal complement $\mathrm{U}(2)^{\oplus 2} \oplus \langle -2\rangle \oplus \langle 2\rangle\oplus \langle -16\rangle$.

In fact, by Theorem \ref{thm:embeddingUnimodular} there exists a unique equivalence class of primitive embeddings of $N_W$ into the Mukai lattice $\widetilde{\Lambda}_{\mathrm{K}3}$, with complement isometric to $T\coloneqq \mathrm{U}(2)^{\oplus 3}\oplus \langle -16\rangle \oplus \langle 2\rangle$. Primitive embeddings of $N_W$ in $\Lambda_{\mathrm{K}3^{[2]}}$ correspond to primitive vectors in $T$ of square $2$. Taking the class $v$ which generates the summand $\langle 2\rangle$ of $T$, we obtain a primitive embedding with complement isometric to $\mathrm{U}(2)^{\oplus 3}\oplus \langle -16\rangle$; taking instead $v+e_1$, where $e_1$ is an isotropic vector in the summand $\mathrm{U}(2)^{\oplus 3}$, gives instead a primitive embedding with orthogonal complement $\mathrm{U}(2)^{\oplus 2}\oplus \langle -2\rangle \oplus \langle 2\rangle\oplus \langle -16\rangle$.

\begin{definition}\label{def:latticeN_M}
    Fix some integer $1\leq i \leq 16$. We let $N_{M}$ be the lattice 
    \begin{equation}
    N_M\coloneqq \langle u+v, u-v+r_i\rangle^{\bot}\subset L_{\Km}\oplus \mathrm{U}.
    \end{equation}
\end{definition}

The lattice $N_M$ is negative definite of rank $16$, and $A_{N_M}\cong (\ZZ/2\ZZ)^7\times \ZZ/4\ZZ$; it does not depend on the choice of $i$. The embedding of $N_M$ into $L_{\Km}\oplus \mathrm{U}$ leads to a primitive embedding of $N_M$ in $\Lambda_{\mathrm{K}3^{[2]}}$ with orthogonal complement isometric to $\mathrm{U}(2)^{\oplus 3}\oplus \langle -4\rangle$.
As above, one can see that there exist at least two distinct equivalence classes of primitive embeddings $N_M\hookrightarrow \Lambda_{\mathrm{K}3}^{[2]}$, one with orthogonal complement isometric to $\mathrm{U}(2)^{\oplus 3}\oplus \langle -4\rangle$, the other with orthogonal complement isometric to $\mathrm{U}(2)^{\oplus 2}\oplus \langle -2\rangle \oplus \langle 2\rangle \oplus \langle -4\rangle$.


\clearpage
\addcontentsline{toc}{section}{References}
\bibliographystyle{alpha} 
\bibliography{bibliographyNoURL}{}

\medskip \medskip

\noindent Salvatore Floccari\\
\noindent{Humboldt-Universit\"at zu Berlin, Germany}\\
\medskip \noindent{\texttt{salvatore.floccari@hu-berlin.de}}

\noindent Lie Fu\\
\noindent{Universit\'e de Strasbourg, Institut de recherche mathématique avancée (IRMA),  France} \\
\medskip \noindent{\texttt{lie.fu@math.unistra.fr}}

\end{document}